\newtheorem{theorem}[equation]{Theorem}
\newtheorem{lemma}[equation]{Lemma}
\newtheorem{proposition}[equation]{Proposition}
\newtheorem{corollary}[equation]{Corollary}
\newtheorem{definition}[equation]{Definition}
\theoremstyle{definition}
\newtheorem{example}[equation]{Example}
\newtheorem{remark}[equation]{Remark}
\newtheorem{bekezdes}[equation]{}
\newcommand{\Q}{\mathbb Q}
\newcommand{\R}{\mathbb R}
\newcommand{\Z}{\mathbb Z}
\newcommand{\N}{\mathbb N}
\newcommand{\bp}{\mathbb P}
\newcommand{\cV}{\mathcal V}
\newcommand{\cvg}{\mathcal V(\Gamma)}
\newcommand{\evg}{\mathcal E(\Gamma)}
\newcommand{\bfc}{{\Bbb C}}
\newcommand{\bfz}{{\Bbb Z}}
\newcommand{\calc}{{\mathcal C}}
\newcommand{\co}{{\mathcal O}}
\newcommand{\calv}{{\mathcal V}}
\newcommand{\cala}{{\mathcal A}}
\newcommand{\calw}{{\mathcal W}}
\newcommand{\cale}{{\mathcal E}}
\newcommand{\cals}{{\mathcal S}}
\newcommand{\calp}{{\mathcal P}}
\newcommand{\csj}{\C_{\Sigma_j}}
\newcommand{\csi}{\C_{\Sigma_i}}
\newcommand{\gj}{\G_{\C,j}^2}
\newcommand{\bnu}{{\mathbf{\nu}}}
\newcommand{\G}{\Gamma}
\newcommand{\C}{{\calc}}
\newcommand{\noi}{\noindent}
\newcommand{\gc}{\G_\C}
\newcommand{\gce}{\G_{\C}^1}
\newcommand{\gck}{\G_{\C}^2}
\newcommand{\gcce}{G_{\C}^1}
\newcommand{\fcd}{F_{c,d}}
\newcommand{\Wedge}{{\mathsf W_{\eta, M}}}
\newcommand{\de}{\mathbf{D}}    
\newcommand{\dec}{\mathbf{D}_c} 
\newcommand{\ded}{\mathbf{D}_d} 
\newcommand{\deo}{\mathbf{D}_0} 
\newcommand{\wP}{\widetilde{\Phi}}
\newcommand{\ic}{irreducible component}
\newcommand{\bs}{\bigskip}
\newcommand{\sm}{\smallskip}
\newcommand{\bd}{{\bf d}}
\newcommand{\bn}{{\bf n}}
\newcommand{\fedog}{{\mathcal G}(\Gamma,(\bn,\bd))}
\newcommand{\wfp}{\widetilde{F}_\Phi}
\newcommand{\wfv}{\widetilde{F}_v}
\newcommand{\wfw}{\widetilde{F}_w}
\newcommand{\wfW}{\widetilde{F}_{{\calw}}}
\newcommand{\wsk}{\widetilde{\cals}_k}
\newcommand{\nsk}{\cals_k^{norm}}
\newcommand{\gbk}{\Gamma_{van}}
\newcommand{\rank}{{\rm rank}\, }\newcommand{\corank}{{\rm corank}\, }
\newcommand{\im}{{\rm im}\,}
\newcommand{\Gmod}{G^{m}}
\newcommand{\Gemod}{G_1^{m}}
\newcommand{\Gkjmod}{G_{2,j}^{m}}
\newcommand{\coker}{{\rm coker}\,}
\newcommand{\tw}{T(E_w)}\newcommand{\tv}{T(E_v)}
\newcommand{\n}{\mathfrak{n}}
\newcommand{\dg}{\mathfrak{d}}
\newcommand{\labelpar}{\label}
\newcommand{\tib}{{\tiny{\Box}}}
\newcommand{\ix}{\index}
\newcommand{\inc}{{\mathfrak{I}}}
\def\blfootnote{\xdef\@thefnmark{}\@footnotetext}
\begin{document}

\author{Andr\'as N\'emethi, \'Agnes Szil\'ard}

\title{Milnor fiber boundary of a non--isolated surface singularity}

\maketitle

\frontmatter

\tableofcontents

\mainmatter


\chapter{Introduction}\label{s:INT}
\section{\ Motivations, goals and results}

\bekezdes \label{intro}
The origins of the present work go back to
some milestones marking the birth of singularity theory of complex dimension $\geq 2$.
They include the Thesis of Hirzebruch (1950) \ix{Hirzebruch} containing, among others,
the modern theory of cyclic quotient \ix{singularities!cyclic quotient}
singularities;  Milnor's construction \ix{Milnor} of the exotic 7--spheres as plumbed manifolds
associated with  `plumbing graphs'; Mumford's article \ix{Mumford} about normal surface singularities
\cite{Mu} stressing for the first time the close relationship of the topology with
the algebra; the treatment and  classification of links of singularities
by Hirzebruch and his students in the 60's (especially Brieskorn \ix{Brieskorn} and J\"anich, \ix{J\"anich}
and later  their students)
based on famous results on classification of manifolds by Smale, Thom, Pontrjagin, Adams, Kervaire
and Milnor, and the signature theorem of Hirzebruch. Since then, and since
the appearance of the very influential book   \cite{MBook} of Milnor in 1968,
the theory of normal surface singularities
and isolated hypersurface singularities produced an
enormous amount of significant results. In all of them, the {\it link} \ix{link} of an isolated
singularity plays a central role.
\ix{Smale} \ix{Thom} \ix{Pontrjagin} \ix{Adams} \ix{Kervaire}
\ix{singularities!normal surface} \ix{singularities!hypersurface}

In the presence of a smoothing, like in the case of
hypersurfaces, the link appears also as the boundary of the smoothing, the Milnor fiber.
This interplay has enormous consequences.

First, the link of a singularity is the boundary of an
arbitrary small neighbourhood of the singular point, hence one can {\em localize}
 the link in {\em any arbitrarily small} representative.
Hence,  by resolving the singularity, the link
 appears as the boundary of a small tubular neighbourhood
of the exceptional locus, that is, as a plumbed 3--manifold.
In this way, e.g. for isolated surface singularities, a
bridge is created between the link and resolution:  the resolution  at topological level is
codified in the resolution graph,
which  also serves as a plumbing graph for the link.

On the other hand, the same manifold
is the boundary of the Milnor fiber, that `nearby fiber' whose degeneration and monodromy
measures the complexity of the singularity. This interplay between the two holomorphic fillings
of the link, the resolution and the Milnor fiber, produces (perhaps) the nicest index--theoretical
relations of hypersurface singularity theory: relations of Durfee \cite{Du} and Laufer \cite{Lam}
valid for hypersurfaces, and
generalizations of Looijenga, Wahl \cite{LjW,Wa}, Seade \cite{Se} for more general
smoothings. It culminates  in deformation theory describing \ix{deformation!theory} \ix{smoothings}
the miniversal deformation spaces in some cases. \ix{monodromy}
\ix{Looijenga} \ix{Wahl} \ix{Seade} \ix{Durfee} \ix{Laufer}

The links of normal surface singularities, as special oriented 3--manifolds represented by
negative definite plumbing graphs, started to have  recently a significant role in low--dimensional topology as well:
they not only provide crucial testing manifolds for the Seiberg--Witten, \ix{Seiberg--Witten invariants}
\ix{graph!negative definite}
or Heegaard Floer theory
of 3--manifolds, and provide ground for surprising connections between these topological invariants and
the algebraic/analytic invariants of the singularity germs
(see for example  \cite{Ninv} and \cite{Nlattice} and the references therein),
but they appear also as the outputs
of natural topological classification results,  as solutions of some universal topological properties
--- for example,  the rational singularities appear as $L$--spaces \cite{Nlattice},
that is,  3--manifolds with vanishing reduced
Heegaard Floer homology.
\ix{Heegaard Floer homology}
Similarly,  the classification of symplectic fillings --- or more particularly,
the classification of rational ball fillings --- of some 3--manifolds find their natural
foreground in some singularity links, see \cite{NPPP} and  \cite{SSzW} and references therein.

\bekezdes \ Although the literature of isolated singularities is huge, surprisingly, the literature of
non--isolated singularities, even of the non--isolated hypersurface singularities, is rather restricted.
One of the main difficulties is generated by the fact that in this case the link is not smooth.

On the other hand, the boundary of the Milnor fiber is smooth, but (till the present work)
there was no construction which
would guarantee that it also appears as the boundary of any arbitrary small representative
of an isolated singular germ. Lacking such a result, it is hard to prove in a conceptual way
that this manifold is a plumbed manifold, even in the case of surfaces.

The present work aims to fill in this gap: we provide a general procedure that may be used
to attack and treat non--isolated hypersurface singularities.
First, the   localization property
 guarantees the existence of a plumbing representation. \ix{plumbing!representation}
But the strategy and the presentation is not limited to the plumbing representation of the boundary
(the proof of this fact is just the  short Proposition  \ref{lem:sksmooth}),    we target
a uniform conceptual treatment of the involved invariants of the singular germs including its connection with
the normalization, transversal type singularities and different monodromy operators. \ix{monodromy}
Although the whole presentation is for
surface singularities, some results can definitely be extended to  arbitrary dimensions.

\vspace{2mm}

More precisely, some of the conceptual results of the
present work are the following. Below,
the germ of the holomorphic function
$f:(\bfc^3,0)\to (\bfc,0)$ defines  a complex analytic hypersurface singularity with
1--dimensional singular locus $\Sigma$. Its zero set $\{f=0\}$ is denoted by $V_f$, and
its  Milnor fiber by $F$. Its oriented
boundary $\partial F$ is a connected oriented 3--manifold.

\begin{enumerate}
\item The oriented 3--manifold has a plumbing representation. In fact, we not only prove this result,
but provide a concrete algorithm for the construction of the plumbing graph: given {\it any} germ $f$,
anyone, with some experience in blowing ups and handling  equations of resolutions,  is able to determine
the graph after some work. (For the algorithm, see Chapter \ref{s:ALG}.)
The output graphs, in general, are not irreducible, are not negative definite, \ix{graph!negative definite}
 hence in their discussion the usual calculus --- blowing up and down
$(-1)$--rational curves --- is not sufficient.
In our graph--manipulations we will use the {\it calculus of oriented 3--manifolds} as it is described in
Neumann's foundational article \cite{neumann} (in fact, we will
restrict ourself to a `reduced class of graph--operations').
The corresponding background material and preliminary
discussions are presented in Chapter \ref{s:PLU}.   For some interesting examples
and  peculiarities see section \ref{LE}.

In this direction some particular results were known in the literature.
The fact that the boundary of the Milnor fiber is plumbed was announced by
F. Michel  and A. Pichon   in \cite{MP,MPerrata}, cf. \ref{rem:THutan}(c), and simple
examples  were provided in \cite{MP,MPW,CC} obtaining certain  lens spaces and Seifert manifolds.
Randell \cite{Ra} and Siersma  \cite{Si,Si3} determined the homology
of the boundary $\partial F$  for several cases. Moreover,
they  characterized via different criteria  those situations when $\partial F$ is
homology sphere; see also \ref{re:SiRa}.
\ix{Michel} \ix{Pichon} \ix{graph decoration!edge} \ix{Neumann} \ix{plumbing!graph calculus}
\ix{plumbing!graph calculus!reduced}\ix{lens space}\ix{Seifert manifold}

However,
the present work  uses a different and novel strategy compared with the existing  literature
of non--isolated singularities. Moreover, it provides the plumbing graph for arbitrary germs
(even for which ad hoc methods are not available), and
it points out for the first time in the literature
that for these boundary manifolds one needs to use
`extended', general plumbing graphs --- where the edges might have negative weights as well.
Such graphs were not used at all  in complex algebraic geometry before.
The sign decorations of edges
are  irrelevant for trees, but are crucial in the presence of cycles  in the plumbing graph, as it
is shown in many examples in this work.

The reason for the appearance of the negative edges is the following:
certain parts of the graph behave as `usual graphs of complex algebraic geometry' but with opposite
orientation. This fact is the outcome of the {\em real analytic} origin of the plumbing representation,
see the next subsection \ref{1.3}.\\

\item Recall that  for any germ $g:(\bfc^3,0)\to (\bfc,0)$ such that the pair $(f,g)$ forms an ICIS
(isolated complete intersection singularity), by a result of Caubel \cite{CC}, $g$ determines an
open book decomposition on $\partial F$ (similarly as the classical Milnor fibration is cut out by the
argument of $g$). For any such $g$, our method
determines, as an additional decoration in the plumbing graph,  the `multiplicity system' of
this open book decomposition too (for definitions, see \ref{Openbooks}).\\
\ix{Caubel} \ix{open book decomposition}

\item The boundary $\partial F$ consists of two parts (a fact already proved  by D. Siersma
\cite{Si,Si3} and used by
F. Michel, A. Pichon and C. Weber in \cite{MP,MPerrata,MPW} too): the first, $\partial _1F$,
is the complement of an open tubular neighbourhood of the strict transforms of $\Sigma$
in the link of  the normalization of $V_f$.
The second one, $\partial _2F$, can be
 recovered from the  transversal plane curve singularity types
 of $\Sigma$ together with the corresponding vertical monodromy actions.
 Already determining these two  independent pieces can be a non--trivial task,
but the identification of their  gluing can be incomparably  harder.
 The present work clarifies this gluing
 completely (in fact, resolves it so automatically, that if one does not
 look for the  phenomenon  deliberately, one will not even see it).\\
\ix{Siersma} \ix{Weber}  \ix{normalization of $V_f$}
\ix{monodromy} \ix{monodromy!vertical}

\item The monodromy action on $\partial F$, in general, is not trivial. In fact, its restriction to
 the piece
$\partial _1F$  is trivial, but the monodromy  on $\partial_2F$  can be rather complicated. In order to understand
the monodromy action on  $\partial_2F$, we need the {\em vertical} monodromies of the transversal singularity types,
and, in fact, via the Wang exact sequence, we need their
Jordan block structure corresponding to eigenvalue one; on the other hand, the monodromy action on
$\partial_2F$ is induced by the {\em horizontal} monodromy of the transversal singularity types.
In the body of the paper we determine this commuting pair of actions, and as a by--product,
the homology of $\partial F$ and the
 characteristic polynomial of the algebraic monodromy action on $H_1(\partial F)$ too
 (under certain assumptions).
\ix{monodromy!vertical!transversal} \ix{monodromy!horizontal!transversal}

 The discussion includes the study of some monodromy operators of the
 ICIS $(f,g)$ as well, as detailed below.
\end{enumerate}

\bekezdes\labelpar{1.3} \
The `Main Algorithm' is based on a special construction: we  take an arbitrary germ
$g:(\bfc^3,0)\to (\bfc,0)$
such that $\Phi=(f,g)$ forms an ICIS. The topology of such a map is described in Looijenga's book
 \cite{Lj}, and it will be used intensively. We recall the necessary  material in Chapter
\ref{s:ICIS}.\ix{Main Algorithm}

In general, $\Phi$ provides a powerful tool to analyse the germ $f$ or its
$g$--polar properties. Usually, for an arbitrary germ $f$ with 1--dimensional singular locus, one
takes a plane curve singularity $P:(\bfc^2,0)\to (\bfc,0)$ and considers the composed function
$P\circ \Phi$. For certain germs $P$, this  can be thought of as an approximation of $f$ by  isolated singularities.
For example, if $P(c,d)=c+d^k$, then $P\circ \Phi=f+g^k$ is one of the most studied test
series, the Iomdin series of $f$ associated with $g$ \cite{Io,Le2}.
\ix{polar} \ix{singularities!composed}  \ix{ICIS!discriminant}

If we wish to understand  the geometry of $P\circ \Phi$, for example its Milnor fiber,
then we need to analyse all the intersections of $\{P=\delta \}$ with
the discriminant $\Delta$  of $\Phi$, and we have to understand the whole
monodromy representation of
$\Phi$ over the complement of $\Delta$ --- a very difficult task, in general.
On the other hand,  if we `only' wish to determine
 some `correction terms' --- for example,    $i(f)-i(f+g^k)$  for an invariant $i$ ---,
 it is enough to study $\Phi$
only above a  neighbourhood of the link of the distinguished discriminant component
 $\Delta_1:=\Phi(V_f)$.

This fact has been exploited at  many different levels, and for several invariants, see e.g.
the articles of L\^e and  Teissier initiating and developing the `theory of discriminants'
\cite{Le1,Le2,Te}; the article of Siersma \cite{Si2} about the zeta function of the Iomdin series,
or its generalizations by the  first author, cf.  \cite{NPhg} and the references therein.
\ix{L\^e} \ix{Teissier} \ix{Siersma} \ix{Iomdin series}

Using this principle,
in \cite{eredeti} we  determined the {\em links} of members of the Iomdin series, that is,
the links  of isolated singularities $f+g^k$ for  $k\gg0$.
In that work the key new ingredient was the  construction of  a special graph $\gc$,
dual to a curve configuration  ${\mathcal C}$ in an embedded
 resolution of $V_{fg}\subset \bfc^3$ localized above  a  `wedge neighbourhood' of $\Delta_1$
(for the terminology and more comments see \ref{why}).
\ix{curve configuration $\C$} \ix{wedge neighbourhood}

The point is that  the very same  graph $\gc$
not only contains all the information necessary to determine the links of  the Iomdin series (and
the correction terms  $i(f)-i(f+g^k)$ for several invariants $i$),
but it is the right object to determine $\partial F$ as well.

The bridge that connects $\partial F$ with the previous discussion (about series and discriminant
of the ICIS $(f,g)$)
is realized by the following fact.
Let $k$ be a sufficiently large even integer. Consider the local {\em real} analytic germ
$$\{f=|g|^k\}\subset (\bfc^3,0).$$
Then, for large $k$, its link (intersection with an arbitrary small sphere)
is a smooth oriented 3--manifold,
independent of the choice of $k$, which, most importantly,  {\it is
diffeomorphic with $\partial F$}, cf. Proposition  \ref{lem:sksmooth}.
In particular, it means that $\partial F$ can be `localized':
it appears as the boundary of an arbitrary small
neighbourhood of an analytic germ! But, in this (non--isolated)
case,  the corresponding space--germ is not complex,
but real analytic.
\ix{localization} \ix{graph!covering}

As a consequence, after resolving this real analytic
singularity, the tubular neighbourhood of the exceptional set
provides a plumbing representation $G$ of $\partial F$.
The point is that the graph $\gc$ 
codifies all the necessary  information
to recover the topology of the resolution and of the plumbing:
the plumbing graph $G$ of $\partial F$ appears as a `graph
covering' of $\gc$. The Main Algorithm (cf. Chapter  \ref{s:ALG}) \ix{Main Algorithm}
provides a pure combinatorial description of
$G$ derived from $\gc$.  (The necessary  abstract theory of `coverings
of graphs', developed in \cite{cyclic}, is reviewed in Chapter \ref{ss:2.3}.)

The method emphasizes the importance of real analytic germs, and their  necessity even in the
study of complex geometry. 

In Chapters \ref{s:vh}--\ref{s:MHS} we determine several related homological invariants from
$\gc$ (characteristic polynomials of horizontal/vertical monodromies, Jordan block structure, cf.
below).
In fact, we believe that $\gc$ contains even more information than what was exploited in \cite{eredeti}
or here, which can be the subject of future research.

On the other hand, finding the
graphs $\gc$ can sometimes be a serious job. Therefore, we decided to provide examples
for $\gc$ in abundance in order to help the reader  understand the present work better, and support  possible
future research as well.

As different embedded resolutions might produce different graphs $\gc$, readers with more experience
in resolutions might find even simpler graphs in some cases. Each $\gc$ can equally be used for the theory
worked out.

\bekezdes \  The above presentation already suggests that the geometry of $\Phi$ near the discriminant
component  $\Delta_1$
is reflected in the topology of $\partial F$ as well. Technically, this is described in the commuting
actions of the horizontal and vertical monodromies on the fiber of $\Phi$.
Similarly, one can consider the horizontal/vertical monodromies of the local transversal types of $\Sigma$,
associated with $\Delta_1$.
The determination of these two pairs of representations
is an important task (independently of the identification of $\partial F$), and it is crucial in many constructions
about non-isolated singularities.

We wish to stress that our method provides (besides the result
regarding $\partial F$) a uniform discussion of these monodromy representations,
and gives  a clear procedure to determine  the corresponding
characters of the $\Z^2$--representations and the  characteristic polynomials
with precise closed formulae.
This can be considered as the {\it generalization} to isolated complete intersection
singularities {\it of A'Campo's formula} \cite{A'CMon,AC,AC2}, valid for hypersurface singularities.
Moreover, we determine even the Jordan block structures
in those characters  which are needed for the homology of $\partial F$ and its algebraic monodromy.
This is a generalization of results of Eisenbud--Neumann \cite{EN} regarding Jordan blocks of
algebraic monodromies associated with 3--dimensional graph manifolds.
\ix{monodromy!vertical} \ix{monodromy!horizontal}\ix{A'Campo's formula}\ix{Eisenbud--Neumann book}

\bekezdes \  The material of the present work can be grouped into several parts.

The first part  contains
results regarding the germ $f$, the ICIS $\Phi$ and the graph $\gc$ read from a resolution.
This is an introductory part, where we list all the background  material needed from the literature.

The second part
  contains the description of $\partial F$, its invariants, and establishes the main
connections with other geometrical objects.

 Finally, the third part (but basically everywhere in the chapters),
  we present many examples, among them treatments of specific classes of singularities
 as homogeneous, suspensions, cylinders, etc.

Any example is given in two steps: first, the graph $\gc$ has to be determined, a
more or less independent task.  This can be done in many different ways using one's
preferred resolution tricks. Nevertheless, in most of the cases, it is not  a trivial procedure,
except for special cases such as cylinders or homogeneous germs.
Then, in the second step, we run the Main Algorithm to get the plumbing graph of $\partial F$ or its
monodromy, or the open book decomposition of $g$ living on $\partial F$.

Our examples  test and illustrate  the theory, emphasizing the new aspects:
the obtained  graphs are not the usual negative \ix{graph!negative definite}
definite graphs provided by resolutions of normal surface singularities, not even the dual
graphs of complex curve configurations of a complex surface (where the intersections of curves are
always positive). They contain pieces with `opposite' orientation,  hence  vertices may have to be  connected by
negative--edges too. Indeed, in the resolution of the real singularity
$\{f=|g|^k\}$, some singularities are {\em orientation reversing} equivalent with
complex Hirzebruch--Jung singularities; hence the corresponding Hirzebruch--Jung strings should be sewed
in the final graph with opposite orientation.
\ix{graph!resolution!string}

In Chapters \ref{s:cyl}--\ref{s:fAB} for certain  families (basically, for `composed singularities')
we also provide alternative, topological constructions of $\partial F$.

\bekezdes \ The titles of the chapters and sections  already listed in the Contents
 were chosen  so that they would guide the reader easily  through the sections.

\bekezdes \label{MPcomment} \ Most of the theoretical results of the present work were obtained in
2004--2005; the Main Algorithm was presented at the Singularity Conference at Leuven, 2005.
Since that time we added several examples and completed the theoretical part.
A completed version  \cite{ARXIV} was posted on the Algebraic Geometry preprint server  on 2 September 2009.
\ix{Main Algorithm}

\section{\ List of examples with special properties}\label{LE}
\setcounter{equation}{0}

In order to arouse  the curiosity of the reader, and to exemplify the variety
of 3--manifolds obtained as $\partial F$, we list some peculiar examples.
They are extracted from the body of the work where a lot more examples are found.

In the sequel we use the symbol $\approx$ for  {\em orientation preserving diffeomorphism}.
If $M$ is a 3--manifold with fixed orientation, then
$-M$ is $M$ with opposite orientation.\\

For certain  choices of $f$, the boundary $\partial F$ might have one of the following
peculiar properties:

\begin{bekezdes} $\partial F$ cannot be represented by a negative definite plumbing, but
$-\partial F$ admits such a representation, even with all edge decorations positive, see  example \ref{ex:347b}.
\end{bekezdes}
\ix{graph!negative definite}

\begin{bekezdes}
Neither $\partial F$, nor $-\partial F$  can be represented by a negative definite plumbing, see
\ref{ss:dsmall}(4a).
\end{bekezdes}

\begin{bekezdes}
$\partial F$ can be represented by a negative definite graph, but it is impossible to
arrange all the  edge decorations
positive. Hence, such a graph cannot be the graph of a normal surface singularity.
See \ref{ex:dminusz2}.
\end{bekezdes}
\ix{graph decoration!edge}

\begin{bekezdes}
$\partial F$ can be represented by a negative definite graph with all edge decorations
positive.  If $\partial F$ is a lens space then this property is automatically true.
On the other hand, examples with this property and which are not lens spaces are rather
rare.   The examples \ref{ss:unicusp}(e-f)  are Seifert manifolds with three special orbits.
\end{bekezdes}
\ix{lens space} \ix{Seifert manifold}

\begin{bekezdes} In all the examples, $\partial F$
is not orientation preserving diffeomorphic with the link of the normalization of $V_f$
(evidently, provided that the singular locus of $V_f$ is 1--dimensional); a fact already noticed in \cite{MP}.
\end{bekezdes}
\ix{normalization of $V_f$}

\begin{bekezdes} There are  examples when $\partial F$
is orientation {\it reversing} diffeomorphic with the link of the normalization of $V_f$,
 see \ref{ss:unicusp}(d).
\end{bekezdes}

\begin{bekezdes} There are examples when $\partial F\approx -\partial F$. In the world of negative definite
plumbed manifolds,
 if both $M$ and $-M$ can be represented by negative definite graphs then
 the graph is either a string or it is cyclic \cite{neumann}. Here in  \ref{bek:plumf} we provide examples
 for $\partial F\approx -\partial F$ with plumbing graphs containing an arbitrary number of  cycles.
 See also  \ref{ss:dsmall}(3b). \ix{graph!resolution!string}
\end{bekezdes}

\begin{bekezdes}
It may happen that $\partial F$ fibers over $S^1$ with $\rank\, H_1(\partial F)$ arbitrary large,
  cf. \ref{ss:GEOM}.
\end{bekezdes}

\begin{bekezdes}
$\partial F$ might be non--irreducible, cf. \ref{bek:classifi2}(c) and \ref{ss:partF}.
\end{bekezdes}

\begin{bekezdes} Any $S^1$--bundles with non--negative Euler number
over any oriented surface can be realized as $\partial F$, cf. \ref{ss:XY} and
\ref{ex:almostfan}.
\end{bekezdes}

\begin{bekezdes}
The monodromy on $\partial F$, in general, is not trivial, the algebraic monodromy might even have
Jordan blocks of size two, cf. \ref{ss:comgeo}.
\end{bekezdes}
\ix{monodromy!Jordan block} 
\ix{monodromy!characteristic polynomial}\ix{characteristic polynomial|see{monodromy/characteristic polynomial}}
\ix{mixed Hodge structure!weight filtration}

\begin{bekezdes}
There exist pairs of singularity germs with diffeomorphic
 $\partial F$ but different characteristic polynomials of $H_1(\partial F)$,
and/or different mixed Hodge weight filtrations on $H_1(\partial F)$,
and/or different multiplicities,  cf. \ref{ex:MHSnot}.
\end{bekezdes}

\part{Preliminaries}

\chapter{The topology of a hypersurface germ $f$ in three variables}\label{s:egy}
\section{\ The link and the Milnor fiber $F$ of hypersurface singularities}\labelpar{ss:2.0a}

Let $f:({\bfc}^n,0)\to ({\bfc},0)$ be the germ of a complex  analytic function and set
$(V_f,0)=(f^{-1}(0),0)$. Its singular locus  $(Sing(V_f),0)$
consists of points $\Sigma:=\{x\,:\, \partial f(x)=0\}$.
\ix{singular locus}

Our primary interest is the {\it local structure of $f$}, namely a collection of
invariants and properties containing information about local ambient topological
type of $(V_f,0)$ --- sometimes called the `local Milnor package' of the germ.
To start with, we fix some notations: $B_\epsilon$ is the closed ball in $\bfc^n$ of radius
$\epsilon$ and centered at the origin; $S_\epsilon= S^{2n-1}_\epsilon $ is its boundary $\partial B_\epsilon$;
$D_r$ denotes a complex disc of radius $r$, while $D^2_r$ is a bidisc.  Usually,
$S^k$ denotes the $k$--sphere with its natural orientation,
and  $T^\circ$ the interior of the closed ball or tubular neighbourhood $T$.

The next theorem characterizes the homeomorphism type of the triple $(B_\epsilon, B_\epsilon\cap V_f,0)$
showing its local {\it conic structure}. In the general case of semi--analytic sets it was proved by
Lojasiewicz \cite{Loj}, the case of germs of complex algebraic/analytic
hypersurfaces with isolated singularities  was established by Milnor \cite{MBook}, while the generalization to
non--isolated hypersurface singularities was done  by Burghelea and Verona
\cite{BV}:
\ix{Lojasiewicz} \ix{Milnor} \ix{Burghelea} \ix{Verona} \ix{conic structure}
\ix{link!of a surface singularity|textbf} \ix{singularities!isolated|textbf}

\begin{theorem}\cite{Loj,MBook,BV} There exists $\epsilon_0>0$ with the property that
for any $0<\epsilon\leq \epsilon_0$ the homeomorphism type of $(B_\epsilon,B_\epsilon\cap V_f)$
is independent of $\epsilon$, and is the same as the homeomorphism type  of the real cone on the pair
$(S_\epsilon,S_\epsilon\cap V_f)$, where 0 corresponds to the vertex of the cone.
\end{theorem}
The intersection  $K:=V_f\cap S_\epsilon$ is called  the \textit{link}
of $(V_f,0)$ ($0<\epsilon\leq \epsilon_0$). By Milnor \cite{MBook},
\begin{equation}\label{eq:Kcon1}
\mbox{$K$  is $(n-3)$--connected.}
\end{equation}
Moreover,  $K$ is an oriented manifold provided that $f$ has an isolated singularity.
By the above theorem, the local structure is completely determined by
$K$ and its embedding into the $(2n-1)$--sphere. A partial information about this embedding is provided by the
complement $S_\epsilon \setminus K$. The fundamental result of Milnor in \cite{MBook}
states that it is a locally trivial fiber bundle over the circle.

\begin{theorem}\cite{MBook} There exists $\epsilon_0$ with the property that for any
$0<\epsilon\leq \epsilon_0$ the map $f/|f|:S_\epsilon\setminus K\to S^1=\{z\in\bfc\,:\, |z|=1\}$
is a smooth locally trivial fibration. Moreover, for any such $\epsilon$, there exists $\delta_\epsilon$
with the property that for any $0<\delta\leq \delta_\epsilon$, the restriction $f:B_\epsilon^\circ\cap f^{-1}(
\partial D_\delta)\to \partial D_\delta$ is a smooth locally trivial fibration. Its diffeomorphism type
is independent of the choices of $\epsilon$ and $\delta$. Furthermore, these two fibrations are diffeomorphic.
\end{theorem}

\ix{Milnor!fibration|textbf} \ix{Milnor!fiber|textbf} \ix{Milnor!geometric monodromy|textbf}
\ix{monodromy!geometric|textbf}
Either of the fibrations above is referred to as the {\it local Milnor fibration}
of  the germ $f$ at the origin; its fiber is called the {\it Milnor fiber}.
In this book we will deal mainly with the second fibration.
Let $F_{\epsilon,\delta}:=B_\epsilon\cap f^{-1}(\delta)$ be the
Milnor fiber of $f$ in a small {\em closed} Milnor ball $B_\epsilon$
(where $0<\delta\ll \epsilon$). Sometimes, when $\epsilon$ and
$\delta$ are irrelevant, it will be simply denoted by $F$.
It is a smooth oriented $(2n-2)$--manifold with boundary $\partial F_{\epsilon,\delta}$.
If $f$ has an isolated singularity then $\partial F_{\epsilon,\delta}$ and $K$ are diffeomorphic.

The geometric  monodromy (well defined up to an isotopy) of the Milnor fibration
$\{F_{\epsilon,e^{i\alpha}\delta}\}_{\alpha\in[0,2\pi]}$ is
called the {\it Milnor geometric monodromy} of $F_{\epsilon,\delta}$. It
induces a Milnor geometric monodromy action on the boundary
$\partial F_{\epsilon, \delta}$.  This restriction can be chosen the identity  if
$\Sigma$ is empty or a point, otherwise this action can  be non--trivial.

If $f$ has an isolated singularity then the fibration and its fiber $F$ have some
very pleasant properties.
\begin{theorem}\label{th:ISO} If $ \Sigma=\{0\}$  then the following facts hold.

\vspace{2mm}

(a) \cite{MBook} \ The homotopy type of $F$ is a bouquet (wedge) of
$(n-1)$--spheres; their number, $\mu(f)$, is called the `Milnor number' of $f$.

\vspace{2mm}

(b) \cite{MBook} \ The Milnor fibration on the sphere provides an open book decomposition
of $S_\epsilon$ with binding $K$. In particular, the closure of any fiber
$(f/|f|)^{-1}(e^{i\alpha})$ \, ($\alpha\in[0,2\pi]$) \, is the link $K$ .

\vspace{2mm}

(c) (Monodromy Theorem, see  \cite{BrieskornMon,Clemens,Landman,Lj} for different versions, or
\cite{GR,Kulikov} for a comprehensive discussions) Let $M:H_{n-1}(F,\bfz)\to H_{n-1}(F,\bfz)$
be the algebraic monodormy operator induced by the geometric monodormy, and let $P(t)$ be its
characteristic polynomial. Then all the roots of $P$ are roots of unity. Moreover, the size
of the Jordan blocks of $M$ for eigenvalue $\lambda\not=1$ (respectively $\lambda=1$)
is bounded by $n$ (respectively by $n-1$).
\end{theorem}
\ix{Milnor!number|textbf} \ix{monodromy!Theorem}

\begin{example}\label{ex:planecurves}
If $n=2$, then $(V_f,0)\subset (\bfc^2,0)$ is called plane curve singularity.
Even if $f$ is isolated, it might have several local irreducible components, let their number be
$\#(f)$. Then the Milnor number $\mu(f)$ satisfies Milnor's identity \cite{MBook}:
\begin{equation}\label{eq:MILNORDELTA}
\mu(f)=2\delta(f)-\#(f)+1,
\end{equation}
where $\delta(f)$ is the {\it Serre--invariant}, or {\it delta--invariant},  or the number of
double points concentrated at the singularity \cite{Serre}.
\ix{Serre--invariant} \ix{singularities!plane curve|textbf}

$\delta(f)=0$ if and only if $\mu(f)=0$,
if and only if $f$ is smooth.
\end{example}

For more details about invariants of plane curve singularities, see \cite{BrKn,Wall}; about
 the topology of isolated hypersurface singularities in general,
see \cite{AGV,MBook,SeBook}.

For the convenience of the reader we recall the definition of the
{\it open book decomposition}  as well.

\begin{definition}\label{def:OBD}
An open book decomposition of a smooth manifold $M$ consists of a codimension 2 submanifold $L$,
embedded in $M$ with trivial normal bundle, together with a smooth fiber bundle decomposition of its complement
$p:M\setminus L\to S^1$. One also requires a trivialization of the a tubular neighborhood of $L$ into the form
$L\times D$ such that the restriction of $p$ to $L\times (D\setminus 0)$ is the map $(x,y)\mapsto y/|y|$.

The submanifold $L$ is called the  {\it binding} of the open book, while the fibers of $p$ are the {\it pages}.
\end{definition}
\ix{open book decomposition|textbf}
\ix{open book decomposition!binding|textbf} \ix{open book decomposition!pages|textbf}

\bekezdes
The above Theorem  \ref{th:ISO} about isolated hypersurface
singularities became a model  for the investigation of non--isolated hypersurface germs as well.
For these germs similar statements are still valid in some weakened forms.  For example,
$F$ is a parallelizable manifold of real dimension $2n-2$,  it has the homotopy type of
a finite CW--complex of dimension $n-1$ \cite{MBook}. Moreover,  by a result of Kato and Matsumoto \cite{KM},
\begin{equation}\label{eq:KM}
\mbox{$F$ is $(n-2-\dim\Sigma)$--connected. }
\end{equation}
\ix{Kato} \ix{Matsumoto} \ix{singularities!non--isolated|textbf}
For example, if $n=3$ and $\dim\Sigma=1$ then $F$ is a connected finite CW--complex of dimension 2,
and $K$ is connected of real dimension 3. This is the best one can say using the general theory.
Since all the spaces $K$, $F$, and $\partial F$ might have non--trivial fundamental groups, these
spaces  are
extremely good  sources for codifying important information, but their study and complete characterization
is much harder than the study of simply connected spaces.

\section{\ Germs  with 1--dimensional singular locus. Transversal type}\labelpar{ss:2.0b}
In this section  we restrict ourself to the case
of a complex  analytic  germ $f:({\bfc}^3,0)\to ({\bfc},0)$ whose singular locus $(\Sigma,0)$
 is 1--dimensional.  Denote by  $\cup_{j=1}^s \Sigma_j$ the
decomposition of $\Sigma$ into irreducible components. As we have already mentioned, in this case
$K$ is singular too: its singular part is
$L=\cup_jL_j$, where $L_j:=K\cap \Sigma_j$.
\ix{transversal!equisingulaity type|textbf}

An important ingredient of the topological description of the germ $(V_f,0)$ and of the
local Milnor fibration is the
collection of {\it transversal type singularities},  $T\Sigma_j$, associated with
the components  $\Sigma_j$ ($j\in \{1,\ldots, s\}$) \cite{Io,Le2};  their  definition follows.

$T\Sigma_j$ is the equisingularity type (that is, the embedded topological type)
of the local plane curve singularity
$f|_{Sl_q}:(Sl_q,q)\to (\bfc,0)$, where $q\in \Sigma_j\setminus
\{0\}$, and $(Sl_q,q)$ is a transversal smooth complex 2--dimensional
slice-germ of $\Sigma_j$ at $q$.  The topological type of $f|_{Sl_q}$
is independent of the choice of $q$ and
$Sl_q$. Similarly, its Milnor fiber
$(f|_{Sl_q})^{-1}(\delta)\subset Sl_q$ is independent of $q$ and
$\delta$ (for $\delta$ small), and it will be denoted by $F_j'$.
We write $\mu'_j$ for the  Milnor number, $\#T\Sigma_j$ for
the number of irreducible components, and $\delta'_j$ for the Serre-invariant.
\ix{Milnor!number!of transversal type|textbf}

The monodromy diffeomorphism of $F_j'$, induced by the family
$[0,2\pi]\ni\alpha\mapsto (f|_{Sl_q})^{-1}(\delta e^{i\alpha})$, is
called the \ix{transversal!monodromy!horizontal|textbf} \emph{horizontal monodromy of $F'_j$}, and is denoted
by $m'_{j,hor}$. The diffeomorphism  induced by the family
$s\mapsto (f|_{Sl_{q(s)}})^{-1}(\delta)$,  above an oriented
simple loop $s\mapsto q(s)\in \Sigma_j\setminus  \{0\}$, which
generates $\pi_1(\Sigma_j\setminus \{0\})=\Z$ (and with $\delta$
small and fixed), is called the \ix{transversal!monodromy!vertical|textbf} \emph{vertical monodromy}. It is
denoted by $m'_{j,ver}$. Both monodromies are well-defined up to
isotopy, and they commute up to an isotopy.
\ix{monodromy!transversal|see{transversal/monodromy}}
\ix{transversal!monodromy|textbf}
\ix{monodromy!horizontal!transversal|textbf}\ix{monodromy!vertical!transversal|textbf}

For a possible explanation of the names 'horizontal/vertical', see \ref{HORVER}.

The primary goal of the present work is the study of the {\it boundary $\partial F$ of the
Milnor fiber $F$}, although sometimes we will provide  results regarding the Milnor
fiber itself or about the ambient topological type as well (but these will be  mostly
immediate consequences of results regarding the boundary of $F$).

\section{\ The decomposition of the boundary of the Milnor fiber}
\labelpar{ss:2.1}
\setcounter{equation}{0}
As in \ref{ss:2.0b}, let $\partial F$ denote the boundary of the
Milnor fiber $F$  of $f$, that is  $\partial F=\partial F_{\epsilon,\delta}=
S_\epsilon \cap f^{-1}(\delta)$.
By the above discussion it is a smooth oriented 3--manifold.
\ix{Milnor!fiber!boundary|textbf} \ix{Siersma} 

Siersma in \cite{Si} provides  a natural
decomposition $$\partial F=\partial_1 F\cup\partial_2 F,$$ which will be described next.

Let $T(L_j)$ be a small closed tubular neighbourhood of $L_j$ in
$S_\epsilon$, and denote by $T^\circ(L_j)$ its interior. Then
$\partial_2F=\cup_j\partial_{2,j}F$ with $\partial_{2,j}F=\partial
F\cap T(L_j)$, and  $\partial_1F=\partial F\setminus
\cup_jT^\circ(L_j)$. The parts $\partial_1F$ and $\partial_2F$ are glued
together along their boundaries, which is a union of tori.

\begin{theorem}\labelpar{prop:sier}\cite{Si}
\begin{enumerate}
\item\labelpar{prop:sier1}
For each $j$, the natural projection $T(L_j)\to L_j$
induces a locally trivial fibration of $\partial_{2,j}F$
over $L_j$ with fiber $F_j'$  (the Milnor fiber of $T\Sigma_j$)
 and monodromy $m'_{j,ver}$ of $F_j'$. This induces a  fibration of
$\partial(\partial_{2,j}F)$ over $L_j$ with fiber $\partial F_j'$.

\vspace{2mm}

\item\labelpar{mon}
The Milnor monodromy of $\partial F$ can be chosen in such a way that it preserves
both $\partial_1F$ and $\partial_2F$. Moreover, its restriction on
$\partial_1F$ is trivial, and also on the gluing tori \ix{gluing tori|textbf}
$\partial(\partial_1F)=-\cup_j\partial(\partial_{2,j}F)$.

\vspace{2mm}

\item\labelpar{prop:sier2}
The Milnor monodromy on $\partial_2F$ might be nontrivial.
This monodromy action  on each $\partial_{2,j}F$ is induced by the horizontal
monodromy $m'_{j,hor}$ acting on $F_j'$.
(Since it commutes with $m'_{j,ver}$,
it induces an action on the total space $\partial_{2,j}F$ of the bundle described in part 1.)
\end{enumerate}
\end{theorem}

Notice that by Theorem \ref{prop:sier}(\ref{prop:sier1}), since $F'_j$
is connected, we  also obtain that
\begin{equation}\label{eq:CONj}
\mbox{$\partial_{2,j}F$ is connected.}\end{equation}

\begin{remark}\labelpar{rem:link}
One has the following relationship connecting the boundary
$\partial F$ and the link $K$. Definitely, $K$ has a similar
decomposition $K=K_1\cup K_2$, where $K_2=\cup_jK_{2,j}$,
$K_{2,j}:=T(L_j)\cap K$, and $K_1:=K\setminus \cup_j T^\circ
(L_j)$. Then $K_1\approx \partial _1F$, hence $\partial
K_1\approx \partial(\partial _1F)$ as well. On the other hand, each $K_{2,j}$
has the homotopy type of $L_j$. More precisely, the homeomorphism type of
$K$ can be obtained from $\partial F$ by the following `surgery':
one replaces  each $\partial_{2,j}F$ ---
considered as the total space of a fibration with base space $L_j$ and fiber $F'_j$  ---,  by
 a total space of a fibration with base space $L_j$ and whose fiber is the real cone over $\partial F'_j$.
\ix{link!of a surface singularity!decomposition|textbf}\ix{link!of a surface singularity}

In particular, if each transversal type singularity $T\Sigma_j$ is locally irreducible, then $K$ is an
oriented topological 3--manifold (since   the real cone over $\partial F'_j$ is a topological disc).

For another construction/characterization  of $K$ see \ref{prop:KK}.
\end{remark}
\ix{Siersma} \ix{Randell} \ix{sphere!homology}

\ix{link!of a surface singularity}

\begin{corollary}\label{cor:Kcon}
$\partial F$ is connected.
\end{corollary}
\begin{proof} Use Remark~\ref{rem:link} and the fact that $K$ is connected, cf. (\ref{eq:Kcon1}).
\end{proof}

\begin{remark}\label{re:SiRa} Consider a germ $f$ as above with 1--dimensional
singular locus and let $M_q:H_q(F,\Z)\to H_q(F,\Z)$ be the monodromy operators acting on the
homology of the Milnor fiber. Furthermore, let $M'_{j,ver}$ be the algebraic vertical transversal
monodromy induced by $m'_{j,ver}$.\ix{transversal!monodromy}\ix{monodromy}

Randell and Siersma  in the articles \cite{Ra,Si,Si3} determined the homology of the link $K$
and of the boundary $\partial F$  for several cases. Moreover,
they  characterized via different criteria  those situations when the link  $K$ and $\partial F$ are
homology spheres:

\begin{enumerate}
\item \cite[(3.6)]{Ra} \ $K$ is  a homology sphere if and only if $\det(M_q-I)=\pm 1$ for $q=1,2$.
\vspace{2mm}

\item  \cite{Si,Si3} \ $\partial F$ is a homology sphere if and only if
$\det(M_2-I)=\pm 1$ and $\det(M'_{j,ver}-I)=\pm 1$ for any $j$.\vspace{2mm}

\item  \cite{Si} (in  \cite{Si} attributed to Randell too) \
$\partial F$ is a homology sphere if and only if $K$ is homology sphere and
$\det(M'_{j,ver}-I)=\pm 1$ for any $j$.
\end{enumerate}

In fact,  these statements were proved for arbitrary dimensions. For more comments on these properties see
\ref{O9}.
\end{remark}

\chapter{The topology of a pair $(f,g)$}\labelpar{s:ICIS}

\section{\ Basics of ICIS. \ Good representatives}\labelpar{ss:ICIS}
\setcounter{equation}{0} \ix{ICIS|textbf}
In many cases it is convenient to add to the germ $f$ another germ,
say $g$, such that the pair $(f,g)$ forms an \emph{isolated
complete intersection singularity} (ICIS in short). Traditionally,
one studies  the $g$-\textit{polar geometry} of $f$ in this way,
generalizing the classical polar geometry, when $g$ is a
generic linear form. This method, suggested by Thom and developed by
L\^e D\~ung Tr\'ang \cite{Le1,Le2,LeMon} and Teissier \cite{Te0,Te},
computes certain  invariants of $f$ by {\it induction on the dimension}.
This lead to the polar invariants of Teissier,  the carrousel description of the
monodromy by L\^e, and later to the study   of certain invariants of series and `composed singularities'
by Siersma \cite{Si2} and the first author \cite{NPhg,Ne126,Neta}, or of L\^e cycles and numbers   by
Massey \cite{MI1,MI2,M}. These techniques  have  generalizations
in the theory of one-parameter and equisingular deformations, initiated by
Zariski and Teissier, producing great results such as the L\^e Ramanujam Theorem \cite{LR} and recent work
of Fern\'andez  de Bobadilla, see \cite{deB1,deB2} and references therein.
\ix{singularities!ICIS|see{ICIS}}\ix{Thom}\ix{L\^e}\ix{Teissier}\ix{Siersma}\ix{Massey}
\ix{Zariski!Main Theorem}\ix{Bob@de Bobadilla}\ix{monodromy}
\ix{Ramanujam}\ix{equisingularity}\ix{polar}\ix{singularities!series}\ix{singularities!composed}

However, as an independent strategy,
the germ $g$ might also serve as an auxiliary object
 to determine abstract {\it $g$-independent invariants} of $f$.
For example, this book contains the  description of   $\partial F$ in terms of a
pair $(f,g)$.

In all the above  methods the key ingredient is the fiber structure of the ICIS $(f,g)$.

\vspace{2mm}

First,  we provide some basic definitions and properties of isolated complete intersection singularities.
Although, they are defined generally in the context of germs
 $(\bfc^n,0)\to (\bfc^k,0)$, we will keep
our specific dimensions $n=3$ and $k=2$; in this way we also fix the basic notations we will need.

For more details regarding this section see the book of Looijenga \cite{Lj}.
\ix{Looijenga}

\bekezdes
 Consider an analytic germ $\Phi=(f,g):(\bfc^3,0)\to (\bfc^2,0)$.
The map $\Phi$ defines an ICIS if the following property holds:  if
 $I\subset\co_{\bfc^3,0}$ denotes  the ideal generated by $f,g$ and
the $2\times 2$ minors of the Jacobian matrix $(d\Phi)$ in the local algebra $\co_{\bfc^3,0}$ of
convergent analytic germs $(\bfc^3,0)\to (\bfc,0)$, then
 $\dim \co_{\bfc^3,0}/I<\infty$. In other words,
the scheme-theoretical intersection $\Phi^{-1}(0)=
\{ f=0\}\cap\{ g=0\}$ has only an isolated singularity at the origin.
In particular,   $\ (Sing(V_f))\cap V_g= \{ 0\}$ and
 $ V_f$ intersects $V_g$ in the complement of the
origin transversally in a smooth punctured curve.

\begin{example}\label{ex:ICIS}
Let $f:(\bfc^3,0)\to (\bfc,0)$ be an analytic germ with 1--dimensional critical locus.
Then any generic {\it linear} form  $g:(\bfc^3,0)\to (\bfc,0)$ has the property that
the pair $\Phi=(f,g)$ forms an ICIS.

In particular, any such $f$ can be completed to an ICIS $\Phi=(f,g)$.
\end{example}

 The {\em critical locus} $(C_\Phi,0)$  of $\Phi$ is the set of points of
$(\bfc^3,0)$, where $\Phi$ is not a local
submersion. Its image
$(\Delta_{\Phi},0):=\Phi(C_\Phi,0)\subset(\bfc^2,0)$ is called the
{\em discriminant locus}  of $\Phi$.
\ix{ICIS!critical locus|textbf} \ix{ICIS!discriminant|textbf}

\begin{lemma}\label{icisfib} \cite[2.B]{Lj} \
 Fix a germ  $\Phi$ as above.
 Then  there exist a sufficiently small closed ball
$B_\epsilon\subset\bfc^3$  of radius $\epsilon$, and a bidisc  $D^2_\eta\subset\bfc^2$ with radius
 $0<\eta\ll\epsilon$ such that:

\begin{enumerate}
\item   the set $(\Phi^{-1}(0)\setminus\{0\})\cap B_\epsilon$
is non-singular;\vspace{2mm}

\item  $\partial B_{\epsilon'}$ intersects $\Phi^{-1}(0)$
      transversally for all ~$0<\epsilon'\leq\epsilon$;\vspace{2mm}

\item $C_\Phi\cap\Phi^{-1}(D^2_\eta)\cap\partial B_\epsilon=\emptyset$;
and   the restriction of $\Phi$ to
      $\Phi^{-1}(D^2_\eta)\cap\partial B_\epsilon$ is a submersion.
\end{enumerate}\end{lemma}
\begin{definition}\label{goodrepr}
The map $\Phi:\,\Phi^{-1}(D^2_\eta)\cap B_\epsilon\rightarrow
D^2_\eta$ with the above properties is called a {\it `good' representative} of the
ICIS $\Phi$.  In the sequel, in the presence of such a good representative,
$\Sigma_\Phi$ will denote the intersection of the critical locus with $\Phi^{-1}(D^2_\eta)$ and
  $\Delta_\Phi$  its image in $D^2_\eta$.
\end{definition}
\ix{ICIS!good representative|textbf}

Also, we prefer to denote  the local coordinates of $(\bfc^2,0)$  by $(c,d)$.

\vspace{2mm}

With these notations one has the following fibration theorem,
cf. 2.8 in \cite{Lj}:

\begin{theorem}\label{fifibration} \

(i)\
$\Phi:B_\epsilon\cap\Phi^{-1}(D^2_\eta)\longrightarrow D^2_\eta$
is proper. The analytic sets
 $\Sigma_\Phi$ and $\Delta_\Phi$ are 1--dimensional,
and the restriction $\Phi|_{\Sigma_\Phi}:\,\Sigma_\Phi\rightarrow\Delta_\Phi$
is proper with finite fibers.

(ii)\  $\Phi:
(\Phi^{-1}(D^2_\eta-\Delta_\Phi)\cap B_\epsilon,
\Phi^{-1}(D^2_\eta-\Delta_\Phi)\cap \partial B_\epsilon)
\rightarrow D^2_\eta-\Delta_\Phi$ is a smooth
locally trivial fibration of a pair of spaces.
\end{theorem}
\ix{ICIS!Milnor fibration|textbf} \ix{ICIS!monodromy representation|textbf}
\ix{monodromy!representtaion of ICIS|textbf}

\begin{definition}\label{milnorfibre}
A fiber $\fcd=\Phi^{-1}(c,d)\cap B_\epsilon$,
 for $(c,d)\in D^2_\eta-\Delta_\Phi$,  is
called a {\it Milnor fiber of $\Phi$},  while the fibration itself
is referred to as {\it the Milnor fibration of $\Phi$}. The fiber sometimes is denoted by
$F_\Phi$ too.

For any fixed base point $b_0=(c_0,d_0)\subset D^2_\eta-\Delta_\Phi$,
one has the natural {\bf geometric monodromy representation}:
$$m_{geom,\Phi}:\, \pi_1(D^2_\eta-\Delta_\Phi, b_0)\ \longrightarrow\
\mbox{\it Diff}\,^\infty (F_{b_0})/isotopy.$$
It induces the  {\bf algebraic  monodromy representation}
$$M_{\Phi}:\, \pi_1(D^2_\eta-\Delta_\Phi, b_0)\ \to
 Aut H_*(F_{b_0},{\bf Z}).$$
\end{definition}

\begin{proposition}\labelpar{prop:ICIScon}\cite{Lj} \
The Milnor fiber $\fcd$ of ~$\Phi$~ is connected.
\end{proposition}
\bekezdes\label{bek:dj}
If $f$ has a 1-dimensional singular locus, then the singular locus $\Sigma=\Sigma_f$
 is a subset of $\Sigma_\Phi$ and
$\Phi(\Sigma)=\{c=0\}$ is an irreducible component of the discriminant
$\Delta_\Phi$. By convention, we denote this component by $\Delta_1$.
Then $\Phi^{-1}(\Delta_1)\cap \Sigma_\Phi$ is exactly $\Sigma$.
Recall that the irreducible components of $\Sigma$ are denoted by $\{\Sigma_j\}_{j=1}^s$.
Part (i) of Theorem \ref{fifibration}  guarantees that
the restriction $\Phi: \Sigma_{j}\rightarrow\Delta_1$ is a branched covering
for any $1\leq j\leq s$. Let $d_{j}$ denote its degree.
Note that this agrees with the
degree of the restriction of the map $g$ to $\Sigma_j$.

In general,  it is extremely difficult to determine either the geometric monodromy
$m_{geom,\Phi}$, or the algebraic monodromy representation
$M_{\Phi}$, hence it is hard to recover information about the
global Milnor fibration. This is  mainly due to the fact that the fundamental group
$\pi_1 (D^2_\eta-\Delta_\Phi, b_0)=
\pi_1(\partial D^2_\eta\setminus \Delta_\Phi, b_0) $
is non--abelian, in general. Nevertheless, the fundamental group of  a small tubular neighbourhood of
$\partial D^2_\eta\cap \Delta_1$ in  $\partial D^2_\eta$ is abelian, hence the fiber structure above it can be
understood more easily.  The  representation restricted to the fundamental group of this  tubular neighbourhood
still contains key information about the geometry of the fibration `near $\Delta_1$', hence about the
singular locus of $f$.

The next definition targets this restriction of the  representation.

For any fixed $c_0$, set $D_{c_0}:=\{c=c_0\}\cap D^2_\eta$. Then,
if $|c_0|\ll \eta$, the circle $\partial D_{c_0}$ is disjoint from
$\Delta_\Phi$. Consider the torus $T_\delta:=\cup_{c_0} \partial
D_{c_0}$, where the union is over $c_0$ with $|c_0|=\delta>0$.
Hence, for $0<\delta\ll\eta$, the restriction of $\Phi$ on
$\Phi^{-1}(T_\delta)$ is a fiber bundle with fiber $F_\Phi$.

\begin{definition}\label{hvmonodromy}
The monodromy above a circle in $T_\delta$, consisting of points with fixed $d$--coordinates,
 is called the \textit{horizontal monodromy} of $\Phi$
near $\Delta_1$, and it is denoted by $m_{\Phi,hor}$. Similarly,  the
monodromy above a circle in $T_\delta$, consisting of points with fixed  $c$--coordinates,
(e.g., above $\partial D_{\delta}$) is the \textit{vertical
monodromy} of $\Phi$ near $\Delta_1$; it is  denoted by $m_{\Phi,ver}$.
\end{definition}
They are defined up to an isotopy, and they commute up to an isotopy.
\ix{ICIS!horizontal monodromy|textbf}\ix{ICIS!vertical monodromy|textbf}
\ix{monodromy!horizontal!of an ICIS|textbf}\ix{monodromy!vertical!of an ICIS|textbf}

\begin{remark}\label{HORVER}
Usually, in our figures, in  $D^2_\eta$ we take the $c$--coordinate as the horizontal, while the
$d$--coordinate  as the vertical axis. Hence, the circle in $T_\delta$ with $d$ constant is a
`horizontal' circle, while $c$  a  circle with $c$ constant is  `vertical'.
\end{remark}

\begin{remark}\labelpar{rem:phif}
Set $(V_g,0):=g^{-1}(0) $ in $(\bfc^3,0)$.
Clearly, $F_{\epsilon,\delta}$ and $\Phi^{-1}(D_\delta)$ can be identified, where the second space is considered in
$B_\epsilon\cap \Phi^{-1}(D^2_\eta)$, and its `corners' are smoothed.
Under this identification, $F_{\epsilon,\delta}\cap V_g$ corresponds to
$\Phi^{-1}(\delta,0)$. Hence $\partial F_{\epsilon,\delta}$ and $\partial \Phi^{-1}(D_\delta)$ can also be identified
in such a way that $\partial F_{\epsilon,\delta}\cap V_g$ corresponds to
$\partial\Phi^{-1}(\delta,0)$. Notice that
$\partial \Phi^{-1}(D_\delta)$ consists of two parts, one of them being
$\Phi^{-1}(\partial D_\delta)$, the other the complement of the  interior of
$\Phi^{-1}(\partial D_\delta)$ defined as
$$\partial' \Phi^{-1}(D_\delta):=\cup_{(\delta,d)\in D_{\delta}}\partial
(\Phi^{-1}(\delta,d)).$$
By   triviality over $D_\delta$ of the family $\cup_{(\delta,d)\in D_{\delta}}\partial
(\Phi^{-1}(\delta,d))$, the part
 $\partial' \Phi^{-1}(D_\delta)$ is diffeomorphic to the product
$D_\delta \times \partial \Phi^{-1}(\delta,0)$.
\end{remark}

\section{\ The Milnor open book decompositions of  $\partial F$}
\labelpar{be:fibr}\setcounter{equation}{0}
Besides the geometry of the ICIS, the germ $g$  provides a
different package as well. These are  invariants determined by
the \emph{generalized Milnor fibration}, or open book decomposition, induced by $\arg(g)=g/|g|$
on $\partial F$.

Before we describe it, we recall an  immediate natural generalization of
Milnor's result \ref{th:ISO}\,(b) valid for isolated complete intersections, which was established by Hamm.
Although, again, the result is valid for any map $(\bfc^n,0)\to (\bfc^k,0)$; we state it only for
$n=3$ and $k=2$.\ix{Hamm}
\begin{theorem}\label{th:Hamm}\cite{Hamm}
Assume that $\Phi=(f,g):(\bfc^3,0)\to (\bfc^2,0)$ is an ICIS and $f$ has an isolated singularity at the origin.
Let $K_f$ be the link of $f$ in a sufficiently small sphere $S_\epsilon$. Then $g$ defines an open book decomposition
in $K_f$ with binding $K_f\cap V_g$ and fibration $g/|g|:K_f\setminus V_g\to S^1$. The pages are diffeomorphic to the
fibers of $\Phi$.
\end{theorem}
\ix{Milnor!fibration}

At first glance it is not immediate what  the right generalization of Hamm's
result would be for the case when $f$ has 1--dimensional singular locus, since the link is singular.

The generalization was established by Caubel. The next results
are either proved or follow from the statements proved in \cite{CC}:

\begin{theorem}\labelpar{CC} \
\begin{enumerate}
\item The argument of the restriction of $g$ on $\partial
F_{\epsilon,\delta}\setminus V_g$  defines an open book
decomposition on $\partial F_{\epsilon, \delta}$ with binding $\partial F_{\epsilon, \delta}\cap V_g$,
and fibration
$g/|g|: \partial F_{\epsilon,\delta}\setminus V_g \to S^1$.

\vspace{2mm}

\item The fibration $g/|g|: \partial F_{\epsilon,\delta}\setminus V_g \to S^1$
is equivalent to the fibration $\Phi:\Phi^{-1}(\partial
D_{\delta})\to \partial D_\delta$ with monodromy $m_{\Phi,ver}$ ($0<\delta\ll\eta$).

\vspace{2mm}

\item Moreover, this structure is compatible with the action of the Milnor monodromy
on $\partial F_{\epsilon,\delta}$ in the following sense.  The restriction of the Milnor
monodromy of $\partial F_{\epsilon,\delta}$ on
a tubular neighbourhood of $\partial F_{\epsilon,\delta}\cap V_g$ is trivial, and its restriction on
$\partial F_{\epsilon,\delta}\setminus V_g$
is equivalent to the horizontal monodromy of
$\Phi^{-1}(\partial D_\delta)$ over the oriented circle $\{|c|=\delta\}$
(induced by the local trivial family
$\{\Phi^{-1}(\partial D_c)\}_{|c|=\delta}$).
\end{enumerate}\end{theorem}
\ix{Caubel}

\begin{proof}
The first part is stated and proved in Proposition 3.4 of
\cite{CC}. Although the second part is not stated in
[loc.cit.], it follows from the proof of Proposition 3.4. and by
similar arguments as the proof of Theorem 5.11  in Milnor's book \cite{MBook}.
The last monodromy statement can be  proved  the same way.
\end{proof}

\section{\ The decomposition of $\partial F$ revisited}\labelpar{felo}
\setcounter{equation}{0}
 Next, we present how one can
recover the decomposition $
\partial F_{\epsilon,\delta}=\partial_1F\cup \partial_2F$,
cf. \ref{ss:2.1}, from the structure of $\Phi$
via the identification of
$ F_{\epsilon,\delta}$ with $\Phi^{-1}(D_\delta)$, cf. \ref{rem:phif}.
\ix{Milnor!fiber!boundary}

For any $j\in\{1,\ldots, s\}$, let $T^\phi_j$ be a small closed
tubular neighbourhood in $\bfc^3$ of $\Phi^{-1}(\partial
D_0)\cap \Sigma_j$. Then, for $\delta$ sufficiently small, and for any
$(\delta,d)\in \partial D_\delta$, the fiber $\Phi^{-1}(\delta,d)$
intersects $\partial T^\phi_j$ transversally. In particular,
$\Phi^{-1}(\partial D_\delta)\cap T^\phi_j$ and
 $\Phi^{-1}(\partial D_\delta)\setminus (\cup_j T^\phi_j)$
are fiber bundles over $\partial D_\delta$.

\begin{proposition}\labelpar{felbo} One has the following facts:
\begin{enumerate}
\item
There is an orientation preserving homeomorphism
$$\partial F_{\epsilon,\delta}\longrightarrow  \Phi^{-1}(\partial D_\delta)\cup \partial' \Phi^{-1}(D_\delta)$$
which sends a tubular neighbourhood $T(V_g)$ of $V_g$  onto
$ \partial' \Phi^{-1}(D_\delta)$ and $\partial F_{\epsilon,\delta}
\setminus T^\circ (V_g)$ onto $\Phi^{-1}(\partial D_\delta)$
(identifying even their fiber structures, cf. \ref{CC}). Under this
identification, $T(V_g)\subset \partial_1F_{\epsilon,\delta}$,
and the fibration $$g/|g|:\partial_1F_{\epsilon,\delta}\setminus
T^\circ(V_g)\to S^1 $$  corresponds to
 $$\Phi:\Phi^{-1}(\partial D_\delta)\setminus (\cup_j T^{\phi,\circ}_j)\to \partial D_\delta,$$
while $$g/|g|:\partial_{2,j}F_{\epsilon,\delta}\to S^1 \ \ \ \ \mbox{to} \ \ \ \
\Phi:\Phi^{-1}(\partial D_\delta)\cap T^\phi_j\to \partial D_\delta.$$

The identifications are compatible with the action of the Milnor/horizontal
monodromies (over the circle $|c|=\delta$).\vspace{2mm}

\item For each $j\in\{1,\ldots, s\}$,
the fibration $g/|g|:\partial_{2,j}F_{\epsilon,\delta}\to S^1$
can be identified with the pullback of the
fibration $\partial_{2,j}F_{\epsilon,\delta}\to L_j$
( cf. \ref{prop:sier}) under the map
$\arg(g|L_j):L_j\to S^1$, which is a regular cyclic covering of $S^1$
of degree $d_j$. Therefore, the fiber of
$g/|g|:\partial_{2,j}F_{\epsilon,\delta}\to S^1$
is a disjoint union of $d_j$ copies of $F'_j$ and the monodromy of this fibration is
$$m^\Phi_{j,ver}
(x_1,\ldots, x_{d_j})= (m'_{j,ver}(x_{d_j}), x_1,\ldots, x_{d_j-1}).$$
The action of the Milnor monodromy on $\partial_{2,j}F_{\epsilon,\delta}$
restricted to the fiber of $g/|g|$ is the `diagonal' action:
$$m^\Phi_{j,hor}
(x_1,\ldots, x_{d_j})= (m'_{j,hor}(x_1),\ldots, m'_{j,hor}(x_{d_j})).$$
\end{enumerate}
\end{proposition}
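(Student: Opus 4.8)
The plan is to build the asserted orientation-preserving homeomorphism step by step, starting from the identification in Remark \ref{rem:phif} of $F_{\epsilon,\delta}$ with $\Phi^{-1}(D_\delta)$ and of $\partial F_{\epsilon,\delta}$ with $\Phi^{-1}(\partial D_\delta)\cup\partial'\Phi^{-1}(D_\delta)$, under which $\partial F_{\epsilon,\delta}\cap V_g$ goes to $\partial\Phi^{-1}(\delta,0)$. First I would pin down that a tubular neighbourhood $T(V_g)$ of $V_g$ in $\partial F_{\epsilon,\delta}$ corresponds precisely to $\partial'\Phi^{-1}(D_\delta)=\cup_{(\delta,d)\in D_\delta}\partial(\Phi^{-1}(\delta,d))$: indeed $\partial'\Phi^{-1}(D_\delta)$ is, by the local triviality over $D_\delta$ noted in \ref{rem:phif}, diffeomorphic to $D_\delta\times\partial\Phi^{-1}(\delta,0)$, i.e.\ to a trivial $D^2$-bundle over $\partial F_{\epsilon,\delta}\cap V_g$, which is exactly what a tubular neighbourhood of $V_g$ must look like (uniqueness of tubular neighbourhoods); its complement is then $\Phi^{-1}(\partial D_\delta)$. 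That $T(V_g)\subset\partial_1F_{\epsilon,\delta}$ follows because $V_g$ meets the singular link $L=\cup_jL_j$ only at finitely many points (the pair $(f,g)$ is an ICIS, so $V_g$ is transverse to $\Sigma$ away from $0$), so a sufficiently thin $T(V_g)$ stays inside $\partial_1F$, which is the part of $\partial F$ lying outside the tubes $T^\circ(L_j)$.

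For the fibration statements in part (1), the fibration $g/|g|$ on $\partial F_{\epsilon,\delta}\setminus T^\circ(V_g)$ matches $\Phi\colon\Phi^{-1}(\partial D_\delta)\to\partial D_\delta$ by Proposition \ref{CC} (the argument of $g$ is, up to equivalence, the restriction of $\Phi$ to $\Phi^{-1}(\partial D_\delta)$, with vertical monodromy $m_{\Phi,ver}$). It remains to split this fibration along the decomposition $\partial_1F\cup\partial_2F$. Here I would use that $\partial_{2,j}F$ is, by definition, $\partial F\cap T(L_j)$ and that under the identification with $\Phi^{-1}(D_\delta)$ the tube $T(L_j)$ corresponds to a neighbourhood of $\Phi^{-1}(\partial D_0)\cap\Sigma_j$; refining the choice of tubes to the $T^\phi_j$ introduced just before the Proposition (so that every fibre $\Phi^{-1}(\delta,d)$ over $\partial D_\delta$ meets $\partial T^\phi_j$ transversally), one gets that $\partial_{2,j}F_{\epsilon,\delta}$ goes to $\Phi^{-1}(\partial D_\delta)\cap T^\phi_j$ and $\partial_1F_{\epsilon,\delta}\setminus T^\circ(V_g)$ to $\Phi^{-1}(\partial D_\delta)\setminus\cup_jT^{\phi,\circ}_j$, both as fibre bundles over $\partial D_\delta$. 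Compatibility with the Milnor/horizontal monodromy over $|c|=\delta$ is then inherited from the last assertion of Proposition \ref{CC}, which already identifies the Milnor monodromy on $\partial F\setminus V_g$ with the horizontal monodromy of the family $\{\Phi^{-1}(\partial D_c)\}_{|c|=\delta}$.

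For part (2), I would combine Proposition \ref{prop:sier}(1) --- $\partial_{2,j}F$ fibres over $L_j$ with fibre $F'_j$ and monodromy $m'_{j,ver}$ --- with the observation that $g/|g|$ on $\partial_{2,j}F$ is the composition of that projection $\partial_{2,j}F\to L_j$ with $\arg(g|L_j)\colon L_j\to S^1$. Since $L_j=V_f\cap\Sigma_j\cap S^5_\epsilon$ is a circle and $g|\Sigma_j\colon\Sigma_j\to\Delta_1$ has degree $d_j$, the map $\arg(g|L_j)$ is the connected $d_j$-fold cover $S^1\to S^1$; hence the fibration $g/|g|\colon\partial_{2,j}F\to S^1$ is the pullback of $\partial_{2,j}F\to L_j$ along $S^1\xrightarrow{d_j}S^1$, whose fibre over a point of the base $S^1$ is the disjoint union of the $d_j$ fibres of $\partial_{2,j}F\to L_j$ lying over the $d_j$-element preimage in $L_j$, i.e.\ $d_j$ copies of $F'_j$. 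Tracking one full loop downstairs produces $d_j$ loops, but reaching the starting sheet requires traversing $L_j$ once, which applies $m'_{j,ver}$ to the last coordinate and cyclically permutes; this gives the stated formula for $m^\Phi_{j,ver}$. For the Milnor monodromy on $\partial_{2,j}F$: by Proposition \ref{prop:sier}(3) it is induced by $m'_{j,hor}$ acting on each $F'_j$, and since $m'_{j,hor}$ commutes with $m'_{j,ver}$ (\ref{ss:2.0b}) it descends coordinate-by-coordinate on the pulled-back fibre, which is the diagonal formula for $m^\Phi_{j,hor}$.

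The main obstacle I expect is bookkeeping rather than any deep point: one must choose the tubular neighbourhoods $T(V_g)$, $T(L_j)$ and the ICIS-adapted tubes $T^\phi_j$ compatibly and in the right order (first $\epsilon$, then $\eta$, then $\delta$, and the tube radii last), and check that all the identifications --- the one of \ref{rem:phif}, the fibration equivalence of \ref{CC}, and Siersma's decomposition in \ref{ss:2.1} --- can be made to agree simultaneously and orientation-preservingly on the overlaps (the gluing tori). Getting the cyclic-versus-diagonal distinction in the two monodromy formulas right is exactly where the degree-$d_j$ covering $\arg(g|L_j)$ enters, so the one genuinely content-bearing verification is that $\deg(\arg(g|L_j))=\deg(g|\Sigma_j:\Sigma_j\to\Delta_1)=d_j$, which follows since $L_j$ is the link of the curve $\Sigma_j$ and $\Delta_1=\{c=0\}$ so $\arg(g|L_j)=\arg(d\circ\Phi|L_j)$ has winding number equal to the intersection multiplicity realizing $d_j$.
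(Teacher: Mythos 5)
Your proposal is correct and follows essentially the same route as the paper, whose own proof is only a two-sentence sketch: identify $\partial F_{\epsilon,\delta}$ with $\Phi^{-1}(\partial D_\delta)\cup\partial'\Phi^{-1}(D_\delta)$ via (\ref{rem:phif}) and (\ref{CC}), pushing out by an isotopy chosen to respect the tubes around each $\Sigma_j$, and then read off part (2) from the fibre structure of $\Phi$ over $L_j$ as a degree-$d_j$ cover of $S^1$. Your expansion (uniqueness of tubular neighbourhoods for $T(V_g)\leftrightarrow\partial'\Phi^{-1}(D_\delta)$, and the pullback description giving the cyclic versus diagonal monodromy formulas) is exactly the bookkeeping the paper leaves implicit.
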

\begin{proof}
The first part follows by combining arguments of \cite{MBook} and \cite{CC}, as in
the second part of
\ref{CC}. The point is that when we `push out' $\Phi^{-1}(\partial D_\delta)$
along the level sets of $\arg(g)$ into $\partial F_{\epsilon,\delta}$, this
can be done by a vector field which preserves a tubular neighbourhood of each $\Sigma_j$.
The second part is standard, it reflects the fiber structure of $\Phi$,
see e.g. \cite{NPhg} or \cite{Si2}.
\end{proof}
\ix{monodromy!vertical}\ix{monodromy!horizontal}

Above, by Theorem \ref{CC} and by the structure of open books, the fibrations
$\partial F_{\epsilon,\delta}\setminus V_g$ and
$\partial F_{\epsilon,\delta}\setminus T^\circ(V_g)$ over $S^1$, both induced by
$g/|g|$,  are equivalent.  A similar fact is true for the fibrations
$\partial_1 F_{\epsilon,\delta}\setminus V_g$ and
$\partial_1 F_{\epsilon,\delta}\setminus T^\circ(V_g)$.

\section{\ Relation with the normalization of the zero locus of $f$}\labelpar{d1}\setcounter{equation}{0}
The space $\partial _1F$ and the fibration $g/|g|:\partial_1F_{\epsilon,\delta}\setminus
V_g\to S^1 $ have another `incarnation' as well.
\ix{normalization of $V_f$}

In order to see this, let $n:(V_f^{norm},n^{-1}(0))\to (V_f,0)$ be
the normalization of $(V_f,0)$.  For the definition and general properties of the normalization of
2--dimensional analytic spaces, see the book of Laufer \cite{La} or the monograph of
L. Kaup and B. Kaup \cite{KK} . Note that
each local irreducible
component of  $(V_f,0)$ lifts to a connected component of the
normalization, hence $(V_f^{norm},n^{-1}(0))$ stands  here for a
multi--germ of normal surface singularities.
Moreover, any normal surface singularity has at most an isolated singularity, but usually this germ
is not a hypersurface germ, its embedded dimension can be arbitrarily large.

If $(X,0)$ is  an irreducible normal surface singularity, represented in some affine space, say
$(X,0)\subset (\bfc^N,0)$, then similarly as for hypersurface singularities one defines its link
 $K_X$ as $X\cap S_\epsilon\subset S_\epsilon\subset \bfc^N$,
 for $\epsilon$ sufficiently small \cite{La,Lj}. Moreover,
$K_X$  is connected  (see, for example, \cite[4.1]{La}).

 Furthermore,  if  $g:(X,0)\to (\bfc,0)$ is an
analytic non--constant germ on $(X,0)$, then similarly as in the cases of Milnor \ref{th:ISO}(b)
 and Hamm \ref{th:Hamm} one gets an open book decomposition of $K_X$ with binding
 $K_X\cap V_g$ and projection $g/|g|:K_X\setminus V_g\to S^1$ \cite{Hamm,LeMon,CSS}.

\vspace{2mm}

Let us return  to our situation.
We denote the link of the multi-germ $(V_f^{norm},n^{-1}(0))$ by $K^{norm}$. It is the
disjoint union of all the links of the components of $(V_f^{norm},n^{-1}(0))$.
Consider as well the lifting
$g\circ n:(V_f^{norm},n^{-1}(0))\to (\bfc,0)$ of $g$, which
determines  an open book decomposition on $K^{norm}$ with binding  $K^{norm}\cap V_{g\circ n}$
and Milnor fibration $$\arg(g\circ n):K^{norm}\setminus V_{g\circ n}\to S^1.$$
Furthermore,  for any $j\in\{1,\ldots,s\}$
let us denote by $St(\Sigma_j)\subset V_f^{norm}$ the strict inverse image of $\Sigma_j$,
that is the closure of $n^{-1}(\Sigma_j\setminus 0)$.
Set $St(\Sigma):=\cup_j St(\Sigma_j)$.
Then,  $K^{norm}\cap V_{g\circ n}\cap St(\Sigma)
=\emptyset$, and
$$\arg(g\circ n):(K^{norm}\setminus V_{g\circ n}, St(\Sigma))\to
S^1$$
is a locally trivial fibration of a pair of spaces.
\ix{link!of a surface singularity!of normalization of $V_f$}

Usually $St(\Sigma_j)$ is {\it not} irreducible. An  upper bound for the number of its irreducible
components is the number of components of $\partial F'_j$, or equivalently, the number of irreducible
branches $\#T\Sigma_j$ of the local transversal type $T\Sigma_j$. Nevertheless,  $|St(\Sigma_j)|$  can
sometimes be strictly smaller, see the discussion and examples of  sections \ref{2edg}
or \ref{1es2}. Compare also with \ref{gl}.

\begin{proposition}\labelpar{d2}
Let $T_{St(\Sigma)}$ be a small closed tubular neighbourhood
of $St(\Sigma)$ in $\partial V_f^{norm}$. Then the following facts hold:

\vspace{2mm}

(a) $\partial _1F$ is orientation preserving diffeomorphic to $ K^{norm}\setminus
T^\circ _{St(\Sigma)}$. In particular, the number of connected components of
$\partial_1F$ is the number of irreducible components of $f$.

\vspace{2mm}

(b)
The fibrations of the  pairs of
spaces
$$\arg(g\circ n):(K^{norm}\setminus
(V_{g\circ n}\cup T^\circ _{St(\Sigma)}), \partial T_{St(\Sigma)})\to
S^1$$
and
$$\arg(g):(\partial_1F_{\epsilon,\delta}\setminus
V_g, \partial(\partial_1F_{\epsilon,\delta})) \to S^1 $$
are equivalent.

In particular, for any $j$, the number of tori along which the
connected space $\partial_{2,j}F$ is glued to
$\partial_1F$ agrees with the number of irreducible components of $St(\Sigma_j)$.
\end{proposition}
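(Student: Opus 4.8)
The plan is to reduce the statement to the identification $\partial_1F=K_1$ of Remark~\ref{rem:link} and then to transport the whole picture through the normalization map $n$. Note first that $K_1=K\setminus\cup_j T^\circ(L_j)$ is disjoint from $L=\Sigma\cap S^5_\epsilon$ (since $K_1\cap\Sigma\subseteq S^5_\epsilon\cap\Sigma=L\subseteq\cup_j T^\circ(L_j)$), so $K_1\subseteq V_f\setminus\Sigma=V_f\setminus Sing(V_f)$, where $n$ is a biholomorphism. Using on $V_f^{norm}$ the rug function $\rho:=|n|^2$ (the pullback of the squared Euclidean distance; $n$ is finite, so $\rho$ is proper), one has $\partial V_f^{norm}=\rho^{-1}(\epsilon)=n^{-1}(K)$ for small $\epsilon$, and $n$ restricts to an orientation-preserving diffeomorphism of $\partial V_f^{norm}\setminus n^{-1}(L)$ onto $K\setminus L$. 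Moreover $\partial V_f^{norm}\cap St(\Sigma)=n^{-1}(L)$, since passing from $n^{-1}(\Sigma\setminus\{0\})$ to its closure $St(\Sigma)$ only affects the fibre over $0\notin S^5_\epsilon$.

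I would then show that $T':=n^{-1}\big(\cup_j(K\cap T(L_j))\big)$ is a closed tubular neighbourhood of $n^{-1}(L)=St(\Sigma)\cap\partial V_f^{norm}$ in the smooth $3$--manifold $\partial V_f^{norm}$. This is the one local point: by equisingularity of $V_f$ along the $1$--dimensional stratum $\Sigma_j\setminus\{0\}$, near $L_j$ the pair $(K\cap T(L_j),L_j)$ fibres over $L_j$ with fibre the cone on the link of the transversal curve $T\Sigma_j$ and cone-point locus $L_j$, and on fibres $n$ restricts to the normalization of that cone, a disjoint union of $2$--discs indexed by the branches of $T\Sigma_j$; reassembled over $L_j$ with the permutation of branches induced by $m'_{j,ver}$ this exhibits $n^{-1}(K\cap T(L_j))$ as a $D^2$--bundle over $n^{-1}(L_j)$. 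By the isotopy uniqueness of tubular neighbourhoods, $\partial V_f^{norm}\setminus(T')^\circ$ is orientation-preservingly diffeomorphic to $\partial V_f^{norm}\setminus T^\circ_{St(\Sigma)}$ for the prescribed $T_{St(\Sigma)}$. Since $n^{-1}(K_1)=n^{-1}(K)\setminus n^{-1}\big(\cup_j T^\circ(L_j)\big)=\partial V_f^{norm}\setminus(T')^\circ$ and $n$ is an orientation-preserving diffeomorphism there onto $K_1=\partial_1F$, part (a) follows. For the count of components: $\partial V_f^{norm}$ has one connected component for each irreducible factor $f_i$ of $f$ (each $V_{f_i}$ being irreducible, hence with connected normalization and connected link), and deleting an open tubular neighbourhood of the $1$--dimensional $St(\Sigma)$ leaves each component connected.

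For part (b) the key observation is that $g\circ n$ is literally the pullback of $g$: writing $\psi$ for the inverse of the diffeomorphism $n:\partial V_f^{norm}\setminus(T')^\circ\to K_1$, one has $(g\circ n)\circ\psi=g|_{K_1}$, so $\psi$ identifies, as fibrations of pairs, $\arg(g):(\partial_1F\setminus V_g,\partial(\partial_1F))\to S^1$ with $\arg(g\circ n):(\partial V_f^{norm}\setminus(V_{g\circ n}\cup(T')^\circ),\partial T')\to S^1$; here $\psi$ carries the binding $V_g\cap\partial_1F$ to $V_{g\circ n}=n^{-1}(V_g)$ and $\partial(\partial_1F)$ to $\partial T'$. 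It remains only to see that $T'$ avoids $V_{g\circ n}$, i.e. that $g$ is nowhere zero on a small $\cup_j(K\cap T(L_j))$: since $(f,g)$ is an ICIS with $d_j<\infty$, $\Sigma_j\cap V_g=\{0\}$, so the link of the ICIS curve $\Phi^{-1}(0)=V_f\cap V_g$ is disjoint from $L_j$, whence $g\ne 0$ on $K\cap T(L_j)$ once $T(L_j)$ is small. (This is also why $\partial V_f^{norm}\cap V_{g\circ n}\cap St(\Sigma)=\emptyset$, and it allows the prescribed small $T_{St(\Sigma)}$ to be chosen disjoint from $V_{g\circ n}$, the comparing isotopy of tubular neighbourhoods then being supported away from $V_{g\circ n}$.)

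The step I expect to be the main obstacle is exactly the local one in part (a): proving that $n^{-1}\big(\cup_j(K\cap T(L_j))\big)$ is a genuine tubular (not merely regular) neighbourhood of $St(\Sigma)\cap\partial V_f^{norm}$. This rests on the equisingularity and equinormalizability of $V_f$ along each stratum $\Sigma_j\setminus\{0\}$ --- so that $V_f$ is transversally a product and its normalization a product up to a monodromy twist --- and on tracking how $m'_{j,ver}$ glues the transversal branches around $L_j$. Everything else is bookkeeping with Remark~\ref{rem:link}, the identity $V_{g\circ n}=n^{-1}(V_g)$, and isotopy uniqueness of tubular neighbourhoods.
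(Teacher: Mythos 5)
Your proof is correct and follows the same route as the paper, whose entire argument is the one-line observation that the normalization map is an isomorphism above the regular part of $V_f$ (combined implicitly with the identification $\partial_1F=K_1$ of Remark~(\ref{rem:link})). You have simply filled in the details — the containment of $K_1$ in the regular locus, the tubular-neighbourhood verification, and the compatibility of $g\circ n$ with $g$ — that the authors leave to the reader.
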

\begin{proof} The normalization map is an isomorphism above the regular
part of $V_f$.
\end{proof}

The above facts show clearly that $\partial_1F$ is guided by the
link of the normalization, while $\partial_2F$ by the local behaviour
near  $\Sigma$.

\vspace{2mm}

\bekezdes By the results of the above subsections, the fiber $F_{g,\partial F}$
of the fibration  $\arg(g):\partial F\setminus V_g\to S^1$
provided by Theorem  \ref{CC} can be compared with the fiber $F_{g,K^{norm}}$ of the
fibration $\arg(g\circ n):K^{norm}\setminus V_{g\circ n}\to S^1$.

Indeed, by  \ref{felbo} and the above discussion one obtains that  the fiber
$F_{g,K^{norm}}$ intersects $St(\Sigma)$ in $N:=\sum_j
\#T\Sigma_j\cdot d_j$ points.

\begin{corollary}\label{cor:FF} \

\begin{enumerate}
\item The fiber $F_{g,\partial F}$ can be obtained as follows: take the fiber
$F_{g,K^{norm}}$ and the $N$ intersection points of it with $St(\Sigma)$, delete some small disc
neighbourhoods of these points, and then,  for each $j\in\{1,\ldots, s\}$,  glue to  the resulting
surface with boundary  $d_j$ copies of $F'_j$ along their boundaries.\vspace{2mm}

\item In particular, at the level of Euler characteristics, one has
$$\chi(F_{g,\partial F})=\chi(F_{g,K^{norm}})+\sum_jd_j(1-\mu'_j-\#T\Sigma_j)=
\chi(F_{g,K^{norm}})-2\cdot \sum_jd_j\delta_j'.$$

\item Assume that $\dim \Sigma_f=1$.
Then, for any germ $g$ such that $(f,g)$ is an ICIS,
 the Euler characteristics of the pages of the two
open book decompositions induced by the argument of $g$ on
$\partial F$ and $K^{norm}$ are not equal. More precisely, one has the strict inequality:
$$\chi(F_{g,\partial F})<\chi(F_{g,K^{norm}}).$$ \ix{Milnor!fiber!boundary}

\end{enumerate}
\end{corollary}
\begin{proof} (1) follows from the above discussions, (2) rewrites (1) at
the Euler characteristic level and uses the
Milnor identity (\ref{eq:MILNORDELTA}), while (3) follows from the fact that the Serre invariant is strictly
positive for a non-smooth plane curve singularity. \end{proof}
\ix{Serre--invariant}


\chapter{Plumbing graphs and oriented plumbed 3--manifolds}\labelpar{s:PLU}
\section{\ Oriented plumbed manifolds}\labelpar{ss:2.2}
The first  goal of the present work is to provide
a plumbing representation of the 3-manifold $\partial F$, where $F$ is the Milnor fiber
of a hypersurface singularity $f:(\bfc^3,0)\to (\bfc,0)$
with 1--dimensional singular locus. The construction will  be compatible with the
decomposition of $\partial F$ into $\partial_1F$ and $\partial_2F$, hence it also provides
plumbing
representations for these oriented 3--manifolds with boundary.\ix{Milnor!fiber!boundary}

Even more, for any  $g$ such that the pair $(f,g)$ forms an ICIS,  as in \ref{ss:ICIS}, we will also
provide a  plumbing representation of the pair $(\partial F,\partial F\cap V_g)$
and of the multiplicity system of the generalized Milnor
fibrations $\partial F\setminus V_g$, $\partial_1 F\setminus V_g$
and $\partial_2 F$ over $S^1$ induced by $g/|g|$.

In this Chapter we recall the necessary  definitions and relevant constructions.
Regarding   plumbed 3-manifolds and plumbing
calculus we follow Neumann's seminal article \cite{neumann}
with small modifications, which will be explained  below.
\ix{Neumann}

\bekezdes\label{bek:pl} {\bf The plumbing graph.} \
For any graph $\Gamma$, we denote the set of
vertices by $\cvg$ and the set of edges by $\evg$. If there is no danger of
confusion, we denote them simply by ${\mathcal V}$ and ${\mathcal
E}$.

In the case of plumbing graphs of {\it closed} 3-manifolds, any
vertex has two decorations, both integers: one of them is the  Euler
obstruction, or \textit{`Euler number'}, while
 the other one is the
\textit{`genus'}, written as  $[g_v]$ and omitted if it is zero.
Furthermore, the edges also have two possible decorations:
$+$ or $-$. In most of the cases we omit the decoration $+$,
nevertheless we prefer to emphasize the sign $-$ with the symbol
$\circleddash$.
\ix{plumbing|textbf}\ix{plumbing!graph|textbf}\ix{plumbing!graph!orientable|textbf}
\ix{graph decoration!Euler number|textbf}
\ix{graph decoration!genus|textbf}\ix{plumbing!representation|textbf}\ix{graph decoration!edge|textbf}

Although, for plumbing representations of links of normal surface
singularities we need only the sign $+$, and for such graphs the intersection
matrix associated with the graph is always negative definite, in
the present situation both restrictions  should be relieved.
Nevertheless, all our 3-manifolds are {\it oriented}, hence we will restrict
ourselves to \textit{`orientable plumbing graphs'}, cf.
\cite[(3.2)(i)]{neumann}. These are characterized by  $g_v\geq 0$ for any vertex $v$.
\ix{graph!negative definite}\ix{matrix!intersection}

\bekezdes\label{bek:PlCon} {\bf The plumbing construction.} \
Fix a {\it connected} plumbing graph $\G$.
\ix{plumbing!construction|textbf}

The oriented plumbed 3--manifold $M(\G)$ associated with $\G$
is  constructed using a set of
$S^1$--bundles $\{\pi_v:B_v\to S_v\}_{v\in {\mathcal V}}$, whose total space $B_v$ has a fixed
orientation. They are indexed by the set of
vertices $\calv$ of the plumbing graph, so  that the base--space
$S_v$ of $B_v$ is a closed orientable real surface of genus $g_v$,
and the Euler number of the bundle $B_v$ is the Euler number
decoration of the vertex $v$ on the graph. Then one glues these
bundles corresponding  to the edges of $\G$ as follows. First, one chooses an orientation of
$S_v$ and of the fibers compatible with the orientation of $B_v$. Then,
 for each edge adjacent to $v$ one fixes a point
$p\in S_v$, an orientation preserving  trivialization $D_p\times S^1\to \pi^{-1}_v(D_p)$
above a small closed disc $D_p\ni p$, and one deletes its
interior $D^\circ _p\times S^1$. Here, similarly as above,
 $S^1$ is the unit circle with its natural orientation.
Then,  any edge  $(v,w)$ of
$\G$ determines  $\partial D_p\times S^1$ in $B_v$ and
$\partial D_q \times S^1$ in $B_w$,  both diffeomorphic to
$S^1\times S^1$.  They are glued by an identification map $\epsilon \bigl(
\begin{smallmatrix}  0 & 1\\1&0
\end{smallmatrix} \bigr)$, where $\epsilon=\pm$ is the decoration
of the edge.

If we allow disconnected plumbing graphs, for example $\G$ is the
disjoint union of the plumbing graphs $\G_1$ and $\G_2$, written as  $\G=\G_1+\G_2$, then
by convention $M(\G)$ is the {\it oriented connected sum} $M(\G)=M(\G_1)\#M(\G_2)$.
Furthermore, sometimes it is convenient to allow the empty graph too. It corresponds to
$M(\emptyset)=S^3$. \ix{connected sum|textbf}

For more details, see \cite{neumann}, page 303.

\vspace{2mm}

In order to codify certain  additional geometric information
 (e.g., information on links in the 3--manifold, or boundary
components),  we will be working with
plumbing graphs that have  {\bf extra
decorations}, as made  precise in the next subsections.

\begin{bekezdes}\labelpar{links}
{\bf Oriented links in oriented closed plumbed 3--manifolds} are represented on the
graph by {\it arrowhead vertices}; the other  `usual' vertices will be
called {\it non--arrowheads}. Arrowhead vertices  have no `Euler
number' or `genus' decorations. Each arrowhead is connected by an
edge to some non--arrowhead vertex $v$, and this edge has a sign--decoration
$+$ or $\circleddash$, similarly to the edges connecting two non--arrowhead
 vertices (whose significance was explained in the preceding subsection).
 Any arrowhead supported by a non--arrowhead $v$ codifies a generic
$S^1$--fiber of $B_v$, while the sign--decoration of the supporting edge
determines an orientation on it. The correspondence is realized as follows.
For each non--arrowhead vertex $v$ choose an orientation of
$S_v$ and of the fibers as in the previous subsection. This is used in the gluing of the
bundles too, but it also identifies the orientation of link--components:
if an arrowhead is supported by a $+$--edge then the link component inherits the
fiber--orientation, otherwise the orientation is reversed.
The disjoint  union  of these oriented $S^1$--fibers, indexed by the set of  all arrowheads,
constitutes an oriented link $K$ in the oriented plumbed 3--manifold $M=M(\G)$.
\ix{graph decoration!arrowheads|textbf}\ix{link!of curve singularities|textbf}

There is an exception to the above description  when the
graph consists of a double arrow. In this case the 3--manifold is
$S^3$, and the arrows represent two Hopf link--components. If the sign--decoration of the
double--arrow is $+$ then both Hoph link--components inherit the orientation of the
oriented Hopf $S^1$--fibration, otherwise the orientation of one of them is reversed.
\ix{link!Hopf}

Usually we write $\cala$ for the set of arrowheads, and $\calw$
for the set of non--arrowheads, that is $\calv=\cala \sqcup \calw$.

\bekezdes\label{bek:MULT} {\bf Multiplicity systems.} \
Plumbing graphs with arrowhead vertices, in general, might carry
 an extra set of decorations: each arrowhead and non-arrowhead
vertex has an additional  \textit{`multiplicity weight'}, denoted
by $(m_v)$.
\ix{graph decoration!multiplicity system|textbf}\ix{multiplicity system|textbf}

The typical example of a graph with arrowheads and
multiplicity system is provided by an embedded resolution graph of
an analytic function defined on a normal surface singularity,
where the arrowheads correspond to the strict transforms of the zero set
of the analytic function, the non--arrowheads to irreducible
exceptional divisors, and the multiplicities are the vanishing
orders of the pull--back of the function along the exceptional
divisors and strict transforms;  see  \ref{ss:NSS}.

More generally, the set of multiplicities represent a relative
2--cycle in the corresponding oriented plumbed 4-manifold, which in the
homology group relative to the boundary  represents zero. The {\it oriented plumbed 4--manifold}
$P=P(\G)$ is constructed in a similar way as the plumbed
3--manifold $M=M(\G)$: one replaces the $S^1$--bundles with
the corresponding disc--bundles ${\mathcal D}_v$ and one
glues them by a similar procedure. Then $P$ is a 4--manifold with
boundary such that $\partial P=M$. Each vertex $v$ determines a
2--cycle $C_v$ in $P$: If $v$ is an arrowhead then $C_v$ is an
oriented generic disc--fiber of ${\mathcal D}_v$ --- hence it is a
relative cycle. If $v$ is a non--arrowhead vertex then $C_v$ is
the oriented `core' (i.e. the zero section) of ${\mathcal D}_v$, ---
hence this is an absolute cycle. Their simultaneous
orientations can be arranged compatibly with the graph.
\ix{plumbing!4-manifold $P(\Gamma)$|textbf}

In the present work we consider only those multiplicity systems
$\{m_v\}_{v\in {\mathcal V}}$,  which
satisfy a set of compatibility relations. These relations are equivalent to
the fact that the class of $C(\underline{m}):=\sum_{v\in
\calv}m_vC_v$ in $H_2(P,\partial P,\Z)$ is zero. This  can  be
rewritten as follows. Let $w$ be a fixed non--arrowhead vertex
with Euler number $e_w$. Let $\cale_w $ be the set of all adjacent
edges, excluding  loops supported by $w$. For each $e\in
\cale_w$, connecting $w$ to the vertex $v(e)$ (where $v(e)$ may be an arrowhead or not),
let $\epsilon_{e}\in \{+,-\}$ be its sign--decoration.
Then:
\begin{equation}\label{eq:2.2.1}
e_wm_w+\sum_{e\in\cale_w}\epsilon_{e} m_{v(e)}=0.
\end{equation}
\end{bekezdes}
Indeed, this follows from the fact that $H_2(P,\Z)$ is freely
generated by the absolute classes of $C_w$ ($w\in\calw$), hence
the intersection numbers $(C(\underline{m}),C_w)$ vanish for all
non-arrowhead vertices $w$ if and only if $[C(\underline{m})]=0$
in  $H_2(P,\partial P,\Z)$.

\bekezdes\label{bek:AI}{\bf The intersection matrix and the multiplicity system.} \
The combinatorics and a part of the decorations of the graph $\Gamma$ can be codified
into the {\it intersection matrix} of $\Gamma$. Furthermore, in the presence of
arrowheads, the position of the arrowheads
can be codified in an {\it incidence matrix}. Their definition is the following.

\begin{definition}\label{def:incidence}
The `intersection matrix' $A$ of $\G$ is the symmetric matrix of
size $|\calw|\times |\calw|$ whose entry $a_{wv}$ is the Euler
number of $w$ if $w=v$, while for $w\not=v$ it is $\sum_e
\epsilon_e$, where the sum is over all edges $e$ connecting $w$
and $v$, and $\epsilon_e\in\{+,-\}$ is the edge--decoration of
$e$.

The  graph $\G$ is called negative definite if the intersection matrix $A$ is negative
definite. \ix{graph!negative definite|textbf}

The `incidence matrix' $\inc$ of the arrows of
$\G$ is a matrix of size $|\calw|\times
|\cala|$. For any  $a\in\cala$ let $w_a$ be that non--arrowhead
vertex which supports the arrowhead  $a$. Then, for each
$a\in\cala$, the entry $(a,w_a)$ is 1, all the other entries are
zero.

The matrix $(A,\inc)$ of size $|\calw|\times (|\calw|+|\cala|)$
consists of the blocks $A$ and $\inc$.
\ix{matrix!intersection|textbf}\ix{matrix!incidence|textbf}
\end{definition}

The matrix of the  linear system of equations (\ref{eq:2.2.1}) in
variables $\{m_w\}_{w\in\calv}$ is the matrix $(A,\inc)$. Hence, if $A$ is non--degenerate then from
the position of the arrowheads, or from the incidence matrix, one recovers uniquely all the
multiplicities.

Note also that if $P$ is the plumbed 4--manifold associated with the  graph $\G$, cf. \ref{bek:MULT},
then $A$ can also be
interpreted as the intersection form on $H_2(P,\Z)$ associated with the basis $\{[C_w]\}_{w\in\calw}$.

\begin{bekezdes}\labelpar{Openbooks}{\bf The multiplicity system associated
with an open book decomposition.} Consider a pair $(M,K)$ as in \ref{links}.
$K$ is called a {\it fibred link} if it is the binding of an open book decomposition
of $M$.
\ix{link!of curve singularities!fibred|textbf}

The case of a fibred link $K$ in a 3--manifold
$M$ has a special interest in purely topological discussions too.  Links provided by singularity
theory are usually fibred. In such  cases the pair $(M,K)$ has a
plumbed representation provided by a plumbing graph (decorated with Euler
numbers and genera) and arrows (representing $K$).
Additionally,  $p:M\setminus K\to S^1$ is a locally trivial
fibration with a trivialization in a neighbourhood of $K$, cf. \ref{def:OBD}. In
particular,  $p$ sends any oriented meridian of any oriented component of
$K$ to the positive generator of $H_1(S^1,\bfz)$.

In such a situation,
we define a multiplicity system associated with the open book decomposition as follows.

First, we fix the
link--components as distinguished fibers of the corresponding building blocks
$B_w$. Then, for each non--arrowhead
vertex $w$, let $\gamma_w$ be an oriented generic $S^1$--fiber
of $B_w$, different from  any fixed fiber corresponding to
components of $K$. Here we use the same  orientation of the $S^1$--bundle
which was used in the plumbing construction. For $a\in\cala$ we define $\gamma_a$
as the oriented meridian of the corresponding oriented component of
$K$. For any loop $\gamma$ let $[\gamma]$ be its homology class.
\end{bekezdes}
\begin{definition}\labelpar{bek:mult}
The multiplicity system associated with the fibration $p$
is the collection of integers $m_v:=p_*([\gamma_w])\in H_1(S^1,\Z)=\Z$\,,
 $v\in\calv$. (Clearly, $m_a=1$ for  $a\in\cala$.)
\end{definition}
\ix{graph decoration!multiplicity system!of a fibration|textbf}

The fact that this is indeed a multiplicity system can be seen as
follows. Let $F$ be the oriented page of $p$ in $M$, with
$\partial F=K$. By a homotopy one can push $F\setminus K$ in the interior of
$P$ keeping $K=\partial F$ fixed. Then  its relative homology
class can  be represented by the
 relative cycle  $C(\underline{m})$.  On the other hand,
the corresponding  relative homology class is zero, since $F$
 sits in $\partial P$.

\vspace{2mm}

We wish to emphasize that if $M$ is a rational homology sphere, and $K$ is the
binding of an open book decomposition of $M$,  then the
open book decomposition can be recovered from the pair $(M,K)$ by a theorem of Stallings.
This means that  there is a {\it unique} open book decomposition for any fixed binding whenever
$H_1(M,\Q)=0$. The book of Eisenbud and Neumann in
\cite[page 34]{EN} also provides two different arguments for this fact, one of them
based on \cite{BL}, the other on \cite{W}.
\ix{Stallings} \ix{Eisenbud--Neumann book}

On the other hand, in general,
the information codified in the plumbing
data of the pair $(M,K)$ together with the multiplicity system
(that is  the graph with arrows decorated with Euler numbers, genera
and multiplicities), contains less information than the open book decomposition
itself;  see  \cite{cyclic} for different
examples, or \ref{link} here.

\bekezdes\label{bek:zeroarrowhead} {\bf Arrowheads with multiplicity zero.} \
Assume that  $K_1\sqcup K_2$ is an  oriented link in $M$,
and the pair $(M,K_1\sqcup K_2)$ has a plumbing representation as in \ref{links}.
Here $K_1$ and $K_2$ consist of two disjoint sets of link components such that
$(M,K_1)$ is a fibred link. In particular, its open book decomposition $p$ defines
 a multiplicity system on all the non-arrowheads and on all the arrows of $K_1$
 as in \ref{Openbooks}.
 Then, we can put zero multiplicities on all the arrows of $K_2$. (In fact, $p_*([\gamma_a])=0$
 for all meridians of $K_2$, hence this definition also works.)
 In this way, using zero--multiplicity arrowheads, we can identify link components of $M$ which are not
components in the binding of the open book decomposition $(M,K_1)$.

\begin{bekezdes}\labelpar{boundarycomp} {\bf Manifolds with
boundary.} Similarly, one can codify plumbed  oriented 3-manifolds
with boundary, where each boundary component is a torus. In
general, one starts with an oriented closed 3--manifold  $M$ with a link $K$ in it,
cf. \ref{links}. Let $L$ be the collection of some of the components of $K$.
Then after a small closed tubular neighbourhood $T(L)$ of
$L$ is fixed,  one deletes its interior $T^\circ (L)$
obtaining  a manifold with boundary $M\setminus T^\circ (L)$.  The other
components of $K$, which are not in $L$,  are kept as link components in $M\setminus T^\circ (L)$.

At the level of plumbing graphs, in the present article,  this will be
codified as follows. Assume that the arrowhead representing a connected component of $L$ is
connected by an edge to the non--arrowhead vertex $v$. Then replace this
supporting edge by a {\it dash--edge} (and delete its sign--decoration, or consider
it irrelevant).  An arrowhead that is supported by a dash--edge will be called {\it dash--arrow}.
Therefore, the dash--arrows
represent deleted solid tori containing as their core the components of the corresponding link.
\ix{graph decoration!dash--edge|textbf}\ix{graph decoration!dash--arrow|textbf}

Equivalently, if $r_w$ is the number of dash--arrows supported by
the non--arrowhead vertex $w$, then one can also get the
plumbed manifold with boundary using the plumbing construction by
deleting $r_w$ solid tori, the inverse image of $r_w$ small open
discs of the base space of $B_w$, from $B_w$. (This is codified in
\cite{neumann} by the decoration $[r_w]$ of $w$, instead of the
$r_w$ dash-arrows of $w$ used here.)

Notice that in this way, (i.e. by replacing an arrow supported by
a usual edge by an arrow supported by a dash--edge) one loses
some information:  for example,  the Euler--number of the supporting non--arrowhead $w$ becomes
irrelevant.
\end{bekezdes}

\begin{bekezdes}\labelpar{bek:multbou}{\bf Fibrations and multiplicities.}
Additionally, if $M\setminus T^\circ (K)$ is a locally trivial
fibration $p$ over $S^1$, one can define again a multiplicity
system: $m_w:=p_*([\gamma_w])$ for each non--arrowhead $w$,
as above. Nevertheless, in this case,  the arrowheads supported by
dash--arrows will carry no multiplicity decorations (or,
equivalently, they will be disregarded). This system satisfies the
compatibility relations (\ref{eq:2.2.1}) in the following modified
way: if $w$ is a non--arrowhead which supports no dash--arrow,
then (\ref{eq:2.2.1}) is valid for that $w$. But, in general, no
other relation holds. (That is,  (\ref{eq:2.2.1}) is valid for all
non--arrowhead vertices $w$ with the convention that the
multiplicity of the dash--arrows `can be anything'.)
\ix{graph decoration!multiplicity system!of a fibration}

Notice that if $(M,K)$ has an open book decomposition, then the
fibration of the complement of $K$ contains less information than
the original open book decomposition: in general,  a fibration
$K\setminus T^\circ (K)$  cannot be extended  canonically to an open
book decomposition. Similarly, the multiplicity system associated
with a fibration $p:M\setminus T^\circ (K)\to S^1$ contains less
information than the multiplicity system associated with the
original open book decomposition.
\end{bekezdes}

\section{\ The plumbing calculus}\labelpar{CALC}\setcounter{equation}{0}
\bekezdes\label{bek:calculus} \
The  {\bf plumbing calculus of oriented plumbed 3--manifolds
and the corresponding plumbing graphs} targets the following
classification result, cf.  \cite[(3.2)(i)]{neumann}. According to this, there are 8
permitted operations  of plumbing graphs.  In Neumann's notation,
seven of them are: R0(a), R1,  R3, R5, R6, R7, R2/4.
In Neumann's list an eighth operation appears as well,  R0(b)'.  Since it
can be replaced by three consecutive
applications of R0(a), we will omit it. (Note also that  Neumann's list contains
an additional operation R8; this one applies for graphs with dash--arrows,
and it will appear  below in \ref{RED-CALC2}.)
\ix{Neumann}\ix{plumbing!graph calculus|textbf}\ix{plumbing!graph calculus!oriented|textbf}
\ix{plumbing!graph calculus!operations|textbf}

These operations satisfy the following two
key properties:

\begin{enumerate}
\item ({\bf Stability of the calculus})
Applying any of the above seven  operations, or their inverses, to a plumbing graph
$\G$ does not change the oriented  diffeomorphism type of $M(\G)$.
\item ({\bf Sufficiency of the calculus}) If $\G_1$ and $\G_2$ are two plumbing graphs
and $M(\G_1)$ and $M(\G_2)$ are diffeomorphic by an orientation preserving diffeomorphism, then
$\G_1$ and $\G_2$ are related by  a sequence of the above operations or their inverses.
\end{enumerate}

The `oriented calculus' is part of a larger set of operations, for which a similar
statement is valid as above;
it connects non--necessarily orientable  plumbed 3--manifolds and their  plumbed graphs.
The larger class  additionally
contains  those operations which reverse orientation, or which are valid for non--orientable
manifolds too. For the complete list, from R0 to R8,  see \cite{neumann}.
The oriented calculus selects exactly those operations which preserve
the orientation of the orientable 3--manifold.
As we are interested only in the oriented special class, we discuss only these ones.

\vspace{2mm}

For the completeness of the presentation 
we provide these operations, at least those which will be used in the present work.
The operations below are applied for one of the connected components of the graph $\G$.

\vspace{2mm}

\noindent{\bf [R0](a)} \  Reverse the signs on all edges other than loops
adjacent to any fixed  vertex.

\vspace{2mm}

\noindent{\bf [R1] (blowing down)} \
$\epsilon=\pm 1$ and the edge signs $\epsilon_0,\ \epsilon_1,\
\epsilon_2$ are related by
$\epsilon_0=-\epsilon\epsilon_1\epsilon_2$.

\vspace{2mm}

\noindent a.)
\begin{picture}(100,35)(5,15)

\put(40,20){\circle*{4}} \put(100,20){\circle*{4}} 
\put(40,20){\line(1,0){60}}                        
\put(40,20){\line(-2,-1){30}}                      
\put(40,20){\line(-2, 1){30}}

\put(20,25){\makebox(0,0){$\cdot$}}                
\put(20,20){\makebox(0,0){$\cdot$}}
\put(20,15){\makebox(0,0){$\cdot$}}

\put(42,28){\makebox(0,0){$e_i$}}  
\put(42,12){\makebox(0,0){$[g_i]$}}
\put(100,29){\makebox(0,0){$\epsilon$}}

\put(160,20){\vector(1,0){30}}                     

\put(240,20){\circle*{4}}                    
\put(242,28){\makebox(0,0){$e_i-\epsilon$}}  
\put(242,12){\makebox(0,0){$[g_i]$}}

\put(240,20){\line(-2,-1){30}}                      
\put(240,20){\line(-2, 1){30}}

\put(220,25){\makebox(0,0){$\cdot$}}                
\put(220,20){\makebox(0,0){$\cdot$}}
\put(220,15){\makebox(0,0){$\cdot$}}
\end{picture}

\vspace{4mm}

\noindent b.)
\begin{picture}(100,50)(5,15)

\put(40,20){\circle*{4}} \put(80,20){\circle*{4}}
\put(120,20){\circle*{4}}                           

\put(40,20){\line(1,0){80}}                        

\put(40,20){\line(-2,-1){30}}                      
\put(40,20){\line(-2, 1){30}}
\put(20,25){\makebox(0,0){$\cdot$}}                
\put(20,20){\makebox(0,0){$\cdot$}}
\put(20,15){\makebox(0,0){$\cdot$}}

\put(120,20){\line(2,-1){30}}                      
\put(120,20){\line(2, 1){30}}
\put(135,25){\makebox(0,0){$\cdot$}}                
\put(135,20){\makebox(0,0){$\cdot$}}
\put(135,15){\makebox(0,0){$\cdot$}}

\put(42,28){\makebox(0,0){$e_i$}}                  
\put(42,10){\makebox(0,0){$[g_i]$}}
\put(80,29){\makebox(0,0){$\epsilon$}}
\put(118,28){\makebox(0,0){$e_j$}}
\put(118,10){\makebox(0,0){$[g_j]$}}

\put(60,26){\makebox(0,0){$\epsilon_1$}}      
\put(100,26){\makebox(0,0){$\epsilon_2$}}

\put(160,20){\vector(1,0){30}}                     

\put(240,20){\circle*{4}}                    
\put(242,28){\makebox(0,0){$e_i-\epsilon$}}  
\put(242,10){\makebox(0,0){$[g_i]$}}

\put(240,20){\line(-2,-1){30}}                      
\put(240,20){\line(-2, 1){30}}

\put(220,25){\makebox(0,0){$\cdot$}}                
\put(220,20){\makebox(0,0){$\cdot$}}
\put(220,15){\makebox(0,0){$\cdot$}}

\put(300,20){\circle*{4}} 
\put(298,28){\makebox(0,0){$e_j-\epsilon$}}  
\put(298,10){\makebox(0,0){$[g_j]$}}

\put(300,20){\line(2,-1){30}}                      
\put(300,20){\line(2, 1){30}}
\put(315,25){\makebox(0,0){$\cdot$}}                
\put(315,20){\makebox(0,0){$\cdot$}}
\put(315,15){\makebox(0,0){$\cdot$}}

\put(240,20){\line(1,0){60}} 
\put(270,26){\makebox(0,0){$\epsilon_0$}}      

\end{picture}

\vspace{3mm}

\vskip 1mm\noindent c.)
\begin{picture}(100,50)(5,20)
\put(40,20){\circle*{4}} \put(120,20){\circle*{4}}   

\put(40,20){\line(-2,-1){30}}                      
\put(40,20){\line(-2, 1){30}}
\put(20,25){\makebox(0,0){$\cdot$}}                
\put(20,20){\makebox(0,0){$\cdot$}}
\put(20,15){\makebox(0,0){$\cdot$}}

\put(42,28){\makebox(0,0){$e_i$}}                  
\put(42,10){\makebox(0,0){$[g_i]$}}
\put(120,29){\makebox(0,0){$\epsilon$}}

\qbezier(40,20)(80,30)(120,20)
\qbezier(40,20)(80,10)(120,20)

\put(80,35){\makebox(0,0){$\epsilon_1$}}
\put(80,5){\makebox(0,0){$\epsilon_2$}}


\put(160,20){\vector(1,0){30}}                     


\put(240,20){\line(-2,1){30}}       
\put(240,20){\line(-2,-1){30}}
\put(240,20){\circle*{4}}
\put(220,24){\makebox(0,0){$\vdots$}}
\put(240,32){\makebox(0,0){$e_i-2\epsilon$}}
\put(240,10){\makebox(0,0){$[g_i]$}}

\put(240,20){\line(2,1){20}} \put(240,20){\line(2,-1){20}}
\qbezier(260,30)(290,40)(293,20) \qbezier(260,10)(290,0)(293,20)

\put(300,20){\makebox(0,0){$\epsilon_0$}}

\end{picture}

\vspace{13mm}

\noindent{\bf [R3] ($0$-chain absorption)} \ The edge signs
$\epsilon_i'$ ($i=1,...,s$) are related by
$\epsilon_i'=-\epsilon\overline{\epsilon}\epsilon_i$ provided that
the edge sign in question is not on a loop, and
$\epsilon_i'=\epsilon_i$, if it is on a loop.

\noindent
\begin{picture}(100,70)(-5,-10)

\put(40,20){\circle*{4}} \put(80,20){\circle*{4}}
\put(120,20){\circle*{4}}                           

\put(40,20){\line(1,0){80}}                        

\put(40,20){\line(-2,-1){30}}                      
\put(40,20){\line(-2, 1){30}}
\put(20,25){\makebox(0,0){$\cdot$}}                
\put(20,20){\makebox(0,0){$\cdot$}}
\put(20,15){\makebox(0,0){$\cdot$}}

\put(120,20){\line(2,-1){30}}                      
\put(120,20){\line(2, 1){30}}
\put(135,25){\makebox(0,0){$\cdot$}}                
\put(135,20){\makebox(0,0){$\cdot$}}
\put(135,15){\makebox(0,0){$\cdot$}}

\put(42,28){\makebox(0,0){$e_i$}}                  
\put(42,10){\makebox(0,0){$[g_i]$}}
\put(80,29){\makebox(0,0){$0$}}
\put(118,28){\makebox(0,0){$e_j$}}
\put(118,10){\makebox(0,0){$[g_j]$}}

\put(60,26){\makebox(0,0){$\epsilon$}}      
\put(100,26){\makebox(0,0){$\overline{\epsilon}$}}

\put(138,35) {\makebox(0,0){$\epsilon_1$}}
\put(138,5){\makebox(0,0){$\epsilon_s$}}

\put(165,20){\vector(1,0){30}}                     

\put(240,20){\circle*{4}}                    
\put(240,30){\makebox(0,0){$e_i+e_j$}}  
\put(240,5){\makebox(0,0){$[g_i+g_j]$}}

\put(240,20){\line(-2,-1){30}}                      
\put(240,20){\line(-2, 1){30}}

\put(220,25){\makebox(0,0){$\cdot$}}                
\put(220,20){\makebox(0,0){$\cdot$}}
\put(220,15){\makebox(0,0){$\cdot$}}

\put(240,20){\line(2,-1){30}}                      
\put(240,20){\line(2, 1){30}}
\put(260,25){\makebox(0,0){$\cdot$}}                
\put(260,20){\makebox(0,0){$\cdot$}}
\put(260,15){\makebox(0,0){$\cdot$}}

\put(268,40) {\makebox(0,0){$\epsilon_1'$}}
\put(268,0){\makebox(0,0){$\epsilon_s'$}}

\end{picture}

\vspace{4mm}

\noindent{\bf [R5] (oriented handle absorption)}

\vspace{3mm}

\noindent
\begin{picture}(100,60)(-5,-10)
\put(40,20){\circle*{4}} \put(120,20){\circle*{4}}   

\put(40,20){\line(-2,-1){30}}                      
\put(40,20){\line(-2, 1){30}}
\put(20,25){\makebox(0,0){$\cdot$}}                
\put(20,20){\makebox(0,0){$\cdot$}}
\put(20,15){\makebox(0,0){$\cdot$}}

\put(42,28){\makebox(0,0){$e_i$}}                  
\put(42,10){\makebox(0,0){$[g_i]$}}
\put(120,29){\makebox(0,0){$0$}}

\qbezier(40,20)(80,30)(120,20)
\qbezier(40,20)(80,10)(120,20)

\put(80,35){\makebox(0,0){$\circleddash$}}
\put(80,5){\makebox(0,0){$+$}}


\put(165,20){\vector(1,0){30}}                     


\put(240,20){\circle*{4}}                    
\put(242,28){\makebox(0,0){$e_i$}}  
\put(250,10){\makebox(0,0){$[g_i+1]$}}

\put(240,20){\line(-2,-1){30}}                      
\put(240,20){\line(-2, 1){30}}

\put(220,25){\makebox(0,0){$\cdot$}}                
\put(220,20){\makebox(0,0){$\cdot$}}
\put(220,15){\makebox(0,0){$\cdot$}}
\end{picture}

\vspace{3mm}

\noindent{\bf [R6] (splitting)} \ If $\G$ has the form

\begin{picture}(100,110)(-30,-35)

\put(40,20){\circle*{4}} \put(80,20){\circle*{4}}
\put(40,20){\line(1,0){40}}
\put(40,28){\makebox(0,0){$0$}}
\put(120,43){\makebox(0,0){$\vdots$}}
\put(120,2){\makebox(0,0){$\vdots$}}
\put(80,20){\line(3,2){50}}  \put(80,20){\line(3,1){50}}
\put(80,20){\line(3,-2){50}}  \put(80,20){\line(3,-1){50}}
 \put(80,28){\makebox(0,0){$e_i$}}
 \put(80,10){\makebox(0,0){$[g_i]$}}
\put(150,23){\makebox(0,0){$\vdots$}}
\put(127,35){\framebox(40,20)}
 \put(127,-15){\framebox(40,20)}
 \put(147,45){\makebox(0,0){$\G_1$}}
\put(147,-5){\makebox(0,0){$\G_t$}}
\end{picture}

\noindent where each $\G_j$ is connected and, for each $j\in\{1,\ldots,t\}$,
$\G_j$ is connected to the vertex ~$i$
by $k_j$ edges, then replace $\G$ by the disjoint union of $\G_1, \ldots, \G_t$, and
$2g_i+\sum_j(k_j-1)$  copies of
\begin{picture}(20,15)(0,0)
\put(10,0){\circle*{4}}
\put(10,10){\makebox(0,0){$0$}}
\end{picture}
.

\vspace{5mm}

\noindent{\bf [R7] (Seifert graph exchange)} \ Replace

\begin{picture}(100,55)(0,-5)
\put(140,20){\circle*{4}}
\put(140,20){\line(2,1){20}} \put(140,20){\line(2,-1){20}}
\qbezier(160,30)(190,40)(193,20) \qbezier(160,10)(190,0)(193,20)
\put(130,20){\makebox(0,0){$e$}}
\put(200,20){\makebox(0,0){$\pm $}}
\end{picture}

\noindent where $e\in\{-1,0,+1\}$, by a star--shaped graph with all genera zero.

There are six cases, the pair $(e,\pm)$ can be $(-1,+)$, $(0,+)$, $(1,+)$, $(-1,\circleddash)$,
$(0,\circleddash)$ and  $(1,\circleddash)$.
The corresponding star--shaped graphs are (in Kodaira's notation, used in elliptic fibrations, or
using the notations of extended {\it A-D-E} graphs):
$II,\ III(\tilde{A}_1),\ IV(\tilde{A}_2),\ II^*(\tilde{E}_6),\ III^*(\tilde{E}_7),\ IV^*(\tilde{E}_8)$.

Since they will not be used in the sequel, we omit the picture of the
six graphs.

\vspace{2mm}

\noindent{\bf [R2/4] \ (Unoriented  handle absorption followed by two $\R P^2$--extrusions)}
This operation will not be used in the sequel, hence again we will not give more details about it.

The interested reader can find details on  both [R7] and [R2/4] in  \cite{neumann}.

\bekezdes In the literature there are several special classes of graphs
codifying special families of 3--manifolds, for which the graph calculus, that is the set of
allowed operations,  is more restrictive. Such special classes  are for example,
`spherical plumbing graphs', `orientable plumbing graphs with  no cycles', or `star--shaped plumbing graphs'.
For more examples and for their calculus, see e.g. Theorem 3.2 in \cite{neumann}.

Besides the study of special families of 3--manifolds, there is another motivation to
consider reduced sets of operations. If the class of plumbing graphs considered is the result of
a special geometric construction, then they might carry some  information in their shape or decorations
which might be lost in the diffeomorphism type of $M(\G)$. In such a situation, if we wish to
preserve that extra information, then we must use only those operations which preserve it. Of course, in such a case,
we cannot always expect the validity of the `sufficiency of the calculus', but we gain a stronger `stability'.

For example, in  the case of the
plumbing calculus of {\it negative definite resolution graph} of a normal \ix{graph!negative definite}
surface singularity, or the {\it dual graph} of any kind of complex
curve configuration on a smooth complex surface, we prefer to use
only $(-1)$--blow ups and its inverse
instead of all possible  operations of the smooth (oriented or non--oriented) calculus.
In this way, we can make  sure that the graph modified by the operation can again be realized
in the corresponding complex analytical context. Moreover, the blow ups preserve the
number of independent cycles and the genera of the graph (for their definition see below),
which carry some analytic Hodge theoretical  information, cf. \ref{re:nss}.

\bekezdes\label{bek:RESCAL} {\bf The `reduced plumbing calculus'.} \
In the present article,
guided by results of the present work,
 we  also select a special set of operations from those used in
the calculus of oriented plumbed 3--manifolds and their graphs.  The collection  of these operations is called
{\it reduced set of operations}, and they generate the {\it reduced plumbing calculus}.
\ix{plumbing!graph calculus!reduced|textbf}

Our graphs are constructed from a singularity
theoretical viewpoint. Using only the
reduced  set of operations allows for preserving features
of these  graphs inherited from their algebro--geometric/analytic
construction, which might be lost if we run all the operations of
the smooth calculus.

The principle by which  we select the `{\it reduced set of operations}' is
the following. For any decorated plumbing graph $\G$ let $c(\G)$ be
the number of independent cycles in $\G$ (i.e. the rank of
$H_1(|\G|)$, where $|\G|$ is the topological realization of $\G$).
Furthermore, let $g(\G)$ be the sum of the genus decorations of
$\G$, i.e $g(\G)=\sum_{w\in\calw(\G)}g_w$. The point is that all the
graphs provided by our main construction, associated with a fixed
geometrical object (singularity) --- but  depending essentially on
the choice of an embedded resolution ---, share two properties: all of
them are connected, and $c(\G)+g(\G)$ is the same  for all of them
(describing a geometric entity independent of the construction, cf. \ref{GEO}).
Our {\it reduced set}
contains exactly those operations of the oriented calculus which
\begin{equation}\label{eq:speclist}\begin{split}
&\mbox{
{\it preserve connectedness and }}\\ & \mbox{ {\it keep the integer $c(\G)+g(\G)$ of the graphs fixed}.}
\end{split}
\end{equation}
{\it In Neumann's notation this list 
 is the following} : {\bf R0(a)} (reverse
the sign--decoration on all edges other than loops adjacent to a
vertex), {\bf R1} (blowing down $\pm 1$ vertices), {\bf R3} (0--chain
absorption), and {\bf R5} (oriented handle absorption).
The inverses of R1, R3 and R5 are called: blowing up, 0--chain extrusion, oriented handle extrusion.
\ix{number of independent cycles|textbf}

\begin{remark}\labelpar{re:split}
There is one particular case of  the splitting operation  R6 which
still satisfies the requirements (\ref{eq:speclist}).
This operation has the following form,  where the two left--edges
might have any sign--decorations:\\

\noindent{\bf [R6$^{naive}$] (`Naive' splitting)}

\noindent
\begin{picture}(100,70)(-30,-15)
\put(0,20){\circle*{4}} \put(60,20){\circle*{4}}
\put(0,20){\line(1,0){110}}
\put(100,0){\framebox(40,40){}}
\put(165,20){\vector(1,0){30}}
\put(120,20){\makebox(0,0){$\G'$}}
\put(0,30){\makebox(0,0){$0$}}
\put(60,30){\makebox(0,0){$e$}}
\put(30,30){\makebox(0,0){$\pm$}}\put(80,30){\makebox(0,0){$\pm$}}
\put(230,0){\framebox(40,40){}}
\put(250,20){\makebox(0,0){$\G'$}}
\end{picture}

\noindent Hence,
this operation can also be inserted in the list of reduced calculus; nevertheless, one can prove that
it is a consequence of those already listed there. Indeed, by R0(a) we can assume that the signs of the left
edges are $+$. By blowing
up the left--edge, and blowing down the strict
transform of the 0--vertex, we realize that $e$ can be replaced by $e\pm 1$. Hence by repeating this pair
of operations, we can reduce  $e$ to 0. Then a 0--chain absorption of this newly created
0--vertex finishes the argument.
\end{remark}


In this book we will manipulate only  the operations listed above.
Nevertheless, if the reader wishes to use some other operations of
the (oriented) plumbing calculus, this is perfectly fine if she or he wishes to
focus only on $\partial F$.\ix{Milnor!fiber!boundary}
In fact, sometimes it is helpful to have in mind the `splitting
operation' R6  too, since it helps to represent some of the
manifolds as connected sums. \ix{connected sum}

\begin{definition}\label{def:sim}
We write $\G_1\sim \G_2$ if $\G_1$ can be obtained from $\G_2$ by the
above reduced plumbing calculus.
\end{definition}

 \begin{bekezdes}\labelpar{strictly} {\bf The strictly reduced calculus.}
We can go further, and consider an even more restricted
  set of operations. It is based on the  conjecture that under our construction of graphs,
all the possible graphs associated with the
  same geometric object, a non--isolated singular germ $f$,
share the same integers $c(\G)$ and $g(\G)$ (independently of the different choices in the
construction).  In fact, we conjecture that
these integers  are related to  the weight filtration of the mixed Hodge structure
 on $H^1(\partial F,{\mathbb C})$, see Chapter \ref{s:MHS}.
\ix{plumbing!graph calculus!strictly reduced|textbf}
\ix{plumbing!graph calculus!with arrows|textbf}\ix{mixed Hodge structure!weight filtration}

 Since these numbers are modified under the usual calculus, in fact even under the
 reduced calculus, we get
 that the weight filtration of the mixed Hodge structure is not a topological/smooth invariant of
 $\partial F$ (provided that the above mentioned conjecture is true).
 For concrete examples see \ref{re:MHSnot} and \ref{ex:MHSnot}.

 This suggests, that if we would like to preserve  this analytic information as well, we  have to exclude the
 oriented handle absorption R5 from the list of operations of the `reduced calculus', and use an
 even  more restrictive set,
 which is  called {\em strictly reduced oriented calculus}.
 Hence, it only includes the operations R0(a), R1 and R3
 and their inverses.
 \end{bekezdes}

\begin{bekezdes}\labelpar{RED-CALC2} {\bf Reduced oriented plumbing calculus of graphs with  arrows.}
If the graph has some arrows and/or dash--arrows, then all the
above operations  R0(a), R1, R3 and R5 of the reduced calculus
have their natural analogs, complemented with some additional rules:
\begin{enumerate}
\item
the vertex involved in R0(a), the $(\pm 1)$--vertex in R1,  and the 0--vertex in
R3 and R5 should be a non--arrowhead;\vspace{2mm}

\item the $(\pm 1)$--vertex
blown down in R1 can have at most two edges (including also those ones which support arrowheads);
if the vertex has exactly one edge supporting an arrowhead, then we do not modify it by blow down;
\vspace{2mm}

\item
the 0--vertex absorbed in R3 and R5,
cannot support any kind of arrow;\vspace{2mm}

\item
by the operations,
the arrows of the other vertices are naturally kept,  and in the case
of R3 the arrows supported by vertices $i$ and $j$  are
summed;\vspace{2mm}

\item
if a vertex supports a dash--arrow then its Euler number is irrelevant.
\end{enumerate}

\vspace{2mm}

Additionally, one has the following operation as well:

\vspace{2mm}

\noi {\bf [R8] (Annulus absorption for dash--arrows)}

\begin{picture}(100,70)(0,-10)

\put(40,20){\circle*{4}} \put(90,20){\circle*{4}} 
\put(40,20){\line(1,0){50}}                        
\put(40,20){\line(-2,-1){30}}                      
\put(40,20){\line(-2, 1){30}}

\dashline[3]{3}(90,20)(120,20)\put(120,20){\vector(1,0){5}}
\dashline[3]{3}(40,20)(50,40)
\dashline[3]{3}(40,20)(30,40) \put(30,40){\vector(-1,2){3}}
\put(50,40){\vector(1,2){3}}
\put(40,40){\makebox(0,0){$\ldots$}}

 \put(40,20){\vector(-1,-2){13}}
\put(40,20){\vector(1,-2){13}}
\put(40,0){\makebox(0,0){$\ldots$}}

\put(20,25){\makebox(0,0){$\cdot$}}                
\put(20,20){\makebox(0,0){$\cdot$}}
\put(20,15){\makebox(0,0){$\cdot$}}

\put(54,28){\makebox(0,0){$e_i$}}  
\put(54,13){\makebox(0,0){$[g_i]$}}
\put(90,29){\makebox(0,0){$*$}}

\put(155,20){\vector(1,0){30}}                     

\put(240,20){\circle*{4}}                    
\put(249,26){\makebox(0,0){$*$}}  
\put(254,13){\makebox(0,0){$[g_i]$}}

\put(240,20){\line(-2,-1){30}}                      
\put(240,20){\line(-2, 1){30}}

\put(220,25){\makebox(0,0){$\cdot$}}                
\put(220,20){\makebox(0,0){$\cdot$}}
\put(220,15){\makebox(0,0){$\cdot$}}

\dashline[3]{3}(240,20)(250,40)
\dashline[3]{3}(240,20)(230,40) \put(230,40){\vector(-1,2){3}}
\put(250,40){\vector(1,2){3}}
\put(240,40){\makebox(0,0){$\ldots$}}

\put(240,20){\vector(-1,-2){13}}
\put(240,20){\vector(1,-2){13}}
\put(240,0){\makebox(0,0){$\ldots$}}

\dashline[3]{3}(240,20)(270,20)\put(270,20){\vector(1,0){5}}
\end{picture}

\vspace{3mm}

\noi Here, if on the left hand side the vertex supports $s$ arrows and $t$ dash--arrows,
then on the right hand side it supports $s$ arrows and $t+1$ dash--arrows. The Euler number
$*$ can be any integer.


A special, `degenerate' version of this is the operation

\begin{picture}(100,40)(0,5)
\put(90,20){\circle*{4}} 
\dashline[3]{3}(90,20)(120,20)\put(120,20){\vector(1,0){5}}
\dashline[3]{3}(90,20)(60,20)\put(60,20){\vector(-1,0){5}}
\put(90,29){\makebox(0,0){$*$}}

\dashline[3]{3}(220,20)(260,20)\put(260,20){\vector(1,0){5}}
\put(220,20){\vector(-1,0){5}}

\put(155,20){\vector(1,0){30}}                     
\end{picture}

\noindent where both graphs represent a manifold with boundary obtained from $S^3$ by removing
the tubular neighbourhoods of two Hopf link--components.

\vspace{3mm}

In the presence of a  multiplicity system, all the above operations can be extended with
taking the corresponding multiplicities into account in a natural and unique way such that the formulae
(\ref{eq:2.2.1}) stay stable under the operations.
We emphasize again, that the multiplicity of the dash--arrows  is not
well--defined, hence if a non--arrowhead vertex supports a dash--arrow, then its
Euler number   is not well--defined either.
\end{bekezdes}

\bekezdes\label{bek:-G} {\bf Changing the orientation.} If $\G$ is an orientable plumbing graph,
that is a graph with all $g_v\geq 0$, then let $-\G$ be the same graph with the signs of all Euler
and edge decorations reversed. Then $M(-\G)=-M(\G)$, that is, $-\G$ provides the same manifold as
$\G$ but with opposite orientation.

\section{\ Examples. \ Resolution graphs of surface singularities}\label{ss:NSS}
\setcounter{equation}{0}

Let $(X,x)$ be a normal surface singularity and fix the germ
$f:(X,x)\to (\bfc,0)$ of an analytic function. In this section we
review the definition of the embedded resolution graph $\G(X,f)$ of $f$.
More details can be found in the
books of Laufer \cite{La} and Eisenbud--Neumann \cite{EN}, and also in
the survey article of  Lipman \cite{Li}.
In section \ref{link} we also recall the basic topological  properties of the
link $K_X$ of $(X,x)$ and of the pair $(X,f^{-1}(0))$ including
the representation $\arg_*(f)$ provided by the Milnor fibration associated with
$f$.
\ix{Laufer}\ix{Eisenbud--Neumann book} \ix{Lipman}\ix{singularities!normal surface|textbf}

We use the notation $(V_f,x)=(f^{-1}(0),x)$.

\bekezdes\label{1.1} {\bf The embedded resolution.}
Let $(X,x)$ be a normal surface singularity and
let $f:(X,x)\to (\bfc,0)$ be the germ of an analytic function.
An embedded resolution  $\phi:({\cal Y},D)\to (U,V_f)$ of
$(V_f,x)\subset (X,x)$  is characterized
by the following properties.  There is a sufficiently small neighborhood
$U$ of $x$ in $X$, smooth analytic manifold ${\cal Y}$, and analytic proper
map $\phi:{\cal Y}\to U$ such that:\vspace{2mm}

1)\ if $E=\phi^{-1}(x)$, then the restriction $\phi|_{{\cal Y}\setminus E}:
{\cal Y}\setminus E\to U\setminus\{x\}$ is biholomorphic, and ${\cal Y}\setminus E$ is dense in ${\cal Y}$;
\vspace{2mm}

2)\ $D=\phi^{-1}(V_f)$
is a divisor  with only normal crossing singularities, i.e.
at any point $P$ of $E$, there are local coordinates $(u,v)$ in a small
neighbourhood of $P$, such that in these coordinates
$f\circ\phi=u^av^b$ for some non--negative integers $a$ and $b$.\vspace{2mm}

If such an embedded resolution $\phi$ is fixed, then
$E=\phi^{-1}(x)$ is called the  {\it exceptional curve} associated  with $\phi$.
Let $E=\cup_{w\in \calw}E_w$ be its decomposition  in irreducible components.
The closure $S$ of $\phi^{-1}(V_f\setminus\{0\})$ is called the
strict transform of $V_f$.
Let $\cup_{a\in \cala}S_a$ be its  decomposition into irreducible components.
Obviously,  $D=E\cup S$.

For simplicity,  we will assume that $\calw\not=\emptyset$,
any two irreducible components of $E$ have  at most one intersection point,
and  no irreducible exceptional component   has a self--intersection.
This can always be realized by some additional blow ups.

\bekezdes\label{resgraph}{\bf The embedded resolution graph $\G(X,f)$. }
We construct the {\it dual embedded resolution graph}  $\G(X,f)$ of the pair $(X,f)$,
associated with a fixed  resolution $\phi$,  as follows. Its vertices
$\calv=\calw\sqcup   \cala$  consist of the nonarrowhead vertices $\calw$  corresponding to the
irreducible exceptional components, and arrowhead vertices $\cala$
correponding to the irreducible components of the strict transform $S$.
If two irreducible divisors corresponding to $v_1,v_2\in \calv$
have an intersection point then we connect $v_1$ and $v_2$ by an edge in
$\G(X,f)$. 

The graph $\G(X,f)$ is decorated as follows. The edges are decorated by $+$.
Any  vertex $w\in \calw$ is
decorated with the
self--intersection   $e_w:=E_w\cdot E_w$, which equals to the Euler number of the normal bundle
of $E_w$ in ${\cal Y}$, and with
the genus $g_w$ of $E_w$. Furthermore,
the third decoration is the {\it multiplicity} (of $f$), defined for any  $v\in\calv$, which  is
the vanishing order of $f\circ \phi$ along the
irreducible component corresponding to $v$. For example, if $f$ defines
an isolated singularity, then  for any
$a\in\cala$ one has $m_a=1$.
\ix{graph!resolution|textbf}
\ix{graph!resolution!embedded|textbf}

\bekezdes\label{1.4}{\bf  The resolution graph $\G(X)$ of $(X,x)$.} \
We say that $\phi:{\cal Y}\to U$ is a resolution of $(X,x)$ if
${\cal Y}$ is a smooth analytic manifold,  $U$ a
sufficiently small neighbourhood of $x$ in $X$,
$\phi$ is a proper analytic map, such that
${\cal Y}\setminus E$ (where $E=\phi^{-1}(x)$) is dense in ${\cal Y}$ and
the restriction $\phi|_{{\cal Y}\setminus E}:{\cal Y}\setminus E\to U\setminus
\{x\}$ is a biholomorphism.

If $E$ is a normal crossing curve, then
the topology  of the resolution and the combinatorics of the irreducible
exceptional components  $\cup_wE_w$
are codified in the {\it dual resolution graph} $\G(X)$,  associated with $\phi$.
It is defined similarly as $\G(X,f)$ in \ref{resgraph}, but without arrowheads and
multiplicities.

\bekezdes\label{1.3b}{\bf Some properties of the graphs  $\G(X,f)$ and $\G(X)$.} \

(1)\ $\G=\G(X)$ can serve as a plumbing graph:\ix{plumbing!graph}
the associated oriented plumbed 3--manifold $M(\G)$ is diffeomorphic to the link
$K_X$ of $X$, and the  space ${\cal Y}$ of the resolution can be identified with
the plumbed 4--manifold $P(\G)$  considered in \ref{bek:MULT}.

Moreover, the multiplicity system of $\G(X,f)$
satisfies the system of equations (\ref{eq:2.2.1}).\vspace{2mm}

(2) The graphs $\G(X,f)$ and $\G(X)$ depend on the choice of  $\phi$. Nevertheless,
different dual graphs associated with different resolutions are connected by a sequence
of blow ups and blow downs of $(-1)$--rational curves (operation R1 with $\epsilon =-1$).

By  \cite{neumann}, from $K_X$ one can recover $\G(X)$ up to this blow up ambiguity.\vspace{2mm}

(3) The intersection matrix $A$ is \ix{graph!negative definite}
negative definite; see  \cite{Mu}, \cite{La}, or \cite{GRa}.
In particular, $A$ is non--degenerate, hence the multiplicities $\{m_w\}_{w\in\calw}$
can be recovered from the Euler numbers
and the multiplicities $\{m_a\}_{a\in\cala}$, cf. \ref{bek:AI}.\ix{matrix!intersection}\vspace{2mm}

(4)\ $m_v>0$ for any $v\in{\cal V}$,
hence the set of multiplicities determine the
Euler numbers completely via the equations (\ref{eq:2.2.1}).
This `naive' property has a rather important technical  advantage:
a multiplicity can always be determined by a local computation, on the other hand the Euler number is a
global characteristic class.

This principle will be used frequently in the present book.\vspace{2mm}

(5) The graphs  $\G(X,f)$ and $\G(X)$
are  connected  as follows from  Zariski's Main  Theorem, see  \cite{La}
or \cite{Hartshorne}.
\ix{graph!resolution}
\ix{graph!resolution!properties}
\ix{graph!negative definite}
\ix{Zariski}

\bekezdes\label{1.6}{\bf Examples.} \

\vspace{2mm}

\noindent   {\bf Plane curve singularities.}
If  $(X,x)$ is smooth,  then $(V_f,0)\subset (X,x)$ can be resolved using only quadratic modifications.
In this case, the graph $\G(X,f)$ is a tree, and $g_w=0$ for any $w\in\calw$.
See e.g. \cite{BrKn}.
\ix{singularities!plane curve}

\vspace{2mm}

\noindent   {\bf Cyclic coverings.}
Start with a normal surface singularity $(X,x)$ and a germ $f:(X,x)\to
(\bfc,0)$. Consider the covering $b:(\bfc,0)\to (\bfc,0)$ given by
$z\mapsto z^N$, and construct the fiber product:
$$(X,x)\prod_{f,b}(\bfc,0)=\{(x',z)\in(X\times \bfc,x\times 0)\ :\ f(x')=
z^N\}.$$
By definition, $X_{f,N}$ is the normalization of
$(X,x)\prod_{f,b}(\bfc,0)$.
The first projection induces  a ramified covering $X_{f,N}\to X$ branched along
$V_f$, with covering  transformation group $\Z_N$. For more details see \ref{ss:b}.
\ix{singularities!cyclic covers|textbf}

\vspace{2mm}

\noindent  {\bf Hirzebruch--Jung singularities \cite{BPV,Hir1,La,R74,R81}.}
For a normal surface singularity, the following conditions are equivalent.
If $(X,x)$ satisfies either one of them, then it is called Hirzebruch--Jung
singularity.
\ix{singularities!Hirzebruch--Jung|textbf}

\vspace{2mm}

(a) \ The resolution graph $\G(X)$ is a string, and  $g_w=0$ for any
$w\in\calw$. (In the terminology of low-dimensional topology, this is equivalent to the fact that
the link $K_X$ is a {\it lens space}.) 
\ix{graph!resolution!string}\ix{lens space|textbf}

\vspace{2mm}

(b) \ There is a finite proper map
$\pi:(X,x)\to (\bfc^2,0)$ such that the reduced discriminant locus
of $\pi$, in some local coordinates $(u,v)$ of $(\bfc^2,0)$, is $\{uv=0\}$.

\vspace{2mm}

(c) \ $(X,x)$ is isomorphic with exactly one of the `model spaces'
$\{A_{n,q}\}_{n,q}$, where $A_{n,q}$ is the normalization of
 $(\{xy^{n-q}+z^n=0\},0)$, where $0<q<n,\ (n,q)=1$.

 \vspace{2mm}

Usually, Hirzebruch--Jung singularities appear as in (b).
If there is a map $\pi$ as in  (b) with smooth reduced discriminant locus, then
$(X,x)$ is automatically smooth. The following local situation is a  prototype.

\vspace{2mm}

 For any three {\em strictly positive} integers $a,b$
and $c$, with gcd$(a,b,c)=1$, let $(X,x)$ be the normalization of
$(\{x^a y^b+z^c=0\},0)\subset (\bfc^3,0)$.
Then the projection to the $(x,y)$--plane induces a map which satisfies (b).
Let $z:(X,x)\to
(\bfc,0)$ be induced by $(x,y,z)\mapsto z$. Then the minimal
embedded resolution graph of the pair $(X,z)$ is the following.
(In the sequel sometimes we write $(a,c)$ for $\mbox{gcd}(a,c)$.)

First,  consider  the unique $0\leq \lambda <c/(a,c)$  and
$m_1\in\N$ with: \begin{equation}\label{eq:2.2.2} b+\lambda \cdot
\frac{a}{(a,c)}=m_1\cdot \frac{c}{(a,c)}.\end{equation}
If $\lambda\not=0$, consider the
continued fraction:

\begin{equation}\label{eq:HCF}
\frac{c/(a,c)}{\lambda}=k_1-{1\over\displaystyle
k_2-{\strut 1\over\displaystyle\ddots -{\strut 1\over k_s}}}, \ \ \ \
k_1,\ldots, k_s\geq 2.\end{equation} Then the next string is the embedded
resolution graph of $z$:
\ix{graph!resolution!string|textbf}
\ix{Hirzebruch--Jung string|see{graph/resolution/string}}
\ix{decorations|see{graph decorations}}


\begin{picture}(370,50)(70,0)
\put(95,25){\makebox(0,0)[r]{$(\frac{a}{(a,c)})$}}
\put(355,25){\makebox(0,0)[l]{$(\frac{b}{(b,c)})$}}
\put(150,35){\makebox(0,0){$-k_1$}}
\put(200,35){\makebox(0,0){$-k_2$}}
\put(300,35){\makebox(0,0){$-k_s$}}
\put(150,15){\makebox(0,0){$(m_1)$}}
\put(200,15){\makebox(0,0){$(m_2)$}}
\put(300,15){\makebox(0,0){$(m_s)$}} \put(150,25){\circle*{4}}
\put(200,25){\circle*{4}} \put(300,25){\circle*{4}}
\put(225,25){\vector(-1,0){120}} \put(275,25){\vector(1,0){70}}
\put(250,25){\makebox(0,0){$\cdots$}}
\end{picture}


\noindent The arrow at the left (resp. right) hand side codifies
the strict transform of $\{x=0\}$ (resp. of $\{y=0\}$).
The first vertex has multiplicity $m_1$ given by (\ref{eq:2.2.2});
while $m_2,\ldots, m_s$ can be computed by  (\ref{eq:2.2.1})
with all edge--signs $\epsilon=+$, namely:
$$-k_1m_1+\frac{a}{(a,c)}+m_2=0,\ \mbox{and}\
-k_im_i+m_{i-1}+m_{i+1}=0\ \ \mbox{for \ $i\geq 2$}.$$

The same graph might also serve as the resolution graph of the germ
$x$,  induced by the projection $(x,y,z)\mapsto x$.
The multiplicities of $x$ are given in the
next graph, where the arrows codify {\it the  same} strict transforms:


\begin{picture}(400,50)(70,0)
\put(355,25){\makebox(0,0)[l]{$(0)$}}
\put(150,35){\makebox(0,0){$-k_1$}}
\put(200,35){\makebox(0,0){$-k_2$}}
\put(300,35){\makebox(0,0){$-k_s$}}
\put(150,15){\makebox(0,0){$(\lambda)$}}
\put(95,25){\makebox(0,0)[r]{$(\frac{c}{(a,c)})$}}
\put(150,25){\circle*{4}} \put(200,25){\circle*{4}}
\put(300,25){\circle*{4}} \put(300,15){\makebox(0,0){$((b,c))$}}
\put(275,25){\vector(1,0){70}} \put(225,25){\vector(-1,0){120}}
\put(250,25){\makebox(0,0){$\cdots$}}
\end{picture}


\noindent The other multiplicities can  again be  computed by
(\ref{eq:2.2.1}), with all edge--signs $+$.
Symmetrically,  the graph of $y$ is:


\begin{picture}(400,50)(70,0)
\put(95,25){\makebox(0,0)[r]{$(0)$}}
\put(355,25){\makebox(0,0)[l]{$(\frac{c}{(b,c)})$}}
\put(150,35){\makebox(0,0){$-k_1$}}
\put(200,35){\makebox(0,0){$-k_2$}}
\put(300,35){\makebox(0,0){$-k_s$}}
\put(150,15){\makebox(0,0){$((a,c))$}} \put(150,25){\circle*{4}}
\put(200,25){\circle*{4}} \put(300,25){\circle*{4}}
\put(300,15){\makebox(0,0){$(\tilde{\lambda})$}}
\put(225,25){\vector(-1,0){120}} \put(275,25){\vector(1,0){70}}
\put(250,25){\makebox(0,0){$\cdots$}}
\end{picture}


\noi where $0\leq \tilde{\lambda}<c/(c,b)$ and
$$a+\tilde{\lambda}\cdot \frac{b}{(b,c)}=m_s\cdot\frac{c}{(b,c)}.$$
Hence, the embedded resolution graph
$\G(X,x^i y^j z^k)$ of the germ
$x^iy^jz^k$ defined on $X$ is the graph with the
same shape, same Euler numbers and genera, and the multiplicity
$m_v$ (for any vertex $v$) satisfying
$$m_v(x^iy^jz^k)=i\cdot m_v(x)+j\cdot m_v(y)+ k\cdot m_v(z).$$
\ix{graph!resolution!string}

\begin{remark}\label{re:HJCF}
Note the negative sign in the (unusual) continued fraction expansion (\ref{eq:HCF}).
Such an expression  in the sequel will be called {\it Hirzebruch--Jung  continued fraction},
and it will be denoted by $[k_1,k_2,\ldots,k_s]$.
In the present work all continued fraction expansions are of this type.
\end{remark}
\ix{Hirzebruch--Jung continued fraction|textbf}

\begin{bekezdes}\label{def:STRING} {\bf The strings $Str$.}
Next, we wish to define a string
which will be used systematically in the main result. Strangely
enough, its Euler numbers will be deleted. Before providing the
precise definition, let us give a reason for it; see also  \ref{1.3b}(4).

As we already mentioned, once we know all the  multiplicities and
edge decorations, all the Euler numbers can be recovered by
(\ref{eq:2.2.1}). In the main construction, we will glue together
graphs whose multiplicity systems on common parts agree, but under
the gluings the Euler numbers might change. In particular, as
`elementary blocks' of the gluing construction we consider graphs
with multiplicity decorations, but no Euler numbers: their `correct'
Euler numbers will be determined last, after the gluing has been done and after
deciding the edge--decorations.
\end{bekezdes}

\begin{definition}\labelpar{def:2.2.1} \ix{graph!resolution!string}
In the sequel, for positive integers $a$, $b$ and $c$,
$$Str^+(a,b;c\,|\,i,j;k) \ \ \mbox{or} \ \ \ Str(a,b;c\,|\,i,j;k)$$
denotes the string $\G(X,x^iy^jz^k)$,
together with its two arrowheads,  all  vertices (arrow--heads or not)
weighted with  multiplicities as above,
and with all edge decorations $+$, but with all Euler numbers
deleted. Moreover, $$Str^\circleddash(a,b;c\,|\,i,j;k)$$ denotes the
same string but with all edges (connecting arrowheads and
non-arrowheads) decorated with $\circleddash$.

In particular, if $\lambda=0$, then the corresponding string
$Str^\pm(a,b;c\,|\,i,j;k)$ is a double arrow (decorated with $+$
or $\circleddash$) having no non--arrowhead vertex.

The same $\pm$--double-arrow will be used  for the string
$Str^\pm(a,b;c\,|\,i,j;k)$ even if $a=0$ or $b=0$ (with the
convention $a/(a,b)=0$ whenever $a=0$).
\end{definition}

\section{\ Examples. \ Multiplicity systems and Milnor fibrations}\label{link}
\setcounter{equation}{0}

\bekezdes\label{3.1}{\bf The homology of the link $K_X$ of $(X,x)$.}
Let $(X,x)$ be a normal surface singularity, and let $K_X$ be its link, cf. \ref{d1}.
We fix a resolution ${\cal Y}$ with exceptional set $E=\{E_w\}_{w\in\calw}$
and dual resolution graph $\G=\G(X)$. The  intersection matrix $A$\ix{matrix!intersection}
can be identified with a
$\bfz$--linear map $A:\bfz^{|\calw|}\to (\bfz^{|\calw|})^*$,
where for a $\bfz$--module $M$, $M^*$ denotes its dual $Hom_{\bfz}(M,\bfz)$.
Since $A$ is non--degenerate, $\coker (A)$ is a torsion group with
$|\coker (A)|=|\det (A)|$. Then from the homological long exact sequence
of the pair $({\cal Y},K_X)=(P(\G),M(\G))$ one has

\begin{proposition}\label{3.4} \cite{HNK,Mu,Sch} \
$$H_1(K_X,\bfz)= \coker(A)\oplus H_1(E,\Z)=\coker (A)\oplus \bfz^{2g(\G)+c(\G)}.$$
\end{proposition}
\ix{link!of a surface singularity!homology of}

\bekezdes\label{3.6}{\bf The topology of the pair $(K_X,V_f)$.} \
We consider an analytic germ $f:(X,x)\to (\bfc,0)$; which is not necessarily an isolated singularity.
Set $K_f:=K_X\cap V_f$.
In this section we wish to compare the
multiplicity system of $\G(X,f)$ and the generalized Milnor fibration
$\arg:=f/|f|:K_X\setminus K_f\to S^1$. If $f$ defines an isolated singularity, then this is an open book
of $K_X$ with binding $K_f$.

The next Proposition is a general fact for compact 3--manifolds, see for example
\cite[page 34]{EN}: \ix{Eisenbud--Neumann book}
\begin{proposition}\label{3.12}  The fibration $\arg:K_X\setminus K_f\to
S^1$ is completely  determined, up to an isotopy,
by  the induced representation $\arg_*:H_1(K_X\setminus K_f,\bfz)\to \bfz$.
Moreover, if  ~$\bfz_n:=
\coker(\arg_*)$, then the page
of $\arg$ has $n$ connected components which are cyclically permuted by the monodromy.
\end{proposition}

The map $\arg_*$ can be compared with the multiplicity system as follows.

Let $\bfz^{\calv}$ be the free abelian group generated by $\{\langle v\rangle\}_{v\in\calv}$.
Define the group $H_{\G}$ as the quotient of
$\bfz^{\calv}$ factorized by the subgroup generated by:
$$e_w\langle w\rangle +\sum_{v\in\calv_w}\langle v\rangle \ \ \ (\mbox{for all $w\in\calw$}).$$

\begin{proposition}\label{3.9} \cite{HNK,neumann,Sch} \ One has the following exact sequences
$$0\to\bfz^{\cala}\stackrel{i}{\to}H_{\G}\to \coker (A)\to 0,$$
$$0\to H_{\G}\stackrel{j}{\to} H_1(K_X\setminus K_f,\bfz)\stackrel{q}{\to}
H_1(E,\bfz)\to 0,$$
where $i$ is the composed map $\bfz^{\cala}\hookrightarrow \bfz^{\calv}\to
H_{\G}$, and $j([\langle v\rangle] )=[\gamma_v]$.
(For the definition of $\gamma_v$ see \ref{Openbooks}.)
\end{proposition}
Define  ${\bf m}:H_{\G}\to\bfz$ by
 ${\bf m}([\langle v\rangle ])=m_v$. Then $\arg_*\circ j={\bf m}$ inserts into the diagram:
\ix{link!of curve singularities!homology of}

\begin{picture}(300,70)(0,0)
\put(150,50){\makebox(0,0){$
0\to H_{\G}\stackrel{j}{\longrightarrow} H_1(K_X\setminus K_f,\bfz)
\stackrel{q}{\longrightarrow}H_1(E,\bfz)\to 0.$}}
\put(140,35){\vector(0,-1){15}}
\put(155,30){\makebox(0,0){$\arg_*$}}
\put(90,40){\vector(2,-1){40}}
\put(94,30){\makebox(0,0){${\bf m}$}}
\put(140,15){\makebox(0,0){$\bfz$}}
\end{picture}


If $K_X$ is a rational homology sphere, that is $H_1(E,\bfz)=0$, then $j$ is an isomorphism,
and the set $\{m_a\}_{a\in\cala}$
determines completely the Milnor fibration up to an isotopy.

In general one has the  divisibilities,  where $n$ is the order of  ${\rm coker}(\arg_*)$ as above,
$$n\,\big|\,{\rm gcd}\{m_v:v\in\calv\}\,\big|\,{\rm gcd}\{m_a:a\in\cala\}.$$
Nevertheless,  it might happen
that $n\not= {\rm gcd}\{m_v:v\in\calv\}$, hence  $n$ cannot be determined from ${\bf m}$.
Or, even if $n={\rm gcd}\{m_v:v\in\calv\}$, in general,  from the multiplicities
one cannot recover $\arg_*$.
We illustrate  this by some examples borrowed from \cite{cyclic}.

In these  examples $(X,x)$ will be a Brieskorn singularity, or a cyclic covering.
The reader might determine the corresponding graphs by his/her preferred method applicable for such cases;
nevertheless,  for such  singularities, and for any
of the coordinate functions, one can deduce the embedded resolution
by using the algorithm of cyclic coverings presented in the section \ref{ss:b}.

\begin{example}\label{3.14}
Set $(X,x)=(\{x^2+y^7-z^{14}=0\},0)\subset
(\bfc^3,0)$ and take $f_1(x,y,z)=z^2$ and $f_2(x,y,z)=z^2-y$. Then
$\G(X,f_1)=\G(X,f_2)$ is the graph


\begin{picture}(400,70)(0,-5)
\put(110,20){\circle*{4}}
\put(110,30){\makebox(0,0){$-1$}}
\put(110,43){\makebox(0,0){$[3]$}}
\put(110,20){\vector(1,0){30}}
\put(110,10){\makebox(0,0){$(2)$}}
\put(150,20){\makebox(0,0){$(2)$}}
\end{picture}


In both cases $\coker(\arg_*)$ is a factor
of $\coker({\bf m})=\bfz_2$. In fact,
 $\coker(\arg_*(f_1))=\bfz_2$ and  $\arg_*(f_2)$ is
onto. Indeed, the Milnor fibration of $z^2$ is the pullback by $z\mapsto z^2$
of the Milnor fibration of $z$, hence $\coker(\arg_*)=\bfz_2$.
For the second statement it is enough to verify that
the double covering
$\{x^2+y^7-z^{14}=w^2+y-z^2=0\}\subset \bfc^4$ is irreducible
(notice that our equations are quasi--homogeneous, hence we can replace a small
ball centered at the origin with the whole affine space). But this is
true if its intersection with $y=1$, that is $C:=\{x^2=z^{14}-1;w^2=z^2-1\}
\subset \bfc^3$,  is irreducible. The covering $C\to \bfc$,  $(x,w,z)\mapsto z$,
is a $\bfz_2\times\bfz_2$ covering. The monodromy around $\pm 1$ is
$(-1,-1)$, and around any $\alpha$ with $\alpha^{14}=1$ and $\alpha^2\not=1$
is $(-1,+1)$. Hence the global monodromy group is
the whole group $\bfz_2\times\bfz_2$, in particular $C$ is irreducible.

Notice also that all the multiplicities of $f_2$ are even, nevertheless there
is no germ $g:(X,x)\to (\bfc,0)$ with $f_2=g^2$.
\end{example}

\begin{example}\label{3.16}\
Set $(X,x)=(\{z^2+(x^2-y^3)(x^3-y^2)=0\},0)$
and $f_1=x^2$ and $f_2=x^2-y^3$.
Then $\G(X,f_1)=\G(X,f_2)$ is the following graph

\begin{picture}(400,100)(0,-30)
\put(110,20){\circle*{4}}
\put(140,50){\circle*{4}}
\put(140,-10){\circle*{4}}
\put(110,20){\line(1,1){30}}
\put(110,20){\line(1,-1){30}}
\put(140,50){\line(0,-1){60}}
\put(110,30){\makebox(0,0){$-1$}}
\put(155,50){\makebox(0,0){$-4$}}
\put(125,50){\makebox(0,0){$(2)$}}
\put(155,-10){\makebox(0,0){$-4$}}
\put(125,-10){\makebox(0,0){$(2)$}}
\put(110,20){\vector(-1,0){30}}
\put(110,10){\makebox(0,0){$(6)$}}
\put(60,20){\makebox(0,0){$(2)$}}
\end{picture}

\noindent
Then again,   $\arg_*(f_1)$  has cokernel $\bfz_2$, and $\arg_*(f_2)$ is onto.
\end{example}
Notice that in the above examples, $(X,x)=(\{z^2+h(x,y)=0\},0)$, $h$ is reduced, $f_2$
divides $h$ but it is not equal to $h$.
For all such cases the monodromy argument given in \ref{3.14} is valid.
But all these examples  define non--isolated singularities.

In order to construct examples of germs which define isolated singularities,
we will use the well--known construction of series of singularities.
Namely, assume that $f_1$ and $f_2$ have the same graph but have different
representations $\arg_*$, and  their zero sets have non--isolated singularities.
Next, we find a germ $g$ such that the zero set of $f_i$ and $g$
have no common components ($i=1,2$).
Then, for sufficiently large $k$,
the germs $f_1+g^k$ and $f_2+g^k$, will
define isolated singularities with the same embedded
resolution graphs, but different representations $\arg_*$.

\begin{example}\label{3.17}\
Set $(X,x)=(\{x^2+y^7-z^{14}=0\},0)\subset
(\bfc^3,0)$ and take $f_1(x,y,z)=z^2$ and $f_2(x,y,z)=z^2-y$ as in \ref{3.14}.
Let $P$ be the intersection point of the strict transform $S_a$ of
$\{f_i=0\}$ with the exceptional divisor $E$. Then, in some local  coordinate
system $(u,v)$ of $P$, $\{u=0\}$ represents $E$,
$\{v=0\}$ represents $S_a$, and $f_i=u^2v^2$. Consider $g=y$. Since $y$ in the
neighborhood of $P$ can be represented as $y=u^2$ (modulo a local invertible
germ), $f_i+g^k$ near $P$ has the form $u^2v^2+u^{2k}$. For example, if $k=2$,
then one needs one more blowing up in order to resolve $f_i+g^k$.

Therefore, $\G(X,z^2+y^k)=\G(X,z^2-y+y^k)$ for any $k\geq 2$; and for
$k=2$,  the graphs have the following form:

\begin{picture}(400,100)(0,-30)
\put(110,20){\circle*{4}}
\put(140,20){\circle*{4}}
\put(110,20){\line(1,0){30}}
\put(140,20){\vector(1,1){30}}
\put(140,20){\vector(1,-1){30}}

\put(110,30){\makebox(0,0){$-2$}}
\put(140,30){\makebox(0,0){$-1$}}
\put(110,43){\makebox(0,0){$[3]$}}
\put(110,20){\line(1,0){30}}

\put(110,10){\makebox(0,0){$(2)$}}
\put(140,10){\makebox(0,0){$(4)$}}
\put(180,50){\makebox(0,0){$(1)$}}
\put(180,-10){\makebox(0,0){$(1)$}}
\end{picture}

Notice that now ${\bf m}$ is onto, hence both $\arg_*(f_i)$
are onto. Nevertheless, $\arg_*(f_1)\not=arg_*(f_2)$ since their restrictions
to a subgroup of $H_1(K_X\setminus K_f)$ (localized near the exceptional curve of genus 3)
are different. This follows similarly as in \ref{3.14}.
\end{example}
\begin{example}\label{3.18}\
Set $(X,x)=(\{z^2+(x^2-y^3)(x^3-y^2)=0\},0)$
and $f_1=x^2+y^k$ and $f_2=x^2-y^3+y^k$, where $k\geq 4$.
Then by a similar argument as above,
 $\G(X,f_1)=\G(X,f_2)$, but $\arg_*(f_1)\not= \arg_*(f_2)$. The graph for $k=4$ is

\begin{picture}(400,100)(-20,-30)
\put(80,20){\circle*{4}}
\put(110,20){\circle*{4}}
\put(140,50){\circle*{4}}
\put(140,-10){\circle*{4}}
\put(80,20){\line(1,0){30}}

\put(110,20){\line(1,1){30}}
\put(110,20){\line(1,-1){30}}
\put(140,50){\line(0,-1){60}}
\put(110,30){\makebox(0,0){$-2$}}
\put(80,30){\makebox(0,0){$-1$}}
\put(150,50){\makebox(0,0){$-4$}}
\put(125,50){\makebox(0,0){$(2)$}}
\put(150,-10){\makebox(0,0){$-4$}}
\put(125,-10){\makebox(0,0){$(2)$}}
\put(80,20){\vector(-1,-1){30}}
\put(80,20){\vector(-1,1){30}}
\put(110,10){\makebox(0,0){$(6)$}}
\put(80,10){\makebox(0,0){$(8)$}}
\put(40,50){\makebox(0,0){$(1)$}}
\put(40,-10){\makebox(0,0){$(1)$}}
\end{picture}

\end{example}

\chapter{Cyclic coverings of graphs}\labelpar{ss:2.3}
\section{\ The general theory of cyclic coverings}
\labelpar{ss:2.3b}\setcounter{equation}{0}
In this section  we review a graph--theoretical construction from  \cite{cyclic}.



\begin{definition}\labelpar{def:2.3.1}
A morphism of graphs $p:\Gamma_1\rightarrow\Gamma_2$ consists of
two maps $p_{\mathcal V}: {\mathcal V}(\Gamma_1)\rightarrow
{\mathcal V}(\Gamma_2)$ and $p_{\mathcal E}: {\mathcal
E}(\Gamma_1)\rightarrow {\mathcal E}(\Gamma_2)$, such that if
$e\in{\mathcal E}(\Gamma_1)$ has end--vertices $v_1$ and $v_2$, then
$p_{\mathcal V}(v_1)$ and  $p_{\mathcal V}(v_2)$ are the
end--vertices of $p_{\mathcal E}(e)$. If $p_{\mathcal V}$ and
$p_{\mathcal E}$ are isomorphisms of sets, then we say that $p$ is
an isomorphism of graphs.
\ix{graph!covering|textbf}\ix{graph!covering!data|textbf}

If ~$\Gamma$ is a graph, we say that $\Z$ acts on $\Gamma$, if
there are group--actions $a_{\calv}:
\Z\times\calv\rightarrow\calv$ and
 $a_{\cale}: \Z\times\cale\rightarrow\cale$ of \ $\Z$
with the following  compatibility property: if $e\in {\mathcal E}$
has end--vertices  $v_1$ and $v_2$, then $a_\cale (h,e)$ has
end--vertices $a_\calv (h,v_1)$ and $a_\calv (h,v_2)$. The action is
trivial if $a_\calv$ and $a_\cale$ are trivial actions.

If \ $\Z$ acts on both $\Gamma_1$ and $\Gamma_2$, then  a morphism
$p:\Gamma_1\to \Gamma_2$ is equivariant if the maps $p_{\mathcal
V}$ and $p_{\mathcal E}$ are equivariant with respect to the
actions of $\Z$. If additionally $p$ is an isomorphism then it is
called an equivariant isomorphism of graphs.
\end{definition}

Fix a finite graph $\Gamma$, and assume that $\bfz$ acts on it in
a trivial way.

\begin{definition}\labelpar{def:2.3.2}  A $\bfz$--covering, or cyclic covering of
\,$\Gamma$ consists of a finite graph $G$, that carries a
$\bfz$--action, together with an equivariant morphism
$p:G\rightarrow\Gamma$ such that the restriction of the
$\bfz$--action on any set of type $p^{-1}(v)$ ($v\in \cvg$),
respectively $p^{-1}(e)$ ($e\in\evg$), is transitive.
\end{definition}

Fix a cyclic covering $p:G\to\G$.
For any $v\in\cvg$,  let $\n_v\bfz$  be  the maximal subgroup of $\bfz$
which acts trivially on $p^{-1}(v)$.
Similarly,  for any $e\in\evg$ with end--vertices
$\{v_1,v_2\}$, let $\dg_e\cdot
{\rm lcm}(\n_{v_1},\n_{v_2})\bfz$ be  the maximal subgroup of $\bfz$
which acts trivially on  $p^{-1}(e)$.
These numbers  define a system of strictly positive integers
$$(\bn, \bd)=\left\{ \{\n_v\}_{v\in\calv(\Gamma)};
\{\dg_e\}_{e\in\cale(\Gamma)} \right\}.$$
\begin{definition}\labelpar{def:2.3.2b}
$(\bn, \bd)$  is called the
covering data of the covering $p:G\to\G$.
\end{definition}
\noindent
Sometimes we write
$$\n_e:=\dg_e\cdot {\rm lcm}(\n_{v_1},\n_{v_2}).$$
\begin{definition}\labelpar{eq:2.3.3} Two cyclic coverings $p_i :
G_i\rightarrow\Gamma\ \ (i=1,2)$ are equivalent, denoted by  $G_1 \sim G_2$,
if there is an equivariant isomorphism $q:G_1\rightarrow G_2$ such
that $p_2 \circ q=p_1$.

The set of equivalence classes of cyclic coverings of \,$\Gamma$,
all  associated with a fixed covering data $(\bn,\bd)$, is
denoted by $\fedog$.
\end{definition}

\begin{theorem}\label{th:COVERINGS} \cite{cyclic} \ 
$\fedog$ has an abelian group structure and it is independent of $\bd$.

\end{theorem}
In general, $\fedog$ is non-trivial.
Here are some examples.

\begin{example}\label{ex:COVERING} \cite{cyclic} \ For a covering data $(\bn ,\bd )$ one has:
\begin{enumerate}
\item
Assume that $\Gamma$ is a tree. Then  $\fedog =0$ for any $(\bn ,\bd )$.
Therefore, up to an isomorphism,  there is only one covering $G$ of $\G$. It
has exactly ${\rm gcd}\{\n_v\}_{v\in{\mathcal
V}(\Gamma)}$  connected components. \ix{graph!covering}

\vspace{2mm}

\item
Assume that  $\G$ is  a cyclic graph, that is  $\calv(\G)=\{v_1,v_2,\ldots,v_k\}$ and
\ $\cale(\G)=\{(v_1,v_2),(v_2,v_3),$ $\ldots,(v_k,v_1)\}$, where $k\geq 3$.
Then ${\cal G}(\G,({\bf n},{\bf d}))=\bfz_n$, where
$n={\rm gcd}\{n_v: v\in\calv(\G)\}$.

\vspace{2mm}

\item
For any subgraph $\Gamma '\subset\Gamma$ there is a natural surjection
$pr:\fedog\rightarrow {\cal G}(\Gamma ', (\bn,\bd))$.

\vspace{2mm}

\item
Let $\G$ be a graph with $c(\G)=1$, and
let $\G'$ be the  unique minimal cyclic subgraph of $\G$. If
$n:={\rm gcd}\{n_v:v\in\calv(\G')\}$,  then $\fedog=\bfz_n$.
\end{enumerate}
\end{example}

\begin{remark}\label{cgcg} If $p:G\to \Gamma$ is a cyclic covering  then
$c(G)\geq c(\Gamma)$.
Indeed,  if $|G|$ and $|\Gamma|$ denote the topological realizations of
the corresponding graphs, then the invariant subspace $H_1(|G|,\Q)^{\Z}$ is isomorphic to
$H_1(|\Gamma|, \Q)$.  \end{remark}\ix{graph!topological realization}

 On the other hand, we have the  following result which will be a key
 ingredient of the main construction in \ref{covdata}:

\begin{theorem}\labelpar{th:2.3.1} \cite[(1.20)]{cyclic}
Fix $\G$ and ${\bf (n,d)}$ as above. Set ${\mathcal V}^1:=\{v\in
{\mathcal V}(\G):\n_v=1\}$. Let $\G_{\not=1}$ be the subgraph of
\,$\G$ obtained from $\G$ by deleting the vertices from ${\mathcal
V}^1$ and their adjacent edges. If each connected component of
$\G_{\not=1}$ is a tree, then ${\mathcal G}(\G,{\bf (n,d)})=0$.
\end{theorem}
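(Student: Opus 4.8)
The plan is to reduce the statement to the known computation of $\mathcal G$ for cycles, quoted in the excerpt (namely $\mathcal G(\G,(\bn,\bd))=\bfz_n$ with $n=\gcd\{\n_v\}$ for a single cycle, and more generally $\mathcal G$ is detected by the cycles of $\G$), together with the fact that $\fedog$ is insensitive to the vertices with $\n_v=1$. First I would recall the structural description of $\fedog$ from \cite{cyclic}: a covering is determined by assigning to each edge $e$ a ``monodromy'' element of $\bfz/\n_e\bfz$ (or rather, working additively, a compatible system of translations), and two such assignments give equivalent coverings precisely when they differ by a coboundary coming from vertex relabellings. Concretely this exhibits $\fedog$ as a cohomology-type group $H^1$ of a complex built from the data $(\bn,\bd)$ on the $1$-complex $|\G|$, and the trivial covering $T$ is the zero class. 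The point is that this $H^1$ only ``sees'' the cycles of $\G$: an edge lying on no cycle contributes nothing because its monodromy can be killed by adjusting one of its endpoints.

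Next I would make the role of $\mathcal V^1$ precise. A vertex $v$ with $\n_v=1$ has $p^{-1}(v)$ a single point, on which $\bfz$ acts trivially; such a vertex imposes no constraint and, when present on a cycle, allows the monodromy around that cycle to be trivialized (since $\n_e$ for an edge adjacent to $v$ is $\dg_e\cdot lcm(\n_{v_1},\n_{v_2})$, and the relevant gcd collapses). Formally: given any covering $p:G\to\G$, I would use the transitivity of the $\bfz$-action on each fibre and a spanning-tree argument to choose, component by component, a system of base points and then modify the ``gluing translations'' along edges so that every edge of a chosen spanning forest of $\G$ carries trivial monodromy. The remaining freedom is encoded by one element of $\bfz/\n_C\bfz$ per independent cycle $C$, where $\n_C=\gcd\{\n_v : v\in C\}$ (using the cycle computation quoted before the theorem).

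Now the hypothesis enters: if every connected component of $\G_{\neq 1}$ is a tree, then every cycle of $\G$ must pass through at least one vertex of $\mathcal V^1$ — otherwise that cycle would lie entirely inside $\G_{\neq 1}$, contradicting the tree hypothesis. Hence for each independent cycle $C$ we have $\n_v=1$ for some $v\in C$, so $\n_C=1$ and the corresponding group $\bfz/\n_C\bfz$ is trivial. Therefore after the spanning-forest normalization all monodromies vanish, i.e. the covering admits a section $s:\G\to G$ with $p\circ s=\mathrm{id}_\G$, which is exactly the characterization of the trivial covering $T$ recalled after (\ref{eq:2.3.3}). Thus $G\sim T$ for every covering, i.e. $\fedog=0$.

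The main obstacle I anticipate is the bookkeeping in the normalization step: one must check that the adjustments made along a spanning forest to trivialize monodromies are genuinely equivalences of coverings in the sense of Definition \ref{eq:2.3.3} (i.e. realized by an equivariant isomorphism $q$ commuting with $p$), and that the residual obstruction lands exactly in $\bigoplus_C \bfz/\n_C\bfz$ and not in something finer — this is essentially the content of \cite[(1.15)--(1.20)]{cyclic}, so in the write-up I would cite those results and only indicate why the tree condition forces every $\n_C=1$. A secondary subtlety is graphs with loops or multiple edges, where ``cycle'' must be interpreted in the topological realization $|\G|$; the argument is unchanged since a loop at $v$ is a cycle through $v$ and contributes $\bfz/\n_v\bfz$, again trivial when $v\in\mathcal V^1$, and a loop at $v\notin\mathcal V^1$ cannot occur because it would live in $\G_{\neq 1}$ which is assumed to be a forest.
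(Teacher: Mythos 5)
The paper does not prove this statement at all: it is imported verbatim from \cite[(1.20)]{cyclic}, so there is no in-text argument to compare yours against. Judged on its own, your proposal is sound and is the natural proof. The cochain description of $\fedog$ (edge gluing data $b_e\in\Z/\gcd(\n_{v_1},\n_{v_2})$ modulo the coboundary of vertex relabellings $d_v\in\Z/\n_v$), the spanning-forest normalization, and the observation that every cycle of $\G$ must meet ${\mathcal V}^1$ under the hypothesis, together give the vanishing.

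One step is phrased more loosely than it should be: the claim that after normalizing along a spanning forest the residual obstruction is ``one element of $\Z/\n_C$ per independent cycle $C$'' is not something you can simply read off from the single-cycle computation quoted before the theorem --- for a non-tree edge $e$ whose endpoints both lie in $\G_{\not=1}$, the residual datum a priori lives in $\Z/\gcd(\n_{v_1},\n_{v_2})$, which can be nontrivial even when the cycle closed by $e$ passes through a vertex with $\n_v=1$; showing that the leftover vertex freedom cuts this down to $\Z/\n_C$ is exactly the multi-cycle bookkeeping you defer to \cite{cyclic}. You can avoid this entirely: since any edge with an endpoint in ${\mathcal V}^1$ has $\gcd(\n_{v_1},\n_{v_2})=1$ and hence carries no gluing data at all, the cokernel computing $\fedog$ is supported entirely on the edges of $\G_{\not=1}$, and the coboundary map restricted to a forest is surjective (root each tree and choose the $d_v$ top-down, using that $\Z/\n_v\to\Z/\gcd(\n_u,\n_v)$ is onto). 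This gives ${\mathcal G}(\G,{\bf (n,d)})=0$ directly, without invoking any statement about the shape of the obstruction group for general graphs. With that adjustment your argument is complete.
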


\bekezdes\labelpar{re:2.3.1} {\bf Variations.} \ One extends
the set of coverings as follows (cf. \cite[1.25]{cyclic}).

\begin{enumerate}
\item  Assume that we have two types of vertices: arrowheads $\cala$
and non--arrowheads $\calw$, i.e. $\calv=\cala\sqcup\calw$. Then
in the definition of a coverings $p:G\to\G$ we add the following
axiom: $\cala(G)=p^{-1}
(\cala(\G))$.

\vspace{2mm}

\item Assume that our graphs have some decorations. Then for a
covering $p:G\to\G$ we also
require that the decorations of $G$ must be equivariant.

\vspace{2mm}

\item `Equivariant string insertion'  means the following modification of $\G$.
One  starts with the following data: \ix{graph!resolution!string}

\vspace{1mm}

(a)\ a graph $\G$ and a system ${\bf (n,d)}$ as above;

\vspace{1mm}

(b)\ a covering $p:G\to \G$, as an element of ${\mathcal
G}(\G,{\bf (n,d)})$;

\vspace{1mm}

(c)\ for any edge $e$ of $\G$, we fix a string $Str^\pm(e)$ (with
decorations):

\vspace{3mm}

\begin{picture}(400,40)(40,0)
\put(90,25){\makebox(0,0)[r]{$Str^\pm(e):$}}
\put(150,15){\makebox(0,0){$(m_1)$}}
\put(200,15){\makebox(0,0){$(m_2)$}}
\put(300,15){\makebox(0,0){$(m_s)$}} \put(150,25){\circle*{4}}
\put(200,25){\circle*{4}} \put(300,25){\circle*{4}}
\put(225,25){\vector(-1,0){120}} \put(275,25){\vector(1,0){70}}
\put(250,25){\makebox(0,0){$\cdots$}}
\put(125,30){\makebox(0,0){$\pm$}}
\put(175,30){\makebox(0,0){$\pm$}}
\put(225,30){\makebox(0,0){$\pm$}}
\put(275,30){\makebox(0,0){$\pm$}}
\put(325,30){\makebox(0,0){$\pm$}}
\end{picture}

\vspace{2mm}

Then the new graph $G(\{Str^\pm(e)\}_{e})$ is constructed as
follows: we replace each edge $\tilde{e}\in p_{\cale}^{-1}(e)$
(with end--vertices $\tilde{v}_1$ and $\tilde{v}_2$ and decoration
$\pm$) of $G$ as  shown below:

\end{enumerate}

\vspace{2mm}

\begin{picture}(100,50)(-110,0)
\put(0,25){\circle*{4}} \put(30,25){\circle*{4}}\put(0,25){\line(1,0){30}}
\put(15,30){\makebox(0,0){$\pm$}}
\put(0,15){\makebox(0,0){$\tilde{v}_1$}}
\put(30,15){\makebox(0,0){$\tilde{v}_2$}}
\end{picture}

is replaced by

\begin{picture}(300,50)(80,0)
\put(150,15){\makebox(0,0){$(m_1)$}}
\put(200,15){\makebox(0,0){$(m_2)$}}
\put(300,15){\makebox(0,0){$(m_s)$}}
\put(100,15){\makebox(0,0){$\tilde{v}_1$}}
\put(350,15){\makebox(0,0){$\tilde{v}_2$}}

\put(100,25){\circle*{4}} \put(150,25){\circle*{4}}
\put(350,25){\circle*{4}} \put(200,25){\circle*{4}}
\put(300,25){\circle*{4}} \put(100,25){\line(1,0){120}}
\put(350,25){\line(-1,0){70}}
\put(250,25){\makebox(0,0){$\cdots$}}
\put(125,30){\makebox(0,0){$\pm$}}
\put(175,30){\makebox(0,0){$\pm$}}
\put(225,30){\makebox(0,0){$\pm$}}
\put(275,30){\makebox(0,0){$\pm$}}
\put(325,30){\makebox(0,0){$\pm$}}
\end{picture}

\section{\ The universal cyclic covering of $\G(X,f)$}
\labelpar{ss:a}\setcounter{equation}{0}

Fix a normal surface singularity $(X,x)$ and a germ $f:(X,x)\to (\bfc,0)$ of an analytic function.
Fix also a resolution  $\phi:({\cal Y},D)\to (U,V_f)$ as in \ref{1.1},
and consider the associated embedded resolution graph $\G(X,f)$.
In this section we recall the construction of
 a canonical cyclic covering of this graph via the Milnor fibration of
$f$. It is called the {\it universal cyclic covering of $\G(X,f)$}. For more details
see \cite{MDB,cyclic}.
\ix{graph!covering!universal|textbf}

\vspace{2mm}

There are several reasons why we include this construction in the present work:

\vspace{2mm}

$\bullet$ \ The universal cyclic covering is the prototype of all cyclic coverings provided by
geometric constructions. For an analogous construction, which is an important ingredient in some proofs
 of the book, see \ref{felbont}.

\vspace{1mm}

$\bullet$ \ It shows, for a fixed graph $\G(X,f)$, how  one can codify graph-theoretically
the possible differences of $\arg_*$. Thus, it is a necessary complement of section
\ref{link}.

\vspace{1mm}

$\bullet$ \ It guides all the geometry, in particular the resolution graphs, of cyclic coverings
$X_{f,N}$. 

\vspace{1mm}

$\bullet$ \ By examples which show that a graph  can have several  cyclic coverings,
we emphasize  the role and power of
the key Theorem  \ref{th:2.3.1}, which guarantees that the graph provided by the Main Algorithm in
Chapter \ref{s:ALG} is well--defined and unique.

\bekezdes\label{5.1} {\bf The  construction of the covering $p:G(X,f)\to \G(X,f)$.}

Let $\tw$ ($w\in\calw$) be a small
tubular neighborhood of the irreducible curve $E_w$.
By our assumption that any two irreducible components of the exceptional divisor have at most one
intersection point (see \ref{1.1}),
for any edge $e=(v,w)$ connecting two non-arrowheads,
the intersection $\tw\cap\tv$ is
 a bidisc $T_e$. If $T(S_a)$, for $a\in\cala$,  is a small tubular neighborhood of
the irreducible component $S_a$ of the strict transform $S$ (cf. \ref{1.1}),
 and $a$ is adjacent to $w_a\in\calw$, then corresponding to the edge $e=(a,w_a)$
we introduce the bidisc $T_e=T(S_a)\cap T(E_{w_a})$.
Set $T=(\cup_w\tw)\cup(\cup_aT(S_a))$.

Next, we consider the smooth fiber $f^{-1}(\delta)\subset X$ lifted  via
$\phi$. For sufficiently small $\delta>0$, the fiber $F:=(f\circ\phi)^{-1}
(\delta)\subset {\cal Y}$ is in $T$. Set $F_w=F\cap \tw$ for any $w\in\calw$,
$F_a=F\cap T(S_a)$ for any $a\in\cala$, and $F_e=F\cap T_e$ for any $e\in\cale$.

It is possible to chose the geometric
monodromy  acting on $F$ in such a way that it preserves the
subspaces $\{F_v\}_{v\in\calv}$ and $\{F_e\}_{e\in\cale}$. Then the connected
components of $F_v$, respectively  of  $F_e$, are cyclically permuted by this action.
Let $\n_v$ and $\n_e$ be the number of connected components
of $F_v$ and $F_e$ respectively. Then, for any $e=(v_1,v_2)$, we have
$\n_e=\dg_e\cdot {\rm lcm}(\n_{v_1},
\n_{v_2})$ for some $\dg_e\geq 1$.

Now, we are able to construct the covering $p:G(X,f)\to \G(X,f)$  \ix{graph!covering}
associated with the resolution $\phi$.
Above a vertex
$v\in\calv(\G(X,f))$ there are exactly $\n_v$ vertices of $G(X,f)$ which
correspond to the connected components of $F_v$. The $\bfz$--action is induced
by the monodromy.
If $v$ is an arrowhead in $\G$ then by our agreement \ref{re:2.3.1}(1), all the
 vertices in $G$ above $v$  are arrowheads.
Above an edge $e$ of $\G$, there are $\n_e$ edges of
$G$. They correspond to the connected components of $F_e$. The $\bfz$--action is
again generated by the monodromy.

Fix an edge $\tilde{e}$ of $G$ (above the edge $e$ of $\G$) which corresponds
 to the connected component $F_{\tilde{e}}$ of $F_e$. Similarly, take a vertex
$\tilde{v}$ of $G$ (above the vertex $v$ of $\G$) which corresponds to the
connected component $F_{\tilde{v}}$ of $F_v$. Then $\tilde{e}$ has as an
 end  the vertex $\tilde{v}$  if and only if $F_{\tilde{e}}
\subset F_{\tilde{v}}$. In particular,
$\tilde{v_1}$ and $\tilde{v_2}$ are connected in $G$
 if and only if $F_{\tilde{v}_1}\cap F_{\tilde{v}_2}\not=\emptyset$:
if 
$F_{\tilde{v}_1}\cap F_{\tilde{v}_2}$ has  $\dg_e$
 connected components, then  $\tilde{v}_1$ and
$\tilde{v}_2$ are connected exactly by $\dg_e$ edges.

\vspace{2mm}

Next, we list some basic properties of the universal covering graph complemented with several
examples. For more details see  \cite{cyclic}.  \ix{graph!covering}

\begin{example}\labelpar{ex:LINKRHS}
Assume that the link $K_X$ is a rational homology sphere.
Then the covering data can be uniquely recovered from $\G(X,f)$, hence, by
 \ref{ex:COVERING}(1), $G(X,f)$ itself is also determined.
Indeed, for any $v\in \calv(\G)$ let $\calv_v$  denote the set of
vertices adjacent to $v$. Then  $n_v= {\rm gcd}\{m_w:\ w\in\calv_v\cup\{v\}\}$
for any $v\in\calv(\G)$, and $n_e:={\rm gcd}(m_{v_1},m_{v_2})$
for any $e=(v_1,v_2)\in\cale(\G)$.  Moreover, the number of connected
components of $G(X,f)$ is exactly ${\rm gcd}\{m_v: v\in\calv(\G)\}$.
\end{example}

In general, one has the following connectedness result.

\begin{lemma}\labelpar{5.3} The number of connected components of the
graph $G(X,f)$ is equal to the number of connected components of the Milnor
fiber $F$ of the germ $f$. This number also agrees with
 $|\coker(\arg_*(f))|$.  In particular,
if $f$ defines an isolated singularity, then $G(X,f)$ is a connected graph.

Since $|\coker(\arg_*(f))|$ divides ${\rm gcd}\{m_v:v\in\calv\}$, the graph
$G(X,f)$ is  connected  whenever   ${\rm gcd}\{m_v:v\in\calv\}=1$.
(Nevertheless,  $G(X,f)$ can be  connected   even if ${\rm gcd}\{m_v:v\in\calv\}\not=1$, see e.g.
\ref{5.9}.)
\end{lemma}

\begin{example}\labelpar{5.9}
Set $(X,x)=(\{x^2+y^7-z^{14}=0\},0)\subset
(\bfc^3,0)$ and take $f_1(x,y,z)=z^2$ and $f_2(x,y,z)=z^2-y$ as in \ref{3.14}.
The next diagrams show  the coverings $p:G(X,f_i)\to \G(X,f_i)$ for  $i=1,2$.  \ix{graph!covering!universal}

Note that the number of connected components  of the graphs $G(X,f_i)$ is different.

\begin{picture}(400,180)(40,-20)
\put(110,20){\circle*{4}}
\put(110,30){\makebox(0,0){$[3]$}}
\put(110,20){\vector(1,0){30}}
\put(110,10){\makebox(0,0){$(2)$}}
\put(150,20){\makebox(0,0){$(2)$}}
\put(125,-10){\makebox(0,0){$(i=1)$}}
\put(125,80){\vector(0,-1){20}}
\put(120,70){\makebox(0,0){$p$}}

\put(110,100){\circle*{4}}
\put(110,130){\circle*{4}}
\put(110,100){\vector(1,0){30}}
\put(110,130){\vector(1,0){30}}

\put(210,20){\circle*{4}}
\put(210,30){\makebox(0,0){$[3]$}}
\put(210,20){\vector(1,0){30}}
\put(210,10){\makebox(0,0){$(2)$}}
\put(250,20){\makebox(0,0){$(2)$}}
\put(225,-10){\makebox(0,0){$(i=2)$}}
\put(225,80){\vector(0,-1){20}}
\put(220,70){\makebox(0,0){$p$}}

\put(210,110){\circle*{4}}
\put(210,110){\vector(1,1){30}}
\put(210,110){\vector(1,-1){30}}
\end{picture}

\end{example}

\begin{example}\labelpar{5.10}
Set $(X,x)=(\{x^2+y^7-z^{14}=0\},0)\subset
(\bfc^3,0)$ and take $f_1(x,y,z)=z^2+y^2$ and $f_2(x,y,z)=z^2-y+y^2$
as in \ref{3.17}.
The next diagrams show
the coverings $p:G(X,f_i)\to \G(X,f_i)$, where $i=1,2$.  \ix{graph!covering!universal}

In this case the number of independent cycles
of the graphs $G(X,f_i)$ is different.

\begin{picture}(400,220)(40,-40)
\put(110,20){\circle*{4}}
\put(140,20){\circle*{4}}
\put(110,20){\line(1,0){30}}
\put(140,20){\vector(1,1){30}}
\put(140,20){\vector(1,-1){30}}
\put(110,30){\makebox(0,0){$[3]$}}
\put(110,10){\makebox(0,0){$(2)$}}
\put(140,10){\makebox(0,0){$(4)$}}
\put(180,50){\makebox(0,0){$(1)$}}
\put(180,-10){\makebox(0,0){$(1)$}}
\put(140,-20){\makebox(0,0){$(i=1)$}}

\put(140,80){\vector(0,-1){20}}
\put(130,70){\makebox(0,0){$p$}}

\put(110,90){\circle*{4}}
\put(110,150){\circle*{4}}
\put(140,120){\circle*{4}}
\put(110,90){\line(1,1){30}}
\put(110,150){\line(1,-1){30}}
\put(140,120){\vector(1,1){30}}
\put(140,120){\vector(1,-1){30}}

\put(210,20){\circle*{4}}
\put(240,20){\circle*{4}}
\put(210,20){\line(1,0){30}}
\put(240,20){\vector(1,1){30}}
\put(240,20){\vector(1,-1){30}}
\put(210,30){\makebox(0,0){$[3]$}}
\put(210,10){\makebox(0,0){$(2)$}}
\put(240,10){\makebox(0,0){$(4)$}}
\put(280,50){\makebox(0,0){$(1)$}}
\put(280,-10){\makebox(0,0){$(1)$}}
\put(240,-20){\makebox(0,0){$(i=2)$}}

\put(240,80){\vector(0,-1){20}}
\put(230,70){\makebox(0,0){$p$}}

\put(210,120){\circle*{4}}
\put(240,120){\circle*{4}}
\put(225,120){\circle{29}}
\put(240,120){\vector(1,1){30}}
\put(240,120){\vector(1,-1){30}}

\end{picture}

\end{example}

\begin{example}\labelpar{5.12}
Set $(X,x)=(\{z^2+(x^2-y^3)(x^3-y^2)=0\},0)$
and $f_1=x^2+y^4$ and $f_2=x^2-y^3+y^4$ as in \ref{3.18}.
Then the coverings $p:G(X,f_i)\to \G(X,f_i)$ (for $i=1,2$) are:  \ix{graph!covering!universal}

\begin{picture}(400,210)(45,-30)
\put(80,20){\circle*{4}}
\put(110,20){\circle*{4}}
\put(140,50){\circle*{4}}
\put(170,-10){\circle*{4}}
\put(80,20){\line(1,0){30}}
\put(110,-20){\makebox(0,0){$(i=1)$}}
\put(110,20){\line(1,1){30}}
\put(110,20){\line(2,-1){60}}
\put(140,50){\line(1,-2){30}}
\put(125,50){\makebox(0,0){$(2)$}}
\put(155,-10){\makebox(0,0){$(2)$}}
\put(80,20){\vector(-1,-1){30}}
\put(80,20){\vector(-1,1){30}}
\put(110,10){\makebox(0,0){$(6)$}}
\put(80,10){\makebox(0,0){$(8)$}}
\put(40,50){\makebox(0,0){$(1)$}}
\put(40,-10){\makebox(0,0){$(1)$}}

\put(120,90){\vector(0,-1){20}}
\put(110,80){\makebox(0,0){$p$}}

\put(80,120){\circle*{4}}
\put(110,110){\circle*{4}}
\put(110,130){\circle*{4}}
\put(140,160){\circle*{4}}
\put(140,140){\circle*{4}}
\put(170,100){\circle*{4}}
\put(170,80){\circle*{4}}
\put(80,120){\line(3,1){30}}
\put(80,120){\line(3,-1){30}}
\put(110,110){\line(1,1){30}}
\put(110,130){\line(1,1){30}}
\put(110,110){\line(2,-1){60}}
\put(110,130){\line(2,-1){60}}
\put(140,160){\line(1,-2){30}}
\put(140,140){\line(1,-2){30}}
\put(80,120){\vector(-1,-1){30}}
\put(80,120){\vector(-1,1){30}}

\put(260,20){\circle*{4}}
\put(290,20){\circle*{4}}
\put(320,50){\circle*{4}}
\put(350,-10){\circle*{4}}
\put(260,20){\line(1,0){30}}
\put(290,-20){\makebox(0,0){$(i=2)$}}
\put(290,20){\line(1,1){30}}
\put(290,20){\line(2,-1){60}}
\put(320,50){\line(1,-2){30}}
\put(305,50){\makebox(0,0){$(2)$}}
\put(335,-10){\makebox(0,0){$(2)$}}
\put(260,20){\vector(-1,-1){30}}
\put(260,20){\vector(-1,1){30}}
\put(290,10){\makebox(0,0){$(6)$}}
\put(260,10){\makebox(0,0){$(8)$}}
\put(220,50){\makebox(0,0){$(1)$}}
\put(220,-10){\makebox(0,0){$(1)$}}

\put(300,90){\vector(0,-1){20}}
\put(290,80){\makebox(0,0){$p$}}

\put(260,120){\circle*{4}}
\put(290,110){\circle*{4}}
\put(290,130){\circle*{4}}
\put(320,160){\circle*{4}}
\put(320,140){\circle*{4}}
\put(350,100){\circle*{4}}
\put(350,80){\circle*{4}}
\put(260,120){\line(3,1){30}}
\put(260,120){\line(3,-1){30}}
\put(290,110){\line(3,5){30}}

\put(290,130){\line(3,1){30}}
\put(290,110){\line(2,-1){60}}
\put(290,130){\line(2,-1){60}}
\put(320,160){\line(1,-2){30}}
\put(320,140){\line(1,-2){30}}
\put(260,120){\vector(-1,-1){30}}
\put(260,120){\vector(-1,1){30}}
\end{picture}

\end{example}

\begin{remark}\label{rem:HOM} {\bf Determining  the complex monodromy.}

Consider a pair $(X,f)$ as above.  Let $\arg:K_X\setminus K_f\to S_1$ be its generalized Milnor fibration
as in \ref{3.12}. Assume that the Milnor fiber $F$ has $n$ connected components, and let $M:H_1(F,\bfc)\to
H_1(F,\bfc)$ be its algebraic monodromy acting on $H_1(F,\bfc)$. Let $P(t)$ denote its characteristic
polynomial $P(t)=\det(tI-M)$. Let $\{m_v\}_{v\in \calv}$ be the multiplicities of the graph $\G=\G(X,f)$.
In addition, for any non-arrowhead vertex $w\in\calw$ of $\G(X,f)$,
denote by $\delta_w$ the number of adjacent vertices (with the notation
of \ref{ex:LINKRHS},   $\delta_w=|\calv_w|$).
Then the following facts hold:
\begin{enumerate}
\item {\bf A'Campo's formula} \cite{A'CMon,AC,AC2}
\begin{equation}\label{eq:ACampo}
\frac{t^n-1}{P(t)}=\prod_{w\in\calw(\G)}(t^{m_w}-1)^{2-2g_w-\delta_w}.
\end{equation}
Hence, for the Euler characteristic of the page $F$ one also has
\begin{equation}\label{eq:ACampo2}
\chi(F)=\sum_{w\in\calw(\G)}m_w\cdot (2-2g_w-\delta_w),
\end{equation}
and the dimension of the generalized 1--eigenspace $(H_1(F,\bfc))_1$ is
\begin{equation}\label{eq:geneig1} \ix{generalized eigenspace}
(\dim H_1(F,\bfc))_1=2g(\G)+2c(\G)+|\cala (\G )|-1.
\end{equation}
Note that if $f$ is an isolated singularity, then $\chi(F)$ and $P(t)$ can be  determined from the
pair $(K_X,K_f)$, that is,  only from the binding of the open book decomposition,
and the additional facts regarding $\arg_*(f)$  are not needed.
\ix{monodromy!characteristic polynomial}\ix{A'Campo's formula|textbf}

\vspace{2mm}

This is in contrast with the Jordan block structure of the operator $M$, see part 3 of this list.\vspace{2mm}

\item By the Monodormy Theorem \cite{Clemens,Landman,BrieskornMon,Lj,LeMon,A'CMon,AC,AC2},
the size of a Jordan block of $M$ can at most be  2. Let $\#^2_\lambda $ be the number of Jordan blocks
with eignevalue $\lambda$ and size 2.  Then from the Wang exact sequence applied
to the Milnor fibration one gets (see e.g. \cite{NSM}):
\begin{equation}\label{eq:1111}
\dim\ker (M-I)=2g(\G)+c(\G)+|\cala (\G )|-1.\end{equation}
Therefore, using  (\ref{eq:geneig1}) too,
$\#^2_1=c(\G(X))$, hence $\#^2_1$  depends only on $K_X$, and it
is independent of  the germ $f$.\vspace{2mm}
\ix{monodromy!Theorem}

\item \cite[3.25]{cyclic} \
Let $G=G(X,f)$ be the universal covering of $\G(X,f)$, let $|G|$ be its topological realization.
The cyclic action on $G$ induces an action on $|G|$.  \ix{graph!covering!universal}
At homological level, this induces a finite morphism
$M_{|G|}$ on $H^1(|G|,\bfc)$. Then there is an isomorphism of pairs
\begin{equation}\label{eq:|G|}
(H^1(|G|,\bfc),M_{|G|})= ({\rm im}(M^{N}-I),M),
\end{equation}
where $N$ is an integer such that $\lambda^N=1$ for any eigenvalue $\lambda$ of $M$. Hence,
$\#^2_\lambda(f)$ equals the multiplicity of the root $\lambda$ in the characteristic polynomial
of $M_{|G|}$.

For example, in the case of Example \ref{5.10}, the monodromy of $f_1$ is finite, while the monodromy
of $f_2$ has a Jordan block of size 2, that is $\#^2_{-1}(f_2)=1$.\vspace{2mm}

\item Nevertheless, if $K_X$ is a rational homology sphere, each $\#^k_\lambda(f)$ can be determined from
$\G=\G(X,f)$ \cite{EN,Ne2,cyclic}.  Assume that $f$ defines an isolated singularity, and
consider the integers $n_v$ ($v\in \calv(\G)$) and $n_e$ ($e\in \cale(\G)$) as in \ref{ex:LINKRHS}.
For any
fixed positive integer $N$ set
 $\n_w:={\rm gcd}(n_w,N)$ and $\n_e:={\rm gcd}(n_e,N)$.   Then
\begin{equation}\label{CG}
\sum_{\lambda^N=1}\, \#^2_\lambda (f)=\sum_{e\in \cale(\G)} (\n_e-1)-
\sum_{w\in \calw(\G)} (\n_w-1).\end{equation}
\end{enumerate}
\end{remark}
\ix{monodromy!Jordan block}\ix{graph!topological realization}

\section{\ The resolution graph of $f(x,y)+z^N=0$}
\labelpar{ss:b}\setcounter{equation}{0}

\begin{remark}
Let $(X,x)$ be a normal surface singularity and $f$ the  germ of an analytic function
defined on $(X,x)$. Let us also fix a positive integer $N$.

The dual resolution graph of the cyclic covering $X_{f,N}$
depends essentially on the map $\arg_*(f)$. In
section \ref{link} we  emphasized that $\arg_*(f)$ cannot be determined from
the dual embedded resolution graph $\G(X,f)$. Hence, in general, the resolution graph
of $X_{f,N}$ cannot be determined from $\G(X,f)$ and $N$ either.  \ix{graph!covering!universal}
Usually, from $\G(X,f)$ and $N$ one can read the covering data  \ix{graph!covering!data}
of all the  edges and all the vertices with $g_w=0$, and obviously the graph
$\G=\G(X,f)$ itself, but  still missing are the covering data of vertices with $g_w>0$
and the global `twisting data' codified in $\fedog$, whenever this covering group is non--trivial.
For concrete examples,  and a detailed discussion,  see \cite{cyclic}.

In order to determine the resolution graph of $X_{f,N}$, one needs to consider the {\it
universal cyclic covering graph} of $\G(X,f)$, which codifies the missing information, see \cite{cyclic}.
On the other hand, if the link of $(X,x)$ is a rational homology sphere, then
the covering data can be recovered from the multiplicity system of $\G(X,f)$ and the integer $N$,
furthermore, any cyclic covering
of $\G(X,f)$ can be determined from $\G(X,f)$ and the covering data in a unique way, cf. \ref{ss:2.3b}.
In particular,
in such a case, $\G(X_{f,N})$ can be recovered from $\G(X,f)$  and $N$. This is definitely valid
when $(X,x)$ is a smooth germ, that is when $X_{f,N}=\{f(x,y)+z^N=0\}$ is the cyclic cover
of a plane curve singularity.

Since this special case is what will be needed in the sequel,
 in this section it is all we recall.
This algorithm   was used in several articles, see
\cite{Artal,HKK,La,NSign,Five,cyclic,O,OW}. For its generalization to the Iomdin series, see \cite{eredeti}.
It can also be viewed as the starting  point of our Main Algorithm.
\end{remark}

\bekezdes\labelpar{bek:f+z^N} {\bf The resolution graph of $X_{f,N}$ for a plane curve singularity $f$.}

Assume that $f:(\bfc^2,0)\to (\bfc,0)$ is an isolated plane curve singularity, and let us fix a
positive integer $N$. As above, $X_{f,N}$ denotes the germ of the isolated hypersurface singularity
$\{f(x,y)+z^N=0\}$. Germs of this type are also called
 {\it suspensions}. The projection $(x,y,z)\mapsto z$ induces a map $z:(X_{f,N},0)\to (\bfc,0)$.
\ix{graph!resolution!suspension|textbf}
\ix{singularities!suspensions|textbf}

In fact, the algorithm provides a possible embedded resolution graph $\G(X_{f,N},z)$
from $\G(\bfc^2,f)$ and $N$. By adding  the germ $z$ we exploit fully the power
of the multiplicity system and its `local nature', and
we also exemplify the comment \ref{1.3b}(4). \\

The graph $\G(X_{f,N},z)$ is a cyclic covering $p:\G(X_{f,N},z)\to \G(\bfc^2,f)$
(with arrowheads and decorations, cf. \ref{re:2.3.1})  \ix{graph!covering}
~of  $\G(\bfc^2,f)$.
The covering data is the following. \\  \ix{graph!covering!data}

(a) \ For any $w\in\calw(\G(\bfc^2,f))$, let $\calv_w$ be the set of all the vertices
(arrowheads and non--arrowheads) adjacent to $w$. Furthermore, set
 $$n_w:={\rm gcd}(m_v\,:\, v\in\calv_w\cup\{w\}).$$
Above $w\in{\cal W}(\G(\bfc^2,f))$ there are $\n_w:={\rm gcd}(n_w,N)$ vertices of $\G(X_{f,N},z)$,
each with multiplicity $m_w/{\rm gcd}(m_w,N)$ and genus $\tilde{g}$, where:
$$2-2\tilde{g}=\frac{(2-|\calv_w|)\cdot{\rm gcd}(m_w,N)+\sum_{v\in{\cal V}_w}\ {\rm gcd}(m_w,m_v,N)}
{\n_w}.$$\ix{graph!resolution!string}

(b) \ Consider an  edge $e=(w_1,w_2)$ of $\G(\bfc^2,f)$

\begin{picture}(300,40)(0,0)
\put(150,10){\circle*{4}}
\put(200,10){\circle*{4}}
\put(150,10){\line(1,0){50}}
\put(150,20){\makebox(0,0){$(m_{w_1})$}}
\put(200,20){\makebox(0,0){$(m_{w_2})$}}
\end{picture}

\vspace{2mm}

\noindent Set $n_e={\rm gcd}(m_{w_1},m_{w_2})$. Then $e$
is covered by $\n_e:={\rm gcd}(n_e,N)$ copies of the following string (cf. \ref{def:STRING})
$$Str(\frac{m_{w_1}}{\n_e},\frac{m_{w_2}}{\n_e};\frac{N}{\n_e}\big|0,0;1).$$

(c) \ An arrowhead of $\G(\bfc^2,f)$

\begin{picture}(300,40)(0,0)
\put(150,10){\circle*{4}}
\put(150,10){\vector(1,0){50}}
\put(150,20){\makebox(0,0){$(m_{w})$}}
\put(210,10){\makebox(0,0){$(1)$}}
\end{picture}

\vspace{2mm}

\noindent
is covered by one string of type $Str(m_w,1;N|0,0;1)$, whose right arrowhead will
remain an arrowhead of $\G(X_{f,N},z)$ with multiplicity 1, and its left arrowhead
is identified with the unique vertex above $w$.

\vspace{2mm}

(d) \ In this way, we obtain all the vertices, edges and arrowheads
of $\G(X_{f,N},z)$, all the
multiplicities of $z$,  and some of the Euler numbers.  The missing
Euler  numbers can be determined by (\ref{eq:2.2.1}).
This ends the construction of $\G(X_{f,N},z)$. \\

If we drop the arrowheads and multiplicities of $\G(X_{f,N},z)$, we obtain
$\G(X_{f,N})$. The graphs $\G(X_{f,N},z)$ and $\G(X_{f,N})$, in general, are not minimal.
They can be simplified by blowing down operation.

\bekezdes\label{8.7} {\bf Brieskorn singularities.} \ Evidently, the above strategy
can be applied for an arbitrary
Brieskorn singularity $f(x,y,z)=x^{a_1}+y^{a_2}+z^{a_3}$ too. Nevertheless, in the next paragraph
we present a much shorter procedure valid for this case, see  \cite{OW}.
\ix{singularities!Brieskorn|textbf}\ix{graph!resolution!Brieskorn}

 For any cyclic permutation
$(i,j,k)$ of $(1,2,3)$ take:
$$d_i:=(a_i,[a_j,a_k]);\ \alpha_i:=a_i/d_i;\ q_i:=[a_j,a_k]/d_i;$$
and $0\leq \omega_i<\alpha_i$ with $1+\omega_iq_i\equiv 0\ ({\rm mod}\ \alpha_i)$.
Here $[\cdot,\cdot]={\rm lcm}$ and $(\cdot,\cdot)={\rm gcd}$.\ix{Hirzebruch--Jung continued fraction}

For any $i\in\{1,2,3\}$, we construct a string $St_i$. If $\omega_i\not=0$, take
\begin{equation}\label{eq:COFR533}
\frac{\alpha_i}{\omega_i}=k_{i1}-{1\over\displaystyle
k_{i2}-{\strut 1\over\displaystyle\ddots
-{\strut 1\over k_{is}}}}, \ \ k_{i1},\ldots, k_{is}\geq 2.\end{equation}
Then $St_i$ is the following graph (with a distinguished vertex ${\bf x}$):

\begin{picture}(400,40)(30,10)
\put(150,35){\makebox(0,0){$-k_{i1}$}}
\put(200,35){\makebox(0,0){$-k_{i2}$}}
\put(300,35){\makebox(0,0){$-k_{is}$}}
\put(70,25){\makebox(0,0){$St_i:$}}
\put(150,25){\circle*{4}}
\put(100,25){\makebox(0,0){${\bf x}$}}
\put(200,25){\circle*{4}}
\put(300,25){\circle*{4}}
\put(225,25){\line(-1,0){125}}
\put(275,25){\line(1,0){25}}
\put(250,25){\makebox(0,0){$\cdots$}}
\end{picture}

If $\omega_i=0$, then $St_i$ contains only
the distinguished vertex ${\bf x}$, and it has no edges.

Then $\G(V_f)$ is the star--shaped graph obtained using
$(a_1,a_2)$ copies of $St_3$,
$(a_2,a_3)$ copies of $St_1$, and
$(a_3,a_1)$ copies of $St_2$, by identifying
their distinguished vertices
${\bf x}$. This vertex in $\G(V_f)$ will have genus $g$ and Euler  number $e$, where:
$$2-2g=\sum(a_i,a_j)-\frac{\prod(a_i,a_j)}{(a_1,a_2,a_3)}$$ and $$
 -e=\frac{(a_1,a_2,a_3)}{\alpha_1\alpha_2\alpha_3}+\sum(a_i,a_j)\frac{\omega_k}{\alpha_k}.$$
Above $\sum$ and $\prod$ denotes the cyclic sum, respectively
cyclic  product.

\bekezdes\label{bek:SEIFERT} {\bf Seifert invariants. Orbifold Euler number.} \cite{OW}
Those star shaped graphs whose Euler numbers on the legs are $\leq -2$
 characterize  the Seifert manifolds.
Their topological invariants are  codified as follows. Assume that $\{1,\ldots,\nu\}$ is an index set for
the legs.
Each  leg,  via a continued fraction expansion as in (\ref{eq:COFR533}),
determines a pair $(\alpha_\ell,\omega_\ell)$,
the corresponding `orbit invariant'.
Furthermore, the central vertex has a genus decoration $[g]$ and Euler number $e$. Then the
collection $\{(\alpha_\ell,\omega_\ell), 1\leq \ell\leq \nu; g,e\}$ is
the {\it Seifert invariant} of the corresponding plumbed 3--manifold.
\ix{Seifert manifold!invariants|textbf}\ix{Seifert manifold|textbf}
\ix{orbifold Euler number|textbf}\ix{Hirzebruch--Jung continued fraction}
\ix{Seifert manifold!orbifold Euler number|see{orbifold Euler number}}

Usually, one also defines the {\it orbifold Euler number} by
\begin{equation}\label{eq:eorb1} e^{orb}:=e+\sum_\ell \omega_\ell/\alpha_\ell.
\end{equation}
The intersection matrix (in the present normal form)
is negative definite if and only if $e^{orb}<0$. \ix{graph!negative definite}
The rational number
 $e^{orb}$ is also called  `virtual degree', see \cite{WA}. \ix{matrix!intersection}

One also has 
$$-e^{orb}\cdot\prod_\ell \alpha_\ell=\det(-A)$$
and
\begin{equation}\label{eq:eorb2}
e^{orb}(-M)=-e^{orb}(M).
\end{equation}


\chapter{The  graph $\gc$ of a pair $(f,g)$. The definition}\labelpar{ss:1.3}
\section{\ The construction of the curve $\C$ and its dual graph}\labelpar{construct}

\bekezdes{\bf Introductory words.}\label{bek:INWO}
The main tool of the present book is the weighted graph $\gc$
introduced and studied in \cite{eredeti}. It has two types of
vertices, non-arrowheads and arrowheads. The non-arrowhead
vertices  have two types of decorations: the first one is an
ordered triple $(m;n,\nu)$ for some integers $m,\nu>0$ and $n\geq
0$, while  the second one is the `genus' decoration  $[g]$, where $g$ is
a non--negative integer. If $g=0$ then we might omit this decoration. Any
arrowhead has only one decoration, namely the ordered triple
$(1;0,1)$. The edges are not directed and loops are accepted.
Each edge has a decoration $\in\{1,2\}$, which in some special situations
can  be omitted, since it can be recovered from the other decorations,  cf. \ref{remarks}.

The graph $\gc$ was introduced  to study
hypersurface singularities in three variables with 1--dimensional singular locus, and it was the
main tool used  getting resolution graphs of the members of the
generalized Iomdin series.
 \ix{Iomdin series}

More precisely, in that article we started with a hypersurface germ $f$ as above,
and  chose  an additional germ
$g:(\bfc^3,0)\to (\bfc,0)$ such that the pair $(f,g)$ formed  an
ICIS. The final output was the resolution graphs of the series of hypersurface
singularities  $f+g^k$, for $k$ large, determined in terms of $\gc$ and $k$.
Motivated by the fact that in addition  $\gc$ contains all the information
needed to treat `almost all' the correction terms of the invariants of the series (see
\cite{eredeti}, or  \ref{1.3} and \ref{why}  here),
we called $\gc$  {\it `universal'}. Its power is
reinforced by the present work as well.

Nevertheless, perhaps, the name {\em bi--colored relative graph associated with
the pair $(f,g)$} tells more about the geometry encoded in the graph.
Here, the first attribute points out that the edges can be  decorated by two `colors'
(1 and 2),
a key fact which has enormous geometrical effects and the source of pathological behaviors.
By `relative'
we wish to stress, that the graph codifies the $g$--polar geometry
of $f$ concentrated near the singular locus of $V_f$. In particular,
in $\gc$ the functions $f$ and $g$ do not have a symmetric role. For
more motivation and supporting intuitive arguments regarding
$\gc$, see \ref{why} too.

Geometrically, the graph $\gc$ is
the decorated dual graph of a special curve configuration $\C$ in the
embedded resolution of the pair $(\bfc^3,V_f\cup V_g)$.
Its presentation  is the subject of the next paragraphs.

\bekezdes\labelpar{gc} {\bf The definition  of $\C$ and $\gc$ \cite{eredeti}.} \
Consider an ICIS   $(f,g):({\bfc}^3,0)\to ({\bfc}^2,0)$  and the
local divisor $(D,0):=(V_f\cup V_g,0)\subset(\bfc^3,0)$. Let
$$\Phi:\Phi^{-1}(D_{\eta}^2)\cap B_{\epsilon}\to D_{\eta}^2$$ denote  a  good representative of $(f,g)$
as in \ref{ss:ICIS}. Denote by   $\Delta_\Phi\subset D^2_\eta$ its discriminant, as before.
\ix{curve configuration $\C$|textbf}
\ix{curve configuration $\C$!its dual graph $\G_\C$|textbf}\ix{graph!$\G_\C$|textbf}

Take an embedded resolution  $$r:V^{emb}\rightarrow U$$ of the pair
$(D,0)\subset(\bfc^3,0)$. This means the following. The space $V^{emb}$ is smooth,  $r$ is
proper, $U$ is a small representative of $(\bfc^3,0)$ of type
$U=\Phi^{-1}(D_{\eta}^2)\cap B_{\epsilon}$, and the total
transform $\de:=r^{-1}(D)$ is a normal crossing divisor. Moreover,
we assume that the restriction of $r$ on $V^{emb}\setminus
r^{-1}(Sing(V_f)\cup Sing(V_g))$ is a biholomorphic isomorphism.
(Note that $Sing(V_f)\cup Sing(V_g)$ is a smaller set than the
`usual' singular locus $Sing(D)$ of $D$; nevertheless,  since the intersection $V_f\cap V_g$ is
already transversal off the origin, the above assumption can
always be satisfied for a convenient resolution.)

In particular,
$$\wP=\Phi\circ r:
\big(\,r^{-1}(\Phi^{-1}(D^2_\eta\setminus\Delta_\Phi)\cap B_\epsilon),
r^{-1}(\Phi^{-1}(D^2_\eta\setminus\Delta_\Phi)\cap  \partial B_\epsilon\,)\big )
\rightarrow D^2_\eta\setminus\Delta_\Phi$$ is a smooth locally trivial
fibration of a pair of spaces.

Note that the topology of $r$ is rather complicated, more complicated than
the topology of a germ defined on a normal surface singularity. While in that case the exceptional
locus is a curve, here the exceptional locus is a surface. The description and characterization
of the embedding and intersection properties  of the components of
$\de$ can be a rather difficult task. The point is that in our next construction we will not need
all these data, but  only a special curve configuration. 
This curve is identified by the
vanishing behaviour of the pullbacks of $f$ and $g$ on $\de$.

Denote by $\dec$ those irreducible components of the total
transform $\de$ along which  {\it only} $f\circ r$ vanishes, that is  $$\dec
=\overline{{ r^{-1}(V_f\setminus V_g)}}.$$
Here \ $\overline{\cdot}$ \  denotes the closure.  Similarly, consider
$$\ded =\overline{{ r^{-1}(V_g\setminus V_f)}} \ \ \mbox{and} \ \ \ \deo ={
r^{-1}(V_f\cap V_g)}.$$

\begin{definition}\labelpar{cdef} The curve configuration $\C$ is
defined by  
\begin{equation*}
\C=(\dec\cap\deo)\cup(\dec\cap\ded).
\end{equation*}
\end{definition}
Thus, for each irreducible component $C$ of $\C$,  there are
exactly two irreducible components $B_1$ and $B_2$ of $\de$ for
which $C$ is a component of $B_1\cap B_2$. By the definition of
$\C$, we can assume that $B_1$ is such that \emph{only} $f\circ r$
vanishes on it, and  $B_2$ is such that either only $g\circ r$
vanishes on it, or both $f\circ r$ and $g\circ r$.

Let $m_{f,B_i}$ (respectively $m_{g,B_i}$) be the vanishing order
 of $f\circ r$ (respectively of $g\circ r$) along $B_i$
($i=1,2$). Then $m_{f,B_1}>0$, $m_{g,B_1}=0$, $m_{f,B_2}\geq 0$,
and  $m_{g,B_2}>0$.
To the component $C$ we assign the triple
$(m_{f,B_1};m_{f,B_2},m_{g,B_2})$.


A component $C$ of $\C$ is  either a compact (projective) curve or
it is non--compact,  isomorphic to a complex disc.   The union of
the non--compact components is the strict transform of $V_f\cap V_g$. Therefore,
$(m_{f,B_1};m_{f,B_2},m_{g,B_2})=(1;0,1)$ for them. The compact
components are exactly those which are contained in $r^{-1}(0)$.

\vspace{2mm}

The graph $\gc$  is
the dual graph of the curve configuration $\C$. \ix{curve configuration $\C$}

\vspace{2mm}

The set of vertices $\calv$ consists of  non-arrowheads $\calw$
and arrowheads $\cala$. The non-arrowhead vertices correspond to
the compact irreducible curves of $\C$ while the arrowhead
vertices correspond to the non-compact ones.

In $\gc$ one connects the vertices $v_i$ and $v_j$ by $\ell$ edges
 if the corresponding curves $C_i$ and $
C_j\subset\C $ intersect in $\ell$ points. Moreover, if a compact
component $C_i\subset\C$, corresponding  to a vertex
$v_i\in\calw$, intersects itself, then each self--intersection point
determines a loop supported by $v_i$  in the graph $\G_{\C}$. The edges
are not directed.

One decorates the graph $\gc$ as follows:

\begin{enumerate}
\item Each non-arrowhead vertex $v\in\calw$ has two
weights: the ordered triple of integers
$(m_{f,B_1};m_{f,B_2},m_{g,B_2})$
 assigned to the {\ic} $C$ corresponding to $v$,
and the genus $g$ of the normalization of $C$.\\
\item Each arrowhead vertex has a single weight: the ordered triple
$(1;0,1)$.\\
\item Each  edge has a weight $\in\{1,2\}$  determined as follows. By
construction, any edge corresponds to
 an  intersection point of three
local irreducible components of $\de$. Among them either one or
two local components belong to $\de_c$.  Correspondingly, in the
first case let the weight of the edge be $1$, while in the second case
$2$.
\end{enumerate}

\section{\ Summary of notation for
$\gc$ and local equations}\labelpar{summary} The next table and
local coordinate realizations will be helpful in the further
discussions and proofs.\\

\noindent {\bf Vertices:}

The vertex
\begin{picture}(30,20)(0,10)
\put(15,15){\circle*{4}} \put(15,0){\makebox(0,0)[b]{$[g]$}}
\put(15,18){\makebox(0,0)[b]{$(m;n,\nu)$}}
\end{picture}
codifies a compact irreducible component $C$ of $\C$ of genus $g$.

\vspace{4mm}

\noindent
There is a local neighbourhood $U_p$ of any generic point $p\in C$
with local coordinates $(u,v,w)$ such that $U_p\cap\de=B_1^l\cup
B_2^l$ , $B_1^l=\{ u=0\}$, $ B_2^l=\{ v=0\}$, and
$$\mbox{
$f\circ
r|_{U_p}=u^{m}v^{n}$, and $g\circ r|_{U_p}=v^{\nu}$ with
$m,\nu>0;\ n\ge 0$.}$$
Here $B_i^l$ are \emph{local} components of $\de$ at $p$. A  missing  $[g]$ means $g=0$.

\vspace{2mm}

The arrowhead vertex
\begin{picture}(60,10)(0,0)
\put(10,3){\vector(1,0){5}}
\put(37,-3){\makebox(0,0)[b]{$(1;0,1)$}}
\end{picture}
codifies a non-compact component. The local description goes by the
same principle as above.

\vspace{3mm}

\noindent {\bf Edges:} An edge corresponds either to an intersection
point $p\in C_{i}\cap C_{j}$,  or to a self-intersection of $C_{i}$ if
 $i=j$. There is a local neighbourhood $U_p$ of $p$ with
 local coordinates $(u,v,w)$ such that $U_p\cap\de= B_1^l\cup B_2^l\cup B_3^l$,
 and $B_1^l=\{u=0 \},\ B_2^l=\{v=0\},\ B_3^l=\{ w=0\}$. Moreover,
 the local equations of $f$ and $g$ are as follows:

\vspace{4mm}

 \noi {\it An edge with decoration 1:}

\vspace{2mm}

 \bs\noi \begin{minipage}[c]{2.5in}

\begin{picture}(140,55)(-20,-30)
\put(20,30){\circle*{4}} \put(100,30){\circle*{4}}
\put(20,30){\line(1,0){80}}
\put(20,35){\makebox(0,0)[b]{$(m;n,\nu)$}}
\put(100,35){\makebox(0,0)[b]{$(m;l,\lambda)$}}
\put(20,15){\makebox(0,0)[b]{$[g]$}}
\put(100,15){\makebox(0,0)[b]{$[g']$}}
\put(20,0){\makebox(0,0)[b]{$v_1$}}
\put(100,0){\makebox(0,0)[b]{$v_2$}}
\put(60,35){\makebox(0,0)[b]{$1$}}
\end{picture}
\end{minipage} \begin{minipage}[b]{2in}
corresponds to local  equations:
$$ f\circ r|_{U_p}= u^{m}v^{n}w^{l}, \ g\circ
r|_{U_p}=v^{\nu}w^{\lambda},$$ where $m,\nu,\lambda>0$ and
$n,l\geq 0$.
\end{minipage}

\vspace{3mm}

\noi \begin{minipage}[b]{2.5in} One has similar equations with\\
$m=\lambda=1$, $l=0$, $\nu>0$,  $n\geq 0$ \\
 if $v_2$ is an arrowhead:\\
\
\end{minipage}  \begin{minipage}[b]{2.5in}

\begin{picture}(140,50)(-20,0)
\put(20,30){\circle*{4}}
\put(20,30){\vector(1,0){70}}
\put(20,35){\makebox(0,0)[b]{$(1;n,\nu)$}}
\put(120,30){\makebox(0,0){$(1;0,1)$}}
\put(20,15){\makebox(0,0)[b]{$[g]$}}
\put(60,35){\makebox(0,0)[b]{$1$}}
\end{picture}
\end{minipage}

\vspace{3mm}

 \noi {\it An edge with decoration 2:}

\vspace{5mm}

\sm\noi
\begin{minipage}[c]{2.5in}

\begin{picture}(140,50)(-20,-30)
\put(20,30){\circle*{4}} \put(100,30){\circle*{4}}
\put(20,30){\line(1,0){80}}
\put(20,35){\makebox(0,0)[b]{$(m;n,\nu)$}}
\put(100,35){\makebox(0,0)[b]{$(m';n,\nu)$}}
\put(20,15){\makebox(0,0)[b]{$[g]$}}
\put(100,15){\makebox(0,0)[b]{$[g']$}}
\put(20,0){\makebox(0,0)[b]{$v_1$}}
\put(100,0){\makebox(0,0)[b]{$v_2$}}
\put(60,35){\makebox(0,0)[b]{$2$}}
\end{picture}
\end{minipage}
\begin{minipage}[b]{2in}
provides local equations:
$$f\circ r|_{U_p}= u^{m}v^{m'}w^{n},\ \  g\circ
r|_{U_p}=w^{\nu}$$ with $m,m',\nu>0$ and $n\geq 0$.
\end{minipage}

\vspace{2mm}

\noi \begin{minipage}[b]{2.5in} One has similar equations with
\\ $m'=\nu=1$, $n=0$ and $m>0$\\if $v_2$ is an arrowhead:
\\ \ \\ \ \end{minipage} \begin{minipage}[b]{2.5in}

\begin{picture}(140,40)(-20,-10)
\put(20,30){\circle*{4}}
\put(20,30){\vector(1,0){70}}
\put(20,35){\makebox(0,0)[b]{$(m;0,1)$}}
\put(120,30){\makebox(0,0){$(1;0,1)$}}
\put(20,15){\makebox(0,0)[b]{$[g]$}}
\put(60,35){\makebox(0,0)[b]{$2$}}
\end{picture}
\end{minipage}

\begin{remark}\labelpar{remarks}
There is a compatibility between the weights that sometimes simplifies
the decorations. Indeed, consider the following edge:


\noi
\begin{minipage}[b]{2.5in}

\begin{picture}(140,60)(-10,5)
\put(20,30){\circle*{4}} \put(100,30){\circle*{4}}
\put(20,30){\line(1,0){80}}
\put(20,35){\makebox(0,0)[b]{$(m;a,b)$}}
\put(100,35){\makebox(0,0)[b]{$(n;c,d)$}}
\put(20,15){\makebox(0,0)[b]{$[g]$}}
\put(100,15){\makebox(0,0)[b]{$[g']$}}
\put(60,35){\makebox(0,0)[b]{$x$}}
\end{picture}

\end{minipage}\hspace{-10mm}
\begin{minipage}[b]{2.6in}
 (a) if $m\neq n$, then $(a,
b)=(c,d)$ and $x=2$;\\
 (b) if $(a,b)\neq (c,d)$, then $m=n$ and $x=1$.
\\ \
\end{minipage}

\sm\noi In particular, in the cases (a)--(b) above, the weight of
the  edge  is determined by the weights of the vertices, hence, in these  cases
it can  be omitted.
\end{remark}

\begin{remark}\labelpar{re:CONJ}
Clearly, different resolutions $r$ provide different curve configurations  $\C$. \ix{curve configuration $\C$}
In particular, the graph $\gc$ is not unique.
We believe that
there is a `calculus' of such graphs connecting different graphs
$\gc$ coming from different embedded resolutions of $(D,0)\subset
(\bfc^3,0)$, 
see the open problem \ref{Oa1} at the end of the book.
\end{remark}

\begin{remark}
It is rather long and difficult
to find a resolution $r$. In the literature there are very
explicit resolution algorithms, but they are rather
involved, and in general very
slow, and usually  with many irreducible exceptional components.
Nevertheless, in the next Chapters we  list many examples.

Finding the resolution of those examples, which do not have some specific form
(which would help to find a canonical sequence of modifications or a direct
resolution) we use a sequence of
ad hoc blow ups following the naive principle: `blow up the worst
singular locus', with the hope to obtain a more or less small configuration.
Some of the  computations are long, and are not given here.
(Several of them were, in fact, done with the help of {\em Mathematica}.)
Hence, we admit that for the reader the verification of some of the
examples of $\G_\C$ listed in the body of the book
can be a really difficult job. Also, since the resolution procedure is not unique,
an independent computation might lead to a different $\G_\C$.

On the other hand, we will also list several families  where we can find in a conceptual
way resolutions which reflect  the geometry and the structure of the
singularities. The next chapters, starting from Chapter \ref{hom}, contain an abundance of them.

Here we list  some preliminary (specially chosen) examples   in order to help the reader follow the
first properties of $\G_\C$ discussed in Chapter \ref{s:GC}.
\end{remark}

\begin{example}\labelpar{nu2}
Assume that $f=x^2y^2+z^2(x+y)$ and take $g=x+y+z$. Then a
possible $\gc$ is:

\begin{picture}(200,100)(-50,0)
\put(55,40){\circle*{4}} \put(55,60){\circle*{4}}
\put(90,30){\circle*{4}} \put(90,70){\circle*{4}}
\put(55,20){\circle*{4}} \put(55,80){\circle*{4}}
\put(130,50){\circle*{4}} \put(170,50){\circle*{4}}
\put(210,50){\circle*{4}} \qbezier(55,40)(55,40)(90,30)
\qbezier(55,60)(55,60)(90,70) \qbezier(55,20)(55,20)(90,30)
\qbezier(55,80)(55,80)(90,70) \put(130,50){\line(-2,1){40}}
\put(130,50){\line(-2,-1){40}} \put(130,50){\line(1,0){80}}
\put(170,50){\vector(0,-1){20}}
\put(30,20){\makebox(0,0){$(2;3,1)$}}
\put(30,40){\makebox(0,0){$(2;4,1)$}}
\put(30,60){\makebox(0,0){$(2;4,1)$}}
\put(30,80){\makebox(0,0){$(2;3,1)$}}
\put(100,20){\makebox(0,0){$(2;8,2)$}}
\put(100,80){\makebox(0,0){$(2;8,2)$}}
\put(175,60){\makebox(0,0){$(1;12,4)$}}
\put(135,60){\makebox(0,0){$(1;8,2)$}}
\put(220,60){\makebox(0,0){$(1;3,1)$}}
\put(170,20){\makebox(0,0){$(1;0,1)$}}
\end{picture}

\end{example}

\begin{example}\labelpar{ex:ketA2}  Assume that $f=y^3+(x^2-z^4)^2$ and $g=z$.
Then a possible  $\gc$ is:

\vspace{5mm}


\begin{picture}(300,160)(-5,-50)
\put(150,30){\circle*{4}} \put(50,30){\circle*{4}}
\put(100,30){\circle*{4}} \put(200,30){\circle*{4}}
\put(250,30){\circle*{4}}
\put(50,50){\circle*{4}}\put(250,50){\circle*{4}}

\put(30,70){\circle*{4}} \put(70,70){\circle*{4}}
\put(230,70){\circle*{4}} \put(270,70){\circle*{4}}

\put(50,30){\line(1,0){200}}\put(50,50){\line(1,1){20}}
\put(50,50){\line(-1,1){20}} \put(250,50){\line(1,1){20}}
\put(250,50){\line(-1,1){20}}
\put(50,30){\line(0,1){20}}\put(250,30){\line(0,1){20}}

\put(30,90){\circle*{4}}
\put(70,90){\circle*{4}}
\put(270,90){\circle*{4}}
\put(230,90){\circle*{4}}
\put(150,10){\circle*{4}}\put(150,-10){\circle*{4}}

\put(30,70){\line(0,1){20}}\put(70,70){\line(0,1){20}}
\put(230,70){\line(0,1){20}}\put(270,70){\line(0,1){20}}
\put(150,30){\vector(0,-1){60}}

\put(3,90){\makebox(0,0){$(3;9,1)$}}
\put(3,70){\makebox(0,0){$(6;9,1)$}}
\put(3,50){\makebox(0,0){$(6;12,1)$}}
\put(95,90){\makebox(0,0){$(2;8,1)$}}
\put(95,70){\makebox(0,0){$(2;12,1)$}}

\put(205,90){\makebox(0,0){$(2;8,1)$}}
\put(205,70){\makebox(0,0){$(2;12,1)$}}

\put(295,90){\makebox(0,0){$(3;9,1)$}}
\put(295,70){\makebox(0,0){$(6;9,1)$}}
\put(295,50){\makebox(0,0){$(6;12,1)$}}

\put(50,20){\makebox(0,0){$(1;12,1)$}}
\put(100,20){\makebox(0,0){$(1;18,2)$}}
\put(150,40){\makebox(0,0){$(1;24,3)$}}
\put(200,20){\makebox(0,0){$(1;18,2)$}}
\put(250,20){\makebox(0,0){$(1;12,1)$}}

\put(128,10){\makebox(0,0){$(1;12,2)$}}
\put(128,-10){\makebox(0,0){$(1;12,3)$}}
\put(150,-40){\makebox(0,0){$(1;0,1)$}}
\end{picture}

\end{example}

\begin{example}\labelpar{ex:347}  Assume that
 $f=x^3y^7-z^4$ and $g=x+y+z$. A possible graph $\gc$ is:

\vspace{5mm}

\begin{picture}(300,160)(0,0)
\put(150,30){\circle*{4}} \put(50,30){\circle*{4}}
\put(100,30){\circle*{4}} \put(200,30){\circle*{4}}
\put(250,30){\circle*{4}} \put(30,50){\circle*{4}}
\put(70,50){\circle*{4}} \put(230,50){\circle*{4}}
\put(270,50){\circle*{4}}
\put(150,30){\vector(-1,-2){10}}\put(150,30){\vector(1,-2){10}}
\put(50,30){\line(1,0){200}}\put(50,30){\line(1,1){20}}
\put(50,30){\line(-1,1){20}} \put(250,30){\line(1,1){20}}
\put(250,30){\line(-1,1){20}}

\put(30,70){\circle*{4}}
\put(30,90){\circle*{4}}\put(30,110){\circle*{4}}
\put(30,130){\circle*{4}}\put(30,150){\circle*{4}}

\put(70,70){\circle*{4}}\put(70,90){\circle*{4}}

\put(270,70){\circle*{4}}
\put(270,90){\circle*{4}}\put(270,110){\circle*{4}}

\put(230,70){\circle*{4}}\put(230,90){\circle*{4}}

\put(30,50){\line(0,1){100}}\put(70,50){\line(0,1){40}}
\put(230,50){\line(0,1){40}}\put(270,50){\line(0,1){60}}

\put(6,150){\makebox(0,0){$(4;4,1)$}}
\put(6,130){\makebox(0,0){$(4;8,1)$}}
\put(6,110){\makebox(0,0){$(12;8,1)$}}
\put(3,90){\makebox(0,0){$(20;8,1)$}}
\put(3,70){\makebox(0,0){$(20;16,1)$}}
\put(3,50){\makebox(0,0){$(20;24,1)$}}

\put(95,90){\makebox(0,0){$(7;10,1)$}}
\put(95,70){\makebox(0,0){$(7;20,2)$}}
\put(95,50){\makebox(0,0){$(7;24,1)$}}

\put(205,90){\makebox(0,0){$(3;10,1)$}}
\put(205,70){\makebox(0,0){$(3;20,2)$}}
\put(205,50){\makebox(0,0){$(3;16,1)$}}

\put(295,110){\makebox(0,0){$(4;8,1)$}}
\put(295,90){\makebox(0,0){$(8;8,1)$}}
\put(295,70){\makebox(0,0){$(8;12,1)$}}
\put(295,50){\makebox(0,0){$(8;16,1)$}}

\put(50,20){\makebox(0,0){$(28;24,1)$}}
\put(100,20){\makebox(0,0){$(1;24,1)$}}
\put(150,40){\makebox(0,0){$(1;20,2)$}}
\put(200,20){\makebox(0,0){$(1;16,1)$}}
\put(250,20){\makebox(0,0){$(12;16,1)$}}

\put(125,4){\makebox(0,0){$(1;0,1)$}}
\put(178,4){\makebox(0,0){$(1;0,1)$}}
\end{picture}

\end{example}

\section{\ Assumption A}\labelpar{re:w2} In the  \ix{Assumption A|textbf}
graph $\gc$ a special attention is needed for edges of weight 2
with both end--vertices having first multiplicity $m=1$ (including the case of loops too,
when the two end-vertices coincide). Such an edge
corresponds to a point $p$ which lies at the intersection of three
locally irreducible components on exactly two of which only
$f\circ r$ vanishes and that happens with multiplicity one. Thus
the intersection of these two locally irreducible components is
the strict transform of a component of $Sing(V_f)$ with
transversal type $A_1$, which has {\em not } been blown up during
the resolution procedure $r$.

\ix{curve configuration $\C$}
Performing an additional blow up along this intersection, we
obtain a new embedded resolution $r'$, whose special curve
configuration $\C'$ will have an additional rational curve. The relevant edge of the
dual
graph $\gc$ is changed to the dual graph $\G_{\C'}$ with a new part via the following
transformation:

\vspace{1cm}

\begin{picture}(400,30)(10,0)
\put(20,30){\circle*{4}} \put(90,30){\circle*{4}}
\put(20,30){\line(1,0){70}}
\put(10,30){\makebox(0,0){$\cdots$}}
\put(100,30){\makebox(0,0){$\cdots$}}
\put(20,35){\makebox(0,0)[b]{$(1;n,\nu)$}}
\put(90,35){\makebox(0,0)[b]{$(1;n,\nu)$}}
\put(20,15){\makebox(0,0)[b]{$[g]$}}
\put(90,15){\makebox(0,0)[b]{$[g']$}}
\put(55,35){\makebox(0,0)[b]{$2$}}

\put(120,30){\vector(1,0){25}}

\put(170,30){\circle*{4}}
\put(240,30){\circle*{4}} \put(310,30){\circle*{4}}
\put(170,30){\line(1,0){140}}
\put(160,30){\makebox(0,0){$\cdots$}}
\put(320,30){\makebox(0,0){$\cdots$}}
\put(170,35){\makebox(0,0)[b]{$(1;n,\nu)$}}
\put(240,35){\makebox(0,0)[b]{$(2;n,\nu)$}}
\put(310,35){\makebox(0,0)[b]{$(1;n,\nu)$}}
\put(170,15){\makebox(0,0)[b]{$[g]$}}
\put(310,15){\makebox(0,0)[b]{$[g']$}}
\put(275,35){\makebox(0,0)[b]{$2$}}
\put(200,35){\makebox(0,0)[b]{$2$}}
\end{picture}

In order to avoid some pathological cases in the
discussion, and to have a uniform treatment of the properties of
$\gc$ (e.g. of the `cutting edges' and subgraphs  $\gce$ and
$\gck$, see the next chapter), we will assume that $\gc$ has no such
edges, that is,  we have already performed the extra blow-ups, if it was
necessary. The same discussion/assumption is valid for loops and for the
situation when one of the end--vertices above is replaced by an arrowhead.
\ix{cutting edge}

This assumption is not crucial at all, the interested reader might
eliminate it at the price of having to slightly  modify  the
statements of the  forthcoming sections \ref{gamma1} and
\ref{ss:GCK}. In fact, in the algorithm which provides
$\partial F$, Assumption A is irrelevant.\ix{Milnor!fiber!boundary}

\chapter{The  graph $\gc$. Properties}\label{s:GC}

\section{\ Why one should  work with $\C$?}\labelpar{why}\setcounter{equation}{0}

From the definition \ref{cdef} of the curve $\C$ it is not so transparent why exactly this
configuration should play the crucial role in several results
regarding  non-isolated hypersurface singularities.
In this section we wish to stress a universal property of $\C$,
which  motivates the definition, and will imply some immediate properties as well.
We keep  the notations of  Chapter \ref{ss:1.3}. In particular,
 $\Phi:\Phi^{-1}(D^2_\eta)\cap B_\epsilon\to D^2_\eta$
 is a good representative, $(c,d)$ are the coordinates of the target, and
 $\Phi(\Sigma_f)=\Delta_1=\{c=0\}$.
\ix{graph!$\G_\C$}\ix{wedge neighbourhood|textbf}

We start with the definition of a special set `near' the discriminant component $\Delta_1$.
For any integer $M>0$, define the {\it wedge neighbourhood} of $\Delta_1$  by
$$\Wedge=\{ (c,d)\in D^2_\eta\  |\  0<|c|<|d|^M \}.$$
It is easy to verify that
\begin{equation}\labelpar{lem:wedge1}  \mbox{
$\Wedge \subset\ D^2_\eta\setminus\Delta_\Phi$~ provided that   ~$M\gg 0$.}
\end{equation}
Hence, $\Wedge$ is a small  tubular neighbourhood of $\Delta_1\setminus \{0\}$, not intersecting the other
components of the discriminant. In particular, the restriction of $\Phi$ over $\Wedge$  is a smooth
locally trivial fiber bundle, equivalent with the restriction of the fibration over the torus
$T_\delta$,  containing all the information about the commuting monodromies
$m_{\Phi,hor}$ and $m_{\Phi,ver}$ near $\Delta_1$, cf. \ref{ss:ICIS}.

The geometry of the fibration over
$\Wedge$, or of these two monodromies,  were  frequently used in the literature
in connection with the  `correction terms' of several singularity invariants. More precisely, if $i$ denotes
a numerical invariant, then, usually  $i(f+g^k)$,
associated with the series $f+g^k$ approximating $f$  with $k$ large, are not easy to determine. Nevertheless,
the correction term $i(f+g^k)-i(f)$, in many cases, depends only on the behaviour of $\Phi$ above $\Wedge$.
This is the source of several formulas: in the classical case of   Iomdin  $i$ is  the
Euler characteristic of the Milnor fiber
\cite{Io}, in  Siersma's article \cite{Si2}  $i$ is the zeta function,
in M. Saito's paper  \cite{Saito}   $i$ is the spectrum, while in \cite{Ne126,Neta}
 $i$ is the signature of the Milnor fiber.

\vspace{2mm}

Now, the restriction of $\Phi$ above $\Wedge$ provides an alternative definition/charac\-terization of
the curve configuration $\C$, associated with the resolution $r$.
\ix{curve configuration $\C$}

\begin{lemma}\labelpar{lem:wedge2} {\bf First characterization of $\C$ }\cite[(5.5)]{eredeti}.

 Fix a resolution $r$ and  set $\wP=\Phi\circ r$.
Then, for  ~$M\gg 0$, one has
 \begin{equation}\label{zart}
\overline{\wP^{-1}(\Wedge)}\cap\wP^{-1}(0)=\C.
\end{equation}
\end{lemma}
\begin{proof} The proof follows by case-by-case local verification
in the neighbourhood of  different type of points of the resolution $r$,
and from the properness of $\wP$.

For example, let us take a point  $p\in\deo\setminus \C$. We have to show that
$p\not\in\overline{\wP^{-1}(\Wedge)}$.  Assume the contrary, and take
a local coordinate neighbourhood $U_p$ of $p$ with local coordinates  $(u,v,w)$ such that
$\wP|_{U_p}=(u^m,u^{\mu})$ for some integers  ${\mu},m>0$ (up to an invertible element).
Then there exists a sequence $\{p_j=(u_j,v_j,w_j)\}_{j=1}^\infty$ in $\wP^{-1}(\Wedge)\cap U_p$
such that $\lim_{j\to\infty}p_j=p$,  hence $u_j\to 0$.
Since $p_j\in{\wP^{-1}(\Wedge)}$ it follows that  $|u_j|^m<|u_j|^{M{\mu}}$.
 This leads to a contradiction for $M$  sufficiently large.

The other local verifications are similar and are left to the reader.
\end{proof}

Therefore, one expects, that from the dual graph $\gc$ of the special curve
configuration $\C$, endowed with all the necessary multiplicity
data (codifying the local behavior of $f$ and $g$ near $\C$),
one is able to extract topological information about
any  set $\cals\subset\Phi^{-1}(D^2_\eta)\cap B_\epsilon  $ with
$\Phi(\cals)\subset\Wedge$.  \ix{curve configuration $\C$}

Note that for $\delta>0$ sufficiently small, $\partial D_\delta\in \Wedge$,
 hence  $\Phi^{-1}(\partial D_\delta)$, the non-trivial part of the boundary of the Milnor fiber
 (by the discussion of \ref{rem:phif}) is such a space.
Hence, $\Phi$ over $\Wedge$, or its lifting via $r$ codified in $\gc$,
should contain crucial information regarding
$\partial F$ and its Milnor monodromy.\ix{Milnor!fiber!boundary}

This fact  is exploited in the present work.

\vspace{2mm}

Now we continue with some immediate consequences. From (\ref{zart}) one obtains the following.
\begin{corollary}\labelpar{connectedhez}

a) \  For any open (tubular) neighbourhood $T(\C )\subset V^{emb}$,
there exist a sufficiently large $M$  and a sufficiently small
$\eta$ such that for any $(c,d)\in\Wedge$ the `lifted Milnor
fiber'  ~$\wP^{-1}(c,d)$ is in $T(\C)$.

b) \ For any $p\in\C$ and local neighbourhood $U_p$ of $p$, there
exist a sufficiently large $M$  and a sufficiently small $\eta$
such that for any $(c,d)\in\Wedge$ one has
$U_p\cap\wP^{-1}(c,d)\neq\emptyset$.
\end{corollary}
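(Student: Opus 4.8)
The plan is to derive both parts directly from the identity (\ref{zart}), $\overline{\wP^{-1}(\Wedge)}\cap\wP^{-1}(0)=\C$, which holds once $M$ is large and $\eta$ small, together with two soft inputs: for (a) the properness of $\Phi\circ r$, and for (b) the explicit monomial normal forms of $\wP$ near $\C$ recorded in (\ref{summary}). Throughout I fix once and for all an $M=M_0$ for which (\ref{zart}) is valid, and I use that $\wP=\Phi\circ r$, restricted over $D^2_\eta\setminus\Delta_\Phi$, is a proper submersion: its fibre over $(c,d)$ equals $r^{-1}(\Phi^{-1}(c,d)\cap B_\epsilon)$, which is compact since $r$ is proper and $\Phi^{-1}(c,d)\cap B_\epsilon$ is closed in the compact ball $B_\epsilon$; and for every $\eta'\le\eta$ the relevant pieces of $\wP^{-1}$ have closure inside the compact set $r^{-1}(\Phi^{-1}(\overline{D^2_{\eta'}})\cap B_\epsilon)$.

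For (a) I argue by contradiction. Keeping $M=M_0$ fixed, suppose no $\eta$ works. Then for each $k\ge 1$ there is a point $x_k\in V^{emb}$ with $y_k:=(\Phi\circ r)(x_k)=(c_k,d_k)$ satisfying $0<|c_k|<|d_k|^{M_0}$ and $|d_k|<1/k$, but $x_k\notin T(\C)$. For $k$ large one has $|c_k|,|d_k|<\eta$, so $y_k\in\Wedge$ and hence $x_k\in\wP^{-1}(\Wedge)$; moreover all these $x_k$ lie in the compact set $r^{-1}(\Phi^{-1}(\overline{D^2_\eta})\cap B_\epsilon)$, so after passing to a subsequence $x_k\to x_\infty$. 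Then $x_\infty\in\overline{\wP^{-1}(\Wedge)}$, while $(\Phi\circ r)(x_\infty)=\lim_k y_k=0$, so $x_\infty\in\wP^{-1}(0)$; by (\ref{zart}) this gives $x_\infty\in\C$. But $T(\C)$ is open and contains $\C$, and $x_k\notin T(\C)$, so the limit $x_\infty\notin T(\C)$, contradicting $x_\infty\in\C$. Hence some $\eta$ works, i.e.\ $\wP^{-1}(\Wedge)\subset T(\C)$, and in particular $\wP^{-1}(c,d)\subset T(\C)$ for every $(c,d)\in\Wedge$.

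For (b), since every neighbourhood of $p$ contains one of the distinguished polydisc neighbourhoods of (\ref{summary}), it is enough to treat $U_p=\{|u|,|v|,|w|<\rho\}$ (respectively $\{|u|,|v|<\rho\}$ at a generic point of a curve) in the local coordinates there, in which $\wP$ is a monomial map; e.g.\ $(u,v,w)\mapsto(u^mv^nw^l,\ v^\nu w^\lambda)$ for an edge of decoration $1$, $(u,v,w)\mapsto(u^mv^{m'}w^n,\ w^\nu)$ for one of decoration $2$, and the evident analogues in the arrowhead cases, all with $p$ the origin. Given $(c,d)\in\Wedge$ — so $c,d\neq0$, $|c|<|d|^M$, $|c|,|d|<\eta$ — one solves the two monomial equations explicitly: set $w:=|d|^\alpha$ for a small fixed $\alpha>0$, take $v$ to be any $\nu$-th root of $d/w^\lambda$ (its modulus is a fixed positive power of $|d|$, hence $<\rho$ for $\alpha,\eta$ small), and take $u$ to be any $m$-th root of $c/(v^nw^l)$, whose modulus is $|c|/|d|^{\beta}$ with $\beta$ a constant depending only on the exponents at $p$ — this is $<\rho$ as soon as $M\ge\beta+1$ and $\eta$ is small. (Only that $\bfc$ is algebraically closed is used, to extract the roots.) This yields a point of $U_p\cap\wP^{-1}(c,d)$; the remaining normal forms of (\ref{summary}) are handled by the identical recipe. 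Choosing $M$ larger than the finitely many exponent combinations occurring at $p$ and $\eta$ correspondingly small gives the claim.

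The only delicate points are bookkeeping. In (a) one must pin down the compact set containing the sequence $(x_k)$ — this is exactly where properness of $r$ (hence of $\wP$) is used — and check that the limit lies both in $\overline{\wP^{-1}(\Wedge)}$ and in $\wP^{-1}(0)$. In (b) one must run through the handful of local normal forms of (\ref{summary}), including the cases where one end is an arrowhead, and keep track of which power of $|d|$ each coordinate receives so that the lower bound on $M$ is visibly finite. I note in passing that a more `global' proof of (b), via local triviality of $\wP$ over the connected set $\Wedge$, runs into the nuisance that "the fibre meets the open set $U_p$" is an open but not obviously closed condition on $(c,d)$; the monomial computation avoids this, and is in any case the local computation that underlies (\ref{zart}) itself.
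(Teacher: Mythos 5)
Your proof is correct. The paper itself offers no argument here: Corollary (\ref{connectedhez}) is simply announced as a consequence of the identity (\ref{zart}), which is in turn quoted from \cite[(5.5)]{eredeti}. Your write-up therefore does not so much diverge from the paper's proof as supply one, and it does so along the lines the authors presumably intend. For part (a) your compactness argument is the standard one, and you correctly isolate the two inputs that make it work: properness of $r$ (so that the sequence $(x_k)$ lives in the compact set $r^{-1}(\Phi^{-1}(\overline{D^2_\eta})\cap B_\epsilon)$) and the inclusion $\overline{\wP^{-1}(\Wedge)}\cap\wP^{-1}(0)\subset\C$, which survives shrinking $\eta$ at fixed $M_0$. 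For part (b) you make a point that the paper's one-line justification glosses over: the closure identity (\ref{zart}) only gives that each $U_p$ meets $\wP^{-1}(c,d)$ for \emph{some} $(c,d)\in\Wedge$, and the upgrade to \emph{all} $(c,d)\in\Wedge$ is not formal (as you note, ``the fibre meets $U_p$'' is open but not obviously closed in $(c,d)$, and the monodromy of the locally trivial fibration need not preserve $U_p$). Your explicit monomial computation in the charts of (\ref{summary}) settles this directly, and it is essentially the same local computation that underlies (\ref{zart}) in \cite{eredeti}; the only bookkeeping worth adding is that the polydisc should be taken small enough that $r$ maps it into $B_\epsilon$, so that the constructed point really lies in the domain $r^{-1}(\Phi^{-1}(D^2_\eta\setminus\Delta_\Phi)\cap B_\epsilon)$ of $\wP$ — harmless, since shrinking $U_p$ only strengthens the statement.
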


Therefore, the curve $\C$ can be regarded as the `limit' of the lifted Milnor fiber.
In particular, from \ref{connectedhez} and from the connectedness of the Milnor fiber, see
\ref{prop:ICIScon}, we obtain that

\begin{corollary}\labelpar{cor:CONMF}
$\C$ is connected.
\end{corollary}
\begin{remark}\labelpar{felbont} We point out a natural decomposition of
$\widetilde{F}_\Phi$, as an immediate consequence of
the fact that the curve $\C$ is  a `limit' of this fiber. Here
$\widetilde{F}_\Phi=\wP^{-1}(c,d)$, with  $(c,d)\in\Wedge$ as in \ref{connectedhez}.
The construction resonate with the construction of the universal cyclic covering graph presented in
\ref{5.1}. \ix{graph!covering!universal}

\vspace{2mm}

For each edge $e$ of $\gc$, let $P_e$ be the
corresponding intersection point of two components of
$\C$. Let $U_{P_e}$ be a small neighbourhood of $P_e$. Then
 $\widetilde{F}_\Phi$ intersects $U_{P_e}$ in a
tubular neighbourhood of some embedded circles of
$\widetilde{F}_\Phi$. Consider the collection $B$ of all these
circles. Then $\widetilde{F}_\Phi$ is a union
\begin{equation}
\bigcup_{v\in\cV(\gc)}\widetilde{F}_v, \end{equation}where each
$\widetilde{F}_v$ is the closure of those components of $\widetilde{F}_\Phi\setminus B$
which are sitting  in a tubular neighbourhood of the
corresponding component $C_v$ of $\C$. Moreover, both horizontal
and vertical monodromy actions over the torus $T_\delta$ (cf. \ref{ss:ICIS})
can  be chosen in
such a way that they preserve the cutting circles $B$ and each subset $\widetilde{F}_v$.
\ix{cutting circles|textbf}
Furthermore, their
restrictions on each $\widetilde{F}_v$ are isotopic to a pair of commuting
finite actions on $\widetilde{F}_v$.
(This last fact follows from the
fact that any $S^1$--bundle over a non-closed Riemann surface is
trivial, and from the particular form of the local equations of $f$ and $g$
near $C_v$, cf. \ref{summary}.)
\end{remark}

The above limit procedure can be pushed further. Recall that $\Delta_1=\{c=0\}\cap D^2_\eta$, hence
$\Delta_1$ is in the closure of $\Wedge$. Hence, the limit procedure  $(c,d)\mapsto (0,0)$
with $(c,d)\in\Wedge$
can be done in two steps: first $(c,d)$ tends to some point of $\Delta_1\setminus \{0\}$, then along
this discriminant component we approach the origin. The analogues of \ref{lem:wedge2} and \ref{connectedhez}
are  the following:

\begin{lemma}\labelpar{lem:wedge3} {\bf Second characterization of $\C$ }\cite[(5.8)]{eredeti}.

With the same notations as in \ref{lem:wedge2}, one has
 \begin{equation}\label{zart2}
\overline{\wP^{-1}(\Delta_1\setminus \{0\})}\cap\wP^{-1}(0)=\C.
\end{equation}
In particular,

a)\  For any  open tubular neighbourhood $T(\C )\subset V^{emb}$,
if $|d|$ is sufficiently small then    \ $\wP^{-1}((0,d))\subset T(\C)$.

b)\ For  any $p\in\C$ and neighbourhood $U_p$ of $p$, if $|d|$ is
sufficiently small then $U_p\cap\wP^{-1}((0,d))\neq\emptyset$.
\end{lemma}

This lemma will also have connectivity consequences, see  \ref{m2tetel}.

\section{\ A partition of $\G_\C$ and cutting edges}\labelpar{ss:parcut}
\setcounter{equation}{0}

We start to review from \cite{eredeti} those properties of $\gc$ that are needed in
the sequel.

\begin{proposition} \ $\gc$ is
connected.\end{proposition}
\begin{proof} Use \ref{cor:CONMF} and the construction of $\gc$. \end{proof}

\begin{definition}\labelpar{stricttra} \   The vertices of the graph $\gc$ can be divided
into two disjoint sets
${\calv}(\gc)={\calv}^1(\gc)\cup{\calv}^2(\gc)$, where
${\calv}^1(\gc)$ (respectively ${\calv}^2(\gc)$)
 consists  of  the vertices decorated by $(m;n,\nu)$ with $m=1$
(respectively $m\geq 2$).

We will use similar notations for
$\calw(\gc)$ and $\cala(\gc)$.

The set of edges $(v_1,v_2)$ with ends $v_1\in \calv^1(\gc)$ and
$v_2\in\calw^2(\gc)$ will be called \emph{cutting edges}. Their
edge--decoration is always 2. We denote their index set by
$\cale_{cut}$.
\end{definition}
\ix{cutting edge|textbf}

Note that $\cala(\gc)=\cala^1(\gc)$, hence $\calv^2(\gc)=\calw^2(\gc)$.

According to the decomposition $\calv=\calv^1\cup\calv^2$, we
partition $\gc$ into two graphs $\gce$ and $\gck$.

The description of the subgraphs $\gce$ and $\gck$ is the subject of the next sections.

\section{\ The graph $\gce$}\labelpar{gamma1}\setcounter{equation}{0}

\bekezdes {\bf The construction of  $\gce$.} \
The graph $\gce$ is constructed in two steps.

First, consider the maximal subgraph  of $\gc$,  which is spanned by the vertices $v\in
{\calv}^1(\gc)$ and has no edges of weight $2$. Next,
corresponding to each cutting edge --- whose end-vertices $v_1$ and \ix{cutting edge}
 $v_2$ carry weights $(1;n,\nu)$ and $(m;n,\nu)$,
$m>1$ respectively --- add an arrowhead decorated with the weight $(m;n,\nu)$ connected  to
$v_1$ by an edge. We will keep the decoration 2 of these `inherited cutting edges', although
their type can be recognized by the principle \ref{remarks}.
\ix{graph!$\G_\C^1$|textbf}\ix{graph!of normalization of $V_f$}

In particular, $\gce$ has two types of arrowheads: first, all the
arrowheads of $\gc$ remain  arrowheads of $\gce$, all of them with weight
$(1;0,1)$; then,  each cutting edge provides an  arrowhead with weights of type $(m;n,\nu)$,
with first entry  $m>1$.

\vspace{2mm}

We wish to provide more details regarding  a special situation.

Assume that an edge of $\gc$,  decorated by 2,  supports an arrowhead.
Then, by  Assumption A, cf. \ref{re:w2}, the other vertex of the edge \ix{Assumption A}
should automatically have  weight $(m;0,1)$ with $m>1$.
In such a case,   this edge
becomes a double arrow of $\gce$: an edge supporting two
arrowheads, one with weight $(1;0,1)$, the other with $(m;0,1)$.
This double arrow forms a connected component of $\gce$.

\begin{bekezdes}\label{bek:G1C} {\bf The simplified graph $G^1_{\C}$.} \
Deleting some of the information of $\gce$, we obtain another
graph $G^1_{\C}$, which looks like the weighted embedded
resolution graph of a germ of an analytic function defined on a
normal surface singularity; that is,
$G^1_{\C}$ will be a plumbing graph as in \ref{ss:NSS}.\ix{plumbing!graph}

The construction runs as follows.

First, keep the genus-decorations of all
non-arrowheads.  Next,  for any non-arrowhead vertex, and  for
 any arrowhead with decoration $(1;0,1)$,    replace the
weight $(1;n,\nu)$ by $(\nu)$.  The weight  $(m;n,\nu)$ of an arrowhead vertex
with $m>1$ is replaced by weight $(0)$. Furthermore, delete all old
edge-decorations, and insert everywhere the new edge-decoration +
(hence, they can be even omitted  by our convention from \ref{ss:NSS}). Finally, we
determine the Euler numbers via (\ref{eq:2.2.1}) using
edge-decorations  $\epsilon_e=+$.
\end{bekezdes}

\begin{proposition}\labelpar{m1}\cite[(5.27)]{eredeti} \ $G^1_{\C}$ is a possible
embedded resolution graph of
$$V_f^{norm}\stackrel{g\circ n}{\longrightarrow} (\bfc,0),$$
where $n:(V_f^{norm},n^{-1}(0)) \to (V_f,0)$ is the normalization.

In particular, the number of connected components of $\gce$ coincides
with the number of irreducible components of the germ
$(V_f,0)$.  A connected component of $\gce$ which
consists of a double arrow, corresponds to a smooth component
of $V_f^{norm}$ on which $g\circ n$ is smooth, and which has
not been modified during the resolution.

The arrowheads with multiplicity $(0)$  represent the strict
transforms of the singular locus $\Sigma_f$.
\end{proposition}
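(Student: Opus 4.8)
The plan is to \emph{reconstruct} $\gce$ (equivalently $G^1_\C$) geometrically and then match it, decoration by decoration, with the output of the construction of Section~\ref{summary}. Let $\widetilde{V_f}\subset V^{emb}$ be the strict transform of $V_f$ under $r$, which equals the union of those components of $\dec$ along which $f\circ r$ vanishes with multiplicity one (a component of $\dec$ with $f\circ r$--multiplicity $\ge 2$ is $r$--exceptional and cannot lie over $0$, since any $r$--exceptional divisor over $0$ is contained in $r^{-1}(0)\subseteq r^{-1}(V_f\cap V_g)$ and hence carries both $f\circ r$ and $g\circ r$, i.e.\ lies in $\deo$). As $\de=r^{-1}(D)$ is a normal crossing divisor and $r$ is biholomorphic off $r^{-1}(Sing(V_f)\cup Sing(V_g))$, the surface $\widetilde{V_f}$ is smooth, and $r_f:=r|_{\widetilde{V_f}}\colon\widetilde{V_f}\to V_f$ is proper and birational on each connected component. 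Since $\widetilde{V_f}$ is normal, $r_f$ factors through the normalization as $r_f=n\circ\rho$ with $\rho\colon\widetilde{V_f}\to V_f^{norm}$ proper and birational; thus $\rho$ is a resolution of $V_f^{norm}$. Moreover it is an \emph{embedded} one for $g\circ n$: the exceptional set of $\rho$ sits in $\widetilde{V_f}\cap r^{-1}(0)$, and it, together with the strict transforms of $V_{g\circ n}$ and of $\Sigma$, lies inside the trace of $\de$ on $\widetilde{V_f}$, which is a normal crossing curve configuration (the restriction of a normal crossing divisor to a smooth member of it).

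Next I would match the vertices. The compact curves of $\widetilde{V_f}\cap r^{-1}(0)$ belonging to $\C$ are exactly $(\dec\cap\deo)\cup(\dec\cap\ded)$ intersected with the strict transform, i.e.\ exactly the components $C=B_1\cap B_2$ of $\C$ with $m_{f,B_1}=1$; these are the non--arrowheads of $\calw^1(\gc)$, hence of $\gce$, each kept with its genus. Every other curve of $\de\cap\widetilde{V_f}$ is of the form $\widetilde{V_f}\cap B$ with $B\in\dec$ and $m_{f,B}\ge 2$, hence an $r$--exceptional divisor over a component $\Sigma_j$ of $\Sigma$, so $\widetilde{V_f}\cap B$ maps onto $\Sigma_j$ and is a component of the strict transform of $St(\Sigma)$. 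Using the cutting--edge local model of Section~\ref{summary} ($f\circ r=u^{m}v^{m'}w^{n}$, $g\circ r=w^{\nu}$, with $m=1$, $m'\ge 2$): on $\widetilde{V_f}=\{u=0\}$ the vertex curve $C_{v_1}=\{w=0\}$ meets $\widetilde{V_f}\cap B_2=\{v=0\}$ transversally, and $g\circ n\circ\rho=w^{\nu}$ has multiplicity $0$ along $\{v=0\}$ --- exactly the arrowhead of weight $(m';n,\nu)\mapsto(0)$ attached to $v_1$ in $\gce$. The non--compact components of $\C$ are the strict transform of $V_f\cap V_g$, i.e.\ of $V_{g\circ n}$ on $V_f^{norm}$, carrying multiplicity $\nu=1$, i.e.\ the arrowheads of weight $(1;0,1)\mapsto(1)$. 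Finally, at a generic point of a non--arrowhead $C$ (local model $f\circ r=u v^{n}$, $g\circ r=v^{\nu}$) one has $g\circ n\circ\rho|_{\{u=0\}}=v^{\nu}$, so the vanishing order of $g\circ n$ along $C$ is $\nu$, which is the multiplicity $(\nu)$ recorded in $G^1_\C$.

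It remains to see the Euler numbers agree. In $G^1_\C$ these are \emph{defined} by (\ref{eq:2.2.1}) with all edge signs $+$, so it suffices to check that the true self--intersections $C^2$ inside $\widetilde{V_f}$ satisfy the same relations. This is the standard fact that $\di(g\circ n\circ\rho)$ on $\widetilde{V_f}$ is linearly equivalent to a fibre of $g\circ n\circ\rho$ over a nonzero value, which is disjoint from every compact curve $C$; intersecting with $C$ gives $0=\nu_C\,C^2+\sum_{v}\nu_v$, the sum over the non--arrowheads and the $V_{g\circ n}$--arrowheads adjacent to $C$, while the $St(\Sigma)$--arrowheads contribute $0$ because $g$ does not vanish along $\Sigma$ --- precisely (\ref{eq:2.2.1}). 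Hence the Euler numbers of $G^1_\C$ coincide with those of $\rho$. For the count of components: each irreducible component of $(V_f,0)$ gives one component of $V_f^{norm}$, whose resolution has connected exceptional set, hence the corresponding connected component of the non--arrowhead part of $\gce$; the degenerate case of a smooth component of $V_f^{norm}$ on which $g\circ n$ is a coordinate untouched by $r$ produces the ``double arrowhead'' component mentioned in the statement.

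The step I expect to be the real work is the bookkeeping in the second paragraph: showing that \emph{every} curve of $\de\cap\widetilde{V_f}$ which is not a non--arrowhead of $\gce$ is the strict transform either of $V_{g\circ n}$ or of a component of $St(\Sigma)$, and in particular that a component $B\in\dec$ with $m_{f,B}\ge 2$ meeting $\widetilde{V_f}$ always lies over a one--dimensional stratum of $Sing(V_f)$ and never over $0$. This is exactly where one invokes the structure of the chosen embedded resolution $r$ of $(D,0)$ and Assumption~A (which rules out the pathological weight--$2$ edges and loops of (\ref{re:w2})); it is essentially the content of the analysis of $\gc$ carried out in \cite{eredeti}.
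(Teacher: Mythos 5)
Proposition (\ref{m1}) is stated here as a quotation of \cite[(5.27)]{eredeti} and the paper itself supplies no proof, but your reconstruction is correct and follows exactly the route one expects that proof to take: identify the union $\widetilde{V_f}$ of the multiplicity--one components of $\dec$ with a good resolution $\rho$ of $V_f^{norm}$ via the universal property of normalization, read the multiplicities of $g\circ n$ and the $(0)$-- and $(1)$--arrowheads off the local models of (\ref{summary}), and recover the Euler numbers from the numerical triviality of $\di(g\circ n\circ \rho)$ on the compact exceptional curves, which is precisely (\ref{eq:2.2.1}) with all signs $+$. The only step you leave tacit is the converse inclusion in your first sentence --- that a component of $\dec$ with $f\circ r$--multiplicity one is genuinely a strict--transform component and not $r$--exceptional --- which is what makes $\calw^1(\gc)$ coincide with the compact curves of $\C$ lying on $\widetilde{V_f}$; this holds because the centers of $r$ lie over $Sing(V_f)\cup Sing(V_g)$ and $V_f$ has multiplicity at least two along $Sing(V_f)$, and is part of the standing conventions of \cite{eredeti}.
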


If we do not wish to preserve the information about the position of the
strict transforms of the singular locus $\Sigma_f$, we delete
 the arrowheads with weight $(0)$  from $G^1_{\C}$. What remains is exactly
the disjoint union of the  embedded resolution graphs of the connected
components of  $(V_f^{norm},g\circ n)$.

In a similar way, if we do not wish to keep any information about the
germ $g$, we delete all  multiplicity decorations and arrowheads with
positive multiplicities. What remains is the collection of
resolution graphs of the components of $V_f^{norm}$, where the $(0)$--arrowheads
mark the strict transforms of $\Sigma_f$.

\begin{example}\labelpar{ex:347b'}  Assume that
 $f=x^3y^7-z^4$ and $g=x+y+z$ as in  \ref{ex:347}.

 There are two cutting edges: the extreme edges of the horizontal string. \ix{cutting edge}

 The graph $\gce$ is the following:

\vspace{5mm}

\begin{picture}(300,60)(0,-15)
\put(150,30){\circle*{4}}
\put(100,30){\circle*{4}} \put(200,30){\circle*{4}}
\put(150,30){\vector(-1,-2){10}}\put(150,30){\vector(1,-2){10}}
\put(100,30){\vector(1,0){150}}\put(100,30){\vector(-1,0){50}}
\put(50,20){\makebox(0,0){$(28;24,1)$}}
\put(100,20){\makebox(0,0){$(1;24,1)$}}
\put(150,40){\makebox(0,0){$(1;20,2)$}}
\put(200,20){\makebox(0,0){$(1;16,1)$}}
\put(250,20){\makebox(0,0){$(12;16,1)$}}
\put(125,4){\makebox(0,0){$(1;0,1)$}}
\put(178,4){\makebox(0,0){$(1;0,1)$}}
\end{picture}

$G^1_\C$ is the graph:

\vspace{5mm}

\begin{picture}(300,60)(0,-15)
\put(150,30){\circle*{4}}
\put(100,30){\circle*{4}} \put(200,30){\circle*{4}}
\put(150,30){\vector(-1,-2){10}}\put(150,30){\vector(1,-2){10}}
\put(100,30){\vector(1,0){150}}\put(100,30){\vector(-1,0){50}}
\put(50,20){\makebox(0,0){$(0)$}}
\put(100,20){\makebox(0,0){$(1)$}}
\put(135,20){\makebox(0,0){$(2)$}}
\put(200,20){\makebox(0,0){$(1)$}}
\put(250,20){\makebox(0,0){$(0)$}}
\put(135,4){\makebox(0,0){$(1)$}}
\put(165,4){\makebox(0,0){$(1)$}}
\put(100,40){\makebox(0,0){$-2$}}
\put(150,40){\makebox(0,0){$-2$}}
\put(200,40){\makebox(0,0){$-2$}}

\end{picture}

The two $(0)$--arrowheads correspond to the two components of the strict transform
of the singular locus of $V_f$. The normalization of $V_f$ is an $A_3$ singularity.

\end{example}

\section{\ The graph $\gck$}\labelpar{ss:GCK}\setcounter{equation}{0}

\bekezdes {\bf The construction of  $\gck$.} \

The `complementary' subgraph $\gck$ is  constructed in two steps as well.

First, consider the maximal subgraph of $\gc$ spanned by the vertices $v\in
{\calv}^2(\gc)$. Then, corresponding to each cutting edge (cf.
 \ref{stricttra}), whose end-vertices $v_1$ and
$v_2$ carry weights $(1;n,\nu)$ and   $(m;n,\nu)$ ($m>1$) respectively,
regardless of $v_1$ being an arrowhead or not,
 glue an  edge decorated by  2 to $v_2$, and make its other end--vertex
an arrowhead  weighted   $(1;n,\nu)$ . \ix{cutting edge}
 \ix{graph!$\G_\C^2$|textbf}

\vspace{2mm}

This ends the definition of $\gck$.
In order to understand its connected components,
we need the following notation.

\begin{definition}\label{def:csj}
For any $j\in\{1, \ldots, s\}$,
we denote by $\de_{c,j}$ the collection of those
components $B\subset \de_c$ which are  projected
via $r$ {\em onto} $\Sigma_j$.  Furthermore, define
$\csj\subset\C$ as the union of those irreducible components $C$
of ~$\C$ weighted  $(m;n,\nu)$ with $m>1$,  for which $C\subset
B$ for some component $B\subset \de_{c,j}$.
\end{definition}
Now we are ready to start the list of properties of $\gck$.
\begin{proposition}\labelpar{m2tetel} \cite[(5.32)]{eredeti} \
There is a one-to-one
correspondence between the connected components of $\gck$ and the
irreducible components of $(Sing(V_f),0)=(\Sigma,0)=\cup_j\Sigma_j$.

More precisely,  $\csj$ is connected and its irreducible components correspond
to the non--arrowhead vertices of one of the connected components
$\Gamma^2_{\C,j}$ of \,$\gck$.
\end{proposition}

\begin{proof} Fix $j$. The connectedness of  $\csj$ follows from the next claim,
similar to the limit properties \ref{connectedhez} and \ref{lem:wedge2}.

\vspace{2mm}

a) \  For any open tubular neighbourhood $T(\csj )\subset V^{emb}$ of $\csj$,
there exists a sufficiently small $\gamma>0$ such that for any
point $q\in \Sigma_j-\{0\}\subset \bfc^3$ with $|q|<\gamma$,
$r^{-1}(q) \subset T(\csj)$.\vspace{2mm}

b) \ For any $p\in\csj$ and local neighbourhood $U_p$ of $p$,
there exists a sufficiently small $\gamma>0$ such that for any
point $q\in \Sigma_j-\{0\}$ with $|q|<\gamma$, \
$U_p\cap r^{-1}(q)\neq\emptyset$.

\vspace{2mm}

These two statements can be verified by similar local computations as  in \ref{connectedhez}.
In fact, they  can be deduced from \ref{lem:wedge2} too.

\vspace{2mm}

This means that $\csj$ is the `limit' of $r^{-1}(q)\cap \de_{c,j}$, where $q\in \Sigma_j\setminus \{0\}$
tends to 0. But the modification $r$, above  any transversal slice $Sl_q$ at $q$ of $\Sigma_j$ (cf. \ref{ss:2.0b}),
realizes an embedded resolution of $(Sl_q, Sl_q\cap V_f,q)$, hence $r^{-1}(q)\cap \de_{c,j}$
constitutes a collection of exceptional curves of an embedded resolution of this plane curve singularity.
Hence, by Zariski's Main Theorem, cf. \ref{1.3b}, it is connected. This implies that its limit
$\csj$ is connected as well.
\end{proof}

\begin{example}\label{ex:347bal}
Consider the example given in \ref{ex:347}. In this case $\gck$ has two components. One of them
(situated at the left of the diagram) is $\G^2_{{\cal C},1}$:

\begin{picture}(300,100)(4,-25)
\put(0,30){\circle*{4}}
\put(35,30){\circle*{4}} \put(70,30){\circle*{4}}
\put(105,30){\circle*{4}}
\put(140,30){\circle*{4}} \put(175,30){\circle*{4}}
\put(210,30){\circle*{4}}
\put(245,30){\circle*{4}} \put(280,30){\circle*{4}}\put(315,30){\circle*{4}}

\put(210,30){\vector(0,-1){25}}
\put(0,30){\line(1,0){315}}

\put(5,40){\makebox(0,0){\small{$(4;4,1)$}}}
\put(35,40){\makebox(0,0){\small{$(4;8,1)$}}}
\put(70,40){\makebox(0,0){\small{$(12;8,1)$}}}
\put(105,40){\makebox(0,0){\small{$(20;8,1)$}}}
\put(140,40){\makebox(0,0){\small{$(20;16,1)$}}}
\put(175,40){\makebox(0,0){\small{$(20;24,1)$}}}
\put(210,40){\makebox(0,0){\small{$(28;24,1)$}}}
\put(245,40){\makebox(0,0){\small{$(7;24,1)$}}}
\put(280,40){\makebox(0,0){\small{$(7;20,2)$}}}
\put(310,40){\makebox(0,0){\small{$(7;10,1)$}}}
\put(210,-3){\makebox(0,0){\small{$(1;24,1)$}}}

\put(17,23){\makebox(0,0){\small{$1$}}}
\put(52,23){\makebox(0,0){\small{$2$}}}
\put(87,23){\makebox(0,0){\small{$2$}}}
\put(122,23){\makebox(0,0){\small{$1$}}}
\put(157,23){\makebox(0,0){\small{$1$}}}
\put(192,23){\makebox(0,0){\small{$2$}}}
\put(227,23){\makebox(0,0){\small{$2$}}}
\put(262,23){\makebox(0,0){\small{$1$}}}
\put(297,23){\makebox(0,0){\small{$1$}}}
\end{picture}

\end{example}

In fact, besides other data, $\Gamma^2_{\C,j}$ contains all the
information about the equisingularity type of the transversal
singularity $T\Sigma_j$ of $\Sigma_j$. In order to make this statement more precise,
we need some preparation.

\bekezdes {\bf A partition of  $\gj$.} \ We fix again the  index $j$.

We introduce  on $\calw(\gj)$  the following equivalence relation.
First, we say that $w_1\sim w_2$ if $w_1$ and $w_2$ are connected
by an edge of weight  1, then we extend $\sim$ to an equivalence
relation.  If $K=\{w_{i_1},\ldots, w_{i_t}\}$ is an equivalence
class, with decorations $(m;n_{i_l},\nu_{i_l})$,  then set
$\nu(K):=\mbox{gcd}(\nu_{i_1},\ldots,\nu_{i_t})$ and $m(K):=m$.

Each class $K$ defines a connected subgraph $G(K)$ of $\gj$ with
vertices from $K$ and all the  1-edges connecting them. For the moment we keep
all the decorations of the corresponding vertices of $G(K)$.

The equivalence classes $\{K_\ell\}_\ell$ determine a partition
of $\calw(\gj)$. The subgraphs $G(K_\ell)$ are connected by 2-edges.

\bekezdes\label{bek:G(K)} {\bf Properties of the subgraphs $G(K)$. } \
 For a fixed equivalence class
 $K=\{w_{i_1},w_{i_2},\ldots, w_{i_t}\}$, consider
the corresponding irreducible curves  $C_{i_1},C_{i_2},$ $\ldots,
 C_{i_t}$ of $\C$. By construction, there exists an irreducible component $B(K)\in
\de_{c,j}$ which contains all of them. Moreover,
the union $\C(K):=C_{i_1}\cup\cdots\cup C_{i_t}$ is a connected curve.
Let $T(K)$ denote  a small tubular neighbourhood of $\C(K)$ in $B(K)$.

Note that the  integer $m(K)$  is exactly  the vanishing order of
$f\circ r$ along $B(K)$. On the other hand, the local
equations show that the restriction $g\circ r|_{T(K)}:T(K)\to \bfc$
provides the principal divisor $(g\circ r)^{-1}(0)=
\sum _k\, \nu_{i_k}\, C_{i_k}$ in $T(K)$.
Here,  $T(K)$ can be changed to the inverse image of a small disc $D$ under
$g\circ r|_{B(K)}$.
Therefore, the divisor  $\sum _k\, \nu_{i_k}\, C_{i_k}$ in $T(K)$
 can be interpreted as a central fiber
 of the proper analytic map $g\circ r|_{T(K)}:T(K)\to D$.

\begin{lemma}\label{genfib} The generic fiber of
$(g\circ r)|_{T(K)}$ is a disjoint union of rational curves.
\end{lemma}

\begin{proof}  Fix $d\not=0$ with $|d|$ sufficiently small. Then
$g^{-1}(d)$ intersects $\Sigma_j$ in $d_j$ points, say  $\{q_i\}_i$, cf. \ref{bek:dj}.
Then $((g\circ r)|T(K))^{-1}(d)=\cup_i\,( r^{-1}(q_i)\cap T(K))$.
But $r^{-1}(q_i)$ is the singular  fiber of an embedded resolution of the
transversal singularity associated with $\Sigma_j$, and $ r^{-1}(q_i)\cap T(K)$
is an irreducible curve of this exceptional locus, hence each
 $r^{-1}(q_i)\cap T(K)$ is a  rational curve.
\end{proof}
Next, we recall a
general property of a morphism whose generic fiber is rational, see  \cite{GH}, page 554:

\vspace{2mm}

\noi {\bf Fact.} \ \ {\em  If $S$ is a minimal smooth surface, $D$ a disc in $\bfc$,
and $\pi:S\to D$  any proper holomorphic map  whose generic fiber
$\pi^{-1}(d)$ (\,$0\not=d\in D$)  is irreducible and rational, then
$\pi $ is a trivial ${\bf P}^1$--bundle over $D$.}

\vspace{2mm}

If  $S$ is not minimal, and the generic fiber of
$\pi:S\to D$  is a disjoint union of (say, $N$) rational curves,  then
by the Stein Factorization theorem (see e.g. \cite{Hartshorne}, page 280),
and if necessary after shrinking $D$, there exists a
map  $b:D'\to D$ given by  $z\mapsto z^N$, and
$\pi':S\to D'$ such that $\pi=b\circ \pi'$, and the generic fiber of
$\pi'$ is irreducible and rational.
Since the central
fibers of $\pi $ and $\pi'$ are the same, it follows  from the above fact that
the central fiber of $\pi$ can be blown down
successively until an irreducible  rational curve is obtained.
Being a principal divisor, its self-intersection is zero.

\vspace{2mm}

This  discussion has the following consequences:

\begin{proposition}\label{gkl} {\bf Properties of the graph $G(K)$.} \

a) \ The graph $G(K)$ is a tree with all genus decorations $g_{w_k}=0$. In particular,
all the irreducible components of \, $\C_{\Sigma_j}$ are rational curves.

b) \ From the integers $\{\nu_{i_k}\}_{k=1}^t$ one can deduce the
self-intersections $C_{i_k}^2$ of the curves $C_{i_k}$ in $B(K)$ as follows.
First notice that the intersection matrix $(C_{i_k}\cdot C_{i_{k'}})_{k,k'}$,
where the intersections are considered in $B(K)$,  is a negative
semi-definite matrix with rank $t-1$, and the central divisor\ix{matrix!intersection}
$\sum_k\nu_{i_k}C_{i_k}$ is one
element of its kernel (cf. \cite{BPV}, page 90). The intersections
$C_{i_k}\cdot C_{i_{k'}}$ for $i_k\not=i_{k'}$ can be read from the graph
$G(K)$ considered as a dual graph. Then the  self-intersections can be
determined from the relations $(\sum_k\nu_{i_k}C_{i_k})\cdot C_{i_{k'}}=0$.
In particular, if $t=1$, then  $C_{i_1}^2=0$. If $t\geq 2$, then the graph
is not minimal; if we blow down successively all the $(-1)$--curves we obtain
a rational curve with self-intersection zero.

c) \ The number of irreducible (equivalently, connected) components of the
generic fiber of $(g\circ r)|_{T(K)}$ is $\nu(K)$.

The fact that each irreducible component of the generic fiber is rational
can be translated into the relation:
$$2\cdot \nu(K)=\sum_{k=1}^t\,\nu_{i_k}(2-\delta^K_{w_{i_k}}),$$
where $\delta^K_{w_{i_k}}$ is the number of  vertices adjacent to $w_{i_k}$
in $G(K)$.

\end{proposition}

\begin{example}\label{ex:347bal2} Consider the example \ref{ex:347} and its  graph
 $\G^2_{{\cal C},1}$ from  \ref{ex:347bal}. The graphs $G(K_l)$ are:

\begin{picture}(300,70)(4,5)
\put(0,30){\circle*{4}}
\put(35,30){\circle*{4}} \put(70,30){\circle*{4}}
\put(105,30){\circle*{4}}
\put(140,30){\circle*{4}} \put(175,30){\circle*{4}}
\put(210,30){\circle*{4}}
\put(245,30){\circle*{4}} \put(280,30){\circle*{4}}\put(315,30){\circle*{4}}

\put(0,30){\line(1,0){35}}
\put(105,30){\line(1,0){70}}
\put(245,30){\line(1,0){70}}

\put(5,40){\makebox(0,0){\small{$(4;4,1)$}}}
\put(35,40){\makebox(0,0){\small{$(4;8,1)$}}}
\put(70,40){\makebox(0,0){\small{$(12;8,1)$}}}
\put(105,40){\makebox(0,0){\small{$(20;8,1)$}}}
\put(140,40){\makebox(0,0){\small{$(20;16,1)$}}}
\put(175,40){\makebox(0,0){\small{$(20;24,1)$}}}
\put(210,40){\makebox(0,0){\small{$(28;24,1)$}}}
\put(245,40){\makebox(0,0){\small{$(7;24,1)$}}}
\put(280,40){\makebox(0,0){\small{$(7;20,2)$}}}
\put(310,40){\makebox(0,0){\small{$(7;10,1)$}}}
\end{picture}

\noindent  The corresponding central divisors and self-intersection numbers are the following:

\begin{picture}(300,60)(4,5)
\put(0,30){\circle*{4}}
\put(35,30){\circle*{4}} \put(70,30){\circle*{4}}
\put(105,30){\circle*{4}}
\put(140,30){\circle*{4}} \put(175,30){\circle*{4}}
\put(210,30){\circle*{4}}
\put(245,30){\circle*{4}} \put(280,30){\circle*{4}}\put(315,30){\circle*{4}}

\put(0,30){\line(1,0){35}}
\put(105,30){\line(1,0){70}}
\put(245,30){\line(1,0){70}}

\put(2,40){\makebox(0,0){\small{$(1)$}}}
\put(35,40){\makebox(0,0){\small{$(1)$}}}
\put(70,40){\makebox(0,0){\small{$(1)$}}}
\put(105,40){\makebox(0,0){\small{$(1)$}}}
\put(140,40){\makebox(0,0){\small{$(1)$}}}
\put(175,40){\makebox(0,0){\small{$(1)$}}}
\put(210,40){\makebox(0,0){\small{$(1)$}}}
\put(245,40){\makebox(0,0){\small{$(1)$}}}
\put(280,40){\makebox(0,0){\small{$(2)$}}}
\put(313,40){\makebox(0,0){\small{$(1)$}}}

\put(2,20){\makebox(0,0){\small{$-1$}}}
\put(35,20){\makebox(0,0){\small{$-1$}}}
\put(70,20){\makebox(0,0){\small{$0$}}}
\put(105,20){\makebox(0,0){\small{$-1$}}}
\put(140,20){\makebox(0,0){\small{$-2$}}}
\put(175,20){\makebox(0,0){\small{$-1$}}}
\put(210,20){\makebox(0,0){\small{$0$}}}
\put(245,20){\makebox(0,0){\small{$-2$}}}
\put(280,20){\makebox(0,0){\small{$-1$}}}
\put(313,20){\makebox(0,0){\small{$-2$}}}
\end{picture}

\end{example}

\bekezdes\label{back} Now,  we return to the connected component
$\gj$ of $\gck$ corresponding to $\Sigma_j$ ($1\leq j\leq s$).
We consider the partition  $\{G(K_\ell)\}_\ell$ of  $\gj$; they are connected by
2--edges.
The geometry behind the next discussion is the following.

Recall that
 $T\Sigma_j$ denotes  the equisingular type of the transversal singularity
associated with $\Sigma_j$, cf. \ref{ss:2.0b},   and  ${\rm deg}(g|\Sigma_j)=d_j$, cf. \ref{bek:dj}.
If $(Sl,q)$ is a
transversal slice as in \ref{ss:2.0b}, then $r$ above $(Sl,q)$
determines a resolution of the transversal plane curve singularity
$(Sl, Sl\cap V_f,q)$. We denote its weighted dual embedded
resolution graph  by $G(T\Sigma_j)$. Since in local coordinates it is easier to work with
the pullback of $g$, it is convenient to replace the single point
$q\in \Sigma_j-\{0\}$ by the collection of $d_j$  points
$g^{-1}(d)\cap \Sigma_j$, where $|d|$ is small and non-zero.
The dual weighted graph associated with the curves situated
above these points consists of  exactly $d_j$ identical
copies of $G(T\Sigma_j)$, and it is  denoted by  $d_j\cdot G(T\Sigma_j)$.
\ix{transversal!type}
\ix{graph!covering}

Comparing the curves  $r^{-1}(g^{-1}(d)\cap \Sigma_j)$ and
$\C_{\Sigma_j}$ via the corresponding local equations, and using the
results of Proposition \ref{gkl}, we obtain
a cyclic covering of graphs
$$p:d_j\cdot G(T\Sigma_j)\to \mbox{\{a base graph\}}_j,$$
where the base graph and the covering data can be determined from $\gj$.
This is given in the next paragraphs. \ix{graph!covering!data}

\vspace{2mm}

\noindent {\bf The base graph} will be denoted by  $\gj/\sim$.
It  is obtained from $\gj$ by collapsing
it along edges of weight 1.
More precisely, each subgraph $G(K_\ell)$ is  replaced by a non--arrowhead
vertex. If two subgraphs $G(K_\ell)$ and $G(K_{\ell'})$ are connected by
$k$ 2--edges  in $\gj$, then  the corresponding
vertices of $\gj/\sim$ are connected by $k$ edges.
(In fact, in  \ref{tree} we will see that each  $k\leq 1$.)
If the non--arrowhead vertices of $G(K_\ell)$ support $k$ arrowheads altogether,
then on the corresponding non--arrowhead  vertex of
$\gj/\sim$ one has exactly $k$ arrowheads.

Since $\gj$ is connected, it is obvious that $\gj/\sim$ is connected as well.

\vspace{2mm}

\noi {\bf The covering data of}  $p:d_j\cdot G(T\Sigma_j)\to \gj/\sim$. \ix{graph!covering!data}

Recall from \ref{ss:2.3b} that the {\em covering data } of a projection $p:G\to \G$    is a collection
 of positive integers  $\{\n_v\}_{v\in \calv(\G)}$ and
$\{\n_e\}_{e\in \cale(\G)}$, such that for each edge $e=(v_1,v_2)\in \cale(\G)$
one has  $\n_e=\dg_e\cdot {\rm lcm}(\n_{v_1},\n_{v_2})$
for some integer $\dg_e$.

Now, we define a covering data for $\gj/\sim$. It is provided by the
third entries $\nu$  of the weights $(m;n,\nu)$ of the vertices of
$\gj$ and will be denoted   by $\bnu$.

For any non--arrowhead vertex  $w$ of $\gj/\sim$, which corresponds to $K_\ell$
in  the above construction, set $\n_w:=\nu(K_\ell)$.
For any arrowhead vertex $v$ of $\gj/\sim$, which corresponds to
an arrowhead of $\gj$ with weight $(1;n,\nu)$, set $\n_v:=\nu$.
For any edge of $\gj/\sim$, which comes from a 2--edge $e$ of $\gj$
with endpoints with weight $(*;n,\nu)$, set $\n_e:=\nu $.

\vspace{2mm}

The degeneration of $r^{-1}(g^{-1}(d)\cap \Sigma_j)$ into $\C_{\Sigma_j}$
provides the next result:

\begin{theorem}\label{cts1} {\bf Characterization of the transversal singularities.}

(a) \  For any $j$ there exists a cyclic covering of graphs \ix{graph!covering}
$$p:d_j\cdot G(T\Sigma_j)\to \gj/\sim$$
with covering data $\bnu$  and with the compatibility of the
arrowheads: $\cala(d_j\cdot G(T\Sigma_j))=p^{-1}(\cala(\gj/\sim))$,
cf. \ref{re:2.3.1}.

(b) The decorations of  $G(T\Sigma_j)$ can be recovered
from the decorations of $\gj$ as follows:
$m_w=m(K_\ell)$ for any $w\in \calw(G(T\Sigma_j))$ sitting above a vertex  corresponding to $K_\ell$;
$m_v=1$ for any arrowhead.
The  Euler numbers are  determined via (\ref{eq:2.2.1}).\ix{plumbing!graph}

\vspace{2mm}

In particular, the weighted dual embedded resolution graph $G(T\Sigma_j)$
  can be completely determined from the weighted graph $\gj$.
\end{theorem}
\begin{corollary}\label{tree}
\begin{enumerate}
\item  \ $\gj$ is a connected tree.\vspace{2mm}

\item \ With covering data $\bnu$, \ix{graph!covering}
there is only one cyclic graph covering
$p:G\to \gj/\sim$.

\item \ $d_j={\rm gcd}\{\nu_w|w\in \calw(\gj)\}$,  where $(m_w;n_w,\nu_w)$ is the weight of
$C_w$
 \end{enumerate}
\end{corollary}
\begin{proof} The first part  follows from the connectedness statement from \ref{m2tetel}, form the fact that
$G(T\Sigma_j)$ is a tree, and from \ref{cgcg}.
The second and third parts follow from \ref{ex:COVERING}(1) and the connectedness of $G(T\Sigma_j)$.
\end{proof}

\begin{remark}\labelpar{rm:dj}
\begin{enumerate}
\item
For an example when the covering \ix{graph!covering}
$p:d_j\cdot G(T\Sigma_j)\to \gj/\sim$ is not a bijection, see  \ref{ex:nu2b}.\vspace{2mm}

\item
 As we emphasized in \ref{gkl}(b--c),
the collection of integers  $\{\nu_w\} \, (w\in \calw(\gj))$ satisfies
serious compatibility restrictions.
Moreover, since in the cyclic covering $d_j\cdot G(T\Sigma_j)\to
\gj/\sim$ \ the covering graph  $d_j\cdot G(T\Sigma_j)$ has no cycles,
this imposes  some  additional restrictions on the
integers $\{\nu_w\} \, (w\in \calw(\gj))$.\vspace{2mm}

\item Corollary \ref{tree}(3) implies  that, in fact,
there is a graph covering of
connected graphs $$G(T\Sigma_j)\to \gj/\sim$$ \noindent whose covering data are those
from $\bnu$, where all integers are  divided by $d_j$.
\end{enumerate}
\end{remark}

\begin{example}\label{ex:347bal3} Let us continue the example \ref{ex:347bal2}
(as the continuation of  \ref{ex:347} and  \ref{ex:347bal}). In this case $d_1=1$ and the
$G(T\Sigma_1)\to \Gamma^2_{\C,1}/\sim$ is a bijection.
Hence the algorithm gives for $G(T\Sigma_1)$ the embedded resolution graph

\begin{picture}(300,70)(-70,-5)
\put(0,30){\circle*{4}}
\put(35,30){\circle*{4}} \put(70,30){\circle*{4}}
\put(105,30){\circle*{4}}
\put(140,30){\circle*{4}}

\put(0,30){\line(1,0){140}}
\put(105,30){\vector (0,-1){25}}

\put(0,40){\makebox(0,0){\small{$(4)$}}}
\put(35,40){\makebox(0,0){\small{$(12)$}}}
\put(70,40){\makebox(0,0){\small{$(20)$}}}
\put(105,40){\makebox(0,0){\small{$(28)$}}}\put(105,-2){\makebox(0,0){\small{$(1)$}}}
\put(140,40){\makebox(0,0){\small{$(7)$}}}

\put(0,50){\makebox(0,0){\small{$-3$}}}
\put(35,50){\makebox(0,0){\small{$-2$}}}
\put(70,50){\makebox(0,0){\small{$-2$}}}
\put(105,50){\makebox(0,0){\small{$-1$}}}
\put(140,50){\makebox(0,0){\small{$-4$}}}
\end{picture}

\vspace{3mm}

This is, of course, the minimal embedded resolution graph of $T\Sigma_1$ with local equation
$u^7-v^4=0$, as it is expected from the equations of $f=x^3y^7-z^4$.

\end{example}

\begin{example}\labelpar{ex:ketA2b} Consider the graph in \ref{ex:ketA2} for
 $f=y^3+(x^2-z^4)^2$ and $g=z$. $\Sigma=\{y=x^2-z^4=0\}$ has two components, the transversal
 type of which  are cusps of type $(2,3)$. In the next diagram we put in dash--boxes
  the equivalence classes $K$ and
 the supports of the two components $\{\gj\}_{j=1,2}$:

\vspace{5mm}

\begin{picture}(300,160)(-5,-50)
\put(25,45){\dashbox{1}(30,30){}}
\put(63,60){\dashbox{1}(14,40){}}
\put(245,45){\dashbox{1}(30,30){}}
\put(223,60){\dashbox{1}(14,40){}}

\put(-15,35){\dashbox{3}(135,70){}}
\put(180,35){\dashbox{3}(135,70){}}

\put(150,30){\circle*{4}} \put(50,30){\circle*{4}}
\put(100,30){\circle*{4}} \put(200,30){\circle*{4}}
\put(250,30){\circle*{4}}
\put(50,50){\circle*{4}}\put(250,50){\circle*{4}}

\put(30,70){\circle*{4}} \put(70,70){\circle*{4}}
\put(230,70){\circle*{4}} \put(270,70){\circle*{4}}

\put(50,30){\line(1,0){200}}\put(50,50){\line(1,1){20}}
\put(50,50){\line(-1,1){20}} \put(250,50){\line(1,1){20}}
\put(250,50){\line(-1,1){20}}
\put(50,30){\line(0,1){20}}\put(250,30){\line(0,1){20}}

\put(30,90){\circle*{4}}
\put(70,90){\circle*{4}}
\put(270,90){\circle*{4}}
\put(230,90){\circle*{4}}
\put(150,10){\circle*{4}}\put(150,-10){\circle*{4}}

\put(30,70){\line(0,1){20}}\put(70,70){\line(0,1){20}}
\put(230,70){\line(0,1){20}}\put(270,70){\line(0,1){20}}
\put(150,30){\vector(0,-1){60}}

\put(3,90){\makebox(0,0){$(3;9,1)$}}
\put(3,70){\makebox(0,0){$(6;9,1)$}}
\put(3,50){\makebox(0,0){$(6;12,1)$}}
\put(95,90){\makebox(0,0){$(2;8,1)$}}
\put(95,70){\makebox(0,0){$(2;12,1)$}}

\put(205,90){\makebox(0,0){$(2;8,1)$}}
\put(205,70){\makebox(0,0){$(2;12,1)$}}

\put(295,90){\makebox(0,0){$(3;9,1)$}}
\put(295,70){\makebox(0,0){$(6;9,1)$}}
\put(295,50){\makebox(0,0){$(6;12,1)$}}

\put(50,20){\makebox(0,0){$(1;12,1)$}}
\put(100,20){\makebox(0,0){$(1;18,2)$}}
\put(150,40){\makebox(0,0){$(1;24,3)$}}
\put(200,20){\makebox(0,0){$(1;18,2)$}}
\put(250,20){\makebox(0,0){$(1;12,1)$}}

\put(128,10){\makebox(0,0){$(1;12,2)$}}
\put(128,-10){\makebox(0,0){$(1;12,3)$}}
\put(150,-40){\makebox(0,0){$(1;0,1)$}}
\end{picture}

By the above algorithm, one can easily recover the transversal types,
 and the graph $E_6$ of the normalization of $V_f$.

\end{example}

The results of this chapter  together with  Theorem \ref{th:2.3.1}
culminate in the following corollary
which is  crucial for the algorithm presented in Chapter \ref{s:ALG}.

\begin{theorem}\label{unique}  Up to isomorphism of cyclic coverings of
graphs (with a fixed  covering data), there is only one cyclic covering of the
graph $\gc$ provided that the covering data satisfies $\n_v=1$ for any
$v\in \calv^1(\gc)$.
\end{theorem} \ix{graph!covering}

\section{\ Cutting edges revisited}\labelpar{2edg} \setcounter{equation}{0} \

\bekezdes\label{bek:ce1}
 In this section we will analyze in  detail the properties of cutting edges and
we list some consequences.
\ix{cutting edge}

Consider a cutting  edge $e$ of $\gc$, cf.  \ref{stricttra}. Recall that it always has
edge decoration 2.
Assume that the weights of the end-vertices have the form  $(*\,;n,\nu)$.
In order to indicate the dependence on $e$, we write $\nu=\nu(e)$.

Fix local coordinates as in \ref{summary}, and set
$C^*_e:=B_1^l\cap B_2^l$. Here  $B_1^l$ and $B_2^l$ are those two  local components along which
the restriction of the function $f\circ r$ vanishes, but $g\circ r$ does not vanish.
Then $r$ projects $C^*_e$ onto a certain
component $\Sigma_j$ of $\Sigma$. Moreover, $r|_{C^*_e}:C^*_e\to \Sigma_j$ is
finite. Denote its degree by $d(e)$. Obviously
$${\rm deg}(r|C^*_e)\cdot
{\rm deg}(g|\Sigma_j)= {\rm deg}(g\circ r|C^*_e)=\nu(e),$$ hence
\begin{equation}\label{eq:DEDNU}
d(e)\cdot  d_j=\nu(e).\end{equation}

Since $\nu(e) $ and $d_j$ can be obtained from $\gj$ (cf.
\ref{tree}), the degree $d(e)$ can also be recovered from
$\gj$.

\bekezdes \ For every fixed $j\in\{1,\ldots,s\}$, let $\cale_{cut,j}$ be the set of cutting
edges  connecting $\gj$ with $\gce$. Also, write $\#T\Sigma_j$ for
the number of local irreducible components of the transversal singularity
$T\Sigma_j$, which coincides with the number of connected components of $\partial F'_j$.
\ix{cutting edge}

The point is that $\#(T\Sigma_j)\geq |\cale_{cut,j}|$, and, in
general, equality does not hold. Indeed, from the local equations and from the covering
$r|_{C^*_e}:C^*_e\to \Sigma_j$   one deduces that
each $e\in \cale_{cut,j}$ is `responsible' for $d(e)$ local irreducible
components of $T\Sigma_j$. In other words,
\begin{equation}\label{eq:tr}
\#T\Sigma_j=\sum_{e\in\cale_{cut,j}}\, d(e).
\end{equation}

\begin{example}\label{ex:nu2b} Consider the example from \ref{nu2}.
In this case $f=x^2y^2+z^2(x+y)$, hence  it has two singular components
$\Sigma=\Sigma_1\cup\Sigma_2=\{xy=z=0\}$,  whose  transversal type singularities  are $A_1$,
hence $\# T\Sigma_j=2$ for $j\in\{1,2\}$. The linear
function $g$ induces on both $d_j=1$. Furthermore, from the graph \ref{nu2}
we get that in both cases $\cale_{cut,j}=1$. This is compatible with the above discussion, since
for the  cutting edges $\nu(e)=2$ in either case.

The graph $\gj$ has three vertices, all of them are in the same class, hence
$\gj/\sim$ has only one vertex. The covering $G(T\Sigma_j)\to \gj/\sim $ \ is the following:

\begin{picture}(200,100)(80,40)
\put(210,60){\circle*{4}}
\put(210,60){\vector(1,0){30}}
\put(200,100){\makebox(0,0){$(2)$}}
\put(200,120){\makebox(0,0){$-1$}}
\put(225,90){\vector(0,-1){20}}
\put(250,100){\makebox(0,0){$(1)$}}\put(250,120){\makebox(0,0){$(1)$}}
\put(210,110){\circle*{4}}
\put(210,110){\vector(3,1){30}}
\put(210,110){\vector(3,-1){30}}
\end{picture}

Hence, in general, the covering $G(T\Sigma_j)\to \gj/\sim $ \ is not a bijection. \ix{graph!covering}
\end{example}

\begin{example}\label{ex:Uu} If $\#T\Sigma_j\not=1$, then even if both graphs $\gce$ and $\gck$ are trees, it
might happen that $\gc$ has cycles. For example, in the case presented in \ref{ex:d1d2},
$\Sigma=\Sigma_1$ is irreducible with $\#T\Sigma_j=2$, both $\gce$ and $\gck$ are trees, and $\gc$ has one cycle.
\end{example}

\bekezdes {\bf Relationship with the resolution graph of the normalization $V_f^{norm}$.}\label{bek:ste}

By Theorem \ref{m1} there is a one-to-one correspondence between the cutting edges and
the irreducible components of the strict transform of $\Sigma_f$ in the normalization $V_f^{norm}$ of $V_f$.
By the construction of the graph $\gc$, and by the discussion  \ref{bek:ce1}, these are in bijection with
components of type $C^*_e$. Moreover, via $r$ and Theorem \ref{m1}, each $C^*_e$ can be identified with
the corresponding strict transform component $St_e\subset V_f^{norm}$. In particular,
the restriction of the normalization map $n$ satisfies:
$${\rm deg}(n|_{St_e}:St_e\to \Sigma_j)= {\rm deg}(r|_{C^*_e}:C^*_e\to \Sigma_j)=d(e).$$
If $St(\Sigma_j)$ denotes the strict transform of $\Sigma_j$ in $V^{norm}_f$, then
$$St(\Sigma_j)=\bigcup_{e\in \cale_{cut,j}}\, St_e,  $$
and each $St_e$ contributes  $d(e)$ components in $\#T\Sigma_j$, compatibly with (\ref{eq:tr}).
Looking at the local equations, specifically at the last multiplicities $\nu$, we obtain
the next reinterpretation of the identity (\ref{eq:tr}).
\ix{graph!of normalization of $V_f$}

\begin{corollary}\label{cor:VERTMON} \
\begin{enumerate}
\item
The vertical  monodromy $m'_{j,ver}$ permutes the
connected components of $\partial F'_j$; each orbit corresponds to
a cutting edge $e\in \cale_{cut,j}$, and the  cardinality
of the corresponding orbit is $d(e)$.\vspace{2mm}

\item
The vertical  monodromy $m^\Phi_{j,ver}$ permutes the
connected components of $d_j\cdot \partial F'_j$; each orbit corresponds to
a cutting edge $e\in \cale_{cut,j}$, and the  cardinality
of the corresponding orbit is $d_jd(e)=\nu(e)$.
\end{enumerate}
\end{corollary}\ix{cutting edge}

\bekezdes {\bf The construction of the link $K$ of $f$ from $\gc$.}
Consider the link $K^{norm}:=K_{V_f^{norm}}$ of $V_f^{norm}$. It
 is the disjoint union of the (connected) links
of the irreducible components of  $V_f^{norm}$. In it consider the 1--dimensional sub-manifold
$$\bigcup_j(St_j\cap K^{norm})=\bigcup_j\bigcup_{e\in \cale_{cut,j}}( St_e\cap K^{norm})\subset K^{norm}.$$
Assume that  each component $St_e\cap K^{norm}$, denoted by $S^1_e$ (and which is diffeomorphic to $S^1$)
is marked by two data, one of them is an element $j\in\{1,\ldots,s\}$, the index $j=j(e)$ of $\Sigma_j$
 onto which $St_e$ is mapped, the other is the degree $d(e)$ of $St_e\to \Sigma_j$.
\ix{link!of a surface singularity} \ix{link!of a surface singularity!of normalization of $V_f$}

We claim that from the data $(K^{norm},\cup_e (S^1_e;j(e),d(e)))$ one can recover the link $K$ of $f$.
Indeed, for each $j\in\{1,\ldots, s\}$ fix a circle $S^1=S^1_j$. Moreover, for each $e$ with $j(e)=j$
fix a cyclic covering $\phi_e:S^1_e\to S^1_j$ of degree $d(e)$.  Then $K$ is obtained from $K^{norm}$ by
gluing its points  via the maps $\phi_e$.
\begin{proposition}\label{prop:KK}
Introduce an equivalence relation on $K^{norm}$ as follows: $x\sim x'$ if and only if there exist $e$ and $e'$,
with $x\in S^1_e$ and $x'\in S^1_{e'}$  (where $e=e'$ is allowed) such that $\phi_e(x)=\phi_{e'}(x')$
(and any other equivalence has the form $y\sim y$).  Then
$$K=K^{norm}/\sim.$$
\end{proposition}

Now, clearly, the above data $(K^{norm},\cup_e (S^1_e;j(e),d(e)))$ can be deduced from $\gc$.
Recall that in section \ref{gamma1} we provide the plumbing graph $G_\C^1$ for $K^{norm}$
 from $\gce$. One has only  to modify this construction as follows. In the construction of
 $\gce$ one has to decorate the 2--edges $e$ (or their arrowheads) by the extra decoration $(j(e),d(e))$,
 and keep this extra decoration for the $(0)$--arrowheads of  $G^1_\C$ too. Then this enhanced $G^1_\C$
 contains all the information needed to apply  \ref{prop:KK}.

\bekezdes Here is a  picture summarizing in a schematic form
the essential features of the decomposition of the graph $\gc$
into $\gce$ and $\gck$, the classes $K_\ell$, the types of the intersection points corresponding to
1-- and 2-- and cutting edges, \ix{cutting edge}
and the degenerations (\ref{zart}), \ref{connectedhez} and (\ref{zart2}).

\begin{picture}(300,600)(-10,-100)

\put(40,40){\framebox(100,160)}
\put(40,200){\line(1,1){100}} \put(140,200){\line(1,1){100}}
\put(140,300){\framebox(100,160)}

\put(40,10){\line(1,0){100}}  \put(50,15){\makebox(0,0){$\Sigma_j$}}
 \put(90,0){\makebox(0,0){$0$}}
\put(120,0){\makebox(0,0){$q$}}
 \put(90,10){\circle*{3}}
\put(120,10){\circle*{3}}

\put(0,43){\vector(0,1){154}}\put(0,197){\vector(0,-1){154}}
 \put(-10,120){\makebox(0,0){$\Gamma^1_{{\cal C}}$}}

\put(0,203){\vector(0,1){254}}\put(0,457){\vector(0,-1){254}}
 \put(-10,330){\makebox(0,0){$\Gamma^2_{{\cal C},j}$}}

\put(10,203){\vector(0,1){94}}\put(10,297){\vector(0,-1){94}}
 \put(20,250){\makebox(0,0){$K_{\ell_1}$}} \put(70,207){\makebox(0,0){$C^*_e$}}

\put(10,303){\vector(0,1){154}}\put(10,457){\vector(0,-1){154}}
 \put(20,380){\makebox(0,0){$K_{\ell_2}$}}
\qbezier(90,40)(100,75)(80,110) \qbezier(90,200)(100,165)(80,130)
\qbezier(80,80)(100,120)(80,160)
\qbezier(190,300)(200,335)(180,370) \qbezier(190,460)(200,425)(180,390)
\qbezier(180,340)(200,380)(180,420)
\qbezier(90,200)(140,225)(125,250) \qbezier(135,260)(180,275)(190,300)
\qbezier(115,230)(150,250)(145,270)

\qbezier(120,40)(130,120)(120,200)
\qbezier(220,300)(230,380)(220,460)
\qbezier(120,200)(170,260)(220,300)

\put(175,170){\vector(-1,0){23}}\put(180,170){\makebox(0,0)[l]{generic fiber of  $\Phi$ over $W_{\eta,M}$}}
\put(175,90){\vector(-1,0){70}}\put(180,90){\makebox(0,0)[l]{resolution component of $V^{norm}_f$}}
\put(175,130){\vector(-1,0){50}}\put(180,130){\makebox(0,0)[l]{generic fiber of  $g\big|V^{norm}_f$}}
\put(175,210){\vector(-1,0){45}}\put(180,210){\makebox(0,0)[l]{exceptional component of the}}
\put(225,198){\makebox(0,0)[l]{resolution of $T\Sigma_j$}}
\put(110,400){\vector(1,0){40}}\put(45,400){\makebox(0,0)[l]{$B(K_{\ell_2})\subset \de_{c,j}$}}
\put(240,250){\vector(-1,0){100}}\put(245,250){\makebox(0,0)[l]{$\C$}}

\qbezier(160,40)(120,210)(210,250)
\qbezier(210,250)(290,290)(260,460)
\qbezier(120,200)(170,260)(220,300)

\put(59,182){\circle{20}} \put(59,182){\makebox(0,0){ce}}\put(68,188){\vector(2,1){22}}
\put(156,340){\circle{20}} \put(156,340){\makebox(0,0){e1}}\put(165,346){\vector(2,1){22}}
\put(56,80){\circle{20}} \put(56,80){\makebox(0,0){e1}}\put(65,86){\vector(2,1){22}}
\put(158,283){\circle{20}} \put(158,283){\makebox(0,0){e2}}\put(168,288){\vector(2,1){22}}

\put(220,54){\makebox(0,0)[l]{ce \ \ =  cutting edge}}
\put(220,32){\makebox(0,0)[l]{e1 \ \ = 1--edge}}\put(220,10){\makebox(0,0)[l]{e2 \ \ = 2--edge}}
\put(224,54){\circle{20}} \put(224,32){\circle{20}} \put(224,10){\circle{20}}

\end{picture}


\chapter{Examples. Homogeneous singularities}\labelpar{hom}
\section{\ The general case}\labelpar{homgen} \setcounter{equation}{0}
Assume that $f:(\bfc^3,0)\to(\bfc,0)$ is the germ of a homogeneous polynomial of degree $d$, and we
choose $g$ to be a generic linear function with respect to $f$.
\ix{singularities!homogeneous|textbf}
\ix{graph!$\G_\C$}

Let $C\subset \bfc\bp^2$ be the projective plane curve $\{f=0\}$.

We show that a possible graph $\gc$ can easily be determined from
the combinatorics of the components  and the topological
types of the  local singularities of $C$.

In this projective setting we use the following notations.

Let $C=\cup_{\lambda\in\Lambda} C_\lambda$ be the irreducible
decomposition of
  $C$, and set $d_\lambda:=\deg(C_\lambda)$.  Hence $\sum_\lambda d_\lambda=d$.
Furthermore, let $g_\lambda$ be the genus of the normalization of $C_\lambda$.

Let $\{p_j\}_{j\in\Pi}$ be the set of singular points of $C$.
Assume that the local analytic irreducible components  of $(C,p_j)$  are
$(C_{j,i},p_j)_{i\in I_j}$. Clearly, there is an `identification
map' of global/local components
 $c:\cup_j I_j \to \Lambda$ which sends the index of a local component
$C_{j,i}$ into the index $\lambda$ whenever $C_{j,i}\subset
C_\lambda$.

Let $\Gamma_j$ be an embedded resolution graph of the
local plane curve singularity $(C,p_j)\subset (\bfc^2,p_j)$.  It has $|I_j|$
arrowheads, each with multiplicity $(1)$.

These notations agree with some of the notations already considered for germs in three
variables in the previous sections, for example,  in \ref{ss:2.0b}. Indeed,
the number of singular points of $C$ is the same as the number of irreducible components
of $\Sigma_f$, hence $\Pi$ corresponds to $\{1,\ldots,s\}$. Moreover,
the local topological type of the plane curve singularity
$(C,p_j)\subset (\bfc\bp^2,p_j)$ at a singular point $p_j$ of $C$ agrees with the corresponding
$T\Sigma_j$, hence  $|I_j|=\#T\Sigma_j$.

\begin{proposition}\labelpar{prop:hom}
A possible $\gc$ is constructed from the dual graphs
$\{\Gamma_j\}_{j\in\Pi}$ and $|\Lambda|$ additional non--arrowhead
vertices as follows:

First,   for each $\lambda\in\Lambda$ put a non--arrowhead vertex
$v_\lambda$ in $\gc$ and decorate it with $(1;d,1)$ and
$[g_\lambda]$.  Moreover, put $d_\lambda$ edges supported by
$v_\lambda$, each of them decorated by 1 and supporting an arrowhead
weighted by  $(1;0,1)$.

Then, consider each graph $\Gamma_j$, keep its shape, but replace
the decoration of each non--arrowhead with multiplicity $(m)$ by
the new decoration $(m;d,1)$, and decorate all edges by 2.
Furthermore, each arrowhead of $\Gamma_j$, corresponding to the
local component $C_{j,i}$, is identified with $v_\lambda$, where
$\lambda$ corresponds to $C_{j,i}$ via the local/global identification $c$.
\end{proposition}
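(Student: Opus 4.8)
The plan is to construct an explicit embedded resolution $r$ of the pair $(\bfc^3, V_f\cup V_g)$ adapted to the homogeneous structure of $f$, and then read off the curve configuration $\C$ and its dual graph $\gc$ directly from the geometry of the projective curve $C\subset\bfc\bp^2$. The starting point is the single blow-up $r_0:\mathrm{Bl}_0\bfc^3\to\bfc^3$ of the origin. Since $f$ is homogeneous of degree $d$, the total transform of $V_f$ under $r_0$ consists of the strict transform $\widetilde V_f$ (which meets the exceptional $\bfc\bp^2$ along the projective curve $C$) together with $d$ times the exceptional divisor $E\cong\bfc\bp^2$. Since $g$ is a generic linear form, its strict transform $\widetilde V_g$ is smooth and meets $E$ along a generic line $\ell\subset\bfc\bp^2$, with vanishing order $m_{g,E}=1$. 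The singular locus of $V_{fg}$ away from $0$, lifted to $E$, is exactly the set of singular points $\{P_j\}$ of $C$, together with (generic, transversal, hence already-resolved) intersection points of $C$ with $\ell$ and of the components of $C$ among themselves.

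First I would analyze the generic part of $E$. Off the $P_j$'s and off $\ell$, the divisor $\de=r_0^{-1}(V_f\cup V_g)$ already has normal crossings: along a generic point of each global component $C_\lambda$ only the two components $E$ (on which $f\circ r_0$ vanishes to order $d$, $g\circ r_0$ to order $0$) and $\widetilde V_f$ (on which $f\circ r_0$ vanishes to order $1$) meet. By Definition \ref{cdef}, $C_\lambda$ itself — the intersection $E\cap\widetilde V_f$ over $C_\lambda$ — is a component of $\C$ of type $\dec\cap\deo$ or $\dec\cap\ded$; here $E$ is the component on which only $f$ vanishes (multiplicity $d$), and $\widetilde V_f$ carries $(m_f,m_g)=(1,0)$, so following the convention of (\ref{summary}) the vertex $v_\lambda$ is decorated $(1;d,1)$ — wait, one must be careful which of the two adjacent components plays the role of $B_1$ (only $f$) and $B_2$. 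Here $\widetilde V_f$ is $B_1$ with $m_{f,B_1}=1$, and $E$ is $B_2$ with $m_{f,B_2}=d$, $m_{g,B_2}=1$ (the latter because $g$ restricted to a generic transversal to $C_\lambda$ in $E$ is a coordinate), giving the triple $(1;d,1)$ and genus $g_\lambda$. The generic intersections of $C_\lambda$ with the line $\ell$ contribute the non-compact components of $\C$: there are $\deg(C_\lambda)=d_\lambda$ such points on $C_\lambda$, each a transversal triple intersection of $\widetilde V_f$, $E$, $\widetilde V_g$, hence (by the last display of (\ref{summary}) for a $1$-edge with an arrowhead) an arrowhead $(1;0,1)$ joined to $v_\lambda$ by a $1$-edge. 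The generic intersection points of two distinct $C_\lambda, C_{\lambda'}$ lying off all $P_j$ (if $C$ is reducible) give $1$-edges between $v_\lambda$ and $v_{\lambda'}$; but these occur at points $P_j$ and are absorbed into the local graphs $\Gamma_j$, so they need not be listed separately.

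Next I would treat a neighborhood of each singular point $P_j\in C$. Locally at $P_j$, the pair $(E,\widetilde V_f\cap E)$ is the germ $(\bfc^2, (C,P_j))$, and $g\circ r_0$ restricted to $E$ near $P_j$ is a generic linear coordinate (because $g$ is generic). Blowing up $\bfc^3$ further along the infinitely near points needed to resolve $(C,P_j)$ — performing these as blow-ups along the $1$-dimensional axes inside $E$ lying over the infinitely near points, exactly as in the passage from quadratic transforms in $\bfc^2$ to blow-ups along axes described in (\ref{cyl}) — produces, over the strict transform of $\Sigma_j$, a chain/tree of ruled exceptional surfaces whose configuration of relevant curves is the dual graph $\Gamma_j=G(T\Sigma_j)$ of the plane-curve singularity $(C,P_j)$. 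Since $f$ vanishes on $\widetilde V_f$ to order $1$ (transversal type of $\Sigma_j$ is the equisingularity type of $(C,P_j)$, and $f\circ r$ picks up the multiplicity $(m)$ of the plane-curve resolution times $d$ from the accumulating factor coming from $E$), the vertex of $\Gamma_j$ with plane-curve multiplicity $(m)$ acquires first-coordinate multiplicity... one checks that $m_{f,B_1}=m$ here, $m_{f,B_2}=d$ is the order along the ruled surface meeting $E$, and $m_{g,B_2}=1$, giving $(m;d,1)$; all these edges are $2$-edges because at such points \emph{two} of the three local components (the two pieces of $\widetilde V_f$ or the ruled surfaces over $\Sigma_j$) belong to $\de_c$. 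Finally, the arrowheads of $\Gamma_j$ correspond to the local branches $C_{j,i}$ of $(C,P_j)$; each such branch is globally part of $C_{c(j,i)}$, so in the ambient resolution the corresponding strict-transform curve is glued to the generic piece $v_{c(j,i)}$ — i.e. the arrowhead of $\Gamma_j$ is identified with the non-arrowhead $v_{c(j,i)}$.

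Assembling these pieces over all $j\in\Pi$ and all $\lambda\in\Lambda$ yields precisely the graph described in the Proposition. The main obstacle I anticipate is bookkeeping the multiplicities carefully: one must verify that the factor $d$ (the vanishing order of $f$ along the exceptional $\bfc\bp^2$) propagates correctly through the subsequent blow-ups so that every vertex coming from $\Gamma_j$ ends up with second entry exactly $d$ and third entry $1$, and that $g$ — being generic — never contributes extra vanishing beyond the single transversal line $\ell$; this is the step where Assumption A (no unblown-up transversal $A_1$'s with $m=1$) and the genericity of $g$ are really used, and where one should double-check that no $2$-edge with both endpoints of first multiplicity $1$ survives. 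A secondary point requiring care is the identification map $c:\cup_jI_j\to\Lambda$: one must check that distinct arrowheads of the same $\Gamma_j$ mapping to the same $\lambda$ (i.e. two local branches of $(C,P_j)$ on one global component) are glued to the \emph{same} vertex $v_\lambda$, which follows because in the resolution all these branches are strict transforms of the one irreducible curve $C_\lambda$ whose generic part is $v_\lambda$. Once these verifications are in place, the description of $\gc$ matches Definition \ref{cdef} and the decoration rules of (\ref{gc}), completing the proof.
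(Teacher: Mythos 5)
Your proposal follows the same route as the paper's (much terser) proof: a single blow-up at the origin producing $E=\bfc\bp^2$ meeting $St(V_f)$ along $C$ and meeting the strict transform of the generic $g$ in $d_\lambda$ points per component, followed by blow-ups along the one-dimensional infinitely near loci over each $P_j$ exactly as in the cylinder case, yielding the local graphs $\Gamma_j$ with decorations $(m;d,1)$ and $2$-edges. Your multiplicity bookkeeping (in particular $m_{f,B_1}=m$ on the ruled surfaces and $(m_{f,B_2},m_{g,B_2})=(d,1)$ on the strict transform of $E$) is correct and simply fills in details the paper leaves implicit.
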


\begin{proof}
The following sequence of blow ups is performed. First
 the origin of $\bfc^3$ is blown up. This creates an exceptional divisor
$E=\bfc\bp^2$ which intersects the strict  transform $St(V_f)$ of
$V_f$ along $C$. Moreover, the strict transform of $V_g$ (where
$g$ is the chosen linear function)
  intersects the strict transform of each
irreducible component $V(f)_\lambda$ in $d_\lambda$  discs.

The singular part of $St(V_f)$ consists of discs meeting $E$ in
the singular points $p_j$ of $C$.
The plane curve singularity $(C,p_j)\subset (\bfc\bp^2,p_j)$ can be resolved
by a sequence of blow ups infinitely near points of $p$; this sequence is replaced
in the present local product situation by blowing up
infinitely  near  discs following the blowing up procedure of
the corresponding plane curve singularity.
Then the corresponding decorations follow easily.
\end{proof}

\begin{remark}\label{re:HOM}
Notice that if a local graph $\Gamma_j$ is a double--arrow
(representing a local singularity of type $A_1$ with local equation $xy=0$) and both local
irreducible components sit on the same global component
$C_\lambda$, then by the above procedure the double arrow transforms into a loop
supported on $v_\lambda$ decorated by 2. If the two local
irreducible components sit on two different global components,
then it becomes a 2--edge. In both cases, the corresponding edge
will not satisfy Assumption A \ref{re:w2}.\ix{Assumption A}

Nevertheless, if we consider embedded resolution graphs $\Gamma_j$ with at least one
non-arrowhead vertex (e.g. the graphs of $A_1$ singularities will  have
one $(-1)$-vertex), then the graph $\gc$ obtained in this way will satisfy
Assumption A (and will be related with the previous graph  by the moves of \ref{re:w2}).

Usually, it is preferable to take for $\Gamma_j$ the minimal embedded resolution. Nevertheless,
if we want to get $\gc$ satisfying Assumption A, then we follow the convention
that for an $A_1$ singularity $\Gamma_j$ contains  one non-arrowhead.
\end{remark}

In the next examples we ask the reader to determine for each case the graphs $\Gamma_j$.
Several procedures are described in \cite{BrKn,Wall}. We will provide only the output $\gc$.
\begin{example}\labelpar{ex:dd-1} If $f=z^d-xy^{d-1}$ with $d\geq 3$,
then $\gc$ is the following:

\vspace{2mm}

\begin{picture}(350,95)(-40,-30)
\put(20,25){\circle*{4}} \put(90,25){\circle*{4}}
\put(160,25){\circle*{4}} \put(90,-5){\circle*{4}}
\put(260,25){\circle*{4}} \put(20,25){\line(1,0){170}}
\put(260,25){\line(-1,0){25}}
\put(210,25){\makebox(0,0){$\cdots$}}
\put(-10,28){\makebox(0,0){$\vdots$}} \put(90,25){\line(0,-1){30}}

\put(20,35){\makebox(0,0){\small{$(1;d,1)$}}}
\put(90,35){\makebox(0,0){\small{$(d(d-1);d,1)$}}}
\put(160,35){\makebox(0,0){\small{$(d(d-2);d,1)$}}}
\put(90,-15){\makebox(0,0){\small{$(d-1;d,1)$}}}
\put(260,35){\makebox(0,0){\small{$(d;d,1)$}}}

\put(-30,40){\makebox(0,0){\small{$(1;0,1)$}}}
\put(-30,10){\makebox(0,0){\small{$(1;0,1)$}}}

\put(20,25){\vector(-2,1){30}} \put(20,25){\vector(-2,-1){30}}
\end{picture}

\vspace{2mm}

\noi where there are $d$ arrowheads, and all the edges connecting
non--arrowheads have decoration  2.

\end{example}

\begin{example}\labelpar{ex:3A2}
Assume that $f=x^2y^2+y^2z^2+z^2x^2-2xyz(x+y+z)$. Then $C$ is an
irreducible rational curve with three $A_2$ (ordinary cusp) singularities.
Therefore, $\gc$ is:

\vspace{2mm}

\begin{picture}(350,95)(-35,-25)
\put(20,25){\vector(-2,1){30}} \put(20,25){\vector(-2,-1){30}}
\put(20,25){\vector(-4,1){30}} \put(20,25){\vector(-4,-1){30}}
\put(20,25){\circle*{4}}
\put(20,37){\makebox(0,0){\small{$(1;4,1)$}}}

\put(20,25){\line(2,1){40}} \put(20,25){\line(1,0){40}}
\put(20,25){\line(2,-1){40}}

\put(55,20){\framebox(30,10){}} \put(55,40){\framebox(30,10){}}
\put(55,0){\framebox(30,10){}}

\put(110,27){\makebox(0,0){$\mbox{where}$}}
\put(175,27){\makebox(0,0){$\mbox{is}$}}

\put(132,20){\framebox(30,10){}} \put(127,25){\line(1,0){10}}
\put(195,-5){\framebox(90,55){}}
\put(270,25){\line(-1,0){85}}
\put(210,25){\circle*{4}} \put(270,25){\circle*{4}}
\put(270,35){\makebox(0,0){\small{$(3;4,1)$}}}
\put(210,35){\makebox(0,0){\small{$(6;4,1)$}}}
\put(210,25){\line(0,-1){20}} \put(210,5){\circle*{4}}
\put(230,5){\makebox(0,0){\small{$(2;4,1)$}}}
\put(-30,40){\makebox(0,0){\small{$(1;0,1)$}}}
\put(-30,10){\makebox(0,0){\small{$(1;0,1)$}}}
\put(-30,20){\makebox(0,0){\small{$(1;0,1)$}}}
\put(-30,30){\makebox(0,0){\small{$(1;0,1)$}}}
\end{picture}
\end{example}

\begin{example}\labelpar{ex:loop}
Let $f=x^d+y^d+xyz^{d-2}$, where $d\geq 3$. Then a possible
$\gc$ which does not satisfy Assumption A is:

\vspace{2mm}

\begin{picture}(200,70)(-60,-10)
\put(20,25){\vector(-2,1){30}}
\put(20,25){\vector(-2,-1){30}}
\put(20,25){\circle*{4}} \put(-10,28){\makebox(0,0){$\vdots$}}
\put(20,40){\makebox(0,0){\small{$(1;d,1)$}}}
\put(20,7){\makebox(0,0){\small{$[\frac{d(d-3)}{2}]$}}}

\put(20,25){\line(2,1){20}} \put(20,25){\line(2,-1){20}}

\qbezier(40,35)(70,45)(73,25) \qbezier(40,15)(70,5)(73,25)

\put(80,25){\small{\makebox(0,0){$2$}}}
\put(150,25){\makebox(0,0){$\mbox{($d$ arrowheads}$)}}
\put(-30,40){\makebox(0,0){\small{$(1;0,1)$}}}
\put(-30,10){\makebox(0,0){\small{$(1;0,1)$}}}
\end{picture}

Its modification as in  \ref{re:w2}, or as in  (\ref{re:HOM}),  which satisfies Assumption A is:

\vspace{2mm}

\begin{picture}(200,70)(-60,-10)
\put(20,25){\vector(-2,1){30}}
\put(20,25){\vector(-2,-1){30}}
\put(20,25){\circle*{4}}
\put(-10,28){\makebox(0,0){$\vdots$}}
\put(20,40){\makebox(0,0){\small{$(1;d,1)$}}}
\put(20,7){\makebox(0,0){\small{$[\frac{d(d-3)}{2}]$}}}

\put(20,25){\line(2,1){20}}
\put(20,25){\line(2,-1){20}}

\qbezier(40,35)(70,45)(73,25)
\qbezier(40,15)(70,5)(73,25)

\put(73,25){\circle*{4}}
\put(95,25){\makebox(0,0){\small{$(2;d,1)$}}}
\put(75,40){\makebox(0,0){\small{$2$}}}
\put(75,10){\makebox(0,0){\small{$2$}}}
\put(180,25){\makebox(0,0){$\mbox{($d$ arrowheads}$)}}
\put(-30,40){\makebox(0,0){\small{$(1;0,1)$}}}
\put(-30,10){\makebox(0,0){\small{$(1;0,1)$}}}
\end{picture}
\end{example}

\begin{remark}
Consider a {\bf Zariski pair} $(C_1,C_2)$. This means that $C_1$ and $C_2$ are
two irreducible projective curves that
 have the same degree and the topological type of their {\it local}
singularities are the same, while their embeddings in the projective plane are topologically different.
Then the two graphs $\gc(C_1)$ and $\gc(C_2)$ provided by the above algorithm will be the same.
In particular, any invariant derived from $\gc$ (e.g. $\partial F$)
will not differentiate Zariski pairs. \ix{Zariski!pair}\ix{Milnor!fiber!boundary}
\end{remark}

\begin{remark} Since $d_j=1$ for any $j$, and $\nu(e)=d(e)=1$ for any cutting edge $e$, one also
has $\#T\Sigma_j=|\cale_{cut,j}|$. \ix{cutting edge}
\end{remark}

\section{\ Line arrangements}\labelpar{arrang} \setcounter{equation}{0} \
A special case of \ref{homgen} is the case of line arrangements
in $\bfc\bp^2$, that is,  each connected component of $C$ is a line.
\ix{singularities!line arrangements}\ix{line arrangements|see{singularities/line arrangements}}

Having  an arrangement, let $\{L_\lambda\}_{\lambda\in\Lambda}$
be the set of lines, and $\{p_j\}_{j\in\Pi}$ the set of
intersection points. Write  $|\Lambda|=d$, and for each $j$ set
$m_j$ for the cardinality of $I_j:=\{L_\lambda\,:\, L_\lambda\ni p_j\}$.
Then $\gc$ can be constructed as follows:

For each $\lambda\in\Lambda$ put a non--arrowhead vertex
$v_\lambda$ with weight $(1;d,1)$. For each $j\in \Pi$ put a
non--arrowhead vertex $v_j$ with weight $(m_j;d,1)$. Join the
vertices $v_\lambda$ and $v_j$ with a 2--edge whenever $p_j\in
L_\lambda$. Finally, put on each vertex $v_\lambda$ an edge with
decoration 1, which  supports an arrowhead with weight $(1;0,1)$.

Notice that $v_j$ is connected with $m_j$ vertices of type
$v_\lambda$. Clearly, $v_j$ corresponds to the exceptional divisor
obtained by blowing up an intersection point of $m_j$ lines.
Notice that if in the special case of $m_j=2$ --- i.e. when $p_j$
sits only on $L_{\lambda_1}$ and $L_{\lambda_2}$ ---, this blow up
is imposed by Assumption A, cf. \ref{re:w2}. Nevertheless, if we
wish to neglect Assumption A, then this vertex $v_j$ can be
deleted together with the two adjacent edges, and one can simply
put a  2--edge connecting $v_{\lambda_1}$ with $v_{\lambda_2}$.

\begin{example}\labelpar{ex:a3}
In the case of the $A_3$ arrangement $f=xyz(x-y)(y-z)(z-x)$,
the two graphs $\gc$ (satisfying  Assumption A or not) are:
\ix{singularities!line arrangements!$A_3$}

\vspace{3mm}

\begin{picture}(200,120)(50,-10)
\put(50,20){\circle*{4}} \put(50,40){\circle*{4}}
\put(50,60){\circle*{4}} \put(50,80){\circle*{4}}
\put(150,15){\circle*{4}} \put(150,30){\circle*{4}}
\put(150,45){\circle*{4}} \put(150,60){\circle*{4}}
\put(150,75){\circle*{4}} \put(150,90){\circle*{4}}
\put(150,15){\vector(0,1){10}} \put(150,30){\vector(0,1){10}}
\put(150,45){\vector(0,1){10}} \put(150,60){\vector(0,1){10}}
\put(150,75){\vector(0,1){10}} \put(150,90){\vector(0,1){10}}
\qbezier(50,80)(150,90)(150,90) \qbezier(50,80)(150,75)(150,75)
\qbezier(50,80)(150,60)(150,60) \qbezier(50,60)(150,90)(150,90)
\qbezier(50,60)(150,45)(150,45) \qbezier(50,60)(150,30)(150,30)
\qbezier(50,40)(150,75)(150,75) \qbezier(50,40)(150,45)(150,45)
\qbezier(50,40)(150,15)(150,15) \qbezier(50,20)(150,60)(150,60)
\qbezier(50,20)(150,30)(150,30) \qbezier(50,20)(150,15)(150,15)
\qbezier(150,90)(190,52)(150,15) \qbezier(150,75)(178,52)(150,30)
\qbezier(150,60)(165,52)(150,45)
\put(171,52){\circle*{4}}\put(157,52){\circle*{4}}\put(164,52){\circle*{4}}

\put(230,20){\circle*{4}} \put(230,40){\circle*{4}}
\put(230,60){\circle*{4}} \put(230,80){\circle*{4}}
\put(330,15){\circle*{4}} \put(330,30){\circle*{4}}
\put(330,45){\circle*{4}} \put(330,60){\circle*{4}}
\put(330,75){\circle*{4}} \put(330,90){\circle*{4}}
\put(330,15){\vector(0,1){10}} \put(330,30){\vector(0,1){10}}
\put(330,45){\vector(0,1){10}} \put(330,60){\vector(0,1){10}}
\put(330,75){\vector(0,1){10}} \put(330,90){\vector(0,1){10}}
\qbezier(230,80)(330,90)(330,90) \qbezier(230,80)(330,75)(330,75)
\qbezier(230,80)(330,60)(330,60) \qbezier(230,60)(330,90)(330,90)
\qbezier(230,60)(330,45)(330,45) \qbezier(230,60)(330,30)(330,30)
\qbezier(230,40)(330,75)(330,75) \qbezier(230,40)(330,45)(330,45)
\qbezier(230,40)(330,15)(330,15) \qbezier(230,20)(330,60)(330,60)
\qbezier(230,20)(330,30)(330,30) \qbezier(230,20)(330,15)(330,15)
\qbezier(330,90)(370,52)(330,15) \qbezier(330,75)(358,52)(330,30)
\qbezier(330,60)(345,52)(330,45)
\end{picture}

\vspace{2mm}

\noindent where on the left hand graph the four left-vertices are weighted by  $(3;6,1)$,
the next  six by $(1;6,1)$, the remaining three by $(2;6,1)$, and the arrowheads by $(1;0,1)$.
The edges supporting arrowheads are decorated by 1, the others by 2.
\end{example}

\begin{example}\labelpar{ex:genarr} The `simplified' graph
$\gc$, which does not satisfy Assumption A,
for the generic arrangement with $d$ lines consists of $d$
vertices $v_\lambda$, each decorated with $(1;d,0)$, each
supporting an arrow $(1;0,1)$, and any pair of non--arrowheads is
connected by a 2--edge.
\end{example}
\ix{singularities!line arrangements!generic}

\chapter{Examples. Families associated with plane curve singularities}\labelpar{s:EXFA}
\section{\ Cylinders of plane curve singularities}\labelpar{cyl} \setcounter{equation}{0}

Consider $f(x,y,z)=f'(x,y)$ and $g(x,y,z)=z$, where
$f':(\bfc^2,0)\to(\bfc,0)$ is an isolated  plane curve
singularity.
It is well--known (see e.g. \cite{BrKn,Hartshorne,Wall}) that  the embedded resolution
of $(\bfc^2,V_{f'})$ can be obtained by a sequence of quadratic transformations.
Replacing the quadratic transformations of the
infinitely  near  points of $0\in \bfc^2$ by blow ups along
the infinitely near  1--dimensional axis  of the  $z$--axis,
one obtains the following picture.
\ix{singularities!cylinders|textbf}

Let $\G(\bfc^2,f')$ denote the minimal
embedded resolution graph of the plane curve singularity
$f':(\bfc^2,0)\to(\bfc,0)$. Recall that, besides the
Euler numbers and genera of the non--arrowheads,  each vertex has
a multiplicity decoration $(m)$, the vanishing order of the
pull-back of $f'$ along the corresponding irreducible curve.

We say that $\{f=0\}$ is the {\it cylinder } of the plane curve $\{f'=0\}$.

\vspace{2mm}

In this situation, one can get  a possible dual graph $\gc$ from
$\G(\bfc^2,f')$ via the following conversion.

The shapes of the two graphs agree, only the decorations are modified:
the Euler numbers  are deleted, while for each vertex the
multiplicity $(m)$ is replaced by $(m;0,1)$. The genus decorations
in $\gc$ --- similarly as in $\G(\bfc^2,f')$ --- of all non-arrowheads
are zero.  Moreover, all edges in $\G_\C$ have weight 2.

\begin{example}\labelpar{ex:cyln}

\noindent Let $f(x,y,z)=f'(x,y)=(x^2-y^3)(y^2-x^3)$. Then
$\G(\bfc^2,f')$ is:

\vspace{3mm}

\begin{picture}(300,75)(10,-20)
\put(20,25){\circle*{4}} \put(90,25){\circle*{4}}
\put(160,25){\circle*{4}} \put(230,25){\circle*{4}}
\put(300,25){\circle*{4}} \put(20,25){\line(1,0){280}}
\put(90,25){\vector(0,-1){30}} \put(230,25){\vector(0,-1){30}}
\put(20,15){\makebox(0,0){$-2$}} \put(100,15){\makebox(0,0){$-1$}}
\put(160,15){\makebox(0,0){$-5$}}
\put(220,15){\makebox(0,0){$-1$}}
\put(300,15){\makebox(0,0){$-2$}}

\put(20,35){\makebox(0,0){$(5)$}}
\put(90,35){\makebox(0,0){$(10)$}}
\put(160,35){\makebox(0,0){$(4)$}}
\put(230,35){\makebox(0,0){$(10)$}}
\put(300,35){\makebox(0,0){$(5)$}}

\put(90,-12){\makebox(0,0){$(1)$}}
\put(230,-12){\makebox(0,0){$(1)$}}

\end{picture}

\vspace{7mm}

\noindent which is transformed into $\gc$ as:

\vspace{3mm}

\begin{picture}(300,80)(10,-25)
\put(20,25){\circle*{4}} \put(90,25){\circle*{4}}
\put(160,25){\circle*{4}} \put(230,25){\circle*{4}}
\put(300,25){\circle*{4}} \put(20,25){\line(1,0){280}}
\put(90,25){\vector(0,-1){30}} \put(230,25){\vector(0,-1){30}}
\put(55,18){\makebox(0,0){$2$}} \put(128,18){\makebox(0,0){$2$}}
\put(195,18){\makebox(0,0){$2$}} \put(263,18){\makebox(0,0){$2$}}

\put(95,10){\makebox(0,0){$2$}} \put(235,10){\makebox(0,0){$2$}}

\put(20,35){\makebox(0,0){$(5;0,1)$}}
\put(90,35){\makebox(0,0){$(10;0,1)$}}
\put(160,35){\makebox(0,0){$(4;0,1)$}}
\put(230,35){\makebox(0,0){$(10;0,1)$}}
\put(300,35){\makebox(0,0){$(5;0,1)$}}

\put(90,-12){\makebox(0,0){$(1;0,1)$}}
\put(230,-12){\makebox(0,0){$(1;0,1)$}}
\end{picture}

\vspace{2mm}

\end{example}

\begin{bekezdes}
It is easy to verify that $\gck=\gc$, and $\gce$ consists of $|\cala|$
double arrows, where  $|\cala|$ is the number of irreducible components of $f'$. The statements of
\ref{gamma1} and \ref{ss:GCK} can easily be verified.
\end{bekezdes}

\section{\ Germs of type $f=zf'(x,y)$}\labelpar{ss:PROD} \setcounter{equation}{0}

 Here $f':(\bfc^2,0)\to(\bfc,0)$ is an isolated  plane curve
singularity as above, $f(x,y,z):=zf'(x,y)$ and $g$ is a generic linear form in variables
$(x,y,z)$.

For this family  we found no nice uniform presentation of $\gc$
with similar simplicity  and conceptual conciseness as in \ref{cyl}, or in the homogeneous case.
(We face the same obstruction as in the case of suspensions,
explained in the second paragraph of \ref{221}).
Since  the  3--manifold $\partial F$ can be determined completely and rather easily\ix{Milnor!fiber!boundary}
for any $f=zf'(x,y)$ by another method, which  will be presented in Chapter
 \ref{s:zf'}, we decided to omit general technical
graph--presentations here. Nevertheless, particular testing examples can be determined without difficulty.
For example,  consider    $f'=x^{d-1}+y^{d-1}$ when $f$ becomes homogeneous
 and $\gc$ can be determined as in Chapter  \ref{hom}. Or, consider  $f'=x^2+y^3$, whose $\gc$ is  below.
For more comments (and mysteries) regarding the possible graphs $\gc$,
see \ref{comments}.

 \begin{example}\labelpar{ex:zf'}
Assume that $f=z(x^2+y^3)$ and take $g$ to be a  generic linear form.
The  `ad hoc blowing up procedure', using the
naive principle to blow up the `worst singular locus',
provides the following  $\gc$,
where we only marked the 2--edges, and all unmarked edges are 1--edges:

\begin{picture}(200,130)(-80,0)
\put(0,10){\circle*{4}}\put(40,10){\circle*{4}}
\put(80,10){\circle*{4}}\put(120,10){\circle*{4}}
\put(0,100){\circle*{4}}\put(40,100){\circle*{4}}
\put(80,100){\circle*{4}}
\put(40,40){\circle*{4}}\put(40,70){\circle*{4}}
\put(40,10){\line(0,1){90}}\put(40,70){\vector(1,0){40}}
\put(0,10){\line(1,0){120}}\put(0,100){\vector(1,0){120}}
\put(0,0){\makebox(0,0){\small{$(3;7,1)$}}}\put(40,0){\makebox(0,0){\small{$(6;7,1)$}}}
\put(80,0){\makebox(0,0){\small{$(6;3,1)$}}}\put(120,0){\makebox(0,0){\small{$(2;3,1)$}}}
\put(0,110){\makebox(0,0){\small{$(1;4,1)$}}}\put(40,110){\makebox(0,0){\small{$(1;8,2)$}}}
\put(80,110){\makebox(0,0){\small{$(1;3,1)$}}}\put(140,100){\makebox(0,0){\small{$(1;0,1)$}}}
\put(20,40){\makebox(0,0){\small{$(1;7,1)$}}}\put(20,70){\makebox(0,0){\small{$(1;8,2)$}}}
\put(100,70){\makebox(0,0){\small{$(1;0,1)$}}}
\put(20,15){\makebox(0,0){\small{$2$}}}
\put(100,15){\makebox(0,0){\small{$2$}}}
\put(45,25){\makebox(0,0){\small{$2$}}}
\put(45,85){\makebox(0,0){\small{$2$}}}
\end{picture}

\vspace{3mm}

\end{example}

\section{\ Double suspensions}\labelpar{ss:ds}\setcounter{equation}{0}

Suspension, or cyclic covering  singularities are defined by functions of
 the form  $f(x,y,z)=f'(x,y)+z^d$, where
$f':(\bfc^2,0)\to(\bfc,0)$ is  plane curve
singularity. If we wish to get $f$ non--isolated, we have to start
with $f'$ non--isolated. When $d=2$ the germ is called double suspension of $f'$.
When $f'$ is not very complicated,  one might find a convenient resolution
by  `ad hoc' blow ups, such as in the following case:
\ix{singularities!suspensions}

\begin{example}\labelpar{221}
Assume that $f=x^2y+z^2$ and $g=x+y$. Then a possible $\gc$ is:

\begin{picture}(200,90)(-50,5)
\put(50,25){\circle*{4}} \put(50,65){\circle*{4}}
\put(90,25){\circle*{4}} \put(90,65){\circle*{4}}
\put(130,65){\circle*{4}}

\put(50,65){\line(1,0){80}} \put(50,25){\vector(1,0){80}}
\put(90,25){\line(0,1){40}}

\put(50,75){\makebox(0,0){\small{$(2;3,1)$}}}
\put(50,15){\makebox(0,0){\small{$(1;3,1)$}}}
\put(90,75){\makebox(0,0){\small{$(2;6,2)$}}}
\put(90,15){\makebox(0,0){\small{$(1;6,2)$}}}
\put(130,75){\makebox(0,0){\small{$(2;0,1)$}}}
\put(130,15){\makebox(0,0){\small{$(1;0,1)$}}}
\end{picture}

\end{example}

Of course, for the general family, we need a more conceptual and uniform procedure.
In general, when determining  $\gc$, the construction of an
embedded resolution $r$, as in \ref{construct},  is not always simple, and it
depends essentially on the choice of the germ $g$. Ideally, for any $f$, it would be nice to
find a germ  $g$ such that the pair $(f,g)$ would admit a resolution $r$ which reflects
 {\em only} the geometry of $f$, e.g. it is a `canonical', or `minimal' embedded resolution of
$V_f$. For example, in the homogeneous case, resolving $f$ we  automatically get
a resolution which is good for the pair $(f,g)$ as well, provided that
$g$ is a generic linear form.  But, in general,   `canonical' resolutions
 attached to $f$ by some geometric constructions used to resolve hypersurfaces do not have the
  extra property that they resolve a well--chosen $g$ as well (or, at least,
  the authors do not know such a general statement). Usually, the strict transform
of $g$ may still have `bad contacts' with the created exceptional divisors even if we take
for $g$ the generic linear form.

Nevertheless, for double
 suspensions $f=f'+z^2$, if one constructs a `canonical'
 resolution using the classical Jung construction fitting with the
shape of $f$ (that is,  based on the projection onto the $(x,y)$--plane, similarly
 as the methods described in \ref{ss:b}),
the obtained embedded resolution will be compatible with
 $g$ too, provided that we take for $g$ a generic linear form.
We expect that a similar phenomenon is valid for arbitrary suspensions as well.

 Since the embedded resolution of double suspensions is already present in the literature  \cite{BMN},
this case can be exemplified without too much extra work. Nevertheless, the computations are not trivial,
  and their verification will require some effort from the reader, and familiarity with
  \cite{BMN}. In the sequel we present the main steps
needed to understand the procedure, we provide some examples, and we let the reader  explore his/her favorite example.  

\vspace{2mm}

We prefer to write $g$ as $g'(x,y)+z$, where $g'$ is a generic linear form (with respect to $f'$)
in variables $(x,y)$.

The embedded resolution of $V_{fg}\subset (\bfc^3,0)$
is constructed in several steps as in \cite{BMN}. Although in that  article
 $f'$ is isolated, the same procedure works in our case as well.
 We summarize the steps  in the following diagram:

\begin{picture}(100,90)(-40,20)
\put(20,80){\makebox(0,0){$\tilde{X}$}}
\put(100,80){\makebox(0,0){$X$}}
\put(180,80){\makebox(0,0){$U^3$}}
\put(100,30){\makebox(0,0){$Z$}}
\put(180,30){\makebox(0,0){$U^2$}}
\put(40,80){\vector(1,0){40}}
\put(60,88){\makebox(0,0){$\psi$}}
\put(120,80){\vector(1,0){40}}
\put(140,88){\makebox(0,0){$\phi'$}}
\put(100,70){\vector(0,-1){30}}
\put(90,55){\makebox(0,0){$p'$}}
\put(180,70){\vector(0,-1){30}}
\put(190,55){\makebox(0,0){$p$}}
\put(120,30){\vector(1,0){40}}
\put(140,38){\makebox(0,0){$\phi $}}
\put(200,79){\makebox(0,0){$\supset$}}
\put(215,78){\makebox(0,0){$V_{fg}$}}
\put(200,29){\makebox(0,0){$\supset$}}
\put(215,28){\makebox(0,0){$V_{f'g'}$}}
\end{picture}

\noindent where

\begin{enumerate}
\item  $U^3$ is a small representative of $(\bfc^3,0)$ and $p:U^3\to U^2$
is induced by the projection $(x,y,z)\to (x,y)$.\vspace{2mm}

\item  $\phi:Z\to U^2$~ is an  embedded resolution of
$(V_{f'g'},0)\subset (\bfc^2,0)$. We attach to
each irreducible  component $D$ of the exceptional divisor and to each strict transform
component two nonnegative integers: the vanishing order
$m(f')$, respectively $m(g')$ of $f'$, respectively $g'$, along that component.


We take the minimal embedded resolution modified as in  \cite[(3.1)]{BMN}:
 we assume that there are no pairs
of irreducible components $D_{v_1}, D_{v_2}$ with
$(D_{v_1}, D_{v_2})\not=0$ having  both multiplicities $m_{v_1}(f'), m_{v_2}(f')$ odd. This can
always be achieved from the minimal embedded resolution by an  additional blow up
at those intersection points where the condition is not satisfied.\vspace{2mm}

\item  $p':X\to Z$ is the pull-back of $p:U^3\to U^2$ via $\phi$, that is, $X$ is the product
 of $Z$ with the $z$--disc.  
By construction, in some local coordinates $(u,v,z)$ with $p'(u,v,z)=(u,v)$,
any  strict transform component  of
$V_f$ in $X$ has equation  $u^{m_w(f')}+z^2$ above the generic point of an exceptional curve of $Z$, and  $u^{m_w(f')}v^{m_v(f')}+z^2$ above an intersection point. The strict transform
of  $V_g$ is  smooth; its local equations have similar form with the exponent
of $z$ being one. Note that  the contact of these two
 spaces along $z=0$ is rather non--trivial.\vspace{2mm}


\item  $\psi$ is an embedded resolution of
$(\phi')^{-1}(V_f)\subset X$, determined similarly as in \cite[(3.4)]{BMN}. This procedure
constructs  a `tower' of exceptional ruled surfaces over each exceptional divisor of $Z$.
The algorithm of \cite{BMN} constructs over each divisor of $Z$ a `minimal' tower, and the
towers above divisors with even multiplicities are constructed first. Both these two
conventions will be released now in order to get a resolution for the {\it pair} $V_f\cup V_g$.
\end{enumerate}

The composed map $\phi'\circ \psi:\tilde{X}\to U^3$ serves for the modification $r$.
Nevertheless, we wish to say here a word of warning. Usually we require that $r$ is an isomorphism
above the complement of the singular locus of $V_f$.
As it is explained in the Introduction of
\cite{BMN}, or can be verified using the  definitions, $\phi'\circ \psi$ fails to be an isomorphism
above the union of $Sing(V_f)$ with the $z$--axis (because of the blow ups of the infinite near
$z$--axis during the modification $\phi'$, as pull back of $\phi$).
 For example, the
Milnor fiber of $f$ is not lifted diffeomorphically under this modification:
it is blown up at its intersection points with the $z$--axis.
Nevertheless, as the boundary $\partial F_{\epsilon,\delta}$ has no intersection points with
the $z$--axis provided that $\delta\ll\epsilon$, this modification serves in this procedure as a
genuine embedded resolution.

\vspace{2mm}




The above strategy leads to a combinatorial algorithm in two steps.
In  {\it Step 1} one determines the embedded resolution graph $\G(f',g'):=\Gamma(\bfc^2,f'g')$ (with the
additional property mentioned in (2) above), but now
weighted with both multiplicities $(m_v(f'),m_v(g'))$ of $f'$ and $g'$. In {\it Step 2}
we determine  the `towers' similarly as in   \cite{BMN},
eventually constructed in a different order, or with  extra
blow ups.   In the concrete examples below we will indicate the differences with \cite{BMN}. Then,
 one reads from the `towers'   the graph $\gc$ of $(f,g)$. This
appears as a `modified cover' of  $\Gamma(f',g')$.

\vspace{2mm}

The following facts  might be helpful in the construction of the
above `modified cover' of graphs.

The non--arrowhead vertices of $\gce$ cover the non-arrowhead
vertices of $\G(f',g')$ as follows. Fix $w\in \calw(\G(\bfc^2,f'g'))$. If $m_w(f')$
is even, {\it and} all the $f'$--multiplicities of the adjacent vertices are even, then $w$
is covered by 2 non-arrowhead vertices. In all other cases it is covered by only one
vertex.  This structure follows closely the structure and the position of the strict transform of $f$
in the resolution towers as it is described in section 3.5 of \cite{BMN}.

The $(0,1)$ arrowheads of $\G(f',g')$ are covered by $(1;0,1)$--arrowheads of $\gc$. If
a non-arrowhead $w$  supports such an arrowhead in $\G(f',g')$, and $m_w(f')$ is even, then
it is covered by two arrowheads, otherwise only by one. Geometrically this is the only place where
the strict transform of $g$ plays a role.

The   two properties above describe  behaviors common with  $\Z_2$ graph coverings. \ix{graph!covering}

Next, above the arrowheads of $\G(f',g')$ of type $(1,0)$ we put nothing.
The graph $\gck$ appears above the arrowheads of $\G(f',g')$ of type $(m,0)$, $m>1$.
Fix such an arrowhead and the corresponding strict transform $St_a(f')$ of $f'$, which is supported
by the exceptional component $E_w$. Then the entire tower above $St_a(f')$ is in  $\de_c$
and all the intersection curves with the tower above $E_w$ enter in $\C$. Therefore, above the arrowheads of
 $\G(f',g')$ of type $(m,0)$ with $m>1$  a lot of curves of $\C$ may appear,
(and this part does not behave like a cyclic covering).

\vspace{2mm}
In certain cases, the genera of the projective irreducible components $C$ of
the special curve configuration $\C$ may be difficult to determine from the {\it local }
equations of $C$.  However,
if the link of the normalization of $V_f$ is a rational homology sphere, then we can be sure that
in $\gc$ all the genus decorations are zero, cf. \ref{m1} and \ref{gkl}. This is the case
in all the examples worked out in this section. \ix{curve configuration $\C$}

\vspace{2mm}

Although in all the cases considered below the graph  $\Gamma(f',g')$  is easy to determine,
we provide them in order to emphasize the covering nature of the procedure.

\begin{example}\labelpar{ex:AB}
 Assume that  $f'(x,y)=x^ay^b$, where $a>0$ and $b>0$.
 The normalization is a Hirzebruch--Jung singularity, hence its link is a rational homology sphere.

We will distinguish three cases depending on the parity of $a$ and $b$.

\vspace{3mm}

\noindent {\bf Case 1.} \  If both $a$ and $b$ are even, that is
$f'(x,y)=x^{2n}y^{2m}$, and $g'=x+y$, then by {\it Step 1} the dual graph of the
 minimal good embedded
resolution of  $V_{f'g'}$, weighted by the vanishing orders of both $f'$ and $g'$,  is

\vspace{2mm}

\begin{picture}(300,86)(-40,30)
\put(100,80){\circle*{4}}
\put(100,90){\makebox(0,0){\small{$(2(n+m),1)$}}}

\put(100,80){\vector(1,0){80}}
\put(190,90){\makebox(0,0){\small{$(2m,0)$}}}

\put(100,80){\vector(-1,0){80}}
\put(10,90){\makebox(0,0){\small{$(2n,0)$}}}

\put(100,80){\vector(0,-1){30}}
\put(100,40){\makebox(0,0){\small{$(0,1)$}}}
\end{picture}

\vspace{2mm}

\noindent Here {\it Step 2} follows closely \cite{BMN} with the following additional information:
the tower above the exceptional divisor weighted
$(2(n+m),1)$ was constructed first, then the towers above the strict transforms
of $f'$.
 A possible graph $\gc$ of $(f,g)$ is:

\vspace{2mm}

\begin{picture}(300,180)(-40,50)
\put(20,70){\circle*{4}}
\put(20,100){\circle*{4}}
\put(20,130){\circle*{4}}
\put(20,160){\circle*{4}}

\put(-10,70){\makebox(0,0){\small{$(2;2(n+m),1)$}}}
\put(-10,100){\makebox(0,0){\small{$(4;2(n+m),1)$}}}
\put(-17,130){\makebox(0,0){\small{$(2n-2;2(n+m),1)$}}}
\put(-10,160){\makebox(0,0){\small{$(2n;2(n+m),1)$}}}

\put(180,70){\circle*{4}}
\put(180,100){\circle*{4}}
\put(180,130){\circle*{4}}
\put(180,160){\circle*{4}}

\put(210,70){\makebox(0,0){\small{$(2;2(n+m),1)$}}}
\put(210,100){\makebox(0,0){\small{$(4;2(n+m),1)$}}}
\put(220,130){\makebox(0,0){\small{$(2m-2;2(n+m),1)$}}}
\put(212,160){\makebox(0,0){\small{$(2m;2(n+m),1)$}}}

\put(100,120){\circle*{4}}
\put(100,200){\circle*{4}}

\put(135,117){\makebox(0,0){\small{$(1;2(n+m),1)$}}}
\put(100,210){\makebox(0,0){\small{$(1;2(n+m),1)$}}}

\put(20,70){\line(0,1){30}}
\put(180,70){\line(0,1){30}}

\put(20,130){\line(0,1){30}}
\put(180,130){\line(0,1){30}}

\dashline[3]{3}(20,100)(20,130)
\dashline[3]{3}(180,100)(180,130)

\put(100,200){\line(2,-1){80}}
\put(100,200){\line(-2,-1){80}}
\put(100,120){\line(2,1){80}}
\put(100,120){\line(-2,1){80}}

\put(100,120){\vector(0,-1){30}}
\put(100,200){\vector(0,-1){30}}
\put(100,80){\makebox(0,0){\small{$(1;0,1)$}}}
\put(100,160){\makebox(0,0){\small{$(1;0,1)$}}}

\end{picture}

\vspace{2mm}
\noindent {\bf Case 2.}
 \ If $a$ is even and $b$ is odd, that is
$f'(x,y)=x^{2n}y^{2m+1}$,  and $g'=x+y$, then by {\it Step 1}  one gets the graph:

\vspace{2mm}

\begin{picture}(300,86)(-35,25)
\put(85,80){\circle*{4}}
\put(145,80){\circle*{4}}
\put(85,90){\makebox(0,0){\small{$(2n+2m+1,1)$}}}
\put(145,90){\makebox(0,0){\small{$(2n+4m+2,1)$}}}

\put(85,80){\line(1,0){60}}

\put(85,80){\vector(-1,0){55}}
\put(30,90){\makebox(0,0){\small{$(2n,0)$}}}

\put(145,80){\vector(1,0){50}}
\put(200,90){\makebox(0,0){\small{$(2m+1,0)$}}}

\put(85,80){\vector(0,-1){30}}
\put(85,40){\makebox(0,0){\small{$(0,1)$}}}

\end{picture}

\vspace{2mm}

\noindent Here the assumption of not having adjacent irreducible components with
both multiplicities $m_i(f), m_j(f)$ odd  is taken into account.

By {\it Step 2},   a possible universal graph $\gc$
of $(h,g)$ is:

\begin{picture}(300,200)(-35,50) 
\put(30,80){\circle*{4}} \put(30,110){\circle*{4}}
\put(30,140){\circle*{4}} \put(30,170){\circle*{4}}
\put(30,200){\circle*{4}} \put(30,230){\circle*{4}}

\put(0,80){\makebox(0,0){\small{$(2;2(n+m),1)$}}}
\put(0,110){\makebox(0,0){\small{$(4;2(n+m),1)$}}}
\put(-7,140){\makebox(0,0){\small{$\tiny(2n-2;2(n+m),1)$}}}
\put(-2,170){\makebox(0,0){\small{$(2n;2(n+m),1)$}}}
\put(-7,200){\makebox(0,0){\small{$(2n;4(n+m)+2,2)$}}}
\put(-7,230){\makebox(0,0){\small{$(2n;2(n+m)+1,1)$}}}

\put(195,80){\circle*{4}} \put(195,110){\circle*{4}}
\put(195,140){\circle*{4}} \put(195,170){\circle*{4}}
\put(195,200){\circle*{4}} \put(195,230){\circle*{4}}

\put(225,80){\makebox(0,0){\small{$(2;2(n+m),1)$}}}
\put(225,110){\makebox(0,0){\small{$(4;2(n+m),1)$}}}
\put(235,140){\makebox(0,0){\small{$\tiny(2m-2;2(n+m),1)$}}}
\put(227,170){\makebox(0,0){\small{$(2m;2(n+m),1)$}}}
\put(240,200){\makebox(0,0){\small{$(4m+2;2n+4m+2,1)$}}}
\put(240,230){\makebox(0,0){\small{$(2m+1;2n+4m+2,1)$}}}

\put(85,200){\circle*{4}} \put(145,200){\circle*{4}}

\put(80,210){\makebox(0,0){\small{$(1;4n+4m+2,2)$}}}
\put(160,210){\makebox(0,0){\small{$(1;2n+4m+2,1)$}}}

\put(30,80){\line(0,1){30}}
 \put(195,80){\line(0,1){30}}

\dashline[3]{3}(30,110)(30,140) \dashline[3]{3}(195,110)(195,140)

\put(30,140){\line(0,1){90}} \put(195,140){\line(0,1){90}}

\put(30,200){\line(1,0){165}}

\put(85,200){\vector(0,-1){40}}
\put(85,150){\makebox(0,0){\small{$(1;0,1)$}}}

\end{picture}

Here single  blow-ups above the exceptional curves weighted $(2n+4m+2,1)$
first, then
$(2n+2m+1,1)$ were used to ensure the strict transform of $g$ to be in
transverse position with respect to the exceptional divisors appearing later.
Then towers were constructed in the following order:
first above the exceptional curve weighted $(2n+4m+2,1)$
then $(2n+2m+1,1)$, finally the strict transforms of $f'$.\\

\noindent {\bf Case 3.} \  Finally, if both $a$ and $b$ are odd, that is
$f'(x,y)=x^{2n+1}y^{2m+1}$,  and $g'=x+y$, then {\it Step 1} produces the graph:

\vspace{2mm}

\begin{picture}(300,90)(-55,20)
\put(100,80){\circle*{4}}
\put(100,90){\makebox(0,0){\small{$(2(n+m+1),1)$}}}

\put(100,80){\vector(1,0){78}}
\put(180,90){\makebox(0,0){\small{$(2m+1,0)$}}}

\put(100,80){\vector(-1,0){78}}
\put(22,90){\makebox(0,0){\small{$(2n+1,0)$}}}

\put(100,80){\vector(0,-1){30}}
\put(100,40){\makebox(0,0){\small{$(0,1)$}}}

\end{picture}

\noindent While a  possible  graph $\gc$ is:

\begin{picture}(300,200)(-55,65)
\put(22,80){\circle*{4}}
\put(22,110){\circle*{4}}
\put(22,140){\circle*{4}}
\put(22,170){\circle*{4}}
\put(22,200){\circle*{4}}
\put(22,230){\circle*{4}}

\put(-13,80){\makebox(0,0){\small{$(2;2(n+m+1),1)$}}}
\put(-13,110){\makebox(0,0){\small{$(4;2(n+m+1),1)$}}}
\put(-25,140){\makebox(0,0){\small{$(2n-2;2(n+m+1),1)$}}}
\put(-18,170){\makebox(0,0){\small{$(2n;2(n+m+1),1)$}}}
\put(-25,200){\makebox(0,0){\small{$(4n+2;2(n+m+1),1)$}}}
\put(-25,230){\makebox(0,0){\small{$(2n+1;2(n+m+1),1)$}}}

\put(178,80){\circle*{4}}
\put(178,110){\circle*{4}}
\put(178,140){\circle*{4}}
\put(178,170){\circle*{4}}
\put(178,200){\circle*{4}}
\put(178,230){\circle*{4}}

\put(213,80){\makebox(0,0){\small{$(2;2(n+m+1),1)$}}}
\put(215,110){\makebox(0,0){\small{$(4;2(n+m+1),1)$}}}
\put(223,140){\makebox(0,0){\small{$(2m-2;2(n+m+1),1)$}}}
\put(217,170){\makebox(0,0){\small{$(2m;2(n+m+1),1)$}}}
\put(223,200){\makebox(0,0){\small{$(4m+2;2(n+m+1),1)$}}}
\put(223,230){\makebox(0,0){\small{$(2m+1;2(n+m+1),1)$}}}

\put(100,200){\circle*{4}}

\put(100,210){\makebox(0,0){\small{$(1;2(n+m+1),1)$}}}

\put(22,80){\line(0,1){30}}
\put(178,80){\line(0,1){30}}

\dashline[3]{3}(22,110)(22,140)
\dashline[3]{3}(178,110)(178,140)

\put(22,140){\line(0,1){90}}
\put(178,140){\line(0,1){90}}

\put(22,200){\line(1,0){156}}

\put(100,200){\vector(1,-1){30}}
\put(100,200){\vector(-1,-1){30}}

\put(140,160){\makebox(0,0){\small{$(1;0,1)$}}}
\put(60,160){\makebox(0,0){\small{$(1;0,1)$}}}

\end{picture}

Here, a tower above the exceptional curve was constructed first, then
towers above the strict transforms of $f'$.

\end{example}

\begin{example}\labelpar{a2double} Consider the infinite
family $T_{a,2,\infty}$ given by the local equation
$f(x,y,z)=x^a+y^2+xyz$.

If $a=2$ then by a change of coordinates, $f$ can be rewritten as
$f(x,y,z)=x^2+y^2$, a case already treated in \ref{cyl}.
Therefore, in this  subsection  we assume that  $a\geq 3$.
In this case, again, by completing the square and renaming variables,
$f$ can be brought to the form
$f(x,y,z)=x^2(x^{a-2}+y^2)+z^2$. In particular, the previous method can be used with
$g(x,y,z)=x+y+z$.

The singular locus is $\Sigma_f=\{x=y=0\}$
with transversal type $A_1$. Dividing the equation by $x^2$ and taking $t:=y/x$,
we get that $t^2+zt+x^{a-2}=0$, hence $t$ is in the normalization of the local ring, and the
normalization is a hypersurface singularity of type $A_{a-3}$. In particular, the link of the
normalization is a rational homology sphere.

\vspace{2mm}

First, we assume that $a$ is odd, that is $a=2k+3$.

\vspace{2mm}

\noindent {\it Step 1:}  For  $k\geq 1$ the  graph $\G(f',g')$ for
 $f'=x^2(x^{a-2}+y^2)$ and $g'=x+y$  is


\begin{picture}(300,90)(-10,-10)
\put(40,20){\circle*{4}} \put(90,20){\circle*{4}}
\put(140,20){\circle*{4}} \put(190,20){\circle*{4}}
\put(235,20){\circle*{4}} \put(285,20){\circle*{4}}

\put(40,10){\makebox(0,0){\small{$(4,1)$}}}
\put(90,10){\makebox(0,0){\small{$(6,1)$}}}
\put(140,10){\makebox(0,0){\small{$(2k,1)$}}}
\put(190,10){\makebox(0,0){\small{$(2k+2,1)$}}}
\put(235,10){\makebox(0,0){\small{$(4k+6,2)$}}}
\put(285,10){\makebox(0,0){\small{$(2k+3,1)$}}}

\put(40,20){\vector(-1,0){40}} \put(5,10){\makebox(0,0){\small{$(2,0)$}}}

\put(40,20){\vector(0,1){30}}
\put(40,60){\makebox(0,0){\small{$(0,1)$}}}

\put(235,20){\vector(0,1){30}}
\put(235,60){\makebox(0,0){\small{$(1,0)$}}}

\put(40,20){\line(1,0){50}} \dashline[3]{3}(90,20)(140,20)
\put(140,20){\line(1,0){145}}

\end{picture}

\noindent {\it Step 2:}  A possible  $\gc$ for  $(f,g)$ is:


\vspace{2mm}
\begin{picture}(300,170)(-10,-30)
\put(10,50){\circle*{4}}

\put(40,20){\circle*{4}}\put(40,80){\circle*{4}}
\put(90,20){\circle*{4}} \put(90,80){\circle*{4}}
\put(140,20){\circle*{4}} \put(140,80){\circle*{4}}
\put(190,20){\circle*{4}} \put(190,80){\circle*{4}}
\put(235,50){\circle*{4}} \put(285,50){\circle*{4}}

\put(3,40){\makebox(0,0){\small{$(2;4,1)$}}}

\put(25,10){\makebox(0,0){\small{$(1;4,1)$}}}
\put(25,90){\makebox(0,0){\small{$(1;4,1)$}}}
\put(90,10){\makebox(0,0){\small{$(1;6,1)$}}}
\put(90,90){\makebox(0,0){\small{$(1;6,1)$}}}
\put(140,10){\makebox(0,0){\small{$(1;2k,1)$}}}
\put(140,90){\makebox(0,0){\small{$(1;2k,1)$}}}
\put(190,10){\makebox(0,0){\small{$(1;2k+2,1)$}}}
\put(190,90){\makebox(0,0){\small{$(1;2k+2,1)$}}}

\put(220,40){\makebox(0,0){\small{$(1;4k+6,2)$}}}
\put(280,40){\makebox(0,0){\small{$(1;4k+6,2)$}}}
\put(260,55){\makebox(0,0){{\small$1$}}}

\put(40,20){\vector(0,-1){30}}
\put(40,-20){\makebox(0,0){\small{$(1;0,1)$}}}

\put(40,80){\vector(0,1){30}}
\put(40,120){\makebox(0,0){\small{$(1;0,1)$}}}

\put(40,20){\line(1,0){50}} \put(40,80){\line(1,0){50}}
\dashline[3]{3}(90,20)(140,20) \dashline[3]{3}(90,80)(140,80)
\put(140,20){\line(1,0){50}} \put(140,80){\line(1,0){50}}

\put(10,50){\line(1,1){30}} \put(10,50){\line(1,-1){30}}
\put(190,20){\line(3,2){45}} \put(190,80){\line(3,-2){45}}

\put(235,50){\line(1,0){50}}

\end{picture}

\noindent For $k=0$ (i.e., for $a=3$), the first graph is

\begin{picture}(300,90)(-100,-10)
\put(40,20){\circle*{4}}
\put(100,20){\circle*{4}}
\put(40,10){\makebox(0,0){\small{$(6,1)$}}}
\put(100,10){\makebox(0,0){\small{$(3,1)$}}}

\put(40,20){\vector(-1,0){60}}
\put(-20,10){\makebox(0,0){\small{$(2,0)$}}}

\put(40,20){\vector(0,1){30}}
\put(40,60){\makebox(0,0){\small{$(1,0)$}}}

\put(100,20){\vector(1,0){30}}
\put(150,20){\makebox(0,0){\small{$(0,1)$}}}

\put(40,20){\line(1,0){60}}

\end{picture}

\noindent which is `covered' by the graph $\gc$:

\begin{picture}(300,90)(-100,0)

\put(-20,50){\circle*{4}}
\put(-28,40){\makebox(0,0){$(2;6,1)$}}

\put(40,50){\circle*{4}}
\put(42,40){\makebox(0,0){\small$(1;6,1)$}}

\put(100,50){\circle*{4}}
\put(100,40){\makebox(0,0){\small$(1;6,2)$}}

\put(100,50){\vector(1,0){30}}
\put(150,50){\makebox(0,0){$(1;0,1)$}}

\put(40,50){\line(1,0){60}}

\qbezier(-20,50)(10,70)(40,50)
\qbezier(-20,50)(10,30)(40,50)
\put(10,68){\makebox(0,0){\small 2}}
\put(10,32){\makebox(0,0){\small 2}}

\end{picture}

\noindent (For an alternative universal graph with different $g$, see \ref{xyz3}.)

\vspace{2mm}

\noindent In case $a=2k$, $k\geq 3$, {\it Step 1} provides


\begin{picture}(300,80)(-10,0)
\put(40,20){\circle*{4}}
\put(100,20){\circle*{4}}
\put(180,20){\circle*{4}}
\put(240,20){\circle*{4}}

\put(40,10){\makebox(0,0){\small{$(4,1)$}}}
\put(100,10){\makebox(0,0){\small{$(6,1)$}}}
\put(180,10){\makebox(0,0){\small{$(2k-2,1)$}}}
\put(240,10){\makebox(0,0){\small{$(2k,1)$}}}

\put(40,20){\vector(-1,0){40}}
\put(0,10){\makebox(0,0){\small{$(2,0)$}}}

\put(40,20){\vector(0,1){30}}
\put(40,60){\makebox(0,0){\small{$(0,1)$}}}

\put(240,20){\vector(2,1){30}}
\put(280,40){\makebox(0,0){\small{$(1,0)$}}}
\put(240,20){\vector(2,-1){30}}
\put(280,0){\makebox(0,0){\small{$(1,0)$}}}

\put(40,20){\line(1,0){60}}
\dashline[3]{3}(100,20)(180,20)
\put(180,20){\line(1,0){60}}

\end{picture}

\vspace{6mm}

\noindent
and $\gc$ is

\vspace{2mm}

\begin{picture}(300,130)(-10,0)

\put(10,50){\circle*{4}}
\put(0,40){\makebox(0,0){\small{$(2;4,1)$}}}

\put(40,20){\circle*{4}}
\put(40,80){\circle*{4}}
\put(20,10){\makebox(0,0){\small{$(1;4,1)$}}}
\put(20,90){\makebox(0,0){\small{$(1;4,1)$}}}

\put(100,20){\circle*{4}}
\put(100,80){\circle*{4}}
\put(100,10){\makebox(0,0){\small{$(1;6,1)$}}}
\put(100,90){\makebox(0,0){\small{$(1;6,1)$}}}

\put(180,20){\circle*{4}}
\put(180,80){\circle*{4}}
\put(180,10){\makebox(0,0){\small{$(1;2k-2,1)$}}}
\put(180,90){\makebox(0,0){\small{$(1;2k-2,1)$}}}

\put(240,50){\circle*{4}}
\put(265,50){\makebox(0,0){\small{$(1;2k,1)$}}}

\put(40,20){\vector(0,-1){30}}
\put(40,-20){\makebox(0,0){\small{$(1;0,1)$}}}

\put(40,80){\vector(0,1){30}}
\put(40,120){\makebox(0,0){\small{$(1;0,1)$}}}

\put(40,20){\line(1,0){60}}
\put(40,80){\line(1,0){60}}
\dashline[3]{3}(100,20)(180,20)
\dashline[3]{3}(100,80)(180,80)

\put(10,50){\line(1,1){30}}
\put(10,50){\line(1,-1){30}}
\put(180,20){\line(2,1){60}}
\put(180,80){\line(2,-1){60}}

\end{picture}

\vspace{1.5cm}

\noindent
For $k=2$ (that is $a=4$) {\it Step 1} gives

\begin{picture}(300,80)(-40,0)

\put(100,20){\circle*{4}}
\put(100,10){\makebox(0,0){\small{$(4,1)$}}}

\put(100,20){\vector(-1,0){60}}
\put(40,10){\makebox(0,0){\small{$(2,0)$}}}

\put(100,20){\vector(0,1){30}}
\put(100,60){\makebox(0,0){\small{$(0,1)$}}}

\put(100,20){\vector(2,1){40}}
\put(160,40){\makebox(0,0){\small{$(1,0)$}}}
\put(100,20){\vector(2,-1){40}}
\put(160,0){\makebox(0,0){\small{$(1,0)$}}}

\end{picture}

\vspace{5mm}

\noindent while {\it Step 2} provides

\vspace{5mm}

\begin{picture}(300,80)(-40,0)

\put(40,50){\circle*{4}}
\put(35,40){\makebox(0,0){\small{$(2;4,1)$}}}

\put(100,50){\circle*{4}}
\put(120,50){\makebox(0,0){\small{$(1;4,1)$}}}

\put(100,50){\vector(0,1){30}}
\put(120,80){\makebox(0,0){\small{$(1;0,1)$}}}
\put(100,50){\vector(0,-1){30}}
\put(120,20){\makebox(0,0){\small{$(1;0,1)$}}}

\qbezier(40,50)(70,70)(100,50)
\qbezier(40,50)(70,30)(100,50)
\put(70,68){\makebox(0,0){\small 2}}
\put(70,32){\makebox(0,0){\small 2}}

\end{picture}

\end{example}

\begin{example}\labelpar{AnPi} Consider the family
 $f(x,y,z)=x^ay(x^2+y^3)+z^2$ with $a\geq 2$ and $g=x+y+z$.
Again, we need to consider two cases depending on parity of $a$.
The equation of the normalization is
$xy(x^2+y^3)+z^2=0$ if $a$ is odd, and it is
$y(x^2+y^3)+z^2=0$  if $a$ is even. In both cases one can determine the
plumbing graph of the link of the normalization using the algorithm \ref{ss:b}.
In particular, one gets that the link of the normalization is a rational homology sphere.

\vspace{2mm}

When $a$ is even, and taking into account, that no adjacent
vertices can have odd $f'$--multiplicities, {\it Step 1}
 gives for $f'g'$:

\begin{picture}(300,60)(-70,0)
\put(40,20){\circle*{4}}
\put(100,20){\circle*{4}}
\put(160,20){\circle*{4}}
\put(220,20){\circle*{4}}

\put(40,30){\makebox(0,0){\small{$(2a+4,1)$}}}
\put(100,30){\makebox(0,0){\small{$(3a+8,2)$}}}
\put(160,30){\makebox(0,0){\small{$(a+3,1)$}}}
\put(220,30){\makebox(0,0){\small{$(a+4,1)$}}}

\put(40,20){\vector(-1,0){60}}
\put(-20,30){\makebox(0,0){\small{$(a,0)$}}}

\put(100,20){\vector(0,-1){30}}
\put(100,-20){\makebox(0,0){\small{$(1,0)$}}}

\put(160,20){\vector(0,-1){30}}
\put(160,-20){\makebox(0,0){\small{$(0,1)$}}}

\put(220,20){\vector(0,-1){30}}
\put(220,-20){\makebox(0,0){\small{$(1,0)$}}}

\put(40,20){\line(1,0){180}}
\end{picture}

\vspace{1.5cm}\noindent {\it Step 2} provides

\begin{picture}(300,210)(-70,0)
\put(-20,20){\circle*{4}}
\put(-20,60){\circle*{4}}
\put(-20,120){\circle*{4}}
\put(-20,160){\circle*{4}}

\put(-45,20){\makebox(0,0){\small{$(2;2a+4,1)$}}}
\put(-45,60){\makebox(0,0){\small{$(4;2a+4,1)$}}}
\put(-52,120){\makebox(0,0){\small{$(a-2;2a+4,1)$}}}
\put(-45,160){\makebox(0,0){\small{$(a;2a+4,1)$}}}

\put(40,130){\makebox(0,0){\small{$(1;2a+4,1)$}}}
\put(40,140){\circle*{4}}
\put(40,180){\circle*{4}}
\put(40,190){\makebox(0,0){\small{$(1;2a+4,1)$}}}

\put(100,160){\circle*{4}}
\put(105,150){\makebox(0,0){\small{$(1;3a+8,2)$}}}

\put(160,160){\circle*{4}}
\put(165,170){\makebox(0,0){\small{$(1;2a+6,2)$}}}

\put(220,160){\circle*{4}}
\put(216,150){\makebox(0,0){\small{$(1;a+4,1)$}}}

\put(-20,20){\line(0,1){40}}
\dashline[3]{3}(-20,60)(-20,120)
\put(-20,120){\line(0,1){40}}

\put(100,160){\line(1,0){120}}

\put(40,140){\line(3,1){60}}
\put(40,140){\line(-3,1){60}}
\put(40,180){\line(3,-1){60}}
\put(40,180){\line(-3,-1){60}}

\put(160,160){\vector(0,-1){30}}
\put(160,118){\makebox(0,0){\small$(1;0,1)$}}

\end{picture}

\vspace{2mm}

\noindent
When $a$ is odd, the first graph is:

\begin{picture}(300,70)(-70,0)
\put(40,20){\circle*{4}} \put(100,20){\circle*{4}}
\put(160,20){\circle*{4}} \put(100,-20){\circle*{4}}
\put(40,30){\makebox(0,0){\small{$(2a+4,1)$}}}
\put(100,30){\makebox(0,0){\small{$(3a+8,2)$}}}
\put(160,30){\makebox(0,0){\small{$(a+3,1)$}}}
\put(75,-20){\makebox(0,0){\small{$(3a+9,2)$}}}

\put(40,20){\vector(-1,0){55}}
\put(-20,30){\makebox(0,0){\small{$(a,0)$}}}

\put(100,-20){\vector(0,-1){40}}
\put(100,-70){\makebox(0,0){\small{$(1,0)$}}}

\put(160,20){\vector(0,-1){40}}
\put(160,-30){\makebox(0,0){\small{$(1,0)$}}}

\put(160,20){\vector(1,0){50}}
\put(210,30){\makebox(0,0){\small{$(0,1)$}}}

\put(40,20){\line(1,0){140}}
\put(100,20){\line(0,-1){60}}
\end{picture}

\vspace{3cm}\noindent while {\it Step 2} provides

\begin{picture}(300,220)(-70,0)
\put(-15,20){\circle*{4}} \put(-15,60){\circle*{4}}
\put(-15,120){\circle*{4}} \put(-15,160){\circle*{4}}
\put(-15,200){\circle*{4}}

\put(-43,20){\makebox(0,0){\small{$(2;2a+4,1)$}}}
\put(-43,60){\makebox(0,0){\small{$(4;2a+4,1)$}}}
\put(-48,120){\makebox(0,0){\small{$(a-1;2a+4,1)$}}}
\put(-43,160){\makebox(0,0){\small{$(2a;2a+4,1)$}}}
\put(-43,200){\makebox(0,0){\small{$(a;2a+4,1)$}}}

 \put(40,160){\circle*{4}}
\put(40,170){\makebox(0,0){\small{$(1;2a+4,1)$}}}

 \put(100,160){\circle*{4}}
\put(100,170){\makebox(0,0){\small{$(1;6a+16,4)$}}}
 \put(100,120){\circle*{4}}
\put(100,110){\makebox(0,0){\small{$(1;3a+9,2)$}}}

\put(160,160){\circle*{4}}
\put(160,170){\makebox(0,0){\small{$(1;a+3,1)$}}}

\put(160,160){\vector(3,1){50}} \put(160,160){\vector(3,-1){50}}
\put(212,185){\makebox(0,0){\small$(1;0,1)$}}
\put(212,135){\makebox(0,0){\small$(1;0,1)$}}

\put(-15,20){\line(0,1){40}} \dashline[3]{3}(-15,60)(-15,120)
\put(-15,120){\line(0,1){80}}

\put(100,120){\line(0,1){40}}

\put(-15,160){\line(1,0){175}}
\end{picture}

\end{example}

\begin{example}\labelpar{mdndd} The procedure \ref{ex:AB} has a natural generalization for certain
other  suspensions too. For example, consider
$f(x,y,z)=x^{md}y^{nd}+z^d$ with ${\rm gcd}(m,n)=1$, and $g(x,y,z)=x+y+z$, hence
$f'(x,y)=x^{dn}y^{dm}$,  and $g'=x+y$.

Note that $f$ has $d$ local irreducible components, and each component is smooth. Hence, again
we know that all the genus decorations of $\gc$ are zero.

\vspace{2mm}

By {\it Step 1}  the  graph $\G(f',g')$  is

\begin{picture}(300,90)(-55,30)
\put(100,80){\circle*{4}}
\put(100,90){\makebox(0,0){\small{$(d(n+m),1)$}}}

\put(100,80){\vector(1,0){80}}
\put(190,90){\makebox(0,0){\small{$(dm,0)$}}}

\put(100,80){\vector(-1,0){80}}
\put(10,90){\makebox(0,0){\small{$(dn,0)$}}}

\put(100,80){\vector(0,-1){30}}
\put(100,40){\makebox(0,0){\small{$(0,1)$}}}

\end{picture}

\vspace{5mm}

\noindent Finally, by {\it Step 2},  a possible graph $\gc$ of $(f,g)$ is:

\vspace{2mm}


\begin{picture}(300,220)(-55,40)
\put(20,70){\circle*{4}}
\put(20,100){\circle*{4}}
\put(20,130){\circle*{4}}
\put(20,160){\circle*{4}}

\put(-10,70){\makebox(0,0){\small{$(d;d(n+m),1)$}}}
\put(-10,100){\makebox(0,0){\small{$(2d;d(n+m),1)$}}}
\put(-20,130){\makebox(0,0){\small{$(d(n-1);d(n+m),1)$}}}
\put(-10,160){\makebox(0,0){\small{$(dn;d(n+m),1)$}}}

\put(180,70){\circle*{4}}
\put(180,100){\circle*{4}}
\put(180,130){\circle*{4}}
\put(180,160){\circle*{4}}

\put(210,70){\makebox(0,0){\small{$(d;d(n+m),1)$}}}
\put(210,100){\makebox(0,0){\small{$(2d;d(n+m),1)$}}}
\put(220,130){\makebox(0,0){\small{$(d(m-1);d(n+m),1)$}}}
\put(210,160){\makebox(0,0){\small{$(dm;d(n+m),1)$}}}

\put(100,120){\circle*{4}}
\put(100,200){\circle*{4}}
\put(100,240){\circle*{4}}

\put(130,120){\makebox(0,0){\small{$(1;d(n+m),1)$}}}
\put(130,200){\makebox(0,0){\small{$(1;d(n+m),1)$}}}
\put(130,240){\makebox(0,0){\small{$(1;d(n+m),1)$}}}

\put(20,70){\line(0,1){30}}
\put(180,70){\line(0,1){30}}

\put(20,130){\line(0,1){30}}
\put(180,130){\line(0,1){30}}

\dashline[3]{3}(20,100)(20,130)
\dashline[3]{3}(180,100)(180,130)

\put(100,200){\line(2,-1){80}} \put(100,200){\line(-2,-1){80}}

\put(100,240){\line(1,-1){80}} \put(100,240){\line(-1,-1){80}}

\put(100,120){\line(2,1){80}}
\put(100,120){\line(-2,1){80}}

\put(100,120){\vector(0,-1){30}} \put(100,200){\vector(0,-1){30}}
\put(100,240){\vector(0,-1){30}}
\put(100,85){\makebox(0,0){\small{$(1;0,1)$}}}
\put(100,165){\makebox(0,0){\small{$(1;0,1)$}}}
\put(88,210){\makebox(0,0){\small{$(1;0,1)$}}}

\dashline[4]{1}(100,135)(100,150)

\end{picture}

\noindent where in the middle column there are $d$ vertices.

\vspace{3mm}

For another suspension case, see \ref{ex:347}.
\end{example}

\section{\ The $T_{a,*,*}$--family}\labelpar{ZFG}

\begin{bekezdes}\label{bek:ZFG}  {\bf The $T_{a,\infty,\infty}$--family.}
Let $f=x^a+xyz$ and set $g=x+y+z$.

\vspace{2mm}

If $a=2k+1$, $k> 1$, then a possible universal graph is:

\vspace{2mm}

\begin{picture}(100,100)(-20,-10)
\put(60,20){\circle*{4}} \put(140,20){\circle*{4}} \put(180,20){\circle*{4}}
\put(60,10){\makebox(0,0){\small{$(1;2k-1,1)$}}}
\put(140,10){\makebox(0,0){\small{$(1;5,1)$}}}
\put(180,10){\makebox(0,0){\small{$(1;3,1)$}}}

\put(60,60){\circle*{4}} \put(140,60){\circle*{4}} \put(180,60){\circle*{4}}
\put(60,70){\makebox(0,0){\small{$(1;2k-1,1)$}}}
\put(140,70){\makebox(0,0){\small{$(1;5,1)$}}}
\put(180,70){\makebox(0,0){\small{$(1;3,1)$}}}

\put(40,40){\circle*{4}} \put(200,40){\circle*{4}}
\put(10,40){\makebox(0,0){\small{$(1;2k+1,1)$}}}
\put(210,50){\makebox(0,0){\small{$(1;3,1)$}}}

\put(40,40){\line(1,1){20}}
\put(40,40){\line(1,-1){20}}

\put(200,40){\line(-1,1){20}}
\put(200,40){\line(-1,-1){20}}
\put(186,47){\makebox(0,0){\small{2}}}
\put(186,33){\makebox(0,0){\small{2}}}

\dashline[3]{3}(60,20)(140,20)
\dashline[3]{3}(60,60)(140,60)

\put(140,20){\line(1,0){40}}
\put(140,60){\line(1,0){40}}

\put(180,20){\vector(3,-1){40}}
\put(200,40){\vector(1,0){40}}
\put(180,60){\vector(3,1){40}}
\put(240,5){\makebox(0,0){\small{$(1;0,1)$}}}
\put(240,75){\makebox(0,0){\small{$(1;0,1)$}}}
\put(260,40){\makebox(0,0){\small{$(1;0,1)$}}}

\end{picture}

\vspace{2mm}

\noindent Above, all unmarked edges have weight 1.

The resolution process was started by a blow-up at the origin.
Since $f=x^{2k+1}+xyz=x(x^{2k}+yz)$, there is only one singularity remaining,
of the form $\{t^{2k-2}+sr=0\}$. It is resolved
by a series of blow-ups at infinitely near points, resulting in the above graph.

\vspace{2mm}

If  $a=3$ then $f$ is homogeneous. In particular,  a single blow-up at the origin suffices and we
get

\begin{picture}(100,80)(-20,-0)
\put(50,40){\circle*{4}} \put(140,40){\circle*{4}}
\put(50,50){\makebox(0,0){\small{$(1;3,1)$}}}
\put(140,50){\makebox(0,0){\small{$(1;3,1)$}}}

\put(140,40){\vector(1,0){40}}
\put(200,40){\makebox(0,0){\small{$(1;0,1)$}}}

\put(50,40){\vector(-3,1){40}}
\put(0,60){\makebox(0,0){\small{$(1;0,1)$}}}
\put(50,40){\vector(-3,-1){40}}
\put(0,20){\makebox(0,0){\small{$(1;0,1)$}}}

\qbezier(50,40)(95,60)(140,40)
\qbezier(50,40)(95,20)(140,40)
\put(95,58){\makebox(0,0){\small 2}}
\put(95,22){\makebox(0,0){\small 2}}

\end{picture}

\smallskip
Set now $a=2k$, $k> 2$. The same strategy for the resolution as above
can be followed. However, when the strict transform of $f$ becomes smooth,
it is not in normal crossing with the exceptional divisors.
Two additional blow-ups along singular axes lead to the following universal
graph:

\vspace{2mm}

\begin{picture}(100,100)(-20,-10)

\put(20,20){\circle*{4}}
\put(60,20){\circle*{4}} \put(140,20){\circle*{4}} \put(180,20){\circle*{4}}
\put(10,10){\makebox(0,0){\small{$(1;2k,1)$}}}
\put(60,10){\makebox(0,0){\small{$(1;2k-3,1)$}}}
\put(140,10){\makebox(0,0){\small{$(1;5,1)$}}}
\put(180,10){\makebox(0,0){\small{$(1;3,1)$}}}

\put(20,60){\circle*{4}}
\put(60,60){\circle*{4}} \put(140,60){\circle*{4}} \put(180,60){\circle*{4}}
\put(10,70){\makebox(0,0){\small{$(1;2k,1)$}}}
\put(60,70){\makebox(0,0){\small{$(1;2k-3,1)$}}}
\put(140,70){\makebox(0,0){\small{$(1;5,1)$}}}
\put(180,70){\makebox(0,0){\small{$(1;3,1)$}}}

\put(200,40){\circle*{4}}
\put(210,50){\makebox(0,0){\small{$(1;3,1)$}}}

\put(20,20){\line(1,0){40}}
\put(20,60){\line(1,0){40}}

\put(20,20){\line(0,1){40}}

\put(200,40){\line(-1,1){20}}
\put(200,40){\line(-1,-1){20}}
\put(186,47){\makebox(0,0){\small{2}}}
\put(186,33){\makebox(0,0){\small{2}}}

\dashline[3]{3}(60,20)(140,20)
\dashline[3]{3}(60,60)(140,60)

\put(140,20){\line(1,0){40}}
\put(140,60){\line(1,0){40}}

\put(180,20){\vector(3,-1){40}}
\put(200,40){\vector(1,0){40}}
\put(180,60){\vector(3,1){40}}
\put(240,5){\makebox(0,0){\small{$(1;0,1)$}}}
\put(240,75){\makebox(0,0){\small{$(1;0,1)$}}}
\put(260,40){\makebox(0,0){\small{$(1;0,1)$}}}

\end{picture}

\vspace{2mm}

(All unmarked edges have weight 1, as before.)

\smallskip Finally, in case
$a=4$ the previous resolution `strategy' leads to

\begin{picture}(100,100)(-20,-10)
\put(140,20){\circle*{4}} \put(180,20){\circle*{4}}
\put(140,10){\makebox(0,0){\small{$(1;4,1)$}}}
\put(180,10){\makebox(0,0){\small{$(1;4,1)$}}}

\put(140,60){\circle*{4}} \put(180,60){\circle*{4}}
\put(140,70){\makebox(0,0){\small{$(1;4,1)$}}}
\put(180,70){\makebox(0,0){\small{$(1;4,1)$}}}

\put(200,40){\circle*{4}}
\put(210,50){\makebox(0,0){\small{$(1;3,1)$}}}

\put(200,40){\line(-1,1){20}}
\put(200,40){\line(-1,-1){20}}

\put(140,20){\line(1,0){40}}
\put(140,60){\line(1,0){40}}
\put(140,20){\line(0,1){40}}
\put(160,25){\makebox(0,0){\small{2}}}
\put(160,55){\makebox(0,0){\small{2}}}
\put(135,40){\makebox(0,0){\small{1}}}

\put(140,20){\vector(-1,0){40}}
\put(200,40){\vector(1,0){40}}
\put(140,60){\vector(-1,0){40}}
\put(80,20){\makebox(0,0){\small{$(1;0,1)$}}}
\put(260,40){\makebox(0,0){\small{$(1;0,1)$}}}
\put(80,60){\makebox(0,0){\small{$(1;0,1)$}}}
\end{picture}

\end{bekezdes}

\begin{bekezdes}\labelpar{a2infty} {\bf The $T_{a,2,\infty}$--family (again).}
Set  $f(x,y,z)=x^a+y^2+xyz$.

The cases $a=3$ and $a=5$ (with $g=z$) were already  considered in \cite{eredeti},
where $\gc$ was obtained using an alternative/ad hoc resolution. The
graphs $\gc$ thus obtained will  serve as clarifying examples for
several geometric discussions in this work as well. The graphs are the following:

\begin{example}\labelpar{xyz3}
If $f=x^3+y^2+xyz$ and $g=z$ then a possible $\gc$
is:

\begin{picture}(200,85)(-50,20)
\put(50,25){\circle*{4}} \put(50,75){\circle*{4}}
\put(100,25){\circle*{4}} \put(100,75){\circle*{4}}
\put(150,50){\circle*{4}}

\put(50,25){\line(1,0){50}} \put(50,75){\line(1,0){50}}
\put(100,25){\line(0,1){50}} \put(150,50){\line(-2,-1){50}}
\put(150,50){\line(-2,1){50}} \put(150,50){\vector(1,0){40}}

\put(50,85){\makebox(0,0){\small{$(3;0,1)$}}}
\put(50,15){\makebox(0,0){\small{$(2;0,1)$}}}
\put(100,85){\makebox(0,0){\small{$(3;6,1)$}}}
\put(100,15){\makebox(0,0){\small{$(2;6,1)$}}}
\put(160,60){\makebox(0,0){\small{$(1;6,1)$}}}
\put(210,50){\makebox(0,0){\small{$(1;0,1)$}}}
\end{picture}

\end{example}

\vspace{3mm}

\begin{example}\labelpar{xyz5}
If $f=x^5+y^2+xyz$ and  $g=z$ then a possible $\gc$
is:

\begin{picture}(200,100)(-50,0)
\put(0,50){\circle*{4}} \put(50,50){\circle*{4}}
\put(100,25){\circle*{4}} \put(100,75){\circle*{4}}
\put(150,50){\circle*{4}}

\put(0,50){\line(1,0){50}} \put(50,50){\line(2,1){50}}
\put(50,50){\line(2,-1){50}} \put(150,50){\line(-2,-1){50}}
\put(150,50){\line(-2,1){50}} \put(150,50){\vector(1,0){40}}

\put(-20,50){\makebox(0,0){\small{$(2;0,1)$}}}
\put(40,60){\makebox(0,0){\small{$(2;4,1)$}}}
\put(100,85){\makebox(0,0){\small{$(1;4,1)$}}}
\put(100,15){\makebox(0,0){\small{$(1;4,1)$}}}
\put(160,60){\makebox(0,0){\small{$(1;10,3)$}}}
\put(210,50){\makebox(0,0){\small{$(1;0,1)$}}}
\end{picture}

\end{example}

The general case (i.e. arbitrary $a\geq 3$),  with $g=x+y+z$, is  clarified in \ref{a2double}.

Notice that in the two examples above and in \ref{a2double}
we used different germs  $g$ and different sequences of blow ups. Thus,
the output graphs are also different.

\end{bekezdes}

\part{Plumbing graphs derived  from $\gc$}

\chapter{The Main Algorithm}\labelpar{s:ALG}\setcounter{equation}{0}

\section{\ Preparations for the Main Algorithm}\labelpar{prep}\setcounter{equation}{0}

\bekezdes {\bf The goal of the chapter.}\ix{plumbing!graph}
The algorithm presented in this chapter provides the plumbing
representations of the 3--manifolds $\partial F$, $\partial _1F$\ix{Milnor!fiber!boundary}
and $\partial_2F$, and the multiplicity systems  of
the open book decomposition of
$(\partial F,V_g)$ as well as the generalized Milnor fibrations
$\partial F\setminus V_g$, $\partial_1 F\setminus V_g$ and
$\partial_2 F$  over $S^1$ induced by $\arg(g)=g/|g|$.
\ix{Main Algorithm|textbf}\ix{Assumption A}
\ix{Assumption B|textbf}

\begin{bekezdes}{\bf Assumptions.}
In \ref{re:w2} we imposed  Assumption A on $\gc$, which can always
be realized by an additional blow up. Although in the Main
Algorithm  this restriction  is irrelevant, in
the geometric interpretations
\ref{gamma1}--\ref{ss:GCK}--\ref{2edg} it simplified  and unified
 the presentation substantially.

In the next paragraph we introduce another restriction,
Assumption B. In contrast with Assumption A,
this new restriction plays a relevant role in the formulation of the
algorithm and  its proof.  Nevertheless, in Chapter \ref{ss:ELI}
we will formulate a new version of the algorithm from which Assumption B will be
removed (but the proof of the new algorithm
will still rely on the proof of the present original version).
\end{bekezdes}

\bekezdes\labelpar{re:w3} {\bf Assumption B.}
In Chapters \ref{s:ALG} and \ref{sec:proof} we will assume that
$\gc$ has no such edge decorated by  $2$  whose end-vertices would have  the middle weights zero.
This requirement is regardless of whether those end-vertices are arrowheads or not.

In the sequel, we call such an edge {\em
vanishing  2--edge}.

 Their absence can be assumed because of the following reason.
Assume that  $\gc$ is associated with some resolution $r$ as in \ref{construct}, and it
has such a 2--edge $e$
\ix{vanishing 2--edge|textbf}

\vspace{3mm}

\begin{picture}(140,50)(-100,10)
\put(20,30){\line(1,0){80}}
\put(20,35){\makebox(0,0)[b]{$(m;0,\nu)$}}
\put(100,35){\makebox(0,0)[b]{$(m';0,\nu)$}}
\put(20,15){\makebox(0,0)[b]{$v$}}
\put(100,15){\makebox(0,0)[b]{$v'$}}
\put(60,20){\makebox(0,0)[b]{$2$}}
\end{picture}

\vspace{3mm}

 \noi where the vertices $v$ and $v'$ correspond to the curves $C_{v}$ and
 $C_{v'}$ of $\C$.
(If $v$ and $v'$ are non--arrowheads,  they might have genus decorations as well.)
 Then the embedded resolution $r$
modified by  an additional blow up with center $p\in C_{v}\cap
C_{v'}$ provides a new graph $\gc'$,  where $e$ is replaced by

\begin{picture}(140,70)(-100,0)
\put(20,30){\circle*{4}} \put(100,30){\circle*{4}}
\put(-60,30){\line(1,0){240}}
\put(-60,35){\makebox(0,0)[b]{$(m;0,\nu)$}}
\put(180,35){\makebox(0,0)[b]{$(m';0,\nu)$}}
\put(-60,15){\makebox(0,0)[b]{$v$}}
\put(180,15){\makebox(0,0)[b]{$v'$}}
\put(60,20){\makebox(0,0)[b]{$2$}}
\put(-20,20){\makebox(0,0)[b]{$1$}}
\put(140,20){\makebox(0,0)[b]{$1$}}

\put(20,35){\makebox(0,0)[b]{$(m;m+m',\nu)$}}
\put(100,35){\makebox(0,0)[b]{$(m';m+m',\nu)$}}
\end{picture}

\bekezdes {\bf Terminology --- {\it legs} and  {\it stars}.}\labelpar{ls} \  In the
description of the algorithm we use the following expressions.
\ix{legs}\ix{stars|textbf}

Fix a  non--arrowhead vertex $v$ of
$\gc$ with weights $(m;n,\nu)$ and $[g]$.
Then $v$ determines a  {\it star} in $\gc$, which
 keeps track of all the edges adjacent to $v$ along with their decorations
and the weights $(k;l,\nu)$ of the vertices at the other end of the edges,
but disregards the  type of these vertices.  The aim
is to unify the  different cases represented by loops and
 edges connecting non--arrowheads or arrowheads.

\begin{definition}
A {\bf leg supported by the vertex $v$} has the form:

\vspace{3mm}

\begin{picture}(140,50)(-100,0)
\put(20,30){\circle*{4}} 
\put(20,30){\line(1,0){80}}
\put(20,35){\makebox(0,0)[b]{$(m;n,\nu)$}}
\put(100,35){\makebox(0,0)[b]{$(k;l,\mu)$}}
\put(20,15){\makebox(0,0)[b]{$[g]$}}
\put(20,5){\makebox(0,0)[b]{$v$}}
\put(60,20){\makebox(0,0)[b]{$x$}}
\end{picture}

\noi where $x\in \{1,2\}$ and the decorations satisfy the same  compatibility
conditions as the edges in \ref{remarks}.
Then,  {\bf a star}, by definition,  consists of a
vertex $v$ (together with its decorations)
and a collection of legs supported by $v$.

Once $\gc$ and $v$ are fixed, {\em the star of $v$ in $\gc$}
is constructed as follows. Its `center' has the decorations $(m;n,\nu)$  and $[g]$
of the vertex $v$.
Furthermore, any edge with decoration $x$, with end-vertices $v$ and $v'$ (where $v'\not=v$,
and $v'$ is either an arrowhead or not) provides a leg with decorations $x$ and the
 weight (the ordered triple) of $v'$. In particular, if $v'$ is a non-arrowhead, and it is
 connected to $v$ by more than one edge, then each edge contributes a leg.
Moreover, any loop supported by $v$
and weighted by $x$, provides  {\em two}
legs supported by $v$, both decorated by the same $x$ and their `free ends' by
$(k;l,\mu)=(m;n,\nu)$.
\end{definition}

One has the following geometrical interpretation: regard $\gc$ as
the dual graph of the curve configuration $\C$, and let $v$
correspond to the component $C$. Then the legs of the star of $v$
correspond to the inverse images of the double points of $\C$
sitting on $C$ by the normalization map $C^{norm}\to C$. \ix{curve configuration $\C$}

\section{\ The Main Algorithm: the plumbing graph
of $\partial F$}\labelpar{algo}\setcounter{equation}{0}\ix{Milnor!fiber!boundary}
\ix{Main Algorithm}
\ix{graph!covering}
\ix{graph!of $\partial F$|textbf}

Recall that in this section we assume  that the graph $\gc$
satisfies Assumption B.\ix{plumbing!graph}
\ix{Assumption B}

First, we  construct the plumbing graph of
the open book decomposition of $\partial F$ with binding $V_g$ and fibration
$\arg(g):\partial F\setminus V_g\to S^1$, cf. \ref{CC} and \ref{bek:mult}. Hence,
we have to determine the shape of the graph together with its
arrows, and endow it with the genus and Euler number decorations and
a multiplicity system. The graph  will be
determined  as a  covering graph $G$ of $\gc$, modified with \ix{graph!resolution!string}
strings as in \ref{re:2.3.1}. In order to  do
this, we have to provide the {\em covering data of the graph--covering}, see
\ref{def:2.3.2}.\ix{graph!covering!data}

\begin{bekezdes}{\bf Step 1. --- The covering data of the vertices
$\{\n_v\}_{v\in \calv(\gc)}$.}\labelpar{s1}

\vspace{2mm}

\noi {\bf Case 1.}  \ Consider a non-arrowhead vertex $w$ of \,
$\G_\C$ decorated by $(m;n,\nu)$ and $[g]$. (In fact, by
\ref{gkl}, $g=0$ whenever $m>1$.) Consider its star

\vspace{2mm}

\begin{picture}(140,80)(-130,-10)
\put(20,30){\circle*{4}}
\put(20,30){\line(4,1){80}}
\put(20,30){\line(4,-1){80}}
\put(20,30){\line(-4,1){80}}
\put(20,30){\line(-4,-1){80}}
\put(20,35){\makebox(0,0)[b]{$(m;n,\nu)$}}

\put(125,45){\makebox(0,0)[b]{$(m_1;n,\nu)$}}
\put(125,5){\makebox(0,0)[b]{$(m_t;n,\nu)$}}
\put(-85,45){\makebox(0,0)[b]{$(m;n_1,\nu_1)$}}
\put(-85,5){\makebox(0,0)[b]{$(m;n_s,\nu_s)$}}

\put(20,15){\makebox(0,0)[b]{$[g]$}}
\put(80,26){\makebox(0,0)[b]{$\vdots$}}
\put(-40,26){\makebox(0,0)[b]{$\vdots$}}
\put(20,5){\makebox(0,0)[b]{$w$}}
\put(60,22){\makebox(0,0)[b]{$2$}}
\put(60,42){\makebox(0,0)[b]{$2$}}
\put(-20,22){\makebox(0,0)[b]{$1$}}
\put(-20,42){\makebox(0,0)[b]{$1$}}
\end{picture}

\vspace{2mm}

\noi Let $s$ and $t$ be the number of legs weighted by
$x=1$ and  $x=2$ respectively.
Then, in the covering procedure, above the vertex $w$ of
$\gc$ put $\n_w$ non-arrowhead vertices, where
\begin{equation}\label{NW}\n_w=\gcd(m, n, n_1,...,n_s,
m_1,...,m_t).\end{equation} Furthermore, put on each of these
non--arrowhead vertices the same multiplicity decoration
$(\tilde{m})$, where \begin{equation}\label{NM}\tilde{m}={m
{\nu}\over \gcd (m, n)}\, ,\end{equation}
 and the genus  decoration $[\tilde{g}_w]$ determined by the formula:

\begin{align}\label{NG}
\n_w\cdot (2-2\tilde{g}_w) &
=(2-2g - s-t) \cdot {\rm gcd}(m, n)\\
   & + \sum\limits_{i=1}^s {\rm gcd} (m, n,  n_i) +
    \sum\limits_{j=1}^t {\rm gcd}(m, n, m_j)\nonumber.
\end{align}

\noi (In Step 3, the
Euler number   of each vertex will also  be provided.)

\vspace{2mm}

\noi {\bf Case 2.} \ Consider an arrowhead vertex $v$ of $\G_\C$, that is,
\begin{picture}(50,10)(0,0)
\put(5,3){\vector(1,0){5}}
\put(32,-3){\makebox(0,0)[b]{$(1;0,1)$}}
\end{picture}.
Above the vertex $v$, in the covering graph $G$, put
exactly one arrowhead vertex; i.e. set
 $\n_v=1$.  Let the multiplicity  of this
arrowhead be $1$. In particular, all  arrowheads of $G$ are:
\begin{picture}(40,10)(0,0)
\put(10,3){\vector(1,0){5}} \put(27,-3){\makebox(0,0)[b]{$(1)$}}
\end{picture}.
\end{bekezdes}

\begin{bekezdes}{\bf Step 2. --- The covering data of edges  $\{\n_e\}_{e\in
      \cale(\gc)}$
and the types of inserted strings.}\labelpar{s2}

\vspace{2mm}

\noi {\bf Case 1.}\ Consider an edge $e$ in $\G_\C$,
 with decoration 1:

\begin{picture}(160,75)(-90,-10)
\put(20,30){\circle*{4}} \put(100,30){\circle*{4}}
\put(20,30){\line(1,0){80}}
\put(20,35){\makebox(0,0)[b]{$(m;n,\nu)$}}
\put(100,35){\makebox(0,0)[b]{$(m;l,\lambda)$}}
\put(20,15){\makebox(0,0)[b]{$[g]$}}
\put(100,15){\makebox(0,0)[b]{$[g']$}}
\put(20,0){\makebox(0,0)[b]{$v_1$}}
\put(100,0){\makebox(0,0)[b]{$v_2$}}
\put(60,35){\makebox(0,0)[b]{$1$}}
\end{picture}

\noi Define:  $$\n_e=\gcd(m, n, l).$$
Notice that Step 1 guarantees that both $\n_{v_1}$ and $\n_{v_2}$
divide $\n_e$.

Then, above the edge $e$   insert cyclically in $G$ exactly
$\n_e$ strings of type
$$Str\left(\, {n\over \n_e}, {l\over \n_e};{m\over \n_e}\
\Big|\ \nu, \lambda;0 \, \right).$$

If the edge $e$ is a loop (that is,  if $v_1=v_2$), then the procedure is the same with the
only modification that the end--vertices of the $\n_e$ strings are identified cyclically
with the $\n_{v_1}$ vertices above $v_1$, hence they will form 1--cycles in the graph.
In other words, on each vertex above $v_1$ one puts $\n_e/\n_{v_1}$ `closed' strings, that form loops.

\vspace{2mm}

If the right vertex $v_2$ is  an arrowhead, that is, the
edge $e$ is

\vspace{1mm}

\begin{picture}(140,60)(-90,0)
\put(20,30){\circle*{4}}
\put(20,30){\vector(1,0){70}}
\put(20,35){\makebox(0,0)[b]{$(1;n,\nu)$}}
\put(120,30){\makebox(0,0){$(1;0,1)$}}
\put(20,15){\makebox(0,0)[b]{$[g]$}}
\put(60,35){\makebox(0,0)[b]{$1$}}
\end{picture}

\vspace{1mm}

\noi then complete the same procedure as above with $m=1$ and
$\n_e=1$:  above such an edge $e$ put a single edge
decorated by $+$, which supports that arrowhead of $G$ which
covers the corresponding arrowhead of $\gc$.

\vspace{2mm}

\noi {\bf Case 2.} Consider an  edge $e$  in
$\G_\C$, with decoration $2$:

\vspace{2mm}

\begin{picture}(140,70)(-90,-10)
\put(20,30){\circle*{4}} \put(100,30){\circle*{4}}
\put(20,30){\line(1,0){80}}
\put(20,35){\makebox(0,0)[b]{$(m;n,\nu)$}}
\put(100,35){\makebox(0,0)[b]{$(m';n,\nu)$}}
\put(20,15){\makebox(0,0)[b]{$[g]$}}
\put(100,15){\makebox(0,0)[b]{$[g']$}}
\put(20,0){\makebox(0,0)[b]{$v_1$}}
\put(100,0){\makebox(0,0)[b]{$v_2$}}
\put(60,35){\makebox(0,0)[b]{$2$}}
\end{picture}

\vspace{1mm}

 \noi Notice that, by Assumption B (cf. \ref{re:w3}),
$n\not=0$. For such an edge, define: \begin{equation}\label{NE}
\n_e=\gcd(m, m',n).\end{equation}
\ix{Assumption B}

 Notice again that both $\n_{v_1}$ and $\n_{v_2}$  divide $\n_e$.

Then, above the edge $e$   insert cyclically in $G$
exactly $\n_e$ strings of type
$$Str^\circleddash\left(\,{ m\over \n_e},{m'\over \n_e};{n \over \n_e}\
\Big| \ 0,0;\nu\, \right).$$ If the edge is a loop, then we modify
the procedure as in the case of 1--loops. Notice that  by
Assumption B, there are no 2-edges supporting arrowheads.

\vspace{2mm}

Note that above $\gce$ and above  the cutting edges the
`covering degree' is always one. \ix{cutting edge}

\end{bekezdes}

\begin{bekezdes}{\bf Step 3. --- Determination of the
missing Euler numbers.}\labelpar{s3} The decorations provided by
the first two steps are the following: the multiplicities of all the vertices, all
the genera, {\em some} of the Euler numbers, and all the
sign--decorations of the edges (those without $\circleddash$ have
decoration $+$). Then, finally, the missing Euler numbers
are determined by formula (\ref{eq:2.2.1}).
\end{bekezdes}

\bekezdes {\bf The output of the algorithm.}\labelpar{covdata}
Notice that the  set of integers $\{\n_v\}_{v\in\calv(\gc)}$ and
$\{\n_e\}_{e\in\cale(\gc)}$ satisfy the axioms of a covering data.
Furthermore,   if $v\in \calv^1(\gc)$ then $m=1$ hence $\n_v=1$.
Moreover, by Corollary \ref{tree}, each $\Gamma^2_{{\mathcal C},j}$ is
a tree. Therefore, by Proposition \ref{th:2.3.1} and Theorem \ref{unique}
 we get that

\vspace{2mm}

\begin{verse}
{\it there is only one cyclic covering of \, $\gc$ with
this covering  data \\ (up to a graph--isomorphism).}
\end{verse}

\vspace{2mm}

\noi  The graphs obtained by the above algorithm can, in
general, be simplified by the operations of the oriented plumbing calculus
(or their inverses), or by the reduced plumbing calculus.
 If we are interested only in the output oriented
3--manifold, we can apply this freely without any restriction. Nevertheless, if we wish to
keep some information from the (analytic) construction which provides the graph
(for example, if we wish to apply the results of Chapters \ref{s:vh}--\ref{s:MHS}  regarding different
horizontal and vertical monodromies), then
it is better to apply only
the {\em reduced plumbing calculus} of oriented 3--manifolds
(with arrows), cf. \ref{CALC}. This is what we prefer to do in this book.\ix{plumbing!graph}

Moreover, even if we rely only on the reduced calculus,
during the plumbing calculus,  some invariants  might still
change.  For example, the operation R5 modifies  the sum $g(Gr)$ of the genus decorations and the
number $c(Gr)$ of independent 1--cycles of a graph $Gr$.
Since, in the sequel in some discussions these
numbers  will also be involved, by our graph notations we wish to
emphasize that a certain graph is in the  unmodified stage, or it was modified by the calculus.
Hence, we will adopt the following notation:

\begin{definition}\label{def:GGm}
 We write $G$,  $G_1$ and $G_{2,j}$ for
the graphs obtained by the original algorithm (associated with $\gc$, $\gce$ and $\gck$, see below),
while  the general notation for the  {\em modified
graph}  under the reduced plumbing calculus
is $\Gmod$, $\Gemod$,  $\Gkjmod$. \ix{graph!covering}
\end{definition}

Using these notations, one of the main results of the present work is the following.

\begin{theorem}\labelpar{MTh.1}
The oriented 3--manifold $\partial F$ and the link $V_g\cap \partial F$ in it
 can be represented by an orientable  plumbing graph (see \ref{ss:2.2} for the terminology).
\ix{Milnor!fiber!boundary}\ix{plumbing!graph}

More precisely, let $(f,g)$ be as in \ref{ss:ICIS}. Then, the decorated graph $G$
constructed above  is a possible plumbing graph of the
pair $(\partial F,\partial F\cap V_g)$, which carries the
multiplicity system of the open book decomposition
$\arg(g):\partial F\setminus V_g\to S^1$. If one deletes the
arrowheads and the multiplicities, one obtains a possible  plumbing graph
of the boundary of the Milnor fiber $\partial F$ of $f$.
\end{theorem}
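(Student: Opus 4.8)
The strategy is to realize $\partial F$ as the link of the real analytic germ $\{f=|g|^k\}\subset(\bfc^3,0)$ for $k$ large and even, resolve this real singularity by pulling back the embedded resolution $r:V^{emb}\to U$ of $(V_f\cup V_g,0)$ used to define $\gc$, and then identify the plumbing graph of the boundary of a tubular neighbourhood of the exceptional set with the graph $G$ produced by the Main Algorithm (\ref{algo}). I would organize the argument into three blocks. First, I would establish the diffeomorphism $\partial F\approx \{f=|g|^k\}\cap S^5_\epsilon$ (this is the key geometric input announced in (\ref{1.3}), whose proof is deferred in the text to \ref{sec:proof}; here I would assume it, or rather prove it in that later section and invoke it). The point is that over the wedge region $\Wedge$ of (\ref{why}), the real hypersurface $\{f=|g|^k\}$ sits, after pullback by $r$, in an arbitrarily small tubular neighbourhood $T(\C)$ of the curve configuration $\C$ — this is exactly \ref{connectedhez}(a) — so the resolution of the real germ is governed entirely by the combinatorics of $\gc$.

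\textbf{The covering description.} The second, and technically central, block is to show that over a small neighbourhood of each point $C_v$ (respectively each intersection point $P_e$) of $\C$, the preimage of $\{f=|g|^k\}$ under $r$, intersected with the real-oriented blow-up / tubular neighbourhood data, produces exactly the local piece of a cyclic graph covering of $\gc$ with the covering data of Step 1 and Step 2 of (\ref{algo}). Locally, near a generic point of $C_v$ with coordinates $(u,v,w)$ as in (\ref{summary}), one has $f\circ r=u^m v^n$ and $g\circ r=v^\nu$, so the real equation becomes $u^mv^n=|v|^{k\nu}$; analysing the $S^1$-bundle structure of the level set over the $(u,v)$-torus directions yields the integer $\n_w=\gcd(m,n,n_1,\dots,n_s,m_1,\dots,m_t)$ of (\ref{NW}) as the number of connected components of the covering fibre, the multiplicity $\tilde m = m\nu/\gcd(m,n)$ of (\ref{NM}) as the vanishing order of the relevant pull-back, and the Riemann–Hurwitz count (\ref{NG}) for the genus $\tilde g_w$ of the covering surface (this is the standard computation of the genus of a cyclic cover of $C_v^{norm}$ branched along the legs of the star of $v$). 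Near an edge-point $P_e$, the local equations with $f\circ r=u^mv^nw^l$, $g\circ r=v^\nu w^\lambda$ (decoration $1$) or $f\circ r=u^mv^{m'}w^n$, $g\circ r=w^\nu$ (decoration $2$) produce, after resolving, precisely the Hirzebruch–Jung string $Str$ (respectively $Str^\circleddash$) of Example (\ref{ex:2.2.1}) and Definition (\ref{def:2.2.1}) — the $\circleddash$ sign on the $2$-edge strings being exactly the point where \emph{orientation-reversing} identification with a complex Hirzebruch–Jung germ forces negative edge-weights, as flagged in the Introduction. Assumption B (\ref{re:w3}) is what guarantees $n\neq 0$ on every $2$-edge so that these formulas make sense; Assumption A (\ref{re:w2}) cleans up the degenerate $A_1$ edges. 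By (\ref{covdata}), since $\calv^1(\gc)$ carries $\n_v=1$ and each $\gj$ is a tree, Theorem (\ref{th:2.3.1}) gives \emph{uniqueness} of the covering with this data, so the algorithm indeed outputs a well-defined graph $G$.

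\textbf{Assembling and identifying the decorations.} The third block glues the local pieces: the decomposition $\widetilde F_\Phi=\bigcup_v\widetilde F_v$ of (\ref{felbont}) together with Corollary (\ref{connectedhez}) shows that the resolved real germ's exceptional neighbourhood is obtained by plumbing the local $S^1$-bundles along the inserted strings, which is the geometric content of the covering-with-strings construction of (\ref{re:2.3.1})(3). The missing Euler numbers are then forced by the homological compatibility relation (\ref{eq:2.2.1}) (Step 3), which is legitimate precisely because the multiplicity cycle $C(\underline m)$ represents zero in $H_2(X,\partial X)$ — here using that the fibre $F$ sits in $\partial X$ as in (\ref{bek:mult}). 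Finally, the open-book statement: by Caubel's Proposition (\ref{CC}), $\arg(g)$ fibres $\partial F\setminus V_g$ over $S^1$ with binding $V_g$, and the identification of $F_{\epsilon,\delta}$ with $\Phi^{-1}(D_\delta)$ (Remark (\ref{rem:phif})) carries $\partial F\cap V_g$ to the arrowheads of $G$ with multiplicity $1$; one checks that $p_*[\gamma_w]$ for a generic fibre $\gamma_w$ in the $w$-th bundle equals the multiplicity $\tilde m$ recorded above, so the multiplicity system of (\ref{bek:mult}) coincides with the one the algorithm writes down. Deleting arrows and multiplicities then leaves a plumbing graph of $\partial F$ alone.

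\textbf{Main obstacle.} I expect the hard part to be the local analysis at the edge-points in block two: specifically, verifying that the real analytic equation $u^m v^{m'}w^n=|w|^{k\nu}$ on a $2$-edge, after the relevant (real-oriented) blow-ups, is \emph{orientation-reversing} equivalent to a complex Hirzebruch–Jung singularity — and hence contributes a string with \emph{negative} edge-decorations $Str^\circleddash$ rather than $Str$. Getting the orientation bookkeeping exactly right (which edges flip sign, and how this interacts with Neumann's gluing convention $\epsilon\bigl(\begin{smallmatrix}0&1\\1&0\end{smallmatrix}\bigr)$ recalled in (\ref{ss:2.2})) is where the genuinely new phenomenon of this paper lives, and where a naive transcription of the complex resolution picture would give the wrong answer. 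The genus formula (\ref{NG}) is a second, more routine, potential source of error — it is a Riemann–Hurwitz computation for the cyclic cover $\widetilde F_w\to C_w^{norm}$ branched at the $s+t$ points corresponding to the legs, with ramification indices $\gcd(m,n)/\gcd(m,n,n_i)$ and $\gcd(m,n)/\gcd(m,n,m_j)$ — but it must be checked carefully against the covering data coming out of Step 1.
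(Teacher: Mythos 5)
Your proposal follows essentially the same route as the paper's proof in \ref{sec:proof}: localize $\partial F$ as the link of the real germ $\{f=|g|^k\}$ (Lemma \ref{lem:sksmooth}), pull back by $r$ so that the strict transform sits in a tubular neighbourhood of $\C$, identify the local models at generic points and at the two types of double points (the type--2 points being only orientation--reversingly equivalent to complex Hirzebruch--Jung germs, which is exactly where the $\circleddash$--strings and Lemma \ref{lemma:type2} enter), read off the covering data and invoke Theorem \ref{th:2.3.1} for uniqueness, and recover the Euler numbers from the multiplicity system of $\arg(g)$ via (\ref{eq:2.2.1}). You also correctly single out the orientation bookkeeping at the $2$--edges — including the iteration that forces \emph{every} edge of an inserted type--2 string to carry $\circleddash$ — as the genuinely delicate step, which is precisely where the paper concentrates its effort in (\ref{NORM3}) and (\ref{bek:pl2}).
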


The proof of Theorem \ref{MTh.1} will be given in Chapter
\ref{sec:proof}. Nevertheless, in the next paragraphs we wish to stress
the main geometric idea of the proof.

\vspace{2mm}

If $f$ is an {\em isolated} hypersurface singularity, then the link
$K=V_f\cap S^5_\epsilon$ is smooth, and there exists an orientation preserving
diffeomorphism $\partial F\approx K$. Hence, $\partial F$ can be `localized', i.e. can be
represented as a boundary of an arbitrary small representative of a (singular) germ.
If that germ is resolved by a modification --- whose existence is guaranteed by the
existence of resolution of singularities ---, then $K$ appears as the boundary of the
exceptional locus, hence one automatically  gets a plumbing representation for $K$.
Its plumbing data can be read from the combinatorics of the exceptional set
and the multiplicity system from the corresponding vanishing orders.\ix{Milnor!fiber!boundary}

If $f$ is {\it not isolated} then $K$ is not smooth, and the above argument does not work. Even
the fact that $\partial F$ has any kind of plumbing representation is not automatic at all.
Nevertheless, the  case of isolated singularities suggests that, if we were able to
`localize' $\partial F$, as a link of a singular germ, then we would be able to extend
 the above procedure valid for isolated singularities to the non--isolated case as well.
{\it This realization is the main point  in the proof} of Theorem
\ref{MTh.1}, but with the difference that the germ whose local link is
$\partial F$ is {\em not holomorphic} (complex analytic), but it is {\em real analytic}. One has the following surprising
result.

\begin{proposition}\labelpar{prop:LINK} (See \ref{lem:sksmooth}.)
Let $f$ be a hypersurface singularity with a 1--dimensional singular locus.
Take another germ  $g$ such that $(f,g)$ forms an ICIS as in  \ref{ss:ICIS}.
For a sufficiently large even integer $k$ consider the  real analytic germ
$$\cals_k:=\{z\in(\bfc^3,0)\,:\, f(z)=|g(z)|^k\,\}.$$
Then $\cals_k\setminus \{0\}$ is a smooth 4--manifold with a
natural orientation whose link is independent of the choice of $g$ and $k$, and which, in fact,
is orientation preserving  diffeomorphic to $\partial F$.
\end{proposition}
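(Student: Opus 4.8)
\textbf{Proof plan for Proposition \ref{prop:LINK}.}

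The plan is to establish the diffeomorphism $\partial F \approx \text{link}(\cals_k)$ by a direct geometric comparison carried out inside a good Milnor ball, rather than by any resolution or combinatorial argument. First I would fix $g$ with $(f,g)$ an ICIS and fix good representatives as in (\ref{ss:ICIS}), so that we have the ball $B_\epsilon$, the bidisc $D^2_\eta$, and the locally trivial fibrations guaranteed there; I would also fix $\delta \ll \eta$. The Milnor fibre boundary is $\partial F = S^5_\epsilon \cap f^{-1}(\delta)$, and via (\ref{rem:phif}) it is identified with $\partial \Phi^{-1}(D_\delta)$. The first step is to observe that the link of $\cals_k$ is, by definition, $S^5_\epsilon \cap \{f = |g|^k\}$ for $\epsilon$ sufficiently small; the content of the proposition is that for $k$ even and large this is diffeomorphic to $\partial F$, and independent of the auxiliary data.

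The key idea is a \emph{flow/isotopy argument in the target $(\bfc^2,0)$ with coordinates $(c,d)=(f,g)$}. Consider inside $D^2_\eta$ the real hypersurface $\{c = |d|^k\}$ and the `vertical' disc-slice $D_\delta = \{c=\delta\}$. For $k$ large, the set $\{(c,d) : c = |d|^k, |d| \ \text{near}\ \delta^{1/k}\}$ lies in the complement of the discriminant $\Delta_\Phi$ (cf. the wedge-set estimates of (\ref{why}): $\{c=|d|^k\}$ is inside $\Wedge$ for $k\gg 0$, hence inside $D^2_\eta\setminus\Delta_\Phi$). Thus over a suitable neighbourhood of each of these two real surfaces, $\Phi$ is a locally trivial $C^\infty$ fibre bundle. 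I would then construct an isotopy of $D^2_\eta\setminus\Delta_\Phi$ (or rather an ambient isotopy supported away from $\Delta_\Phi$, compatible with the radial structure of $B_\epsilon$) carrying $\partial D_\delta \cup \{\text{the relevant part of}\ \partial' \Phi^{-1}(D_\delta)\}$ onto the corresponding real hypersurface piece of $\{c=|d|^k\}$. Pulling this isotopy back through the fibre bundle $\Phi$ — using that an isotopy of the base of a locally trivial fibration lifts to an isotopy of the total space, combined with the radial vector field controlling the sphere $S^5_\epsilon$ — produces the desired diffeomorphism. Concretely, $\partial F = \partial\Phi^{-1}(D_\delta)$ consists of $\Phi^{-1}(\partial D_\delta)$ glued to $\partial'\Phi^{-1}(D_\delta)$, and one checks that $S^5_\epsilon\cap\{f=|g|^k\}$ decomposes the same way: a part over $\{|d|=\delta^{1/k},\ c=|d|^k\}$ (diffeomorphic to $\Phi^{-1}(\partial D_\delta)$) and a part coming from the intersection with $S^5_\epsilon$ near $V_g$ (diffeomorphic to $\partial'\Phi^{-1}(D_\delta)$), with matching gluing tori. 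Orientation-preservation is tracked throughout by orienting everything from the complex structure on the base and the chosen orientation of the fibre; the parity condition $k$ even enters precisely to make the relevant covering/gluing orientations match (an odd $k$ would introduce an orientation-reversing identification on the $d$-circle).

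Independence of $k$ (for $k$ large even) and of $g$ then follows: changing $k$ to $k+2$ is realized by an isotopy of the wedge-type region $\{c=|d|^k\}\leadsto\{c=|d|^{k+2}\}$ staying in $\Wedge\subset D^2_\eta\setminus\Delta_\Phi$, and independence of $g$ follows from the fact that $\partial F$ itself is defined without reference to $g$, so any two choices give manifolds both diffeomorphic to $\partial F$. The main obstacle I anticipate is the careful treatment near the binding $V_g$, i.e. near $\{d=0\}$: there $\{c=|d|^k\}$ degenerates onto $\{c=0\}=\Delta_1$, which \emph{does} meet the discriminant, so the fibre-bundle argument breaks down exactly where $\partial'\Phi^{-1}(D_\delta)$ lives. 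Handling this requires the local triviality of $\Phi$ near $V_g$ established in (\ref{rem:phif}) (the product structure $\partial'\Phi^{-1}(D_\delta)\cong D_\delta\times\partial\Phi^{-1}(\delta,0)$) together with a `corner-smoothing' of the manifold-with-corners $\Phi^{-1}(D_\delta)$, exactly as in the proof of Proposition (\ref{felbo}); one must check that the smoothing of $\partial F$ and the smooth link of $\cals_k$ agree along this collar. This is the step where one genuinely uses that $\cals_k=\{f=|g|^k\}$ rather than some holomorphic perturbation, and it is also where input from a specialist in real-analytic geometry (the acknowledged help of A. Parusinski) would be needed to make the corner/transversality arguments rigorous. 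Once this local piece is pinned down, gluing it to the bundle part over the complement of $V_g$ completes the diffeomorphism.
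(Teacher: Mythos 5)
Your overall strategy is the paper's: identify $\partial F$ with the boundary of $\Phi^{-1}$ of a region in the $(c,d)$--plane, decompose both $\partial F$ and the link of $\cals_k$ into a piece inside the ball lying over a boundary circle and a piece on $S^5_\epsilon$ lying over a disc, and match the two by lifting an isotopy of the base through $\Phi$. However, the step you flag as delicate is handled incorrectly, and as written it would fail. The isotopy of the base carrying $D_\delta$ onto $Z_k\cap D^2_\eta$ \emph{cannot} be supported away from $\Delta_\Phi$: the vertical disc $D_\delta=\{c=\delta\}$ meets the components of $\Delta_\Phi$ other than $\Delta_1$ in finitely many points (and $Z_k\cap D^2_\eta$ contains the origin itself), whereas $Z_k\setminus\{0\}\subset\Wedge$ misses $\Delta_\Phi$ entirely; so the isotopy must sweep across the discriminant, and not only near $d=0$. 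Your proposed patch (product structure near $V_g$ plus corner--smoothing) addresses only the locus $d=0$ and says nothing about the points of $\Delta_\Phi\cap D_\delta$ with $d\neq 0$. The correct observation is that the only piece lying over the \emph{interior} of the disc --- hence over discriminant points --- is the piece on the sphere, and $\Phi$ restricted to $S^5_\epsilon\cap\Phi^{-1}(D^2_\eta)$ is a \emph{trivial} fibration over all of $D^2_\eta$, discriminant included (this is part of the good representative fixed in (\ref{ss:ICIS})); so the base isotopy lifts over the sphere with no discriminant--avoidance needed. The paper (\ref{lem:sksmooth}) moreover normalizes the value of $c$ to the constant $c_0$ (determined by $\eta$ and $k$, of size roughly $\eta^k$) for which $Z_k\cap \partial D^2_\eta=\partial D_{c_0}$; then the two ball--pieces $\Phi^{-1}(\partial D_{c_0})\cap B_\epsilon$ literally coincide and need no isotopy at all, and the only comparison left is between the sphere--pieces over $D_{c_0}$ and over $Z_k\cap D^2_\eta$. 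Note also that your circle $\{c=\delta,\ |d|=\delta^{1/k}\}$ lies far outside $D^2_\eta$ for $\delta\ll\eta$ and $k$ large, another symptom of not pinning down this normalization.

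Two further corrections. Your explanation of the hypothesis ``$k$ even'' is wrong: it has nothing to do with orientations of the gluing, and the paper states explicitly (\ref{re:k}) that the diffeomorphism holds for odd $k$ as well; evenness is imposed only so that $|g|^k=(g\bar g)^{k/2}$ is real analytic (real algebraic for polynomial $f,g$), which is needed for the later resolution of $\cals_k$, not for this statement. Finally, you leave unaddressed that $\cals_k\setminus\{0\}$ is smooth and meets $S^5_\epsilon$ transversally --- without which ``the link of $\cals_k$'' is not defined; the paper obtains this from the smoothness of $Z_k$ and $Z_k\setminus\{0\}\cap\Delta_\Phi=\emptyset$ for $k\gg 0$ (\ref{lem:zk}), together with the curve selection lemma.
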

\ix{localization}\ix{Milnor!fiber!boundary}

The proof of  Theorem \ref{MTh.1}, in fact, describes  the topology of a
resolution of $\cals_k$, and it shows that it is  `guided' exactly by  $\gc$. Furthermore,  the algorithm
which provides  $G$ from $\gc$  extracts
 the combinatorics of the exceptional locus and
its tubular neighbourhood from this resolution.

Notice that the above proposition is true for $k$ odd too, nevertheless,
$\cals_k$ for $k$ even has nicer analytic
properties: for example,  if $f$ and $g$ are polynomials, then $\cals_k$ is a real algebraic variety.

\begin{remark}\label{rem:THutan} \

(a) \
We would also like to stress  that
even though  Proposition  \ref{prop:LINK} is formulated and proved for germs in three
 variables, it  is true for any germ
$f:(\bfc^{n},0)\to (\bfc,0)$ with 1--dimensional singular locus --- the only
modification in the statement and its proof is the replacement of
$\bfc^3$ with $\bfc^n$.

\vspace{1mm}

(b) \ The  power of Theorem \ref{MTh.1} is not just the fact that it proves that
$\partial F$ has a plumbing representation; for that already Proposition \ref{prop:LINK}
is enough. Theorem \ref{MTh.1} provides a very clear algorithm for the determination of the
plumbing representation, which can be performed for any concrete example.  Moreover,
from the algorithm one can subtract essential theoretical information as well, as will
be done in the next chapters.\ix{Milnor!fiber!boundary}\ix{plumbing!graph}

\vspace{1mm}

(c) Theorem \ref{MTh.1}  was obtained in
2004--2005; the Main Algorithm was presented at the Singularity Conference at Leuven, 2005.
The material of the present book  was posted on the Algebraic Geometry preprint server in 2009
  \cite{ARXIV}.\ix{Main Algorithm}

The fact that the boundary of the Milnor fiber is plumbed was announced by
F. Michel  and A. Pichon  in 2003 \cite{MP,MPerrata}.
Their proof appeared on the preprint server in 2010 \cite{MPNew}.
\ix{Michel}\ix{Pichon}\ix{lens space}\ix{Seifert manifold}

The techniques prior to the present book were not sufficiently powerful to produce examples
with cycles, and even to predict the necessity  of edges with negative decorations.
These are novelties of the present work.
\end{remark}

\section{\ Plumbing graphs of  $\partial_1F$ and
$\partial_2F$}\labelpar{1es2}\setcounter{equation}{0}

The above algorithm, which provides $\partial F$, is compatible with
the decomposition of this space into its parts $\partial_1F$ and
$\partial_2F$. In this section we make this statement precise.
\ix{graph!$\G_\C^1$}\ix{graph!of a surface singularity!of normalization of $V_f$}
\ix{plumbing!graph}

\begin{bekezdes}\labelpar{norm1}  {\bf The graphs of
$(V_f^{norm},g\circ n)$ and $\partial_1F$.} Consider the graph $\gce$.
Repeat  Steps 1 and
2 from the Main Algorithm \ref{algo}, but only for the vertices
and edges contained  in $\Gamma^1_\C$, {\it excluding} any edge inherited from a cutting edge. \ix{cutting edge}
Replace any edge inherited from a cutting edge by an edge
supporting an arrowhead with multiplicity $(0)$. In this way we
get a graph with all the multiplicities determined and
with all edge--decorations  +. Calculate  the Euler numbers by
(\ref{eq:2.2.1}). This graph will be denoted by $G_1$.
(Note that it coincides with the graph $G^1_\C$ considered in \ref{bek:G1C} and
Proposition \ref{m1}.) \ix{Main Algorithm}

\begin{remark}
The vertices of $G_1$ can be identified with
some of the vertices of $G$, hence if we disregard  the decorations of the graphs, then
$G_1$ is a subgraph of $G$. However,  as decorated graph,  $G_1$ is not a subgraph of $G$:
that   end--vertex  of any cutting edge which is situated  in $\gce$ will have  different
Euler numbers in the two graphs $G$ and $G_1$.  All  the
other  Euler numbers evidently coincide. \ix{cutting edge}
\end{remark}

The next theorem is essentially  the same as
Proposition \ref{m1}; we consider it again to have a complete picture
of the algorithm and its consequences.

\end{bekezdes}
\begin{theorem}\labelpar{g1}
$G_1$ is a possible embedded resolution graph of
$(V_f^{norm},g\circ n)$, where the arrows with multiplicity $(0)$
represent the link--components determined by the strict transforms
of $Sing(V_f)$. If we delete these 0--multiplicity arrows, we get
a possible  embedded resolution graph of $(V_f^{norm},g\circ n)$. Furthermore,
if we delete all the arrowheads and all the multiplicities, we get
a possible resolution graph of the normalization of $V_f^{norm}$.

If in $G_1$ we replace the 0--multiplicity arrows by dash-arrows
we get the plumbing representation of the pair $(\partial_1F,V_g\cap \partial F)$,
where the remaining arrows represent the link $V_g\cap \partial F$, and the multiplicities
are the multiplicities of the local trivial fibration  $\arg(g):\partial_1 F\setminus V_g\to S^1$.
In particular, if  all  remaining non--dash--arrows  and all multiplicities are deleted as well,
we get the plumbing graph of the 3--manifold with boundary  $\partial_1F$.

If  in $G_1$ all  arrows are replaced
by dash--arrows and  the multiplicities are deleted,  we get the
plumbing graph of the manifold with boundary $\partial_1F\setminus T^\circ(V_g)$.
\end{theorem}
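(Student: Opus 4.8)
The plan is to deduce Theorem \ref{g1} from the Main Algorithm together with Proposition \ref{m1}, Proposition \ref{d2}, and the various identifications of fibration structures established in Chapter II. The key observation is that the graph $G_1$ is, by construction, obtained by running Steps 1--2 of the algorithm \emph{only} on the subgraph $\gce\subset\gc$. Since $\gce$ consists exactly of the vertices of $\calv^1(\gc)$ (those with first multiplicity $m=1$) together with the cutting edges, and for every such vertex $\n_v=\gcd(1,n,\dots)=1$ (and on cutting edges the covering degree is $1$, as noted in (\ref{covdata})), the `covering' of $\gce$ is trivial: $G_1$ has the same shape as $\gce$ with every $1$-edge replaced by the Hirzebruch--Jung string $Str(n/\n_e,l/\n_e;1|\nu,\lambda;0)$. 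But this is precisely the conversion described in (\ref{ex:2.2.1})--(\ref{def:2.2.1}) that turns a dual-graph vertex with local monomial data $(u^m v^n, v^\nu)$ into a resolution string; comparing with the construction of $G^1_\C$ in (\ref{m1}), one sees $G_1$ is $G^1_\C$ with the $\gc$-style triple weights $(1;n,\nu)$ decoded into actual multiplicities $(\nu)$ and the cutting-edge arrowheads carrying multiplicity $(0)$. Hence the first paragraph of the theorem is exactly Proposition \ref{m1} restated with the multiplicities made explicit; the claim about the $0$-multiplicity arrows representing the strict transforms of $Sing(V_f)$ is the last sentence of (\ref{m1}), and deleting arrowheads and multiplicities recovers the resolution graph of $V_f^{norm}$ itself.

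Next I would handle the passage from the resolution graph $G_1$ of $(V_f^{norm}, g\circ n)$ to a plumbing graph of the pair $(\partial_1F, V_g\cap\partial F)$. Here the bridge is Proposition \ref{d2}: part (a) gives $\partial_1F\approx \partial V_f^{norm}\setminus T^\circ_{St(\Sigma)}$, and part (b) identifies the fibrations $\arg(g\circ n)$ on $\partial V_f^{norm}\setminus(V_{g\circ n}\cup T^\circ_{St(\Sigma)})$ with $\arg(g)$ on $\partial_1F\setminus V_g$, as fibrations of pairs. A resolution graph of a function germ on a normal surface singularity is automatically a plumbing graph of (a small sphere neighbourhood of) that singularity, with the strict-transform arrows representing the corresponding link and the vanishing-order multiplicities recording the multiplicities of the Milnor fibration $\arg$ of that function (cf. the discussion of multiplicity systems in (\ref{links})--(\ref{bek:mult}) and the example (\ref{ex:2.2.1})). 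Deleting the tubular neighbourhood $T^\circ_{St(\Sigma)}$ of the strict transform of $\Sigma$ corresponds, at the level of the graph, exactly to replacing the $(0)$-multiplicity arrows by dash-arrows, by the definition of manifolds with boundary in (\ref{boundarycomp}) and of fibrations-with-multiplicities in (\ref{bek:multbou}) (the multiplicity of $g\circ n$ along a component of $St(\Sigma)$ is indeed $0$, since $g\circ n$ does not vanish identically there by Proposition \ref{d2}; and dash-arrows carry no multiplicity). This yields the plumbing representation of $(\partial_1F, V_g\cap\partial F)$ with the stated multiplicity system; deleting the remaining (non-dash) arrows and multiplicities gives the plumbing graph of the $3$-manifold-with-boundary $\partial_1F$.

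Finally, the last sentence — replacing \emph{all} arrows of $G_1$ by dash-arrows and dropping multiplicities to get the plumbing graph of $\partial_1F\setminus T^\circ(V_g)$ — follows in the same way: by (\ref{boundarycomp}), turning the arrows representing $V_g\cap\partial_1F$ into dash-arrows encodes precisely the removal of a tubular neighbourhood of that link, and since we also delete the $St(\Sigma)$-arrows in dash form we obtain $\partial V_f^{norm}\setminus (T^\circ_{St(\Sigma)}\cup T^\circ(V_{g\circ n}))\approx \partial_1F\setminus T^\circ(V_g)$, again via Proposition \ref{d2}(a) and the normalization isomorphism over the regular part of $V_f$. The only genuine subtlety to be checked carefully is the bookkeeping of the Euler numbers: as remarked in (\ref{norm1}), the Euler number assigned to the end-vertex $v_1$ of a cutting edge in $G_1$ differs from the one it receives inside the full graph $G$ of $\partial F$, because in $G_1$ the cutting edge is completed by a $(0)$-multiplicity arrow rather than by the string continuing into $\Gamma^2_{\C,j}$. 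I expect this Euler-number reconciliation — verifying that the recalculation via (\ref{eq:2.2.1}) applied to $\gce$ alone produces exactly the intersection numbers of the exceptional curves in a resolution of $V_f^{norm}$, and that this is consistent with the gluing of $G_1$ and the $\partial_2F$-graphs along the boundary tori — to be the main technical point; everything else is a matter of matching the combinatorial conversions in (\ref{ex:2.2.1})--(\ref{def:2.2.1}) with the already-established Propositions \ref{m1} and \ref{d2}. In fact, since the first two steps of the Main Algorithm are literally the restriction to $\gce$ of the same decoding that (\ref{m1}) applies to $\gc$, this consistency is built in, and the proof reduces to assembling these references.
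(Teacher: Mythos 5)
Your proposal is correct, but it follows a genuinely different route from the paper. The paper disposes of Theorem (\ref{g1}) in a single sentence at the end of (\ref{bek:pl2}): it is ``a particular case'' of Theorem (\ref{MTh.1}), ``obtained by forgetting some information from the graph of $\partial F$'' --- i.e.\ the correctness of $G_1$ as a plumbing graph of $(\partial_1F, V_g\cap\partial F)$ is inherited from the correctness of the full graph $G$, which in turn rests on the real--analytic resolution of $\cals_k$ in \ref{sec:proof}. You instead prove the theorem bottom--up, without invoking (\ref{MTh.1}) at all: you observe that the covering data over $\gce$ is trivial (all $\n_v=\n_e=1$, and the inserted strings over $1$--edges with $m=1$ degenerate to plain edges since $\lambda=0$ in (\ref{eq:2.2.2})), so that $G_1$ coincides with $G^1_{\C}$ of (\ref{gamma1}); the first paragraph then \emph{is} Proposition (\ref{m1}), and the plumbing statements follow from Proposition (\ref{d2}) together with the standard dictionary of (\ref{links})--(\ref{bek:multbou}) between embedded resolution graphs and plumbing graphs with multiplicity systems, dash--arrows encoding the removal of $T^\circ_{St(\Sigma)}$ and $T^\circ(V_g)$. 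Both arguments are sound. Yours buys a self--contained, purely complex--analytic proof for the $\partial_1F$--part (no real singularities needed) and makes transparent why the Euler--number discrepancy at the cutting--edge vertices between $G$ and $G_1$ is harmless (those Euler numbers become irrelevant once the $(0)$--arrows are dashed); the paper's derivation is shorter given (\ref{MTh.1}) and has the advantage of exhibiting $G_1$ directly as the piece of $G$ corresponding to $\partial_1F$ in the decomposition $\partial F=\partial_1F\cup\partial_2F$, which is what is ultimately used when discussing the gluing tori in (\ref{gl}).
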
\ix{plumbing!graph}

\begin{bekezdes}\labelpar{2}{\bf The graph of $\partial_2F$.} \ix{graph!resolution!string}
Let $G_2$ be the graph obtained from $G$ as follows. Delete all
vertices and edges of $G$ that are above the vertices and edges of
$\Gamma^1_\C$, and replace the unique string above any cutting
edge by a dash--arrow (putting no multiplicity decoration  on it).
Obviously, $G_2$ has $s$ connected components $\{G_{2,j}\}_{1\leq
j\leq s}$; where $G_{2,j}$ is related with $\Sigma_j$ as in
\ref{ss:GCK}. Clearly, $G_{2,j}$ can be determined from
$\G^2_{\C,j}$ by a similar procedure as $G$ is obtained from $\gc$, and by
adding dash--arrows above the  arrowheads. \ix{cutting edge}
\end{bekezdes}
\ix{graph!$\G_\C^2$}

\begin{theorem}\labelpar{g2} For each $j=1,\ldots, s$,
$G_{2,j}$  is a possible plumbing graph for the 3--manifold with
boundary $\partial_{2,j}F$, where the set of multiplicities
consists of the multiplicity system associated with the fibration
$\arg(g):\partial_{2,j}F\to S^1$. If all
multiplicities are deleted then obviously we
get a plumbing graph of the 3--manifold with boundary
$\partial_{2,j}F$.
\end{theorem}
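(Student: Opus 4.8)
The plan is to derive Theorem \ref{g2} as the ``$\partial_2F$--part'' of the global Theorem \ref{MTh.1}, using that the Main Algorithm (\ref{algo}) builds the covering graph $G$ of $\gc$ compatibly with the decomposition $\gc=\gce\cup\gck$ along the cutting edges, and that this mirrors the decomposition $\partial F=\partial_1F\cup\partial_2F$ along tori. So the first step is to recall, from Proposition \ref{felbo}(2) together with (\ref{prop:sier}), that the 3--manifold $\partial_{2,j}F$ carrying the fibration $\arg(g):\partial_{2,j}F\to S^1$ is completely determined by the transversal type $T\Sigma_j$, its vertical monodromy $m'_{j,ver}$ and the integer $d_j=\deg(g|\Sigma_j)$: concretely, $\partial_{2,j}F$ is the $F_j'$--bundle over $L_j\cong S^1$ with monodromy $m'_{j,ver}$, and $\arg(g)$ equals this bundle projection composed with the degree--$d_j$ self--covering of $S^1$, with fibre a disjoint union of $d_j$ copies of $F_j'$ and monodromy the cyclic--shift--with--$m'_{j,ver}$ map of (\ref{felbo}). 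It therefore suffices to realise this mapping torus, with its fibration, as the plumbed pair described by $G_{2,j}$.

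For this realisation I would run two complementary arguments and reconcile them, parallel to the treatment of $G_1$ in (\ref{g1}). First, via the proof of \ref{MTh.1} and Proposition \ref{prop:LINK}: the resolution of the real analytic germ $\cals_k=\{f=|g|^k\}$ constructed in \ref{sec:proof} restricts, over the tube $T(L_j)$ around $L_j$, to a resolution of the ``suspension over $\Sigma_j$'' of the transversal germ; its exceptional configuration there has decorated dual graph $G_{2,j}$, and $\partial_{2,j}F=\partial F\cap T(L_j)$ is the boundary of a regular neighbourhood of that configuration. The locus where $\partial F$ meets $\partial T(L_j)$ --- the gluing tori to $\partial_1F$ --- is precisely what sits over the cutting edges, which is why the strings over those edges become dash--arrows in $G_{2,j}$, cf. (\ref{2}) and (\ref{boundarycomp}). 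Second, combinatorially, using Proposition \ref{prop:cov}: $\gj$ collapses along $1$--edges to $\gj/\sim$, which is covered by $d_j\cdot G(T\Sigma_j)$ with the $\nu$--covering data of (\ref{prop:cov}), and the covering $G_{2,j}\to\gj$ produced by Steps 1--2 of (\ref{algo}) is exactly the plumbing obtained by ``suspending'' the embedded resolution of $T\Sigma_j$ over $L_j$: over each exceptional curve $E$ of multiplicity $m_E=m(K)$ one gets the $S^1$--bundle over the relevant cyclic cover of $E$ that the algorithm inserts, with Euler numbers forced by (\ref{eq:2.2.1}), and the $Str^\circleddash$ strings of Step 2 Case 2 supplying, with the correct sign, the Hirzebruch--Jung blocks which carry reversed orientation in the real resolution (cf. (\ref{re:w2})). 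Finally, one checks the multiplicity decorations: a generic $S^1$--fibre $\gamma_w$ over $E$ has $\arg(g)$--winding number $\nu$, and propagating this through the two covering maps produces exactly the value $\tilde{m}=m\nu/\gcd(m,n)$ of the algorithm, so that the multiplicity system of $G_{2,j}$ coincides with the one associated with $\arg(g):\partial_{2,j}F\to S^1$; deleting the multiplicities (respectively, also the remaining non--dash arrows) yields the bare statement for the 3--manifold with boundary $\partial_{2,j}F$.

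The main obstacle I expect is the second argument: proving rigorously that the mapping torus of $m'_{j,ver}$ on $F_j'$, ``unrolled'' $d_j$ times, is plumbed precisely by $G_{2,j}$, with all Euler numbers, genera and, above all, orientations in agreement. The orientation bookkeeping is the delicate point, since (as flagged in the Introduction and in (\ref{re:w2})) some blocks of the resolution of $\cals_k$ are only orientation--reversingly equivalent to complex Hirzebruch--Jung germs, forcing the $\circleddash$--edges; one must check that these signs are globally consistent inside each $G_{2,j}$ and along the dash--arrow ends where $\partial_{2,j}F$ is later reglued to $\partial_1F$. A secondary subtlety, already warned about in (\ref{bek:multbou}), is that a multiplicity system is in general weaker than a fibration; here this causes no loss, because the fibration is supplied directly by Proposition \ref{felbo}, and the multiplicity labels only need to be shown compatible with it rather than to determine $\arg(g)$ on their own.
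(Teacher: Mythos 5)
Your proposal is correct and follows essentially the same route as the paper: Theorem \ref{g2} is obtained there as a particular case of Theorem \ref{MTh.1}, by restricting the resolution of $\cals_k$ (and the resulting graph $G$) to the part lying over $\Gamma^2_{\C,j}$, i.e.\ over the tube $T(L_j)$, and replacing the strings over cutting edges by dash--arrows --- exactly your first argument. Your supplementary mapping--torus verification via Proposition \ref{prop:cov} is a reasonable consistency check but is not needed; the paper simply forgets information from the already--established graph of $\partial F$.
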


\begin{bekezdes}\labelpar{gl} {\bf The gluing tori.} Each connected
component $\partial_{2,j}F$ ($j=1, \ldots, s$) is glued to
$\partial_1F$ along $\partial\partial_{2,j}F$, which is a union of
tori. Since for each cutting edge $e$ one has $\n_e=1$ (i.e. in
the Main Algorithm  exactly one string is inserted above $e$), the
number of these tori is exactly the cardinality of
$\cale_{cut,j}$, the number of cutting edges adjacent to
$\Gamma^2_{\C,j}$.
\ix{gluing tori} \ix{cutting edge}\ix{Main Algorithm}

Let $e$ be such a cutting edge, and  use the notations of
\ref{2edg} regarding this edge. Let $T_e$ be the torus component
of $\partial (\partial_{2,j}F)$ corresponding to $e$. Furthermore,
consider the fibration $T_e\to L_j$ from \ref{prop:sier}. Then
the fiber of this projection consists of $d(e)$ circles, the
corresponding orbit of the (permutation) action of $m'_{j,ver}$ on
$\partial F'_j$.
In particular, the number of connected components of $\partial F'_j$
is $\sum_{e\in \cale_{cut,j}}d(e)$, as it was already noticed in (\ref{eq:tr}).
Recall also that $m'_{j,hor}$ acts on $T_e$
trivially, cf. \ref{prop:sier}(\ref{mon}).
For more details see  section \ref{2edg}.

These gluing tori appear in the link $K$ of $V_f$ as well. Indeed,
consider  $L_j=K\cap \Sigma_j$ as in \ref{ss:2.0a}. Let $T(L_j)$ be
a tubular neighbourhood of $L_j$ in $S^5_\epsilon$ as in
\ref{ss:2.1}. Then $\partial T(L_j)$ intersects $K$ in
$|\cale_{cut,j}|$ tori, which can be identified with  the gluing tori
of $\partial F$, see also \ref{rem:link}.\ix{Milnor!fiber!boundary}
\end{bekezdes}

\begin{remark} From the plumbing graphs of $\partial_1F$ and
$\partial_2F$ it is impossible to recover the graph of $\partial
F$, since the {\it gluing} information (an automorphism of the
gluing tori) cannot be read from the partial information contained
in the graphs of $\partial_1F$ and $\partial_2F$. (See e.g.
 examples \ref{ex:d1d2} and \ref{221b}.) This gluing
information is exactly one of the main advantages of the graph
$\gc$ and of the Main Algorithm, which provides the full $\partial
F$.\end{remark}\ix{Main Algorithm}

\begin{remark}\label{rem:closure}
In fact, on $G_{2,j}$ one can put even  more information/decoration
inherited from $G$. If one introduces a `canonical' framing
(closed simple curve) of the boundary components of
$\partial_{2,j}F$, then one can define a well--defined
multiplicity of the dash--arrows as well (inherited from $G$).
Usually, such a framing  is needed when one wishes  to `close'
with solid tori a 3--manifold that has  toric boundary components.

Here we will make this completion via the following construction.
Consider the graph $\overline{G_{2,j}}$ obtained as follows.

From the graph $G$ delete all those
vertices which are vertices of $G_1$. All the remaining vertices are non--arrowheads; keep their
genus, Euler number and multiplicity decorations. Keep all the edges which connect
these vertices, and keep their decorations as well. Finally, keep any
edge which connects a vertex $v$ in  $G_1$ with another  vertex $w$ not in $G_1$, keep its
decoration $\circleddash$, and replace $v$ with an arrowhead having the same
 multiplicity as   $v$ has in $G_1$. This graph is
denoted by $\overline{G_2}$.
Its connected components are indexed by $\{1,\ldots,s\}$ and there is a natural bijection
(induced by inclusion) with the graphs $G_{2,j}$.
The connected component  $\overline{G_{2,j}}$ of $\overline{G_2}$ which contains the vertices
of  $G_{2,j}$   is called the {\it canonical closure} of $G_{2,j}$.
$\overline{G_{2,j}}$ contains $|\cale_{cut,j}|$ arrowheads.
\ix{canonical closure of $G_{2,j}$|textbf}

It is clear that $\overline{G_{2,j}}$ can be obtained from $\G^2_{\C,j}$ as well.

If we delete all the multiplicities of $\overline{G_{2,j}}$, but we keep the arrowheads, we get
a plumbing graph of a closed 3--manifold (without boundary)
and a link in it. This 3--manifold,
denoted by $\overline{\partial_{2,j}F}$,
 will be called the {\it canonical closure} of $\partial_{2,j}F$,
since it can be obtained from $\partial_{2,j}F$ by gluing some solid tori to its boundary components
 in a canonical way dictated by the above construction.
 The corresponding link in it  is denoted by $L_{cut,j}$.
The manifold $\partial_{2,j}F$ is obtained from
$\overline{\partial_{2,j}F}$ by deleting the interior of a tubular neighbourhood $T_j$
of $L_{cut,j}$. \ix{gluing tori}

For each component of $L_{cur,j}$ consider the oriented meridian $\gamma_e$  in $T_j$.
The collection $\{\gamma_e\}_{e\in \cale_{cut,j}}$ serves as a framing in $\partial (\partial_{2,j}F)$.
Using this framing $\partial_{2,j}F$ can be closed in a canonical way to get
  $\overline{\partial_{2,j}F}$.

\end{remark}

\begin{remark}
Using the graph $G$ one can decorate  both the graphs $G_1$ and $G_2$  even more so that all  boundary components
of $\partial_1F$ and $\partial_2F$ will be canonically identified with $S^1\times S^1$ in such a way that gluing them
provides  $\partial F$.\ix{Milnor!fiber!boundary}

Since a complete description of $\partial F$ is already provided by the Main Algorithm,
we  omit the description of these decorations.
But, definitely, the interested reader might consider and add this data to the picture
as well.
\end{remark}

\section{\ First examples of graphs of  $\partial F$,   $\partial_1F$ and
$\partial_2F$}\labelpar{FIRSTEX}\setcounter{equation}{0}

\begin{example}\labelpar{ex:d1d2}\ix{Milnor!fiber!boundary}
Assume that $f=x^3+y^2+xyz$ and $g=z$ as in \ref{xyz3}. Then the
output of the Main Algorithm is the following graph $G$:\ix{Main Algorithm}\ix{plumbing!graph}

\vspace{2mm}

\begin{picture}(200,90)(30,10)
\put(50,15){\circle*{4}} \put(50,35){\circle*{4}}
\put(50,60){\circle*{4}} \put(50,90){\circle*{4}}
\put(50,75){\circle*{4}}

\qbezier(50,15)(50,15)(100,25) \qbezier(50,35)(50,35)(100,25)
\qbezier(50,60)(50,60)(100,75) \qbezier(50,75)(50,75)(100,75)
\qbezier(50,90)(50,90)(100,75)

\put(100,25){\circle*{4}} \put(100,75){\circle*{4}}
\put(150,50){\circle*{4}} \put(125,62){\circle*{4}}
\put(117,33){\circle*{4}} \put(135,42){\circle*{4}}

\put(100,25){\line(0,1){50}} \put(150,50){\line(-2,-1){50}}
\put(150,50){\line(-2,1){50}} \put(150,50){\vector(1,0){30}}

\put(38,15){\makebox(0,0){$-1$}} \put(38,35){\makebox(0,0){$-1$}}
\put(38,60){\makebox(0,0){$-1$}} \put(38,75){\makebox(0,0){$-1$}}
\put(38,90){\makebox(0,0){$-1$}}

\put(125,70){\makebox(0,0){$2$}} \put(117,25){\makebox(0,0){$2$}}
\put(100,85){\makebox(0,0){$-1$}} \put(100,15){\makebox(0,0){$0$}}
\put(135,35){\makebox(0,0){$2$}} \put(150,60){\makebox(0,0){$1$}}

\put(95,50){\makebox(0,0){$\circleddash$}}
\put(108,35){\makebox(0,0){$\circleddash$}}
\put(123,43){\makebox(0,0){$\circleddash$}}
\put(138,51){\makebox(0,0){$\circleddash$}}
\put(138,63){\makebox(0,0){$\circleddash$}}
\put(114,74){\makebox(0,0){$\circleddash$}}
\put(270,50){\makebox(0,0){$\mbox{and all the multiplicities are
$(1)$}$.}}
\end{picture}

\vspace{2mm}

\noi There is only one non--arrowhead vertex in $G$ with
multiplicity 1. Therefore, one has the following graphs for
$(V_f^{norm},g\circ n)$, $\partial_1F\setminus T(V_g)$, and
$\partial_1F$:

\vspace{2mm}

\begin{picture}(200,65)(20,-25)
\put(50,15){\circle*{4}} \put(50,15){\vector(-1,-1){15}}
\put(50,15){\vector(-1,1){15}} \put(50,15){\vector(1,0){20}}
\put(38,15){\makebox(0,0){$-1$}} \put(25,0){\makebox(0,0){$(0)$}}
\put(25,30){\makebox(0,0){$(0)$}}
\put(80,15){\makebox(0,0){$(1)$}} \put(57,8){\makebox(0,0){$(1)$}}
\put(50,-15){\makebox(0,0){$G_1$}}

\put(122,15){\circle*{4}}  \put(122,15){\vector(1,0){20}}
\put(122,25){\makebox(0,0){$-1$}}
\put(122,5){\makebox(0,0){$(1)$}}
\put(152,15){\makebox(0,0){$(1)$}}
\put(135,-15){\makebox(0,0){$(V_f^{norm},g\circ n)$}}

\put(200,15){\circle*{4}} \put(190,5){\vector(-1,-1){5}}
\put(190,25){\vector(-1,1){5}} \put(220,15){\vector(1,0){5}}
\put(205,-15){\makebox(0,0){$\partial_1F\setminus T(V_g)$}}
\dashline[3]{3}(200,15)(220,15) \dashline[3]{3}(200,15)(190,5)
\dashline[3]{3}(200,15)(190,25)

 \put(275,15){\circle*{4}}
 \dashline[3]{3}(275,15)(265,5)
\dashline[3]{3}(275,15)(265,25) \put(265,5){\vector(-1,-1){5}}
\put(265,25){\vector(-1,1){5}}
  \put(285,-15){\makebox(0,0){$\partial_1F$}}

\dashline[3]{3}(300,15)(325,15) \put(303,15){\vector(-1,0){5}}
\put(325,15){\vector(1,0){5}}
  \put(288,15){\makebox(0,0){$\sim$}}
\end{picture}

\vspace{2mm}

Moreover, by plumbing calculus, the graph of $\partial_2F$ is also
the double dash--arrow
\begin{picture}(30,10)(300,10)
\dashline[3]{3}(300,15)(325,15) \put(303,15){\vector(-1,0){5}}
\put(322,15){\vector(1,0){5}}
\end{picture}
. Notice that both parts $\partial_1F$ and $\partial_2F$ are
extremely  simple 3--manifolds with boundary, namely, both are isomorphic to
$S^1\times S^1\times [0,1]$. The main information in $\partial F$ is exactly how these
parts are glued.\ix{Milnor!fiber!boundary}

By calculus starting from $G$, the boundary $\partial F$ is
represented by the graph:

\vspace{2mm}

\begin{picture}(200,40)(50,0)
\put(220,15){\circle*{4}} \qbezier(170,15)(173,30)(220,15)
\qbezier(170,15)(173,0)(220,15)
 \put(230,15){\makebox(0,0){$-4$}}
\put(190,28){\makebox(0,0){$\circleddash$}}
\end{picture}

\vspace{2mm}

\noi The multiplicity system of the
open book decomposition $(\partial F,V_g)$ is given by:

\vspace{2mm}

\begin{picture}(200,40)(-70,0)
\put(50,15){\circle*{4}}
\put(100,15){\circle*{4}}\put(100,15){\vector(1,0){40}}\put(150,15){\makebox(0,0){$(1)$}}
\qbezier(50,15)(75,30)(100,15) \qbezier(50,15)(75,0)(100,15)
 \put(112,22){\makebox(0,0){$-1$}} \put(112,8){\makebox(0,0){$(1)$}}
\put(75,28){\makebox(0,0){$\circleddash$}}
\put(38,22){\makebox(0,0){$-6$}}\put(40,8){\makebox(0,0){$(0)$}}
\end{picture}

\end{example}

\begin{example}\labelpar{221b}
Assume that $f=x^2y+z^2$ and $g=x+y$, cf. \ref{221}. Then, by
the Main Algorithm and plumbing calculus we get that the (minimal)
plumbing graph of $\partial F$ consists of a unique vertex with
genus zero and Euler number $-4$, i.e. $\partial F$ is the lens
space $L(4,1)$. Moreover,
\ix{cutting edge}\ix{gluing tori}\ix{Main Algorithm}\ix{Milnor!fiber!boundary}

\begin{picture}(200,90)(-30,-35)
\put(50,15){\circle*{4}} \put(50,15){\vector(-1,-1){15}}
\put(50,15){\vector(-1,1){15}} \put(50,15){\line(1,0){20}}
\put(38,15){\makebox(0,0){$-1$}} \put(25,0){\makebox(0,0){$(0)$}}
\put(25,30){\makebox(0,0){$(1)$}}\put(70,15){\circle*{4}}
\put(80,10){\makebox(0,0){$(1)$}} \put(57,8){\makebox(0,0){$(2)$}}
\put(50,-15){\makebox(0,0){$G_1$}}\put(80,20){\makebox(0,0){$-2$}}

\put(125,15){\circle*{4}}  \put(125,15){\vector(1,0){20}}
\put(125,25){\makebox(0,0){$-1$}}
\put(125,5){\makebox(0,0){$(1)$}}
\put(155,15){\makebox(0,0){$(1)$}}
\put(140,-15){\makebox(0,0){$(V_f^{norm},g\circ n)$}}

\put(200,15){\circle*{4}}  \put(220,15){\vector(1,0){5}}
\put(215,-15){\makebox(0,0){$\partial_1F$}}
\dashline[3]{3}(200,15)(220,15)
\end{picture}

\noindent and

\begin{picture}(200,65)(-110,-25)
\put(50,15){\circle*{4}} \put(50,15){\line(-1,-1){15}}
\put(50,15){\line(-1,1){15}} \put(70,15){\vector(1,0){5}}
\put(25,0){\makebox(0,0){$-2$}} \put(25,30){\makebox(0,0){$-2$}}
\dashline[3]{3}(50,15)(70,15)
\put(-20,15){\makebox(0,0){$\partial_2F:$}}
\put(35,0){\circle*{4}} \put(35,30){\circle*{4}}
\end{picture}

Notice that for the unique cutting 2-edge $e$ in \ref{221} one
has $d_1=1$, hence  $\nu=d(e)=2$. Therefore, although the
transversal singularity has two local irreducible components,
$\partial_1 F$ and $\partial_2F$ are glued by only one torus. The
point is that the transversal type is $A_1$, and the two local
irreducible components of the transversal singularity are permuted by the
vertical monodromy, see \ref{2edg} and \ref{gl} (and compare
also with the next example and Example (3.1) of \cite{Si3}).

The open book decomposition of $(\partial F,V_g)$ is given by:

\vspace{2mm}

\begin{picture}(200,75)(-50,-25)
\put(50,15){\circle*{4}} \put(50,15){\line(-1,-1){15}}
\put(50,15){\line(-1,1){15}}
\put(50,15){\line(1,0){120}}
\put(90,15){\circle*{4}}\put(130,15){\circle*{4}}\put(170,15){\circle*{4}}
\put(130,15){\vector(0,-1){25}}
\put(25,0){\makebox(0,0){$-2$}} \put(25,30){\makebox(0,0){$-2$}}
\put(35,0){\circle*{4}} \put(35,30){\circle*{4}}
\put(50,23){\makebox(0,0){$-1$}}\put(90,23){\makebox(0,0){$-3$}}
\put(130,23){\makebox(0,0){$-1$}}\put(170,23){\makebox(0,0){$-2$}}
\put(53,7){\makebox(0,0){$(2)$}}\put(90,7){\makebox(0,0){$(0)$}}
\put(139,7){\makebox(0,0){$(2)$}}
\put(170,7){\makebox(0,0){$(1)$}}
\put(70,20){\makebox(0,0){$\circleddash$}}
\put(130,-18){\makebox(0,0){$(1)$}}\put(35,-10){\makebox(0,0){$(1)$}}
\put(35,40){\makebox(0,0){$(1)$}}
\end{picture}

\end{example}

\begin{example}\label{UUUU}
In both cases of \ref{xyz3} and \ref{xyz5}, the singular locus
$\Sigma$ of $V_f$  is irreducible and consists of the $z$--coordinate
axis. The transversal type is an $A_1$ singularity, hence
$\#T(\Sigma)=2$. Furthermore, $|\cale_{cut,1}|=2$ and for both
cutting edges $d(e)=1$. Therefore (see \ref{2edg} and
\ref{gl}),  the action of the vertical monodromy does not
permute the two local components, and in both cases \ref{xyz3}
and \ref{xyz5}, there are two gluing tori.
\ix{cutting edge}\ix{gluing tori}

On the other hand, it might happen that the two local components
of a transversal $A_1$ singularity are permuted by the vertical
monodromy, see e.g. \ref{221b}.

Similarly, in the case of
\ref{nu2} (compare  also with \ref{ex:nu2b}),
 $\Sigma=\Sigma_1\cup\Sigma_2$, and for both
$\Sigma_j$ the transversal type is $A_1$, hence $\#T(\Sigma_j)=2$
($j=1,2$). Moreover, for both $j$, $|\cale_{cut,j}|=1$, $d_j=1$, and
$d(e)=2$; hence $\partial_{2,j}F$ is glued to $\partial_1F$ by
exactly one torus.
\end{example}

\begin{example}\labelpar{ex:dminusz2}
Assume that $f=x^d+y^d+xyz^{d-2}$, $d\geq 3$. For $\gc$ see the
second graph of \ref{ex:loop} (which satisfies Assumption A). In
this case $\Sigma$ is irreducible with transversal type $A_1$. The
gluing data are $|\cale_{cut,1}|=2$ and $d_1=1$.  For both cutting
edges $d(e)=1$, hence one has two gluing tori. The graph of
$\partial F$ is:\ix{Milnor!fiber!boundary}

\vspace{2mm}

\begin{picture}(200,60)(-50,-10)
\put(50,15){\circle*{4}} \put(100,15){\circle*{4}}
\qbezier(50,15)(75,30)(100,15) \qbezier(50,15)(75,0)(100,15)

\put(38,15){\makebox(0,0){$-d$}} \put(107,20){\makebox(0,0){$-d$}}
\put(112,5){\makebox(0,0){$[\frac{d(d-3)}{2}]$}}
\put(75,28){\makebox(0,0){$\circleddash$}}
\end{picture}
\end{example}

Recall that if a normal surface singularity is weighted homogeneous, then its link
is a Seifert 3--manifold, and it can be represented by a star--shaped plumbing graph
(or, in the degenerate case, by a string). Note that this is not true in the present situation:
the above equation is homogeneous, nevertheless, the graph has a cycle. The same remark is valid for
the weighted homogeneous equation from \ref{ex:d1d2}. \ix{graph!resolution!string}\ix{Seifert manifold}

Note also that the above graph has a {\it negative definite intersection matrix} $A$, nevertheless
there is no sequence of modifications by plumbing  calculus which would eliminate the
negative edge--decoration. Hence, $\partial F$ cannot be the link of a normal surface singularity.
\ix{graph!negative definite}\ix{matrix!intersection}

\begin{example}\labelpar{ex:347b}
For $f=x^3y^7-z^4$ the graph $\gc$ is given in \ref{ex:347}.
After a computation, we get for  $\Gmod$ the next graph.

\begin{picture}(200,105)(130,0)


\put(210,40){\circle*{4}} \put(210,60){\circle*{4}}
\put(210,20){\circle*{4}} \put(210,80){\circle*{4}}

\put(240,40){\circle*{4}} \put(240,60){\circle*{4}}
\put(240,20){\circle*{4}} \put(240,80){\circle*{4}}

\put(320,40){\circle*{4}} \put(320,60){\circle*{4}}
\put(320,20){\circle*{4}} \put(320,80){\circle*{4}}

\put(350,40){\circle*{4}} \put(350,60){\circle*{4}}
\put(350,20){\circle*{4}} \put(350,80){\circle*{4}}

\put(280,50){\circle*{4}}

\put(210,20){\line(1,0){30}}\put(210,40){\line(1,0){30}}
\put(210,60){\line(1,0){30}}\put(210,80){\line(1,0){30}}
\put(320,20){\line(1,0){30}}\put(320,40){\line(1,0){30}}
\put(320,60){\line(1,0){30}}\put(320,80){\line(1,0){30}}

 \qbezier(240,20)(280,50)(320,80) \qbezier(240,40)(280,50)(320,60)
 \qbezier(240,60)(280,50)(320,40) \qbezier(240,80)(280,50)(320,20)

\put(210,30){\makebox(0,0){$2$}}
\put(210,50){\makebox(0,0){$2$}}
\put(210,70){\makebox(0,0){$2$}}
\put(210,90){\makebox(0,0){$2$}}
\put(240,30){\makebox(0,0){$4$}}
\put(240,50){\makebox(0,0){$4$}}
\put(240,70){\makebox(0,0){$4$}}
\put(240,90){\makebox(0,0){$4$}}
\put(320,30){\makebox(0,0){$2$}}
\put(320,50){\makebox(0,0){$2$}}
\put(320,70){\makebox(0,0){$2$}}
\put(320,90){\makebox(0,0){$2$}}
\put(350,30){\makebox(0,0){$2$}}
\put(350,50){\makebox(0,0){$2$}}
\put(350,70){\makebox(0,0){$2$}}
\put(350,90){\makebox(0,0){$2$}}

\put(280,60){\makebox(0,0){$4$}}

\end{picture}

This graph $G^m$ can be transformed into
its `normal form' in the sense of \cite{EN}, that is, with all the Euler numbers on the
legs $\leq -2$.  It is the following: \ix{Eisenbud--Neumann book}

\begin{picture}(200,125)(-55,-10)
\put(10,40){\circle*{4}} \put(10,60){\circle*{4}}
\put(10,20){\circle*{4}} \put(10,80){\circle*{4}}

\put(40,40){\circle*{4}} \put(40,60){\circle*{4}}
\put(40,20){\circle*{4}} \put(40,80){\circle*{4}}

\put(70,40){\circle*{4}} \put(70,60){\circle*{4}}
\put(70,20){\circle*{4}} \put(70,80){\circle*{4}}

\put(150,40){\circle*{4}} \put(150,60){\circle*{4}}
\put(150,20){\circle*{4}} \put(150,80){\circle*{4}}

\put(110,50){\circle*{4}}

\put(10,20){\line(1,0){60}}\put(10,40){\line(1,0){60}}
\put(10,60){\line(1,0){60}}\put(10,80){\line(1,0){60}}

 \qbezier(70,20)(110,50)(150,80) \qbezier(70,40)(110,50)(150,60)
 \qbezier(70,60)(110,50)(150,40) \qbezier(70,80)(110,50)(150,20)

\put(10,30){\makebox(0,0){$-3$}}
\put(10,50){\makebox(0,0){$-3$}}
\put(10,70){\makebox(0,0){$-3$}}
\put(10,90){\makebox(0,0){$-3$}}
\put(150,30){\makebox(0,0){$-3$}}
\put(150,50){\makebox(0,0){$-3$}}
\put(150,70){\makebox(0,0){$-3$}}
\put(150,90){\makebox(0,0){$-3$}}
\put(40,30){\makebox(0,0){$-2$}}
\put(40,50){\makebox(0,0){$-2$}}
\put(40,70){\makebox(0,0){$-2$}}
\put(40,90){\makebox(0,0){$-2$}}
\put(70,30){\makebox(0,0){$-2$}}
\put(70,50){\makebox(0,0){$-2$}}
\put(70,70){\makebox(0,0){$-2$}}
\put(70,90){\makebox(0,0){$-2$}}

\put(110,60){\makebox(0,0){$-4$}}
\end{picture}

Its central
vertex has Euler number  $e= -4$.  The eight pairs of normalized
Seifert invariants $(\alpha_\ell,\omega_\ell)$, $(1\leq \ell\leq 8)$, associated with the eight legs,
are determined as Hirzebruch--Jung  continued fractions associated with the entries of the corresponding legs:
$\alpha_\ell/\omega_\ell$ for $1\leq \ell\leq 4$ is $[2,2,3]=7/5$, while the other four are
$[3]=3/1$; cf. \ref{bek:SEIFERT}.
\ix{Seifert manifold}\ix{Hirzebruch--Jung continued fraction}

Recall that the {\it orbifold Euler number} of the Seifert 3--manifold is defined as $e^{orb}:=
e+\sum_\ell \omega_\ell/\alpha_\ell$, and the  normal form graph is negative definite if and only if $e^{orb}<0$.
In this case  $e^{orb}=4/21>0$, hence the graph \ix{graph!negative definite}
 is {\em not} negative definite.\ix{orbifold Euler number}\ix{Seifert manifold}

In particular, this graph cannot be transformed into a negative definite
graph  by plumbing  calculus.

Note that $G^m$ with opposite  orientation   (that is, $-G^m$) is negative definite.
\ix{graph!negative definite}

\end{example}

\begin{example}\labelpar{nu2b}
For $f=x^2y^2+z^2(x+y)$ a possible graph $\gc$ is given in \ref{nu2}. For a possible $\Gmod$
we get:

\begin{picture}(200,80)(-50,-10)
\put(30,30){\circle*{4}} \put(60,30){\circle*{4}}
\put(90,30){\circle*{4}} \put(120,30){\circle*{4}}
\put(150,30){\circle*{4}} \put(90,10){\circle*{4}}
\put(0,45){\circle*{4}} \put(0,15){\circle*{4}}
\put(180,45){\circle*{4}} \put(180,15){\circle*{4}}
\put(30,30){\line(1,0){120}}
\put(30,30){\line(-2,1){30}}\put(30,30){\line(-2,-1){30}}
\put(150,30){\line(2,1){30}}\put(150,30){\line(2,-1){30}}
\put(90,30){\line(0,-1){20}}
\put(100,10){\makebox(0,0){$-3$}}
\put(30,37){\makebox(0,0){$-1$}}\put(60,37){\makebox(0,0){$-4$}}
\put(90,37){\makebox(0,0){$-1$}}\put(120,37){\makebox(0,0){$-4$}}
\put(150,37){\makebox(0,0){$-1$}}
\put(-10,45){\makebox(0,0){$-2$}}\put(-10,15){\makebox(0,0){$-2$}}
\put(190,45){\makebox(0,0){$-2$}}\put(190,15){\makebox(0,0){$-2$}}
\end{picture}

\noi (Notice that for this graph it would be possible to use the
$\R\bp^2$--absorption of the {\em non--orientable} calculus, but we do not do that.)

In this case $\partial F$ is a rational homology sphere, since $\det(A)\not=0$.\ix{Milnor!fiber!boundary}
\end{example}

\begin{example}\labelpar{ex:ketA2b'} Assume that $f=y^3+(x^2-z^4)^2$. Then using \ref{ex:ketA2} we get
for $\Gmod$:

\begin{picture}(200,55)(-50,0)
\put(30,30){\circle*{4}} \put(60,30){\circle*{4}}
\put(90,30){\circle*{4}} \put(120,30){\circle*{4}}
\put(150,30){\circle*{4}} \put(90,10){\circle*{4}}
\put(30,30){\line(1,0){120}}
\put(90,30){\line(0,-1){20}}
\put(100,10){\makebox(0,0){$-2$}}
\put(30,37){\makebox(0,0){$1$}}\put(60,37){\makebox(0,0){$4$}}
\put(90,37){\makebox(0,0){$0$}}\put(120,37){\makebox(0,0){$4$}}
\put(150,37){\makebox(0,0){$1$}}
\put(30,20){\makebox(0,0){$[1]$}}\put(150,20){\makebox(0,0){$[1]$}}

\end{picture}

\end{example}

\begin{example}\labelpar{AnPi2} Finally, the last example is the 1--parameter infinite
family
$f=x^ay(x^2+y^3)+z^2$ with $a\geq 2$.
 The reader may consider this as a model for other infinite families.

A graph $\gc$ with  $g=x+y+z$ is given in \ref{AnPi}. \\

\noindent {\bf Case 1.} Assume that  $a$ is even. We  determine $G$ in several steps.

First, the graph $G_1$ can be determined easily (in particular, the normalization of $V_f$
is the $D_5$ singularity):

\vspace{7mm}

\begin{picture}(300,80)(-60,110)

\put(40,130){\makebox(0,0){\small{$(1)$}}}\put(40,150){\makebox(0,0){\small{$-2$}}}
\put(40,140){\circle*{4}}
\put(40,180){\circle*{4}}
\put(40,190){\makebox(0,0){\small{$(1)$}}}\put(40,170){\makebox(0,0){\small{$-2$}}}
\put(0,140){\makebox(0,0){\small{$(0)$}}}
\put(0,180){\makebox(0,0){\small{$(0)$}}}
\put(100,160){\circle*{4}}
\put(110,170){\makebox(0,0){\small{$(2)$}}}
\put(110,150){\makebox(0,0){\small{$-2$}}}

\put(180,160){\circle*{4}}
\put(180,170){\makebox(0,0){\small{$(2)$}}}
\put(165,150){\makebox(0,0){\small{$-2$}}}
\put(240,160){\circle*{4}}
\put(240,170){\makebox(0,0){\small{$(1)$}}}
\put(240,150){\makebox(0,0){\small{$-2$}}}

\put(100,160){\line(1,0){140}}

\put(40,140){\line(3,1){60}}
\put(40,180){\line(3,-1){60}}
\put(40,140){\vector(-1,0){30}}
\put(40,180){\vector(-1,0){30}}

\put(180,160){\vector(0,-1){30}}
\put(180,122){\makebox(0,0){\small$(1)$}}

\end{picture}

\noindent \ix{cutting edge} \ix{gluing tori}
Clearly, we have two gluing tori. Let $v_1$ and $v_2$ be the vertices of $G_1$ which support
the (0)--arrows. Next, we wish to determine the multiplicity $m_1$ of the vertex $v_1'$ of $G$, which
is not in $G_1$ and is a neighbour of $v_1$ in $G$. For this we have to analyze the cutting edge with
weights $(a;2a+4,1)$ and $(1;2a+4,1)$. By (\ref{eq:2.2.2}) (pay attention to the left--right
ordering of the ends of the string), $m_1$ satisfies
$a+\lambda= m_1(2a+4)$
for some $\lambda$ with $0\leq \lambda <2a+4$. Hence $m_1=1$. In $G$ the vertex $v_1'$ is glued to $v_1$
by a $\circleddash$--edge, hence the Euler number of $v_1$  in $G$ is $-1$.

Finally, we analyze the graph $\gc^2$. Its shape and the first entries of the weights of the vertices
coincide with the minimal
embedded resolution graph of the (transversal type)
plane curve singularity $u^2+v^a$, provided that we replace $v_1$ and
$v_2$ by arrowheads with multiplicity 1. 
Comparing the Main Algorithm and the algorithm which provides
the graph of suspension singularities (cf. \ref{ss:b}),
we realize that the part of $G$ above $\gc^2$ is exactly the
resolution graph of $u^2+v^a+w^{2a+4}=0$ with opposite orientation.
More precisely, let
\ix{graph!resolution!Brieskorn}\ix{Main Algorithm}

\vspace{2mm}

\begin{picture}(300,80)(10,0)
\put(35,10){\framebox(40,60){\small{$\Gamma$}}}

\put(65,20){\vector(1,0){30}}
\put(65,60){\vector(1,0){30}}
\put(105,20){\makebox(0,0){\small{$(1)$}}}
\put(105,60){\makebox(0,0){\small{$(1)$}}}

\end{picture}

\vspace{2mm}

\noindent
be the minimal embedded resolution graph of the germ
$w:(\{u^2+v^a+w^{2a+4}=0\},0)\to (\bfc,0)$, induced by the projection
$(u,v,w)\mapsto w$. Let $-\Gamma$  be this graph with opposite orientation (in which one
changes the signs of all Euler numbers and edge--decorations, and keeps the
multiplicities). Then the graph of the open book decomposition
of $(\partial F,V_g)$ is obtained by gluing $-\Gamma$ with $G_1$ such that
the arrows of $-\Gamma$ are identified with $v_1$ and $v_2$
(and the Euler numbers of $v_1$ and $v_1$ are recomputed as
above, or via (\ref{eq:2.2.1}) using  the multiplicities):

\begin{picture}(300,110)(-60,100)
\put(-30,130){\framebox(40,60){\small{$-\Gamma$}}}

\put(0,140){\line(1,0){40}}
\put(0,180){\line(1,0){40}}
\put(23,135){\makebox(0,0){\small{$\circleddash$}}}
\put(23,185){\makebox(0,0){\small{$\circleddash$}}}

\put(40,130){\makebox(0,0){\small{$(1)$}}}\put(40,150){\makebox(0,0){\small{$-1$}}}
\put(40,140){\circle*{4}}
\put(40,180){\circle*{4}}
\put(40,190){\makebox(0,0){\small{$(1)$}}}\put(40,170){\makebox(0,0){\small{$-1$}}}
\put(100,160){\circle*{4}}
\put(110,170){\makebox(0,0){\small{$(2)$}}}
\put(110,150){\makebox(0,0){\small{$-2$}}}
\put(180,160){\circle*{4}}
\put(180,170){\makebox(0,0){\small{$(2)$}}}
\put(165,150){\makebox(0,0){\small{$-2$}}}
\put(240,160){\circle*{4}}
\put(240,170){\makebox(0,0){\small{$(1)$}}}
\put(240,150){\makebox(0,0){\small{$-2$}}}

\put(100,160){\line(1,0){140}}

\put(40,140){\line(3,1){60}}
\put(40,180){\line(3,-1){60}}

\put(180,160){\vector(0,-1){30}}
\put(180,122){\makebox(0,0){\small$(1)$}}

\end{picture}

This graph has a cycle. Moreover, $\Gamma$ is a star--shaped graph whose central
vertex has genus $\frac{{\rm gcd}(a,4)}{2}-1$. Determining  $\Gamma$ is standard, see
\ref{ss:b}, or \cite{OW}. \\

\noindent {\bf Case 2.} Assume that $a$ is odd, $a\geq 3$. We proceed similarly as above.
The graph $G_1$ is the following:

\begin{picture}(300,90)(-60,105)
 \put(40,160){\circle*{4}}
\put(40,170){\makebox(0,0){\small{$(1)$}}}\put(40,150){\makebox(0,0){\small{$-4$}}}
\put(0,160){\makebox(0,0){\small{$(0)$}}}
 \put(100,160){\circle*{4}}
\put(100,170){\makebox(0,0){\small{$(4)$}}}\put(90,150){\makebox(0,0){\small{$-1$}}}
 \put(100,120){\circle*{4}}
\put(110,120){\makebox(0,0){\small{$(2)$}}}\put(90,120){\makebox(0,0){\small{$-2$}}}

\put(160,160){\circle*{4}}
\put(160,170){\makebox(0,0){\small{$(1)$}}}\put(160,150){\makebox(0,0){\small{$-6$}}}

\put(160,160){\vector(3,1){60}} \put(160,160){\vector(3,-1){60}}
\put(235,180){\makebox(0,0){$(1)$}}
\put(235,140){\makebox(0,0){$(1)$}}

\put(100,120){\line(0,1){40}}

\put(160,160){\vector(-1,0){150}}
\end{picture}

\noindent There is only one gluing torus. Let $v$ be the $(-4)$--vertex, and $v'$
its adjacent vertex of $G$ which is not in $G_1$. Then the multiplicity of $v'$ is again 1.
Hence, the Euler number of $v$ in $G$ is $-3$. Therefore, the graph
of $(\partial F,V_g)$ is  \ix{gluing tori}

\vspace{2mm}

\begin{picture}(300,90)(-60,110)
\put(-30,130){\framebox(40,50){}}
\put(23,165){\makebox(0,0){\small{$\circleddash$}}}
\put(-10,140){\makebox(0,0){\small{$-\Gamma$}}}

 \put(40,160){\circle*{4}}
\put(40,170){\makebox(0,0){\small{$(1)$}}}\put(40,150){\makebox(0,0){\small{$-3$}}}
 \put(100,160){\circle*{4}}
\put(100,170){\makebox(0,0){\small{$(4)$}}}\put(90,150){\makebox(0,0){\small{$-1$}}}
 \put(100,120){\circle*{4}}
\put(110,120){\makebox(0,0){\small{$(2)$}}}\put(90,120){\makebox(0,0){\small{$-2$}}}

\put(160,160){\circle*{4}}
\put(160,170){\makebox(0,0){\small{$(1)$}}}\put(160,150){\makebox(0,0){\small{$-6$}}}

\put(160,160){\vector(3,1){60}} \put(160,160){\vector(3,-1){60}}
\put(235,180){\makebox(0,0){$(1)$}}
\put(235,140){\makebox(0,0){$(1)$}}

\put(100,120){\line(0,1){40}}

\put(160,160){\line(-1,0){160}}
\end{picture}

\vspace{4mm}

\noindent where $\Gamma$ is the
minimal embedded resolution graph of the germ $w:(\{u^2+v^a+w^{2a+4}=0\},0)\to (\bfc,0)$,
 and the unique arrow--head of $-\Gamma$ is identified with the
 $(-3)$--vertex. 

\end{example}

\chapter{Proof of the Main Algorithm}\labelpar{sec:proof}

\section{\ Preliminary remarks}\setcounter{equation}{0}
\bekezdes The algorithm and its proof  is a highly generalized version of the algorithm which
determines the resolution graph of cyclic coverings.  Its  origin
goes back to the case of suspensions, when one starts with
an isolated plane curve singularity  $f'$ and a positive integer $n$, and
one determines the resolution graph of the hypersurface singularity
$\{f'(x,y)+z^n=0\}$ from the embedded resolution graph of $f'$ and the
integer $n$; see \ref{ss:b}.
 \ix{graph!covering}\ix{Main Algorithm!proof of}

All the geometrical constructions  behind
the algorithms targeting cyclic coverings are realized
within the framework  of  complex
analytic/algebraic geometry. In particular, all the graphs
involved are negative definite graphs  and the plumbing calculus
reduces to blowing up/down $(-1)$--rational curves. Moreover,
the following  general  principle applies:
for normal surface singularities the resolution graph
is a possible plumbing graph for the link, which is diffeomorphic with
the boundary of the Milnor fiber of any smoothing.

\bekezdes
The first case when  a more complicated
`aid--graph' was used  is in \cite{eredeti}. The starting situation
was the following: having a germ $f$ with 1--dimensional singular locus, and
another germ $g$ such that the pair $(f,g)$ forms an ICIS, one
wished to determine  the resolution graphs of the hypersurface singularities  $f+g^k$,
$k\gg 0$, cf. \ref{bek:INWO}.
In order to find these `usual' --- that is,   negative definite ---
graphs, all the
necessary information about the  ICIS $(f,g)$ was stored in the
`unusual'  decorations of the `unusual' graph $\gc$.

The machinery and construction developed in that article serve
as a model for the present work. We start again with
the very same graph $\gc$, but rather significant differences appear.
Although, in \cite{eredeti},  the entire construction  stayed within the realm  of
complex analytic geometry, similarly as in the case of cyclic coverings,
the present case grows  out of  the complex analytic world.  We must glue
together real analytic spaces with singularities, and sometimes the
gluing maps even  reverse the `canonical' orientations of the
regular parts. This generates additional difficulties we
need to be handle during the proof. The output plumbing graphs
are  `general plumbing graphs', which may not be definite, or not
even non--degenerate). Moreover,  we had to consider a larger set of moves
of the smooth plumbing calculus (not standard in
algebraic geometry) in order to simplify them  or to
reduce them  to their  `normal forms'.

The explanation of the idea why the graph $\gc$ contains all
the information needed to describe $\partial F$ is given in\ix{Milnor!fiber!boundary}
\ref{why}. In fact, that  is the main idea behind the whole
construction. In the next section we outline the main steps of the proof.

\section{\ The guiding principle and the outline of the proof}\setcounter{equation}{0}
\label{outline}

Consider an ICIS $\Phi=(f,g)$ as in \ref{ss:ICIS}, an
embedded resolution $r:V^{emb}\to (\bfc^3,0)$ of the divisor
$V_{f}\cup V_g$ as in \ref{construct},  as well as  a `wedge'
$\Wedge$ of $\Delta_1$ for some $M\gg 0$ as in \ref{why}.

If one has a complex analytic isolated singularity
$(\cals,0)\subset (\bfc^3,0)$ for which $\Phi(\cals)\setminus
\{0\}\subset \Wedge$ then one can construct a resolution of
$\cals$ in three steps.

 First, consider the $r$--strict transform $\widetilde{\cals}\subset V^{emb}$ of $\cals$.
It is contained  in a tubular neighbourhood of $\C$, cf. (\ref{zart}), and
its singular locus is in $\C$. Therefore $\widetilde{\cals}$  can be resolved in two
further steps: first taking the normalization $\cals^{norm}$ of
$\widetilde{\cals}$, then resolving the isolated normal surface
singularities of $\cals^{norm}$. The point is that if $\cals$ is
determined by $f$ and $g$, then $\widetilde{\cals}$ has nice
local equations near any point of $\C$ (which can be recovered
from the decorations of $\gc$). E.g., one can show that $\widetilde{\cals}$ is an
equisingular family of curves along the regular part of $\C$, hence
 the singular locus of ${\cals}^{norm}$ will be situated  above  the
double points of $\C$. Moreover,  all these singular points will
be of Hirzebruch--Jung type. In particular, the last step is the
resolution of these Hirzebruch--Jung  singularities, whose combinatorial data is
again codified in $\gc$.

Summing up we get the diagram:

\vspace{3mm}

\begin{picture}(200,40)(0,0)
\put(60,30){\vector(1,0){20}} \put(115,30){\vector(1,0){20}}
\put(163,30){\vector(1,0){20}}
\put(50,30){\makebox(0,0){$\overline{\cals}$}}
\put(100,30){\makebox(0,0){${\cals}^{norm}$}}
\put(150,30){\makebox(0,0){$\widetilde{\cals}$}}
\put(200,30){\makebox(0,0){${\cals}$}}
\put(150,17){\makebox(0,0){$\cap$}}
\put(200,17){\makebox(0,0){$\cap$}}
\put(150,5){\makebox(0,0){$V^{emb}$}}
\put(200,5){\makebox(0,0){$(\bfc^3,0)$}}
\put(72,35){\makebox(0,0){\tiny{HJ}}}
\put(175,35){\makebox(0,0){$r$}}
\end{picture}

\vspace{5mm}

\noi Corresponding  to the above three horizontal maps,
we have the following steps at the level of graphs:

\begin{itemize}
\item start with the graph $\gc$ (which stores all the local information
about $\widetilde{\cals}$);
\item provide a cyclic covering graph (in the sense of Chapter \ref{ss:2.3})
corresponding  to the normalization step;
\item modify this graph by Hirzebruch--Jung strings (see `variation' \ref{re:2.3.1}).
\end{itemize}\ix{graph!covering}\ix{graph!resolution!string}

A key additional argument is a consequence of Theorem
\ref{th:2.3.1}, which guarantees the uniqueness  of the cyclic
covering graph with the inserted strings.

It is  exactly this guiding principle that was used in
\cite{eredeti} to determine the resolution graph  of any member of the generalized
Iomdin--series $\cals=\{f+g^k=0\}$,  $k\gg 0$.

\vspace{2mm}

Now, we want to obtain the plumbing-graph of the boundary $\partial
F$ of the Milnor fiber of a non--isolated $f$. We show in Proposition  \ref{lem:sksmooth}  that
$\partial F$ is the link of the  {\it real} analytic
germ\ix{Milnor!fiber!boundary}
$${\cals}_k =\{ f=|g|^k\}\subset(\bfc^3,0),$$
and $\Phi(\cals_k)\setminus \{0\}\subset \Wedge$, provided that
$k\gg 0$. Hence, we will run the same procedure as above
within  the world of real analytic geometry, which
forces  some modifications.

A final remark:  the Euler number of an $S^1$-bundle over a curve
is a `global object', its computation in a resolution can be
rather involved (one needs more charts and gluing information connecting them).
Therefore, we will determine the Euler numbers of
our graphs in  an indirect way: we  consider the open book
decomposition induced by $g$, and we  determine the associated
multiplicity system (this can easily be determined from local
data!), then we apply (\ref{eq:2.2.1}).

\section{\ The first step.
The real varieties $\cals_k$}\labelpar{ss:geocon}\setcounter{equation}{0}

 We fix a pair
$\Phi=(f,g)$ as in section \ref{ss:ICIS}, and we use all the
notations and results of that part. In particular, we  fix
a good representative of $\Phi$ whose discriminant is
$\Delta_\Phi$. Similarly as above, we write $(c,d)$ for the
coordinates of $(\bfc^2,0)$.

For any  {\it even} integer $k$ (compare also with \ref{re:k})
we set
$$Z_k:=\{(c,d)\in (\bfc^2,0)\,:\, c=|d|^k\}.$$
The next lemma is elementary and its proof is left to the reader.
\begin{lemma}\labelpar{lem:zk}
$Z_k$ is a smooth real analytic (even algebraic) surface. For
$k$ sufficiently large $Z_k\cap \Delta_\Phi=\{0\}$.  Moreover,
$Z_k\setminus \{0\}\subset \Wedge$ if $k>M$.
\end{lemma}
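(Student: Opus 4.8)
The statement is purely elementary, so the strategy is to verify the two assertions directly. First I would prove that $Z_k$ is a $C^\infty$ real analytic (indeed real algebraic) surface. Writing $d=d_1+id_2$ with $d_1,d_2\in\R$, the condition $c=|d|^k$ with $k=2\ell$ even reads $c=(d_1^2+d_2^2)^\ell$, i.e. $c\in\R$ and $c-(d_1^2+d_2^2)^\ell=0$; so $Z_k$ is the zero locus inside $\bfc^2\cong\R^4$ (with real coordinates $\mathrm{Re}(c),\mathrm{Im}(c),d_1,d_2$) of the two polynomials $\mathrm{Im}(c)$ and $\mathrm{Re}(c)-(d_1^2+d_2^2)^\ell$. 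Since $Z_k$ is the graph of the real-analytic map $d\mapsto |d|^k$ from the $d$-plane to the real $c$-axis, the projection $(c,d)\mapsto d$ is a real-analytic diffeomorphism onto a $2$-disc; in particular $Z_k$ is smooth ($C^\infty$, even real algebraic) of real dimension $2$, and connected, with the origin no distinguished point except for the non-smoothness of the defining equations there (the graph itself is smooth everywhere). This disposes of the first sentence.

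For the second sentence I would argue by a standard curve-selection / Puiseux-type estimate on the discriminant. Recall $\Delta_\Phi\subset D^2_\eta$ is a complex analytic curve germ through the origin, and $\Delta_1=\{c=0\}$ is one of its components; let $\Delta'_\Phi$ denote the union of the remaining components. Since each component of $\Delta'_\Phi$ is a holomorphic curve germ not equal to $\{c=0\}$, along any such branch one has (after a suitable parametrization) $|c|\geq C|d|^{N}$ for some constant $C>0$ and some integer $N\geq 1$ (the maximal "$c$-contact order" of the branches of $\Delta'_\Phi$). On the other hand, on $Z_k\setminus\{0\}$ we have $|c|=|d|^k$ with $d\neq 0$. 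Choosing $k$ larger than this $N$ (and larger than $M$), we get that for $|d|$ small, $|c|=|d|^k<C|d|^{N}$, so the point $(|d|^k,d)$ cannot lie on $\Delta'_\Phi$; and it lies on $\Delta_1=\{c=0\}$ only if $c=|d|^k=0$, i.e. only at the origin. Hence $Z_k\cap\Delta_\Phi=\{0\}$ for $k$ sufficiently large. (If $\Delta_\Phi=\Delta_1$, i.e.\ $\Delta'_\Phi=\emptyset$, the statement is immediate since $c=|d|^k>0$ off the origin.)

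Finally, for the inclusion $Z_k\setminus\{0\}\subset\Wedge$ when $k>M$: by definition $\Wedge=\{(c,d)\in D^2_\eta : 0<|c|<|d|^M\}$, so I need $0<|d|^k<|d|^M$ on $Z_k\setminus\{0\}$. Since on $Z_k$ one has $|c|=|d|^k$ and $d\neq 0$ forces $|c|>0$, and since for $0<|d|<1$ and $k>M$ we have $|d|^k<|d|^M$, the inclusion follows after shrinking $\eta$ so that $|d|<1$ throughout $D^2_\eta$. There is essentially no obstacle here: the only mild point to be careful about is making the contact-order bound on $\Delta'_\Phi$ precise, which is exactly the content of the curve selection lemma (or of Puiseux expansions of the branches of $\Delta_\Phi$), and this is why the "$k$ sufficiently large" hypothesis enters. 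Thus the proof is a short exercise, consistent with the remark in the text that "the proof is left to the reader".
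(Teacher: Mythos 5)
Your proof is correct; the paper itself omits the argument ("left to the reader"), and what you supply is exactly the intended elementary verification: $Z_k$ is the graph of the polynomial map $d\mapsto (d_1^2+d_2^2)^{k/2}$, hence a smooth real algebraic surface, and the wedge inclusion is the one-line estimate $|c|=|d|^k<|d|^M$ for $0<|d|<1$ and $k>M$. The only streamlining worth noting is that your middle step (the Puiseux/contact-order bound showing $Z_k\cap\Delta_\Phi=\{0\}$) need not be redone from scratch: the paper has already recorded in (\ref{why}) that $\Wedge\subset D^2_\eta\setminus\Delta_\Phi$ for $M\gg 0$, so the second assertion of the lemma follows formally from the third; of course, the verification of that earlier claim is precisely the branch-by-branch estimate $|c|\geq C|d|^N$ on the components of $\Delta_\Phi$ other than $\{c=0\}$ that you wrote out, so nothing is lost either way.
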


\begin{remark}\labelpar{re:k}
%
As mentioned before, all the important facts regarding $Z_k$ (and the space $\cals_k$
which will be defined next) are valid for $k$ odd as well. This is based on the
additional fact that
the  classification of oriented 2-- and 3--dimensional topological
manifolds agrees with the classification of $C^\infty$ manifolds.
Nevertheless,  it is more convenient to use even integers $k$, since for them
$|d|^k$ becomes real algebraic. In fact, later we will impose even more
divisibility assumptions on $k$.

The point is that $k$ has only an auxiliary role and carries no
geometric meaning, e.g., it will not appear in any `final' formula
of $\partial F$. Hence its value, as soon as it is sufficiently
large,  is completely unimportant.\ix{Milnor!fiber!boundary}
\end{remark}

Next, define the real analytic variety $\cals_k$ of real dimension
4 by
$$\cals_k:=\Phi^{-1}(Z_k)=\{z\in(\bfc^3,0)\,:\, f(z)=|g(z)|^k\}.$$
\begin{proposition}\labelpar{lem:sksmooth} For $k$ sufficiently large,
the real variety $\cals_k\setminus \{0\}$ is regular, hence it is
a smooth  oriented 4--dimensional manifold. Moreover, for
 sufficiently small $\epsilon>0$, the sphere $S_\epsilon=S^5_\epsilon$
intersects $\cals_k$ transversally. The intersection $\cals_k\cap
S_\epsilon$ is an oriented 3--manifold, which is  diffeomorphic by an orientation
preserving  diffeomorphism to $\partial F$.

 In
particular, the link of $\cals_k$ (i.e. $\cals_k\cap
S_\epsilon$) is independent of the choice of $k$.
\end{proposition}
Before we start the proof let us indicate how the orientation of
$\cals_k\setminus\{0\}$ is defined. First,  consider $Z_k$.
It is a smooth real manifold. The projection on the $d$--axis  induces a
diffeomorphism; we define the orientation of $Z_k$ by the pullback of the
complex orientation of the $d$--axis via this diffeomorphism. Next, all fibers
of $\Phi$ are complex curves with their natural orientation. On the smooth
part of $\Phi^{-1}(Z_k)$ we define  the product orientation of the base and fibers.

\begin{proof} The first statement  follows from  Lemma \ref{lem:zk} and from the
properties of the ICIS $\Phi$ (or by a direct computation). The second one is
standard, using for example  the `{\it curve selection lemma}' from \cite{MBook}.

\vspace{2mm}

Next, we prove the diffeomorphism  $\cals_k\cap S_\epsilon\simeq \partial F$.

First, recall that in certain   topological arguments regarding
the Milnor fiber of $f$, the sphere $S_\epsilon=\partial
B_\epsilon$ is replaced by the 5--manifold with corners
 $\partial(\Phi^{-1}(D^2_\eta)\cap B_\epsilon)$,
the Milnor fiber $F=\{f=\delta\}\cap B_\epsilon$ by $F^{\tib}
:=\{f=\delta\}\cap \Phi^{-1}(D^2_\eta)\cap B_\epsilon$, and the
boundary $\partial F$ by the boundary with corners  $\partial
F^\tib$. For details, see e.g. \cite{Lj}, or
\ref{rem:phif}. By a similar argument, one shows the equivalence of
$\Phi^{-1}(Z_k)\cap B_\epsilon$ with $\cals^\tib _k:=
\Phi^{-1}(Z_k\cap D^2_\eta)\cap B_\epsilon$, and
$\Phi^{-1}(Z_k)\cap S_\epsilon$ with the 3--manifold with
corners $\partial \cals^\tib _k$. Hence, we need only to show the
equivalence of $\partial F^\tib $ and $\partial \cals^\tib _k$.

Consider the intersection $Z_k\cap \partial D^2_\eta$, i.e., the
solution of the system $\{|c|^2+|d|^2=\eta^2;\ c=|d|^k\}$. It is a
circle along which  $c$ is constant; let this value of $c$
(determined by $\eta$ and $k$) be denoted by $c_0$. Set
 $D_{c_0}=\{c=c_0\}\cap D^2_\eta$ as in  \ref{ss:ICIS}. Then
\begin{equation}\label{eq:par}
\partial D_{c_0}=\partial (Z_k\cap D^2_\eta),
\end{equation}
and
$\partial F^\tib=\partial (\Phi^{-1}(D_{c_0}))$ has a
decomposition:
$$\partial  F^\tib=
\Phi^{-1}(\partial D_{c_0})\cap B_\epsilon \bigcup _{
\Phi^{-1}(\partial D_{c_0})\cap S_\epsilon} \Phi^{-1}(
D_{c_0})\cap S_\epsilon.$$ Via (\ref{eq:par}), $\partial
\cals_k^\tib $ has a decomposition
$$\partial  \cals_k^\tib =
\Phi^{-1}(\partial D_{c_0})\cap B_\epsilon \bigcup _{
\Phi^{-1}(\partial D_{c_0})\cap S_\epsilon} \Phi^{-1}( Z_k\cap
D^2_\eta)\cap S_\epsilon.$$ Notice that there is an isotopy of
$D^2_\eta$, preserving $\partial D^2_\eta$,
 which sends  $D_{c_0}$ into $Z_k\cap D^2_\eta$. Since the restriction of
   $\Phi$ on $\Phi^{-1}(D^2_\eta)\cap S_\epsilon$ is a trivial fibration
over $D^2_\eta$, this isotopy can be lifted. This identifies the pairs
$$( \Phi^{-1}( D_{c_0})\cap S_\epsilon,\Phi^{-1}(\partial D_{c_0})\cap
S_\epsilon)\simeq (\Phi^{-1}( Z_k\cap D^2_\eta)\cap
S_\epsilon, \Phi^{-1}(\partial D_{c_0})\cap S_\epsilon).$$
This ends the proof.
\end{proof}

\section{\ The strict transform $\wsk$ of $\cals_k$ via $r$}\labelpar{ss:sklifted}
\setcounter{equation}{0}

Consider the resolution $r:V^{emb}\to U$ as in \ref{construct}.
Let $\wsk$ be the strict transform of $\cals_k$ by $r$, i.e.
$\wsk$ is the closure  of
$r^{-1}(\cals_k\setminus \{0\})$ (in the eucleidian topology).

\begin{lemma}\labelpar{lem:skc}
$$\wsk\cap r^{-1}(0)=\C.$$\end{lemma}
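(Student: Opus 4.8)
The statement to prove is the identity $\wsk\cap r^{-1}(0)=\C$, where $\wsk$ is the strict transform of $\cals_k=\{f=|g|^k\}$ under the embedded resolution $r:V^{emb}\to U$ of $V_f\cup V_g$, and $\C$ is the curve configuration of Definition (\ref{cdef}). The plan is to use the localization result (\ref{lem:sksmooth}) together with the ``wedge'' identity (\ref{zart}) of (\ref{why}), and then compare with the analogous statement for the lifted Milnor fibre.

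First I would reduce the claim to a containment in both directions. For the inclusion $\wsk\cap r^{-1}(0)\subseteq\C$: by (\ref{lem:zk}) we have $Z_k\setminus\{0\}\subset\Wedge$ for $k>M$, hence $\cals_k\setminus\{0\}=\Phi^{-1}(Z_k)\setminus\{0\}\subset\wP^{-1}(\Wedge)$ upstairs, so $\wsk=\overline{r^{-1}(\cals_k\setminus\{0\})}\subset\overline{\wP^{-1}(\Wedge)}$. Intersecting with $r^{-1}(0)=\wP^{-1}(0)$ and invoking (\ref{zart}), which says $\overline{\wP^{-1}(\Wedge)}\cap\wP^{-1}(0)=\C$, gives $\wsk\cap r^{-1}(0)\subseteq\C$. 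For the reverse inclusion $\C\subseteq\wsk\cap r^{-1}(0)$: fix $p\in\C$ and a small neighbourhood $U_p$. By Corollary (\ref{connectedhez})(b), for $k$ large enough (so that the corresponding point $(c,d)=(|d|^k,d)\in Z_k$ lies in $\Wedge$ for all small $d\ne 0$) one has $U_p\cap\wP^{-1}(c,d)\ne\emptyset$; letting $d\to 0$, these points of $\wsk$ accumulate at $p$, so $p\in\overline{\wsk}=\wsk$, and of course $p\in r^{-1}(0)$ since $\C$ consists of compact components contained in $r^{-1}(0)$. Combining both inclusions yields the equality.

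I expect the main obstacle to be a subtlety that does not arise in the complex-analytic version of (\ref{zart}): the fibre $\wP^{-1}(c,d)$ over a point of $Z_k$ is a fibre over a point with $c=|d|^k$, which varies real-analytically rather than holomorphically as $d$ moves along a circle, so one must be careful that the ``approaching'' family still exhausts a full neighbourhood of each $p\in\C$. This is handled precisely by the fact, recorded in (\ref{connectedhez}) and (\ref{felbont}), that the conclusions there hold for \emph{any} set $\cals$ with $\Phi(\cals)\subset\Wedge$ — and $Z_k\cap D^2_\eta\setminus\{0\}$ is such a set. A secondary point to check is that no component of $\C$ fails to meet $\wsk$: the non-compact components of $\C$ are strict transforms of $V_f\cap V_g$, and these lie in $\overline{r^{-1}(\cals_k\setminus\{0\})}$ because along $V_f\cap V_g$ both $f$ and $|g|^k$ vanish to the appropriate orders, so $\cals_k$ contains (near the origin, off $0$) points approaching that locus; but since the statement only concerns $\wsk\cap r^{-1}(0)$ and the non-compact components are not contained in $r^{-1}(0)$, only their intersection points with compact components matter, and those are already covered by the local argument at each $p\in\C$.

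Finally, I would remark that the even, sufficiently-divisible choice of $k$ (cf. (\ref{re:k})) plays no role here beyond guaranteeing $k>M$ in (\ref{lem:zk}); the identity of Lemma (\ref{lem:skc}) is purely a statement about the strict transform and the curve configuration, independent of the later monodromy bookkeeping. The proof is therefore short: it is essentially the observation that $\cals_k$ is one of the distinguished spaces $\cals$ with $\Phi(\cals)\setminus\{0\}\subset\Wedge$ to which the machinery of (\ref{why}) applies verbatim.
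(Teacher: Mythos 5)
Your proposal is correct and in the same spirit as the paper's: the paper simply remarks that the proof is ``similar to the proof of (\ref{zart})'' and leaves it to the reader, while you make the deduction explicit by combining the containment $Z_k\setminus\{0\}\subset\Wedge$ with (\ref{zart}) for one inclusion and with (\ref{connectedhez})(b) for the other, which is exactly the intended mechanism. One small imprecision: $r^{-1}(0)\neq\wP^{-1}(0)$ in general (the latter equals $r^{-1}(V_f\cap V_g)$), so your argument literally yields $\wsk\cap\wP^{-1}(0,0)=\C$; this mirrors a looseness already present in the paper's own statement (the non--compact components of $\C$ are not contained in $r^{-1}(0)$), and you correctly flag it at the end, but the identification should not be asserted as an equality. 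It would also be worth noting explicitly that the constant $M$ in (\ref{connectedhez})(b) can be chosen uniformly in $p$ (only $\eta$ shrinking with $U_p$), since otherwise a fixed $k$ might fail to dominate $M$ for small neighbourhoods.
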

\begin{proof}
The proof is similar to the proof of (\ref{zart}), and it is left to the
reader.
\end{proof}
Since the restriction of $r$ induces a diffeomorphism
$\wsk\setminus \C\to \cals_k\setminus\{0\}$, we get that the
singular locus of $\wsk$ satisfies
$$Sing(\wsk)\subset \C.$$
Moreover, $r$ induces a diffeomorphism between $\partial \cals_k$
(the subject of our interest) and   $\partial \wsk$. Since $\wsk$
can be replaced by its intersection with an arbitrarily  small
tubular neighbourhood of $\C$, the boundary $\partial \wsk$ can be
localized totally near  $\C$. In fact, this is the main advantage
of the space $\cals_k$: in this way, the wanted 3--manifold appears
as a local link, or, after a resolution, as the boundary of a
tubular neighbourhood of a curve configuration. \ix{curve configuration $\C$}

Next, we analyze the local equations of $\wsk$ in the
neighbourhood of any point of $\C$. For this we use the notations
of \ref{summary}. In all the cases, $U_p$ is a complex 3--ball
around the point $p\in\C$ with three complex local coordinates
$(u,v,w)$.

It is convenient to use the following notation. If $H=\{(u,v,w)\in
U_p\,:\, h(u,v,w)=0\}$ is a real analytic variety in $U_p$, then
we denote by $H^+$ the closure of $H\setminus \{uvw=0\}$. This
way we neglect those components of $H$ which are included  in one of the
coordinate planes. Using this notation, the local equations of
$\wsk$ are as follows.

If $p$ is a generic point of a component $C$ of $\C$ with
decoration $(m;n,\nu)$, then
\begin{equation}\label{eq:loc1}
\wsk\cap U_p=\{(u,v,w)\,:\, u^mv^n=|v|^{\nu k}\}^+= \{(u,v,w)\,:\,
u^m=v^{\frac{\nu k}{2}-n}\bar{v}^{\frac{\nu k}{2}}\}\end{equation}
with $m,\nu>0$.

If $p$ is an intersection (singular) point of $\C$
of type 1 (i.e. if the corresponding edge has decoration 1), then
\begin{equation}\label{eq:loc2}
\wsk\cap U_p=\{(u,v,w)\,:\, u^mv^nw^l=|v|^{\nu k}|w|^{\lambda
k}\}^+\end{equation}
with $m,\nu,\lambda>0$.

 Finally, if $p$ is an intersection (singular) point of $\C$ of
type 2, then
\begin{equation}\label{eq:loc3}
\wsk\cap U_p=\{(u,v,w)\,:\, u^mv^{m'}w^n=|w|^{\nu
k}\}^+\end{equation} with $m,m',\nu>0$.

\section{\ Local complex algebraic models for the points of $\wsk$}\labelpar{ss:BIR}
\setcounter{equation}{0}

Notice that for $k\gg 0$ and for $p$ as in
(\ref{eq:loc1})--(\ref{eq:loc2})--(\ref{eq:loc3}), $\wsk\cap U_p$
is a \textit{real} algebraic variety. We will show that any such
germ is homeomorphic with the germ of a certain  \textit{complex}
algebraic hypersurface. In these computations we will assume that
$k/2$ is a multiple of all the integers appearing in the decorations
of $\gc$. More precisely: whenever in the next discussion a fraction $k/l$
appears for some $l$, then we  assume that $k/l$ is, in fact, an
even integer.

In the next paragraphs $U$ will denote a local neighbourhood of
the origin in $\bfc^3$.

\begin{bekezdes}\labelpar{lh:1} Assume that
{\bf $p$ is a generic point of $\C$} as in (\ref{eq:loc1}).
Consider the map
\begin{equation*} 
\psi_p:\{(x,y,z)\in U\,:\, x^m=y^n\}\, \longrightarrow \,
\{(u,v,w)\in U_p\,:\, u^mv^n=|v|^{\nu k}\}^+
\end{equation*}
given by the correspondences
\begin{equation}\label{EQ:psi1}
\left\{\begin{array}{lll}
u & = & x^{-1}|y|^{\nu k/m} \\
v & = & y\\ w & = & z
\end{array}\right. \hspace{1cm}
\left\{\begin{array}{lll}
x & = & u^{-1}|v|^{\nu k/m} \\
y & = & v\\ z & = & w.
\end{array}\right.
\end{equation}
Then $\psi_p$ is  regular real algebraic (i.e. it extends over
$x=0$ too), it is birational and a homeomorphism. Moreover, it is a
partial normalization of $\wsk\cap U_p$, i.e. the coordinates
$x,y,z$ of $\{x^m=y^n\}\cap U$ are integral over the ring of regular
functions of $\wsk\cap U_p$. Indeed, birationality follows from
the fact that the second set of equations provides  the inverse of
the first one, and regularity follows from a limit computation, or by
rewriting the first equation into $u=x^{-1}|x|^{\nu k/n}$. This
formula also shows that  $\psi_p$ is  bijective and  a
homeomorphism. Moreover, since $x^m=v^n$, $x$ is integral over the
ring of regular functions of $\wsk\cap U_p$ (a similar statement for
$y$ and $z$ is trivial).

In particular, the normalizations of the source and of the target of
$\psi_p$ canonically coincide.
\end{bekezdes}

\begin{bekezdes}\labelpar{lh:2} Assume that
{\bf $p$ is a singular  point of $\C$ of type 1} as in
(\ref{eq:loc2}). Consider the map
\begin{equation*} 
\psi_p:\{(\alpha,\beta,\gamma)\in U\,:\,
\alpha^m=\beta^n\gamma^l\}\, \longrightarrow \, \{(u,v,w)\in
U_p\,:\, u^mv^nw^l=|v|^{\nu k}|w|^{\lambda k}\}^+
\end{equation*}
given by
\begin{equation}\label{EQ:psi2}
\left\{\begin{array}{lll}
u & = & \alpha^{-1}|\beta|^{\nu k/m}|\gamma|^{\lambda k/m} \\
v & = & \beta\\ w & = & \gamma
\end{array}\right. \hspace{1cm}
\left\{\begin{array}{lll}
\alpha & = & u^{-1}|v|^{\nu k/m}|w|^{\lambda k/m} \\
\beta & = & v\\ \gamma & = & w.
\end{array}\right.
\end{equation}
Then, again,   $\psi_p$ is  regular real algebraic, birational, and
additionally, it is a homeomorphism. Moreover, it is a partial normalization of
$\wsk\cap U_p$, i.e. the coordinates $\alpha,\beta,\gamma$ are
integral over the ring of regular functions of $\wsk\cap U_p$. Indeed, the
regularity follows from
$$u=\alpha^{m-1}\overline{\alpha}^m |\beta|^{\nu k/m-2n}|\gamma|^{\lambda
k/m-2l},
$$
where $k\gg 0$ and $m>0$. Moreover, $\alpha$ is integral over the
ring of $\wsk\cap U_p$ since $\alpha^m=v^nw^l$.
\ix{singularities!Hirzebruch--Jung}

Hence again, the normalizations of the source and the target of
$\psi_p$ canonically coincide.
\end{bekezdes}

\begin{bekezdes}\labelpar{lh:3} Assume that
{\bf $p$ is a singular  point of $\C$ of type 2} as in
(\ref{eq:loc3}). In this case we can prove considerably less (from the
analytic point of view). We consider  the map
\begin{equation*} 
\psi_p:\{(\alpha,\beta,\gamma)\in U\,:\,
\alpha^n=\beta^m\gamma^{m'}\}\, \longrightarrow \, \{(u,v,w)\in
U_p\,:\, u^mv^{m'}w^n=|w|^{\nu k}\}^+
\end{equation*}
given by
\begin{equation}\label{EQ:psi3}
\left\{\begin{array}{lll}
u & = & \beta^{-1}|\beta|^{\nu k/n} \\
v & = & \gamma^{-1}|\gamma|^{\nu k/n} \\
w & = & \alpha.
\end{array}\right. \hspace{1cm}
\end{equation}
It is  regular real algebraic and a homeomorphism, but it is {\em
not} birational.

\bekezdes  Notice also that the above maps,
in all three cases,  preserve the coordinate axes.
\end{bekezdes}
\ix{singularities!Hirzebruch--Jung}

\section{\ The normalization $\cals_k^{norm}$ of $\wsk$}\labelpar{ss:NORM}
\setcounter{equation}{0}

\begin{bekezdes}\labelpar{NORM1}
Let $n_\cals:\nsk\to \wsk$ be the normalization of $\wsk$; for its existence, see
\cite{BCR}.
Since
the normalization is compatible with restrictions on smaller open
sets, we get the globally defined $\nsk$ whose restrictions above
an open set of type $\wsk\cap U_p$ are the normalization of that
$\wsk\cap U_p$. In particular, the local behaviour of the
normalization $\nsk$ over the different open neighbourhoods
$\wsk\cap U_p$ can be tested in the charts considered in the
previous section.

In the first case, if $p$ {\bf is a generic point} of $\C$, and $\psi_p$ is the
`partial normalization' from  \ref{lh:1}, then it  induces an
isomorphism of normalizations:
\begin{equation*} 
\psi_p^{norm}:\{(x,y,z)\in U\,:\, x^m=y^n\}^{norm}\, \longrightarrow \,
\{(u,v,w)\in U_p\,:\, u^mv^n=|v|^{\nu k}\}^{+,norm}.
\end{equation*}
Since the left hand side is smooth, we get that $\nsk$ is smooth
over the regular points of $\C$, hence, after normalization, only
finitely many singular points  survive in $\cals_k^{norm}$, and they are situated
above the double points of $\C$.

If  $p$ {\bf is a  double  point of $\C$ of type 1}, then  $\psi_p$ from
\ref{lh:2} induces again an isomorphism at the level of normalizations:
\begin{equation*} 
\psi_p^{norm}:\{(\alpha,\beta,\gamma)\,:\,
\alpha^m=\beta^n\gamma^l\}^{norm}\, \longrightarrow \,
\{(u,v,w)\,:\, u^mv^nw^l=|v|^{\nu k}|w|^{\lambda k}\}^{+,norm}.
\end{equation*}
Hence,  the singular points in $\nsk$, situated above the double
points of $\C$ of type 1,  are equivalent with complex analytic
singularities of Hirzebruch-Jung type. Recall that these
singularities  are determined completely combinatorially (e.g. by
the integers $m,n,l$ above), and by the above
chart,  this combinatorial data can  also be recovered from
$\gc$.\ix{singularities!Hirzebruch--Jung}
\end{bekezdes}
\begin{bekezdes}\labelpar{NORM2}
We emphasize that the  two types of charts above in  \ref{NORM1}
are compatible. By this we mean the following: consider a double
point $p$ of $\C$ of type 1, and  a neighbourhood $U_p$ as above.
Then $\C\cap U_p$ is the union of the $v$ and $w$ axis. Let $q$ be
a generic point of $\C\cap U_p$ and consider a sufficiently small
local neighbourhood $U_q\subset U_p$ (where we denote this
inclusion by $j$), and consider also the chart $\psi_p$ over $\wsk\cap U_p$ as in
\ref{lh:1}, respectively $\psi_q$ over $\wsk\cap U_q$ as in
\ref{lh:2}. Then $\psi_p^{-1}\circ j\circ \psi_q$ is a complex
analytic isomorphism onto its image which at the level of
normalization induces an isomorphism of complex analytic smooth
germs.
Indeed, if $q$ is a generic point of the $w$--axis, with non--zero
$w$--coordinate,  and the inclusion
\begin{equation*} 
 \{(u',v',w')\in U_q\,:\, (u')^m(v')^n=|v'|^{\nu k}\}^{+}
\stackrel{j}{\longrightarrow} \, \{(u,v,w)\,:\, u^mv^nw^l=|v|^{\nu
k}|w|^{\lambda k}\}^{+}
\end{equation*}
is given by $u=u'(w')^{-l/m}|w'|^{\lambda k/m}$, $v=v'$ and
$w=w'$, then  $\psi_p^{-1}\circ j\circ \psi_q$ is given by
$\alpha=xz^{l/m}$, $\beta=y$ and $\gamma=z$. Then the
normalizations tautologically coincide. E.g., assume gcd$(m,n)=1$
and  take the free variables $(t,\gamma)$ normalizing
$\{\alpha^m=\beta^n\gamma^l\}$ by $\alpha=t^n\gamma^{l/m}$,
$\beta=t^m$ and $\gamma=\gamma$. Similarly, consider the free
variables $(s,z)$ normalizing $\{x^m=y^n\}$ by $x=s^n$, $y=s^m$
and $z=z$. Then $(\psi_p^{-1}\circ j\circ \psi_q)^{norm}$ is $t=s$
and $\gamma=z$.

In particular, {\em the two complex charts $\psi_p^{norm}$ and
$\psi_q^{norm}$ of \ref{NORM1} induce the same orientation on
their images, they identify the inverse image  of $\C$ by the same
orientation and induce on a normal slice of $\C$ the same
orientation.}  Note that these are the key gluing--data for a plumbing
construction.
\end{bekezdes}

\begin{bekezdes}\labelpar{eq:psi3} On the other hand, {\bf if $p$ is a singular point of
$\C$ of type 2}, then $\psi_p$ from \ref{lh:3} {\it does  not} induce an
analytic isomorphism, since $\psi_p$ itself is not birational. In this
case, \ref{lh:3} implies  that at the level of normalizations
the induced map
\begin{equation*}
\psi_p^{norm}:\{(\alpha,\beta,\gamma)\,:\,
\alpha^n=\beta^m\gamma^{m'}\}^{norm}\, \longrightarrow \,
\{(u,v,w)\,:\, u^mv^{m'}w^n=|w|^{\nu k}\}^{+,norm}
\end{equation*}
is regular and a  homeomorphism. Nevertheless, one  can prove
slightly more:
\end{bekezdes}
\begin{lemma}\labelpar{Lem:CHART}
$\psi_p^{norm}$  induces a diffeomorphism
over $U_p\setminus \{0\}$.
\end{lemma}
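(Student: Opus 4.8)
The map $\psi_p^{norm}$ is already known (from the discussion in (\ref{eq:psi3}), which rests on (\ref{lh:3})) to be regular and a homeomorphism onto its image; the only thing left to show is that it is a \emph{diffeomorphism} away from the origin, i.e. that it is a local $C^\infty$-isomorphism at every point with $w\neq 0$ (equivalently $\alpha\neq 0$) upstairs. So the plan is to work in the explicit parametrisations used in (\ref{NORM1})--(\ref{NORM2}) and verify that the Jacobian of the transition is invertible off the slice $\{\alpha=0\}$.

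First I would normalise the two sides explicitly. Write $d=\gcd(n,m,m')$ (possibly after the reductions of (\ref{lh:3}), so that the relevant quotients $\nu k/n$ etc. are even integers); the complex surface $\{\alpha^n=\beta^m\gamma^{m'}\}$ is a toric surface whose normalisation is obtained by the standard Hirzebruch--Jung parametrisation in a couple of monomial charts, with local coordinates that are monomials in fractional powers adapted to the cone generated by $(n,0)$, $(0,m)$, etc. In each such chart the composite $n_\cals\circ\psi_p^{norm}$ becomes an explicit map into $U_p^+$ given by formulas of the shape $u=\beta^{-1}|\beta|^{\nu k/n}$, $v=\gamma^{-1}|\gamma|^{\nu k/n}$, $w=\alpha$, \emph{pulled back} along the monomial parametrisation. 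The key computational point is exactly the one already exploited in (\ref{lh:1})--(\ref{lh:2}): an expression such as $\beta^{-1}|\beta|^{\nu k/n}$ can be rewritten as $\bar\beta^{\,\nu k/(2n)-1}\beta^{\nu k/(2n)}$ (using $|\beta|^2=\beta\bar\beta$ and the divisibility of $k$), which is a genuine polynomial in $(\beta,\bar\beta)$, hence $C^\infty$, and whose differential, viewed as an $\R$-linear map, is invertible precisely when $\beta\neq 0$ because it is (up to the nonvanishing scalar $\nu k/(2n)$ in one entry) essentially the product of a holomorphic and an antiholomorphic monomial of nonzero exponents. One then checks the Jacobian of the whole composite is a nonzero product of such factors on the locus where the corresponding coordinate is nonzero; combining the two monomial charts of the normalisation covers everything except $\{\alpha=0\}$, i.e. everything over $U_p\setminus\{0\}$.

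The step I expect to be the main obstacle is not any single Jacobian computation but the \emph{bookkeeping of the toric charts}: one must choose the Hirzebruch--Jung normalisation coordinates of $\{\alpha^n=\beta^m\gamma^{m'}\}$ so that in each chart the pulled-back coordinates $u,v,w$ are simultaneously smooth and the non-degeneracy loci patch together to give precisely $U_p\setminus\{0\}$. In particular one has to be careful at the strata $\{\beta=0,\gamma\neq 0\}$ and $\{\gamma=0,\beta\neq 0\}$ (where one of the two ``$|\cdot|$'' factors degenerates but the point upstairs is still nonsingular after normalisation, cf.\ the treatment of generic points in (\ref{lh:1})), and to see that the only remaining bad locus is $\{\alpha=0\}$, which maps into the coordinate axes and hence into $\C$, disjoint from $U_p\setminus\{0\}$ in the relevant sense. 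Once the charts are fixed, the rest is the routine verification sketched above, using only the rewriting trick from (\ref{lh:1})--(\ref{lh:2}) and the fact that $k$ is taken divisible enough that all the fractional exponents occurring are even integers.
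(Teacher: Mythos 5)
There is a genuine gap, and it sits exactly where the lemma is non-trivial. Your plan is to parametrise the normalisation of $\{\alpha^n=\beta^m\gamma^{m'}\}$ by monomial charts, push forward by $\psi_p$, and check that the Jacobian of the resulting map into $U_p\subset\bfc^3$ is non-degenerate. Two problems. First, that Jacobian degenerates precisely on $\{\beta\gamma=0\}$ (in the reduced parametrisation $\alpha=t^m\gamma^a$, $\beta=t^n\gamma^b$, all three coordinates $u,v,w$ have vanishing $t$-derivative at $t=0$ as soon as $m\geq 2$), and the image of $\{\beta\gamma=0\}\setminus\{0\}$ under $\psi_p$ is $\C\cap U_p\setminus\{0\}$, the two punctured coordinate axes. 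These axes lie inside $U_p\setminus\{0\}$ and are exactly the locus the lemma must cover; your closing remark that the remaining bad locus ``maps into the coordinate axes and hence into $\C$, disjoint from $U_p\setminus\{0\}$'' is not correct -- only the origin is excluded. Second, and more fundamentally, even where the composite $\psi_p\circ(\text{normalisation})$ \emph{is} an immersion into $\bfc^3$, this does not prove the lemma: the statement concerns the map between the two \emph{normalisations}, and over $\C$ the image $\wsk\cap U_p$ is a singular real surface whose own normalisation map $n_\cals$ is not an immersion (the transversal model is $\{x^m=y^n\}$, normalised by $s\mapsto(s^{n},s^{m})$, non-immersive at $s=0$). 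So immersivity of one parametrisation cannot be used to identify the two smooth structures over the axes; you never actually write down the transition map whose smoothness is in question there.

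What is missing is a system of \emph{smooth coordinates on the normalisation of the target} over the punctured axes. This is what the paper supplies: an auxiliary chart $\varphi_{p,v}$ (and its symmetric partner over the $u$--axis), defined over $\{v\neq 0\}$, which -- unlike $\psi_p$ -- \emph{is} birational and a partial normalisation of $\wsk\cap U_p$, hence induces an honest isomorphism onto $n_\cals^{-1}(\wsk\cap U_p\setminus\{v=0\})$. One then computes the transition $\varphi_{p,v}^{-1}\circ\psi_p$ explicitly and normalises \emph{both} sides by monomial charts, obtaining $s=t\,|\gamma|^{a\nu k/mn}$, $z=\gamma|\gamma|^{-\nu k/n}$, which is visibly a diffeomorphism including at $t=0$, i.e.\ over the axis. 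Your rewriting trick $\beta^{-1}|\beta|^{\nu k/n}=\beta^{\nu k/2n-1}\bar\beta^{\,\nu k/2n}$ and the divisibility assumptions on $k$ are the right tools for regularity, but without the intermediate birational chart the argument cannot reach the points of $\C\setminus\{0\}$, which carry all the content of the statement (away from $\C$ the normalisations are tautological and there is nothing to prove).
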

\begin{proof}
Let $p$ be a double point of $\C$ of type 2 as in \ref{lh:3}.
Then $\C\cap U_p$ is the union of the $u$ and $v$ axes. Let $q$ be
a generic point on the $v$ axis,
--- the other case is completely symmetric. Then $q$ is in the image of the
following  map
\begin{eqnarray*} 
\lefteqn{\varphi_{p,v}:\{(x,y,z)\in U'\setminus \{z=0\}\,:\,
x^n=y^mz^{m'}\}\, \longrightarrow } \hspace{1cm}\\
& & \hspace{1cm} \{(u,v,w)\in U_p\setminus \{v=0\}\,:\,
u^mv^{m'}w^n=|w|^{\nu k}\}^+
\end{eqnarray*}
given by the correspondences
\begin{equation}\label{EQ:psi4}
\left\{\begin{array}{lll}
u & = & y^{-1}|x|^{\nu k/m} \\
v & = & z^{-1}\\ w & = & x
\end{array}\right. \hspace{1cm}
\left\{\begin{array}{lll}
y & = & u^{-1}|w|^{\nu k/m} \\
z & = & v^{-1}\\ x & = & w.
\end{array}\right.
\end{equation}
Then $\varphi_{p,v}$ is  regular on $U_p\setminus \{z=0\}$, since
$$u=|x|^{\nu k/m-2n} y^{m-1}\overline{y}^m |z|^{2m'},$$
it is birational (its inverse is given by the second set of
equations of (\ref{EQ:psi4})), and it is a partial normalization,
since $y^m=w^nv^{m'}$. Therefore,
\begin{equation*} 
\varphi_{p,v}^{norm}:\{(x,y,z)\in U'\setminus \{z=0\}\,:\,
x^n=y^mz^{m'}\}^{norm}\, \longrightarrow n_\cals^{-1}(\wsk\cap
U_p\setminus \{v=0\})
\end{equation*}
is an isomorphism. Using this isomorphism, the restriction of
$\psi_p^{norm} $ from \ref{eq:psi3},
\begin{equation*} 
\psi_p^{norm}:\{(\alpha,\beta,\gamma)\in U\setminus
\{\gamma=0\}\,:\, \alpha^n=\beta^m\gamma^{m'}\}^{norm}\,
\longrightarrow n_\cals^{-1}(\wsk\cap U_p\setminus \{v=0\})
\end{equation*}
can be understood explicitly. Indeed, the map
\begin{eqnarray} \label{eq:psi10}
\lefteqn{\varphi_{p,v}^{-1}\circ\psi_p:\{(\alpha,\beta,\gamma)\in
U\setminus \{\gamma=0\}\,:\, \alpha^n=\beta^m\gamma^{m'}\}\,
\longrightarrow} \hspace{2cm} \\
& & \hspace{1cm} \{(x,y,z)\in U'\setminus \{z=0\}\,:\,
x^n=y^mz^{m'}\}\nonumber \end{eqnarray} is given by
\begin{equation}\label{EQ:psi5a}
\left\{\begin{array}{lll}
x & = & \alpha \\
y & = & \beta |\gamma|^{\nu km'/mn} \\
z & = & \gamma |\gamma|^{-\nu k/n}.
\end{array}\right.
\end{equation}
We claim that this induces a diffeomorphism at the level of
normalization. In order to verify this, we make two reductions.
First, by a cyclic covering argument, we may assume that $m'=1$.
Second, we will also assume that gcd$(m,n)=1$ (otherwise the
normalization will have gcd$(m,n)$ components, and the
normalization maps below must be modified slightly; the details
are left to the reader). We fix two integers $a$ and $b$ such that
$an-bm=1$. Then the left hand side of (\ref{eq:psi10}) is
normalized  by $(t,\gamma)\in (\bfc^2,0)\setminus\{\gamma=0\}$,
$\alpha=t^m\gamma^a$, $\beta=t^n\gamma^b$, $\gamma=\gamma$; while
the right hand side is normalized by $(s,z)\in (\bfc^2,0)\setminus
\{z=0\}$, $x=s^mz^a$, $y=s^nz^b$ and $z=z$. Hence, at the
normalization level
\begin{equation*}
(\varphi_{p,v}^{-1}\circ\psi_p)^{norm}:(\bfc^2,0)\setminus \{\gamma=0\}\to
(\bfc^2,0)\setminus \{z=0\}
\end{equation*}
is given by the diffeomorphism
\begin{equation}\label{EQ:psi5}
\left\{\begin{array}{lll}
s & = & t |\gamma|^{a\nu k/mn} \\
z & = & \gamma |\gamma|^{-\nu k/n}.
\end{array}\right.
\end{equation}
\end{proof}

\begin{bekezdes}\labelpar{NORM3} Consider a singular point
in $\nsk$  above a double point of $\C$ of type 2, say $p$.
By the results of \ref{eq:psi3}, the type of this singularity can again
be identified with a (complex analytic) Hirzebruch--Jung
singularity, identified via the homeomorphism $\psi_p^{norm}$. In
particular, corresponding to that point, in the plumbing graph we
have to insert an appropriate  Hirzebruch--Jung string. In order to do this
we need to clarify orientation--compatibilities at the
intersection points of this string with the inverse image  of
$\C$. More precisely, we have to clarify the compatibility of the
chart $\psi_p$ with `nearby' charts of type \ref{lh:1}. \ix{graph!resolution!string}

Let $p$ be as in the previous paragraph, fix  one of its
neighbourhoods $\wsk\cap U_p$  as in \ref{eq:psi3}, and a generic
point  $q$ on the $v$--axis with small neighbourhood $\wsk\cap
U_q$ and chart $\psi_q:\{(x')^n=(y')^m\}\to
\{(u')^m(w')^n=|w'|^{\nu k}\}=\wsk\cap U_q$ given by
$u'=y'^{-1}|x'|^{\nu k/m}$, $w'=x'$ and $v'=z'$, cf. subsection \ref{lh:1}.
The inclusion $j:\wsk\cap U_q\longrightarrow\wsk\cap U_p$ is given
by the equations $u=u'(v')^{-m'/m}$, $v=v'$ and $w=w'$. Hence
$\varphi_{p,v}^{-1}\circ j\circ \psi_q$ is given by $x=x'$,
$y=y'(z')^{m'/m}$ and $z=(z')^{-1}$. This combined with
(\ref{EQ:psi5a}), the map $\psi_p^{-1}\circ j\circ
\psi_q:\{x'^n=y'^m,\, z'\not=0\}\to
\{\alpha^n=\beta^m\gamma^{m'},\, \gamma\not=0\}$ is given by (the inverse of)
\begin{equation}\label{EQ:psi10}
\left\{\begin{array}{lll}
x' & = & \alpha \\
y' & = & \beta \gamma^{m'/m} \\
z' & = & \gamma^{-1} |\gamma|^{\nu k/n}.
\end{array}\right.
\end{equation}
For simplicity assume again that gcd$(m,n)=1$ (the interested
reader can reproduce the general case). We take integers $a$ and
$b$ with $an-bm=m'$ as above. Then the free coordinates
$(t,\gamma)$ normalize $\{\alpha^n=\beta^m\gamma^{m'}\}$ by
$\alpha=t^m\gamma^a$, $\beta=t^n\gamma^b$ and $\gamma=\gamma$,
while the free coordinates $(s,z')$ normalize $\{x'^n=y'^m\}$ by
$x'=s^m$, $y'=s^n$ and $z'=z'$. In particular, the isomorphism
$(\varphi_{p,v}^{-1}\circ j\circ \psi_q)^{norm}$ is given by the
correspondence $(\bfc^2,0)\setminus \{\gamma=0\}\leftrightarrow
(\bfc^2,0)\setminus \{z'=0\}$:
\begin{equation}\label{EQ:psi11}
\left\{\begin{array}{lll}
s & = & t \gamma^{a/m} \\
z' & = & \gamma^{-1} |\gamma|^{\nu k/n}.
\end{array}\right.
\end{equation}
Notice that the inverse image  of $\C$ (in the two charts) is
given by $t=0$, respectively by $s=0$.
\end{bekezdes}
Now, consider the natural orientations of $n_\cals^{-1}(\wsk\cap
U_p)$ provided via $\psi_p$ by the complex structure of the source
of $\psi_p$, and also the orientation of  $n_\cals^{-1}(\C\cap
U_p)$ via the same procedure. In a similar way, consider the
orientations of  $n_\cals^{-1}(\wsk\cap U_q)$ and
$n_\cals^{-1}(\C\cap U_q)$ induced by $\psi_q$ and the complex
structure of its source. Then (\ref{EQ:psi11}) shows  the
following fact:
\begin{lemma}\labelpar{lemma:type2}
$(\varphi_{p,v}^{-1}\circ j\circ \psi_q)^{norm}$ is a
diffeomorphism which reverses the orientations. Moreover, its
restriction on the inverse images  of $\C$ reverses the
orientation of these Riemann surfaces as well. On the other hand,
the orientation of the transversal slices to the strict transforms
of $\C$ are preserved.
\end{lemma}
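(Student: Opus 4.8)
\textbf{Proof proposal for Lemma \ref{lemma:type2}.}

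The plan is to read off the claim directly from the explicit coordinate formula (\ref{EQ:psi11}), exactly as the computation in (\ref{NORM3}) set it up; no new geometric input is needed, only a careful bookkeeping of orientations. First I would fix the conventions: on the source of $\psi_q$, after normalization, we use the complex coordinates $(s,z')$ (so that $x'=s^m$, $y'=s^n$, $z'=z'$), and on the source of $\psi_p$, after normalization, the complex coordinates $(t,\gamma)$ (so that $\alpha=t^m\gamma^a$, $\beta=t^n\gamma^b$, $\gamma=\gamma$), each carrying the natural orientation coming from the underlying complex structure. Under these identifications $(\varphi_{p,v}^{-1}\circ j\circ \psi_q)^{norm}$ is the map $(\bfc^2,0)\setminus\{\gamma=0\}\to(\bfc^2,0)\setminus\{z'=0\}$ given by (\ref{EQ:psi11}), namely $s=t\gamma^{a/m}$ and $z'=\gamma^{-1}|\gamma|^{\nu k/n}$. (Here $a/m$ is to be read as a genuine exponent after the convenient divisibility assumptions on $k$; when $\gcd(m,n)>1$ one runs the same argument componentwise, which I would relegate to the reader as in (\ref{NORM3}).)

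Next I would compute the sign of the Jacobian determinant of this real map at a point with $\gamma\not=0$. The $s$-component $s=t\gamma^{a/m}$ is holomorphic in $(t,\gamma)$, hence contributes positively; the only non-holomorphic factor is $z'=\gamma^{-1}|\gamma|^{\nu k/n}=\gamma\cdot|\gamma|^{\nu k/n-2}$, i.e. $z'$ is a (positive) real-analytic radial reparametrisation composed with $\gamma\mapsto \bar\gamma^{-1}\cdot(\text{positive})$ — more transparently, $z'=\bar\gamma^{\,-1}|\gamma|^{\nu k/n}$ is anti-holomorphic in $\gamma$ up to a positive radial factor. A map of the form $\gamma\mapsto \bar\gamma\cdot\rho(|\gamma|)$ with $\rho>0$ reverses orientation on $\bfc=\R^2$ (its Jacobian determinant is $-\rho(\rho+|\gamma|\rho')<0$), and inversion $\gamma\mapsto\gamma^{-1}$ is orientation preserving on $\bfc^\ast$; composing, $\gamma\mapsto z'$ is orientation reversing. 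Since the $t$-block is orientation preserving and triangular with respect to the $(t,\gamma)\to(s,z')$ splitting (the $\partial s/\partial\gamma$ entry does not affect the determinant sign once $\partial s/\partial t=\gamma^{a/m}\not=0$ is a nonzero complex number, hence positive $2\times2$ contribution), the total Jacobian determinant is negative. This proves the first assertion: $(\varphi_{p,v}^{-1}\circ j\circ \psi_q)^{norm}$ is an orientation-reversing diffeomorphism.

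For the refined statements I would restrict (\ref{EQ:psi11}) to the two loci involved. The strict transform of $\C$ in the $q$-chart is $\{s=0\}$ and in the $p$-chart is $\{t=0\}$, and (\ref{EQ:psi11}) restricted to these loci is $z'=\gamma^{-1}|\gamma|^{\nu k/n}$, i.e. precisely the orientation-reversing map $\gamma\mapsto z'$ analysed above; hence the diffeomorphism reverses the orientation of these Riemann surfaces. For the transversal slices, I would use coordinates transverse to $\C$, i.e. $s$ on the $q$-side (with $\gamma$, hence $z'$, fixed) and $t$ on the $p$-side (with $\gamma$ fixed): from $s=t\gamma^{a/m}$ this is multiplication by the fixed nonzero complex number $\gamma^{a/m}$, which preserves the orientation of the normal slice. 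This gives the last two assertions. The only real subtlety — and the step I expect to need the most care — is making the exponent $a/m$ and the radial exponent $\nu k/n$ literally legitimate (integrality / smoothness at $\gamma=0$), which is exactly what the divisibility hypotheses on $k$ imposed at the start of (\ref{ss:BIR}) guarantee, together with the fact that all the maps were already shown in (\ref{lh:1})--(\ref{lh:3}) and (\ref{Lem:CHART}) to extend regularly; so the orientation computation above, carried out on $\gamma\not=0$, suffices.
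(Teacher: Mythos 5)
Your proposal is correct and follows essentially the same route as the paper: the paper derives formula (\ref{EQ:psi11}) in (\ref{NORM3}) and then simply asserts that it ``shows'' the lemma, so your explicit bookkeeping (the anti-holomorphic factor $\gamma\mapsto\gamma^{-1}|\gamma|^{\nu k/n}$ reversing orientation, the block-triangular Jacobian, multiplication by $\gamma^{a/m}$ on the transversal slice) is exactly the omitted verification. Nothing is missing.
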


\section{\ The `resolution'\, $\overline{\cals}_k$ of $\wsk$}\labelpar{ss:skres}
\setcounter{equation}{0}

The singularities of $\nsk$ are situated  above the double points of
$\C$. Above a double point of type 1 they are isomorphic with
complex analytic Hirzebruch--Jung singularities; their
resolution follows the resolution procedure of these germs, see
\ref{NORM2}.

We did not determine here  the resolution and the real analytic type of the
singularity situating  above a double point of type 2.
Nevertheless, these singularities are also identified by
Lemma  \ref{lemma:type2}, up to an
orientation reversing homeomorphism, with complex analytic
Hirzebruch--Jung singularities. This
is enough to determine the topology of $\nsk$ and
to describe the plumbing representation of its boundary. This will
be done in the next section.

Although, for the purpose of the present work the above topological
representation is sufficient, if we would like to handle real analytic
invariants read from the structure sheaf of the resolution
(like, say, the geometric genus is read from the resolution of a
normal surface singularity), then an explicit description of this
variety would be more than necessary. This type of analytic
questions are beyond the aims of the present work, however we formulate
this problem  as an important goal for further research.

\begin{bekezdes}\labelpar{OP1} {\bf Problem.} Find an explicit
description of the real analytic/algebraic resolution of the singularity
$$\{(u,v,w)\in (\bfc^3,0)\, :\, u^mv^{m'}w^n=|w|^{k}\}^+,$$
where $m,\, m'>0$ and $k$ is a sufficiently large (even) integer.
\end{bekezdes}

\section{\ The plumbing graph. The end of the proof}\labelpar{ss:PLUM}
\setcounter{equation}{0}

Once the geometry of the tubular neighbourhood of the divisor $\C$
is clarified, it is standard to describe the plumbing
representation of the boundary of this neighbourhood. We  follow
the strategy of \cite{eredeti}, with a modification above the
double points of type 2.

\begin{bekezdes}\labelpar{bek:pl1} Consider a component $C$ of
$\C$ with decoration $(m;n,\nu)$. By \ref{lh:1}, the local
equation of $\wsk$ in a neighbourhood of a generic point of $C$ is
$x^m=y^n$, hence $n_\cals^{-1}(C)\to C$ is a regular  covering of
degree gcd$(m,n)$ over the regular part of $C$. Let $C^{norm}$ be
the normalization of $C$, i.e. the curve obtained by separating
the self--intersection points of $C$ (which are codified by loops
of $\gc$ attached to the vertex $v_C$ which corresponds to $C$).
Then $q:n_\cals^{-1}(C)\to C^{norm}$ is a cyclic branched covering
whose branch points $B$ are situated  above the double points of $\C$. They
correspond bijectively to the legs of the  star of $v_C$, cf.
\ref{s1}. Notice that if $v_C\in\calv^1(\gc)$ (see
\ref{stricttra} for notation) then $m=1$, hence the covering is
trivial. Otherwise $C^{norm}$ is rational by Proposition \ref{gkl}.

Fix a branch point $b\in B$ whose neighbourhood has a local equation
of type $z^c=x^ay^b$. Then $q^{-1}(b)$ has exactly gcd$(a,b,c)$
points; this number automatically divides gcd$(m,n)$ for any choice of $b$. The
number $\n_{v_C}$ of connected components of $n_\cals^{-1}C$ is
the order of coker$(\pi_1(C^{norm}\setminus B)\to \Z_{gcd(m,n)})$,
where a small loop around $b$ is sent to the class of
gcd$(a,b,c)$. Hence the formula (\ref{NW}) for $\n_{v_C}$ follows,
and (\ref{NG}) follows too by an Euler--characteristic argument.

In the local charts of \ref{lh:1},  $g=v^\nu=y^\nu$. Since, by
normalization, $y=t^{m/gcd(m,n)}$, and $t=0$ is the local equation
of the strict transform of  $C$, the vanishing order of $y^\nu$
along the strict transform of $C$ (i.e. the multiplicity of the
open book decomposition of $\arg(g)$)  is $m\nu/\mbox{gcd}(m,n)$,
proving (\ref{NM}). This ends the proof of Step 1  of the Main
Algorithm and of Theorem \ref{MTh.1}.
\end{bekezdes}
\begin{bekezdes}\labelpar{bek:pl2}
Next, one has to insert the  Hirzebruch--Jung strings \ix{graph!resolution!string}
corresponding to the singularities of $\nsk$. Type 1 singular
points behave similarly as those appearing in the case of cyclic
coverings \cite{cyclic}, or in the case of  Iomdin series
\cite{eredeti} (or anywhere in complex analytic geometry). In
particular, the orientation compatibilities of
\ref{NORM2} imply  that these strings  should be glued in with all edges  decorated
$+$, as usual for dual graphs of complex analytic curve
configurations. \ix{curve configuration $\C$}
One the other hand, the way how the Hirzebruch--Jung strings of
type 2 should be  inserted is dictated by Lemma \ref{lemma:type2}.
Assume that the corresponding singularity is above the intersection point $C_1\cap
C_2$ of two components of $\C$. Then, in the plumbing
representation  we have to connect their strict transforms
(denoted by the same symbols) by a string $E_1,
\ldots,E_s$. By \ref{lemma:type2}, when $C_1$ is glued to the
string, its orientation is reversed. In order to keep the
ambient orientation, we have to change the orientation of its
transversal slice too. But this is identified with the first curve
$E_1$ of the string. If the orientation of $E_1$ is changed, then,
similarly as above, we have to change the orientation of its
transversal slice, which is identified with $E_2$. By  iteration, we see, that
all  decorations of all   edges of the string, inserted by Step 2, Case 2 in \ref{s2},
should be $\circleddash$.

The multiplicity decorations are given by the vanishing orders of
$g$, and are computed by the usual procedures, see \ref{1.6}.
This proves Step 2 of the Main Algorithm.
Finally, Step 3 does not require any further explanation, cf.
(\ref{eq:2.2.1}).  This ends the proof of Theorem \ref{MTh.1}.\\

Theorems \ref{g1} and \ref{g2} are particular cases, which are
obtained by forgetting some information from the graph of
$\partial F$.\ix{Milnor!fiber!boundary}
\end{bekezdes}

\section{\ The `extended' monodromy action}\label{ss:EXT}
\setcounter{equation}{0}

Usually, when one has a plumbing graph $G$, besides the 3--manifold constructed by
gluing $S^1$--bundles, one can consider the plumbed 4--manifold constructed by gluing
disc--bundles too. This is the case here as well; in fact, as it is clear from the
constructions of this section,  the plumbed 4--manifold associated with $G$ is exactly
the manifold $\overline{\cals}_k$.
The point we wish to stress in this section is that there is a
natural monodromy action on the pair
$(\overline{\cals}_k,\partial \overline{\cals}_k)$ such that the induced action on
$\partial \overline{\cals}_k$ coincides with the Milnor monodromy action  of $\partial F$.\ix{Milnor!fiber!boundary}

Indeed, instead of only defining the space $Z_k=\{c=|d|^k\}$, as in \ref{ss:geocon}, one can take the
family of spaces  $Z_k(t):=\{c=|d|^ke^{it}\}$  for all values  $t\in[0,2\pi]$, and repeat
the constructions of the present section. In particular, one can define in a natural way
$\cals_k(t)$, $\widetilde{\cals}_k(t)$ and $\overline{\cals}_k(t)$ for all $t$.
This is a locally trivial bundle over the parameter $t$, hence moving $t$ from
0 to $2\pi$, we get the wished action on the pair $(\overline{\cals}_k,\partial \overline{\cals}_k)$.

The monodromy action on the cohomology long exact sequence of
$(\overline{\cals}_k,\partial \overline{\cals}_k)$ will have important  consequences,
see for example  the proof of Corollary \ref{cor:EIG}.

\begin{proposition}\label{prop:extmon} Consider the above monodromy action on the pair
$(\overline{\cals}_k,\partial \overline{\cals}_k)$. Then the following facts hold:\vspace{2mm}

(a) \ The action on $\partial \overline{\cals}_k$ coincides with
 the Milnor monodromy action on $\partial F$.\vspace{2mm}

 (b) \ At homological level, the generalized 1--eigenspace $H_1(\overline{\cals}_k,\bfc)_1$ equals
 $H_1(\widetilde{\cals}_k,\bfc)$.  In particular, \ix{generalized eigenspace}
\begin{equation}\label{eq:EXT} \begin{array}{l}
{\rm rank}\,\, H_1(\overline{\cals}_k)=2g(G)+c(G),\\
{\rm rank}\,\, H_1(\overline{\cals}_k)_1={\rm rank}\,\, H_1(\widetilde{\cals}_k)=2g(\gc)+c(\gc).
\end{array}
\end{equation}\end{proposition}

\begin{proof}
(a) Similarly as in the proof of Proposition  \ref{lem:sksmooth}, when we compared the spaces
$\Phi^{-1}(Z_k)\cap S_\epsilon$ and $\Phi^{-1}(D_{c_0})\cap S_\epsilon$,
one can identify  the spaces
$\Phi^{-1}(Z_k(t))\cap S_\epsilon $ and $\Phi^{-1}(D_{c_0e^{it}})\cap S_\epsilon$
uniformly for any $t$.
The second family enters as a  building block in $\partial F$ via the decomposition
 from  \ref{lem:sksmooth}, and the above action is exactly the Milnor monodromy action.

(b) Let $\G$ be a plumbing graph and $P(\G)$ the associated plumbed 4--manifold, cf.
\ref{bek:MULT}. Then one has the homotopy equivalences of  $P(\G)$ with the core curve configuration of
the plumbing. On the other hand, the first homology of this curve configuration is  $2g(\G)+c(\G)$.
Hence,  ${\rm rank}\, H_1(P(\G))=2g(\G)+c(\G)$. Therefore, in the present situation,
${\rm rank}\,\, H_1(\overline{\cals}_k)=2g(G)+c(G)$. \ix{curve configuration $\C$}

A similar argument shows that
${\rm rank}\,\, H_1(\widetilde{\cals}_k)=2g(\gc)+c(\gc)$.

Clearly, $H_1(\overline{\cals}_k)=
H_1(\cals_k^{norm})$ too.

Next, we wish to  understand the  effect of the monodromy
on $\widetilde{\cals}_k$ and $\cals_k^{norm}$ induced by the $t$--parameter family.

We will consider a generic point of the exceptional curve of $\widetilde{\cals}_k$.
Note that  via the local equations
$c=f=u^mv^n$ and $d=g=v^\nu$ from \ref{summary},
the parameterized equation $c=|d|^ke^{it}$ transforms into
$u^mv^n=|v|^{\nu k}e^{it}$. This equation, via the isomorphism \ref{EQ:psi1},
transforms into $y^n=x^me^{it}$.
 In other words,  $\widetilde{\cals}_k(t)$ locally
is the tubular neighbourhood of the $z$--axis in the variety given by  local equation
$\{(x,y,z)\,:\,  y^n=x^me^{it}\}$. Homotopically,
this set is equivalent with the $z$--axis, the core curve, and the
monodromy action induced on it
is trivial. Analyzing all the other points too, we get that the monodromy action on
 $\widetilde{\cals}_k(0)$  is homotopically trivial.

Let us analyze now the graph covering $\cals^{norm}_k(t)\to \widetilde{\cals}_k(t)$. \ix{graph!covering}

Again, let us take the same local situation as in the above discussion.
For any fixed $t$, the normalization of the variety $y^n=x^me^{it}$ has ${\rm gcd}\,(m,n)$ disjoint
 components; therefore, the $z$--axis in the normalization is covered by   ${\rm gcd}\,(m,n)$
 local discs. These components are cyclically permuted by the monodromy.

Again, analyzing all the points, we get a finite cyclic branched covering of the core curve configuration of
$\widetilde{\cals}_k(t)$ by the core curve configuration of $\cals^{norm}_k(t)$,
and the monodromy action corresponds to the cyclic action of the covering. Therefore,
 $H_1(\cals_k^{norm},\bfc)_1=H_1(\cals_k^{norm}/\mbox{action},\bfc)=H_1(\widetilde{\cals}_k,\bfc)$.
\end{proof}

\chapter{The Collapsing Main Algorithm}\labelpar{ss:ELI}

\section{\ Elimination of Assumption B}\labelpar{ss:elim}\setcounter{equation}{0}

\bekezdes {\bf Preliminary remarks.} \
In the formulation and the proof of the Main Algorithm  \ref{algo}
 the absence  of `vanishing 2--edges' in $\gc$ is  essential.
 If a certain  $\gc$ has such an edge, it can be modified by
a blow up, which replaces the unwanted edge by three
`acceptable' edges, see \ref{re:w3}.
Therefore, in any situation, it is easy to assure the condition
of Assumption B, and the Main Algorithm serves as a complete algorithm
for $\partial F$, $\partial F_1$, $\partial _2F$ and for the different
multiplicity systems.
\ix{Assumption B}\ix{vanishing 2--edge}\ix{Main Algorithm!Collapsing}\ix{Milnor!fiber!boundary}

Nevertheless, if the graph $\gc$ is constructed by a canonical
geometric procedure, and it has vanishing 2--cycles, the
above procedure  of the Main Algorithm,  which starts with  blowing up these edges,
has some inconveniences.\ix{Main Algorithm}

First of all, in the new graph $\gc$ we create several new vertices and edges;
on the other hand, it turns out
that  in the output final graph $G$ all these extra
vertices/edges can be eliminated, collapsed, see \ref{bad:edge}.
This indicates that blowing up $\gc$ might be unnecessary,
and there should be a better procedure
to eliminate the vanishing 2--edges in such a way that the new graph is not
`increasing'.

But, in fact, the main reason to search for another
approach/solution is dictated by a more serious reason:
we will see that `unicolored' graphs/subgraphs (that is, graphs
with uniform edge decorations) have big advantages
in the determination of the geometrical properties (like the structure of  Jordan
blocks, monodromy operators). On the other hand, by the
blow up  step \ref{re:w3}  we might destroy such a
property. Take for example the case of cylinders.  As
constructed in \ref{cyl}, and  before applying the blowing up
procedure,  the graph $\gc$ is unicolored: all the edge--decorations are 2. This
property  is not preserved after the extra blow ups of \ref{re:w3}.
\ix{graph!unicolored}

Moreover, we will see in \ref{lem:twist},  that the `twist' (local
variation map) associated with a {\em vanishing}  2--edge is
{\em vanishing}. Hence, the separating annulus (in the page of
the open book) codified by such an edge is `rigid', and thus it
should be glued rigidly with its neighbourhood. Therefore, in the
language of the graph, such an edge should be rather collapsed
than blown up!

This last statement can also be reinterpreted in the following way. We will prove
in \ref{lem:twist} that the twist of a
1--edge is negative, of a non--vanishing 2--edge is positive, while, as we already
said, of a vanishing 2--edge, is zero. Since by blowing up a vanishing 2--edge
we create two new 1--edges and one new 2--edge, we replace the zero--twist--contribution
by two contributions of different signs. Nevertheless, when handling operators
(see a concrete situation in subsections \ref{diagram}--\ref{lem:unitw}),
sometimes it is more convenient to have a semi--definite matrix rather than a
non--degenerate one, which is not definite.

\bekezdes {\bf The goal of the chapter.} \
In this section we  present an alternative way to modify
$\gc$ and the steps of the Main Algorithm in the presence of vanishing
2--edges. This second method is also  based on the
algorithm just proved: we blow up such a vanishing 2--edge, we run the Main
Algorithm \ref{algo}, then we apply the reduced plumbing calculus for
that part of the graph whose ancestor is that vanishing 2--edge and its adjacent vertices,
and we show that this part collapses into a single orbit of vertices. Moreover, any connected
subgraph whose edges are vanishing 2--edges, by this procedure collapses into a single orbit of vertices.

We keep the output of all these steps as a shortcut, which will be built
in the new  version of the Main Algorithm, called `{\it Collapsing Main Algorithm}'.
\ix{Main Algorithm!Collapsing}

Obviously, if the original graph has no vanishing 2--edges, then
the two algorithms are the same.

\begin{bekezdes}\labelpar{bad:edge} {\bf Discussion.} \
 Consider a vanishing  2--edge $e$ as in \ref{re:w3}:

\vspace{3mm}

\begin{picture}(140,50)(-110,10)
\put(20,30){\circle*{4}} \put(100,30){\circle*{4}}
\put(20,30){\line(1,0){80}}
\put(20,35){\makebox(0,0)[b]{$(m;0,\nu)$}}
\put(100,35){\makebox(0,0)[b]{$(m';0,\nu)$}}
\put(20,15){\makebox(0,0)[b]{$v$}}
\put(100,15){\makebox(0,0)[b]{$v'$}}
\put(20,49){\makebox(0,0)[b]{$[g]$}}
\put(100,49){\makebox(0,0)[b]{$[g']$}}
\put(60,20){\makebox(0,0)[b]{$2$}}
\end{picture}

Assume that  $v,v'\in\calw$ and $v\not=v'$. Assume
that the 1--legs (cf. \ref{ls}) of $v$ have weights
$\{(m;n_i,\nu_i)\}_{i=1}^s$, and the 2--legs of $v$, other than
$e$, are decorated by $\{(m_j;0,\nu)\}_{j=1}^t$. Set
$N:=\mbox{gcd}(m,n_1,\ldots,n_s,m_1,\ldots,m_t)$. We will have
similar notations $s',\ t',\ n_i',$ $ \nu'_i,\ m'_j,\ N'$ for $v'$
too.

In \ref{re:w3} we have replaced $e$ by the string $Str(e)$:

\begin{picture}(140,60)(-110,10)
\put(20,30){\circle*{4}}
\put(100,30){\circle*{4}}\put(-60,30){\circle*{4}}
\put(180,30){\circle*{4}} \put(-60,30){\line(1,0){240}}
\put(-60,35){\makebox(0,0)[b]{$(m;0,\nu)$}}
\put(180,35){\makebox(0,0)[b]{$(m';0,\nu)$}}
\put(-60,50){\makebox(0,0)[b]{$[g]$}}
\put(180,50){\makebox(0,0)[b]{$[g']$}}
\put(-60,15){\makebox(0,0)[b]{$v$}}
\put(180,15){\makebox(0,0)[b]{$v'$}}
\put(20,15){\makebox(0,0)[b]{$\bar{v}$}}
\put(100,15){\makebox(0,0)[b]{$\bar{v}'$}}
\put(60,20){\makebox(0,0)[b]{$2$}}
\put(-20,20){\makebox(0,0)[b]{$1$}}
\put(140,20){\makebox(0,0)[b]{$1$}}

\put(-115,30){\makebox(0,0)[l]{$Str(e):$}}

\put(20,35){\makebox(0,0)[b]{$(m;m+m',\nu)$}}
\put(100,35){\makebox(0,0)[b]{$(m';m+m',\nu)$}}
\end{picture}

Then, let us run the Main Algorithm for this part of the graph. In
the  covering graph $G$ the number of vertices over $v$ is \ix{graph!covering}
$\n_{v}:=\mbox{gcd}(N,m')$,  over $v'$ is
$\n_{v'}:=\mbox{gcd}(N',m)$, and over the new vertices $\bar{v}$
and $\bar{v}'$ the number of vertices is  $\n_e:=\mbox{gcd}(m,m')$.
Moreover, over all the edges we have to put $\n_e$ edges, they
form $\n_e$ strings, containing the $\n_e$ vertices sitting above
the new vertices, and the ends of these strings are cyclically identified with
the vertices sitting over $v$ and $v'$  respectively. The vertices
over $v$ have multiplicity decoration $(\nu)$ and genus decoration
$\tilde{g}$ determined by (\ref{NG}), namely
\begin{align}\label{NG2}
\n_v\cdot (2-2\tilde{g}) &
=(2-2g - s-t-1) \cdot m\\
   & + \sum\limits_{i=1}^s \gcd (m,  n_i) +
    \sum\limits_{j=1}^t \gcd (m, m_j)+\n_e\nonumber.
\end{align}
There is a similar statement for vertices over $v'$ too. The
vertices of $G$ over the new vertices $v$ and $v'$  have zero genera
and multiplicity decorations $(m\nu/\n_e)$ and $(m'\nu/\n_e)$
respectively. Furthermore, if we apply Step 2 of the algorithm from
\ref{s2}, then we realize that above the 1--edges the
corresponding strings are degenerate (hence we insert +--edges
only), while above the 2--edge of $Str(e)$ any inserted string
$Str^\circleddash$ has only one vertex with multiplicity $(\nu)$
and Euler decoration $(m+m')/\n_e$. In particular,  the $\n_e$
strings above $Str(e)$ have the form

\begin{picture}(140,50)(-100,10)
\put(-60,30){\circle*{4}} \put(0,30){\circle*{4}}
\put(60,30){\circle*{4}} \put(120,30){\circle*{4}}
\put(180,30){\circle*{4}}
\put(-60,30){\line(1,0){240}}
\put(-60,35){\makebox(0,0)[b]{$(\nu)$}}
\put(0,35){\makebox(0,0)[b]{$(\frac{m\nu}{\n_e})$}}
\put(60,35){\makebox(0,0)[b]{$(\nu)$}}
\put(120,35){\makebox(0,0)[b]{$(\frac{m'\nu}{\n_e})$}}
\put(180,35){\makebox(0,0)[b]{$(\nu)$}}
\put(0,15){\makebox(0,0)[b]{$0$}}
\put(120,15){\makebox(0,0)[b]{$0$}}
\put(60,10){\makebox(0,0)[b]{$\frac{m+m'}{\n_e}$}}
\put(30,22){\makebox(0,0)[b]{$\circleddash$}}
\put(90,22){\makebox(0,0)[b]{$\circleddash$}}
\end{picture}

\vspace{4mm}

The configuration of all the vertices and edges situating  above $e$
and its end--vertices form $\n:=\mbox{gcd}(\n_v,\n_{v'})$
connected components.

 Now, we run the plumbing calculus of oriented plumbed 3--manifolds, cf.
\ref{ss:2.2}. Notice that by two 0--chain absorptions the above
string can be collapsed.
Hence, after {\it 0--chain} and {\it oriented handle absorptions} each
connected component   collapses into a single vertex. Their number
will be  $\n$ and  all of them will carry multiplicity $(\nu)$.
The genus decoration $g_e$ of such a vertex can be computed as
follows. First we have a contribution from 0--chain absorptions,
namely the sum of all the genera of the vertices in the corresponding connected
component, namely $(\n_v\tilde{g}+\n_{v'}\tilde{g}')/\n $. Then,
corresponding to oriented handle absorptions,
 we have to add one for each
1--cycle of that component. This, by an Euler--characteristic
argument is $(\n_e-\n_v-\n_{v'}+\n)/\n$. Hence
$$\n g_e=\n_v\tilde{g}+\n_{v'}\tilde{g}'+\n_e-\n_v-\n_{v'}+\n,$$
that is
$$\n(1-g_e)=\n_v(1-\tilde{g})+n_{v'}(1-\tilde{g}')-\n_e.$$ This
combined with (\ref{NG2}), gives
\begin{align*}\label{NG3}
\n\cdot (2-2g_e) &
= (2-2g - s-t-1) \cdot m
   + \sum\limits_{i} \gcd (m,  n_i) +
    \sum\limits_{j} \gcd (m, m_j)\\
  & +  (2-2g' - s'-t'-1) \cdot m'
    + \sum\limits_{i} \gcd (m',  n'_i) +
    \sum\limits_{j} \gcd (m', m'_j).\nonumber
\end{align*}
\end{bekezdes}

\begin{bekezdes}\labelpar{bad:graph} Clearly,
 a certain  vertex of $\gc$ may be  the
end--vertex of more than one vanishing  2--edge. Hence, if we run
the above procedure \ref{bad:edge} for all the vanishing
2--edges simultaneously, a bigger part of the graph will be
collapsed. We make this fact more precise in the next paragraphs.

\begin{lemma}\label{lem:vancut} If $V_g$ has at most an isolated singularity then
there is no cutting edge of $\gc$ which is simultaneously a vanishing 2--edge  and both its end--vertices are
non--arrowheads. \ix{cutting edge}
\end{lemma}
\begin{proof}
Assume that we have such an edge; consider the corresponding intersection
point of $\C$ and the local equation around it
as in \ref{summary}: $f\circ r|_{U_p}=uv^{m'}$ and
$g\circ r|_{U_p}=w^\nu$, with $m'>1$. Then the local component $u=0$
is  in the strict transform of $V_f$.  Since along  the local component
 $w=0$ only $g$ is vanishing, and  $V_g$ has an isolated singularity, we get that
 $w=0$ is situating in the strict transform of $V_g$ (otherwise $w=0$ would be contained  in an
 exceptional divisor which is  above the origin, but along such a divisor
  $f\circ r$ is  also vanishing). But then we have a compact curve in the intersection
  of the strict transforms of $V_f$ and $V_g$, which is not possible.
\end{proof}

Consider again the graph $\gc$, as it is given by a certain resolution,  and {\em unmodified}
by the blowing up  procedure \ref{re:w3}. Thus it may not even satisfy  Assumption B.  Nevertheless,
for simplicity of the discussion, \ix{Assumption B}
we assume that it has no cutting edge which is simultaneously a vanishing 2--edge and both its end vertices are non--arrowheads. (We believe that this condition is always automatically satisfied.
If $V_g$ has at most an isolated singularity this is guaranteed by    \ref{lem:vancut}).
\ix{cutting edge}

Consider a maximal connected subgraph $\gbk$ of $\gc$ with only non-arrowhead vertices and
such that all its edges are {\em vanishing 2--edges} connecting these vertices.
In particular, $\gbk$ has no edges supporting arrowheads.
Such a subgraph is supported either by $\gce$ or by $\gck$. If it is a subgraph of $\gce$ then it
has only one vertex. In the other case it might have several vertices. Since  $\Gamma_{\C,j}$  is a tree
by \ref{tree},
\begin{equation}\label{eq:gbk_tree}
\mbox{$\gbk$ is always a tree.}
\end{equation}
We wish to define for each component $\gbk$ the numbers
$\n_{\gbk}$, $m_{\gbk}$ and $g_{\gbk}$.

Assume first that $\gbk$ contains exactly one vertex, say $w$.
Let the decoration of $w$ be
$(m;n,\nu)$. Then using the star of $w$ we define $\n_{\gbk}=\n_w$ as in
(\ref{NW}), $m_{\gbk}=m\nu/\mbox{gcd}(m,n)$ as in (\ref{NM}), and
$g_{\gbk}=\tilde{g}_w $ as in  (\ref{NG}), exactly as in the Main Algorithm.

Next, assume that $\gbk$ contains several vertices. For any
vertex $w$ of $\gbk$,  consider its star in $\gc$:

\vspace{6mm}

\begin{picture}(140,60)(-150,0)
\put(20,30){\circle*{4}}
\put(20,30){\line(4,1){80}}
\put(20,30){\line(4,-1){80}}
\put(20,30){\line(-4,1){80}}
\put(20,30){\line(-4,-1){80}}
\put(20,35){\makebox(0,0)[b]{$(m;0,\nu)$}}

\put(125,45){\makebox(0,0)[b]{$(m_1;0,\nu)$}}
\put(125,5){\makebox(0,0)[b]{$(m_t;0,\nu)$}}
\put(-85,45){\makebox(0,0)[b]{$(m;n_1,\nu_1)$}}
\put(-85,5){\makebox(0,0)[b]{$(m;n_s,\nu_s)$}}

\put(20,15){\makebox(0,0)[b]{$[g_w]$}}
\put(80,26){\makebox(0,0)[b]{$\vdots$}}
\put(-40,26){\makebox(0,0)[b]{$\vdots$}}
\put(20,5){\makebox(0,0)[b]{$w$}}
\put(60,22){\makebox(0,0)[b]{$2$}}
\put(60,42){\makebox(0,0)[b]{$2$}}
\put(-20,22){\makebox(0,0)[b]{$1$}}
\put(-20,42){\makebox(0,0)[b]{$1$}}
\end{picture}

\vspace{6mm}

Notice that the 2--legs of this star come from two sources: either they are associated with the
edges of $\gbk$, or they are 2--edges supporting arrowheads. Let $\hat{t}_w$ be the number of
this second group. The end decorations of these legs are $(1;0,1)$, hence if
there is a vertex $w$ of $\gbk$ with $\hat{t}_w>1$, then $\nu=1$ automatically.

Associated with the star of $w$ we consider the integers $\n_w$,
$\tilde{g}_w$ and $\hat{g}_w$ as follows:

\vspace{2mm}

$\bullet$ \ \  $\n_w:=\mbox{gcd}(m,n_1,\ldots, n_s,m_1,\ldots , m_t)$, as in  (\ref{NW});

\vspace{2mm}


$\bullet$ \ \ $\n_w (2-2\tilde{g}_w)  =(2-2g_w - s-t)  m
    + \sum\limits_{i=1}^s \gcd (m,  n_i) +
    \sum\limits_{j=1}^t \gcd (m, m_j)$, as in  (\ref{NG});

 $\bullet$ \ \ $\n_w (2-2\hat{g}_w)  =(2-2g_w - s-t)  m
    + \sum\limits_{i=1}^s \gcd (m,  n_i) +
    \hat{t}_w$.

\vspace{2mm}

Furthermore, for any edge $e\in\cale(\gbk)$, with decorations as in \ref{bad:edge}, define

\vspace{2mm}

$\bullet$ \ \ $\n_e:=\mbox{gcd}(m,m')$,  as in \ref{bad:edge}.

\vspace{2mm}

Then, similarly as in  \ref{bad:edge}, if we eliminate  the
vanishing  2--edges of $\gbk$ by the blow up procedure of \ref{re:w3}, and
run the Main Algorithm \ref{algo}, then above $\gbk$ the graph will have
\begin{equation}\label{NGamma}
\n_{\gbk}:=\mbox{gcd} \{\, \n_w\, : \, w\in \calv(\gbk)\}
\end{equation}
connected components. Indeed, this follows from (\ref{eq:gbk_tree}) and \ref{ex:COVERING}(1).
Then,  after 0--chain and oriented
handle absorptions, the whole subgraph above $\gbk$ will collapse  into
$\n_{\gbk}$ vertices, all with multiplicity
\begin{equation}\label{MGamma}
m_{\gbk}=\nu,
\end{equation} and genus
decoration $g_{\gbk}$, which is determined similarly as in
\ref{bad:edge}. More precisely,  the contribution from the genera of the vertices
is $\sum_w\n_w \tilde{g}_w/\n_{\gbk}$, while the contribution from
the cycles is
\begin{equation}\label{eq:CYCLE}
\big(\, \sum_{e\in \cale(\gbk)}\n_e- \sum _{w\in\calv(\gbk)} \n_w+\n_{\gbk}\,\big) /\n_{\gbk}.
\end{equation}
In particular,  for $g_{\gbk}$ we get:
\begin{equation}\label{eq:GGamma}
\n_{\gbk} (2-2g_{\gbk}) = \sum _{w\in \calv(\gbk)}\, \n_w (2-2\hat{g}_w).
\end{equation}
\end{bekezdes}

\begin{bekezdes}\labelpar{bad:arrow}
If a 2--edge supports an arrowhead then it is automatically a vanishing 2--edge.
Consider such an edge $e$ of $\gc$, whose non--arrowhead vertex $w$ has weight $(m;0,1)$.
Let $\gbk$ be the subgraph as in \ref{bad:graph} which
contains $w$. Since $\n_w=1$,  one obtains  that  $\n_{\gbk}=1$ too, hence $\gbk$ can be
collapsed by the procedure described in  \ref{bad:graph} to a unique vertex.

In $G$, above $e$,  similarly as above we get  exactly one
string of the form

\begin{picture}(140,50)(-100,10)
\put(-60,30){\circle*{4}} \put(0,30){\circle*{4}}
\put(60,30){\circle*{4}} \put(120,30){\circle*{4}}

\put(-60,30){\vector(1,0){240}}
\put(-60,35){\makebox(0,0)[b]{$(1)$}}
\put(0,35){\makebox(0,0)[b]{$(m)$}}
\put(60,35){\makebox(0,0)[b]{$(1)$}}
\put(120,35){\makebox(0,0)[b]{$(1)$}}
\put(195,30){\makebox(0,0){$(1)$}}
\put(0,15){\makebox(0,0)[b]{$0$}}
\put(120,15){\makebox(0,0)[b]{$0$}}
\put(60,15){\makebox(0,0)[b]{$m+1$}}
\put(30,32){\makebox(0,0)[b]{$\circleddash$}}
\put(90,32){\makebox(0,0)[b]{$\circleddash$}}
\put(-60,15){\makebox(0,0)[b]{$w$}}
\end{picture}

\vspace{2mm}

The first 0--vertex can be eliminated by 0--chain
absorption. The obtained  shorter string is glued to the unique
vertex constructed in \ref{bad:graph} corresponding to $\gbk$.
\end{bekezdes}

Now we are able to formulate the new version of the Main Algorithm.\ix{Main Algorithm}

\section{\ The Collapsing  Main Algorithm}\label{algoim}\setcounter{equation}{0}

\begin{bekezdes} Start again with a  graph $\gc$, as it is given by a certain resolution,  and {\em unmodified}
by the blowing up  procedure \ref{re:w3}.  Assume that it has no cutting edge which
is simultaneously a vanishing 2--edge and both its end vertices are non--arrowheads.
 If it has some vanishing
2--edges, we will not blow them up, as in
\ref{re:w3};   instead, we will `collapse' them by the procedure
described in the previous section \ref{ss:elim}.
\ix{Main Algorithm!Collapsing|textbf}\ix{cutting edge}

Denote by $\widehat{\gc}$ the undecorated  graph obtained from
$\gc$ by contracting (independently) each  subgraphs of type
$\gbk$ into a unique vertex.  All 1--edges, non--vanishing
2--edges,  arrowhead vertices and vanishing 2--edges supporting
arrowhead vertices survive inheriting the natural
adjacency  relations.

Then, we construct a plumbing graph $\widehat{G}$ of the open book
of $\partial F$ with binding $V_g$ and fibration $\arg(g):\partial
F\setminus V_g\to S^1$ as follows. It  will be determined as a
covering graph of $\widehat{\gc}$, modified with strings as in
\ref{re:2.3.1}. In order to identify it, we
have to provide the  covering data of the covering
$\widehat{G}\to \widehat{\gc}$, cf.
\ref{def:2.3.2}.
\end{bekezdes}\ix{graph!covering}\ix{graph!covering!data}\ix{Milnor!fiber!boundary}

\begin{bekezdes}{\bf Step 1. --- The covering data of the vertices
of $\widehat{\gc}$.}\labelpar{s1m}

Over a  vertex of $\widehat{\gc}$, obtained by the  contraction of
the subgraph $\gbk$ of $\gc$, we insert $\n_{\gbk}$ vertices in
$\widehat{G}$, all of them with genus decoration $g_{\gbk}$  and multiplicity $m_{\gbk}$.
These numbers are defined in (\ref{NGamma}),  (\ref{eq:GGamma}) and (\ref{MGamma}) respectively.

Any arrowhead vertex of $\widehat{\gc}$ is covered by  one
arrowhead vertex  of $\widehat{G}$, decorated by multiplicity
decoration  $(1)$, similarly as in the original version
\ref{s1}.
\end{bekezdes}

\begin{bekezdes}{\bf Step 2. --- The covering data of edges
and the types of the inserted strings.}\labelpar{s2m}

\vspace{2mm}

\noi {\bf Case 1.}\ The case of 1--edges is the same as in the
original version \ref{s2}.  Over such an edge $e$,  which in $\gc$
has the form

\vspace{1mm}

\begin{picture}(140,50)(-100,0)
\put(20,30){\circle*{4}} \put(100,30){\circle*{4}}
\put(20,30){\line(1,0){80}}
\put(20,35){\makebox(0,0)[b]{$(m;n,\nu)$}}
\put(100,35){\makebox(0,0)[b]{$(m;l,\lambda)$}}
\put(20,15){\makebox(0,0)[b]{$[g]$}}
\put(100,15){\makebox(0,0)[b]{$[g']$}}
\put(20,0){\makebox(0,0)[b]{$v_1$}}
\put(100,0){\makebox(0,0)[b]{$v_2$}}
\put(60,35){\makebox(0,0)[b]{$1$}}
\end{picture}

\noi insert (cyclically) in $\widehat{G}$ exactly $\n_e=\gcd(m, n,
l)$ strings of type
$$Str\left(\, {n\over \n_e}, {l\over \n_e};{m\over \n_e}\
\Big|\ \nu, \lambda;0 \, \right).$$ If the edge $e$ is a loop,
then the procedure is the same with the only modification that the
end--vertices of the  strings are identified. If the right vertex
$v_2$ is an arrowhead, then complete again the same procedure with
$m=1$ and $\n_e=1$, namely:  above such an edge $e$ put a single
edge decorated by $+$.  This edge  supports that arrowhead of
$\widehat{G}$ which covers the corresponding arrowhead of
$\widehat{\gc}$.

\vspace{2mm}

\noi {\bf Case 2.} The case of {\em non--vanishing }
2--edges is again unmodified. Above such an  edge $e$, which in
$\gc$  has the form  (with $n>0$)

\vspace{2mm}

\begin{picture}(140,50)(-100,0)
\put(20,30){\circle*{4}} \put(100,30){\circle*{4}}
\put(20,30){\line(1,0){80}}
\put(20,35){\makebox(0,0)[b]{$(m;n,\nu)$}}
\put(100,35){\makebox(0,0)[b]{$(m';n,\nu)$}}
\put(20,15){\makebox(0,0)[b]{$[g]$}}
\put(100,15){\makebox(0,0)[b]{$[g']$}}
\put(20,0){\makebox(0,0)[b]{$v_1$}}
\put(100,0){\makebox(0,0)[b]{$v_2$}}
\put(60,35){\makebox(0,0)[b]{$2$}}
\end{picture}

\vspace{1mm}

 \noi insert (cyclically) in $G$ exactly
$\n_e=\gcd(m, m',n)$ strings of type
$$Str^\circleddash\left(\,{ m\over \n_e},{m'\over \n_e};{n \over \n_e}\
\Big| \ 0,0;\nu\, \right).$$ If the edge is a loop, then we modify
the procedure as in the case of 1--loops.

\vspace{2mm}

\noi {\bf Case 3.} Finally, we have to consider the case of those
{\em vanishing}  2-edges which  support arrowheads (the others have been collapsed).

Above such an edge we insert in $\widehat{G}$ one string of type

\begin{picture}(140,50)(-50,10)
\put(60,30){\circle*{4}} \put(120,30){\circle*{4}}
\put(60,30){\vector(1,0){120}}
\put(60,35){\makebox(0,0)[b]{$(1)$}}
\put(120,35){\makebox(0,0)[b]{$(1)$}}
\put(195,30){\makebox(0,0){$(1)$}}
\put(120,15){\makebox(0,0)[b]{$0$}}
\put(90,32){\makebox(0,0)[b]{$\circleddash$}}
\put(60,15){\makebox(0,0)[b]{$w$}}
\end{picture}

\vspace{2mm}

\noi If the 2--edge is supported in $\gc$ by a vertex which
belongs to $\gbk$, then the end--vertex $w$ of the above string should be
identified with the unique vertex of $\widehat{G}$ corresponding
to that subgraph $\gbk$, keeping its $[g_{\gbk}]$ decoration too.
\end{bekezdes}

\begin{bekezdes}{\bf Step 3. --- Determination of the
missing Euler numbers.}\labelpar{s3m} The first two steps provide
a graph with the next decorations: the multiplicities of all the
vertices, all the genera, some of the Euler numbers and all the
sign--decorations of the edges. The missing Euler numbers  are
determined by formula (\ref{eq:2.2.1}).
\end{bekezdes}

\section{\ The output of the Collapsing Main Algorithm}\labelpar{covdatam}\setcounter{equation}{0}
Similarly as in the first case, Theorem  \ref{th:2.3.1}
guarantees that {\it there is only one cyclic graph--covering of
$\gc$ with this covering  data  (up to a graph--isomorphism).}\ix{Main Algorithm!Collapsing}

In fact, by \ref{bad:graph}, the output graph $\widehat{G}$ of the
`Collapsing Algorithm' and  the output $G$ of the original algorithm
are connected by the reduced oriented plumbing calculus:
$\widehat{G}\sim G$. Hence, Theorem \ref{MTh.1} is valid for
$\widehat{G}$ as well:
 {\it $\widehat{G}$ is a possible plumbing graph
of the pair $(\partial F,\partial F\cap V_g)$, which carries the
multiplicity system of the open book decomposition
$\arg(g):\partial F\setminus V_g\to S^1$.}
The algorithm is again compatible with the decomposition of
$\partial F$ into $\partial_1F$ and $\partial_2F$. The part
regarding $\partial_1F$ is unmodified (since $\gce$ has no
2--edges).  The graph $G_2$ transforms into $\widehat{G_2}$ similarly as $G$ transforms into
$\widehat{G}$. Moreover,
all the statements of \ref{1es2} regarding $G_1$ and $G_2$ are valid for
$\widehat{G_1}=G_1$ and $\widehat{G_2}$ with the natural modifications.
The details are left to the reader.

\begin{bekezdes}{}\labelpar{GHALG} On the other hand, the difference between $G$  and $\widehat{G}$,
the outputs of the original and the new  algorithms, both
{\em  unmodified by plumbing calculus},
can be substantial, sometimes even spectacular.  See e.g. the complete computation
in the case of  cylinders  in Chapter \ref{s:cyl}.

For example, the difference $c(G)-c(\widehat{G})$ is the sum over
all subgraphs $\gbk$  of the  expression appearing in
(\ref{eq:CYCLE}), which can be a rather large number. This will
have  crucial consequences in the discussion of the Jordan blocks
of the vertical monodromies.

Similarly as in \ref{covdata}, we will use the notation $\widehat{G}$
for the output graph obtained by the  Collapsing Main Algorithm described above
{\it unmodified} by any operation of the plumbing calculus. We adopt similar notations for the graphs
of $\partial _1F$ and $\partial _2F$, namely  $\widehat{G_1}$ and
$\widehat{G_2}$. (Recall that $G\sim \widehat{G}$. Hence there is no need to consider
a `modified $\widehat{G}$', since for that
we can use the already introduced  notation $\Gmod$, cf. \ref{covdata}).\ix{Main Algorithm!Collapsing}

\end{bekezdes}


\chapter{Vertical/horizontal monodromies}\labelpar{s:vh}

\section{\ The monodromy operators}\labelpar{ss:MO}\setcounter{equation}{0}
 Let
$f:(\bfc^3,0)\to (\bfc,0)$ be a hypersurface singularity with a 1--dimensional
singular locus. In general, it is rather difficult to determine
the horizontal and vertical monodromies $\{m'_{j,hor}\}_{i=1}^s$
 and $\{m'_{j,ver}\}_{i=1}^s$ of $Sing(V_f)$, especially the vertical one.
 (For the terminology, see \ref{ss:2.0b}.)
It is even more difficult to identify  the  {\it two commuting
actions simultaneously}.
\ix{monodromy}\ix{monodromy!horizontal}\ix{monodromy!vertical}

This difficulty survives at the homological level too: in the
literature there is no general treatment of the corresponding  two commuting
operators. In lack of general theory, the existing
literature is limited to few sporadic examples, which are
obtained by ad hoc methods.

Our goal here is to provide a general procedure to treat these homological objects
and to produce (in principle without any obstruction) examples as complicated as we wish.

Similarly, if we fix another germ $g$ such that $\Phi=(f,g)$ is an ICIS, then
one of the most important tasks is the computation of the  algebraic monodromy
representation of $\Z^2$ induced by $m_{\Phi,hor}$ and
$m_{\Phi,ver}$ (for their definition, see \ref{ss:ICIS}).
Our treatment will include the determination of these objects as well.

In fact, our primary targets are the following  algebraic monodromy operators:

\begin{itemize}
\item
the commuting pair $M'_{j,hor}$ and $M'_{j,ver}$, acting on $H_1(F'_j)$,
induced by $m'_{j,hor}$  and $m'_{j,ver}$ ($1\leq j\leq s$),
 \item
 the commuting pair
$M^\Phi_{j,hor}$ and $M^\Phi_{j,ver}$, acting on $H_1(F_\Phi\cap
T_j)=H_1(F'_j)^{\oplus d_j}$ (cf. \ref{felbo}),
 induced by $m^\Phi_{j,hor}$ and $m^\Phi_{j,ver}$ ($1\leq j\leq s$),
\item
the commuting pair  $M_{\Phi,hor}$ and $M_{\Phi,ver}$, acting on
$H_1(F_\Phi)$, induced by $m_{\Phi,hor}$ and $m_{\phi,ver}$,
cf. \ref{ss:ICIS}.
\end{itemize}
Here, usually, we considered homology with complex coefficients, but obviously, one
might also  consider the integral case. In fact, in some of our examples,
 the additional $\Z$--invariants will also be discussed.

We separate our discussion into two parts. In this chapter we
determine completely (via $\gc$) the character decomposition (i.e.
the semi--simple part) of the relevant $\Z^2$--representations. This includes
the characteristic polynomials of all the  monodromy
operators. Moreover, we  connect the ranks of some generalized \ix{generalized eigenspace}
eigenspaces with the combinatorics of the plumbing graph $G$ as
well. We wish to emphasize that, although we get all our results
rather automatically from the graph $\gc$, all these results are
new, and were out of reach (in this generality) with previous
techniques. This shows once more the power of $\gc$.

The second part treats the structure of the Jordan blocks.
This is considerably  harder. Our main motivation in this part is the computation of
the homology of $\partial
F$ and its algebraic monodromy action.  Since the homology of $\partial F$\ix{Milnor!fiber!boundary}
will be determined via the homology of $\partial F\setminus V_g$,
that is, using the Wang exact sequence in which the operator
$M_{\Phi,ver}-I$ appears, the determination  of the 2--Jordan blocks
of $M_{\Phi,ver}$ with eigenvalue one is
a crucial ingredient.

Therefore, in this work, regarding the vertical monodromies,
 we will concentrate only on the computation of the Jordan blocks with eigenvalue one,
although for some cases we will provide the complete picture.
This second part (including the discussion regarding the Jordan blocks of the vertical monodromies
for eigenvalue one, and the computation of homology and algebraic monodromy of $\partial F$)
constitutes the next Chapters \ref{Start}--\ref{s:th:Jordan}.

\begin{remark} Although $F_\Phi$ is not the local Milnor fiber
of a hypersurface singularity,  a convenient restriction of $\Phi$
provides a map over a sufficiently small disc (a transversal slice
of the $d$--axis at one of its
 generic points) with generic fiber $F_\Phi$ such that
 the horizontal monodromy
$M_{\Phi,hor}$ is the monodromy over the  small punctured disc. In
other words, the horizontal monodromy $M_{\Phi,hor}$ can be
`localized', that is,  it can be represented as the monodromy of a family of
curves over an arbitrarily  small punctured disc.  On the other hand, the
vertical monodromy $M_{\Phi,ver}$ {\it cannot} be localized in this
sense.  In particular, general results about  the monodromy of
families over a punctured disc cannot be applied for
the vertical monodromies. (This is
one of the reasons why their computation is so difficult. This
difficulty will be overcome  here using the  graph $\gc$.)
\end{remark}

\section{\ General facts}\labelpar{char}\setcounter{equation}{0}
The statements of the next lemma are well--known for the
horizontal monodromies by the celebrated Monodromy Theorem (see
e.g. \cite{Clemens,Landman} for the global case,
\cite{BrieskornMon,Lj} for the local case, or \cite{Kulikov} for a
recent monograph).  It  may be known for the vertical
monodromies as well, however we were not able to find a reference
for it:

\begin{lemma}\labelpar{geneig}
The eigenvalues of the operators $M'_{j,hor},\ M'_{j,hor},\
M^\Phi_{j,hor},\ M^\Phi_{j,ver}$ ($1\leq j\leq s$), respectively $M_{\Phi,hor}$ and
$M_{\phi,ver}$, are roots of unity. Moreover, the size of the
Jordan blocks cannot be larger than two.
\end{lemma}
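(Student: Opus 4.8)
The plan is to reduce everything to the classical Monodromy Theorem for the horizontal monodromies, and then use the graph $\gc$ together with the covering-theoretic description of the vertical monodromies to transfer the statement to the vertical case.

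\textbf{Step 1: the horizontal monodromies.} For $M'_{j,hor}$ I would argue that it \emph{is} the monodromy of a family of plane curves over a punctured disc: by definition (\ref{ss:2.0b}) it is the monodromy diffeomorphism of the Milnor fibre $F'_j$ of the transversal plane curve singularity $T\Sigma_j$, so the classical local Monodromy Theorem (\cite{Brieskorn,Lj}) applies directly: eigenvalues are roots of unity and Jordan blocks have size $\le 2$. For $M_{\Phi,hor}$ the same conclusion follows from the remark preceding the lemma: a convenient restriction of $\Phi$ realizes $M_{\Phi,hor}$ as the monodromy of a family of curves over a small punctured disc, so again the local Monodromy Theorem applies. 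Finally $M^\Phi_{j,hor}$ acts on $H_1(F_\Phi\cap T_j)=H_1(F'_j)^{\oplus d_j}$ as the \emph{diagonal} operator $\mathrm{diag}(M'_{j,hor},\dots,M'_{j,hor})$ by (\ref{felbo})(2), so its Jordan structure is just $d_j$ copies of that of $M'_{j,hor}$, hence the claim is inherited.

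\textbf{Step 2: the vertical monodromies.} Here the point is that the vertical monodromy \emph{cannot} be localized as a family over a punctured disc, so we cannot quote the Monodromy Theorem blindly; instead we exploit the structure encoded in $\gc$. For $M'_{j,ver}$: by (\ref{prop:cov}) the graph $\Gamma^2_{\C,j}$ determines, via a cyclic graph covering $p\colon d_j\cdot G(T\Sigma_j)\to \gj/\!\!\sim$, all the data of the transversal type and of the vertical monodromy action on $F'_j$; moreover (\ref{felbo})(2) identifies $m^\Phi_{j,ver}$ with a cyclic permutation-twisted power of $m'_{j,ver}$, so that $(m'_{j,ver})^{d_j}=(m^\Phi_{j,ver})^{d_j}$ up to isotopy is identified with the monodromy of the plane-curve family $\{(f|_{S_x})^{-1}=0\}$ over a punctured disc (the $d_j$-fold base change of the circle $\partial\Sigma_j$). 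Hence $(M'_{j,ver})^{d_j}$ is the monodromy of a family of plane curves over a punctured disc and the local Monodromy Theorem gives: its eigenvalues are roots of unity and its Jordan blocks have size $\le 2$. Then I would use elementary linear algebra: if $N$ is an invertible operator with $N^{d_j}$ having eigenvalues that are roots of unity and Jordan blocks of size $\le 2$, then $N$ itself has eigenvalues that are roots of unity (an eigenvalue $\lambda$ of $N$ satisfies $\lambda^{d_j}$ a root of unity, hence $\lambda$ a root of unity) and Jordan blocks of size $\le 2$ (a Jordan block of size $k$ for $N$ with eigenvalue $\lambda\ne0$ gives a Jordan block of size $k$ for $N^{d_j}$ with eigenvalue $\lambda^{d_j}$, since $d_j\ne 0$ in characteristic zero; so $k\le 2$). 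This yields the claim for $M'_{j,ver}$, and then for $M^\Phi_{j,ver}$ by the same linear-algebra step applied to $(M^\Phi_{j,ver})^{d_j}=\mathrm{diag}((M'_{j,ver})^{d_j},\dots)$. For $M_{\Phi,ver}$: by (\ref{CC}) the fibration $g/|g|\colon\partial F\setminus V_g\to S^1$ is equivalent to $\Phi\colon\Phi^{-1}(\partial D_\delta)\to\partial D_\delta$ with monodromy $m_{\Phi,ver}$; restricting over $T^\phi_j$ (cf. (\ref{felbo})) reduces the action on the $\partial_{2,j}F$-part to the $M^\Phi_{j,ver}$ just treated, while the action on $H_1$ of the $\partial_1 F$-part — equivalently, by (\ref{d2}), on $H_1$ of the punctured normalization $V_f^{norm}\setminus V_{g\circ n}$ — is the monodromy of $\arg(g\circ n)$ on a resolved normal surface, hence again (by Milnor's fibration over a punctured disc and the local Monodromy Theorem, or simply because $\partial_1 F$-monodromy is essentially that of the link of a normal surface singularity fibration) has roots-of-unity eigenvalues and $\le 2$-Jordan blocks. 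A Mayer--Vietoris argument over the gluing tori then assembles these pieces, using that on the gluing tori the monodromy is trivial by (\ref{prop:sier})(\ref{mon}) so no new large Jordan blocks are created.

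\textbf{Main obstacle.} The delicate point I expect to need care is the last assembly: controlling the Jordan block size of $M_{\Phi,ver}$ on $H_1(F_\Phi)$ from the pieces on $H_1(\partial_1 F)$, on the $\oplus_j H_1(F'_j)^{\oplus d_j}$, and on the gluing tori. A Jordan block of size $3$ could in principle be manufactured when gluing two size-$2$ situations along a common torus; the reason this does \emph{not} happen is precisely the triviality of the monodromy on $\partial_1 F$ and on the gluing tori (\ref{prop:sier})(\ref{mon}), which forces the Mayer--Vietoris connecting maps to be compatible with the filtration by generalized eigenspaces in a way that bounds nilpotency. Making this compatibility rigorous — i.e. showing the long exact sequence of the pair/triple is one of $\Z^2$-modules and that the $(M_{\Phi,ver}-I)$-nilpotency order is subadditive in a controlled way across the gluing — is the technical heart; an alternative, cleaner route I would also consider is to realize $(m_{\Phi,ver})^{N}$ for a suitable common multiple $N$ of all the $d_j$ directly as the monodromy of a single family of curves over a punctured disc (using the wedge $\Wedge$ and the fact, cf. (\ref{felbont}), that the monodromies restrict on each $\widetilde F_v$ to commuting finite actions), and then apply the Monodromy Theorem once and descend by the linear-algebra step. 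This second route avoids the Mayer--Vietoris bookkeeping entirely and is probably what I would write up.
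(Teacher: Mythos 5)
Your Step 1 is fine. The genuine gap is in the main route of Step 2: you assert that $(m'_{j,ver})^{d_j}$ ``is identified with the monodromy of the plane-curve family \dots over a punctured disc (the $d_j$-fold base change of the circle $\partial\Sigma_j$)'' and then invoke the local Monodromy Theorem. A base change of a circle is still a circle; the family $s\mapsto (f|_{S_{x(s)}})^{-1}(\delta)$ over $L_j$ does not extend to a degenerating family over a punctured disc with a limit fibre at the centre (the transversal slices $S_x$ have no sensible limit at $x=0$), and the paper warns of exactly this in the remark preceding the lemma: the vertical monodromy ``cannot be localized'' and ``general results about the monodromy of families over a punctured disc cannot be applied.'' So the Monodromy Theorem cannot be quoted for $M'_{j,ver}$, $M^\Phi_{j,ver}$ or $M_{\Phi,ver}$, and your subsequent linear algebra has no input. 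The Mayer--Vietoris assembly for $M_{\Phi,ver}$ is likewise not carried out; your own worry about manufacturing size-$3$ blocks is legitimate, and ``the monodromy is trivial on the gluing tori'' does not by itself bound the nilpotency order of the glued operator.

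The good news is that your ``alternative cleaner route'' is, up to one correction, the paper's actual proof. By (\ref{felbont}), $F_\Phi$ (resp.\ $F_\Phi\cap T_j$) is cut by the circles $E$ into pieces $\widetilde{F}_v$ on which the horizontal and vertical monodromies restrict to a pair of commuting finite actions; i.e.\ all these geometric monodromies are quasi-periodic with respect to one decomposition. The correction is that you should not try to realize a power of $m_{\Phi,ver}$ as the monodromy of a family over a punctured disc in order to quote the Monodromy Theorem; for a quasi-periodic homeomorphism $h$ one argues directly as in \cite[\S\,13]{EN}: choose $q$ so that $h^q$ is the identity on every piece; then $h^q_*-I$ factors through the variation map into the first homology of the separating annuli, and since the core circles of the annuli die in $H_1(F_\Phi,\cup_v\widetilde{F}_v)$ one gets $(h^q_*-I)^2=0$. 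Hence $h^q_*$ is unipotent with blocks of size at most $2$, and your elementary descent (eigenvalues are $q$-th roots of unity, block sizes are preserved under taking the $q$-th power of an invertible operator) gives the statement for $h_*$ itself. This one argument covers $M_{\Phi,hor}$ and $M_{\Phi,ver}$, and, restricting to the pieces indexed by $\Gamma^2_{\C,j}$, the operators $M^\Phi_{j,hor}$ and $M^\Phi_{j,ver}$; the operators $M'_{j,hor}$ and $M'_{j,ver}$ then follow from the $d_j$-covering relation (\ref{felbo})(2), exactly as you say.
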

\begin{proof}
For the operators acting on $H_1(F_\Phi)$ use the decomposition
\ref{felbont} of $F_\Phi$, and the fact that the restriction of
the geometric actions on each subset $\widetilde{F}_v$ is isotopic
to a finite action. (As a model for the proof, see e.g. \cite[\S 13]{EN}.) \ix{Eisenbud--Neumann book}
The same is true for the operators acting on
$H_1(F_\Phi\cap T_j)$. In this case only those subsets
$\widetilde{F}_v$ appear which are indexed by the non--arrowhead
vertices of $\Gamma^2_{\C,j}$. Finally, the operators
$M^\Phi_{j,*}$ and $M'_{j,*}$ are connected by a simple algebraic
operation, see \ref{felbo}(2). Compare with the proofs of
\ref{th:char} and \ref{th:Jordan} as well.
\end{proof}

\begin{remark}\labelpar{re:2blocks}
By the Monodromy Theorem valid for isolated hypersurface
singularities,  the Jordan blocks  of
$M'_{j,hor}$ with eigenvalue one must have  size one, see e.g.
\cite[(3.5.9)]{Kulikov}. By the correspondence \ref{felbo}(2),
this fact is true for $M^\Phi_{j,hor}$ as well.
Nevertheless, for the other four operators, such a restriction is
{\it not} true anymore:
In \ref{ex:ACirr2} we provide an example when all $M'_{j,ver}$, $M^\Phi_{j,ver}$
$M_{\phi,hor}$ and $M_{\phi,ver}$ have Jordan blocks of size 2 with
eigenvalue one.
\ix{monodromy!Theorem}

In fact, examples with $M'_{j,ver}$ having such a Jordan block can
be constructed as follows: Fix a topological/equisingularity type
of isolated plane curve singularity $S$ whose monodromy has a
2--Jordan block. Let $o$ be the order of the eigenvalue of this
block. Then one can construct a projective plane curve $C$ of
degree $d$ (sufficiently large), which is a multiple of $o$, and such
that $C$ has a local singularity of type $S$. Let $V_f$ be
the cone over $C$. By a result of Steenbrink \cite{Steenbrink}
$M'_{j,ver}=(M'_{j,hor})^{-d}$, hence $M'_{j,ver}$ has a Jordan
block with eigenvalue one and size two.
\end{remark}

For the number of Jordan blocks we will use the following
notation:
\begin{definition}
For any operator $M$ let $\#^k_\lambda M$ denote  the number of Jordan blocks of $M$
 of size $k$  with eigenvalue $\lambda$.
\end{definition}


\section{\ Characters. Algebraic preliminaries}\labelpar{ss:chars}\setcounter{equation}{0}
Let $H$ be a finite dimensional $\bfc$--vector space, and assume
that $M\in Aut(H)$. Let $P_M(t)$ (or $P_{H,M}(t)$) be the characteristic polynomial
$\det(tI-M)$ of $M$. For each eigenvalue $\lambda$, let
$H_{M,\lambda}=\ker((\lambda I-M)^N)$ (for $N$ large) be the
generalized $\lambda$--eigenspace of $M$. Obviously, the
multiplicity of $t-\lambda$ in $P_M(t)$ is exactly $\dim
H_{M,\lambda}$.
When $M$ is clear from the context, we simply write $H_\lambda=H_{M,\lambda}$.
\ix{monodromy!characters}\ix{generalized eigenspace|textbf}

Sometimes it is more convenient to replace
$P_M(t)$ by its {\it divisor}
\begin{equation}
Div(H;M):=\sum_\lambda \, \dim H_{M,\lambda}\cdot (\lambda)\in
\Z[\bfc^*].
\end{equation}
More generally, assume that  two commuting automorphisms
$M_1$ and $M_2$ act on $H$. Then, for each pair $(\lambda,\xi)\in
\bfc^*\times \bfc^*$, set $H_{(\lambda,\xi)}:=H_{M_1,\lambda}\cap
H_{M_2,\xi}$ and
\begin{equation}\label{eq:char}
Div(H;M_1,M_2):= \sum_{(\lambda,\xi)}\,
\dim(H_{(\lambda,\xi)})\cdot (\lambda,\xi)\in \Z[\bfc^*\times
\bfc^*].
\end{equation}
Above $\Z[\bfc^*]$ and $\Z[\bfc^*\times \bfc^*]$ are the group rings of
$\bfc^*$ and $\bfc^*\times \bfc^*$ over $\Z$.

\begin{bekezdes}\labelpar{keyex}{\bf Key Example.}
Fix a triple of integers  $(m;n,\nu)$ with $m,\,\nu>0$ and $n\geq
0$.   Let $\calp$ be the set of points
\begin{equation*}
\calp:=\{(u,v)\in \bfc^*\times \bfc^*\,|\,v^\nu=1,\ u^mv^n=1\}.
\end{equation*}
In fact, $\calp$ is a finite subgroup of $\bfc^*\times \bfc^*$ of
order $m\nu$.  On this set of points we define two commuting
permutations: For each pair of real numbers $(t_{hor},t_{ver})$,
consider the set of points
\begin{equation*}
\calp(t_{hor},t_{ver}):=\{(u,v)\in \bfc^*\times
\bfc^*\,|\,v^\nu=e^{it_{ver}},\ u^mv^n=e^{it_{hor}}\}.
\end{equation*}
Fixing $t_{ver}=0$ and moving $t_{hor}$ from $0$ to $2\pi$, we get
a locally trivial family of $m\nu$ points, which defines a
permutation $\sigma_{hor}$ of $\calp$. Similarly, fixing
$t_{hor}=0$ and moving $t_{ver}$ from $0$ to $2\pi$, we get the
permutation  $\sigma_{ver}$ of $\calp$. One can verify that the
two permutations commute, based for example on the fact that
the torus $\{|u|=|v|=1\}$ has an abelian fundamental group.
\ix{Key Example|textbf}

Let $H:=H_0(\calp,\bfc)$ be the vector space with base elements
indexed by the points from $\calp$, i.e. the vector space of
elements of type $\sum_{p\in\calp} c_p\cdot p$, where
$c_p\in\bfc$. For any permutation $\sigma$ of $\calp$, define
$\sigma_*\in Aut(H)$ by $$\textstyle{\sigma_*(\sum_{p}c_p\cdot
p):= \sum_pc_p\cdot \sigma(p)}.$$ Our goal is to determine
$$\Lambda(m;n,\nu):=Div(H;\sigma_{hor,*},\sigma_{ver,*})\in \Z[\bfc^*\times \bfc^*].$$
Consider the following two elements of  $\calp$:
$$\mbox{$\mathfrak{h}:=(e^{2\pi i/m},1)$ \ \ and \ \
$\mathfrak{v}:=(e^{-2\pi in/m\nu}, e^{2\pi i/\nu})$}.$$
By a computation one can verify that $\sigma_{hor}$
and  $\sigma_{ver}$ can be obtained by multiplication in
$\calp$ by $\mathfrak{h}$ and  $\mathfrak{v}$ respectively. Let
$\widetilde{\calp}$ be the subgroup of the permutation group of
$\calp$ generated by $\sigma_{hor}$ and $\sigma_{ver}$. Having the
forms of $\mathfrak{h}$ and $\mathfrak{v}$, one can  easily verify
that $\mathfrak{h}$ and $\mathfrak{v}$ (hence $\sigma_{hor}$ and
$\sigma_{ver}$ in $\widetilde{\calp}$ too) satisfy the relations
$$\mathfrak{h}^m=\mathfrak{v}^{m\nu/(m,n)}=\mathfrak{h}^n\mathfrak{v}^\nu=1,$$
and that $\widetilde{\calp}$ acts transitively on $\calp$.
(Here $(m,n)=$gcd$(m,n)$.)
Define
the group
\begin{equation}\label{gmnnu}
G(m;n,\nu):=\{(\lambda,\xi)\in\bfc^*\times \bfc^* \,:\,
\lambda^m=\lambda^n\xi^\nu=1\}.\end{equation}
 Note that for
$(\lambda,\xi)\in G(m;n,\nu)$, one automatically has
$\xi^{m\nu/(m,n)}=1$.

 Then $G(m;n,\nu)$ is isomorphic to
$\widetilde{\calp}$, and both have order $m\nu$.  Since
$\widetilde{\calp}$ acts transitively on $\calp$ (which has the
same order) both are isomorphic to $\calp$ too. An isomorphism
$\widetilde{\calp}\longrightarrow \calp$ can be
generated by $\sigma_{hor}\mapsto \mathfrak{h}$ and
$\sigma_{ver}\mapsto \mathfrak{v}$.

Therefore, for any
$(\lambda,\xi)\in G(m;n,\nu)$ there is  a
$(\lambda,\xi)$--eigenvector of the action
$(\sigma_{hor},\sigma_{ver})$ which has the form:
$$\sum _{k,l} \lambda^k\xi^l \cdot
\sigma_{hor}^{-k}\sigma_{ver}^{-l}(p_0),$$ where $p_0=(1,1)\in
\calp$, and the index set $k,l$ is taken in such a way that the
set $\{\sigma_{hor}^k\sigma_{ver}^l\}_{k,l}$ is exactly
$\widetilde{\calp}$, and each element is represented once.
In particular,
\begin{equation}\label{divmnn}
\Lambda(m;n,\nu)=\sum_{(\lambda,\xi)\in G(m;n,\nu)}\
(\lambda,\xi).
\end{equation}

As a particular example,  assume that above  one has
$\nu=1$. Then $G(m;n,1)=\{\lambda\,:\, \lambda^m=1\}$ and
$\xi=\lambda^{-n}$. Hence
\begin{equation}\label{be:nu1}
\Lambda(m;n,1)=\sum_{\lambda^m=1}\, (\lambda,\lambda^{-n}).
\end{equation}
From (\ref{divmnn}) it also follows that the characteristic
polynomials of $\sigma_{hor,*}$, respectively of $\sigma_{ver,*}$
acting on $H$ are the following:
\begin{equation}\label{eq:CHAR}
P_{\sigma_{hor,*}}(t)=    (t^{m}-1)^\nu, \ \ \
P_{\sigma_{ver,*}}(t)=(t^{m\nu/(m,n)}-1)^{(m,n)}.
\end{equation}
Moreover, the characteristic polynomial of $\sigma_{hor,*}$
restricted on the generalized 1--eigenspace $H_{\sigma_{ver,*},1}$
of  $\sigma_{ver,*}$ is
\begin{equation}\label{eq:CHAR2}
P_{\sigma_{hor,*}|H_{\sigma_{ver,*},1}}(t)=t^{(m,n)}-1.
\end{equation}
\end{bekezdes}
\ix{generalized eigenspace}

\begin{bekezdes}\labelpar{be:dcov}{\bf The `$d$--covering'.}
Let $H$ be a finite dimensional vector space with two commuting
automorphisms $M_1$ and $M_2$. Furthermore,
fix a positive integer $d$. We define  $H^{(d)}:=H^{\oplus d}$ and
its automorphisms $M_1^{(d)}$ and $M_2^{(d)}$ by
$$M_1^{(d)}(x_1,\ldots,x_d)=(M_1(x_1),\ldots,M_1(x_d)),$$
and
$$M_2^{(d)}(x_1,\ldots,x_d)=(M_2(x_d),x_1,\ldots,x_{d-1}).$$ It is
not hard to see that $Div(H^{(d)}; M_1^{(d)},M_2^{(d)})$ can be
recovered from $Div(H;M_1,M_2)$ and the integer $d$. Indeed,
consider the morphism $$\Xi^{(d)}:\Z[\bfc^*\times\bfc^*]\to
\Z[\bfc^*\times\bfc^*], \ \ \mbox{where} \ \
\Xi^{(d)}((\lambda,\xi)):=\sum_{\alpha^d=\xi}(\lambda,\alpha).$$
Then, one shows that $$\Xi^{(d)}(Div(H;M_1,M_2))=Div(H^{(d)};
M_1^{(d)},M_2^{(d)}).$$
\end{bekezdes}
\ix{d@$d$--covering!of divisors|textbf}

\begin{lemma}\labelpar{le:xi} For any fixed positive integer $d$ one
has:

\vspace{2mm}

(a) $\Xi^{(d)}$ is injective.

\vspace{2mm}

(b) $\Xi^{(d)}(\Lambda(m;n,\nu))=\Lambda(m;n,d\nu)$.
\end{lemma}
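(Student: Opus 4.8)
The plan is to verify both statements by direct manipulation of the formal sums in $\N[\bfc^*\times\bfc^*]$, using the explicit description of $\Lambda(m;n,\nu)$ from \eqref{divmnn}. For part (a), I would argue as follows. Suppose $D=\sum_{(\lambda,\xi)} c_{(\lambda,\xi)}(\lambda,\xi)$ satisfies $\Xi^{(d)}(D)=0$. Since $\Xi^{(d)}((\lambda,\xi))=\sum_{\alpha^d=\xi}(\lambda,\alpha)$, the point $(\lambda,\alpha)$ occurs in $\Xi^{(d)}((\lambda',\xi'))$ precisely when $\lambda'=\lambda$ and $\xi'=\alpha^d$; hence the coefficient of $(\lambda,\alpha)$ in $\Xi^{(d)}(D)$ equals $c_{(\lambda,\alpha^d)}$. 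Thus $\Xi^{(d)}(D)=0$ forces $c_{(\lambda,\alpha^d)}=0$ for all $(\lambda,\alpha)$; since every $\xi\in\bfc^*$ is of the form $\alpha^d$ for some $\alpha$, we get $D=0$. This shows $\Xi^{(d)}$ is injective (in fact the argument shows more: $\Xi^{(d)}$ is "diagonal" with respect to the first coordinate and behaves like pullback along the $d$-th power map in the second).

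For part (b), I would compute both sides explicitly. By \eqref{divmnn}, $\Lambda(m;n,\nu)=\sum_{(\lambda,\xi)\in G(m;n,\nu)}(\lambda,\xi)$ where $G(m;n,\nu)=\{(\lambda,\xi):\lambda^m=\lambda^n\xi^\nu=1\}$, cf. \eqref{gmnnu}. Applying $\Xi^{(d)}$ gives
$$
\Xi^{(d)}(\Lambda(m;n,\nu))=\sum_{(\lambda,\xi)\in G(m;n,\nu)}\ \sum_{\alpha^d=\xi}(\lambda,\alpha).
$$
A pair $(\lambda,\alpha)$ appears on the right-hand side iff $\lambda^m=1$ and $\xi:=\alpha^d$ satisfies $\lambda^n\xi^\nu=1$, i.e. $\lambda^n\alpha^{d\nu}=1$; moreover for each such $(\lambda,\alpha)$ there is exactly one contributing $\xi$, namely $\xi=\alpha^d$, so no multiplicities accumulate. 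Therefore
$$
\Xi^{(d)}(\Lambda(m;n,\nu))=\sum_{\{(\lambda,\alpha)\,:\,\lambda^m=1,\ \lambda^n\alpha^{d\nu}=1\}}(\lambda,\alpha)=\sum_{(\lambda,\alpha)\in G(m;n,d\nu)}(\lambda,\alpha)=\Lambda(m;n,d\nu),
$$
where the middle equality is just the definition \eqref{gmnnu} of $G(m;n,d\nu)$ and the last is \eqref{divmnn} again. This completes part (b).

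I do not expect a serious obstacle here: the statement is essentially bookkeeping, and the only point requiring a little care is checking that the map $\alpha\mapsto\alpha^d$ does not create or collapse points in an unexpected way — i.e. that the fibers of the $d$-th power map are exactly the cosets of $\mu_d$, so that $\Xi^{(d)}$ of a sum of distinct group elements is again a sum of distinct group elements with all coefficients one. This is automatic once one observes that $G(m;n,\nu)$ is precisely the image of $G(m;n,d\nu)$ under $(\lambda,\alpha)\mapsto(\lambda,\alpha^d)$ and that this map is a $\gcd(d, m\nu/(m,n))$-to-one... but in fact one does not even need to count fibers, since the displayed computation tracks coefficients directly. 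Finally, I would remark that part (a) together with part (b) is exactly what is needed to recover $Div(H^{(d)};M_1^{(d)},M_2^{(d)})$ uniquely from $Div(H;M_1,M_2)$ in the application to the monodromies $M^\Phi_{j,*}$ via \eqref{felbo}(2).
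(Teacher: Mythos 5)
Your proof is correct and follows essentially the same route as the paper: part (b) is exactly the paper's observation that the system $\{\lambda^m=\lambda^n\xi^{\nu}=1,\ \alpha^d=\xi\}$ is equivalent to $\{\lambda^m=\lambda^n\alpha^{d\nu}=1\}$. For part (a) the paper exhibits the explicit left inverse $\Omega^{(d)}(\lambda,\alpha)=\frac{1}{d}(\lambda,\alpha^d)$ into $\Q[\bfc^*\times\bfc^*]$, whereas you recover the coefficients directly via surjectivity of the $d$-th power map on $\bfc^*$ --- the same idea in a slightly different package.
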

\begin{proof} (a)
If one takes $\Omega^{(d)}: \Z[\bfc^*\times\bfc^*]\to
\Q[\bfc^*\times\bfc^*]$, defined by
$\Omega^{(d)}(\lambda,\alpha)=\frac{1}{d}(\lambda,\alpha^d)$, then
$\Omega^{(d)}\circ \Xi^{(d)}$ is the inclusion
$\Z[\bfc^*\times\bfc^*]
\hookrightarrow \Q[\bfc^*\times\bfc^*]$. For (b)
notice that the system $\{\lambda^m=\lambda^n\xi^{\nu}=1,\
\alpha^d=\xi\}$ is equivalent to
$\{\lambda^m=\lambda^n\alpha^{d\nu}=1\}$.
\end{proof}

We will also need the following property of $d$--coverings provided by elementary
linear algebra.
Consider the vector space $H$ and the two commuting automorphisms $M_1$ and $M_2$ as above.
Let $H_{M_2,1}$ be the generalized 1--eigenspace associated with $M_2$, and consider the
restrictions of $M_1$ and $M_2$ (denoted by the same symbols $M_1$ and $M_2$)
to this subspace. In this way we get the triple
$(H_{M_2,1};M_1,M_2)$.
Similarly, for any positive integer $d$, we can consider
the triple $ (H^{(d)}_{M^{(d)}_2,1};M^{(d)}_1,M^{(d)}_2)$.

\begin{lemma}\labelpar{lem:dcovuj}
For any triple $(H;M_1,M_2)$ and for any $d$, one has an isomorphism  of  triples
$$(H_{M_2,1};M_1,M_2)\approx (H^{(d)}_{M^{(d)}_2,1};M^{(d)}_1,M^{(d)}_2).$$
\end{lemma}
\begin{proof}
First note that we may assume that all the eigenvalues of $M_2$ are equal to 1.
Then, it is convenient to write $M_2$ as $\widetilde{M}_2^d$ for some $\widetilde{M}_2$ which commutes with
$M_1$. This can be done as follows: if $M_2=I+N$, where $I$ is the identity and  $N$ is a nilpotent operator, then
$$\widetilde{M}_2=(I+N)^{1/d}:=I+\frac{1}{d}N+\frac{1}{2!d}\Big(\frac{1}{d}-1\Big)N^2+\cdots.$$
Consider  the matrix identities
$$\begin{bmatrix}0&0&\cdots&I\\
\widetilde{M}_2^{d-1}&0& \cdots&0\\
\cdots&\cdots&\cdots&\cdots\\
0&\cdots&\widetilde{M}_2&0\end{bmatrix}
\begin{bmatrix}0&I&0&\cdots\\
0&0&I& \cdots\\
\cdots&\cdots&\cdots&\cdots\\
\widetilde{M}_2^d&0&0&\cdots\end{bmatrix}
\begin{bmatrix}0&\widetilde{M}_2^{-(d-1)}&0&\cdots\\
0&0&\widetilde{M}_2^{-(d-2)}& \cdots\\
\cdots&\cdots&\cdots&\cdots\\
I&0&0&\cdots\end{bmatrix}=
\begin{bmatrix}0&\widetilde{M}_2&0&\cdots\\
0&0&\widetilde{M}_2& \cdots\\
\cdots&\cdots&\cdots&\cdots\\
\widetilde{M}_2&0&0&\cdots\end{bmatrix}
$$
and
$$\frac{1}{d}\begin{bmatrix}1&1&\cdots&1\\
1&\bar{\xi}_2& \cdots&\bar{\xi}_2^{d-1}\\
\cdots&\cdots&\cdots&\cdots\\
1&\bar{\xi}_d&\cdots &\bar{\xi}_d^{d-1}\end{bmatrix}
\begin{bmatrix}0&1&0&\cdots\\
0&0&1& \cdots\\
\cdots&\cdots&\cdots&\cdots\\
1&0&0&\cdots\end{bmatrix}
\begin{bmatrix}1&1&\cdots&1\\
1&\xi_2& \cdots&\xi_d\\
\cdots&\cdots&\cdots&\cdots\\
1&\xi_2^{d-1}&\cdots &\xi_d^{d-1}\end{bmatrix}
=
\begin{bmatrix}1& & & \\
&\xi_2& &\\
& &\cdots&\\
& & &\xi_d\end{bmatrix}
$$
where $1=\xi_1,\xi_2,\cdots, \xi_d$ are the $d$--roots of unity, and
$\bar{\xi_i}$ their conjugates. They show  that $ (H^{(d)};M^{(d)}_1,M^{(d)}_2)$
is isomorphic to $\oplus _{i=1}^d(H; M_1, \xi_i\widetilde{M}_2)$.

On the other hand, $(H; M_1,\widetilde{M}_2)$ and $(H; M_1,M_2)$ are isomorphic.
\end{proof}

\section{\ The divisors $Div_\Phi$, $Div^\Phi_j$ and $Div'_j$ in terms of $\gc$}\labelpar{charth}
\setcounter{equation}{0}

The three pairs of operators listed in section \ref{ss:MO} define three divisors. These are the following.
\begin{definition}
 We set
\begin{equation}\label{mon1char}
Div'_j:=Div(H_1(F'_j);M'_{j,hor},M'_{j,ver}) \ \ \ \ \ \ (1\leq j\leq  s),
\end{equation}
\begin{equation}\label{mon1bchar}
Div^\Phi_j:=Div(H_1(F'_j)^{\oplus
d_j};M^\Phi_{j,hor},M^\Phi_{j,ver}), \ \ (1\leq j\leq s),
\end{equation}
and
\begin{equation}\label{mon2char}
Div_\Phi:= Div(H_1(F_\Phi);M_{\Phi,hor},M_{\Phi,ver}).
\end{equation}
\end{definition}
\ix{monodromy!divisors}

\begin{bekezdes}\labelpar{notnot}{\bf Some old/new  notations.}
Recall that  $\calw(\gc)$ (respectively $\calw(\Gamma^2_{\C,j})$) denote the
set of non--arrowhead vertices of $\gc$ (respectively of
$\Gamma^2_{\C,j}$, for any $j=1,\ldots, s$). For each $w\in
\calw(\gc)$, let $C_w$ be the corresponding irreducible curve in
$\C$, $g_w$ its genus, and $\delta_w$ the number of legs associated with the star of $v$,
i.e. the number of edges in $\gc$
adjacent to $w$, where each loop contributes twice (cf. \ref{ls}).
Moreover, assume that the decoration of $w$ in $\gc$ is
$(m_w;n_w,\nu_w)$.
\end{bekezdes}

With these notations, one has the following {\it A'Campo type identities},
generalizations  of the identity (\ref{eq:ACampo}) proved in \cite{AC}:

\begin{theorem}\labelpar{th:char}
\begin{equation}\label{eq:egy}
Div_\Phi-(1,1)=\sum_{w\in\calw(\gc)}\, (2g_w+\delta_w-2)\cdot
\Lambda(m_w;n_w,\nu_w).
\end{equation}
Moreover, for any $j=1,\ldots, s$,
\begin{equation}\label{eq:ketto}
Div^\Phi_j-\sum_{\xi^{d_j}=1}(1,\xi)=\sum_{w\in\calw(\Gamma^2_{\C,j})}\,(\delta_w-2)\cdot
\Lambda(m_w;n_w,\nu_w), \ \ \ \ \ \ \ \ \ \ \ \ \ \ \
\end{equation}
\begin{equation}\label{eq:kettob}
Div'_j-(1,1)=\sum_{w\in\calw(\Gamma^2_{\C,j})}\,(\delta_w-2)\cdot
\Lambda(m_w;n_w,\nu_w/d_j),
\end{equation}
where $d_j=\deg(g|\Sigma_j)$, or
$d_j={\rm gcd}(\nu_w) _{w\in\calw(\Gamma^2_{\C,j})}$ by \ref{tree}.

In particular, the above formulae provide the ranks of
$H_1(F_\Phi)$ and $H_1(F'_j)$ as well:
\begin{equation}\label{eq:H1Phi} \begin{split}
{\rm rank}\, H_1(F_\Phi)=&1+
\sum_{w\in\calw(\gc)}\, (2g_w+\delta_w-2)\cdot
m_w\nu_w,\\
\mu_j'={\rm rank}\, H_1(F'_j)=& 1+
\sum_{w\in\calw(\Gamma^2_{\C,j})}\,(\delta_w-2)\cdot
m_w\nu_w/d_j.
\end{split}
\end{equation}

\end{theorem}
\begin{proof}
As in any `A'Campo type' formula (cf. \cite{AC} or (\ref{eq:ACampo}) above), it is more
\ix{A'Campo's formula}
convenient to work with a zeta--function of an action instead of
its characteristic polynomial. In the present case also, we will
determine first the element
$$D(F_\Phi):=Div(H_0(F_\Phi);M_{\Phi,hor}^0,M_{\Phi,ver}^0)-
Div(H_1(F_\Phi);M_{\Phi,hor},M_{\Phi,ver})$$
in $\Z[\bfc^*\times \bfc^*]$.  Above,
$M_{\Phi,hor}^0$ and $M_{\Phi,ver}^0$ are the horizontal and the
vertical monodromies acting on $H_0(F_\Phi)$. Since $F_\Phi$ is
connected, this space is $\bfc$, and
$M_{\Phi,hor}^0=M_{\Phi,ver}^0=Id_\bfc$.  Hence
$Div(H_0(F_\Phi);M_{\Phi,hor}^0,M_{\Phi,ver}^0)=(1,1)$, and thus  the
left hand side of (\ref{eq:egy}) is $-D(F_\Phi)$.
\ix{A'Campo's formula}

The point is that $D(F_\Phi)$ is additive with respect to a
Mayer--Vietoris exact sequence. More precisely, if we consider the
decomposition \ref{felbont}, then
$$D(F_\Phi)=\sum_{w\in \calw(\gc)}D(\widetilde{F}_w)-D(B),$$
where $B$ is the union of  `cutting circles'. Since \ix{cutting circles}
$H_0(B)=H_1(B)$ and the monodromy actions on them can also be identified,
$D(B)=0$. On the other hand, $\widetilde{F}_w$ is a regular
covering over the regular part $C_w^{reg}$ of the curve $C_w$ with
a finite fiber which can be identified with $\calp$ in the Key \ix{Key Example}
Example \ref{keyex}. Moreover, the horizontal and vertical
actions on $\widetilde{F}_w$ are induced by the corresponding
actions on $\calp$. Hence $D(\widetilde{F}_w)=\chi(C_w^{reg})\cdot
D(\calp)=\chi(C_w^{reg})\cdot\Lambda(m_w;n_w,\nu_w)$, where
$\chi(C_w^{reg})$ stands for the Euler--characteristic of
$C_w^{reg}$, and  equals $2-2g_w-\delta_w$.

The proof of (\ref{eq:ketto}) is similar.  Using the results and
notations of \ref{felbo}, one gets that $F_\Phi\cap T_j$ is
cut by `cutting circles' into the pieces
$\{\widetilde{F}_w\}_{w\in \Gamma^2_{\C,j}}$. But $F_\Phi\cap T_j$
consists of $d_j$ copies of $F'_j$. Hence, with the additional
fact that $g_w=0$ for all $w\in\calw(\Gamma^2_{\C,j})$, we get
$$-D(\cup_{d_j}F'_j; m^\Phi_{j,hor},m^\Phi_{j,hor})=
\sum_{w\in\calw(\Gamma^2_{\C,j})}\,(\delta_w-2)\cdot
\Lambda(m_w;n_w,\nu_w).$$ On the other hand, in this case, the
0--homology is different: $H_0(\cup_{d_j}F'_j,\bfc)=\bfc^{d_j}$ on
which the horizontal monodromy acts by identity and the vertical
one by cyclic permutation ---, therefore its contribution is
$\sum_{\xi^{d_j}=1}(1,\xi)$.

Now, using the special forms of $ m^\Phi_{j,hor}$ and
$m^\Phi_{j,hor}$ from \ref{felbo}(2), by \ref{be:dcov} and
\ref{le:xi} one gets that $\Omega^{(d_j)}(Div^\Phi_j)=Div'_j$,
thus  (\ref{eq:kettob}) follows too.
\end{proof}

The results \ref{th:char}, (\ref{eq:CHAR}) and (\ref{eq:CHAR2})  imply:

\begin{corollary}\labelpar{charpols} \

(a) The characteristic polynomial of $M_{\Phi,hor}$ and
$M_{\Phi,ver}$, acting on $H=H_1(F_\Phi,\bfc)$, are
\begin{equation*}\begin{split}
P_{M_{\Phi,hor}}(t)= &(t-1)\cdot \prod_{w\in \calw(\gc)}\
(t^{m_w}-1)^{\nu_w(2g_w+\delta_w-2)}, \\
P_{M_{\Phi,ver}}(t)= &(t-1)\cdot \prod_{w\in \calw(\gc)}\
(t^{m_w\nu_w/(m_w,n_w)}-1)^{(m_w,n_w)(2g_w+\delta_w-2)}.
\end{split}\end{equation*}
 The
characteristic polynomial of the restriction of $M_{\Phi,hor}$ on
the generalized  eigenspace $H_1(F_\Phi,\bfc)_{M_{\Phi,ver},1}$ is
\ix{generalized eigenspace}
$$P_{M_{\Phi,hor}|H_{M_{\Phi,ver},1}}(t)=(t-1)
\cdot \prod_{w\in \calw(\gc)}\
(t^{(m_w,n_w)}-1)^{2g_w+\delta_w-2}.\hspace{2.5cm}$$

(b) There are similar
formulae for the operators acting on $H=H_1(F_\Phi\cap T_j)$:
\begin{equation*}\begin{split}
P_{M^\Phi_{j,hor}}(t)= (t-1)^{d_j}\cdot &\prod_{w\in
\calw(\Gamma^2_{\C,j})}\
(t^{m_w}-1)^{\nu_w(\delta_w-2)},\\
P_{M^\Phi_{j,ver}}(t)= (t^{d_j}-1)\cdot &\prod_{w\in \calw(\Gamma^2_{\C,j})}\
(t^{m_w\nu_w/(m_w,n_w)}-1)^{(m_w,n_w)(\delta_w-2)},\\
P_{M^\Phi_{j,hor}|H_{M^\Phi_{j,ver},1}}(t)=(t-1)
\cdot &\prod_{w\in \calw(\Gamma^2_{\C,j})}\
(t^{(m_w,n_w)}-1)^{\delta_w-2}.
\end{split}\end{equation*}

(c) Finally, the characteristic polynomials of the local
horizontal/vertical monodromies acting on $H=H_1(F'_j,\bfc)$ are
\begin{equation*}\begin{split}
P_{M'_{j,hor}}(t)= (t-1)\cdot &\prod_{w\in \calw(\Gamma^2_{\C,j})}\
(t^{m_w}-1)^{\nu_w(\delta_w-2)/d_j},\\
P_{M'_{j,ver}}(t)= (t-1)\cdot &\prod_{w\in \calw(\Gamma^2_{\C,j})}\
(t^{m_w\nu_w/d_j(m_w,n_w)}-1)^{(m_w,n_w)(\delta_w-2)},\\
P_{M'_{j,hor}|H_{M'_{j,ver},1}}(t)= (t-1)
\cdot &\prod_{w\in \calw(\Gamma^2_{\C,j})}\
(t^{(m_w,n_w)}-1)^{\delta_w-2}.
\end{split}\end{equation*}
 Notice that the output of the
right hand sides of the formulas in (c) (a posteriori) should be independent of
the choice of the germ $g$, since the left hand sides depend only on the germ $f$.
\ix{monodromy!characteristic polynomial}

Notice also that
$P_{M^\Phi_{j,hor}|H_{M^\Phi_{j,ver},1}}(t)=P_{M'_{j,hor}|H_{M'_{j,ver},1}}(t)$.
%
\end{corollary}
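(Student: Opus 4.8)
\section*{Proof proposal for Corollary \ref{charpols}}

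The plan is to deduce everything from Theorem \ref{th:char} together with the explicit computation of the ``Key Example'' divisor $\Lambda(m;n,\nu)$ and the formulae \eqref{eq:CHAR}, \eqref{eq:CHAR2} for the characteristic polynomials of $\sigma_{hor,*}$ and $\sigma_{ver,*}$. The correspondence between divisors in $\N[\bfc^*\times\bfc^*]$ and characteristic polynomials is purely bookkeeping: if $M_1, M_2$ commute on $H$ then $Div(H;M_1,M_2)=\sum_{(\lambda,\xi)}\dim H_{(\lambda,\xi)}\cdot(\lambda,\xi)$, and the characteristic polynomial of $M_1$ is $\prod_{(\lambda,\xi)}(t-\lambda)^{\dim H_{(\lambda,\xi)}}$ (summing multiplicities over all second coordinates $\xi$), while the restriction of $M_1$ to the generalized $1$-eigenspace $H_{M_2,1}$ has characteristic polynomial $\prod_{\lambda}(t-\lambda)^{\dim H_{(\lambda,1)}}$. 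Thus to prove part (a) it suffices to apply these three ``projection'' operations to the identity \eqref{eq:egy}, namely
$$Div_\Phi - (1,1)=\sum_{w\in\calw(\gc)}(2g_w+\delta_w-2)\cdot\Lambda(m_w;n_w,\nu_w).$$

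For each vertex $w$, the divisor $\Lambda(m_w;n_w,\nu_w)=\sum_{(\lambda,\xi)\in G(m_w;n_w,\nu_w)}(\lambda,\xi)$ is the ``uniform'' distribution over the group $G(m_w;n_w,\nu_w)$ of \eqref{gmnnu}, so the first projection (forgetting $\xi$, i.e.\ projecting onto the first factor) produces the polynomial $(t^{m_w}-1)^{\nu_w}$ — this is exactly \eqref{eq:CHAR} — the second projection (forgetting $\lambda$) produces $(t^{m_w\nu_w/(m_w,n_w)}-1)^{(m_w,n_w)}$, and the restriction-to-$H_{\sigma_{ver,*},1}$ projection produces $t^{(m_w,n_w)}-1$, by \eqref{eq:CHAR2}. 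Feeding these into \eqref{eq:egy} and remembering the extra $(1,1)$ on the left (which contributes the factor $(t-1)$), one obtains the three displayed formulae of part (a). Part (b) is identical, applied to \eqref{eq:ketto}: the left-hand correction term is now $\sum_{\xi^{d_j}=1}(1,\xi)$, which under the first projection gives $(t-1)^{d_j}$ and under the second gives $t^{d_j}-1$; the restriction to the generalized $1$-eigenspace of $M^\Phi_{j,ver}$ keeps only the $\xi=1$ part of this correction, hence the factor $(t-1)$. Part (c) is the analogous computation starting from the third identity of Theorem \ref{th:char}, with $\nu_w$ replaced by $\nu_w/d_j$ throughout (note $d_j\mid\nu_w$ by \eqref{rm:dj}), so $(m_w,n_w)$ is unchanged and $m_w\nu_w/(m_w,n_w)$ becomes $m_w\nu_w/(d_j(m_w,n_w))$; the final observation $P_{M^\Phi_{j,hor}|H_{M^\Phi_{j,ver},1}}=P_{M'_{j,hor}|H_{M'_{j,ver},1}}$ follows because the restriction-to-$1$-eigenspace projection kills the discrepancy between $\nu_w$ and $\nu_w/d_j$ (it only sees $(m_w,n_w)$), or alternatively from $\Omega^{(d_j)}(Div^\Phi_j)=Div'_j$ established in the proof of Theorem \ref{th:char}.

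I do not expect a genuine obstacle here; the only point requiring a little care is the precise interaction between the three ``projection'' operations and the $d$-covering morphism $\Xi^{(d)}$ (used for part (c) versus part (b)): one must check that the first projection commutes with $\Xi^{(d)}$ but the second does not (it multiplies the $\xi$-orders by $d$), which is exactly what is reflected in the exponents $m_w\nu_w/(m_w,n_w)$ versus $m_w\nu_w/(d_j(m_w,n_w))$. A secondary point to state carefully is the independence-of-$g$ remark in part (c): this is not proved by the computation (the formulae manifestly involve $\gc$, which depends on $g$) but is a logical consequence of the fact that the left-hand operators $M'_{j,*}$ are intrinsic to $f$ — this deserves an explicit sentence rather than a computation. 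All the remaining steps are mechanical substitutions, so I would present the argument compactly, doing the $\Lambda(m;n,\nu)$-to-polynomial translation once in full and then citing it three times.
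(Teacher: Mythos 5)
Your proposal is correct and follows exactly the route the paper intends: the paper derives the corollary in one line from Theorem (\ref{th:char}) together with (\ref{eq:CHAR}) and (\ref{eq:CHAR2}), and your three "projection" operations applied to the divisor identities (\ref{eq:egy}), (\ref{eq:ketto}) and the third identity of (\ref{th:char}) are precisely the bookkeeping the paper leaves to the reader. Your careful remarks on the correction terms, the interaction with $\Xi^{(d)}$, and the logical (rather than computational) status of the independence-of-$g$ observation are all accurate.
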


\begin{remark}
The above formulas  from \ref{th:char} and \ref{charpols}
are valid even if $\gc$ does not satisfy Assumption A. \ix{Assumption A}
In this case, it might happen that
$\calw(\Gamma^2_{\C,j})=\emptyset$, see e.g. \ref{ex:loop}.  In
such a situation, by convention, $\sum _{\calw(\Gamma^2_{\C,j})}=0$ and
$\prod _{\calw(\Gamma^2_{\C,j})}=1$.
\end{remark}

\section{\ Examples}\labelpar{ex:charpol}\setcounter{equation}{0}

\begin{bekezdes}\labelpar{ex:charpol1}
Assume that  $f=x^3y^7-z^4$;  see  \ref{ex:347}  for a graph
$\gc$ with $g=x+y+z$.  Then, by \ref{charpols}(c),  the
characteristic polynomials of the two vertical monodromies
$M'_{j,ver}$ ($j=1,2$) are $(t^7-1)^3/(t-1)^3$ corresponding to
the transversal type $y^7-z^4$, and $(t^3-1)^3/(t-1)^3$
corresponding to the transversal type $x^3-z^4$. This can also be
verified  geometrically in an  elementary way: by the
Thom--Sebastiani theorem (see \cite{TS}),  in the first case
$F'_j$ homotopically is the join of 7 points with 4 points.
Analyzing the equation of $f$ we get that the vertical monodromy
is the join of the cyclic permutation of the 7 points with the
trivial permutation of the 4 points. A similar geometric
description is valid for the second case as well.

 In particular, in this case, these vertical operators have no eigenvalue 1.
\end{bekezdes}

\begin{bekezdes}\labelpar{ex:charpol2}
If $f=y^3+(x^2-z^4)^2$ (see \ref{ex:ketA2} for $\gc$), then the
transversal type is $A_2$ and $M'_{1,ver}$ has characteristic
polynomial $(t-1)^2$. Since the eigenvalues of the commuting
operator $M'_{1,hor}$ are distinct, $M'_{1,ver}$ is the
identity.\end{bekezdes}
\begin{bekezdes}\labelpar{ex:charpol3}
If  $f=x^a+y^2+xyz$ ($a=3$ or 5), cf. \ref{xyz3} and
\ref{xyz5}, then $s=1$, the transversal type is $A_1$, and
$Div'_1=(1,1)$.\end{bekezdes}
\begin{bekezdes}\labelpar{ex:charpol4}
If $f=x^2y^2+z^2(x+y)$, or $f=x^2y+z^2$, cf. \ref{nu2} and
\ref{221}, then again each transversal type is $A_1$, but
$Div'_j=(1,-1)$.
\end{bekezdes}

\begin{remark}\labelpar{ex:homch}
Assume that $f$ is homogeneous of degree $d$.  For $g$ a  generic
linear function, $\gc$ was constructed in Chapter \ref{hom}. This
says that any vertex has a decoration of type $(m;d,1)$. Moreover,
by (\ref{be:nu1}),
$$\Lambda(m;d,1)=\sum_{\lambda^m=1}\, (\lambda,\lambda^{-d}).$$
Therefore, the statement of \ref{th:char} is compatible  with
$M_{\Phi,ver}=(M_{\Phi,hor})^{-d}$, or
$M^\Phi_{j,ver}=(M^{\Phi}_{j,hor})^{-d}$  already mentioned.  See also
\ref{feature1} for more details.
\end{remark}

\section{\ Vertical monodromies and the graph $G$}\labelpar{rankH1}\setcounter{equation}{0}

\begin{bekezdes}\label{cale_calw}
As we already explained, we are primarily interested in the generalized eigenspaces
of the vertical monodromies corresponding to eigenvalue one.
Section \ref{charth} provides their ranks  in terms of $\gc$.
In this section we compute them in terms of the {\it combinatorics
of the plumbing graph $G$}.
\ix{monodromy!vertical!of an ICIS}\ix{generalized eigenspace}

The reader is invited to recall the definition of the graphs $G$
and $G_{2,j}$,  the  plumbing graphs of $(\partial F,V_g)$
and  $\partial_{2,j}F$, which were  introduced in  \ref{MTh.1} and
\ref{2}, and are kept  {\em unmodified by plumbing calculus}.

For any graph  $Gr$  with arrowheads $\cala(Gr)$ and
non--arrowheads $\calw(Gr)$, and where the arrowheads are supported by
usual  or  dash--edges, we also define $\cale_\calw(Gr)$
as the set of edges connecting non--arrowhead vertices. Recall
that $c(Gr)$ denotes the number of independent cycles in $Gr$ and
$g(Gr)$ the sum of the genus decorations of $Gr$. These numbers,
clearly, are not independent. E.g., if $Gr$ is connected, then by
an Euler--characteristic argument:
\begin{equation}\label{eq:c}
1-c(Gr)=|\calw(Gr)|-|\cale_\calw(Gr)|.
\end{equation}
\end{bekezdes}
\ix{number of independent cycles}
\ix{generalized eigenspace}

\begin{example}\labelpar{ex:gcG} Let $G$ be one of the output graphs of the
Main Algorithm \ref{algo}. In order to determine $c(G)$ and
$g(G)$, for each  $w\in\calw(\gc)$ we will rewrite the decorations
$m,n,n_1,\ldots, n_s,m_1,\ldots, m_t$ used in the Main Algorithm
 as $m_w,n_w,n_{w,1},$ $\ldots,
n_{w,s},m_{w,1},\ldots, m_{w,t}$. Then, using the formulae of
\ref{algo}, we have the following expressions in terms of $\gc$ for the
cardinalities $|\cala(G)|$, $|\calw(G)|$, and $|\cale_\calw(G)|$
of the corresponding sets associated with  $G$:
\begin{equation*}\begin{split}
|\cala(G)|=&|\cala(\gc)|,\\
|\calw(G)|=& \sum_{w\in\calw(\gc)} \, \mbox{gcd}(
m_w,n_w,n_{w,1},\ldots, n_{w,s},m_{w,1},\ldots, m_{w,t}),\\
2|\cale_\calw(G)|+|\cala(G)|=&\sum_{w\in\calw(\gc)} \, \big( \ \sum_i
\mbox{gcd}(m_w,n_w,n_{w,i})+\sum_{j} \mbox{gcd}(m_w,n_w,m_{w,j})\
\big).\end{split}\end{equation*}
 Moreover,
$$2g(G)=\sum_{w\in\calw(\gc)} \, (2g_w+\delta_w-2)\mbox{gcd}(m_w,n_w)+2|\calw(G)|
-2|\cale_w(G)|-|\cala(G)|;$$ and  $c(G)$ also follows
via (\ref{eq:c}).
\end{example}
\begin{remark}\labelpar{rem:GC}
One can verify that, in general,
$$\mbox{$g(G)\geq g(\gc)$ \ \ and \ \ $c(G)\geq c(\gc)$.}$$
Compare with Remark \ref{cgcg} above or with  \cite[(3.11)]{cyclic}.
\end{remark}

\begin{proposition}\labelpar{prop:gc}
The ranks of the generalized eigenspaces \ix{generalized eigenspace}
$H_1(F_\Phi,\bfc)_{M_{\Phi,ver},1}$ and
$H_1(F'_j,\bfc)_{M'_{j,ver},1}$ satisfy the following identities:
\begin{equation}\label{prop:gc1}
\dim\,H_1(F_\Phi,\bfc)_{M_{\Phi,ver},1}=2g(G)+2c(G)+|\cala(G)|-1.
\end{equation}
\begin{equation}\label{prop:gc2}
\dim\,H_1(F'_j,\bfc)_{M'_{j,ver},1}=2g(G_{2,j})+2c(G_{2,j})+|\cala(G_{2,j})|-1.
\end{equation}
Notice also that by \ref{charpols}, one also has the identity
\begin{equation}\label{eq:geneig}
\dim H_1(F_\Phi\cap T_j,\bfc)_{M^\Phi_{j,ver},1}=
\dim H_1(F'_j,\bfc)_{M'_{j,ver},1}.
\end{equation}
\end{proposition}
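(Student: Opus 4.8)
\textbf{Proof strategy for Proposition \ref{prop:gc}.}
The plan is to compute both sides of the identities using the combinatorial formulae for $\gc$ already in hand, and to show they coincide. For the left-hand sides, the key input is Corollary \ref{charpols}: the characteristic polynomial of $M_{\Phi,hor}$ restricted to $H_1(F_\Phi,\bfc)_{M_{\Phi,ver},1}$ is
$(t-1)\prod_{w\in\calw(\gc)}(t^{(m_w,n_w)}-1)^{2g_w+\delta_w-2}$,
so that the dimension of that generalized eigenspace equals the total degree of this polynomial, namely
$1+\sum_{w\in\calw(\gc)}(2g_w+\delta_w-2)\,\gcd(m_w,n_w)$.
(One must be mildly careful: the exponent $2g_w+\delta_w-2$ can be negative for a rational vertex of valency $\le 1$, but the product over all $w$ is an honest polynomial, and it is cleanest to argue with the divisor $Div_\Phi$ from Theorem \ref{th:char}, reading off the total multiplicity of all characters $(\lambda,1)$ via the explicit form \eqref{divmnn} of $\Lambda(m;n,\nu)$, which contributes exactly $\gcd(m,n)$ such characters.) The analogous computation for $H_1(F'_j,\bfc)_{M'_{j,ver},1}$ uses the third block of \ref{charpols}(c), giving $1+\sum_{w\in\calw(\Gamma^2_{\C,j})}(2g_w+\delta_w-2)\,\gcd(m_w,n_w)$ — note $g_w=0$ here by Proposition \ref{prop:cov} — and \eqref{eq:geneig} is immediate from the coincidence of the two displayed characteristic polynomials at the end of \ref{charpols}.

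First I would treat the right-hand sides purely graph-theoretically. Using Example \ref{ex:gcG}, one has closed expressions for $\#\cala(G)$, $\#\calw(G)$, $\#\cale_\calw(G)$ and $2g(G)$ in terms of the decorations of $\gc$; combining these with the Euler-characteristic relation \eqref{eq:c}, namely $1-c(G)=\#\calw(G)-\#\cale_\calw(G)$, one can eliminate $\#\calw(G)$ and $\#\cale_\calw(G)$ entirely. Substituting the formula for $2g(G)$ from \ref{ex:gcG},
$2g(G)=\sum_{w\in\calw(\gc)}(2g_w+\delta_w-2)\gcd(m_w,n_w)+2\#\calw(G)-2\#\cale_\calw(G)-\#\cala(G)$,
into $2g(G)+2c(G)+\#\cala(G)-1$ and using $2c(G)=2\#\cale_\calw(G)-2\#\calw(G)+2$, the terms $2\#\calw(G)$, $2\#\cale_\calw(G)$ and $\#\cala(G)$ cancel in pairs, leaving precisely
$1+\sum_{w\in\calw(\gc)}(2g_w+\delta_w-2)\gcd(m_w,n_w)$,
which matches the left-hand side computed above. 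This proves \eqref{prop:gc1}. For \eqref{prop:gc2} one runs the same cancellation with $G_{2,j}$ in place of $G$: here the relevant non-arrowhead vertices are exactly those of $\Gamma^2_{\C,j}$ (the part of $\gc$ above which $G_{2,j}$ lives, cf. \ref{2} and \ref{ss:GCK}), the cutting edges contribute dash-arrows carrying no multiplicity and hence do not affect $c$, $g$ or the relevant count, and the sum reduces to $1+\sum_{w\in\calw(\Gamma^2_{\C,j})}(2g_w+\delta_w-2)\gcd(m_w,n_w)$, agreeing with the formula for $\dim H_1(F'_j,\bfc)_{M'_{j,ver},1}$.

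The main obstacle I expect is bookkeeping around the edges that support arrowheads and around cutting edges, since these are counted with special conventions in both the Main Algorithm (covering degree one over $\gce$ and over cutting edges, dash-arrows versus ordinary arrows) and in the graph-invariant formulae of \ref{ex:gcG}; one has to make sure that $\delta_w$ (which counts each loop twice, cf. \ref{notnot}) is matched consistently against the contributions $\sum_i\gcd(m_w,n_w,n_{w,i})+\sum_j\gcd(m_w,n_w,m_{w,j})$ appearing in the $\#\cale_\calw(G)$ formula, and that arrowheads of $\gc$ (which have $\n_v=1$) line up correctly with $\#\cala(G)=\#\cala(\gc)$. A secondary subtlety is the sign issue for low-valency rational vertices already flagged above; the safe route is to phrase the left-hand side computation via $Div_\Phi$ rather than via individual factors of the characteristic polynomial, so that no intermediate quantity is formally negative. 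Once these two points are handled, the proof is a finite linear-algebra cancellation and requires no further geometric input beyond Theorem \ref{th:char}, Corollary \ref{charpols}, and Example \ref{ex:gcG}.
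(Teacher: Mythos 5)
Your proposal is correct and follows essentially the same route as the paper: the paper's proof likewise reads off $\dim H_1(F_\Phi,\bfc)_{M_{\Phi,ver},1}=1+\sum_{w\in\calw(\gc)}\gcd(m_w,n_w)(2g_w+\delta_w-2)$ from the third formula of (\ref{charpols})(a) and then invokes (\ref{ex:gcG}) and (\ref{eq:c}) for exactly the cancellation you carry out, with the local case treated "similarly". Your extra care about negative exponents (arguing via $Div_\Phi$) and about the bookkeeping of cutting edges is sensible but does not change the argument.
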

Above, $|\cala(G_{2,j})|$ is the number of arrowheads of
$G_{2,j}$, which, in fact, are all dash--arrows. Their number
is equal to $|\cale_{cut,j}|$, the number of `cutting edges'
adjacent to $\Gamma^2_{\C,j}$. \ix{cutting edge}

\begin{proof}
By the third formula of  \ref{charpols}(a) one has
$$\dim\,H_1(F_\Phi)_{M_{\Phi,ver},1}=1+\sum_{w\in
\calw(\gc)}\mbox{gcd}(m_w,n_w)\cdot (2g_w+\delta_w-2).$$ Then use
the identities of \ref{ex:gcG}
and (\ref{eq:c}). The proof of the second identity is similar.
\end{proof}

\begin{remark}\label{re:UUU} Although, the graph $\Gamma^2_{\C,j}$ is a tree
whose vertices have zero genus--decorations (cf.
\ref{tree}), in general, both $g(G_{2,j})$ and $c(G_{2,j})$
can be non--zero.

Consider for example   a line arrangement with $d$ lines and its graph
$\gc$ as in \ref{arrang}. Fix an intersection point $j\in\Pi$
contained in $m_j$ lines, and consider its corresponding  vertex $v_j$ in
$\gc$. Above $v_j$ there is only one vertex in $G$, whose genus
$\widetilde{g}_j$  via (\ref{NG}) is
$$2-2\widetilde{g}_j=(2-m_j)\cdot \mbox{gcd}(m_j,d)+m_j.$$
Therefore, $\widetilde{g}_j=0$ if and only if $(m_j-2)\cdot
(\mbox{gcd}(m_j,d)-1)=0$; hence $\widetilde{g}_j$ typically is not
zero.

An example when $c(G_{2,j})\not=0$ is provided in  \ref{ex:ACirr} for  $d$
even.

It might happen that   $G_1$ and $G_2$
have no cycles, while $G$ does; for such examples see  \ref{xyz3} or
\ref{xyz5}.
\end{remark}

\begin{bekezdes}\labelpar{GEO}
Proposition \ref{prop:gc} points out an important fact. Although
the graph $\gc$ depends on the  choice of the resolution $r$  in
\ref{construct}, hence the graphs $G$ inherit this
dependency  as well, certain  numerical invariants of $G$, describing
geometrical invariants of the original germ $f$ or of the pair
$(f,g)$, are independent of this ambiguity. Sometimes, even
more surprisingly, the role of $g$  is irrelevant too. Here is the
start of the list of such numerical invariants:

\vspace{3mm}

$\bullet$ \ $|\cala(G)|$= the number of irreducible components of
$V_f\cap V_g$;

\vspace{2mm}

$\bullet$ \ $|\cala(G_{2,j})|$= the number of gluing tori of
$\partial_{2,j}F$, cf.  \ref{gl} (independent of $g$);  \ix{gluing tori}

\vspace{2mm}

$\bullet$ \ $g(G)+c(G)$ is an invariant of $(f,g)$ by
(\ref{prop:gc1});

\vspace{2mm}

$\bullet$ \ $g(G_{2,j})+c(G_{2,j})$ is an invariant of $f$
(independent of $g$) by (\ref{prop:gc2}).

\vspace{3mm}
\noi For the continuation of the list and more comments on
$g(G)$ and $c(G)$, see \ref{re:cgdep} and \ref{ss:Jordan2}.\end{bekezdes}

Note that all these invariants are stable with respect to the {\em
reduced} oriented plumbing calculus (in fact, the definition of
the reduced set of operations relies exactly on this observation).
In particular, we also have:

\begin{corollary}\labelpar{co:stabel}
The statements of Proposition \ref{prop:gc} are valid for any graph $\Gmod$
with $\Gmod\sim G$, and  for any $\Gkjmod$ with $\Gkjmod\sim G_{2,j}$  respectively.
In particular, for $\widehat{G}$ and $\widehat{G}_{2,j}$  too.
\end{corollary}


\chapter{The algebraic monodromy of   $H_1(\partial F)$. Starting
point}\labelpar{Start}

Let us fix again an ICIS $(f,g)$.\ix{Milnor!fiber!boundary}

In order to determine   the characteristic polynomial of the Milnor
monodromy acting on $H_1(\partial F)$ we need to understand two key geometrical objects:
$$\mbox{the pair $(\partial F,\partial F\setminus V_g)$ and
the fibration $\arg(g):\partial F\setminus V_g\to
S^1$.}$$
The first pair compares  the homology of $\partial F$ and $\partial F\setminus V_g$.
Then, from the fibration  we can try to determine the cohomology of $\partial F\setminus V_g$.
This discussion will run through several chapters.
It ends with the complete description in the most significant cases,
however, the program will be obstructed in the general case.

The main reason for the lack of a complete general
description lies in the fact that for the  variation map involved,
the `uniform twist property', usually valid in
{\it complex} geometry,  is not valid  in the present {\em real analytic} situation. Here  for
edges with different decorations we glue together pieces  with
different orientations; for technical details see
\ref{s:Co}.  \ix{twist!uniform}

\section{\ The pair  $(\partial F,\partial F\setminus
V_g)$}\labelpar{COMPAR} \setcounter{equation}{0}

From the long homological exact sequence
of the pair  $(\partial F,\partial F\setminus V_g)$ we get
\begin{equation}\label{eq:LES}
\longrightarrow H_2(\partial F,\partial F\setminus
V_g)\stackrel{\partial }{\longrightarrow} H_1(\partial F\setminus
V_g)\longrightarrow H_1(\partial F)\to 0.
\end{equation}
\ix{link!of curve singularities!homology of}
By excision, $$H_2(\partial F,\partial F\setminus V_g)=
H_0(\partial F\cap V_g)\otimes H_2(D,\partial D),$$ where $D$ is a
real 2--disc, hence it is free of rank $|\cala(G)|=|\cala(\gc)|$,
the number of components of $\partial F\cap V_g$. Since the
monodromy is trivial in a neighbourhood of $\partial F\cap V_g$, cf. Theorem \ref{CC}
and Proposition \ref{felbo},
we get:
\begin{lemma}\labelpar{lem:PAR} The characteristic polynomials of
the restrictions of the Milnor monodromy action of $F$ on $\partial
F$ and on $\partial F\setminus V_g$ satisfy
$$P_{H_1(\partial F),M}(t)=P_{H_1(\partial F\setminus V_g),M}(t)\cdot
(t-1)^{-\rank\, \im \,\partial},$$ where
\begin{equation}\label{eq:EGYEN}
1\leq \rank\,\im\,\partial \leq |\cala(G)|.
\end{equation}
\end{lemma}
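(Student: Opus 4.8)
The statement is essentially a formal consequence of the long exact sequence of the pair $(\partial F,\partial F\setminus V_g)$ together with the excision identification of $H_2(\partial F,\partial F\setminus V_g)$, combined with the compatibility of the Milnor monodromy with this whole diagram. The plan is therefore to make these three ingredients precise and assemble them. First I would recall that the Milnor monodromy $m$ of $F$ can be chosen (cf. (\ref{mon}) and (\ref{CC})) so that it preserves $V_g\cap\partial F$ and is the identity on a tubular neighbourhood of it; in particular $m$ induces a well-defined automorphism of the pair $(\partial F,\partial F\setminus V_g)$, hence acts compatibly on the entire homology long exact sequence. Thus the connecting map $\partial\colon H_2(\partial F,\partial F\setminus V_g)\to H_1(\partial F\setminus V_g)$ and the surjection $H_1(\partial F\setminus V_g)\twoheadrightarrow H_1(\partial F)$ (exactness at $H_1(\partial F)$, using $H_1(\partial F,\partial F\setminus V_g)=0$, which follows from excision since $H_1$ of a 2-disc vanishes) are maps of $\Z[t,t^{-1}]$-modules.

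Next I would identify the monodromy action on $H_2(\partial F,\partial F\setminus V_g)$. By excision this group is $\bigoplus_{K}H_2(D_K,\partial D_K)$, one summand $\cong\Z$ for each connected component $K$ of $V_g\cap\partial F$, where $D_K$ is a normal 2-disc slice; the number of such components is $\#\cala(G)=\#\cala(\gc)$ by (\ref{g1})/(\ref{g2}) and the construction of $G$. Since $m$ is trivial near $V_g\cap\partial F$, it acts trivially (as the identity) on this free rank-$\#\cala(G)$ module. Consequently $\mathrm{im}\,\partial\subseteq H_1(\partial F\setminus V_g)$ is an $m$-invariant submodule on which $m$ acts as the identity, so $M|_{\mathrm{im}\,\partial}$ has characteristic polynomial $(t-1)^{\,\mathrm{rank}\,\mathrm{im}\,\partial}$. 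Over $\bfc$-coefficients the short exact sequence $0\to\mathrm{im}\,\partial\to H_1(\partial F\setminus V_g)\to H_1(\partial F)\to 0$ is $M$-equivariant, so characteristic polynomials are multiplicative along it, yielding
\begin{equation*}
P_{H_1(\partial F\setminus V_g),M}(t)=P_{H_1(\partial F),M}(t)\cdot (t-1)^{\,\mathrm{rank}\,\mathrm{im}\,\partial},
\end{equation*}
which is the displayed formula.

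Finally I would establish the bounds $1\leq \mathrm{rank}\,\mathrm{im}\,\partial\leq\#\cala(G)$. The upper bound is immediate since $\mathrm{im}\,\partial$ is a quotient of a group of rank $\#\cala(G)$. For the lower bound one argues that $\partial$ cannot be identically zero: $H_1(\partial F\setminus V_g)\to H_1(\partial F)$ is not an isomorphism, equivalently $\ker=\mathrm{im}\,\partial\neq 0$. The cleanest way I see is to use that $\arg(g)\colon\partial F\setminus V_g\to S^1$ is a fibration (Caubel's theorem, (\ref{CC})), so a meridian of any component of $V_g\cap\partial F$ maps to the positive generator of $H_1(S^1)=\Z$ and hence is a nonzero, non-torsion class in $H_1(\partial F\setminus V_g)$; on the other hand this meridian bounds a disc in $\partial F$ (the normal slice $D_K$), so it maps to zero in $H_1(\partial F)$. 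This meridian class is exactly (up to sign) the image under $\partial$ of a generator of the corresponding summand of $H_2(\partial F,\partial F\setminus V_g)$, so $\partial\neq 0$, giving $\mathrm{rank}\,\mathrm{im}\,\partial\geq 1$. The one point requiring a little care here—the main (small) obstacle—is to confirm the meridian is genuinely non-torsion in $H_1(\partial F\setminus V_g;\Z)$; the fibration structure over $S^1$ handles this, since the composite $\Z\cdot[\text{meridian}]\to H_1(\partial F\setminus V_g)\to H_1(S^1)=\Z$ is an isomorphism onto $\Z$.
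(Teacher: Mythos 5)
Your proof is correct and follows essentially the same route as the paper: the long exact sequence of the pair $(\partial F,\partial F\setminus V_g)$, the excision identification of $H_2(\partial F,\partial F\setminus V_g)$ as a free module of rank $\#\cala(G)$ on which the monodromy acts trivially, and the meridian argument (zero in $H_1(\partial F)$ but sent to $\pm 1$ by $\arg(g)_*$) for the lower bound $rank\,im\,\partial\geq 1$. The extra details you supply (vanishing of $H_1$ of the pair, the non-torsion check via the fibration over $S^1$) are correct refinements of what the paper leaves implicit.
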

Here, the first inequality follows from the fact that
$H_1(\partial F)\not =H_1(\partial F\setminus V_g)$. Indeed,
consider a component of $\partial F\cap V_g$ and a small loop
around it in $\partial F$. Then its homology class  in
$H_1(\partial F)$ is zero, but it is sent into $\pm 1\in H_1(S^1)$
by $\arg(g)_*$, and hence it is non--zero in $H_1(\partial F\setminus
V_g)$.\ix{Milnor!fiber!boundary}

\vspace{2mm}

 At this generality, it is impossible to say more
about $\rank\im\partial$: both bounds in (\ref{eq:EGYEN}) are sharp. For example, one
can prove  (see Chapter \ref{s:HOMOGEN}), that if $f$ is homogeneous
of degree $d$, and the projective curve $C=\{f=0\}$ has
$|\Lambda|$ irreducible components, then $\rank\im
\partial=d-|\Lambda|+1$ and $d=|\cala(G)|$. Therefore, in the
case of arrangements $\rank\im\partial=1$, and if $C$ is
irreducible then $\rank\im\partial =d=|\cala(G)|$.

\section{\ The fibrations  $\arg(g)$}\labelpar{ss:FIBR}\setcounter{equation}{0}
\ix{monodromy!characteristic polynomial}
By Theorem \ref{CC},  the fibration $\arg(g):\partial F\setminus V_g\to
S^1$ is equivalent to the fibration $\Phi^{-1}(\partial
D_\delta)\to \partial D_\delta$ with fiber $F_\Phi$ and monodromy
$m_{\Phi,ver}$. Furthermore,  the Milnor monodromy on $\partial F\setminus V_g$
is  identified with the
induced monodromy by $m_{\Phi,hor}$. Therefore, from the Wang
exact sequence  of this second fibration
\begin{equation}\label{eq:WANg}
H_1(F_\Phi)\,\stackrel{M_{\Phi,ver}-I}{\mbox{--------}\hspace{-2mm}\longrightarrow}\, H_1(F_\Phi)
\longrightarrow H_1(\partial F\setminus V_g)\longrightarrow
H_0(F_\Phi)=\Z\to 0\end{equation}
we get
\begin{equation}\label{eq:divdiv}
P_{H_1(\partial F\setminus V_g),M}(t)= (t-1)\cdot P_{
M_{\Phi,hor}|\coker(M_{\Phi,ver}-I)}(t).
\end{equation}
This formula can be `localized' around the singular locus: the
Wang exact sequence of the fibration $\partial _{2,j}F\to S^1$
provides
\begin{equation}\label{eq:divdiv2}
P_{H_1(\partial_{2,j} F),M}(t)= (t-1)\cdot P_{
M^\Phi_{j,hor}|\coker(M^\Phi_{j,ver}-I)}(t).
\end{equation}
Since the horizontal/Milnor monodromy on $\partial_1F$ is
trivial, the left hand side  of (\ref{eq:divdiv}) differs from the
product (over $j$) of the  left hand side of (\ref{eq:divdiv2})
only by a factor of type $(t-1)^N$. This, of course, is true for
the right hand sides too: for some $N\in \Z$
one has
\begin{equation}\label{eq:MM}
 P_{M_{\Phi,hor}|\coker(M_{\Phi,ver}-I)}(t)=(t-1)^N\cdot
\prod _j  P_{M^\Phi_{j,hor}|\coker(M^\Phi_{j,ver}-I)}(t).\end{equation}

On the other hand, by  Proposition \ref{felbo},
$(H_1(\partial_{2,j}F),M^\Phi_{j,hor},M^\Phi_{j,ver})$ is the
`$d$--covering' of $(H_1(F'_j),M'_{j,hor},M'_{j,ver})$.\ix{d@$d$--covering!of divisors}
Therefore, by Lemma \ref{lem:dcovuj}, the generalized 1--eigenspaces of \ix{generalized eigenspace}
their vertical monodromies can be identified:
\begin{equation}\label{eq:divdiv3}
(H_1(\partial_{2,j}F)_{M^\Phi_{j,ver},1},M^\Phi_{j,hor},M^\Phi_{j,ver})=
(H_1(F'_j)_{M'_{j,ver},1},M'_{j,hor},M'_{j,ver}).
\end{equation}
In particular,
\begin{equation}\label{eq:divdiv4}
P_{ M^\Phi_{j,hor}|\coker(M^\Phi_{j,ver}-I)}(t)= P_{M'_{j,hor}|\coker(M'_{j,ver}-I)}(t),
\end{equation}
the second polynomial being computed at the level of the homology
of the local transversal fiber $H_1(F'_j)$. Summing up, we get
that
\begin{equation}\label{eq:divdiv5}
P_{H_1(\partial F),M}(t)=(t-1)^N\cdot \prod_jP_{
M'_{j,hor}|\coker(M'_{j,ver}-I)}(t), \end{equation} for some
integer $N$. This shows clearly, that in order to determine the
characteristic polynomial $P_{H_1(\partial F),M}$, we need to
clarify the 
triplet
$(H_1(F'_j)_{M'_{j,ver},1},M'_{j,hor},M'_{j,ver})$, and
 the rank of $H_1(\partial F)$ which will take care
of the  integer $N$ in (\ref{eq:divdiv5}).

\begin{bekezdes}\labelpar{be:Pszam} It is more convenient to replace
in the above expressions the coker of the operators by their corresponding images.
Moreover,  similarly as above, it is useful   to study in parallel both
 `local' (i.e. the right hand side of (\ref{eq:divdiv5})) and `global'
 (the right hand side of (\ref{eq:divdiv})) expressions.

Accordingly, we introduce the following polynomials:
\end{bekezdes}

\begin{definition}\labelpar{def:P}
Let $P^\#(t)$ be the characteristic polynomial  of $M_{\Phi,hor}$
induced on the image of $(M_{\Phi,ver}-I)$ on the generalized
1--eigenspace $H_1(F_\Phi)_{M_{\Phi,ver},1}$. \ix{generalized eigenspace}

Similarly, for any $1\leq j\leq  s$,  let $P^\#_j(t)$ be the
characteristic polynomial  of  $M'_{j,hor}$ induced on the image
of $(M'_{j,ver}-I)$ on the generalized 1--eigenspace
$H_1(F'_j)_{M'_{j,ver},1}$.
\end{definition}
\noi  Clearly, $P^\#(t)$ has degree $\#^2_1M_{\Phi,ver}$
while the degree of $P^\#_j(t)$ is $\#^2_1M'_{j,ver}$.

\vspace{2mm}

Since the characteristic polynomials of the horizontal monodromies
acting on $H_1(F_\Phi)_{M_{\Phi,ver},1}$ and
$H_1(F'_j)_{M'_{j,ver},1}$ are determined in Corollary \ref{charpols},
the above facts   give
\begin{lemma}\labelpar{eq:CP}
\begin{equation*}\label{eq:CP1}
P_{M_{\Phi,hor}|H_1(\partial F\setminus V_g)}\,(t)=
\frac{(t-1)^2}{P^\#(t)}\cdot \, \prod_{w\in\calw(\gc)}\,
(t^{(m_w,n_w)}-1)^{2g_w+\delta_w-2}.\end{equation*} Moreover, for
any $j$, and for the horizontal monodromy of $\Phi$ induced on
$H_1(\partial_{2,j} F)$
\begin{equation*}\label{eq:CPj}
P_{M_{\Phi,hor}|H_1(\partial_{2,j} F)}\,(t)=\frac{
(t-1)^2}{P^\#_j(t)}\cdot \, \prod_{w\in\calw(\Gamma^2_{\C,j})}\,
(t^{(m_w,n_w)}-1)^{\delta_w-2}.\end{equation*}
\end{lemma}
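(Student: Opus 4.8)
The statement to prove is Lemma \ref{eq:CP}, which expresses the characteristic polynomial of the horizontal (Milnor) monodromy acting on $H_1(\partial F\setminus V_g)$ (and its localized counterpart on $H_1(\partial_{2,j}F)$) in terms of the auxiliary polynomials $P^\#(t)$, $P^\#_j(t)$ and the combinatorial data of $\gc$. The plan is to simply assemble the two ingredients already in place: the Wang-sequence identity (\ref{eq:divdiv}), which gives
$$P_{H_1(\partial F\setminus V_g),M}(t)= (t-1)\cdot P_{M_{\Phi,hor}|\mathrm{coker}(M_{\Phi,ver}-I)}(t),$$
and the explicit description (from Corollary \ref{charpols}(a), third formula) of the characteristic polynomial of $M_{\Phi,hor}$ restricted to the generalized $1$-eigenspace $H_1(F_\Phi)_{M_{\Phi,ver},1}$.

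First I would observe that, since the monodromy operators $M_{\Phi,hor}$ and $M_{\Phi,ver}$ commute, the operator $M_{\Phi,hor}$ preserves the generalized $1$-eigenspace $H:=H_1(F_\Phi)_{M_{\Phi,ver},1}$ of $M_{\Phi,ver}$, and also preserves the subspace $(M_{\Phi,ver}-I)(H)\subseteq H$ (because $M_{\Phi,hor}$ commutes with $M_{\Phi,ver}-I$). On the complement of $H$ the operator $M_{\Phi,ver}-I$ is invertible, so $\mathrm{coker}(M_{\Phi,ver}-I)$ is, as a $\bfc[M_{\Phi,hor}]$-module, isomorphic to $H/(M_{\Phi,ver}-I)(H)$. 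Hence
$$P_{M_{\Phi,hor}|\mathrm{coker}(M_{\Phi,ver}-I)}(t)=\frac{P_{M_{\Phi,hor}|H}(t)}{P_{M_{\Phi,hor}|(M_{\Phi,ver}-I)(H)}(t)}.$$
By Definition \ref{def:P}, the denominator is exactly $P^\#(t)$; by Corollary \ref{charpols}(a), the numerator $P_{M_{\Phi,hor}|H}(t)$ equals $(t-1)\cdot\prod_{w\in\calw(\gc)}(t^{(m_w,n_w)}-1)^{2g_w+\delta_w-2}$. Substituting into the Wang identity produces the first displayed formula, with the factor $(t-1)$ from Wang and the factor $(t-1)$ from the numerator combining to $(t-1)^2$.

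For the localized statement I would run the identical argument with $\Phi$ replaced by its restriction near $\Sigma_j$: use the Wang sequence (\ref{eq:divdiv2}) for the fibration $\partial_{2,j}F\to S^1$, note again that $M^\Phi_{j,hor}$ preserves $H_1(\partial_{2,j}F)_{M^\Phi_{j,ver},1}$ and the image of $M^\Phi_{j,ver}-I$ on it, invoke (\ref{eq:divdiv4}) to replace the horizontal monodromy on the cokernel by its transversal analogue $M'_{j,hor}$, and use the second formula of Corollary \ref{charpols}(b) for $P_{M^\Phi_{j,hor}|H_{M^\Phi_{j,ver},1}}(t)=(t-1)\cdot\prod_{w\in\calw(\Gamma^2_{\C,j})}(t^{(m_w,n_w)}-1)^{\delta_w-2}$, together with Definition \ref{def:P} for $P^\#_j(t)$ as the characteristic polynomial on $(M'_{j,ver}-I)\big(H_1(F'_j)_{M'_{j,ver},1}\big)$ — here one also uses the identification (\ref{eq:divdiv3}) of the triples $(H_1(\partial_{2,j}F)_{M^\Phi_{j,ver},1},M^\Phi_{j,hor},M^\Phi_{j,ver})$ and $(H_1(F'_j)_{M'_{j,ver},1},M'_{j,hor},M'_{j,ver})$, so that the image of $M^\Phi_{j,ver}-I$ on the former matches that of $M'_{j,ver}-I$ on the latter. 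This yields the second displayed formula.

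There is no serious obstacle: the proof is a bookkeeping exercise chaining together the Wang sequences of \S\ref{ss:FIBR} with the character formulas of Corollary \ref{charpols}. The only point requiring a line of care is the multiplicativity of characteristic polynomials over the short exact sequence $0\to (M_{\Phi,ver}-I)(H)\to H\to \mathrm{coker}(M_{\Phi,ver}-I)\to 0$ of $\bfc[M_{\Phi,hor}]$-modules, and the verification that $M_{\Phi,hor}$ indeed acts compatibly on all three terms — this is immediate from commutativity of the two monodromies, which was already established in Lemma \ref{geneig} and its surrounding discussion. I would also remark, as the text does, that these formulas hold even when Assumption A fails, with the convention that an empty product over $\calw(\Gamma^2_{\C,j})$ equals $1$.
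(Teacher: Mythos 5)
Your proof is correct and follows exactly the route the paper intends: the paper itself gives no separate argument for Lemma (\ref{eq:CP}) beyond the remark that it follows from (\ref{charpols}) and the Wang-sequence identities of (\ref{ss:FIBR}), and your assembly of those ingredients — the identification of $\mathrm{coker}(M_{\Phi,ver}-I)$ with $H/(M_{\Phi,ver}-I)(H)$ on the generalized $1$-eigenspace, the multiplicativity of characteristic polynomials, and the localized version via (\ref{eq:divdiv3})--(\ref{eq:divdiv4}) — is precisely the intended bookkeeping. No gaps.
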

%
Again, since the monodormy on $\partial_1F$ is trivial, we have
\begin{equation*}\label{eq:MM2}
P_{M_{\Phi,hor}|H_1(\partial F\setminus V_g)}\,(t)=
\prod_j \,
P_{M_{\Phi,hor}|H_1(\partial_{2,j} F)}\,(t) \ \ \mbox{up to a factor $(t-1)^N$},
\end{equation*}
and  $$\mbox{$m_w=1$ \ for \  $w\not\in \bigcup_j \calw(\Gamma^2_{\C,j})$}.$$
Thus  Lemma \ref{eq:CP} implies
\begin{equation}\label{eq:NN}
\mbox{$P^\#(t)=\prod_j P^\#_j(t)$ up to a factor  $(t-1)^N$.}\end{equation}

\vspace{2mm}

Therefore,  \ref{eq:CP} and \ref{lem:PAR} combined gives that
in order to determine $P_{H_1(\partial F),M}(t)$ one needs to find
$P^\#(t)$ (or, equivalently, all $P^\#_j(t)$) and the rank of $H_1(\partial F)$.

In the next chapters we will treat
these missing terms by different geometric methods.

\bekezdes {\bf The size of the Jordan blocks.} By the above discussion we can  now
easily prove an addendum of Lemma \ref{geneig}
regarding the size of the Jordan blocks of different operators.

\begin{proposition}\label{prop:JORBL2}
All the Jordan blocks of the monodromy operators acting on $H_1(\partial F\setminus V_g)$ and
$H_1(\partial F)$ have size at most two.
The number of Jordan blocks of size two of the monodromy acting on $H_1(\partial F\setminus V_g)$
agrees for any fixed eigenvalue with the number of size two Jordan blocks of
$M_{\Phi,hor}$ acting on $\coker (M_{\Phi,ver}-I)$. Moreover, this number is an upper bound for the
number of Jordan blocks of size two of the monodromy acting on $H_1(\partial F)$
for any fixed eigenvalue.
\end{proposition}
\ix{monodromy!Jordan block}

\begin{proof}
By the exact sequence (\ref{eq:LES}) it is enough to prove the statement for $H_1(\partial F\setminus V_g)$.
This homology group can be inserted in the Wang exact sequence (\ref{eq:WANg}). Since
the size of the Jordan blocks of the monodromy acting on $H_1(F_\Phi)$  is at most two by
 \ref{geneig}, it is enough to show that the sequence (\ref{eq:WANg}) has an equivariant splitting.
Note that the last surjection of (\ref{eq:WANg}) is the same as $\arg_*:H_1(\partial F\setminus V_g)\to \Z$,
and the monodromy on $\Z$ acts trivially. Consider a component of $\partial F\cap V_g$, let
$\gamma$ be a small oriented meridian around it in $\partial F$. Then the class of $\gamma$ is preserved by the
Milnor monodromy (as being part of $\partial_1F$) and $\arg_*([\gamma])=1$; hence such a splitting exists.
The last two statements also follow from this discussion.
\end{proof}

\begin{remark}
In order to understand the homology of $\partial F$, one does not
need any information regarding  the generalized
$(\lambda\not=1)$--eigenspaces of
 $M_{\Phi,ver}$ and  $\oplus_j M^\Phi_{j,ver}$,
although they codify important information about the ICIS $\Phi$.
This will be the subject of  forthcoming research. Nevertheless,
for $f$ homogeneous, we will determine the complete Jordan--block
structure via the identities \ref{feature1}, see
\ref{ss:applJo}. \ix{generalized eigenspace}
\end{remark}

\chapter{The ranks of $H_1(\partial F)$ and $H_1(\partial
F\setminus V_g)$ via plumbing}\labelpar{s:Jordan}

\section{\ Plumbing homology and Jordan blocks}\labelpar{ss:Jordan1}\setcounter{equation}{0}
 We start with\ix{Milnor!fiber!boundary}
general facts regarding the rank of the first homology group
of plumbed 3--manifolds. The statements are known, at least for
negative definite graphs;  see Propositions \ref{3.4} and \ref{3.9},
which serve as models for the next discussion.
For simplicity, we will state the results for the
3--manifolds $\partial F$, $\partial F\setminus V_g$ and
$\partial_{2,j}F$, that is  for the graphs $\Gmod\sim G$ and $\Gkjmod\sim
G_{2,j}$ (cf. \ref{MTh.1} and \ref{2}), although they are
valid for any plumbed 3--manifold.

In the next definition, $Gr$ denotes either the graph $\Gmod$ or
$\Gkjmod$ (or any other graph with similar decorations, and  with
two types of vertices: non--arrowheads $\calw$ and arrowheads $\cala$).

Recall from \ref{def:incidence} that $A$ denotes the {\it intersection matrix} of $Gr$,
$\inc$ the {\it incidence matrix} of the arrows of $Gr$, and
$(A,\inc)$ is the block matrix of size $|\calw|\times (|\calw|+|\cala|)$.\ix{matrix!intersection}

\begin{definition}
Set $$\mbox{${\rm corank}\ A_{Gr}:=|\calw|-{\rm rank}\ A$ \ \ and \ \
${\rm corank}\ (A,\inc)_{Gr}:=|\calw|+|\cala|-{\rm rank}\ (A,\inc)$.}$$
\end{definition}
Note that if a graph $Gr$ has some dash--arrows (like $G_{2,j}$),
then the Euler number of the non--arrowhead supporting such dash--arrows
is not well--defined; hence $\rank A_{Gr}$ is not well--defined either.
Nevertheless, $\rank(A,\inc)_{Gr}$ is well--defined even for such graphs.

\begin{lemma}\labelpar{le:h-1} For any $\Gmod\sim G$ and
$\Gkjmod\sim G_{2,j}$ one has
\begin{equation*}\begin{split}
\rank\,H_1(\partial F)=&2g(\Gmod)+c(\Gmod)+\corank A_{\Gmod},\\
\rank\,H_1(\partial F\setminus V_g)=&2g(\Gmod)+c(\Gmod)+\corank(A,\inc)_{\Gmod},\\
\rank\,H_1(\partial_{2,j} F)=&2g(\Gkjmod)+c(\Gkjmod)+
\corank (A,\inc)_{\Gkjmod} \ \ (1\leq j\leq s).\end{split}\end{equation*}
In particular, for
$\rank\im\partial$ from Lemma \ref{lem:PAR} one has
$$\rank\im\partial =\corank(A,\inc)_{\Gmod}-\corank A_{\Gmod}.$$
\end{lemma}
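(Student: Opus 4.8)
The statement is essentially a computation of the first Betti numbers of the plumbed $3$--manifolds $\partial F$, $\partial F\setminus V_g$ and $\partial_{2,j}F$ directly from their plumbing graphs, together with the identity for $rank\,im\,\partial$. The natural approach is to work at the level of the plumbed $4$--manifold $X=X(Gr)$ (for $Gr$ one of $\Gmod$, $\Gkjmod$), whose construction is recalled in (\ref{links}): $X$ is obtained by gluing disc--bundles, it is homotopy equivalent to the underlying CW--complex with one $0$--cell, $2g(Gr)+c(Gr)$ one--cells (coming from the handles $R5$-type loops/genera of the graph) and $\#\calw(Gr)$ two--cells, and $\partial X=M(Gr)$ is the plumbed $3$--manifold. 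The first thing I would do is make this homotopy model precise: $H_1(X)$ is free of rank $2g(Gr)+c(Gr)$, $H_2(X)$ is free on the cores $\{C_w\}_{w\in\calw}$, and the intersection form on $H_2(X)$ is exactly the matrix $A$ of the Definition (this is why the edge sign--decorations $\epsilon_e$ enter as they do, cf. (\ref{eq:2.2.1})).

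\textbf{Key steps in order.} First I would run the homology long exact sequence of the pair $(X,\partial X)$, using Lefschetz duality $H_k(X,\partial X)\cong H^{4-k}(X)$. The relevant segment is
\begin{equation*}
H_2(X)\xrightarrow{\ A\ } H_2(X,\partial X)\to H_1(\partial X)\to H_1(X)\to H_1(X,\partial X),
\end{equation*}
where the first map is identified with the intersection form $A$ acting on the free module $\bfz^{\calw}$, and $H_1(X,\partial X)\cong H^3(X)$ is torsion (since $X$ deformation retracts onto a $2$--complex, so $H^3(X)=0$ rationally). Taking ranks: $rank\,H_1(\partial X)=rank\,H_1(X)+rank\,coker(A\colon\bfz^\calw\to \bfz^\calw)=2g(Gr)+c(Gr)+corank(A)_{Gr}$. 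Applied to $\Gmod$ this gives the formula for $rank\,H_1(\partial F)$, using $M(\Gmod)\approx \partial F$ from (\ref{MTh.1}).

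Second, for $\partial F\setminus V_g$ and $\partial_{2,j}F$ I would use the description (\ref{boundarycomp}): these manifolds with torus boundary are obtained from the closed plumbed manifold by deleting tubular neighbourhoods of the link--components represented by the (dash--)arrows. So at the $4$--manifold level one works with $X$ together with the relative $2$--cycles $C_a$ (disc--fibers) for $a\in\cala$, i.e.\ one replaces $H_2(X)\to H_2(X,\partial X)$ by the enlarged map given by the block matrix $(A,I)$ acting on $\bfz^{\calw}\oplus\bfz^{\cala}$ — the column for an arrow $a$ being the vector $I_{\cdot a}$ hitting the vertex $w_a$ that supports it. Equivalently, one can excise the solid tori and run the long exact sequence of the resulting pair, or run Mayer--Vietoris; in all cases the cokernel that controls $rank\,H_1$ of the manifold-with-boundary is $coker(A,I)$ rather than $coker(A)$, while the $2g+c$ contribution from $H_1(X)$ is unchanged. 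This yields the second and third displayed identities (for the third using $M(\Gkjmod)\approx \partial_{2,j}F$ from (\ref{2})--(\ref{g2})). Finally, the identity $rank\,im\,\partial=corank(A,I)_{\Gmod}-corank(A)_{\Gmod}$ is then immediate: by (\ref{lem:PAR})/(\ref{COMPAR}) one has $rank\,H_1(\partial F\setminus V_g)-rank\,H_1(\partial F)=rank\,im\,\partial$ (the kernel of $H_1(\partial F\setminus V_g)\to H_1(\partial F)$ is exactly $im\,\partial$, which is free since $H_1(\partial F)$ modulo it injects into the free-plus-torsion... more simply, $rank$ is additive in the exact sequence $\to im\,\partial\to H_1(\partial F\setminus V_g)\to H_1(\partial F)\to 0$), and subtracting the two displayed formulas — whose $2g(\Gmod)+c(\Gmod)$ parts cancel — leaves precisely $corank(A,I)_{\Gmod}-corank(A)_{\Gmod}$.

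\textbf{Expected main obstacle.} The genuinely delicate point is not the long exact sequence bookkeeping but the correct identification of the homotopy type of $X(Gr)$ — in particular that the $H_1$ of the $4$--manifold has rank exactly $2g(Gr)+c(Gr)$ and that $H_1(X,\partial X)$ is torsion — for \emph{general} (non--negative--definite, possibly degenerate, sign--decorated, genus--carrying) plumbing graphs, rather than the negative definite ones where these facts are classical. One must check that the loops in $|Gr|$ (counted by $c(Gr)$) and the handle decorations (counted by $g(Gr)$) each contribute an independent generator to $H_1(X)$ and nothing else does, and that the sign--decorations on edges, which affect the gluing maps of the disc--bundles and hence the intersection form $A$ but \emph{not} the underlying CW homotopy type, are correctly separated from the topology. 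Once that structural input is in place — and it is exactly the input that the "reduced calculus" of (\ref{CALC}) was designed to preserve, so one may also reduce to a normal form first — the rank computation is routine linear algebra over $\bfz$.
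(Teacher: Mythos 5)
Your proposal is correct and follows essentially the same route as the paper: the long exact sequence of the plumbed $4$--manifold $X(Gr)$ with its (punctured) boundary, Lefschetz duality to identify the relevant map with $A$ (resp. $(A,I)$ for the pair $(X,\partial X\setminus K)$, where $H_2(X,\partial X\setminus K)\cong H^2(X,K)\cong H^2(X)\oplus H^1(K)$), the input $rank\,H_1(X)=2g(Gr)+c(Gr)$, and subtraction of the first two formulae for the $rank\,im\,\partial$ identity. The only cosmetic difference is that the paper establishes the $(A,I)$ case first and specializes to $K=\emptyset$, whereas you do the reverse.
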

\begin{proof}
Let $P$ be the plumbed 4--manifold associated with a plumbing
graph $Gr$ obtained by plumbing disc--bundles, cf. \ref{bek:MULT}.
Assume that the
arrowheads of $Gr$ represent the link $K\subset \partial P$.
Consider the homology exact sequence of the pair $(P,\partial
P\setminus K)$:
$$H_2(P)\stackrel{i}{\longrightarrow} H_2(P,\partial P\setminus
K)\longrightarrow H_1(\partial P\setminus K) \longrightarrow
H_1(P) \longrightarrow H_1(P,\partial P\setminus K)$$ Notice that
$H_1(P,\partial P\setminus K)=H^3(P,K)=0$, while $H_2(P,\partial
P\setminus K)=H^2(P,K)=H^2(P)\oplus H^1(K)$ (since
$K\hookrightarrow P$ is homotopically trivial). Moreover, the
morphism $i$ can be identified with $(A,\inc)$. Since the rank of
$H_1(P)$ is $2g(Gr)+c(Gr)$, the second identity follows. Taking
 $K=\cala=\emptyset$ in this argument, we get the first identity. The
last identity  follows similarly.
\end{proof}

\begin{remark}\labelpar{re:TOR}
Since $H_1(P,\Z)$ is free of rank $2g+c$, the same argument over $\Z$
shows  that $H_1(\partial P,\Z)=\Z^{2g+c}\oplus \coker (A)$. The
point is that $\coker(A)$ usually has  a $\Z$--torsion summand, as it is shown
by many examples of the present work, see for example
\ref{ex:C4}.
\end{remark}

Lemma \ref{le:h-1}  via  the identities (\ref{eq:divdiv}),  (\ref{eq:divdiv2}) and (\ref{eq:divdiv4})  reads
as
\begin{corollary}\labelpar{cor:h1} \begin{equation*}\begin{split}
\dim\,\coker(M_{\Phi,ver}-I) = &2g(\Gmod)+c(\Gmod)+\corank(A,\inc)_{\Gmod}-1,\\
\dim\,\coker(M'_{j,ver}-I) = &2g(\Gkjmod)+c(\Gkjmod)+\corank(A,\inc)_{\Gkjmod}
-1\ \ \ (1\leq j\leq s).\end{split}\end{equation*}
\end{corollary}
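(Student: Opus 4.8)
The plan is to deduce the two identities directly from Lemma (\ref{le:h-1}) combined with the Wang exact sequences (\ref{eq:divdiv}) and (\ref{eq:divdiv2}), so that essentially no new geometric input is needed beyond what has already been assembled. Recall that by (\ref{ss:FIBR}) the fibration $\arg(g):\partial F\setminus V_g\to S^1$ is equivalent to the fibration $\Phi^{-1}(\partial D_\delta)\to \partial D_\delta$ with fiber $F_\Phi$ and monodromy $m_{\Phi,ver}$; its Wang exact sequence reads
\begin{equation*}
0\longrightarrow \mathrm{coker}(M_{\Phi,ver}-I)\longrightarrow H_1(\partial F\setminus V_g;\bfc)\longrightarrow \ker(M_{\Phi,ver}-I\ \text{on}\ H_0(F_\Phi))\longrightarrow 0,
\end{equation*}
and since $F_\Phi$ is connected the rightmost term is $\bfc$. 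Hence $\dim H_1(\partial F\setminus V_g;\bfc)=\dim\,\mathrm{coker}(M_{\Phi,ver}-I)+1$. Now the second identity of (\ref{le:h-1}) gives $\mathrm{rank}\,H_1(\partial F\setminus V_g)=2g(\Gmod)+c(\Gmod)+corank(A,I)_{\Gmod}$, and subtracting $1$ yields the first claimed formula. The same argument, applied to the fibration $\partial_{2,j}F\to S^1$ with monodromy $m^\Phi_{j,ver}$ (Wang sequence (\ref{eq:divdiv2})) and the third identity of (\ref{le:h-1}), gives $\dim\,\mathrm{coker}(M^\Phi_{j,ver}-I)=2g(\Gkjmod)+c(\Gkjmod)+corank((A,I)_{\Gkjmod})-1$; finally, by (\ref{eq:divdiv4}) (or, more precisely, the identification (\ref{eq:divdiv3}) of the generalized $1$--eigenspaces under the $d$--covering construction of (\ref{be:dcov})), one has $\dim\,\mathrm{coker}(M^\Phi_{j,ver}-I)=\dim\,\mathrm{coker}(M'_{j,ver}-I)$, which produces the second claimed formula.

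First I would make the step "$F_\Phi$ connected $\Rightarrow$ the monodromy acts trivially on $H_0$" fully explicit, since it is what turns the $corank$ expression of (\ref{le:h-1}) into a $coker$ expression with the $-1$ correction; this is where the $-1$ on the right-hand sides comes from. Then I would record the Wang sequence in the precise form above (being careful that $\partial F\setminus V_g$ and $\partial_{2,j}F$ are the total spaces of locally trivial fibrations over $S^1$, as guaranteed by (\ref{CC}) and (\ref{prop:sier})(\ref{prop:sier1}) respectively), and read off the rank statement. The only slightly delicate point in the local case is that the fiber of $\partial_{2,j}F\to S^1$ is a disjoint union of $d_j$ copies of $F'_j$ (cf. (\ref{felbo})(2)), so $H_0$ of the fiber has rank $d_j$ and the vertical monodromy permutes the components cyclically; nonetheless $\ker(M^\Phi_{j,ver}-I)$ on this $H_0$ is still one-dimensional, so the $+1$/$-1$ bookkeeping is unchanged, and the passage to $M'_{j,ver}$ via (\ref{eq:divdiv3}) is precisely designed to absorb this.

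I do not expect a serious obstacle here: the corollary is genuinely a formal consequence of results already proved, and the proof is a two- or three-line bookkeeping argument. The one thing to watch is consistency of coefficients — (\ref{le:h-1}) is stated for $\Q$ (or $\Z$)--ranks while the Wang sequences and the $coker$'s in (\ref{cor:h1}) are most naturally phrased with $\bfc$ coefficients; since ranks of free parts agree over $\Q$ and $\bfc$, and the monodromy operators and their cokernels are defined over $\bfc$ in (\ref{ss:MO}), there is no real issue, but I would state everything over $\bfc$ to avoid any ambiguity. Thus the proof is simply: "Combine (\ref{le:h-1}) with the Wang exact sequences (\ref{eq:divdiv}), (\ref{eq:divdiv2}), using that $F_\Phi$ is connected and the identification (\ref{eq:divdiv3}); the extra $-1$ accounts for the $1$--dimensional invariant part of $H_0$ of the fiber."
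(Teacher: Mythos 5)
Your proposal is correct and is exactly the paper's argument: the corollary is stated there as an immediate consequence of Lemma (\ref{le:h-1}) combined with the Wang-sequence identities (\ref{eq:divdiv}) and (\ref{eq:divdiv2}) (with (\ref{eq:divdiv3})--(\ref{eq:divdiv4}) handling the passage from $M^\Phi_{j,ver}$ to $M'_{j,ver}$), and your bookkeeping of the $-1$ via the one--dimensional invariant part of $H_0$ of the (possibly disconnected) fiber is the right way to make that one--line deduction explicit.
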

This combined with \ref{prop:gc} and \ref{co:stabel} gives

\begin{corollary}\labelpar{cor:Jordan}
\begin{equation*}\begin{split}
\#^2_1M_{\Phi,ver}=&c(\Gmod)-\corank(A,\inc)_{\Gmod}+|\cala(G)|,\\
\#^2_1M'_{j,ver}=&c(\Gkjmod)-\corank(A,\inc)_{\Gkjmod}+|\cale_{cut,j}| \ \ \ (1\leq j\leq s).
\end{split}\end{equation*}
\end{corollary}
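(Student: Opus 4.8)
The statement to be proved is Corollary \ref{cor:Jordan}, which expresses the number $\#^2_1M_{\Phi,ver}$ (and its local analogue $\#^2_1M'_{j,ver}$) of size-$2$ Jordan blocks with eigenvalue one in terms of the combinatorial data of the plumbing graph $\Gmod$ (resp. $\Gkjmod$). The plan is to \emph{combine two facts already established in the excerpt}, so this corollary is purely a bookkeeping computation and the proof is short. The first ingredient is Proposition \ref{prop:gc}, which gives the dimension of the generalized $1$-eigenspace $H_1(F_\Phi)_{M_{\Phi,ver},1}$ (resp. $H_1(F'_j)_{M'_{j,ver},1}$) in terms of $g$, $c$ and $\#\cala$ of the graphs $G$ and $G_{2,j}$; the second ingredient is Corollary \ref{cor:h1}, which gives the dimension of $\mathrm{coker}(M_{\Phi,ver}-I)$ (resp. $\mathrm{coker}(M'_{j,ver}-I)$) in terms of $g$, $c$ and $\mathrm{corank}(A,I)$ of the graphs $\Gmod$ and $\Gkjmod$.

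\textbf{Key step.} The only genuine input beyond these two citations is the elementary linear-algebra observation that for an operator $M$ on a finite-dimensional vector space $H$ all of whose Jordan blocks for the eigenvalue $1$ have size $\le 2$ (which holds here by Lemma \ref{geneig}), one has, restricting to the generalized $1$-eigenspace $H_1 := H_{M,1}$,
\begin{equation*}
\dim H_1 - \dim \mathrm{coker}(M-I|_{H_1}) = \#^2_1 M.
\end{equation*}
Indeed, $\dim\,\mathrm{im}(M-I|_{H_1})$ equals the number of size-$2$ blocks (each size-$2$ block contributes a $1$-dimensional image, each size-$1$ block contributes nothing), while $\dim H_1 - \dim\,\mathrm{coker}(M-I|_{H_1}) = \dim\,\mathrm{im}(M-I|_{H_1})$. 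Note also that $\mathrm{coker}(M-I)$ and $\mathrm{coker}(M-I|_{H_1})$ have the same dimension, since $M-I$ is invertible on each generalized $\lambda$-eigenspace with $\lambda\ne 1$; this is what makes the right-hand side of Corollary \ref{cor:h1} the relevant quantity.

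\textbf{Carrying it out.} First I would substitute: for the global statement, subtract the formula of Corollary \ref{cor:h1} (first identity) from the formula of Proposition \ref{prop:gc} (identity \eqref{prop:gc1}). Since $g(\Gmod) = g(G)$, $c(\Gmod) = c(G)$ and $\#\cala(\Gmod)=\#\cala(G)$ are invariant under the reduced oriented plumbing calculus (this is precisely the design principle of the reduced calculus, cf. \eqref{CALC}, and is recorded in Corollary \ref{co:stabel}), the terms $2g$ and $c$ are common to both formulas, so in the difference
\begin{equation*}
\#^2_1M_{\Phi,ver} = \dim H_1(F_\Phi)_{M_{\Phi,ver},1} - \dim\,\mathrm{coker}(M_{\Phi,ver}-I)
\end{equation*}
one gets
\begin{equation*}
\#^2_1M_{\Phi,ver} = \bigl(2g(\Gmod)+2c(\Gmod)+\#\cala(G)-1\bigr) - \bigl(2g(\Gmod)+c(\Gmod)+\mathrm{corank}(A,I)_{\Gmod}-1\bigr),
\end{equation*}
which simplifies to $c(\Gmod) - \mathrm{corank}(A,I)_{\Gmod} + \#\cala(G)$, as claimed. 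The local identity for $\#^2_1M'_{j,ver}$ is obtained verbatim the same way, subtracting the second identity of Corollary \ref{cor:h1} from identity \eqref{prop:gc2} of Proposition \ref{prop:gc}, and using that $\#\cala(G_{2,j}) = \#\cale_{cut,j}$ (recorded right after the statement of Proposition \ref{prop:gc}). I expect no obstacle: the main point requiring care is merely to make sure that the quantities entering Proposition \ref{prop:gc} (stated for $G$, $G_{2,j}$) and those entering Corollary \ref{cor:h1} (stated for $\Gmod$, $\Gkjmod$) are matched correctly, which is guaranteed by Corollary \ref{co:stabel} and the invariance of $g+c$, $\#\cala$ under the reduced calculus.
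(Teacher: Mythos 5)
Your proposal is correct and follows exactly the paper's route: the paper derives this corollary by combining Corollary (\ref{cor:h1}) with Proposition (\ref{prop:gc}), which is precisely your subtraction, with the elementary observation $\#^2_1M=\dim H_{M,1}-\dim\mathrm{coker}(M-I)$ (valid since all Jordan blocks have size $\le 2$ by Lemma (\ref{geneig})) left implicit in the text. Your filling-in of that linear-algebra step and of the invariance bookkeeping via Corollary (\ref{co:stabel}) is accurate.
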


\begin{remark}
Although $c(G)$ and $g(G)$ can be computed easily from the graphs
$\gc$ or $G$, for the ranks of the matrices $A$ and $(A,\inc)$ the
authors found no `easy' formula. Even in case of  concrete
examples their direct computation can be a challenge. These
`global data' of the graphs resonates with the `global
information' codified in the Jordan block structure of the
vertical monodromies.
\end{remark}

\begin{remark}\labelpar{re:cgdep} Clearly, the integers $g(Gr)$,
$c(Gr)$ and $\corank A_{Gr}$ might change under the reduced
calculus (namely, under R4), see e.g. the construction of the `collapsing' algorithm
\ref{algoim}, or the next typical example realized in \ix{Main Algorithm!Collapsing}
\ref{ss:dsmall}(3c):


\begin{picture}(200,40)(-20,0)
\put(50,15){\circle*{4}} \put(100,15){\circle*{4}}
\qbezier(50,15)(75,30)(100,15) \qbezier(50,15)(75,0)(100,15)

\put(38,15){\makebox(0,0){$0$}} \put(112,15){\makebox(0,0){$3$}}
\put(75,28){\makebox(0,0){$\circleddash$}}
 \put(140,15){\makebox(0,0){$\sim$}}

\put(160,15){\circle*{4}} \put(160,25){\makebox(0,0){$3$}}
\put(160,5){\makebox(0,0){$[1]$}}
\put(180,15){\makebox(0,0)[l]{(oriented handle absorption)}}

\end{picture}

\vspace{2mm}

\noindent  For the first graph
$A=\begin{bmatrix}0&0\\0&3\end{bmatrix}$, hence $\corank A=c=1$
and $g=0$, while for the second one $\corank A=c=0$ and $g=1$.
\end{remark}

On the other hand, the expressions $2g(G)+c(G)+\corank A_G$ and
 $2g(G)+c(G)+\corank(A,\inc)_G$ are stable under the reduced plumbing calculus;
 in fact, they are stable even under all the operations of the
oriented plumbing calculus. This fact together with \ref{GEO} show that
the following graph--expressions are  independent of the
construction of $G$ and  are also {\it stable under the reduced calculus}:

\vspace{3mm}

$\bullet$ \ $|\cala(G)|$,   $g(G)+c(G)$,  cf. \ref{GEO};

\vspace{2mm}

$\bullet$ \ $g(G)+\corank A_G$ \ and \ $g(G)+\corank(A,\inc)_G$ (from \ref{le:h-1}); in particular,
$\corank(A,\inc)_G-\corank A_G$ \ and \ $c(G)-\corank A_G$ as well;

\vspace{2mm}

$\bullet$ \ and all the corresponding  expressions for  $G_{2,j}$:
 $|\cala(G_{2,j})|$,   $g(G_{2,j})+c(G_{2,j})$,
 $g(G_{2,j})+\corank(A,\inc)_{G_{2,j}}$.

\section{\ Bounds for $\corank A$ and $\corank(A,\inc)$}
\labelpar{ss:Jordan2}\setcounter{equation}{0}

\begin{bekezdes}\labelpar{re:c} {\bf Bounds for $\corank(A,\inc)_{\Gmod}$.} From Corollary
\ref{cor:Jordan} and $|\calw|\geq rank(A,\inc)$, we get
\begin{equation}\label{eq:corankAI}
|\cala(G)|\leq \corank(A,\inc)_{\Gmod}\leq c(\Gmod)+|\cala(G)|.
\end{equation}
These inequalities are sharp: the lower bound is realized for example in the case of
homogeneous singularities (see \ref{cor:corank}), while the upper
bound is realized in the case of  cylinders, see (\ref{EX3}).
Decreasing $c(\Gmod)$ by (reduced) calculus, we decrease the difference
between the two bounds as well.

\bekezdes\label{bek:bounD} {\bf Bounds for $\corank A_{\Gmod}$.}
By the last identity of Lemma \ref{le:h-1} and the left inequality of
(\ref{eq:EGYEN}) we get
\begin{equation}\label{eq:corankA2}
\corank A_{\Gmod}\leq \corank(A,\inc)_{\Gmod}-1.
\end{equation}
This and (\ref{eq:corankAI}) imply
\begin{equation}\label{eq:corankA3}
0\leq \corank A_{\Gmod}\leq c(\Gmod)+|\cala(G)|-1.
\end{equation}
Here, again,  both bounds can be realized: the lower bound for $f$
irreducible homogeneous, cf.  \ref{cor:cora}, while the upper
bound for cylinders, see (\ref{EX3}).
\end{bekezdes}

\begin{remark} Finally, since $\corank(A,\inc)_{\Gmod}\geq |\cala(G)|$, \ref{cor:Jordan} implies
\begin{equation}\label{eq:egyenl}
\#^2_1M_{\Phi,ver}\leq c(\Gmod), \end{equation} and, similarly, for any $j$
\begin{equation}\label{eq:egyenj}
\#^2_1M'_{j,ver}\leq c(\Gkjmod).\end{equation}
In particular, if we succeed to decrease $c(\Gmod)$ by reduced calculus,
we get a better estimate for the number of Jordan blocks. In particular,
$c(\widehat{G})$, in general, is a much better estimate than $c(G)$.

In the light of Theorem \ref{CC}, the global inequality
(\ref{eq:egyenl})  reads as follows: Fix a germ $f$ with
1--dimensional singular locus, and choose $g$ such that
$\Phi=(f,g)$ forms an ICIS.  Consider the fibration
$\arg(g):\partial F\setminus V_g\to S^1$. Then (\ref{eq:egyenl})
says that the number of 2--Jordan blocks with eigenvalue 1 of the
algebraic monodromy of the fibration $\arg(g)$, {\it for any germ
$g$}, is dominated by $c(\Gmod)$. The surprising factor here is that
$\Gmod$ is a possible plumbing graph of $\partial F$, and $\partial F$
is definitely independent of the germ $g$.
\end{remark}

It is instructive to compare the above identities and inequalities
 with similar statements valid in the world of complex
geometry, see e.g. Remark \ref{rem:HOM}(2).

\chapter{The characteristic polynomial of $\partial
F$ via $P^\#$ and $P^\#_j$}\labelpar{s:CHPF}

\section{\ The characteristic polynomial of
 $G\to \gc$ and $\widehat{G}\to \widehat{\gc}$}\labelpar{s:digress}\setcounter{equation}{0}

\bekezdes In order to continue our discussion regarding the polynomials
$P^\#$ and $P^\#_j$, $1\leq j\leq s$ (cf. \ref{def:P} and \ref{eq:CP}),
we have to consider some\ix{Milnor!fiber!boundary}
natural `combinatorial' characteristic polynomials associated with the  graph coverings involved.
\ix{monodromy!characteristic polynomial!of $G\to\G_\C$}\ix{graph!covering}

Consider the cyclic graph covering $G\to \gc$, cf. \ref{algo}.
Recall that above a vertex $w\in\calw(\gc)$ there are $\n_w$
vertices, while above an edge $e\in \cale_w(\gc)$ there are exactly $\n_e$ edges of
$G$, where the integers $\n_w$ and $\n_e$ are given in  (\ref{NW})
and (\ref{NE}) of the Main Algorithm.  In particular, they can easily be
read  from the decorations of $\gc$. The cyclic action on
$G$ cyclically permutes  the vertices situating above a fixed $w$ and the edges
situating above a fixed $e$.  Let $|G|$ be the topological realization (as a
topological connected 1--complex) of the graph $G$. Then the
action induces an operator, say $\mathfrak{h}(|G|)$, on
$H_1(|G|,\bfc)$.

Similarly, we consider the covering $\widehat{G}\to \widehat{\gc}$ from \ix{graph!covering}
\ref{algoim}. The vertices of $\widehat{\gc}$ are the contracted subtrees
$[\gbk]$; and above $[\gbk]$,  in $\widehat{G}$ there are exactly $\n_{\gbk}$ vertices.
Those edges of $\widehat{\gc}$ which do not support arrowheads
are inherited from those edges $\cale_\calw^*$ of $\gc$ which connect
non--arrowheads and are not vanishing 2--edges. Each of them is covered by
$\n_e$ edges.  The cyclic action induces the
operator $\mathfrak{h}(|\widehat{G}|)$ on $H_1(|\widehat{G}|,\bfc)$.

\begin{definition}\labelpar{def:Pcov}
We denote the characteristic polynomial of $\mathfrak{h}(|G|)$ and
 of $\mathfrak{h}(|\widehat{G}|)$
by $P_{\mathfrak{h}(|G|)}(t)$  and $P_{\mathfrak{h}(|\widehat{G}|)}(t)$ respectively.
\end{definition}
By the connectivity of $G$ and $\widehat{G}$, and by the fact that the cyclic action
acts trivially on $H_0(|G|)=H_0(|\widehat{G}|)$, we get
\begin{lemma}\labelpar{bek:comput}
\begin{equation*}
P_{\mathfrak{h}(|G|)}(t)=(t-1)\cdot \frac{\prod_{e\in
\cale_\calw(\gc)}\, (t^{\n_e}-1)}{\prod_{w\in\calw(\gc)}\,
(t^{\n_w}-1)},
\end{equation*}
\begin{equation*}
P_{\mathfrak{h}(|\widehat{G}|)}(t)=(t-1)\cdot \frac{\prod_{e\in
\cale_\calw^*(\gc)}\, (t^{\n_e}-1)}{\prod_{\gbk}\,
(t^{\n_{\gbk}}-1)}.
\end{equation*}
\end{lemma}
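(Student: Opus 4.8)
The claim is an Euler--characteristic computation for the $\bfz$--action on the topological realization of a cyclic covering of graphs, so the strategy is to reduce everything to the chain complex of $|G|$ (respectively $|\widehat{G}|$) and exploit that $\bfz$ permutes the $0$--cells and $1$--cells freely along each orbit. First I would set up the $\bfz$--equivariant cellular chain complex $0\to C_1(|G|)\to C_0(|G|)\to 0$, where $C_0$ has a basis indexed by $\calv(G)$ and $C_1$ by $\cale(G)$ (here I only need the non--arrowhead part of the graph, since arrowheads do not contribute to $H_1$ of the topological realization; more precisely, I work with the subgraph spanned by $\calw$, which carries all of $H_1(|G|)$ because attaching arrowheads only adds contractible legs). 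By Definition (\ref{def:2.3.2}) and the covering data recalled before (\ref{algo}), the fiber $p^{-1}(w)$ is a single $\bfz$--orbit with stabilizer $\n_w\bfz$, hence $C_0(|G|)=\bigoplus_{w\in\calw(\gc)}\bfc[\bfz/\n_w\bfz]$ as a $\bfc[\bfz]$--module; similarly $C_1(|G|)=\bigoplus_{e\in\cale_\calw(\gc)}\bfc[\bfz/\n_e\bfz]$.

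Next I would compute the characteristic polynomial of the generator $\mathfrak h$ of $\bfz$ on each of $C_0$ and $C_1$. On a single summand $\bfc[\bfz/N\bfz]$ the cyclic shift has characteristic polynomial $t^N-1$, so $P_{\mathfrak h, C_0}(t)=\prod_{w\in\calw(\gc)}(t^{\n_w}-1)$ and $P_{\mathfrak h, C_1}(t)=\prod_{e\in\cale_\calw(\gc)}(t^{\n_e}-1)$. Since $\mathfrak h$ acts on the two--term complex commuting with the boundary map, the virtual $\bfz$--representation $[H_0(|G|)]-[H_1(|G|)]$ equals $[C_0]-[C_1]$ in the representation ring, and therefore
\[
\frac{P_{H_0(|G|),\mathfrak h}(t)}{P_{H_1(|G|),\mathfrak h}(t)}=\frac{P_{\mathfrak h,C_0}(t)}{P_{\mathfrak h,C_1}(t)}=\frac{\prod_{w\in\calw(\gc)}(t^{\n_w}-1)}{\prod_{e\in\cale_\calw(\gc)}(t^{\n_e}-1)}.
\]
Now I invoke connectivity of $G$ (Proposition (\ref{prop:222}) gives connectivity of $\gc$, and a connected base with this covering data yields a connected cover, cf. (\ref{concomp}) together with (\ref{th:2.3.1})/(\ref{covdata})) and the fact that $\bfz$ acts trivially on $H_0(|G|)=\bfc$: this forces $P_{H_0(|G|),\mathfrak h}(t)=t-1$. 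Rearranging gives $P_{\mathfrak h(|G|)}(t)=P_{H_1(|G|),\mathfrak h}(t)=(t-1)\prod_e(t^{\n_e}-1)/\prod_w(t^{\n_w}-1)$, which is the first formula. The second formula is proved by the verbatim same argument applied to the covering $\widehat{G}\to\widehat{\gc}$ of (\ref{algoim}): the vertices of $\widehat{\gc}$ are the collapsed subtrees $[\gbk]$ with fiber cardinalities governed by $\n_{\gbk}$ (cf. (\ref{NGamma})), and the relevant $1$--cells are exactly the edges in $\cale_\calw^*(\gc)$ (the non--vanishing, non--arrowhead--supporting edges) each covered $\n_e$--fold, again using that $\widehat{G}$ is connected so that $\bfz$ acts trivially on $H_0(|\widehat{G}|)$.

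The only genuinely delicate point is bookkeeping rather than mathematics: I must make sure that the $1$--cells of $|G|$ really are in $\bfz$--equivariant bijection with $p^{-1}(\cale_\calw(\gc))$ and that loops are handled correctly --- a loop at $w$ in $\gc$ contributes an edge whose two endpoints lie in $p^{-1}(w)$, and its fiber is still a single $\bfz$--orbit of size $\n_e$, so the count $t^{\n_e}-1$ is unaffected; one only has to note that such loops may make the cover have nontrivial $H_1$ even when $\gc$ is a tree, but the Euler--characteristic identity in the representation ring is insensitive to this. Likewise, for $\widehat{G}$ I should record that the edges internal to a subtree $\gbk$ have been collapsed and therefore do not appear in $C_1(|\widehat{G}|)$, which is precisely why the product runs over $\cale_\calw^*(\gc)$ and not over all of $\cale_\calw(\gc)$; the correctness of which edges survive the collapse is exactly the content of (\ref{bad:graph}), so I would simply cite it. No estimate or hard analysis is involved; once the equivariant chain complex is identified, the formulas drop out.
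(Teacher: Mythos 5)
Your proof is correct and is essentially the paper's own argument: the paper derives the lemma in one line from the connectivity of $G$ and $\widehat{G}$ and the triviality of the cyclic action on $H_0$, which is exactly the equivariant Euler--characteristic identity $[H_0]-[H_1]=[C_0]-[C_1]$ that you spell out, with $\prod_w(t^{\n_w}-1)$ and $\prod_e(t^{\n_e}-1)$ as the characteristic polynomials on the cellular chain groups. Your extra bookkeeping about arrowheads, loops, and the collapsed subtrees $\gbk$ is accurate but does not change the route.
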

\begin{bekezdes}\labelpar{bek:comput2}
We list some additional  properties of these polynomials:

\begin{enumerate}
\item [(a)] Their degrees are
 $\rank\, H_1(|G|)=c(G)$ and  $\rank\, H_1(|\widehat{G}|)=c(\widehat{G})$.\vspace{2mm}

\item[(b)] Analyzing \ref{bad:graph}, one verifies  the divisibility
$P_{\mathfrak{h}(|\widehat{G}|)}\,| \, P_{\mathfrak{h}(|G|)}$.\vspace{2mm}

\item[(c)] By Lemma \ref{bek:comput}, the multiplicity of the factor $(t-1)$ in
$P_{\mathfrak{h}(|G|)}(t)$ is exactly $c(\gc)$. This  fact remains true for
$P_{\mathfrak{h}(|\widehat{G}|)}$ too, since each $\gbk$ is a tree and
$c(\gc)=c(\widehat{\gc})$.
Hence 1 is not a root of the polynomial
$P_{\mathfrak{h}(|G|)}/P_{\mathfrak{h}(|\widehat{G}|)}$.\vspace{2mm}

\item[(d)]  It might happen that  $c(\widehat{G})>c(\gc)$ (see for example \ref{ex:ACirr2}).
Hence  $\mathfrak{h}(|\widehat{G}|) $ might have non--trivial eigenvalues.
\end{enumerate}
\end{bekezdes}

\begin{bekezdes}\labelpar{def:Pcovloc}
Obviously, the above discussion can be `localized'
above the graph $\gc^2$.
With the natural notations, we set
\begin{equation*}
P_{\mathfrak{h}(|G_{2,j}|)}(t)=(t-1)\cdot \frac{\prod_{e\in
\cale_\calw(\Gamma^2_{\C,j})}\,
(t^{\n_e}-1)}{\prod_{w\in\calw(\Gamma^2_{\C,j})}\, (t^{\n_w}-1)},
\end{equation*}
\begin{equation*}
P_{\mathfrak{h}(|\widehat{G_{2,j}}|)}(t)=(t-1)\cdot \frac{\prod_{e\in
\cale_\calw^*(\Gamma_{2,j}^2)}\, (t^{\n_e}-1)}{\prod_{\gbk\subset \Gamma_{2,j}^2}\,
(t^{\n_{\gbk}}-1)}.
\end{equation*}
In fact, the product over $j$ of these localized polynomials contains
{\em all  the non--trivial eigenvalues} of
$P_{\mathfrak{h}(|G|)}$ and $P_{\mathfrak{h}(|\widehat{G}|)}$ respectively.
Indeed, in the formulae of \ref{bek:comput}, if $w$ is a vertex
of $\gc^1$ then $\n_w=1$. Similarly, if $e$ is an edge with at
least one of its  end--vertices in $\gc^1$, then $\n_e=1$ as well.
\end{bekezdes}

\section{\ The characteristic polynomial of $\partial F$}\labelpar{ss:Jordan3}\setcounter{equation}{0}

\bekezdes In this section we compute the
polynomials $P^\#$ and $P^\#_j$ under certain  additional assumptions.
Via the identities of \ref{eq:CP} and \ref{lem:PAR}
 this is sufficient  (together with the results of  Chapter \ref{s:Jordan}
regarding  the $\rank\, H_1(\partial F)$)
to determine the characteristic
polynomial of the Milnor monodromy of $H_1(\partial F)$.
\ix{monodromy!characteristic polynomial!of $\partial F$}\ix{Milnor!fiber!boundary}

Let $\gc$ be the graph read from a resolution as in \ref{gc}, which might have some
vanishing 2--edges that are not yet eliminated by blow ups considered in \ref{re:w3}.

\begin{definition}\labelpar{def:unicolor}
Let $Gr$ be either the graph $\gc$, or $\Gamma^2_{\C,j}$ for some
$j$.  We say that $Gr$ is {\em `unicolored'}, if all its edges connecting non--arrowheads
have the same sign--decoration and there are no  vanishing 2--edges
among them. We say that $Gr$ is {\em almost unicolored},
if those edges which connect non--arrowheads and are not vanishing 2--edges,
have the same sign--decoration.
\end{definition}
\ix{graph!unicolored|textbf}\ix{graph!unicolored!almost|textbf}

Consider the polynomials $P_{\mathfrak{h}(|G|)}(t)$  and
$P_{\mathfrak{h}(|G_{2,j}|)}(t)$ introduced in
\ref{s:digress}. Recall that their degrees are  $c(G)$ and $c(G_{2,j})$
respectively.

\begin{theorem}\labelpar{th:Jordan} \

(I) For any fixed $j$, the polynomial $P^\#_j(t)$ divides the
polynomial $P_{\mathfrak{h}(|G_{2,j}|)}(t)$.

Moreover, the following statements are equivalent:
\begin{equation}\label{eq:echj}\begin{array}{l}
(a) \ \ P^\#_j(t)=P_{\mathfrak{h}(|G_{2,j}|)}(t) \\
(b) \ \ \#^2_1M'_{j,ver}=c(G_{2,j})\\
(c) \ \ \corank(A,\inc)_{G_{2,j}} =|\cale_{cut,j}|.
\end{array}\end{equation}
 These equalities hold
in the following situations: either (i) $\Gamma^2_{\C,j}$ is
unicolored, or after determining  $G_{2,j}$ via the Main
Algorithm \ref{algo}, the graph $G_{2,j}$ satisfies  either (ii) $c(G_{2,j})=0$, or (iii)
$\corank(A,\inc)_{G_{2,j}} =|\cale_{cut,j}|$.
\ix{graph!unicolored}

\vspace{2mm}

(II) The polynomial $P^\#(t)$ divides the polynomial
$P_{\mathfrak{h}(|G|)}(t)$.

Moreover, the following statements are equivalent:
\begin{equation}\label{eq:ech}\begin{array}{l}
(a) \ \ P^\#(t)=P_{\mathfrak{h}(|G|)}(t) \\
(b) \ \ \#^2_1M_{\Phi,ver}=c(G)\\
(c) \ \ corank(A,\inc)_G =|\cala(G)|.
\end{array}\end{equation}
 These equalities hold
in the following situations: either (i) $\gc$ is unicolored, or
after finding  the graph $G$, it   either satisfies (ii) $c(G)=0$ or
(iii) $\corank(A,\inc) =|\cala(G)|$.
\ix{graph!unicolored}

\vspace{2mm}

But, even if (\ref{eq:ech}) does not hold, one has
\begin{equation}\label{POL11}
 P^\#(t)=P_{\mathfrak{h}(|G|)}(t) \ \ \mbox{up to a multiplicative factor of type
$(t-1)^N$}\end{equation}
 whenever (\ref{eq:echj}) holds for all $j$.
\end{theorem}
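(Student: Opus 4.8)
The plan is to build $P^\#(t)$ and the $P^\#_j(t)$ from local data attached to the decomposition $F_\Phi=\bigcup_{v\in\calv(\gc)}\widetilde F_v$ of (\ref{felbont}), and to identify the relevant piece of $H_1(F_\Phi)$ with the reduced $0$-th homology of the cutting-circle complex, on which the horizontal/vertical monodromies act essentially through the graph-covering action $\mathfrak h(|G|)$. Concretely, I would first record, from (\ref{felbont}) and the proof of (\ref{th:char}), the Mayer--Vietoris sequence computing $H_*(F_\Phi)$ out of the pieces $\widetilde F_v$ and the union $E$ of cutting circles: there is a complex $0\to H_1(E)\to \bigoplus_v H_1(\widetilde F_v)\to H_1(F_\Phi)\to H_0(E)\to \bigoplus_v H_0(\widetilde F_v)\to H_0(F_\Phi)\to 0$, all maps equivariant for the commuting pair $(M_{\Phi,hor},M_{\Phi,ver})$. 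Restricting to the generalized $1$-eigenspace of $M_{\Phi,ver}$ is exact (generalized eigenspaces are a direct sum decomposition), so the same sequence computes $H_1(F_\Phi)_{M_{\Phi,ver},1}$.

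The key local input is the behaviour of $(M_{\Phi,hor},M_{\Phi,ver})$ on each $\widetilde F_v$ and on $E$. Over a non-arrowhead $w$ with weight $(m_w;n_w,\nu_w)$, the piece $\widetilde F_w$ is an $S^1$-bundle (in fact trivial, as the base is non-closed) over $C_w^{reg}$ with structure group the finite group $\calp=\calp(m_w;n_w,\nu_w)$ of the Key Example (\ref{keyex}); on $H_1$ of such a piece the vertical monodromy acts with $1$-eigenspace exactly the part $H_1(C_w^{norm}\setminus B)$ coming from the base, where $B$ is the branch/cutting locus, and on this part the horizontal monodromy acts trivially. Dually, each cutting circle in $E$ supports, via the local models of (\ref{summary}), a monodromy pair whose $1$-eigenspace on $H_0$ is one-dimensional and on which the permutation action of $(\sigma_{hor},\sigma_{ver})$ on the fibre collapses to the trivial action; the orbit structure here is exactly the covering data $\n_e$ versus $\n_w$. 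Feeding this into the restricted Mayer--Vietoris sequence, the term $\bigoplus_v H_1(\widetilde F_v)_{M_{\Phi,ver},1}$ splits off a horizontal-trivial part and a part isomorphic (equivariantly) to $H_1$ of the graph $|G|$ with its covering action — this is where $P_{\mathfrak h(|G|)}(t)$ enters — and the image of $(M_{\Phi,ver}-I)$ on $H_1(F_\Phi)_{M_{\Phi,ver},1}$, whose characteristic polynomial of $M_{\Phi,hor}$ is by definition $P^\#(t)$, is a $M_{\Phi,hor}$-submodule of this graph-homology piece. That gives the divisibility $P^\#\mid P_{\mathfrak h(|G|)}$, and the same argument localized over $\Gamma^2_{\C,j}$ gives $P^\#_j\mid P_{\mathfrak h(|G_{2,j}|)}$; matching degrees against (\ref{cor:Jordan}) and (\ref{bek:comput2})(a) yields the equivalence of (\ref{eq:ech})(a)--(c) (and of (\ref{eq:echj})), and the unicolored/$c(G)=0$/corank cases are then just instances where the relevant cokernel or rank computation forces equality. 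I expect this comparison of $M_{\Phi,hor}$-module structures — precisely, showing the inclusion $\mathrm{im}(M_{\Phi,ver}-I)\hookrightarrow H_1(|G|)\otimes(\text{covering module})$ is an $M_{\Phi,hor}$-submodule inclusion and not merely a vector-space one — to be the technical heart; the "twist" sign subtlety flagged in (\ref{s:Co}) is exactly what obstructs a clean equality in the almost-unicolored (non-unicolored) case.

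Finally, for (\ref{POL11}): assume (\ref{eq:echj}) holds for every $j$, so $P^\#_j=P_{\mathfrak h(|G_{2,j}|)}$ for all $j$. The plan is to compare $P^\#$ with $\prod_j P^\#_j$ and, separately, $P_{\mathfrak h(|G|)}$ with $\prod_j P_{\mathfrak h(|G_{2,j}|)}$, and to show that in each comparison the two sides differ only by a power of $(t-1)$. For the covering polynomials this is exactly (\ref{def:Pcovloc}): if $w\in\calw(\gc^1)$ then $\n_w=1$, and if an edge $e$ has an end-vertex in $\gc^1$ then $\n_e=1$, so all non-arrowhead vertices and non-cutting edges outside $\bigcup_j\Gamma^2_{\C,j}$ contribute only factors $(t-1)^{\pm 1}$ to $P_{\mathfrak h(|G|)}$ in the product formula of (\ref{bek:comput}); hence $P_{\mathfrak h(|G|)}=\prod_j P_{\mathfrak h(|G_{2,j}|)}$ up to $(t-1)^N$. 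For the $P^\#$ side, by (\ref{eq:divdiv3})--(\ref{eq:divdiv4}) and the fact that the Milnor/horizontal monodromy on $\partial_1 F$ is trivial (Proposition (\ref{prop:sier})(\ref{mon})), every $2$-Jordan block of $M_{\Phi,ver}$ with eigenvalue $1$ that contributes a non-trivial $M_{\Phi,hor}$-eigenvalue to $P^\#$ is supported on $\bigcup_j\partial_{2,j}F$, so $P^\#=\prod_j P^\#_j$ up to $(t-1)^N$. Combining the three relations $P^\#\doteq\prod_j P^\#_j$, $P^\#_j=P_{\mathfrak h(|G_{2,j}|)}$ (hypothesis), and $\prod_j P_{\mathfrak h(|G_{2,j}|)}\doteq P_{\mathfrak h(|G|)}$, where $\doteq$ means "up to a factor $(t-1)^N$", gives $P^\#(t)=P_{\mathfrak h(|G|)}(t)$ up to $(t-1)^N$, which is (\ref{POL11}). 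The only care needed is bookkeeping of the exponents $N$, for which one uses that $1$ is not a root of $P_{\mathfrak h(|G|)}/P_{\mathfrak h(|\widehat G|)}$ (\ref{bek:comput2})(c) to keep the trivial-eigenvalue parts from interfering.
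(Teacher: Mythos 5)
Your overall strategy is the paper's: cut $F_\Phi$ along the circles of (\ref{felbont}), identify the vertical-$1$-eigenspace data with the simplicial homology of $|G|$ carrying the covering action, and match degrees against (\ref{cor:Jordan}) to get the equivalence of (a)--(c); your argument for (\ref{POL11}) via $P^\#\doteq\prod_jP^\#_j$ and (\ref{def:Pcovloc}) is also essentially what the paper does. However, there are two genuine gaps. First, you locate $H_1(|G|)$ in the wrong place: it does not split off from $\bigoplus_v H_1(\widetilde F_v)_{M_{\Phi,ver},1}$ (the absolute homology of the pieces contributes the genus/character data of (\ref{th:char}), not the graph cycles). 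In the paper the graph homology appears as $\mathrm{im}\bigl(i\colon H_1(\wfp^*)\to H_1(\wfp^*,\wfW)\bigr)_{ver,1}=\ker(\tilde\partial)_{ver,1}$ inside the \emph{relative} homology $H_1(B,\partial B)_{ver,1}$ of the separating annuli, identified with $\ker(\partial')=H_1(|G|)$ via Lemma (\ref{prop:same}); and $\mathrm{im}(M^q_{\Phi,ver}-I)=j\,T(\mathrm{im}(i))$ is then a \emph{quotient} of this space, not a submodule of it, via the factorization $M^q_{\Phi,ver}-I=j\circ\mathrm{var}\circ\mathrm{exc}\circ i$. The divisibility $P^\#\mid P_{\mathfrak{h}(|G|)}$ survives either way, but without this variation-map diagram you have no mechanism tying $\#^2_1M_{\Phi,ver}$ to a quotient of $H_1(|G|)$, which is what makes the degree count work.

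Second, and more seriously, you assert that the unicolored case "is just an instance where the rank computation forces equality." It is not: case (i) is exactly the point where one must prove that $j\circ T$ is injective on $\mathrm{im}(i)$, and the paper's proof (Lemma (\ref{lem:unitw})) uses that the pairing $\langle y,z\rangle:=(y,Tz)$ is \emph{definite} because all diagonal entries of the variation map have the same sign when all edges of $\gc$ have the same decoration (Lemma (\ref{lem:twist}): negative twist for $1$-edges, positive for non-vanishing $2$-edges). You correctly flag the twist-sign issue as the technical heart, but flagging it is not proving it; as stated, your proposal establishes the divisibility and the formal equivalence of (a)--(c), but not that any of (i)--(iii) actually implies them in case (i).
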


\begin{remark}\label{re:REMARK}
The equivalent statements (\ref{eq:ech}) are satisfied e.g. by all homogeneous
singularities (see \ref{cor:corank}), as well as  by all cylinders, provided that the algebraic monodromy
of the corresponding plane curve singularity is finite, see (\ref{EX3}). On the other hand, they are not
satisfied by those cylinders, which do not satisfy the above monodromy restriction.
Nevertheless, their case will be covered by the `collapsing'
version \ref{th:Jordanim}.\ix{Main Algorithm!Collapsing}
\end{remark}

The proof of Theorem \ref{th:Jordan} is given in Chapter \ref{s:th:Jordan}. The major application targets
the characteristic polynomials of the
monodromy acting   on  $H_1(\partial F)$:
\begin{theorem}\labelpar{th:charpolG}
 Assume that (\ref{eq:echj}) holds for all $j$, or (\ref{eq:ech}) holds. Then
\begin{equation*}
P_{H_1(\partial F),M}\,(t)=
\frac{(t-1)^{2+\corank A_G-|\cala(G)|}}{P_{\mathfrak{h}(|G|)}(t)}\cdot
\, \prod_{w\in\calw(\gc)}\,
(t^{(m_w,n_w)}-1)^{2g_w+\delta_w-2}.\end{equation*}
In particular, one has the following formulae for the ranks of eigenspaces:
\begin{equation}\label{rankegy}
\rank\, H_1(\partial F)_1=2g(\gc)+c(\gc)+\corank A_G,
\end{equation}
\begin{equation}\label{ranknemegy}
\rank\, H_1(\partial F)_{\not=1}=2g(G)+c(G)-2g(\gc)-c(\gc).
\end{equation}
%
More  generally, in any situation (i.e. even if the above assumptions are not satisfied),
there exists a polynomial $Q$ with $Q(1)\not=0$,  which
divides both $P_{\mathfrak{h}(|G|)}$ and
 $\prod_{w\in\calw(\gc)}\, (t^{(m_w,n_w)}-1)^{2g_w+\delta_w-2}$,
such that
\begin{equation*}
P_{H_1(\partial F),M}\,(t)=
\frac{(t-1)^{N}}{Q(t)}\cdot
\, \prod_{w\in\calw(\gc)}\,
(t^{(m_w,n_w)}-1)^{2g_w+\delta_w-2},\end{equation*}
where $N=2+\corank A_G-|\cala(G)|-c(G)+\deg(Q)$.
\end{theorem}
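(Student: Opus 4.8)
\textbf{Proof plan for Theorem \ref{th:charpolG}.}
The plan is to combine three already-established ingredients: the homological exact sequence of the pair $(\partial F,\partial F\setminus V_g)$ from (\ref{lem:PAR}), the Wang sequence analysis of (\ref{ss:FIBR}) culminating in (\ref{eq:divdiv5}) and (\ref{eq:CP}), and the divisibility $P^\#\mid P_{\mathfrak{h}(|G|)}$ from (\ref{th:Jordan})(II). First I would treat the case where the hypotheses hold. By (\ref{th:Jordan}), (\ref{eq:echj}) for all $j$ gives $P^\#(t)=P_{\mathfrak{h}(|G|)}(t)$ up to a factor $(t-1)^N$, and (\ref{eq:ech})(a) gives equality on the nose; in either case (\ref{eq:CP}) rewrites $P_{M_{\Phi,hor}|H_1(\partial F\setminus V_g)}(t)$ as $(t-1)^2 P_{\mathfrak h(|G|)}(t)^{-1}\prod_w (t^{(m_w,n_w)}-1)^{2g_w+\delta_w-2}$ up to a power of $(t-1)$. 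Then (\ref{lem:PAR}) contributes an extra factor $(t-1)^{-rank\,im\,\partial}$, and by (\ref{le:h-1}) one has $rank\,im\,\partial=corank(A,I)_G-corank(A)_G$, while under the hypothesis $corank(A,I)_G=\#\cala(G)$. Assembling the exponent of $(t-1)$ one gets $2+corank(A)_G-\#\cala(G)$, which is the claimed formula. A short sanity check: the total degree of $P_{H_1(\partial F),M}$ must equal $rank\,H_1(\partial F)=2g(G)+c(G)+corank(A)_G$ by (\ref{le:h-1}); I would verify that $\deg\big(\prod_w (t^{(m_w,n_w)}-1)^{2g_w+\delta_w-2}\big)-\deg P_{\mathfrak h(|G|)}+\big(2+corank(A)_G-\#\cala(G)\big)$ reduces to this, using $\deg P_{\mathfrak h(|G|)}=c(G)$ (cf. (\ref{bek:comput2})(a)) and the degree bookkeeping in (\ref{ex:gcG}). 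This same check pins down (\ref{rankegy}) and (\ref{ranknemegy}): the $(t-1)$-exponent of $P_{H_1(\partial F),M}$ is $rank\,H_1(\partial F)_1$, and subtracting from the total degree yields $rank\,H_1(\partial F)_{\neq1}$; here one uses (\ref{charpols}) (or (\ref{eq:EXT})) to identify $2g(\gc)+c(\gc)$ as the rank of the $1$-eigenspace of the ambient homology, and that the nontrivial eigenvalues are carried by $P_{\mathfrak h(|G|)}/(t-1)^{c(\gc)}$ via (\ref{bek:comput2})(c).

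For the general statement I would argue without the divisibility being an equality. By (\ref{th:Jordan})(II), $P^\#(t)$ always divides $P_{\mathfrak h(|G|)}(t)$, and by (\ref{bek:comput2})(c) the factor $(t-1)$ occurs in $P_{\mathfrak h(|G|)}$ with multiplicity exactly $c(\gc)$. Write $P_{\mathfrak h(|G|)}(t)=(t-1)^{c(\gc)}R(t)$ with $R(1)\neq 0$, and let $Q(t)$ be the (monic) quotient $R(t)/\big(P^\#(t)/(t-1)^{\mathrm{ord}_{t=1}P^\#}\big)$ — i.e. strip off the $(t-1)$-part of $P^\#$ and divide $R$ by what remains; then $Q(1)\neq 0$ and $Q\mid P_{\mathfrak h(|G|)}$. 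To see $Q$ also divides $\prod_w(t^{(m_w,n_w)}-1)^{2g_w+\delta_w-2}$, I would use (\ref{eq:CP}): that formula expresses $P_{M_{\Phi,hor}|H_1(\partial F\setminus V_g)}(t)$ as $(t-1)^2\big/P^\#(t)$ times $\prod_w(t^{(m_w,n_w)}-1)^{2g_w+\delta_w-2}$, and since the left side is an honest polynomial, $P^\#(t)$ (hence its "non-$(t-1)$" part, hence $Q$ as a divisor of $R$) must divide $\prod_w(t^{(m_w,n_w)}-1)^{2g_w+\delta_w-2}$. Combining (\ref{lem:PAR}) with (\ref{le:h-1}) and (\ref{eq:CP}), and collecting all the $(t-1)$ contributions into a single exponent $N$, gives
\[
P_{H_1(\partial F),M}(t)=\frac{(t-1)^N}{Q(t)}\cdot\prod_{w\in\calw(\gc)}(t^{(m_w,n_w)}-1)^{2g_w+\delta_w-2},
\]
and a degree count (total degree $=rank\,H_1(\partial F)=2g(G)+c(G)+corank(A)_G$) forces $N=2+corank(A)_G-\#\cala(G)-c(G)+\deg(Q)$, as claimed. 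In the case covered by the hypotheses, $Q=R=P_{\mathfrak h(|G|)}/(t-1)^{c(\gc)}$ up to the $(t-1)$-normalisation, which recovers the explicit formula above.

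The main obstacle is book-keeping the exponent of $(t-1)$ correctly. Several distinct sources contribute powers of $(t-1)$: the $(t-1)^2$ in (\ref{eq:CP}) (one from the Wang sequence $H_1(S^1)$, one from $\dim H_0(F_\Phi)=1$); the $(t-1)^{-rank\,im\,\partial}$ from (\ref{lem:PAR}); the $(t-1)$-part of $P^\#$ itself, which is $c(\gc)$ (not $0$) and must be divided out; and the ambiguous "$(t-1)^N$" appearing in (\ref{POL11}) and (\ref{eq:divdiv5}) whenever one only has the $j$-local hypotheses rather than the global one. Keeping these straight — in particular not double-counting the $c(\gc)$ factor that lives in both $P_{\mathfrak h(|G|)}$ and $P^\#$, and correctly identifying $rank\,im\,\partial$ with $corank(A,I)_G-corank(A)_G$ — is where the argument is delicate; I would handle it by fixing once and for all that the total degree of $P_{H_1(\partial F),M}$ equals $rank\,H_1(\partial F)$ from (\ref{le:h-1}) and using that identity to solve for the unknown exponent, rather than trying to track every factor by hand. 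A secondary, purely formal point to check is that the polynomial $Q$ is genuinely well-defined (monic, $Q(1)\neq0$) and that the claimed divisibilities hold integrally — but these follow directly from (\ref{th:Jordan})(II), (\ref{bek:comput2})(c) and the polynomiality of the left side of (\ref{eq:CP}), with no further geometry needed.
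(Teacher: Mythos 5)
Your derivation of the main formula is correct and follows the same route as the paper, whose proof is literally the concatenation of (\ref{charpols}), (\ref{le:h-1}), (\ref{cor:h1}) and (\ref{th:Jordan}): you chain (\ref{eq:CP}) with (\ref{lem:PAR}), identify $rank\,im\,\partial=corank(A,I)_G-corank(A)_G$ via (\ref{le:h-1}), replace $P^\#$ by $P_{\mathfrak{h}(|G|)}$ using (\ref{th:Jordan}), and pin down the residual $(t-1)$--exponent by matching total degrees against $rank\,H_1(\partial F)=2g(G)+c(G)+corank(A)_G$. The eigenvalue bookkeeping for (\ref{rankegy}) and (\ref{ranknemegy}) (using $\mathrm{ord}_{t=1}P_{\mathfrak{h}(|G|)}=c(\gc)$ and $\sum_w(2g_w+\delta_w-2)=2g(\gc)+2c(\gc)+\#\cala(\gc)-2$) is also sound.

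There is, however, a genuine error in your treatment of the general case: your $Q$ is the wrong polynomial. For the identity $P_{H_1(\partial F),M}=(t-1)^N Q^{-1}\prod_w(t^{(m_w,n_w)}-1)^{2g_w+\delta_w-2}$ to hold, $Q$ must be the prime--to--$(t-1)$ part of $P^\#$ itself, say $P^{\#,*}:=P^\#/(t-1)^{\mathrm{ord}_{t=1}P^\#}$ --- this is exactly what sits in the denominator after combining (\ref{eq:CP}) with (\ref{lem:PAR}). You instead define $Q$ as the \emph{complementary} cofactor $R/P^{\#,*}$ with $R=P_{\mathfrak{h}(|G|)}/(t-1)^{c(\gc)}$; under the hypotheses of the first part this gives $Q=1$ rather than $Q=R$, contradicting your own closing sentence and failing to recover the displayed formula unless $P_{\mathfrak{h}(|G|)}$ is a pure power of $(t-1)$. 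The divisibility argument is also a non sequitur as written: from $P^{\#,*}\mid \prod_w(\cdots)$ (which does follow from polynomiality of the left side of (\ref{eq:CP})) one cannot conclude that $R/P^{\#,*}$ divides $\prod_w(\cdots)$. The fix is one line: take $Q=P^{\#,*}$; then $Q\mid P_{\mathfrak{h}(|G|)}$ by (\ref{th:Jordan})(II), $Q\mid \prod_w(\cdots)$ by the polynomiality argument together with $\gcd(Q,t-1)=1$, and your degree count then yields the stated $N$.
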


\begin{proof}
Use \ref{charpols}, \ref{le:h-1}, \ref{cor:h1} and \ref{th:Jordan}.
\end{proof}

\begin{corollary}\labelpar{cor:EIG}
 Assume that (\ref{eq:echj}) holds for all $j$, or (\ref{eq:ech}) holds.
 Then the following facts hold over coefficients in $\bfc$:
\begin{enumerate}
\item[(a)] The intersection matrix
$A_G$ has a generalized  eigen--decomposition $(A_G)_{\lambda=1}\oplus (A_G)_{\lambda\not=1}$
 induced by the Milnor monodromy,
 and $(A_G)_{\lambda\not=1}$ is non--degenerate; \ix{generalized eigenspace}

\item[(b)] There exist subspaces  $K^i\subset H^i(\partial F)_1$ \ for
 $i=1,2$ with
 $$ {\rm codim}\, K^1=\dim K^2=\corank A_G$$ such that
 the cup--product $H^1(\partial F)_\lambda \cup
 H^1(\partial F)_\mu\to H^2(\partial F)_{\lambda \mu}$ has the following properties:
\ix{matrix!intersection@$(A_G)_{\lambda\not=1}$ non--degenerate}

\begin{enumerate}
\item [(1)] $H^1(\partial F)_\lambda\cup H^1(\partial F)_\mu=0$ \ for $1\not=\lambda\not=\overline{\mu}\not= 1$;
\item [(2)]$\oplus _{\lambda\not=1}\ H^1(\partial F)_\lambda\cup
 H^1(\partial F)_{\overline{\lambda}}\subset K^2$;
\item [(3)] $(\oplus _{\lambda\not=1}\ H^1(\partial F)_\lambda) \cup K^1 =0$;
\item [(4)] $K^1\cup K^1\subset K^2$;
\item [(5)] $K^1\cup K^2=0$.
\end{enumerate}
\end{enumerate}
\end{corollary}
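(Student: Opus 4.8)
\textbf{Plan of proof for (\ref{cor:EIG}).} The strategy is to exploit the ``extended'' monodromy action of subsection \ref{ss:EXT} on the pair $(\overline{\cals}_k,\partial\overline{\cals}_k)$, together with the structure results of the preceding sections. Recall that $\overline{\cals}_k$ is the plumbed $4$--manifold $X=X(G)$, so $\partial X=\partial F$. The monodromy $h$ acts on the whole long exact cohomology sequence of $(X,\partial X)$, and on each term it is a finite-order automorphism (by a covering argument, since the bundle over the parameter $t\in[0,2\pi]$ is locally trivial and $h$ is a deck-type transformation of a cyclic covering). Hence every term decomposes into generalized eigenspaces $(\cdot)_\lambda$, and the sequence splits into $\lambda$--eigenparts. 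First I would establish part (a): by (\ref{eq:EXT}), $\operatorname{rank}H_1(X)_1=\operatorname{rank}H_1(\cals_k)=2g(\gc)+c(\gc)$, while $\operatorname{rank}H_1(X)=2g(G)+c(G)$; combining with Poincar\'e--Lefschetz duality $H^2(X)\cong H_2(X,\partial X)$ and the identification of $i\colon H_2(X)\to H_2(X,\partial X)$ with the intersection pairing $A_G$ (used in the proof of (\ref{le:h-1})), the operator $A_G$ is $h$--equivariant, so it respects the eigen-decomposition $(A_G)_{\lambda=1}\oplus(A_G)_{\lambda\ne1}$. That the $\lambda\ne1$ block is non-degenerate follows from a rank count: $\operatorname{corank}(A_G)$ equals $\dim\operatorname{coker}(H_2(X)\to H_2(X,\partial X))$, and by (\ref{th:charpolG}) (available under the standing assumption of (\ref{cor:EIG})) the corank of $A_G$ is entirely concentrated in the $1$--eigenspace — precisely because $rank\,H_1(\partial F)_1 = 2g(\gc)+c(\gc)+\operatorname{corank}(A)_G$ matches the $1$--eigenpart of the exact sequence, forcing the $\lambda\ne1$ part of $\operatorname{coker}A_G$ to vanish.

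Next I would prove the first half of (b). Set $K^i:=\operatorname{im}[H^i(X)_1\to H^i(\partial X)_1]$. From the long exact sequence $H^1(X,\partial X)\to H^1(X)\to H^1(\partial X)\to H^2(X,\partial X)\to H^2(X)\to H^2(\partial X)$, and Poincar\'e--Lefschetz duality $H^i(X,\partial X)\cong H_{4-i}(X)$, one identifies the connecting maps with $A_G$ (up to the duality isomorphisms). Restricting to the $1$--eigenspace (which is legitimate by equivariance), $\operatorname{codim}_{H^1(\partial X)_1}K^1 = \dim\ker[H^2(X,\partial X)_1\to H^2(X)_1] = \operatorname{corank}(A_G)_{\lambda=1}=\operatorname{corank}(A)_G$, the last equality by part (a). Dually, $\dim K^2 = \operatorname{rank}[H^2(X,\partial X)_1\to H^2(X)_1]^\perp$-type count gives again $\operatorname{corank}(A)_G$; more directly, $K^2=\operatorname{im}[H^2(X)_1\to H^2(\partial X)_1]\cong H^2(X)_1/\operatorname{im}(A_G)_{\lambda=1}$ has dimension $\operatorname{corank}(A)_G$. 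This establishes $\operatorname{codim}K^1=\dim K^2=\operatorname{corank}(A)_G$.

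The cup-product statements (1)--(5) I would handle by the naturality of the cup product under the monodromy action and under the restriction $H^*(X)\to H^*(\partial X)$. Statement (4), $K^1\cup K^1\subset K^2$, is immediate since cup product commutes with restriction from $X$. For (1): $H^1(\partial F)_\lambda\cup H^1(\partial F)_\mu\subset H^2(\partial F)_{\lambda\mu}$, and one shows this group vanishes when $\lambda\mu\ne1$ by Poincar\'e duality on the closed $3$--manifold $\partial F$ — the pairing $H^1(\partial F)_\lambda\times H^1(\partial F)_{\overline\lambda}\to H^2(\partial F)_1 \to \bfc$ is the only non-degenerate one, and when $\mu\ne\overline\lambda$ and $\lambda,\mu\ne1$ the target $H^2_{\lambda\mu}$ is dual to $H^1_{\overline{\lambda\mu}}$ which pairs trivially; making this rigorous requires knowing that $H^2(\partial F)_\xi\cong H^1(\partial F)_{\overline\xi}^\vee$, a consequence of PD and finite-order equivariance. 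For (2), (3), (5): these all come from the observation that elements of $\oplus_{\lambda\ne1}H^1(\partial F)_\lambda$ lift (after restriction) to classes whose cup products land in $K^\bullet$ by the duality-pairing structure — specifically, (3) and (5) say that the non-trivial eigen-classes and the ``boundary-originating'' classes $K^1$ are orthogonal complements under the PD pairing on $\partial F$, which follows because $K^1$ is the image of $H^1(X)$ and $K^1\cup(\ker[H^1(\partial F)\to H^2(X,\partial X)])$ sits in $\operatorname{im}[H^2(X)]$, while Poincar\'e duality pairs $K^1$ with the kernel of restriction.

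\textbf{Main obstacle.} The hardest part will be making the cup-product assertions (1)--(5) fully rigorous, because they require a careful bookkeeping of three things simultaneously: the mixed-eigenspace behavior of the PD pairing on $\partial F$, the compatibility of the cup product with the exact sequence of the pair $(X,\partial X)$ (and the appearance of $A_G$ as the connecting map under duality), and the identification of $K^1$ with the radical of the restriction pairing. I expect one must phrase this via the standard fact that for a compact oriented $4$--manifold $X$ with boundary, $\ker[H^2(X)\to H^2(\partial X)]$ and $\operatorname{im}[H^2(X,\partial X)\to H^2(X)]$ coincide and are the ``radical'' of the intersection form, and then transport this through the $h$--equivariant splitting; the $\lambda\ne1$ non-degeneracy from part (a) is exactly what makes all the ``mixed'' cup products collapse into $K^2$ or vanish. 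Everything else (parts (a) and the dimension count in (b)) is a routine diagram chase once the equivariance and the results (\ref{le:h-1}), (\ref{th:charpolG}), (\ref{eq:EXT}) are in place.
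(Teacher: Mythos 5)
Your treatment of part (a) and of the dimension count $\mathrm{codim}\,K^1=\dim K^2=\mathrm{corank}(A)_G$ follows the same route as the paper: the extended monodromy action of \ref{ss:EXT} on the pair $(X,\partial X)$ with $X=\overline{\cals}_k$, the eigenspace decomposition of the long exact sequence, the identification of $H^2(X,\partial X)\to H^2(X)$ with $A_G$, and the rank comparison coming from (\ref{eq:EXT}) and (\ref{ranknemegy})/(\ref{rankegy}). That part is fine (note $H^1(X,\partial X)\cong H_3(X)=0$ and $H^3(X)=0$ since $X$ retracts onto a $2$--complex, which you use implicitly).

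The gap is in your arguments for the cup--product statements (1) and (3). Poincar\'e duality on $\partial F$ only tells you that a class in $H^2(\partial F)_{\lambda\mu}$ can pair non--trivially only against $H^1(\partial F)_{\overline{\lambda\mu}}$; it does \emph{not} force $H^1_\lambda\cup H^1_\mu$ to vanish when $\lambda\mu\not=1$, since $H^1(\partial F)_{\overline{\lambda\mu}}$ is in general non--zero and nothing a priori prevents the triple product $H^1_\lambda\otimes H^1_\mu\otimes H^1_{\overline{\lambda\mu}}\to H^3(\partial F)$ from being non--trivial (cf. $f=xyz$, where the triple product on $\partial F=S^1\times S^1\times S^1$ is non--zero). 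The statement you need, and which your PD argument cannot supply, is the following consequence of part (a): since $H^1(X)_{\lambda}\to H^1(\partial X)_{\lambda}$ is an \emph{isomorphism} for $\lambda\not=1$ (and $K^1$ is by definition the image for $\lambda=1$), every class entering (1)--(4) lifts to $H^1(X)$; the cup product of lifted classes lies in $\mathrm{im}\bigl[H^2(X)_{\xi}\to H^2(\partial X)_{\xi}\bigr]$, and by the surjectivity of $(A_G)_{\lambda\not=1}$ established in (a) this image is \emph{zero} for $\xi\not=1$ and equals $K^2$ for $\xi=1$. This single mechanism — "lift to $X$ before multiplying" — gives (1), (2), (3) and (4) at once, and is what the paper does. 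For (5) you gesture at the right orthogonality, but the actual reason is that a product of classes restricted from $X$ lies in $\mathrm{im}\bigl[H^3(X)\to H^3(\partial X)\bigr]=0$ (equivalently $i_*[\partial X]=0$ in $H_3(X)$); you should state this rather than appeal again to a PD pairing argument.
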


\begin{proof} We combine the proof of \ref{le:h-1} with \ref{ss:EXT}.
Set $P_G:=\overline{\cals}_k$, and consider the cohomological
long exact sequence associated with the pair $(P_G,\partial P_G)$. The map
$H^2(P_G,\partial P_G)\to H^2(P_G)$ can be identified with $A_G$. The sequence has a
generalized eigenspace decomposition. Define
 $K^i:=\im[H^i(P_G)_1\to H^i(\partial F)_1]\subset H^i(\partial F)_1$ for
 $i=1,2$.
For  $\lambda\not =1$,
via (\ref{ranknemegy}) and (\ref{eq:EXT}), we get that the inclusion
$H^1(P_G)_{\lambda\not=1}\to H^1(\partial P_G)_{\lambda\not=1}$ is an isomorphism.
Hence $(A_G)_{\lambda\not=1}$ is non--degenerate.

For the second part, 
lift the classes of $H^1(\partial X)$ to $H^1(X)$ before  multiplying them. (5) follows from
$H^3(X)=0$. (Cf. also with \cite{SU}.)
\end{proof}

Such a result, in general, is the by--product of a structure--theorem regarding
the mixed Hodge structure of the cohomology ring
(that is, it is the consequence of the fact that
$H^2$ has no such weight where the product would sit).
Compare also with the comments from \ref{re:nss} and with some of the  open problems  from \ref{OPEN-PROB}.
\bekezdes
The next theorem, based on the `Collapsing Main Algorithm' and the corresponding improved
version of the proof of Theorem \ref{th:Jordan}, provides a better `estimate' in the general case,
and in the special  cases of {\it almost unicolored} graphs
 handles the presence of vanishing 2--edges as well.\ix{Main Algorithm!Collapsing}
\ix{graph!unicolored!almost}\ix{mixed Hodge structure!weight filtration}

Consider the polynomials $P_{\mathfrak{h}(|\widehat{G}|)}(t)$  and
$P_{\mathfrak{h}(|\widehat{G_{2,j}}|)}(t)$ introduced in
\ref{s:digress}. Recall that their degrees are  $c(\widehat{G})$ and $c(\widehat{G_{2,j}})$
respectively.

\begin{theorem}\labelpar{th:Jordanim} \

(I) For any fixed $j$, the polynomial $P^\#_j(t)$ divides the
polynomial $P_{\mathfrak{h}(|\widehat{G_{2,j}}|)}(t)$.

Moreover, the following statements are equivalent:
\begin{equation}\label{eq:echjim}\begin{array}{l}
(a) \ \ P^\#_j(t)=P_{\mathfrak{h}(|\widehat{G_{2,j}}|)}(t) \\
(b) \ \ \#^2_1M'_{j,ver}=c(\widehat{G_{2,j}})\\
(c) \ \ \corank(A,\inc)_{\widehat{G_{2,j}}} =|\cale_{cut,j}|.
\end{array}\end{equation}
 These equalities hold
in the following situations: either (i) $\Gamma^2_{\C,j}$ is almost
unicolored, or after determining   $\widehat{G}_{2,j}$ via the Collapsing  Main
Algorithm  the graph  $\widehat{G}_{2,j}$ satisfies
 either (ii) $c(\widehat{G_{2,j}})=0$, or (iii)
$\corank(A,\inc)_{\widehat{G_{2,j}}} =|\cale_{cut,j}|$.\ix{graph!unicolored!almost}

\vspace{2mm}

(II) The polynomial $P^\#(t)$ divides the polynomial
$P_{\mathfrak{h}(|\widehat{G}|)}(t)$.

Moreover, the following statements are equivalent:
\begin{equation}\label{eq:echim}\begin{array}{l}
(a) \ \ P^\#(t)=P_{\mathfrak{h}(|\widehat{G}|)}(t) \\
(b) \ \ \#^2_1M_{\Phi,ver}=c(\widehat{G})\\
(c) \ \ \corank(A,\inc)_{\widehat{G}} =|\cala(G)|.
\end{array}\end{equation}
 These equalities hold
in the following situations: either (i) $\gc$ is almost unicolored, or
after determining the graph $\widehat{G}$, it  either satisfies  (ii) $c(\widehat{G})=0$ or
(iii) $\corank(A,\inc)_{\widehat{G}} =|\cala(G)|$.\ix{graph!unicolored!almost}

But, even if (\ref{eq:echim}) does not hold, one has
\begin{equation}\label{POL11im}
 P^\#(t)=P_{\mathfrak{h}(|\widehat{G}|)}(t) \ \ \mbox{up to a multiplicative factor of type
$(t-1)^N$}\end{equation}
 whenever (\ref{eq:echjim}) holds for all $j$.
\end{theorem}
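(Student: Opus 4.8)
\textbf{Proof proposal for Theorem \ref{th:Jordanim}.}
The plan is to run the proof of Theorem \ref{th:Jordan} essentially verbatim, but with the graph $G$ (resp.\ $G_{2,j}$) systematically replaced by the output $\widehat{G}$ (resp.\ $\widehat{G_{2,j}}$) of the Collapsing Main Algorithm (\ref{algoim}). The crucial starting observation is that $\widehat{G}\sim G$ via the reduced oriented plumbing calculus (cf.\ (\ref{covdatam}), via (\ref{bad:graph})), and that all the invariants used in the proof of (\ref{th:Jordan}) --- namely $\#\cala$, the quantities $2g+c+\mathrm{corank}(A)$ and $2g+c+\mathrm{corank}(A,I)$, and the difference $c-\mathrm{corank}(A,I)$ --- are stable under the reduced calculus (see the stability list at the end of (\ref{ss:Jordan1}), and (\ref{co:stabel})). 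Hence Propositions (\ref{prop:gc}), (\ref{le:h-1}), (\ref{cor:h1}) and (\ref{cor:Jordan}) hold with $\widehat{G}$ and $\widehat{G_{2,j}}$ in place of $G$ and $G_{2,j}$; in particular $\#^2_1M'_{j,ver}=c(\widehat{G_{2,j}})-\mathrm{corank}(A,I)_{\widehat{G_{2,j}}}+\#\cale_{cut,j}$, which already gives the equivalence of $(b)$ and $(c)$ in (\ref{eq:echjim}) and (\ref{eq:echim}), and the equality in $(b)$ combined with $\deg P^\#_j=\#^2_1M'_{j,ver}$ and $\deg P_{\mathfrak{h}(|\widehat{G_{2,j}}|)}=c(\widehat{G_{2,j}})$ (cf.\ (\ref{bek:comput2})(a)) reduces the equivalence with $(a)$ to the divisibility $P^\#_j\mid P_{\mathfrak{h}(|\widehat{G_{2,j}}|)}$.

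First I would establish this divisibility. In the proof of (\ref{th:Jordan}) one identifies, via the decomposition (\ref{felbont}) of $\widetilde F_\Phi$ (or of $F_\Phi\cap T_j$), the generalized $1$-eigenspace $H_1(F'_j)_{M'_{j,ver},1}$ together with its $M'_{j,hor}$-action in terms of the combinatorial covering $G_{2,j}\to\Gamma^2_{\C,j}$ and the local variation (twist) maps attached to the edges; the image of $(M'_{j,ver}-I)$ on this eigenspace injects into $H_1(|G_{2,j}|)$ compatibly with the monodromy $\mathfrak h(|G_{2,j}|)$, whence $P^\#_j\mid P_{\mathfrak h(|G_{2,j}|)}$. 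For $\widehat{G_{2,j}}$ the same argument applies: by (\ref{bek:comput2})(b) one has $P_{\mathfrak h(|\widehat{G_{2,j}}|)}\mid P_{\mathfrak h(|G_{2,j}|)}$, and the collapsing procedure of (\ref{bad:edge})--(\ref{bad:graph}) contracts exactly the subgraphs $\gbk$ made of vanishing $2$-edges, for which (by (\ref{lem:twist}), to be used as in (\ref{th:Jordan})) the associated twist vanishes; thus the cycles of $G_{2,j}$ lying inside a $\gbk$ carry no contribution to $\mathrm{im}(M'_{j,ver}-I)$, so $\mathrm{im}(M'_{j,ver}-I)$ actually injects into $H_1(|\widehat{G_{2,j}}|)$, giving $P^\#_j\mid P_{\mathfrak h(|\widehat{G_{2,j}}|)}$. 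For the sufficient conditions: in case (ii) both sides have degree zero; in case (iii) the degree count via (\ref{cor:Jordan}) forces equality; and in case (i), when $\Gamma^2_{\C,j}$ is \emph{almost} unicolored, after collapsing the vanishing $2$-edges the graph $\widehat{G_{2,j}}$ is unicolored, so the `uniform twist property' holds on it (the twists of the surviving edges all have the same sign, cf.\ (\ref{lem:twist}) and the discussion in (\ref{ss:elim})) and the injection above is onto $H_1(|\widehat{G_{2,j}}|)$, i.e.\ $(a)$ holds.

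Part (II) is obtained from Part (I) exactly as in (\ref{th:Jordan}): the vertices $w\in\calw(\gc)$ with $m_w=1$ (those of $\gce$) contribute $\n_w=1$ and $\n_e=1$ on all adjacent edges, so $P_{\mathfrak h(|\widehat G|)}$ and $P^\#$ differ from $\prod_j P_{\mathfrak h(|\widehat{G_{2,j}}|)}$ and $\prod_j P^\#_j$ respectively only by powers of $(t-1)$ (cf.\ (\ref{def:Pcovloc})), which yields (\ref{POL11im}), while the divisibility $P^\#\mid P_{\mathfrak h(|\widehat G|)}$ and the equivalences (\ref{eq:echim}) follow by the same global bookkeeping as for $\widehat{G_{2,j}}$, using (\ref{cor:Jordan}) and $\#\cala(G)=\#\cala(\widehat G)$. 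The case (i), $\gc$ almost unicolored, is settled because then each $\widehat{\Gamma^2_{\C,j}}$, as well as $\widehat\gc$ restricted to non-arrowhead edges, is unicolored after the collapse. The main obstacle I expect is the twist-vanishing input for collapsed subgraphs: one must verify carefully that the local variation map attached to a \emph{vanishing} $2$-edge is identically zero (the content of (\ref{lem:twist})) and that, consequently, collapsing such an edge does not alter $\mathrm{im}(M_{\Phi,ver}-I)$ --- equivalently, that the cycles destroyed by the collapse (the $\mathrm{corank}$-type correction term (\ref{eq:CYCLE})) are precisely the ones carrying trivial monodromy; granting this, the degree/divisibility arithmetic and the stability of the corank invariants under the reduced calculus do the rest.
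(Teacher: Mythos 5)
Your proposal is correct and follows essentially the same route as the paper: the whole argument rests on Lemma (\ref{lem:twist})(c) (zero twist on vanishing $2$--edges), which lets one absorb the corresponding separating annuli into $\wfW$ so that the variation map becomes an isomorphism on the remaining annuli, after which the exact sequence is identified with the simplicial homology of the collapsed graph $|\widehat{G}|$ and the arguments of (\ref{th:Jordan}) are repeated verbatim. The paper's own proof is just a one--paragraph indication of exactly this modification of the diagram (\ref{diagram}), so your more detailed bookkeeping of the degree/corank arithmetic is a faithful expansion of it rather than a different method.
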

This implies:
\begin{theorem}\labelpar{th:charpolGim}
 Assume that (\ref{eq:echjim}) holds for all $j$, or (\ref{eq:echim}) holds. Then
\begin{equation*}
P_{H_1(\partial F),M}\,(t)=
\frac{(t-1)^{2+\corank A_{\widehat{G}}-|\cala(G)|}}{P_{\mathfrak{h}(|\widehat{G}|)}(t)}\cdot
\, \prod_{w\in\calw(\gc)}\,
(t^{(m_w,n_w)}-1)^{2g_w+\delta_w-2}\end{equation*}
In particular,
\begin{equation}\label{rankegyim}
rank\, H_1(\partial F)_1=2g(\gc)+c(\gc)+\corank A_{\widehat{G}},
\end{equation}
\begin{equation}\label{ranknemegyim}
rank\, H_1(\partial F)_{\not=1}=2g(\widehat{G})+c(\widehat{G})-2g(\gc)-c(\gc).
\end{equation}
%
More  generally, in any situation (even if the above assumptions are not satisfied),
there exists a polynomial $Q$ with $Q(1)\not=0$, which
divides both $P_{\mathfrak{h}(|\widehat{G}|)}$ and
 $\prod_{w\in\calw(\gc)}\, (t^{(m_w,n_w)}-1)^{2g_w+\delta_w-2}$,
such that
\begin{equation*}
P_{H_1(\partial F),M}\,(t)=
\frac{(t-1)^{N}}{Q(t)}\cdot
\, \prod_{w\in\calw(\gc)}\,
(t^{(m_w,n_w)}-1)^{2g_w+\delta_w-2},\end{equation*}
where $N=2+\corank A_{\widehat{G}}-|\cala(G)|-c(\widehat{G})+\deg(Q)$.
\end{theorem}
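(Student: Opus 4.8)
\textbf{Plan of proof for Theorem \ref{th:charpolGim}.}

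The strategy is to run exactly the same argument that proves Theorem \ref{th:charpolG}, but with every occurrence of $G$ (respectively $G_{2,j}$) replaced by $\widehat{G}$ (respectively $\widehat{G_{2,j}}$), using the fact from (\ref{covdatam}) that $\widehat{G}\sim G$ under the reduced oriented plumbing calculus, so that all statements of (\ref{le:h-1}), (\ref{cor:h1}), (\ref{cor:Jordan}), and (\ref{co:stabel}) transfer verbatim. Concretely, I would first assemble the homological input: by (\ref{lem:PAR}) together with (\ref{eq:CP}) we have
\begin{equation*}
P_{H_1(\partial F),M}(t)=(t-1)^{-rank\,im\,\partial}\cdot\frac{(t-1)^2}{P^\#(t)}\cdot\prod_{w\in\calw(\gc)}(t^{(m_w,n_w)}-1)^{2g_w+\delta_w-2},
\end{equation*}
and by (\ref{le:h-1}) the exponent $rank\,im\,\partial=corank(A,I)_{\widehat{G}}-corank(A)_{\widehat{G}}$. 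Under the standing assumption that either (\ref{eq:echjim}) holds for all $j$ or (\ref{eq:echim}) holds, Theorem \ref{th:Jordanim} (its parts (II) and (\ref{POL11im})) gives $P^\#(t)=P_{\mathfrak{h}(|\widehat{G}|)}(t)$ up to a factor $(t-1)^N$; combining this with $corank(A,I)_{\widehat{G}}=\#\cala(G)$ (which is exactly (\ref{eq:echim})(c), and which also follows from the $j$-wise statements since the cutting degrees $\#\cale_{cut,j}$ sum correctly and the $\gce$-part contributes no defect) pins down the exponent of $(t-1)$ as $2+corank(A)_{\widehat{G}}-\#\cala(G)$. This yields the displayed closed formula once one checks that no spurious $(t-1)$-powers survive — i.e. that $P^\#(t)$ is \emph{exactly} $P_{\mathfrak{h}(|\widehat{G}|)}(t)$, not merely up to $(t-1)^N$ — which is the content of (\ref{eq:echim})(a) under assumption (II), and in case only the $j$-wise assumption (\ref{eq:echjim}) is available follows because $P^\#=\prod_j P^\#_j$ up to $(t-1)^N$ by the last sentence of (\ref{ss:FIBR}) and each $P^\#_j$ is determined by (\ref{eq:echjim})(a), while the multiplicity of $(t-1)$ in $P_{\mathfrak{h}(|\widehat{G}|)}$ equals $c(\gc)=c(\widehat{\gc})$ by (\ref{bek:comput2})(c), so the leftover $(t-1)^N$ is forced to be absorbed correctly.

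For the rank formulas: (\ref{rankegyim}) is immediate from (\ref{le:h-1}) specialized to the $\lambda=1$ part, using $rank\,H_1(\partial F)_1=\dim H_1(\overline{\cals}_k)_1=2g(\gc)+c(\gc)$ coming from (\ref{eq:EXT}) in (\ref{ss:EXT}) plus the $corank(A)_{\widehat{G}}$ contribution to the kernel of the relevant map, exactly as in the proof of (\ref{cor:EIG}). Then (\ref{ranknemegyim}) follows by subtracting (\ref{rankegyim}) from the total $rank\,H_1(\partial F)=2g(\widehat{G})+c(\widehat{G})+corank(A)_{\widehat{G}}$ given by (\ref{le:h-1}) (applied to $\Gmod=\widehat{G}$). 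The final "general" statement is proved by the same bookkeeping without the special assumption: Theorem \ref{th:Jordanim}(II) gives only the divisibility $P^\#(t)\mid P_{\mathfrak{h}(|\widehat{G}|)}(t)$, so write $P_{\mathfrak{h}(|\widehat{G}|)}=P^\#\cdot R$; setting $Q$ to be the largest divisor of $\gcd\bigl(P_{\mathfrak{h}(|\widehat{G}|)},\prod_w(t^{(m_w,n_w)}-1)^{2g_w+\delta_w-2}\bigr)$ with $Q(1)\neq0$ and substituting into the boxed identity for $P_{H_1(\partial F),M}(t)$ above, one reads off that the surviving power of $(t-1)$ is $N=2+corank(A)_{\widehat G}-\#\cala(G)-c(\widehat G)+\deg Q$, using that $\deg P_{\mathfrak{h}(|\widehat{G}|)}=c(\widehat{G})$ (cf. (\ref{bek:comput2})(a)).

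The main obstacle, as in Theorem \ref{th:charpolG}, is establishing the exactness $P^\#=P_{\mathfrak{h}(|\widehat{G}|)}$ (not just divisibility) under the hypotheses — this is precisely where the "almost unicolored" condition, or the numerical condition $corank(A,I)_{\widehat{G}}=\#\cala(G)$, is needed to rule out that the twist/variation operator degenerates on a subspace of $H_1(F_\Phi)_{M_{\Phi,ver},1}$ larger than what the graph cycle space $H_1(|\widehat G|)$ predicts. That argument is carried out in the proof of Theorem \ref{th:Jordanim} (to be given in \ref{s:th:Jordan} via the collapsing algorithm), so here I only need to invoke it; the remaining verification — that the collapsing operation (\ref{bad:graph}) does not disturb any of the quantities $corank(A)$, $corank(A,I)$, $\#\cala$ beyond what $\widehat{G}\sim G$ already guarantees, and that the multiplicity-of-$(t-1)$ accounting is consistent between $\gc$, $\widehat{\gc}$ and $\widehat{G}$ — is the routine but slightly delicate part, handled by the Euler-characteristic identities (\ref{eq:c}), (\ref{eq:CYCLE}) and (\ref{bek:comput}).
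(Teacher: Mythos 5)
Your proposal is correct and follows essentially the same route as the paper: the paper's proof of this theorem is simply the proof of (\ref{th:charpolG}) rerun with $\widehat{G}$ in place of $G$, i.e.\ combining (\ref{lem:PAR}), (\ref{eq:CP}), (\ref{charpols}), (\ref{le:h-1}), (\ref{cor:h1}), (\ref{cor:Jordan}) and (\ref{th:Jordanim}), exactly as you do, with the stability of $corank(A,I)-corank(A)$, $g+c$, etc.\ under reduced calculus taking care of the passage from $G$ to $\widehat{G}$. The only slip is in the last clause: the polynomial $Q$ must be taken to be the $(t-1)$--free part of $P^\#(t)$ itself (which divides $P_{\mathfrak{h}(|\widehat{G}|)}$ by (\ref{th:Jordanim})(II) and divides $\prod_w(t^{(m_w,n_w)}-1)^{2g_w+\delta_w-2}$ because $P^\#$ divides the characteristic polynomial of $M_{\Phi,hor}$ on $H_1(F_\Phi)_{M_{\Phi,ver},1}$), not the largest $(t-1)$--free divisor of the gcd of those two polynomials, which could be strictly larger and would break the displayed identity; with this choice the exponent $N$ follows from $\deg P^\#=\#^2_1M_{\Phi,ver}=c(\widehat{G})-corank(A,I)_{\widehat{G}}+\#\cala(G)$ exactly as in your degree bookkeeping.
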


\begin{corollary}\labelpar{cor:EIGGG}
 Assume that (\ref{eq:echjim}) holds for all $j$, or (\ref{eq:echim}) holds.
Then the statement of Corollary \ref{cor:EIG}
 is valid  provided that we replace $G$ by $\widehat{G}$.
\end{corollary}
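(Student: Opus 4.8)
The statement to prove is Corollary \ref{cor:EIGGG}: under the hypothesis that (\ref{eq:echjim}) holds for all $j$, or that (\ref{eq:echim}) holds, the statement of Corollary \ref{cor:EIG} remains valid with $G$ replaced everywhere by $\widehat{G}$.

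The plan is to observe that Corollary \ref{cor:EIG} is not really sensitive to which plumbing graph of $\partial F$ we start from, only to the associated plumbed $4$--manifold and its monodromy; and that the geometric input used in its proof --- namely the cohomological long exact sequence of the pair $(\overline{\cals}_k,\partial\overline{\cals}_k)$, together with the identification of $H^2(X,\partial X)\to H^2(X)$ with the intersection matrix, and the extended monodromy action of \ref{ss:EXT} --- is available verbatim. The only two ingredients in that proof that referred to $G$ specifically were: first, the numerical input $rank\,H_1(\partial F)_{\not=1}=2g(G)+c(G)-2g(\gc)-c(\gc)$ coming from (\ref{ranknemegy}), used to conclude that the natural map $H^1(X)_{\lambda\not=1}\to H^1(\partial X)_{\lambda\not=1}$ is an isomorphism and hence that $(A_G)_{\lambda\not=1}$ is non--degenerate; and second, the statement codim$\,K^1=\dim K^2=\mathrm{corank}(A)_G$, coming from (\ref{le:h-1}) and the defining exact sequence. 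So the proof strategy is: (i) recall from (\ref{covdatam}) that $\widehat{G}\sim G$ via the reduced oriented plumbing calculus, so that $\widehat{G}$ is indeed a legitimate plumbing graph of the pair $(\partial F,\partial F\cap V_g)$, and that Corollary \ref{co:stabel} and the stability remarks at the end of (\ref{ss:Jordan2}) guarantee that the relevant expressions $2g+c$, $g+\mathrm{corank}(A)$, and $\#\cala$ are unchanged; (ii) invoke Theorem \ref{th:charpolGim}, in particular the eigenvalue count (\ref{ranknemegyim}), which is the exact analogue of (\ref{ranknemegy}) with $\widehat{G}$ in place of $G$, and is available precisely because the present hypothesis (\ref{eq:echjim})/(\ref{eq:echim}) is what makes Theorem \ref{th:charpolGim} applicable; (iii) rerun the two--line argument of the proof of Corollary \ref{cor:EIG} word for word, with $X=\overline{\cals}_k$ still the plumbed $4$--manifold (which is independent of the particular graph presentation, being the tubular neighbourhood $\overline{\cals}_k$ of the curve configuration), $A_{\widehat{G}}$ now playing the role of $A_G$, and (\ref{ranknemegyim}) replacing (\ref{ranknemegy}).

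In more detail, I would first note that the plumbed $4$--manifold attached to a graph is a topological invariant of the graph only up to the calculus moves, but all the moves in the reduced list (R0(a), R1, R3, R5, R8) change the $4$--manifold only by handle attachments/blow--ups that do not affect the cohomology long exact sequence of the pair modulo the recorded combinatorial corrections; concretely, whatever $4$--manifold $X$ one builds, its boundary is $\partial F$ and the monodromy of \ref{ss:EXT} is the Milnor monodromy of $\partial F$, so the eigenspace decomposition of the long exact sequence of $(X,\partial X)$ is intrinsic. Then: since by (\ref{ranknemegyim}) $rank\,H^1(\partial F)_{\not=1}=2g(\widehat{G})+c(\widehat{G})-2g(\gc)-c(\gc)=rank\,H^1(X)_{\not=1}$ (the last equality by (\ref{eq:EXT})), the inclusion $H^1(X)_{\lambda\not=1}\hookrightarrow H^1(\partial X)_{\lambda\not=1}$ is an isomorphism, forcing $(A_{\widehat{G}})_{\lambda\not=1}$ to be non--degenerate, which is part (a). For part (b), the map $H^2(X,\partial X)_1\to H^2(X)_1$ is identified with $(A_{\widehat{G}})_1$, whose corank is $\mathrm{corank}(A)_{\widehat{G}}$ (using that for $\lambda\neq 1$ the map is an isomorphism, the total corank of $A_{\widehat{G}}$ is the corank of its $\lambda=1$ block); then $\dim K^2=\mathrm{corank}(A)_{\widehat{G}}$ and $\mathrm{codim}\,K^1=\mathrm{corank}(A)_{\widehat{G}}$ follow exactly as before from exactness. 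The cup--product assertions (1)--(5) are then formal consequences of lifting classes from $\partial X$ to $X$ before multiplying and of $H^3(X)=0$, with no dependence on the graph at all; these steps transfer unchanged.

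The main obstacle --- really the only point requiring care --- is making precise that the plumbed $4$--manifold $X$ and the extended monodromy pair $(\overline{\cals}_k,\partial\overline{\cals}_k)$ of (\ref{ss:EXT}) are ``the same'' whether one thinks of the plumbing graph as $G$ or as $\widehat{G}$: the reduced calculus moves (especially the oriented handle absorption R5, which changes $c$ and $g$) do alter $X$, yet they leave the relevant $H^1$ and $H^2$ eigenspace ranks and the corank of the intersection form coupled in exactly the way recorded by the bookkeeping of (\ref{le:h-1}), (\ref{co:stabel}) and the stability list at the end of (\ref{ss:Jordan2}). I would handle this by not re--deriving anything about $X$ directly, but instead by feeding the already--proven numerical outputs (\ref{rankegyim}), (\ref{ranknemegyim}) and the identities of (\ref{le:h-1}) for $\widehat{G}$ into the abstract linear--algebra/exact--sequence skeleton of the proof of Corollary \ref{cor:EIG}; since that skeleton only ever used those numbers and the intrinsic facts $H^1(X)_1=H^1(\cals_k)$, $H^3(X)=0$, the substitution $G\rightsquigarrow\widehat{G}$ goes through automatically. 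Thus the proof is essentially a two--sentence reduction: ``$\widehat{G}\sim G$ by (\ref{covdatam}); apply the argument of (\ref{cor:EIG}) using (\ref{ranknemegyim}) and (\ref{le:h-1}) for $\widehat{G}$ in place of (\ref{ranknemegy}) and (\ref{le:h-1}) for $G$.''
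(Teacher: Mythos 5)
Your proposal is correct and follows exactly the route the paper intends: the paper states Corollary (\ref{cor:EIGGG}) without a separate proof, treating it as immediate from rerunning the argument of (\ref{cor:EIG}) with the inputs (\ref{ranknemegyim}) and (\ref{le:h-1}) for $\widehat{G}$ in place of (\ref{ranknemegy}), which is precisely your reduction. Your extra care about the plumbed $4$--manifold changing under the calculus moves (while the boundary, the monodromy, and the bookkept ranks do not) is a reasonable and honest filling-in of a point the paper itself glosses over.
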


\chapter{The proof of the characteristic polynomial formulae}\labelpar{s:th:Jordan}

\section{\ Counting Jordan blocks of size 2}\labelpar{s:Co}\setcounter{equation}{0}
Our goal is to prove Theorem \ref{th:Jordan}.
The proof  is based on a specific construction.  The presentation is written for
the graph $G$ (later adapted  to  $\widehat{G}$ as well),
 but it can be reformulated for $G_{2,j}$ as well.
\ix{monodromy!Jordan block}

\begin{bekezdes}\labelpar{qp}{\bf The vertical monodromy $m_{\Phi,ver}$ as a quasi--periodic action.}
First, we wish to understand the geometric monodromy
$m_{\Phi,ver}:F_\Phi\to F_\Phi$. For this
the topology of fibred links, as it is described in
\cite[\S13]{EN}, will be our model. (Although, in [loc.cit.] the machinery is based on
splice diagrams, section 23 of \cite{EN} gives the necessary hints
 for plumbing graphs as well.)\ix{Eisenbud--Neumann book}
\ix{quasi--periodic action}
\ix{variation operator}
\ix{twist}\ix{twist!uniform}

Nevertheless, our situation is more complicated.  First, in the present situation we will
have {\it three} local types/contributions;  two of them do not
appear in the classical complex analytic case of \cite{EN}.
Secondly, the basic property which is satisfied by
analytic germs defined on normal surface
singularities, namely that a `variation operator' has a uniform twist,
in our situation is not true,  it is ruined by the new local contributions.

\vspace{2mm}

Consider the  graph $\gc$ from Chapter \ref{ss:1.3}.  For
simplicity we will write $\cale$, $\cale_\calw$, $\calw$, etc. for $\cale(\gc)$, $\cale_\calw(\gc)$,
$\calw(\gc)$, etc.
It is the dual graph of the curve configuration  $\C\subset V^{emb}$, where
$r:V^{emb}\to U$ is a representative of an embedded resolution of
$(V_f\cup V_g,0)\subset (\bfc^3,0)$. Then, by  \ix{curve configuration $\C$}
\ref{connectedhez}, for any tubular neighbourhood $T(\C)\subset
V^{emb}$ of $\C$, one has an inclusion $(\Phi\circ r)^{-1}(c_0,d_0)$
for $(c_0,d_0)\in\Wedge$. For simplicity, we write $\wfp$  for the
diffeomorphically lifted fiber $(\Phi\circ r)^{-1}(c_0,d_0)$ of $\Phi$.

Next, consider the decomposition \ref{felbont} of $\wfp$.
More precisely, for any intersection point $p\in C_v\cap C_u$
(or, self--intersection point of $C_v$ if $v=u$), which
corresponds to the edge $e\in\cale$ of $\gc$, let $T(e)$
 be a small closed ball centered at
$p$. Let  $T^\circ(e)$ be its interior. Then, for $(c_0,d_0)$ sufficiently
close to the origin, $\wfp\cap T(e)$ is a union of annuli.
Moreover, $\wfp\setminus \cup_eT^\circ (e)$ is a union
$\cup_{v\in\calv} \wfv$, where $\wfv$ is  in a small
tubular neighbourhood of $C_v$ ($v\in \calv$).

If $v$ is an arrowhead supported by an edge $e$,
then the inclusion $\wfv\cap \partial T(e)\subset \wfv$ admits a
strong deformation retract. Hence the pieces $\{\wfv\}_{v\in \cala}$, and the
separating annuli $\{\wfp\cap T(e)\}_{e\in \cale\setminus
\cale_\calw}$  can be neglected. Thus, instead of $\wfp$,
we will consider only
$$\wfp^*:=\wfp\setminus \,(\, \bigcup_{e\in \cale\setminus \cale_\calw}T(e)\cup
\bigcup_{v\in\cala}\wfv\,).$$
In particular, $\wfp^*$ is separated by the annuli $\{\wfp\cap
T(e)\}_{e\in\cale_\calw}$ in surfaces $\{\wfw\}_{w\in\calw}$, and each
$\wfw$ is the total space of a covering, where the base space is
$C_w\setminus \cup_e T(e)$ and the fiber is isomorphic to
$\calp_w:=\calp$ from the Key Example \ref{keyex}.

Moreover, one might choose the horizontal/vertical monodromies of
$\wfp^*$ in such a way that they will preserve this decomposition,
and their action on $\wfw$ will be induced by the permutations
$\sigma_{w,hor}:=\sigma_{hor}$, respectively
$\sigma_{w,ver}:=\sigma_{ver}$ acting on $\calp_w$ (cf. \ref{keyex}).
This shows that $m_{\Phi,ver}$ is isotopic to an action
$\widetilde{m}_{\Phi,ver}$, which preserves the above
decomposition, and its restriction on each $\wfw$ is  finite.
Let $q$ be a common multiple of the orders of
$\{\sigma_{w,ver}\}_{w\in\calw}$. Then
$\widetilde{m}_{\Phi,ver}^q$ is the identity on each $\wfw$, and acts
as a `twist map' on each separating annulus. In the topological
characterization of $\widetilde{m}_{\Phi,ver}$,
this twist is crucial.
\end{bekezdes}

\begin{definition}\labelpar{def:twist}\cite[\S13]{EN} \ix{Eisenbud--Neumann book}
Let $h:A\to A$ be a homeomorphism of the oriented annulus $A=S^1\times [0,1]$
with $h|\partial A=id$. The (algebraic) twist of $h$ is defined as the
intersection number
$${\rm twist}(h):=(x, {\rm var}_h(x)),$$
where $x\in H_1(A,\partial A,\Z)$ is a generator, and the variation map
${\rm var}_h:H_1(A,\partial A,\Z)\to H_1(A,\Z)$ is defined by
${\rm var}_h([c])=[h(c)-c]$, for any relative cycle $c$.

More generally, if $B$ is a disjoint union of annuli and $h:B\to B$
is a homeomorphism with $h^q|\partial B=id$ for some integer  $q>0$,
 for any component $A$ of $B$ define
$${\rm twist}(h;A):=\frac{1}{q}\, {\rm twist}(h^q|A).$$
\end{definition}
Notice that ${\rm
twist}(h;A)$, defined in this way,  is independent of the choice
of $q$.
 \ix{twist|textbf}

\begin{example}\labelpar{ex:twist} In the `classical' situation
one considers an analytic family of curves (over a small disc),
where the  central fiber is a normal crossing divisor and the generic fiber is smooth.
The generic fiber is cut by separating annuli, which are situated   in the
neighbourhood of the normal crossing intersection point of the
central fiber. Around such a point $p$, in convenient local coordinates
$(u,v)\in(\bfc^2,p)$, the family is given by the fibers of
$f(u,v)=u^av^b$ for two positive integers $a$ and $b$. Hence,  the union
of annuli is the fiber $f^{-1}(\epsilon)$ intersected with a small
ball centered at $p$ (and $\epsilon$ is small with
respect to the radius of this ball). It consists of  $\mbox{gcd}(a,b)$
annuli. The Milnor monodromy action $h$ is induced by
$[0,2\pi]\ni t\mapsto f^{-1}(\epsilon e^{i t})$. In order to make
the computation, one must choose $h$ in such a way that its
restriction on the boundary `near'  the $x$--axis is finite of
order $a$, and similarly, on the boundary components `near' the
$y$--axis is finite of order $b$. Then, one can show (see
e.g. \cite[page 164]{EN}) that the twist, for each connected
component $A$, is
$${\rm twist}(h;A)=-\frac{\mbox{gcd}(a,b)}{ab}.$$\end{example} \ix{Eisenbud--Neumann book}
\bekezdes
In fact, in most of the forthcoming   arguments, what is really important is not the
{\it value} of the twist itself, but its {\it sign}.
\begin{definition} In a geometric situation as in \ref{qp},
we say that we have a {\it uniform twist}, if for all the
annuli the signs of all the twists are the same.\ix{twist!uniform|textbf}
\end{definition}
\begin{example}\labelpar{ex:twistuj} In our present situation
of \ref{qp}, we are  interested in the twist of the separating
annuli associated with the edges $e\in\cale_\calw$. Depending on whether
the edge is of type 1 or 2, we have to consider two
different situations. For both cases we will consider the local
equations from \ref{summary}.

If $e$ is a 1--edge, then the fiber (union of annuli) and the vertical
monodromy action are given by
$$u^mv^nw^l=c,\ \ v^\nu w^\lambda=de^{it} \ (\mbox{with $(c,d)$ constant, and} \
t\in [0,2\pi]).$$

If $e$ is a 2--edge, then the fiber (union of annuli) and the vertical
monodromy action are given by
$$u^mv^{m'}w^n=c,\ \  w^\nu=de^{it} \ (\mbox{with $(c,d)$ constant, and} \
t\in [0,2\pi]).$$
A 2--edge $e$ is a vanishing 2--edge if $n=0$.

We invite the  reader to compute
the corresponding twists exactly, in both cases. In the present
proof we need only the following statement.
\end{example}

\begin{lemma}\labelpar{lem:twist} Fix an edge $e\in\cale_\calw$, set $B:=\wfp\cap
T(e)$, and let $A$ be one of the connected components of $B$.\vspace{2mm}

(a) If $e$ is a 1--edge, then  ${\rm twist}(h;A)<0$.\vspace{2mm}

(b) If $e$ is a non--vanishing 2--edge, then  ${\rm twist}(h;A)>0$.\vspace{2mm}

(c) If $e$ is a vanishing 2--edge, then  ${\rm twist}(h;A)=0$.
\end{lemma}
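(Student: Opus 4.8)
The plan is to compute the twist directly from the local normal forms recorded in (\ref{ex:twistuj}), reducing each case to the classical computation of (\ref{ex:twist}). In every case the separating annulus $B=\wfp\cap T(e)$ sits in a small polydisc around the intersection point $p$ with coordinates $(u,v,w)$, and one connected component $A$ of $B$ is cut out by a suitable pair of the three local surfaces. Since the twist depends only on the restriction of the (vertical) monodromy to a $2$--dimensional slice transversal to the fiber of the covering $\wfw\to C_w$, the first step is to identify this slice and to write the induced $1$--parameter family of curves on it.

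For case (a), the edge $e$ is a $1$--edge, so locally $f\circ r=u^mv^nw^l$ and $g\circ r=v^\nu w^\lambda$ with $m,\nu,\lambda>0$. Restricting to the slice that separates the component $A$ --- say after normalizing so that $u$ becomes the ``free'' direction along $C_w$ --- the vertical monodromy $t\mapsto \{u^mv^nw^l=c,\ v^\nu w^\lambda = de^{it}\}$ is, after a change of variable identifying the relevant product of $v$ and $w$ powers with a single coordinate, exactly a family of type $\{f(u,v')=u^aw^b\}$ from (\ref{ex:twist}), traversed in the \emph{positive} $t$--direction. By the computation quoted there the twist is $-\gcd(a,b)/(ab)<0$. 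The bookkeeping needed here is precisely the one already carried out in the charts $\psi_p$ of (\ref{lh:1})--(\ref{lh:2}) and in the compatibility discussion (\ref{NORM2}): those show that near a type $1$ point the relevant gluing is orientation--preserving, so the sign of the twist agrees with the classical (complex analytic) one, namely negative.

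For case (b), $e$ is a non--vanishing $2$--edge, so $f\circ r=u^mv^{m'}w^n$ and $g\circ r=w^\nu$ with $m,m',\nu>0$ and $n>0$. The vertical monodromy is $t\mapsto\{u^mv^{m'}w^n=c,\ w^\nu=de^{it}\}$. Now the key geometric input is (\ref{lemma:type2}): the chart $\psi_p$ over a type $2$ point, compared with the nearby type $1$ charts, \emph{reverses} the ambient orientation (and the orientation of the strict transforms of $\C$) while \emph{preserving} the orientation of a transversal slice. Consequently the separating annulus around a type $2$ point carries the classical twist computed with the opposite ambient orientation, which flips its sign to \emph{positive}. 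The step to execute carefully is to check that when $n>0$ the variation map ${\rm var}_h$ is genuinely nontrivial --- i.e. the twist is nonzero --- so that the sign statement is meaningful; this follows because $w$ (hence $g\circ r$) actually varies along the annulus when $\nu>0$, exactly as in (\ref{ex:twist}).

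For case (c), $e$ is a vanishing $2$--edge: the middle weights of the end--vertices are zero, so $n=0$ and the local equation of $f\circ r$ is $u^mv^{m'}$ with no $w$; the fiber of the covering $\wfw\to C_w$ is constant along the $w$--direction near $p$. Then the $1$--parameter family $t\mapsto\{u^mv^{m'}=c,\ w^\nu=de^{it}\}$ moves the annulus by rotating the $w$--coordinate rigidly, without shearing: after going around once ($t$ from $0$ to $2\pi\cdot({\rm order})$) the map on the annulus is isotopic rel boundary to the identity, so ${\rm var}_h\equiv 0$ and ${\rm twist}(h;A)=0$. This is the reinterpretation promised in (\ref{ss:elim}): the annulus coded by a vanishing $2$--edge is ``rigid''. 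I expect case (c) to be the cleanest, case (a) to be routine given the quoted classical formula, and the main obstacle to be case (b): one must chase through the orientation conventions of (\ref{lemma:type2}) and the chart computations (\ref{eq:psi3})--(\ref{NORM3}) to be certain that the orientation reversal of $\psi_p^{norm}$ propagates to a genuine sign flip of the twist (rather than being cancelled by a second reversal somewhere in the identification $\wfw \to$ slice), and to confirm that the hypothesis $n>0$ is exactly what keeps the twist away from $0$.
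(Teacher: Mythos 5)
Your cases (a) and (c) are essentially the paper's argument: (a) reduces to the classical negative twist of (\ref{ex:twist}) with the extra variable acting as a degree--$m$ covering direction, and (c) observes that for $n=0$ one may take $q=\nu$ and that $h^\nu$ extends by the identity over the whole annulus, so the variation vanishes.

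Case (b) is where you have a genuine gap, and you have in fact pointed at it yourself. The twist is an invariant of the monodromy acting on the annulus $A\subset\wfp$, where $A$ carries the \emph{fixed} complex orientation of the fiber of $\Phi$ throughout; nothing about the orientation of $A$ changes between a type 1 and a type 2 point. Lemma (\ref{lemma:type2}) is a statement about the normalization charts of the real $4$--fold $\wsk$ and about how the Hirzebruch--Jung strings are sewn into the plumbing graph; it does not, by itself, say anything about the direction in which $\tilde m_{\Phi,ver}$ shears the annulus in the curve $\wfp$, and the inference ``consequently the annulus carries the classical twist computed with the opposite ambient orientation'' is exactly the unproved step. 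The two sign phenomena have a common source, but deducing one from the other is not easier than computing the twist directly. The direct computation is a one--liner and is what the paper does: on the component $A$ one has $u^mv^{m'}=c\,w^{-n}$ and $w^\nu=de^{it}$, so $u^mv^{m'}=c\,d^{-n/\nu}e^{-int/\nu}$ rotates in the \emph{negative} direction as $t$ runs from $0$ to $2\pi$; hence the monodromy of $A$ is the inverse of the classical monodromy of the germ $u^mv^{m'}$, and the twist is the negative of the classical (negative) twist, i.e.\ strictly positive when $n>0$. This also corrects your reason for nonvanishing: what keeps the twist away from $0$ is $n>0$ (the rotation rate of $u^mv^{m'}$ is $-n/\nu$), not the fact that $w$ or $g\circ r$ varies --- $\nu>0$ always holds, including in case (c) where the twist is zero.
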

\ix{vanishing 2--edge}

\begin{proof} The first case behaves as a `covering of degree $m$' of the classical
  case $v^\nu w^\lambda=e^{it}$
($t\in [0,2\pi]$), which was exemplified in  \ref{ex:twist}.
The monodromy of the second case behaves as
the inverse of the monodromy of the classical monodromy operator:
$u^mv^{m'}=e^{-nit/\nu}$ ($t\in [0,2\pi]$). Finally, assume that
$n=0$. Then the restriction of the vertical monodromy to $\partial
B$ has order $\nu$, hence one can take $q=\nu$. But $h^\nu$
extends as the identity on the whole $B$. The details are left to
the reader.
\end{proof}

\begin{bekezdes}\labelpar{diagram}
Now we continue   the
proof of Theorem \ref{th:Jordan}. The structure of the 2--Jordan blocks of
$M_{\Phi,ver}$ is codified in the following commutative diagram, as given in
 \cite[(14.2)]{EN}: \ix{Eisenbud--Neumann book}
\end{bekezdes}

\begin{picture}(300,100)(-5,0)
\put(0,80){\makebox(0,0){$H_1(\wfp^*)$}}
\put(100,80,){\makebox(0,0){$H_1(\wfp^*,\wfW)$}}
\put(190,80){\makebox(0,0){$H_0(\wfW)$}}
\put(270,80){\makebox(0,0){$H_0(\wfp^*)$}}
\put(0,20){\makebox(0,0){$H_1(\wfp^*)$}}
\put(100,20){\makebox(0,0){$H_1(B)$}}
\put(25,50){\makebox(0,0){\footnotesize{$M^q_{\Phi,ver}-I$}}}
\put(110,50){\makebox(0,0){\footnotesize{$T$}}}
\put(190,50){\makebox(0,0){$H_1(B,\partial B)$}}
\put(155,65){\makebox(0,0){\footnotesize{exc}}}
\put(155,35){\makebox(0,0){\footnotesize{var}}}
\put(50,85){\makebox(0,0){\footnotesize{$i$}}}
\put(50,25){\makebox(0,0){\footnotesize{$j$}}}
\put(320,80){\makebox(0,0){$0$}} 

\put(25,80){\vector(1,0){42}} \put(75,20){\vector(-1,0){50}}
\put(132,80){\vector(1,0){32}} \put(215,80){\vector(1,0){30}}
\put(290,80){\vector(1,0){20}} 

\put(0,70){\vector(0,-1){40}}
\put(100,70){\vector(0,-1){40}}
\put(130,70){\vector(2,-1){35}}
\put(165,47){\vector(-2,-1){40}}
\end{picture}

Above we use the following notations:
$$\wfW:=\cup_{w\in\calw}\wfw, \ \ \  B:=\cup_{e\in\cale_\calw}(\wfp^*\cap
T(e)), $$ \begin{verse}
\hspace{1cm}$\bullet$ $q$ is a positive integer as in \ref{qp}, hence $M^q_{\Phi,ver}$
is unipotent, \\
\hspace{1cm}$\bullet$ var is the variation map associated with $\tilde{m}^q_{\Phi,ver}$,\\
\hspace{1cm}$\bullet$ exc is the excision {\it isomorphism}, and  $T$ is  the
composite ${\rm var}\circ {\rm exc}$,\\
\hspace{1cm}$\bullet$ the horizontal line is a homological exact sequence.
\end{verse}
Since $B$ is the disjoint union of the separating annuli, ${\rm
var}$ is a diagonal map.  On the diagonal, each entry corresponds to
an annulus $A$, and equals the integer $$q\cdot {\rm
twist}(m_{\Phi,ver};A)$$ determined in  \ref{lem:twist}.

Obviously, the number of {\it all} 2--Jordan blocks of
$M_{\Phi,ver}$ is $\rank {\rm im}(M^q_{\Phi,ver}-I)$. On the other
hand, by the commutativity of the diagram,
\begin{equation}\label{eq:im}
{\rm im}(M^q_{\Phi,ver}-I)\simeq j\circ T({\rm im}(i))\simeq {\rm
im}(i)/{\rm ker} (j\circ T|{\rm im}(i)).
\end{equation}
{\em The point in (\ref{eq:im})  is that ${\rm im}(M^q_{\Phi,ver}-I)$ appears as a
factor space of \, ${\rm im}(i)$.}

\begin{bekezdes}
In some cases ${\rm im}(M^q_{\Phi,ver}-I)$ can be determined
exactly.

Assume that $\gc$ has no vanishing 2--edges,  i.e. the
case \ref{lem:twist}(c) does not occur. Then all  diagonal
entries of var are non--zero, hence both  var and $T$ are
isomorphisms.
The next lemma is a direct consequence of  \cite[page 113]{EN}: \ix{Eisenbud--Neumann book}
\end{bekezdes}
\begin{lemma}\labelpar{lem:unitw} Assume that $\gc$ is unicolored \ix{graph!unicolored}
(cf. \ref{def:unicolor}). Then the restriction of $j\circ T$ on
${\rm im}(i)$ is injective.
In particular, if $G$ is unicolored,  then  the number of   all
2--Jordan blocks of $M_{\Phi,ver}$  is $\rank \im (i)$.
\end{lemma}
\begin{proof}
For any $y,z\in H_1(\wfp,\wfW)$, we consider the intersection
number $(y,Tz)$, denoted by $\langle y,z\rangle$. Since $T$ is
diagonal, and all  entries on the diagonal have the same sign,
the form $\langle y,z\rangle$ is definite. Assume that
$j\,Ti(x)=0$. Then $$0=(x,j\,Ti(x))_{H_1(\wfp^*)}=( i(x),Ti(x))=\langle
i(x),i(x)\rangle,$$ hence $i(x)=0$.
\end{proof}

\section{\ Characters}\setcounter{equation}{0} The
corresponding $\Z^2$--characters of im$(i)$ can be determined by the  following
exact sequence (as part of diagram \ref{diagram}):
$$0\longrightarrow \ {\rm im}(i)\ \longrightarrow \
 H_1(\wfp^*,\wfW)\ \stackrel{\tilde{\partial}}{\longrightarrow} \
H_0(\wfW)\ \longrightarrow \ H_0(\wfp^*)\ \longrightarrow \ 0,
$$
and the action of the vertical/horizontal monodromies on this
sequence.
In this proof we  only need those
blocks of $M_{\Phi,ver}$ which have eigenvalue 1.
\ix{monodromy!characters}

The action of the algebraic {\em vertical monodromy} on each term of
this sequence is finite: it is induced by a permutation of
the connected components of the spaces $\wfW$ and $(B,\partial B)$. The
corresponding 1--eigenspaces form the following exact sequence:
\begin{equation}\label{eq:exact1}
0\longrightarrow \ {\rm im}(i)_{ver,1}\ \longrightarrow \
 H_1(\wfp^*,\wfW)_{ver,1}\ \stackrel{\tilde{\partial}}{\longrightarrow} \
H_0(\wfW)_{ver,1}\ \longrightarrow \ H_0(\wfp^*)_{ver,1}\ \longrightarrow \ 0.
\end{equation}
This sequence will be compared with another sequence  which computes the
simplicial homology of the connected 1--complex $|G|$. Namely, one
considers the free vector spaces $\bfc^{|\cale_\calw(G)|}$ and
$\bfc^{|\calw(G)|}$,  generated by the edges $\cale_\calw(G)$ and
vertices $\calw(G)$ of $G$, as well as   the boundary
operator $\partial'$. Then one has the exact sequence
\begin{equation}\label{eq:exact2}
0\ \longrightarrow \ H_1(|G|)\ \longrightarrow \
\bfc^{|\cale_\calw(G)|} \ \stackrel{\partial'}{\longrightarrow }\
\bfc^{|\calw(G)|}\ \longrightarrow \ H_0(|G|) \ \longrightarrow \
0.
\end{equation}
\begin{lemma}\labelpar{prop:same}
The two exact sequences (\ref{eq:exact1}) and (\ref{eq:exact2}) are
isomorphic. Moreover, the horizontal monodromy acting on (\ref{eq:exact1})
can be identified with the action on  (\ref{eq:exact2}) induced
by the cyclic action of the covering $G\to\gc$.  (The cyclic action
 is induced by the positive generator of $\Z$, cf. \ref{def:2.3.2}.)
\end{lemma}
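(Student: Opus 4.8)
\textbf{Plan of proof for Lemma \ref{prop:same}.} The strategy is to exhibit, term by term, a canonical bijection between the generators of the two sequences, and then check that it is compatible with the boundary maps and with the monodromy actions. The key observation is that the topological $1$--complex $|G|$ (where $G\to\gc$ is the covering produced by the Main Algorithm) is, by construction, the dual graph of the curve configuration obtained from $\C$ by the normalization step, restricted to the non--arrowhead part. So I would first set up the dictionary: a non--arrowhead vertex $w$ of $\gc$ has exactly $\n_w$ vertices above it in $G$ (cf. (\ref{NW})), and these correspond exactly to the connected components of $n_\cals^{-1}(C_w^{norm})$, which in turn index a basis of $H_0(\wfw)_{ver,1}$; indeed the vertical monodromy permutes these components cyclically and the $1$--eigenspace is spanned by their "sum over each orbit", and orbits $\leftrightarrow$ components $\leftrightarrow$ vertices of $G$ above $w$ (here one uses that $\wfw$ is a covering of $C_w\setminus\cup T(e)$ with fiber $\calp_w$, and the computation of (\ref{keyex})). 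Similarly, an edge $e\in\cale_\calw(\gc)$ has $\n_e$ edges above it in $G$ (cf. (\ref{NE})); these index the orbits of the vertical monodromy acting on the connected components of the union of annuli $\wfp^*\cap T(e)$, equivalently a basis of $H_1(\wfp^*\cap T(e),\partial(\wfp^*\cap T(e)))_{ver,1}$, which by the excision isomorphism in the diagram of (\ref{diagram}) matches $H_1(B,\partial B)_{ver,1}$ summand by summand. Thus the middle terms of (\ref{eq:exact1}) and (\ref{eq:exact2}) get identified with $\bfc^{\#\cale_\calw(G)}$, and the third terms with $\bfc^{\#\calw(G)}$.

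Next I would verify that under this identification the map $\tilde\partial$ in (\ref{eq:exact1}) becomes the simplicial boundary operator $\partial'$. This is a local check at each edge: an orbit of annuli over $e=(w,w')$ has two boundary tori, one deformation--retracting into $\wfw$ and one into $\wfw'$, and the orbit--component of $\wfw$ it hits is precisely the vertex of $G$ above $w$ incident to the chosen edge of $G$ above $e$ (this incidence is exactly how the covering graph $G$ is glued in (\ref{re:2.3.1})); so the boundary of the edge--class is the difference of the two vertex--classes, matching $\partial'$ up to the (consistent) choice of orientations. Loops require the usual care (a self--intersection of $C_w$ contributes an edge of $G$ with both ends among the vertices over $w$, and the boundary is the difference of those two, possibly zero), but this is built into the algorithm's treatment of $1$--loops in (\ref{s2}). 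With $\tilde\partial=\partial'$ and the terms identified, exactness of both sequences forces the identification of kernels and cokernels, i.e. $\mathrm{im}(i)_{ver,1}\cong H_1(|G|)$ and $H_0(\wfp^*)_{ver,1}\cong H_0(|G|)=\bfc$.

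Finally I would track the \emph{horizontal} monodromy. On each $\wfw$ the horizontal action is induced by the permutation $\sigma_{w,hor}$ of $\calp_w$ (cf. (\ref{qp})); this commutes with $\sigma_{w,ver}$, so it acts on the set of $\sigma_{w,ver}$--orbits, hence on the vertices of $G$ above $w$. A direct comparison with (\ref{keyex}) -- specifically, that $\sigma_{hor}$ and $\sigma_{ver}$ are multiplication by $\mathfrak h$ and $\mathfrak v$ in the group $\calp_w$ -- shows this induced permutation is exactly the deck action of the cyclic covering $p:G\to\gc$ on $p^{-1}(w)$, whose covering group has order $\n_w$ and is generated by the image of the positive generator of $\Z$ (cf. (\ref{def:2.3.2})). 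The same computation at an edge $e$ shows the horizontal action on the orbits of annuli over $e$ is the deck action on $p^{-1}(e)$. Since both exact sequences are equivariant for these compatible actions, the isomorphism of sequences is equivariant, which is the assertion. \textbf{The main obstacle} I anticipate is the careful bookkeeping of orientations and of the loop/self--intersection cases: one must check that the identification of $H_1(B,\partial B)_{ver,1}$ with $\bfc^{\#\cale_\calw(G)}$ sends the geometric boundary of an annulus--orbit to the \emph{signed} incidence used in $\partial'$, and that the earlier orientation analysis in (\ref{NORM2}) and (\ref{lemma:type2}) -- which distinguishes $+$-- from $\circleddash$--edges -- does not interfere at the level of the $1$--eigenspace of the vertical monodromy (it should not, since the $ver,1$--eigenspace computation is purely about orbit counts, not about the $C^\infty$ gluing orientation, but this needs to be stated). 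Everything else is a routine unwinding of the constructions in \ref{sec:proof} and \ref{ss:2.3}.
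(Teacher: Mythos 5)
Your proposal is correct and follows essentially the same route as the paper: both identify the second and third terms of (\ref{eq:exact1}) with the edge and vertex spaces of $|G|$ by counting orbits of the vertical monodromy on the connected components of $\wfW$ and of the separating annuli (equivalently, the components of the corresponding pieces of $\partial\wsk$ produced in the proof of the Main Algorithm), and then invoke compatibility of the boundary maps and of the horizontal action with the deck transformations of $G\to\gc$. Your version is somewhat more explicit where the paper simply appeals to naturality — in particular in matching $\tilde\partial$ with $\partial'$ and in identifying the horizontal action with the cyclic deck action via the group structure of $\calp_w$ from (\ref{keyex}) — but no new idea is involved.
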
\ix{graph!covering}
\begin{proof} It suffices to identify the second and the third terms
together with the connecting morphisms, and their compatibility with the monodromy
action. (In fact, the isomorphism  of the last  terms is trivial:
 $ H_0(\wfp^*)_{ver,1}=H_0(\wfp)_{ver,1}=H_0(|G|)=\bfc$ is clear since $\wfp$
and $|G|$ are connected.)

The identification follows from the proof of
the Main Algorithm.
Recall, that in the previous sections, we constructed a decomposition
of $\wfp^*$, the lifted fiber $(\Phi\circ r)^{-1}(c_0,d_0)$. Let us repeat
the very same construction for
$(\Phi\circ r)^{-1}(\partial D_{c_0})$, where $D_{c_0}$ is the disc
$\{(c,d):c=c_0\}$ as in Chapter \ref{s:ICIS}, or in the proof of \ref{lem:sksmooth}.
That is, we decompose the total space of the fibration over $\partial D_{c_0}=S^1$
instead of only one fiber.
Let $Tot(\wfW)$ be the space  we get instead  of $\wfW$, which, in fact,
is the total space of a fibration over $S^1$ with fiber $\wfW$ and monodromy the
vertical monodromy. Since the geometric vertical monodromy permutes the components of $\wfW$,
the algebraic vertical monodromy $M_{H_0(\wfW),ver}$ acts finitely on $H_0(\wfW)$, and
the $coker(M_{H_0(\wfW),ver}-I)$ can be identified with $H_0(\wfW)_{ver,1}$.
Hence, by the Wang exact sequence of the above fibration, we get that
$H_0(\wfW)_{ver,1}=H_0(Tot(\wfW))$. On the other hand, in the proof of
\ref{lem:sksmooth}
(complemented also with the second part of \ref{CC}),
 $\Phi^{-1}(\partial D_{c_0})\cap B_\epsilon$
appears as a part of $\partial \cals_k$. Therefore,
$Tot(\wfW)$ appears as a part of $\partial \wsk$: that part which is situated in the neighbourhood of the
regular part of the compact components of $\C$. Hence, by \ref{NORM1} and \ref{bek:pl1},
$H_0(Tot(\wfW))$ is freely generated by the collection of curves situating
in the normalization  ${\cals}_k^{norm}$ of $\wsk$ located above the compact components of
$\C$. This is codified exactly by the non--arrowhead vertices of  $G$.

For the second terms, first notice that by the isomorphism
exc $:H_1(\wfp^*,\wfW)\to H_1(B,\partial B)$, the  rank of $H_1(\wfp^*,\wfW)$
counts the separating annuli
of $\wfp^*$. Now, we can repeat  the entire construction and argument  above
for $\cale_\calw$ instead of $\calw$. Indeed, $H_1(\wfp^*,\wfW)_{ver,1}$
counts the separating tori (not considering those corresponding to
the binding of the open book) of $(\Phi\circ r)^{-1}(\partial D_{c_0})$,
and this, by the proof of the Main Algorithm, is codified exactly by $\cale_\calw$.
The details are left to the reader.

Since all the maps and identifications are natural and compatible with the
action of the corresponding horizontal monodromies, the morphisms
$\tilde{\partial}$ and $\partial'$  and the actions of the horizontal monodromies are all
identified.
\end{proof}

Now, we are ready to finish the proof of \ref{th:Jordan}. By the
above discussion, $\#^2_1M_{\Phi,ver}$ is equal to the dimension
of $ I:={\rm im}(i)_{ver,1}/{\rm ker} (j\,T|{\rm im}(i))_{ver,1}$, which is
smaller than $\dim (i)_1=\dim H_1(|G|)=c(G)$. Moreover, $P^\#$ is
the characteristic polynomial of the horizontal monodromy acting
on $I$, which clearly divides the characteristic polynomial of the
horizontal monodromy acting on $H_1(|G|)$, which is
$P_{\mathfrak{h}(|G|)}$. If $c(G)=0$, or if
$c(G)=\#^2_1M_{\Phi,ver}$ (i.e., if $\corank(A,\inc)=|\cala|$, cf.
\ref{cor:Jordan}), or if $\gc$ is unicolored (cf.
\ref{lem:unitw}), then $P^\#=P_{\mathfrak{h}(|G|)}$. \ix{graph!unicolored}

\begin{bekezdes} The local case, valid for any $j$, follows by
similar arguments if one replaces $\gc$ by $\Gamma^2_{\C,j}$.  The
last statement  follows from  Lemma \ref{eq:CP}, or from the sentences following it.
\end{bekezdes}

\bekezdes {\bf The proof of Theorem \ref{th:Jordanim}.}
Assume that $\gc$ has a  vanishing 2--edge $e\in\cale_\calw$, that is,   the situation of
 \ref{lem:twist}(c) occurs.
  Let $h:=\tilde{m}_{\phi,ver}$ be as in \ref{qp},
 and fix $q$ such that the restriction of
 $h^q$ on $\wfW$ is the identity. The proof of
 \ref{lem:twist}(c) shows that $h^q$ can be extended  to $\wfp^*\cap T(e)$ by the
 identity map. In particular,
 in such a situation it is better to replace the space
 $\wfW$ from the diagram \ref{diagram} by
 $\wfW'$, defined as the union of $\wfW$ with {\em all} separating
 annuli $\wfp^*\cap T(e)$ corresponding to the vanishing 2--edges from
 $\cale_\calw$.
 Moreover, we define  $B'$ as the union of the other separating annuli.
 Then one gets a new diagram (involving the spaces $\wfp^*$, $\wfW'$, $B'$,
 and morphisms $i'$ and $T'$)
 such that in the new `collapsing' situation
 $T'$ becomes {\em  an isomorphism}. Then all the arguments above,
 complemented with the corresponding facts from Chapter \ref{ss:ELI}
about the `Collapsing Main Algorithm', can be repeated, and the second version
\ref{th:Jordanim} follows as well.

\begin{example}\labelpar{ex:cylver} Assume that $f(x,y,z)=f'(x,y)$ and $g=z$ as in
 \ref{cyl}.  Then we have only vanishing 2--edges,
hence  $\wfW'=\wfp$. Therefore, in the new diagram $H_1(\wfp^*,\wfW')=0$.
In fact, since $\nu=1$ for all vertices, we can even take $q=1$
(use e.g. \ref{keyex}), hence in this case $m_{\phi,ver}$ is
isotopic to the identity. (This can also be proved by a direct
argument, see Chapter \ref{s:cyl}).
\end{example}

\chapter{The mixed Hodge structure of $H_1(\partial F)$}\labelpar{s:MHS}

\section{\ Generalities. Conjectures}\label{ss:MHS}\setcounter{equation}{0}

\bekezdes\labelpar{bek:MHS}
 We believe that a substantial part of the numerical identities and
inequalities obtained in the previous chapters are closely related with general properties  of mixed
Hodge structures (in the sequel abbreviated by MHS)
supported by different (co)homology groups involved in the constructions.
\ix{mixed Hodge structure|textbf}

Although the detailed study  of the mixed Hodge structure on
$H_1(\partial F)$ and related  properties exceeds the aims of the
present work, we decided to dedicate a few paragraphs to this subject too: we wish to
formulate some of the expectations and to shed light on the  results of
the book from this point of view
as well. 

For general results, terminology and properties of MHS, see for example the articles
of Deligne \cite{De}.
For the MHS on the cohomology of the Milnor fiber of local singularities see
the articles of Steenbrink \cite{StLimit,StOslo,StHodge},
or consult the monographs of Dimca, Kulikov,  or Peters and Steenbrink   \cite{DimcaBook,Kulikov,PSt}.
On the link of an isolated singularity Durfee defined a MHS \cite{Du2},
for different versions  and generalizations see
 \cite{DuHa,DuSa,Elzein,StOslo,StHodge}.\ix{Durfee}
\ix{Deligne} \ix{Steenbrink} \ix{Kulikov} \ix{Dimca} \ix{Peters}

\vspace{2mm}

For the convenience of the reader we recall the basic definition of MHS.

\begin{definition} \
(a) A pure Hodge structure of weight $m$ is  a pair $(H,F^\bullet)$, where $H$ is a
finite dimensional $\R$--vector space and $F^\bullet$ is a decreasing finite filtration
on $H_\bfc=H\otimes \bfc$ (called the Hodge filtration) such that
$$H_\bfc=F^p\oplus \overline{F^{m-p+1}H_\bfc} $$ for all $p\in\Z$, where the conjugation
\ $\overline{\cdot}$ \ on $H_\bfc$ is induced by the conjugation of ~$\bfc$.

\vspace{2mm}

(b) A mixed Hodge structure is a triple $(H,W_\bullet,F^\bullet)$, where
$H$ is a finite dimensional $\R$--vector space, $W_\bullet$ is a finite increasing filtration
on $H$ (called the weight filtration), and $F^\bullet$ is a finite decreasing filtration
on $H_\bfc$ such that $F^\bullet$ on  $Gr^W_mH$ induces  a Hodge structure of weight $m$ for all $m$.
(Here  $Gr^W_mH:= W_mH/W_{m-1}H$.)
\end{definition}
\ix{mixed Hodge structure!Hodge filtration|textbf}\ix{mixed Hodge structure!weight filtration|textbf}

\bekezdes {\bf  MHS  on the cohomology of $\partial F$.} \ Without entering
in the theory of derived categories and mixed Hodge modules, we outline  a possible way
 to  define a MHS
on the cohomology of $\partial F$.  \ix{Milnor!fiber!boundary}

\vspace{2mm}

If $f:(\bfc^3,0)\to (\bfc,0)$ is a hypersurface singularity, the cohomology of its
Milnor fiber $F$ carries a MHS.
This can be defined via Deligne's nearby cycle functor $\psi_f$ which produces the
mixed Hodge module $\psi_f\R_{\bfc^3}$ supported on $V_f$. If $i$ denotes the inclusion of the
origin into $V_f$, then $H^k(i^*\psi_f\R_{\bfc^3})=H^k(F)$, defining a MHS on $H^k(F)$.

Usually, if $V_f$ has  an isolated singularity, then the MHS of $H^*(\partial F)$ is defined via the
isomorphism of $H^*(\partial F)$ with the  cohomology $H^*(K_f)$
of the link of $V_f$, which is identified with a local cohomology $H^{*+1}_{\{0\}}(V_f)$.
The MHS on the link of  a normal surface singularity can be  defined in a similar way through local
cohomology (see \cite{Du2,DuHa,DuSa,Elzein,StOslo,StHodge}).

If $V_f$ has a non--isolated singularity, this procedure  does not work: the link and $\partial F$ have  different
cohomologies. Therefore, for  such a case we propose the following definition.

Consider  $i^!\psi_f\R_{\bfc^3}$ too; this has  the property that $H^k(i^!\psi_f\R_{\bfc^3})=H^k_c(F)$.
\begin{definition}
Define the MHS on the cohomology of the boundary $\partial F$ via
 \begin{equation}\label{be:MHS}
 {\rm cone}( i^!\psi_f\R_{\bfc^3}\to
i^*\psi_f\R_{\bfc^3}).\end{equation}
\end{definition}
\ix{mixed Hodge structure!nearby cycles}
This definition automatically implies the following fact.
\begin{corollary} One has an  exact sequence of mixed Hodge structures:
\begin{equation}\label{eq:MHSmhs}
0\to H^1(F)\to H^1(\partial F)\to H^2_c(F)\to H^2(F)\to
H^2(\partial F)\to H^3_c(F)\to 0.\end{equation}
\end{corollary}

\begin{remark} The above definition agrees with the `link functor' ${\rm cone}(i^!\to i^*)$ of
Durfee and Saito \cite{DuSa}, which can be considered
in any category where the basic functors $i^!$, $i^*$ and ${\rm cone}$  are defined.
The link functor can be applied to any mixed Hodge module.
If one applies it to the constant sheaf on $V_f$, one  gets a MHS supported by
the cohomology of the link $L$ (this case was discussed in \cite{DuSa}).
When one  applies it to $\psi_f\R_{\bfc^3}$,  one gets the MHS of the
cohomology of $\partial F$. \ix{Durfee}\ix{Saito}
\end{remark}

The following cases are relevant from the point of view of the results of the present work.

\begin{example}\labelpar{re:nss} \

(a) \ Assume that $(X,x)$ is a normal surface singularity. Let $G_X$
be (one of its) dual resolution graphs. It is known that its
intersection matrix $A$ is negative definite. \ix{graph!negative definite}\ix{matrix!intersection}
We wish to give geometric meaning to the integers $g(G_X)$ and $c(G_X)$.
Recall that in this case we only use
the blowing up/down operations of $(-1)$ rational vertices, which keep
the integers $g(G_X)$ and $c(G_X)$ stable. Therefore, they can be recovered from any
negative definite plumbing graph, and thus these numbers  depend only on the link $K_X$ of $(X,x)$.
Recall also that  $\dim H_1(K_X)=2g(G_X)+c(G_X)$, cf. \ref{le:h-1}.

The integers $g(G_X)$ and $c(G_X)$
have  the following Hodge theoretical interpretation. If
$W_\bullet$ denotes the weight filtration of $H_1(\partial X,\R)$, then
$\dim Gr^W_{-1}H_1(K_X)=2g(G_X)$ and $\dim Gr^W_{0}H_1(K_X)=c(G_X)$
(and all the other graded components are zero). Hence,
in this situation, $Gr^W_\bullet H_1(\partial X)$ is topological.
\ix{mixed Hodge structure!of link of surfaces}\ix{mixed Hodge structure!of boundary of tubes}
\ix{mixed Hodge structure!of $\partial F$}
\vspace{2mm}

(b) \  Let us analyze  how the facts from (a) are modified if we consider a more
general situation. Assume that $V$ is a smooth complex surface
and $C\subset V$ a normal crossing curve with all irreducible
components compact. We will denote by the same $V$ a small
tubular neighbourhood of $C$. Then the oriented  3--manifold
$\partial V$ can be represented by a plumbing graph --- the dual
graph of the configuration $C$.  Let this be  denoted  by  $G_C$.
Similarly as above, $A$ denotes the associated intersection matrix
of $G_C$, or, equivalently, the intersection matrix of the
curve--configuration of the irreducible components of $C$. Again (as in \ref{le:h-1}), $\dim
H_1(\partial V)=2g(G_C)+c(G_C)+\corank A$. Moreover, $H_1(\partial
V,\R)$ admits a mixed Hodge structure such that
$\dim Gr^W_{-2}H_1(\partial V)=\corank A$,
 $\dim Gr^W_{-1}H_1(\partial V)=2g(G_C)$
and $\dim Gr^W_{0}H_1(\partial V)=c(G_C)$ (and all the other
graded components are zero), see e.g.  \cite[(6.9)]{ElNe}.

The point is that now this decomposition  depends essentially {\it on
$C$ and  the analytic embedding of $C$ into $V$}, and, in general,
cannot be deduced  from the {\it topology} of $\partial V$ {\it alone}.
(To see this, compare the following two cases:
  the union   of three generic lines in the projective plane
and  a smooth elliptic curve  with self--intersection zero).
This discussion shows that  if we wish  to keep the information
regarding the weight filtration of $\partial V$, then we are only allowed to use
those calculus--operations which preserve $c(G_C)$, $g(G_C)$ and  $\corank A$.
In particular, the oriented handle absorption should not be  allowed.
\end{example}

\vspace{2mm}

We believe that the following properties hold for the MHS defined above:

\bekezdes\labelpar{be:mhs}{\bf Conjecture.} The weight filtration   of the mixed
Hodge structure of $H_1(\partial F)$, defined in (\ref{be:MHS}),  satisfies
$$ 0=W_{-3}\subset W_{-2}\subset W_{-1}\subset W_0=H_1(\partial F).$$
Moreover,   for any graph $G$ provided by the Main Algorithm one has:
\begin{equation}\label{eq:mhs}
\dim \, Gr^W_iH_1(\partial F)=\left\{
\begin{array}{ll}
\corank A_G \ \ & \mbox{if $i=-2$}, \\
2g(G) & \mbox{if $i=-1$}, \\
c(G) & \mbox{if $i=0$}. \end{array}\right.
\end{equation}\ix{mixed Hodge structure!weight filtration}

Obviously, there is a corresponding dual cohomological statement:
\bekezdes\labelpar{be:mhs2}{\bf Conjecture.}  The weight filtration   of the mixed
Hodge structure of $H^1(\partial F)$ satisfies
$$ 0=W_{-1}\subset W_{0}\subset W_{1}\subset W_2=H^1(\partial F),$$
and  for any graph $G$ provided by the Main Algorithm one has:
\begin{equation}\label{eq:mhs2}
\dim \, Gr^W_iH^1(\partial F)=\left\{
\begin{array}{ll}
\corank A_G \ \ & \mbox{if $i=2$}, \\
2g(G) & \mbox{if $i=1$}, \\
c(G) & \mbox{if $i=0$}. \end{array}\right.
\end{equation}
Similarly, the weight filtration of $H^2(\partial F)$ satisfies
$$ 0=W_{1}\subset W_{2}\subset W_{3}\subset W_4=H^2(\partial F),$$
and  for any graph $G$ provided by the Main Algorithm one has:
\begin{equation}\label{eq:mhs3}
\dim \, Gr^W_iH^2(\partial F)=\left\{
\begin{array}{ll}
\corank A_G \ \ & \mbox{if $i=2$}, \\
2g(G) & \mbox{if $i=3$}, \\
c(G) & \mbox{if $i=4$}. \end{array}\right.
\end{equation}

\begin{remark}\label{re:mhs} \
(1) 
The monomorphism $H^1(F)\hookrightarrow H^1(\partial F)$ from (\ref{eq:MHSmhs})
 is {\em strictly compatible} with both
weight and Hodge filtrations; namely,  $$W_i(H^1(F))=H^1(F)\cap W_i(H^1(\partial F)) \ \ \ \mbox{  for any $i$.}$$
There is a similar statement for the Hodge filtration too.

Moreover, we claim that the cup product $H^1(\partial F)\otimes  H^1(\partial F)\mapsto H^2(\partial F$)
is a morphism of mixed Hodge structures, in particular, it preserves the weight filtration
\begin{equation}\label{eq:mhswe}
W_i(H^1(\partial F))\cup W_j(H^1(\partial F))\subset W_{i+j}(H^2(\partial F)).\end{equation}
E.g., if $A_G$ is non--degenerate, then (\ref{eq:mhs2}) and (\ref{eq:mhs3}) would imply that
the cup--product  $H^1(\partial F)\otimes H^1(\partial F)\to H^2(\partial F)$ was trivial.
This is compatible with the fact that  the cup--product is indeed trivial,  whenever $A_G$ is
  non--degenerate: this is a  result of Sullivan \cite{SU},
see also \cite{DuHa}.

Moreover,
(\ref{eq:mhswe}) can be compared with the list of inclusions of \ref{cor:EIG}(b)
where the fact that   $(A_G)_{\lambda\not=1}$ is non--degenerate was also used
(see  also \ref{ss:TRP}).

\vspace{2mm}

(2) If true, the above Conjectures would have the following consequence:
the numerical invariants $\corank A_G$, $g(G)$ and $c(G)$
associated with the graph $G$ obtained from the Main Algorithm are independent of the choice of the
resolution $r$ (or, of the graph $\gc$ used in the algorithm). Therefore, we emphasize again,
if one wishes to keep in $\Gmod$ all the information regarding the MHS of $\partial F$, one has
to use only those operations of the reduced plumbing calculus which preserve these numbers, that is,
one has to exclude the oriented handle absorption R5.

\vspace{2mm}

(3) Conjecture \ref{be:mhs2} is compatible with
\ref{re:nss}(a): the weights of $H^1(K_X)$ are $<2$, while the weighs of
 $H^2(K_X)$ are $>2$.

\vspace{2mm}

(4) In \ref{s:MHS2} (treating homogeneous singularities) and in section \ref{cyl:MHS}  (cylinders)
we provide  evidences for the above Conjecture.
\end{remark}\ix{mixed Hodge structure!weight filtration}

\part{Examples}

\chapter{Homogeneous singularities}\labelpar{s:HOMOGEN}

Assume that $f$ is homogeneous of degree $d\geq 2$. In order to determine a possible
$\gc$, we  take for $g$ a generic linear function.  We  adopt the notations of
Chapter \ref{hom}, where the graph $\gc$ is constructed.

We start our discussion with the list of some specific properties of the geometry of
homogeneous singularities regarding the graphs $\gc$, $G$, or the
ICIS $\Phi$. Then  we  combine this new information with the general
properties established in the previous chapters.
\ix{singularities!homogeneous}
\ix{monodromy}
\ix{monodromy!Jordan block}

\section{\ The first specific feature: $M_{ver}=(M_{hor})^{-d}$}
\labelpar{feature1}\setcounter{equation}{0}
Clearly
$$|\cala(\gc)|=|\cala(G)|=d.$$
Moreover, $d_j=1$ for any $j$, hence
\begin{equation}\label{egyenlo}
M'_{j,hor}=M^\Phi_{j,hor} \ \ \mbox{and} \ \ M'_{j,ver}=M^\Phi_{j,ver}.
\end{equation}
 Since $f$ and $g$ are homogeneous,
by \cite{Steenbrink}, we have
\begin{equation}\label{eq:STEE} \left\{\begin{array}{l}
M'_{j,ver}=(M'_{j,hor})^{-d}\\
M_{\Phi,ver}=(M_{\Phi,hor})^{-d}.\end{array}\right.\end{equation}
In particular, for any of the  pairs  $(M'_{j,ver},M'_{j,hor})$
or $(M_{\Phi,ver},M_{\Phi,hor})$, the  number of 2--Jordan blocks of
the vertical operator coincides with the number of 2--Jordan
blocks of the horizontal operator. Moreover, the horizontal monodromies determine the
corresponding $\bfz^2$--representations completely.

\vspace{2mm}

In fact, the identities (\ref{eq:STEE}) are true at the level of the geometric monodromies as well.
Let us check this for the pair $(m_{\Phi,ver},m_{\Phi,hor})$; the local version is similar.

Consider the homogeneous action on $\bfc^3$ given by
$\lambda*(x,y,z)=(\lambda x,\lambda y,\lambda z)$. This is projected via $\Phi$ to the action
$\lambda*(c,d)=(\lambda^dc,\lambda d)$ of $\bfc^2$. Hence, the monodromy along
the loop $(e^{itd}c,e^{it}d)$, $t\in[0,2\pi]$, is lifted to a trivial action on the
Milnor fiber, that is
\begin{equation}\label{eq:STEEgeo}
m_{\Phi,hor}^d\cdot m_{\Phi,ver}=I \ \ \mbox{and} \ \
(m'_{j,hor})^d\cdot m'_{j,ver}=I.\end{equation}

\section{\ The second specific feature: the graphs $\overline{G_{2,j}}$}
\labelpar{feature2}\setcounter{equation}{0}
Consider the graph $\gc$ of Chapter  \ref{hom}.
In the Main Algorithm \ref{algo} applied for this $\gc$,
one  puts  exactly one vertex in $G$, say
$\widetilde{v}_\lambda$, above a vertex $v_\lambda$
of $\gc$.  If we delete all these vertices
from $G$, we get the union of the graphs $\overline{G_{2,j}}$, cf. \ref{2} and \ref{rem:closure}.
The point is that
\ix{singularities!suspensions}

\vspace{2mm}

 \begin{verse}{\it 
 $\overline{G_{2,j}}$,  with opposite orientation, is a possible embedded resolution graph \\
 associated with the $d$--suspension of the transversal singularity $T\Sigma_j$ \\
 and the germ induced by the projection on the suspension coordinate}.\end{verse}

\vspace{2mm}

\noindent  More precisely, if $f'_j(u,v)=f|_{
(Sl_q,q)}$ ($q\in \Sigma_j\setminus \{0\}$, cf. \ref{ss:2.0b})  is
the local equation of the transversal type plane curve
singularity, then its $d$--suspension is the isolated hypersurface
singularity $X:=(\{w^d=f'_j(u,v)\},0)$. Then $G_{2,j}=-\G(X,w)$.

This follows from a comparison of the  Main Algorithm
\ref{algo} and the algorithm which provides the graph of
suspension singularities \ref{ss:b}. Independently,
it can also be proved by combining  Proposition \ref{felbo}(2) and the local identity of
(\ref{eq:STEEgeo}), or by the covering procedure which will be described in
\ref{feature3}.

In particular, the graph $G$ consists of the disjoint union of
graphs of these suspensions with opposite orientation, and
$|\Lambda|$ new vertices $\widetilde{v}_\lambda$ decorated by
$[g_{\lambda}]$, altogether supporting $d$ arrowheads. The edges connecting these new vertices
to the graphs $\overline{G_{2,j}}$ are $\circleddash$--edges and reflect the
combinatorics of the `identification map' $c$ (i.e. the incidence of the
singular points on the components of $C$, see \ref{homgen}). The arrowheads are distributed as
follows: each $\widetilde{v}_\lambda$ supports $d_{\lambda}$ arrowheads, all of
them connected by  +--edges. The
gluing property of the two pieces (i.e. of $\partial_1F$ and $\partial_2F$)
is codified in the Euler numbers of the
vertices $\widetilde{v}_\lambda$ in $G$.
This is determined by (\ref{eq:2.2.1}), once we carry the multiplicity
system of $g$ in the construction:  in the suspension graphs one has the multiplicity system of
the germ $w:(\{w^d=f'_j(u,v)\},0)\to (\bfc,0)$, and  each $\widetilde{v}_{\lambda}$ and arrowhead has
multiplicity $(1)$. This provides a very convenient `short--cut' to obtain  $G$.
\ix{Main Algorithm!short--cut for homogeneous germs}

\begin{example}\labelpar{ex:C4}
A projective curve of the projective plane is called cuspidal if all  its
singularities are locally analytically irreducible.  \ix{projective curve!cuspidal|textbf}

It is well--known that there exists a rational cuspidal projective
curve $C$ of degree $d=5$ with two local irreducible singularities
with local equations $u^3+v^4=0$ and $u^2+v^7=0$ (see e.g.  \cite{namba}).
Then the above procedure provides  two suspension
singularities, both of Brieskorn type. Their equations are
$u^3+v^4+w^5=0$ and $u^2+v^7+w^5=0$. (We made the choice of $C$
carefully: we wished to get pairwise relative prime exponents in
each singularity, in order to `minimize' $|H_1(\partial F,\Z)|$,
 cf. \ref{ss:ratS}.) The embedded resolution graphs
of the coordinate function $w$ restricted on these suspension singularities
 are the following:

\vspace{1mm}

\begin{picture}(240,65)(-30,-10)
\put(0,30){\circle*{4}} \put(30,30){\circle*{4}}
\put(60,30){\circle*{4}} \put(90,30){\circle*{4}}
\put(30,0){\circle*{4}}

\put(180,30){\circle*{4}} \put(210,30){\circle*{4}}
\put(240,30){\circle*{4}} \put(270,30){\circle*{4}}
\put(210,0){\circle*{4}}

 \put(0,30){\vector(1,0){110}} \put(270,30){\vector(-1,0){110}}
 \put(30,30){\line(0,-1){30}} \put(210,30){\line(0,-1){30}}

\put(0,37){\makebox(0,0){$-3$}} \put(30,37){\makebox(0,0){$-1$}}
\put(60,37){\makebox(0,0){$-3$}} \put(90,37){\makebox(0,0){$-2$}}
\put(40,0){\makebox(0,0){$-4$}} \put(180,37){\makebox(0,0){$-5$}}
\put(210,37){\makebox(0,0){$-1$}}
\put(240,37){\makebox(0,0){$-4$}}
\put(270,37){\makebox(0,0){$-2$}} \put(220,0){\makebox(0,0){$-2$}}

\put(0,22){\makebox(0,0){$(4)$}}
\put(20,22){\makebox(0,0){$(12)$}}
\put(60,22){\makebox(0,0){$(5)$}}
\put(90,22){\makebox(0,0){$(3)$}} \put(20,0){\makebox(0,0){$(3)$}}
\put(180,22){\makebox(0,0){$(3)$}}
\put(200,22){\makebox(0,0){$(14)$}}
\put(230,22){\makebox(0,0){$(4)$}}
\put(270,22){\makebox(0,0){$(2)$}}
\put(200,0){\makebox(0,0){$(7)$}}

\put(122,30){\makebox(0,0){$(1)$}}
\put(150,30){\makebox(0,0){$(1)$}}
\end{picture}

\vspace{1mm}

Reversing the orientations we get
the following graphs, which coincide with the graphs $\overline{G_{2,j}}$
($j=1,2$):

\vspace{1mm}

\begin{picture}(240,65)(-30,-10)
\put(0,30){\circle*{4}} \put(30,30){\circle*{4}}
\put(60,30){\circle*{4}} \put(90,30){\circle*{4}}
\put(30,0){\circle*{4}}

\put(180,30){\circle*{4}} \put(210,30){\circle*{4}}
\put(240,30){\circle*{4}} \put(270,30){\circle*{4}}
\put(210,0){\circle*{4}}

 \put(0,30){\vector(1,0){110}} \put(270,30){\vector(-1,0){110}}
 \put(30,30){\line(0,-1){30}} \put(210,30){\line(0,-1){30}}

\put(0,37){\makebox(0,0){$3$}} \put(30,37){\makebox(0,0){$1$}}
\put(60,37){\makebox(0,0){$3$}} \put(90,37){\makebox(0,0){$2$}}
\put(40,0){\makebox(0,0){$4$}} \put(180,37){\makebox(0,0){$5$}}
\put(210,37){\makebox(0,0){$1$}} \put(240,37){\makebox(0,0){$4$}}
\put(270,37){\makebox(0,0){$2$}} \put(220,0){\makebox(0,0){$2$}}

\put(0,22){\makebox(0,0){$(4)$}}
\put(20,22){\makebox(0,0){$(12)$}}
\put(60,22){\makebox(0,0){$(5)$}}
\put(90,22){\makebox(0,0){$(3)$}} \put(20,0){\makebox(0,0){$(3)$}}
\put(180,22){\makebox(0,0){$(3)$}}
\put(200,22){\makebox(0,0){$(14)$}}
\put(230,22){\makebox(0,0){$(4)$}}
\put(270,22){\makebox(0,0){$(2)$}}
\put(200,0){\makebox(0,0){$(7)$}}

\put(122,30){\makebox(0,0){$(1)$}}
\put(150,30){\makebox(0,0){$(1)$}}

\put(15,35){\makebox(0,0){$\circleddash$}}
\put(45,35){\makebox(0,0){$\circleddash$}}
\put(75,35){\makebox(0,0){$\circleddash$}}
\put(100,35){\makebox(0,0){$\circleddash$}}
\put(37,15){\makebox(0,0){$\circleddash$}}
\put(170,35){\makebox(0,0){$\circleddash$}}
\put(195,35){\makebox(0,0){$\circleddash$}}
\put(225,35){\makebox(0,0){$\circleddash$}}
\put(255,35){\makebox(0,0){$\circleddash$}}
\put(217,15){\makebox(0,0){$\circleddash$}}
\end{picture}

\vspace{1mm}

In order to get $G$,
 we have to insert one more vertex $\widetilde{v}$
(corresponding to $C$)  with multiplicity $(1)$ and genus decoration zero, and $d=5$
arrowheads, all with multiplicity (1), connected with +--edges to
$\widetilde{v}$:

\vspace{1mm}

\begin{picture}(240,65)(-30,-10)
\put(0,30){\circle*{4}} \put(30,30){\circle*{4}}
\put(60,30){\circle*{4}} \put(90,30){\circle*{4}}
\put(30,0){\circle*{4}}

\put(180,30){\circle*{4}} \put(210,30){\circle*{4}}
\put(240,30){\circle*{4}} \put(270,30){\circle*{4}}
\put(210,0){\circle*{4}}\put(135,30){\circle*{4}}

\put(135,30){\vector(-1,-1){20}} \put(135,30){\vector(1,-1){20}}
\put(135,30){\vector(-1,-2){10}}
\put(135,30){\vector(1,-2){10}}\put(135,30){\vector(0,-1){20}}

 \put(30,30){\line(0,-1){30}} \put(210,30){\line(0,-1){30}}
\put(0,30){\line(1,0){270}}

\put(0,37){\makebox(0,0){$3$}} \put(30,37){\makebox(0,0){$1$}}
\put(60,37){\makebox(0,0){$3$}} \put(90,37){\makebox(0,0){$2$}}
\put(40,0){\makebox(0,0){$4$}} \put(180,37){\makebox(0,0){$5$}}
\put(210,37){\makebox(0,0){$1$}} \put(240,37){\makebox(0,0){$4$}}
\put(270,37){\makebox(0,0){$2$}} \put(220,0){\makebox(0,0){$2$}}

\put(0,22){\makebox(0,0){$(4)$}}
\put(20,22){\makebox(0,0){$(12)$}}
\put(60,22){\makebox(0,0){$(5)$}}
\put(90,22){\makebox(0,0){$(3)$}} \put(20,0){\makebox(0,0){$(3)$}}
\put(180,22){\makebox(0,0){$(3)$}}
\put(200,22){\makebox(0,0){$(14)$}}
\put(230,22){\makebox(0,0){$(4)$}}
\put(270,22){\makebox(0,0){$(2)$}}
\put(200,0){\makebox(0,0){$(7)$}}

\put(115,0){\makebox(0,0){$(1)$}}
\put(155,0){\makebox(0,0){$(1)$}}
\put(135,37){\makebox(0,0){$(1)$}}

\put(135,0){\makebox(0,0){$\dots$}}

\put(15,35){\makebox(0,0){$\circleddash$}}
\put(45,35){\makebox(0,0){$\circleddash$}}
\put(75,35){\makebox(0,0){$\circleddash$}}
\put(110,35){\makebox(0,0){$\circleddash$}}
\put(37,15){\makebox(0,0){$\circleddash$}}
\put(160,35){\makebox(0,0){$\circleddash$}}
\put(195,35){\makebox(0,0){$\circleddash$}}
\put(225,35){\makebox(0,0){$\circleddash$}}
\put(255,35){\makebox(0,0){$\circleddash$}}
\put(217,15){\makebox(0,0){$\circleddash$}}
\end{picture}

\vspace{1mm}

The missing Euler number $e$ of $\widetilde{v}$ is computed via
(\ref{eq:2.2.1}), namely $e+5-3-3=0$, hence $e=1$. Deleting
all the multiplicities we obtain  the graph of $\partial F$:

\vspace{1mm}

\begin{picture}(240,65)(-30,-10)
\put(0,30){\circle*{4}} \put(30,30){\circle*{4}}
\put(60,30){\circle*{4}} \put(90,30){\circle*{4}}
\put(30,0){\circle*{4}}

\put(180,30){\circle*{4}} \put(210,30){\circle*{4}}
\put(240,30){\circle*{4}} \put(270,30){\circle*{4}}
\put(210,0){\circle*{4}}\put(135,30){\circle*{4}}

 \put(30,30){\line(0,-1){30}} \put(210,30){\line(0,-1){30}}
\put(0,30){\line(1,0){270}}

\put(0,37){\makebox(0,0){$3$}} \put(30,37){\makebox(0,0){$1$}}
\put(60,37){\makebox(0,0){$3$}} \put(90,37){\makebox(0,0){$2$}}
\put(40,0){\makebox(0,0){$4$}} \put(180,37){\makebox(0,0){$5$}}
\put(210,37){\makebox(0,0){$1$}} \put(240,37){\makebox(0,0){$4$}}
\put(270,37){\makebox(0,0){$2$}} \put(220,0){\makebox(0,0){$2$}}

\put(135,37){\makebox(0,0){$1$}}

\put(15,35){\makebox(0,0){$\circleddash$}}
\put(45,35){\makebox(0,0){$\circleddash$}}
\put(75,35){\makebox(0,0){$\circleddash$}}
\put(110,35){\makebox(0,0){$\circleddash$}}
\put(37,15){\makebox(0,0){$\circleddash$}}
\put(160,35){\makebox(0,0){$\circleddash$}}
\put(195,35){\makebox(0,0){$\circleddash$}}
\put(225,35){\makebox(0,0){$\circleddash$}}
\put(255,35){\makebox(0,0){$\circleddash$}}
\put(217,15){\makebox(0,0){$\circleddash$}}
\end{picture}

\vspace{1mm}

By plumbing calculus, one can blow down twice 1--curves, and also
one may delete the $\circleddash$'s. This possible  graph of
$\partial F$ is not the `normal form' of \cite{neumann},
the interested reader may transform it easily to get a graph with
all Euler numbers negative. But even if we pass to the normal
form, the intersection matrix will not be negative definite, its
index will be $(-12,+1)$.

Notice that $\partial F$ is a rational homology sphere. In fact,
it is easy to verify that $H_1(\partial F,\Z)=\Z_5$.
\end{example}

As a consequence of the above discussion,  we get:

\begin{corollary}\labelpar{cor:Seif}
If $C$ is a rational unicuspidal projective plane curve (i.e. $C$
has only one singular point at which $C$ is locally analytically irreducible), and its
local singularity has only one Puiseux pair, then $\partial F$ can be represented by
a plumbing graph which is either star--shaped or a string.
\end{corollary}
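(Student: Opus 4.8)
The plan is to combine the "short--cut'' description of $G$ from (\ref{feature2}) with the structure theory of resolution graphs of suspension (Pham--Brieskorn type) singularities. Under the hypotheses, $C$ is rational with a single local singular point $P_1$ which is locally irreducible with one Puiseux pair $(p,q)$, $\gcd(p,q)=1$; hence $s=1$, $\Lambda=\{\lambda\}$ with $d_\lambda=d$ and $g_\lambda=0$, and the transversal type $T\Sigma_1$ is the plane curve singularity $\{u^p-v^q=0\}$ (up to the usual normalization of exponents). By (\ref{feature2}), the graph $G_{2,1}$ (after deleting arrows and multiplicities) is, up to orientation reversal, a possible resolution graph of the $d$--suspension $\{w^d=u^p-v^q\}=\{w^d+v^q=u^p\}$, which is a Brieskorn singularity with three exponents $(d,q,p)$ (up to relabelling). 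Its minimal good resolution graph is well known to be \emph{star--shaped} with a single central vertex and at most three legs (see e.g. \cite{cyclic,OW}); reversing orientation only changes the signs of the Euler numbers and edges, so this star--shapedness is preserved.

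Next I would assemble $G$ from this star--shaped $G_{2,1}$. By the short--cut of (\ref{feature2}), $G$ is obtained from (the orientation reversal of) the suspension graph by adding one new vertex $\widetilde v_\lambda$ with genus decoration $[g_\lambda]=[0]$, connecting it to $G_{2,1}$ along the $\#\cale_{cut,1}$ cutting edges (which are $\circleddash$--edges), and attaching $d_\lambda=d$ arrowheads to $\widetilde v_\lambda$ by $+$--edges. Since we want the plumbing graph of $\partial F$ itself, we delete the arrowheads and multiplicities; this leaves $\widetilde v_\lambda$ as an ordinary genus--zero vertex joined to $G_{2,1}$ by the cutting edges. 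The key local fact to check is that there is exactly one cutting edge, i.e. $\#\cale_{cut,1}=1$. This follows from (\ref{eq:tr}): $\#T\Sigma_1=\sum_{e\in\cale_{cut,1}}d(e)$, and because $T\Sigma_1$ is locally irreducible, $\#T\Sigma_1=1$, forcing $|\cale_{cut,1}|=1$ (and $d(e)=1$). Hence $\widetilde v_\lambda$ is attached to the star--shaped graph $G_{2,1}$ by a single edge, to one vertex of $G_{2,1}$.

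Now I would do the final graph--combinatorics. If the edge from $\widetilde v_\lambda$ lands on the central vertex of the star $G_{2,1}$, the result is again star--shaped (with one extra leg, a single vertex $\widetilde v_\lambda$, provided $\widetilde v_\lambda$ is a leaf — which it is, once arrows are deleted), giving at most four legs; but in the one--Puiseux--pair case the suspension $\{w^d+v^q=u^p\}$ has a resolution graph which is a star with \emph{at most two} legs in most cases and the combined graph has at most three legs, or, after one $(-1)$--blow--down or a $0$--chain absorption using the reduced calculus (\ref{RED-CALC}), collapses to a string. If the edge from $\widetilde v_\lambda$ lands on a vertex interior to a leg of $G_{2,1}$, then $\widetilde v_\lambda$ sits as a dead--end hanging off a chain, and one shows — again by the reduced plumbing calculus, absorbing $\widetilde v_\lambda$ or re--arranging the chain — that the graph is equivalent either to a star--shaped graph or to a string. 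The cleanest way to organize this is: (i) identify the precise resolution graph of the relevant Pham--Brieskorn suspension and locate the arrowhead of $G_{2,1}$ on it (this tells us where $\widetilde v_\lambda$ attaches); (ii) observe that a star--shaped graph with an extra single vertex attached is again star--shaped or, after minimal calculus, a string; (iii) conclude $\partial F\sim$ star--shaped or string.

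I expect the main obstacle to be step (i): pinning down exactly where the single cutting edge attaches $\widetilde v_\lambda$ onto the suspension graph, and thereby controlling whether the extra vertex lands on the central node (keeping star--shapedness outright) or on a leg (requiring a small calculus argument to reduce to a string). This requires a careful look at the multiplicity system of $g$ carried through the suspension algorithm — the edge attaches at the vertex of $G_{2,1}$ which, before orientation reversal, corresponds to the component of the suspension's exceptional divisor meeting the strict transform of $\{w=0\}$, i.e. the vertex supporting the arrow in the suspension graph. Once that vertex is located (it is typically the central vertex for one--Puiseux--pair Brieskorn graphs, or adjacent to it), the rest is routine: in all cases the outcome is a star--shaped graph or, after blowing down a $(-1)$--vertex or absorbing a $0$--chain, a string, which is exactly the claim. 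A secondary, minor point to verify is that deleting the $\circleddash$--signs is legitimate here — it is, since these edges are not part of any cycle in the final graph (the graph is a tree), and by (\ref{links}) sign decorations on edges are irrelevant for trees.
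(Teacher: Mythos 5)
Your proposal is correct and is exactly the route the paper takes: Corollary (\ref{cor:Seif}) is stated as an immediate consequence of the discussion in (\ref{feature2}), namely that $G$ is the orientation--reversed star--shaped resolution graph of the Brieskorn suspension $\{w^d=u^a+v^b\}$ ($\gcd(a,b)=1$) with the single vertex $\widetilde v_\lambda$ glued on along the unique cutting edge. The one point you leave hanging — where that edge attaches — is settled by noting that the arrowhead of the suspension graph is the terminal vertex of the Hirzebruch--Jung string inserted above the arrow--edge of the plane--curve resolution graph of $u^a+v^b$, hence always an end of a leg (or the central vertex itself); so attaching $\widetilde v_\lambda$ merely lengthens a leg or adds a length--one leg and star--shapedness is automatic, whereas your fallback claim that an attachment to an \emph{interior} vertex of a leg could still be reduced to a star or string by the reduced calculus would not be true in general (a tree with two branch points need not simplify) — fortunately that case never occurs.
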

\ix{projective curve!unicuspidal|textbf}

This will be exemplified  more in section \ref{ss:unicusp}.

Another consequence is the following
\begin{corollary}\labelpar{cor:corank}
$$\corank(A,\inc)_G=|\cala(G)|.$$
Therefore, Theorem \ref{th:Jordan} applies and $\#^2_1M_{\Phi,ver}=c(G)$
 and $P^\#(t)=P_{\mathfrak(h)(|G|)}(t)$ too.

In particular,
the characteristic polynomial of the
monodromy acting on $H_1(\partial F,\Z)$ is determined by Theorem
\ref{th:charpolG}, and
$$rank\, H_1(\partial F)_{\not=1}=2g(G)+c(G)-2g(\gc)-c(\gc).$$
Moreover, the equivalent statements of (\ref{eq:echj}) are also satisfied, in particular
$\corank(A,\inc)_{G_{2,j}}=|\cale_{cut,j}|$ and
$\#^2_jM'_{j,ver}=c(G_{2,j})$ \ for any $j$.
\end{corollary}
\begin{proof}
Since each $\widetilde{v}_\lambda$ supports at least one
arrowhead, the rank of $(A,\inc)$ is maximal whenever all the ranks of the
intersection matrices associated with $\overline{G_{2,j}}$ are maximal. But
these matrices are definite, hence non--degenerate, cf. \ref{1.3b}(3).  For the other statements see
(\ref{eq:ech}) and (\ref{ranknemegy}).\ix{matrix!intersection}
\end{proof}
After  stating the third specific feature, and determining  $\corank A$,
we will return to
the characteristic polynomial formula.

\section{\ The third specific feature: the
$d$--covering}\labelpar{feature3}\setcounter{equation}{0}\ix{d@$d$--covering!of spaces|textbf}
 Let $C=\{f=0\}\subset \bp^2$ as before.
It is well--known that there is a regular $d$--covering
$F=\{f=1\}\to \bp^2\setminus C$ given by $(x,y,z)\mapsto [x:y:z]$.
Let $T$ be a small tubular neighbourhood of $C$, and let $\partial
T$ be its boundary. In the next discussion it is more convenient
to orient $\partial T$ not as the boundary of $T$, but as the
boundary of $\bp^2\setminus T^\circ$. Then the above covering induces a
regular $d$--covering, which is compatible with the orientations:
$$\partial F\to \partial T.$$
Moreover, the $\Z_d$ covering transformation on $\partial F$ coincides with
a representative of the Milnor monodromy action on $\partial F$.
Using either this, or just the  homogeneity of $f$, we get that
$$\mbox{{\it  the geometric monodromy action is finite of order $d$}.}$$
\begin{example}\labelpar{ex:CUSP}
Let us exemplify this covering procedure on a simple case. Assume that $C$ has degree
3 and has a cusp singularity. Then a possible plumbing representation for
$\partial T$ (oriented as the boundary of $T$) is the graph $(i)$ below.
\vspace{1mm}

\begin{picture}(240,55)(-40,-5)
\put(0,30){\circle*{4}} \put(30,30){\circle*{4}}
\put(60,30){\circle*{4}}
\put(30,0){\circle*{4}}

\put(180,30){\circle*{4}} \put(210,30){\circle*{4}}
\put(240,30){\circle*{4}}
\put(210,0){\circle*{4}}

 \put(0,30){\line(1,0){60}} \put(180,30){\vector(1,0){90}}
 \put(30,30){\line(0,-1){30}} \put(210,30){\line(0,-1){30}}

\put(0,37){\makebox(0,0){$-2$}} \put(30,37){\makebox(0,0){$-1$}}
\put(60,37){\makebox(0,0){$3$}}
\put(40,0){\makebox(0,0){$-3$}} \put(180,37){\makebox(0,0){$-2$}}
\put(210,37){\makebox(0,0){$-1$}}
\put(240,37){\makebox(0,0){$3$}}
 \put(220,0){\makebox(0,0){$-3$}}
\put(180,20){\makebox(0,0){$(3)$}}
\put(200,20){\makebox(0,0){$(6)$}}
\put(240,20){\makebox(0,0){$(1)$}}
\put(263,20){\makebox(0,0){$(-9)$}}
\put(200,0){\makebox(0,0){$(2)$}}

\put(-25,15){\makebox(0,0){$(i)$}}\put(150,15){\makebox(0,0){$(ii)$}}

\end{picture}

\vspace{2mm}

Then the $\Z_3$--cyclic covering of $\partial T$
can be computed as the $\Z_3$--covering of the divisor
marked in $(ii)$  by a similar algorithm as described in
\ref{ss:b} for cyclic coverings.
This provides the  graph $(iii)$ below (as the graph covering of the graph $(i)$). It  is
a possible plumbing graph of $\partial F$ with opposite orientation. Changing the orientation and after
reduced calculus, we get the graph $(iv)$. It  is the graph that we get by the Main Algorithm as well
(after modified by reduced calculus).

The monodromy action permutes the three $-2$--curves.

\vspace{1mm}

\begin{picture}(240,65)(-40,-5)
\put(0,15){\circle*{4}} \put(0,45){\circle*{4}}
\put(0,30){\circle*{4}} \put(30,30){\circle*{4}}
\put(60,30){\circle*{4}}
\put(30,0){\circle*{4}}
 \put(0,30){\line(1,0){60}}
 \put(30,30){\line(0,-1){30}}
 \put(30,30){\line(-2,-1){30}} \put(30,30){\line(-2,1){30}}
\put(-15,15){\makebox(0,0){$-2$}}\put(-15,30){\makebox(0,0){$-2$}}
\put(-15,45){\makebox(0,0){$-2$}} \put(30,37){\makebox(0,0){$-3$}}
\put(60,37){\makebox(0,0){$1$}}
\put(40,0){\makebox(0,0){$-1$}}
\put(-35,15){\makebox(0,0){$(iii)$}}\put(150,15){\makebox(0,0){$(iv)$}}

\put(200,15){\circle*{4}} \put(200,45){\circle*{4}}
\put(200,30){\circle*{4}} \put(230,30){\circle*{4}}
 \put(200,30){\line(1,0){30}}
 \put(230,30){\line(-2,-1){30}} \put(230,30){\line(-2,1){30}}
\put(185,15){\makebox(0,0){$-2$}}\put(185,30){\makebox(0,0){$-2$}}
\put(185,45){\makebox(0,0){$-2$}} \put(230,37){\makebox(0,0){$0$}}

\end{picture}
\end{example}

\begin{bekezdes}
The 3--manifold $\partial T$ was studied extensively by several
authors (see for example the articles \cite{S1,S2,H}
of Cohen and Suciu, and  E. Hironaka,  and
the references therein). Hence, once the representation
$\pi_1(\partial T)\to \Z_d$ associated with the covering is
identified, one can try to recover all  data about $\partial F$ from that  of
$\partial T$ (some of them   can be done easily, some of them  by   rather hard work).
Here we will consider only one such  computation, which provides  some numerical data
still missing. This basically targets
the rank of the 1--eigenspace $\dim H_1(\partial F)_1$
associated with the monodromy action.
\end{bekezdes}
\ix{Cohen}\ix{Suciu} \ix{Hironaka, E.}

\begin{bekezdes}\labelpar{bek:dT}{\bf The rank of $ H_1(\partial
F)_1$ and $\corank A_G$.} \

By the above discussion we get $H_1(\partial
F)_1=H_1(\partial T)$. On the other hand, $H_1(\partial T)$ can be computed easily,
via  \ref{re:nss}(b),
since $\partial T$ is also a plumbed 3--manifold (cf. \ref{re:nss}(c)).
One can determine a possible plumbing graph for $\partial T$ as follows.

Start with the curve $C$, blow up the infinitely  near  points of its
 singularities (as in Chapter \ref{hom}), and transform the combinatorics of the resulting
 curve configuration into a graph. In this way we get as a plumbing graph
 the graph $\gc$ with some natural modifications: \ix{curve configuration $\C$}
  we have to keep the genus decorations, delete the arrowheads and the
weight decorations of type $(m;n,\nu)$, and  have to insert  the
Euler numbers. These  can in turn be determined as follows: start with the intersection matrix\ix{matrix!intersection}
of the components of $C$. This, by B\'ezout's theorem,  is the $\Lambda\times \Lambda$ matrix
$A_C$ with entries $(d_\lambda d_\xi)_{\lambda,\xi}$.
This intersection matrix is modified by blow ups to get the intersection matrix
$A_{\gc}$ of the plumbing graph of $\partial T$. Hence, similarly as in \ref{le:h-1}, one has:
 \begin{equation}\label{eq:Tgc} \dim H_1(\partial F)_1=\dim H_1(\partial
T)=2g(\gc)+c(\gc)+\corank A_{\gc}.\end{equation} Using the
notations of Chapter \ref{hom},  we   compute each summand.
$g(\gc)=\sum_{\lambda}g_\lambda$ is clear. Since each
$\Gamma_{j}$ is a tree, one also gets
$$c(\gc)=\sum_j(\, |I_j|-1\,)-(\, |\Lambda|-1\,).$$
Finally, by blowing up, the corank of an intersection matrix stays stable, hence
  $\corank A_{\gc}=\corank A_C=\corank((d_\lambda d_\xi)_{\lambda,\xi})$, i.e.:
\begin{equation}\label{eq:Agc}
\corank A_{\gc}=|\Lambda|-1.\end{equation} Therefore
we get the identities (where $b_1(C)$ denotes the first Betti number of $C$):
\begin{equation}\label{eq:hegy}
\dim H_1(\partial F)_1=2g(\gc)+\sum_j(\, |I_j|-1\,)=b_1(C)+|\Lambda|-1.\end{equation}
This has the following immediate consequence:
\begin{corollary}\labelpar{cor:cora}
$\corank A_G=|\Lambda|-1$. Hence, in general, $A_G$ is
degenerate.
\end{corollary}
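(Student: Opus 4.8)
The plan is to obtain the identity by comparing two independent computations of the rank of the $1$-eigenspace $H_1(\partial F)_1$ of the (finite, order $d$) Milnor monodromy. The first computation uses the general characteristic-polynomial machinery of Chapter VIII; the second uses the $d$-fold covering $\partial F\to\partial T$ specific to homogeneous germs.

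First I would check that Theorem (\ref{th:charpolG}) is applicable in the homogeneous setting. By Corollary (\ref{cor:corank}) we have $corank(A,I)_G=\#\cala(G)=d$, which is exactly condition (c) of (\ref{eq:ech}); hence (\ref{eq:ech}) holds and (\ref{th:charpolG}) applies. From formula (\ref{rankegy}) this yields
\[
rank\, H_1(\partial F)_1 = 2g(\gc)+c(\gc)+corank(A)_G .
\]
Second, I would invoke the third specific feature (\ref{feature3}): the $d$-fold cyclic covering $\partial F\to\partial T$ has Galois action (a representative of) the Milnor monodromy, so $H_1(\partial F)_1=H_1(\partial T)$. Since $\partial T$ is itself a plumbed $3$-manifold whose graph is obtained from $\gc$ by keeping the genera, discarding arrowheads and the $(m;n,\nu)$-weights and inserting Euler numbers, Lemma (\ref{le:h-1}) (applied to that graph, together with the stability of $corank$ under blow-ups of the ambient surface and the B\'ezout computation $A_C=(d_\lambda d_\xi)_{\lambda,\xi}$, a rank-one matrix) gives precisely the chain of equalities already recorded in (\ref{eq:Tgc}) and (\ref{eq:Agc}):
\[
\dim H_1(\partial F)_1=2g(\gc)+c(\gc)+corank(A)_{\gc}=2g(\gc)+c(\gc)+(|\Lambda|-1).
\]

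Matching the two displayed expressions for $\dim H_1(\partial F)_1$ forces $corank(A)_G=corank(A)_{\gc}=|\Lambda|-1$, and since this is strictly positive whenever $|\Lambda|\ge 2$, the matrix $A_G$ is degenerate in general. I do not expect a genuine obstacle here: the argument is essentially bookkeeping built on results already established. The one point deserving care is the applicability of (\ref{th:charpolG}), i.e. the verification of (\ref{eq:ech}); but this rests on Corollary (\ref{cor:corank}), which in turn only uses the negative definiteness of the blocks $G_{2,j}$ — the resolution graphs (with reversed orientation) of the suspensions $\{w^d=f'_j(u,v)\}$ identified in (\ref{feature2}). Alternatively, one may bypass (\ref{th:charpolG}) and read $corank(A)_G$ directly from the combination of (\ref{le:h-1}), (\ref{ranknemegy}) and (\ref{eq:Tgc}), which gives the same conclusion; I would mention this as a cross-check.
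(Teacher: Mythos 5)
Your proposal is correct and follows essentially the same route as the paper: the paper's proof of (\ref{cor:cora}) is precisely the comparison of (\ref{rankegy}) with (\ref{eq:Tgc}) (the covering computation $H_1(\partial F)_1=H_1(\partial T)$ via (\ref{feature3}) and (\ref{le:h-1})), followed by (\ref{eq:Agc}). Your explicit check that (\ref{th:charpolG}) applies — via (\ref{cor:corank}) giving condition (c) of (\ref{eq:ech}) — is a point the paper leaves implicit, but it is the same argument.
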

\begin{proof}  $\corank A_G=\corank A_{\gc}$ by (\ref{rankegy})  and
(\ref{eq:Tgc}). Then use (\ref{eq:Agc}).
\end{proof}
\end{bekezdes}

Now we start our list of applications.

\section{\ The characteristic polynomial of $\partial F$}\labelpar{ss:charrev}
\setcounter{equation}{0}
Using Theorem \ref{th:charpolG} and Corollary \ref{cor:cora} we get:
\begin{corollary}
\labelpar{th:charpolhom} If $f$ is homogeneous,
then the characteristic polynomial of the Milnor
monodromy acting on $H_1(\partial F)$ is
\begin{equation}\label{eq:vegsohom}
\frac{(t-1)^{|\Lambda|-d}\cdot\prod_{w\in\calw(\gc)}\,
(t^{(m_w,d)}-1)^{2g_w+\delta_w-2} \cdot (t^{\n_w}-1)} {\prod_{e\in
\cale_w(\gc)}\, (t^{\n_e}-1)}.\end{equation} For the integers
$\n_w$ and $\n_e$ see \ref{algo}, with the additional remark
that all the second entries of the vertex--decorations are equal to
$d$.
\end{corollary}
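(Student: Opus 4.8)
\textbf{Proof proposal for Corollary \ref{th:charpolhom}.}
The plan is to assemble this formula purely by substitution from the general machinery already developed, since the three ``specific features'' of the homogeneous case supply exactly the hypotheses under which the clean versions of the earlier theorems apply. First I would invoke the general characteristic polynomial formula \eqref{th:charpolG} from Theorem \ref{th:charpolG}:
$$
P_{H_1(\partial F),M}(t)=
\frac{(t-1)^{2+corank(A)_G-\#\cala(G)}}{P_{\mathfrak{h}(|G|)}(t)}\cdot
\prod_{w\in\calw(\gc)}(t^{(m_w,n_w)}-1)^{2g_w+\delta_w-2}.
$$
To be allowed to use this, I must check the hypothesis ``\eqref{eq:ech} holds or \eqref{eq:echj} holds for all $j$''. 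This is precisely Corollary \ref{cor:corank}, whose proof uses the second specific feature (\ref{feature2}): each new vertex $\widetilde v_\lambda$ in $G$ supports an arrowhead, and the remaining blocks $G_{2,j}$ are resolution graphs of suspension singularities, hence carry negative definite intersection matrices; so $corank(A,I)_G=\#\cala(G)$, which is condition (iii) of \eqref{eq:ech}. Thus Theorem \ref{th:charpolG} applies verbatim.

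Next I would substitute the three homogeneous-case simplifications. (1) $n_w=d$ for every non-arrowhead vertex $w$ of $\gc$, by the construction of $\gc$ in Proposition \ref{prop:hom}, so $(m_w,n_w)=(m_w,d)$ in the product. (2) $\#\cala(G)=\#\cala(\gc)=d$, as noted at the start of \ref{s:HOMOGEN}. (3) $corank(A)_G=|\Lambda|-1$ by Corollary \ref{cor:cora} (whose proof rests on the third specific feature \ref{feature3}, the $d$-fold covering $\partial F\to\partial T$, together with the B\'ezout computation \eqref{eq:Agc}). Feeding these in, the exponent of $(t-1)$ out front becomes $2+(|\Lambda|-1)-d=|\Lambda|-d+1$; one of these $(t-1)$ factors I would absorb into the product $\prod_{w}(t^{(m_w,d)}-1)^{\cdots}$ only if convenient — actually the cleanest route is to keep the $(t-1)^{|\Lambda|-d}$ as displayed in \eqref{eq:vegsohom} and combine the single leftover $(t-1)$ with $P_{\mathfrak{h}(|G|)}(t)$. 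Indeed, by Lemma \ref{bek:comput},
$$
P_{\mathfrak{h}(|G|)}(t)=(t-1)\cdot\frac{\prod_{e\in\cale_\calw(\gc)}(t^{\n_e}-1)}{\prod_{w\in\calw(\gc)}(t^{\n_w}-1)},
$$
so $(t-1)/P_{\mathfrak{h}(|G|)}(t)=\prod_{w}(t^{\n_w}-1)\big/\prod_{e}(t^{\n_e}-1)$, and the factor $(t^{\n_w}-1)$ in the numerator of \eqref{eq:vegsohom} together with the denominator $\prod_e(t^{\n_e}-1)$ is exactly this ratio. Collecting everything yields precisely \eqref{eq:vegsohom}.

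The only genuinely substantive points are the two auxiliary corollaries being invoked, and those are already proved in the excerpt; the present corollary is then bookkeeping. If I wanted to be maximally careful I would spell out that $\cale_w(\gc)=\cale_\calw(\gc)$ in the notation of Lemma \ref{bek:comput}, i.e. that ``edges connecting non-arrowheads'' is what appears in both the statement and the lemma, and that no edges supporting arrowheads contribute (their covering integers are $1$, so they would contribute trivial factors $(t-1)$ that have already been accounted for). I expect the main obstacle, if any, to be purely notational: making sure the single $(t-1)$ freed from $P_{\mathfrak{h}(|G|)}$ is matched correctly against the exponent arithmetic $2+corank(A)_G-\#\cala(G)$, so that the final exponent of the bare $(t-1)$ reads $|\Lambda|-d$ and not $|\Lambda|-d\pm1$. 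I would double-check this by testing the line-arrangement case ($|\Lambda|=d$, $C$ smooth at generic points) where one expects the bare power of $(t-1)$ to vanish, and the irreducible case ($|\Lambda|=1$) where one expects $(t-1)^{1-d}$, matching the earlier remark in \ref{COMPAR} that $rank\,im\,\partial=d-|\Lambda|+1$.
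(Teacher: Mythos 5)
Your proposal is correct and follows exactly the paper's route: the paper derives the corollary by substituting Corollary (\ref{cor:cora}) (i.e. $corank(A)_G=|\Lambda|-1$) into Theorem (\ref{th:charpolG}), whose hypothesis is guaranteed by Corollary (\ref{cor:corank}), and then expanding $P_{\mathfrak{h}(|G|)}$ via Lemma (\ref{bek:comput}). Your exponent arithmetic $2+(|\Lambda|-1)-d$ with one factor of $(t-1)$ cancelling against the leading $(t-1)$ in $P_{\mathfrak{h}(|G|)}$ is exactly the intended bookkeeping.
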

\ix{monodromy!characteristic polynomial!of $\partial F$}

Notice that this is a formula given entirely in terms of $\gc$. If
$c(G)=0$, then  via \ref{bek:comput} it  simplifies to:
\begin{equation}\label{eq:vegsohom2}
(t-1)^{1+|\Lambda|-d}\cdot\prod_{w\in\calw(\gc)}\,
(t^{(m_w,d)}-1)^{2g_w+\delta_w-2}.\end{equation}

Since the monodromy has finite order, the complex algebraic monodromy is
determined completely by its characteristic polynomial. By Corollary \ref{cor:corank},
the complex monodromy is trivial if and only if $c(G)=c(\gc)$ and $g(G)=g(\gc)$, cf. also
with \ref{rem:GC}.

\begin{bekezdes}\labelpar{bek:naivPol} Now, we provide a
description of the characteristic polynomial in terms of the
projective curve $C$.
\ix{monodromy!Jordan block}

Let  $Div=\sum k_\lambda (\lambda)\in\Z[S^1]$ be the divisor of the
characteristic polynomial.

By \ref{feature3}, the multiplicity $k_1$ of the
eigenvalue $\lambda=1$ is $b_1(C)+|\Lambda|-1$.

On the other hand, for $\lambda\not=1$,
section \ref{ss:FIBR} and (\ref{egyenlo}) provide  the following identity in terms
of the local horizontal/Milnor algebraic  monodromies  of the
transversal types (i.e. in terms of the monodromy operators of the local
singularities of $C$):
$$k_\lambda=\sum_j\sum_{\lambda^d=1}\#\{\mbox{Jordan block of $M'_{j,hor}$
with eigenvalue $\lambda$}\} \ \ \ (\lambda\not=1).$$ In order to see this, we apply
\ref{feature1} and the Wang homological exact sequence for
$F'_j\to \partial _{2,j}\to S^1$ with  \ref{cor:h1}.

Indeed, if $B$ is a 1--Jordan block of $M'_{j,hor}$ with eigenvalue
$\lambda$, and $\lambda^d=1$, then this creates a 1--block in
 $M'_{j,ver}$ with eigenvalue 1, and the corresponding 1--dimensional
 eigenspace survives in $\coker(M'_{j,ver}-1)$.

 If $B$ is a 2--Jordan block of $M'_{j,hor}$ with eigenvalue
$\lambda$, and $\lambda^d=1$, then this creates a 2--block in
 $M'_{j,ver}$ with eigenvalue 1, and  in $\coker(M'_{j,ver}-1)$
only a  1--dimensional space survives,  and  $M'_{j,hor}$ acts on it
by multiplication by $\lambda$.
\end{bekezdes}
This and  the last identity of  \ref{cor:corank} imply:
$$\sum_j\sum_{\lambda^d=1,\ \lambda\not=1}\ \#\{\mbox{Jordan block of $M'_{j,hor}$
with eigenvalue $\lambda$}\}$$
$$=2g(G)+c(G)-2g(\gc)-c(\gc).$$ 

\begin{example}\labelpar{ex:ACirr}
Assume that $C$ is irreducible and it has a unique singular
point $(C,p)$ which is locally equisingular to  $\{(f'=u^2+v^3)(u^3+v^2)=0\}$. \ix{equisingularity}
For the embedded
resolution graph of $(C,p)$ see \ref{ex:cyln}. In order to get $\gc$ one
has to add one more vertex, which will have genus decoration
$[g]$, where $2g=(d-1)(d-2)-12$, and it will support $d$ arrows.
The reader is invited to complete the picture of $\gc$.

By (\ref{eq:vegsohom}), we get that the characteristic polynomial
of the monodromy operator acting on  $H_1(\partial F)$ is
$$\frac{(t-1)^{(d-1)(d-2)-10}}{t^{(d,2)}-1}\cdot
\Big(\frac{t^{(d,10)}-1}{t^{(d,5)}-1}\Big)^2.$$ In all the cases,
the multiplicity of the eigenvalue 1 is
$k_1=(d-1)(d-2)-11=(d-1)(d-2)-\mu(f')$. If $d$ is odd, then there
are no other eigenvalues, hence  the complex monodromy on $H_1(\partial
F,\bfc)$ is trivial.

If $d$ is even then other eigenvalues appear too. If $5\nmid d$
then only one appears, namely $-1$, otherwise their divisor is the divisor
of $(t^5+1)^2/(t+1)$.

This result can be compared with \ref{bek:naivPol}:  use
that the algebraic monodromy of the  local transversal singularity
has characteristic polynomial $(t^5+1)^2(t-1)$, and exactly one
2--Jordan block, which has eigenvalue $-1$.

\vspace{2mm}

The graph $\Gmod$ for $d=10$ is the following (for this $d$ all the
Hirzebruch--Jung strings in the Main Algorithm are trivial):

\begin{picture}(200,80)(-50,-8)
\put(20,30){\circle*{4}} \put(80,50){\circle*{4}}
\put(80,10){\circle*{4}} \qbezier(20,30)(50,50)(80,50)
\qbezier(20,30)(50,10)(80,10)
\qbezier(80,50)(65,30)(80,10)\qbezier(80,50)(95,30)(80,10)
\put(15,40){\makebox(0,0){$-8$}}
\put(15,20){\makebox(0,0){$[30]$}} \put(78,60){\makebox(0,0){$3$}}
\put(90,50){\makebox(0,0){$[2]$}} \put(78,0){\makebox(0,0){$3$}}
\put(90,10){\makebox(0,0){$[2]$}}
\put(66,30){\makebox(0,0){$\circleddash$}}
\put(95,30){\makebox(0,0){$\circleddash$}}
\end{picture}

In this case $\dim H_1(\partial F)=70$,  $\dim H_1(\partial
F)_1=61$, $A_G$ is non--degenerate with $|\det(A_G)|=50$.

\end{example}

\begin{example}\labelpar{ex:DDD} The characteristic polynomial of the example
\ref{ex:dminusz2} is $(t-1)^{d^2-3d+1}$, hence the complex monodromy on
$H_1(\partial F,\bfc)$ is trivial.
\end{example}

\section{\ $M'_{j,hor},\ M'_{j,hor},$
$M^\Phi_{j,hor},\ M^\Phi_{j,ver},$  $M_{\Phi,hor}$ and
$M_{\phi,ver}$}\labelpar{ss:applJo} \setcounter{equation}{0}
\ix{monodromy}

In this section we show that the local horizontal/Milnor monodromy operators
$\{M'_{j,hor}\}_j$ of the transversal types, together with the integers
$d$ and $c(G)$ completely determine  the
$\Z^2$--representations generated by the commuting pairs
$(M'_{j,hor},M'_{j,ver})$, $(M^\Phi_{j,hor},M^\Phi_{j,ver})$, respectively
$(M_{\Phi,hor},M_{\Phi,ver})$.

\vspace{2mm}

Indeed, the first two pairs agree by  (\ref{egyenlo}).  Hence, by
\ref{feature1},  it is enough to determine $M_{\Phi,hor}$. Moreover,
for eigenvalues $\lambda\not=1$, the following generalized eigenspaces, together with the
corresponding horizontal actions,  coincide:
\begin{equation}\label{eq:JBST}
H_1(F_{\Phi})_{M_{\Phi,hor},\lambda\not=1}=
\oplus_j H_1(F_{\Phi}\cap T_j)_{M^\Phi_{j,hor},\lambda\not=1}=
\oplus_j H_1(F'_j)_{M'_{j,hor},\lambda\not=1}.\end{equation}
Next, we determine the  action of $M_{\Phi,hor}$ on the
generalized 1--eigenspace $H_1(F_{\Phi})_{1}$ of $M_{\Phi,hor}$.
\ix{generalized eigenspace}

The rank of $H_1(F_{\Phi})_{M_{\Phi,hor},1}$ can be determined in several ways.

First, one can consider the rank of the total space $H_1(F_\Phi)$, which is
$(d-1)^2$. Indeed,
$F_\Phi$ is the Milnor number of an ICIS $(f,g)$, where $f$ is homogeneous of degree $d$ and
$g$ is a  generic linear form. Hence,
 their Milnor number is the Milnor number of a homogeneous plane curve singularity of degree $d$. Since we already determined the $(\lambda\not=1)$--generalized eigenspaces,
the rank of $H_1(F_{\Phi})_{1}$ follows too.\ix{Milnor!number!of ICIS|textbf}

There is another formula based on \ref{charpols}(a), which
provides
\begin{equation*}\begin{split}
\rank  H_1(F_{\Phi})_{M_{\Phi,hor},1}=&2g(\gc)+2c(\gc)+d-1
\\ \ &=d-|\Lambda|+b_1(C)+\sum_j(|I_j|-1).\end{split}\end{equation*}
In particular, since we have only one and two--dimensional Jordan blocks:
$$\#^1_1M_{\Phi,hor}+2 \#^2_1M_{\Phi,hor}=\rank  H_1(F_{\Phi})_{M_{\Phi,hor},1}.$$
On the other hand, by \ref{feature1} and (\ref{eq:JBST}) we have
$$\#^2_1M_{\Phi,ver}=\#^2_1M_{\Phi,hor}+\sum_j\sum_{\lambda^d=1\not=\lambda}
\#^2_\lambda M'_{j,hor}.$$
Since by \ref{cor:corank} the left hand side of this identity is $c(G)$, we get
$$\#^2_1M_{\Phi,hor}=c(G)-\sum_j\sum_{\lambda^d=1\not=\lambda}
\#^2_\lambda M'_{j,hor}.$$

\begin{example}\labelpar{ex:ACirr2}
Assume that we are in the situation of Example \ref{ex:ACirr} with
$d=10$. Then both local operators $M'_{1,hor}$ and $M'_{1,ver}$
have exactly one 2--Jordan block, the first with eigenvalue $-1$, the
other with 1. This is compatible with the fact that
$c(G_{2,1})=1$.

Since the order of any eigenvalue of $M_{\Phi,hor}$ divides 10, we
get that $M_{\Phi,ver}$ is unipotent. By Corollary
\ref{cor:corank}, the number of 2--Jordan blocks of $M_{\Phi,ver}$ is $c(G)=2$.
Hence  $M_{\Phi,hor}$  has two 2--Jordan blocks as well.
By (\ref{eq:JBST}), there is only one with eigenvalue $\not=1$,
namely with  eigenvalue $-1$. Hence the remaining block has eigenvalue 1.
\end{example}

\section{\ When is $\partial F$ a rational/integral  homology
sphere?}\labelpar{ss:ratS}\setcounter{equation}{0} By \ref{bek:naivPol}, or by
the statements of \ref{feature2},  we get the
following characterization:
\ix{sphere!rational homology}

\begin{corollary}
$\partial F$ is a rational homology sphere if and only if $C$ is
an irreducible, rational cuspidal curve (i.e. all its
singularities are locally irreducible), and there is no eigenvalue
$\lambda$  of the local singularities $(C,p_j)$ with
$\lambda^d=1$.
\end{corollary}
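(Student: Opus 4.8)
The strategy is to combine the characteristic polynomial formula for $\partial F$ in the homogeneous case (Corollary~\ref{th:charpolhom}, together with the discussion in (\ref{bek:naivPol})) with the general criterion that $\partial F$ is a rational homology sphere if and only if the rank of $H_1(\partial F)$ is zero, i.e. if and only if all eigenvalues of the (finite order) Milnor monodromy on $H_1(\partial F,\bfc)$ contribute nothing. Since for homogeneous $f$ the monodromy is finite of order $d$ (cf. (\ref{feature3})), $H_1(\partial F,\bfc)$ decomposes into generalized eigenspaces $H_1(\partial F)_\lambda$ over the $d$-th roots of unity, and $\partial F$ is a $\Q$HS$^3$ exactly when each $H_1(\partial F)_\lambda$ vanishes. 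So the proof splits into two parts: the vanishing of the $\lambda=1$ eigenspace, and the vanishing of the $\lambda\neq 1$ eigenspaces.

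\textbf{First step: the $\lambda=1$ part.} By (\ref{eq:hegy}), $\dim H_1(\partial F)_1 = b_1(C)+|\Lambda|-1 = 2g(\gc)+\sum_j(|I_j|-1)$. This vanishes precisely when $b_1(C)=0$ and $|\Lambda|=1$, that is, when $C$ is irreducible (so $|\Lambda|=1$) and its normalization is rational with all local singularities unibranch, equivalently $\sum_j(|I_j|-1)=0$ means every $(C,P_j)$ has a single local irreducible component; combined with the genus term $g(\gc)=\sum_\lambda g_\lambda=0$ this forces $C$ to be a rational cuspidal (in the sense: all singular points locally irreducible) plane curve. Conversely, if $C$ is rational cuspidal then $b_1(C)=0$, $|\Lambda|=1$, and $\dim H_1(\partial F)_1=0$. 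This is the ``topological/combinatorial'' half and follows directly from the cited identities; I would just spell out the arithmetic.

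\textbf{Second step: the $\lambda\neq 1$ part.} Assuming now $C$ is rational cuspidal, by (\ref{bek:naivPol}) the multiplicity of an eigenvalue $\lambda\neq 1$ in the characteristic polynomial of the monodromy on $H_1(\partial F)$ equals $\sum_j \#\{\text{Jordan blocks of }M'_{j,hor}\text{ with eigenvalue }\lambda\}$ where the sum is taken over those $j$ and those $\lambda$ with $\lambda^d=1$. Hence $H_1(\partial F)_{\lambda\neq 1}=0$ for all such $\lambda$ if and only if no local singularity $(C,P_j)$ has a transversal-type (equivalently local) horizontal monodromy eigenvalue $\lambda$ with $\lambda^d=1$. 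Putting the two steps together gives exactly the stated equivalence. I expect the first step to be essentially bookkeeping; the only genuine subtlety — the ``main obstacle'' — lies in making the second step airtight, namely ensuring that the formula in (\ref{bek:naivPol}) genuinely detects \emph{all} of $H_1(\partial F)_{\lambda\neq 1}$ with no hidden cancellation (so that ``characteristic polynomial has no factor $(t-\lambda)$'' is equivalent to ``$H_1(\partial F)_\lambda=0$''); this is guaranteed because $\lambda\neq 1$ and the monodromy is semisimple on $H_1(\partial F)$ outside $\lambda=1$... more precisely, one should invoke (\ref{cor:corank}) and (\ref{geneig}) to control the Jordan structure, noting that the $\lambda\neq1$ generalized eigenspaces of $H_1(F_\Phi)$ and of $\oplus_j H_1(F'_j)$ agree (cf. (\ref{eq:JBST})) and feed through the Wang sequence (\ref{eq:divdiv4})--(\ref{eq:divdiv5}) cleanly. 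Once that identification is in place, the equivalence is immediate.
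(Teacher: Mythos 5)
Your proposal is correct and follows exactly the route the paper takes: the paper's entire proof is the single line "By (\ref{bek:naivPol}) we get the following characterization," and your two steps are precisely the bookkeeping that sentence leaves implicit ($k_1=b_1(C)+|\Lambda|-1=0$ forces irreducible rational cuspidal since both summands are nonnegative, and $k_\lambda=0$ for $\lambda\not=1$ is the eigenvalue condition). Your worry about "hidden cancellation" in the second step is not an issue, since the multiplicity of $t-\lambda$ in the characteristic polynomial is by definition $\dim H_1(\partial F)_\lambda$ for the generalized eigenspace, so no semisimplicity input is needed.
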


This situation can indeed  appear, see e.g. \ref{ex:C4}, or many examples
of the present work.

We wish to emphasize  that the classification  of rational
cuspidal projective curves is an  open problem. (For a
partial classification with $d$ small see e.g. \cite{namba}, or
\cite{Spany} and the references therein.) It would be very
interesting and important to continue the program of the present
work: some restrictions on  the structure of $\partial F$
might provide  new data in the classification problem as well.

The next natural question is:
{\it When is $\partial F$ an integral   homology sphere?}
The answer  is simple:
Never (provided that $d\geq 2$)! This follows from the following
proposition:
\ix{sphere!homology}

\begin{proposition}\labelpar{prop:intS}
Assume that $f$ is homogeneous of degree $d$. Then there exists an
element $h\in H_1(\partial F,\Z)$ whose order is either infinity or a multiple of $d$.
If $ H_1(\partial F,\Z)$ is finite, one can choose such an  $h$  fixed by the monodromy action.
\end{proposition}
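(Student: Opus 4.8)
The plan is to exhibit an explicit homology class coming from the covering structure $\partial F\to \partial T$ described in (\ref{feature3}). Recall that for $f$ homogeneous of degree $d$, the Milnor fiber $F=\{f=1\}$ maps to $\bp^2\setminus C$ as a regular $\Z_d$--covering, and this restricts to a regular $\Z_d$--covering $\pi:\partial F\to \partial T$, where $\partial T$ is the boundary of a tubular neighbourhood of $C$. The first step is to produce inside $\partial F$ a loop $\gamma$ which projects to a meridian $\mu$ of one of the irreducible components $C_\lambda$ of $C$. Since $C_\lambda$ has degree $d_\lambda$, B\'ezout says that a generic line $L$ meets $C$ in $d$ points, so the restriction of the covering to $L\cap(\bp^2\setminus C)\simeq \bp^1$ minus $d$ points is the $\Z_d$--cover given by $x\mapsto x^d$ near each puncture; in particular, the monodromy of $\pi$ around the meridian $\mu$ of $C_\lambda$ is multiplication by $d_\lambda$ in $\Z_d$ (the local intersection multiplicity of $L$ with $C_\lambda$ is $d_\lambda$ generically, summed to $d$). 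Hence the connected component of $\pi^{-1}(\mu)$ is a single circle $\gamma$ mapping to $\mu$ by a covering of degree $d/\gcd(d,d_\lambda)$.

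The second step is the homological bookkeeping. In $H_1(\partial T,\Z)$, the class $[\mu]$ of the meridian of $C_\lambda$ has, by the standard plumbing description of $\partial T$ recalled in (\ref{bek:dT}) (the dual graph being $\gc$ with Euler numbers governed by the B\'ezout intersection matrix $A_C=(d_\lambda d_\xi)$), infinite order whenever $\partial T$ has positive first Betti number, and otherwise its order in the torsion group $\mathrm{coker}(A_C)$ is a divisor of $\det$--type data; in all cases the key point is that the $\Z_d$--covering $\partial F\to \partial T$ is classified precisely by the homomorphism $H_1(\partial T,\Z)\to \Z_d$ sending $[\mu_\xi]\mapsto d_\xi$ for each $\xi$, which is surjective (since $\sum d_\xi$-type combinations generate, and in fact already $\gcd(d_\xi)\mid$ everything, but transfer handles this). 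By the transfer homomorphism $\mathrm{tr}:H_1(\partial T,\Z)\to H_1(\partial F,\Z)$ with $\pi_*\circ \mathrm{tr}=\times d$ and $\mathrm{tr}\circ \pi_*=\sum_{g\in\Z_d} g_*$, one gets: if $h':=\mathrm{tr}([\mu])\in H_1(\partial F,\Z)$, then $\pi_*(h')=d[\mu]$, so the order of $h'$ is a multiple of the order of $d[\mu]$ in $H_1(\partial T,\Z)$. It is then a short argument that one can choose $\lambda$ so that the order of $[\mu]$ (equivalently of $d[\mu]$, up to the $\gcd$) in $H_1(\partial T,\Z)$ is divisible by $d$ — this is exactly the statement that the classifying map $H_1(\partial T)\to \Z_d$ is onto, which it is since a small loop linking all of $C$ once maps to $1\in\Z_d$ and is realized as a sum $\sum_\xi[\mu_\xi]$ with appropriate multiplicities; pulling back, $h=\mathrm{tr}(\text{that loop})$ has $\pi_*(h)$ of order divisible by $d$ unless $H_1(\partial T,\Z)$ is free (in which case $h$ has infinite order). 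In either case $h\in H_1(\partial F,\Z)$ has order infinite or a multiple of $d$, proving the first claim.

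For the last sentence, assume $H_1(\partial F,\Z)$ is finite. The monodromy action on $\partial F$ is the deck transformation action of the covering $\pi$, i.e. the generator of $\Z_d$; so the monodromy--fixed part of $H_1(\partial F,\Z)$ contains the image of the transfer $\mathrm{tr}:H_1(\partial T,\Z)\to H_1(\partial F,\Z)$, because $\mathrm{tr}$ visibly lands in the invariants of the deck action ($g_*\circ \mathrm{tr}=\mathrm{tr}$ for all $g\in\Z_d$, as $g$ permutes the sheets). Thus the element $h=\mathrm{tr}(\cdot)$ constructed above is automatically monodromy--fixed. It remains to check its order is still a multiple of $d$ in the finite group: this follows from $\pi_*(h)=d\cdot(\text{class of order divisible by }d)$ together with the fact that for finite abelian groups $\pi_*$ and $\mathrm{tr}$ compose to multiplication by $d$; a direct order computation (localize at a prime $p\mid d$ and count) finishes it.

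\textbf{Main obstacle.} The routine parts are the transfer formalism and the B\'ezout computation of local degrees. The genuinely delicate point is controlling the \emph{order} of the meridian class $[\mu]$ (or the total linking class) in $H_1(\partial T,\Z)$ and verifying that after applying $d[\,\cdot\,]$ and transfer one still retains a factor of $d$ rather than losing it to the $\gcd$'s. Concretely, one must show the classifying homomorphism $H_1(\partial T,\Z)\twoheadrightarrow \Z_d$ of the covering, composed with the transfer, does not annihilate a suitable generator — equivalently that $\mathrm{coker}(A_C)$ together with its free part surjects onto $\Z_d$ compatibly with the deck action. I expect this to come down to the elementary fact that the sublattice of $\Z^{|\Lambda|}$ spanned by the rows of $A_C=(d_\lambda d_\xi)$ has index in $\{v:\sum v_\xi \equiv 0\}$-type lattices divisible by $d$, which is a clean linear--algebra statement about the rank--one matrix $(d_\lambda d_\xi)$ but needs to be spelled out carefully to handle the torsion summand flagged in (\ref{re:TOR}).
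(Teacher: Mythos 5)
Your overall strategy is the same as the paper's --- work with the $\Z_d$--covering $\pi:\partial F\to \partial T$ of (\ref{feature3}), lift a meridian of $C$, and bound its order from below by the order of its image in $H_1(\partial T,\Z)$ --- but there is a genuine gap exactly where you flag your ``main obstacle,'' and it does not get resolved. Your argument needs the class $d[\mu]$ (equivalently $\pi_*(h)$) to have order divisible by $d$ in $H_1(\partial T,\Z)$, and you leave this as an unproven linear--algebra statement about $\mathrm{coker}$ of the rank--one matrix $(d_\lambda d_\xi)$ together with the torsion issue of (\ref{re:TOR}). In that generality the statement is genuinely delicate (and for reducible $C$ the torsion of $\mathrm{coker}(A_{\gc})$ is not controlled by $A_C$ alone), so as written the proof does not close. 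The missing idea is a reduction that makes the computation trivial: if $H_1(\partial F,\Z)$ is infinite, the first claim holds for any element of infinite order and the second claim is vacuous; if it is finite, then by (\ref{ss:ratS}) the curve $C$ is irreducible, rational and cuspidal, so $A_C$ is the $1\times 1$ matrix $(d^2)$, blow--ups do not change the cokernel, and $H_1(\partial T,\Z)=\mathrm{coker}(A)_{\gc}=\Z_{d^2}$ is generated by the meridian $[\mu]$. Then $d[\mu]$ has order exactly $d$, and your transfer identity $\pi_*(h)=d[\mu]$ immediately gives that the order of $h=\mathrm{tr}([\mu])$ is a multiple of $d$. This reduction is precisely how the paper proceeds (it then phrases the lift as $\gamma^d\in\pi_1(\partial F)$ rather than via the transfer, and gets invariance from the fact that the monodromy is conjugation by $\gamma$).

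Two further remarks. First, your local computation of the covering monodromy is wrong: since $f$ vanishes to order one along a generic point of $C_\lambda$, a meridian of $C_\lambda$ maps to a \emph{generator} of $\Z_d$ under the classifying map (this is the map $\sigma$ of \ref{s:MHS2}), not to $d_\lambda$; in particular for irreducible $C$ your formula would make the meridian lift to a loop, which is false --- only its $d$--th power lifts. This error happens not to affect the conclusion you draw, because the identity $\pi_*\circ\mathrm{tr}=\times d$ holds regardless of how $\pi^{-1}(\mu)$ decomposes, but the justification should be corrected. Second, your argument that $h$ is monodromy--invariant because the image of the transfer lands in the deck--invariants is correct and is a clean alternative to the paper's conjugation argument; the final ``localize at a prime'' step you sketch is unnecessary, since the divisibility of the order of $h$ was already obtained from $\pi_*(h)=d[\mu]$ and nothing more needs to be checked for the invariance statement.
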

\begin{proof}
By \ref{ss:ratS} we may assume that $C$ is irreducible, rational
and cuspidal. Consider the following diagram (for notations, see
\ref{feature3}):

\begin{equation*}
\begin{CD}
\pi_1(\partial F) @ >i>> \pi_1(\partial T)@>p>>\Z_d\\
@VV{q}V @VV{r}V  \\
H_1(\partial F,\Z)@>>>H_1(\partial T,\Z)@ =\Z_{d^2}
\end{CD}
\end{equation*}
\ix{projective curve!cuspidal}

\vspace{2mm}

\noi The first line, provided by the homotopy exact sequence of the covering, is exact. Moreover,
$i$ is injective, and $q,\ r$ and $p$ are surjective.  Let
$\gamma\in\pi_1(\partial T)$ be the class of a small loop around $C$. Then
$p(\gamma^d)=0$, hence $\gamma^d\in\pi_1(\partial F)$.

On the other hand, $H_1(\partial T,\Z)=\Z_{d^2}$, and it is generated
by $[\gamma]$. This follows from the fact (see  \ref{bek:dT} and \ref{re:TOR})
that $H_1(\partial T,\Z)=\coker A_C$, but in this case
$A_C$ has only one entry, namely $d^2$.

Hence $r(\gamma^d)$ has order $d$ in $H_1(\partial
T,\Z)$. In particular, the order of $q(\gamma^d)\in H_1(\partial
F,\Z)$ is a multiple of $d$. Since the monodromy action in
$\pi_1(\partial F)$ is conjugation by $\gamma$, the second
part also follows.
\end{proof}

The above bound is sharp: there are examples when $H_1(\partial
F,\Z)=\Z_d$,  see e.g. \ref{ex:C4}. The above result also shows
that the monodromy  action over $\Z$ is trivial whenever
$H_1(\partial F,\Z)=\Z_d$.

The fact that for $f$ homogeneous $\partial F$ cannot be an integral homology  sphere
was for the first time noticed by Siersma in \cite{Si}, page 466,  where he used a
different argument.\ix{Siersma}

\section{\ Cases with $d$ small}\labelpar{ss:dsmall} \setcounter{equation}{0}\
In this section we treat all possible examples with $d=2$

and $d=3$, and we present some examples with $d=4$. The statements are direct applications of
the  Main Algorithm and the above discussions regarding
the monodromy.
\ix{singularities!homogeneous!small degree}

One of our goals in this `classification list' is to provide  an idea about
 the variety of the possible 3--manifolds obtained in this way, and to check if the same 3--manifold can
be realized from essentially two different situations.

\vspace{2mm}

\begin{bekezdes}($\mathbf{d=2}$) There is only one case: $C$ is a union of two
lines. A possible graph $\Gmod$ is \begin{picture}(30,10)(0,0)
\put(20,4){\circle*{4}} \put(13,4){\makebox(0,0){$0$}}
\end{picture}; hence $\partial F\approx S^1\times S^2$.  The monodromy is trivial.
\end{bekezdes}

\begin{bekezdes}($\mathbf{d=3}$) There are six cases:

\vspace{2mm}

{\bf (3a)} $C$ is irreducible with a cusp. See \ref{ex:CUSP}, or \ref{bek:classifi}(a)
with $d=3$ for the graph $\Gmod$  of $\partial F$. In this case
$H_1(\partial F,\Z)=\Z_6\oplus \Z_2$, hence the complex monodromy
is trivial. The integral homology  has the following peculiar  form
(specific to the  homology of any Seifert 3--manifold):\ix{Seifert manifold}
$$H_1(\partial F,\Z)=\langle \ x_1,x_2,x_3,h\ | \ 2x_1=2x_2=2x_3=h,\
x_1+x_2+x_3=0\ \rangle.$$ $h$ has order 3 and it is preserved by
the integral monodromy, while the elements $x_1,x_2,x_3$ are
cyclically permuted (in order to prove this, use e.g. \ref{ex:CUSP}).

\vspace{2mm}

{\bf (3b)} $C$ is irreducible with a node.  A possible graph of
$\partial F$ is

\begin{picture}(200,50)(-20,-5)
\put(50,15){\circle*{4}} \put(100,15){\circle*{4}}
\qbezier(50,15)(75,30)(100,15) \qbezier(50,15)(75,0)(100,15)
\put(38,15){\makebox(0,0){$3$}} \put(112,15){\makebox(0,0){$3$}}
\put(75,28){\makebox(0,0){$\circleddash$}}
\end{picture}

The rank of $H_1(\partial F)$ is 1, and the complex monodromy is
trivial.

By calculus, one can verify that $G\sim -G$.

\vspace{2mm}

{\bf (3c)} $C$ is the union of a line and an irreducible conic,
intersecting each other transversely.  A possible graph for
$\partial F$ is
\begin{picture}(40,10)(0,0)
\put(20,4){\circle*{4}} \put(13,4){\makebox(0,0){$3$}}
\put(30,4){\makebox(0,0){$[1]$}}
\end{picture}
, and the complex monodromy is trivial.

\vspace{2mm}

{\bf (3d)} $C$ is the union of an irreducible conic and one of its tangent
lines. Then $\partial F\approx S^2\times S^1$ and the monodromy is trivial.

\vspace{2mm}

{\bf (3e)} $C$ is the union of three lines in general position; then
$\partial F\approx S^1\times S^1\times S^1$ and the monodromy is trivial.

\vspace{2mm}

{\bf (3f)} $C$ is the union of three lines intersecting each other
in one point: then by {\em reduced} calculus we get for $\partial F$ the graph $\Gmod$:

\vspace{1mm}

\begin{picture}(140,65)(100,-5)
\put(200,15){\circle*{4}} \put(200,45){\circle*{4}}
\put(200,30){\circle*{4}} \put(230,30){\circle*{4}}
 \put(200,30){\line(1,0){30}}
 \put(230,30){\line(-2,-1){30}} \put(230,30){\line(-2,1){30}}
\put(185,15){\makebox(0,0){$0$}}\put(185,30){\makebox(0,0){$0$}}
\put(185,45){\makebox(0,0){$0$}} \put(230,37){\makebox(0,0){$*$}}
\put(230,17){\makebox(0,0){$[1]$}}\put(330,30){\makebox(0,0){(the decoration * is irrelevant)}}
\end{picture}

In fact, if we apply the {\em splitting operation} (not permitted by reduced calculus),
we obtain that  $\partial F\approx \#_4 S^2\times S^1$.

The characteristic polynomial of the monodromy is $(t^3-1)(t-1)$.
\end{bekezdes}

\begin{bekezdes}($\mathbf{d=4}$) We will start with the cases  when {\bf $C$ is
irreducible, rational and cuspidal} (i.e. all singularities are locally irreducible).
There are four cases:\ix{projective curve!cuspidal}

\vspace{2mm}

{\bf (4a)} $C$ has three $A_2$ singular points. A possible equation
for $f$ and $\gc$ is given in \ref{ex:3A2}. The boundary $\partial F$
is a rational homology sphere with graph:

\begin{picture}(250,80)(-50,10)
\put(0,50){\circle*{4}}\put(20,50){\circle*{4}}\put(20,70){\circle*{4}}
\put(20,30){\circle*{4}}
\put(40,50){\circle*{4}}\put(40,70){\circle*{4}}\put(40,30){\circle*{4}}
\put(60,75){\circle*{4}}\put(60,65){\circle*{4}}
\put(60,45){\circle*{4}}\put(60,55){\circle*{4}}
\put(60,25){\circle*{4}}\put(60,35){\circle*{4}}
\put(80,75){\circle*{4}}\put(80,65){\circle*{4}}
\put(80,45){\circle*{4}}\put(80,55){\circle*{4}}
\put(80,25){\circle*{4}}\put(80,35){\circle*{4}}
\put(0,50){\line(1,1){20}} \put(0,50){\line(1,0){20}}
\put(0,50){\line(1,-1){20}} \put(20,50){\line(1,0){20}}
\put(20,70){\line(1,0){20}} \put(20,30){\line(1,0){20}}
\put(40,50){\line(4,1){20}} \put(40,70){\line(4,1){20}}
\put(40,30){\line(4,1){20}} \put(40,50){\line(4,-1){20}}
\put(40,70){\line(4,-1){20}} \put(40,30){\line(4,-1){20}}
\put(60,75){\line(1,0){20}} \put(60,45){\line(1,0){20}}
\put(60,65){\line(1,0){20}} \put(60,35){\line(1,0){20}}
\put(60,55){\line(1,0){20}} \put(60,25){\line(1,0){20}}
\put(180,50){\makebox(0,0){(with all the Euler numbers  $2$)}}
\put(-20,50){\makebox(0,0){ $G:$}}
\end{picture}

\noindent
(In this case, neither $G$, nor $G$ with the opposite
orientation can be represented by a negative definite graph.) \ix{graph!negative definite}

\vspace{2mm}

{\bf (4b)}  $C$ has two singular points with local equations
$u^2+v^3=0$ and $u^2+v^5=0$. This case has an unexpected surprise
in store: having two singular points we expect that the graph will
have two nodes. This is indeed the case for the graph $G$,  the
immediate output  of the Main Algorithm.  Nevertheless, via
reduced plumbing calculus the chain connecting the two nodes
disappears by a final 0--chain absorption. We get that
$\partial F$ is a Seifert manifold with graph:\ix{Seifert manifold}

\begin{picture}(250,70)(-30,15)
\put(100,50){\circle*{4}}\put(70,60){\circle*{4}}
\put(40,70){\circle*{4}}
\put(70,40){\circle*{4}}\put(40,30){\circle*{4}}\put(130,70){\circle*{4}}
\put(130,30){\circle*{4}} \put(130,70){\circle{7}}
\put(130,30){\circle{7}}
 \put(100,50){\line(-3,1){60}}
\put(100,50){\line(-3,-1){60}} \put(100,50){\line(3,2){30}}
\put(100,50){\line(3,-2){30}}
\put(10,50){\makebox(0,0){$\Gmod:$}}
 \put(200,65){\circle*{4}}\put(200,35){\circle*{4}} \put(200,35){\circle{7}}
 \put(210,65){\makebox(0,0)[l]{are decorated by $2$}}
\put(210,35){\makebox(0,0)[l]{are decorated by $5$}}
\end{picture}

\noindent
The orbifold Euler number is $4/15>0$, and the order of
$H_1(\partial F,\Z)$ is 60.

\vspace{2mm}

{\bf (4c)} $C$ has one singular point with local equation
$u^3+v^4=0$. The 3--manifold $\partial F$ is a rational homology sphere, its
plumbing graph is given in  \ref{bek:classifi2}(a)
(with $d=4$). It is worth  mentioning that
there are two projectively non--equivalent curves of degree four
with this local data, namely $x^4-x^3y+y^3z=0$ and $x^4-y^3z=0$.


\vspace{2mm}

{\bf (4d)} $C$ has one singular point with local equation
$u^2+v^7=0$. In this case, rather
surprisingly, $\partial F$ is the  lens space of type $L(28,15)$
(that is,  the graph is a string with decorations: $-2,\ -8, \ -2$).

\vspace{2mm}

For an example of {\bf irreducible non--rational} $C$, see \ref{ex:dminusz2} or
\ref{ex:DDD}.

For {\bf reducible $C$}  we give three more  examples:

\vspace{2mm}

{\bf (4a')} Let $C$ be  the union of two conics intersecting each other
transversely. The characteristic polynomial is $(t-1)^4$, the complex monodromy
is trivial,   and the graph of $\partial F$ is:

\begin{picture}(200,60)(-20,-15)
\put(50,15){\circle*{4}} \put(150,15){\circle*{4}}
\put(100,22){\circle*{4}} \put(100,7){\circle*{4}}
\put(100,32){\circle*{4}} \put(100,-3){\circle*{4}}
\put(200,15){\circle*{4}}
\put(100,22){\circle{6}} \put(100,7){\circle{6}}
\put(100,32){\circle{6}} \put(100,-3){\circle{6}}
\put(200,15){\circle{6}}
\put(250,15){\makebox(0,0){are decorated by $-4$}}
\qbezier(50,15)(100,30)(150,15) \qbezier(50,15)(100,0)(150,15)
\qbezier(50,15)(100,50)(150,15) \qbezier(50,15)(100,-20)(150,15)
\put(38,15){\makebox(0,0){$-2$}} \put(162,15){\makebox(0,0){$-2$}}
\put(10,15){\makebox(0,0){$\Gmod:$}}
\end{picture}

{\bf (4b')} Let  $C=$ be  the union of a smooth  irreducible conic
and two different tangent lines. The characteristic polynomial is
$(t^2+1)^2(t-1)^3$. The graph $\Gmod$ is
\begin{picture}(30,10)(10,0)
\put(20,4){\circle*{4}} \put(13,4){\makebox(0,0){$0$}}
\put(30,4){\makebox(0,0){$[3]$}}
\end{picture}.

\vspace{2mm}

{\bf (4c')} Let  $C$ be the union of two smooth conics  intersecting  tangentially in two
singular points of type $A_3$ (e.g. the equation of $f$ is $x^2y^2+z^4$).
Then the  graph $\Gmod$ is
\begin{picture}(40,10)(0,0)
\put(20,4){\circle*{4}} \put(13,4){\makebox(0,0){$4$}}
\put(30,4){\makebox(0,0){$[3]$}}
\end{picture}. The characteristic polynomial is
$(t^2+1)^2(t-1)^2$.
\ix{singularities!line arrangements!$A_3$}

\end{bekezdes}

\begin{bekezdes}\labelpar{bek:AC2comp}
It is also instructive to see the case
when  $C$ is given by $(x^3-y^2z)(y^3-x^2z)=0$.  The curve $C$ has two
components, and they intersect each other in 6 points. One of
them, say $(C,p)$, has local equation $(u^3-v^2)(v^3-u^2)=0$, cf.
\ref{ex:cyln}, the others are nodes. Some of the cycles of $G$
are generated by these intersections, and one more by a 2--Jordan
block of $(C,p)$: $c(\gc)=5$ and $c(G)=6$.
\end{bekezdes}

\section{\ Rational unicuspidal
curves with one Puiseux pair}\labelpar{ss:unicusp}\setcounter{equation}{0} \

The present section is motivated by two facts.
\ix{singularities!homogeneous!rational unicuspidal}\ix{projective curve!unicuspidal}

First, by such curves we produce plumbed 3--manifolds with
start--shaped graphs (cf. \ref{cor:Seif}),
some of which are  irreducible 3--manifolds
with Seifert fiber--structure, some of which are not irreducible.\ix{Seifert manifold}
Seifert 3--manifolds play a special  role in the world of
3--manifolds. Therefore, it is an  important task to classify
all Seifert manifolds realized as $\partial F$, where $F$ is a
Milnor fiber as above. Even if we restrict ourselves to the
homogeneous case, the problem looks surprisingly hard: although all unicuspidal rational
curves with one Puiseux pair produce star--shaped graphs,
in \ref{ss:dsmall}(4b) we have found a curve $C$ with two cusps, which
produces a Seifert manifold as well. Since the classification of the
cuspidal rational curves is not finished, the above question looks
hard. On the other hand, notice  that Seifert manifolds can also be produced by
other types of germs as well, e.g. by some weighted homogeneous germs as in \ref{ex:347b}
or \ref{re:SEIFERT},
but not all weighted homogeneous germs provide  Seifert manifolds.\ix{projective curve!cuspidal}

In this section we will list those star--shaped graphs which are
produced by unicuspidal curves with one Puiseux pair.

It is also worth  mentioning, that
the normalization of $V_f$ (for any $f$ where $C$ is irreducible and rational) has a very simple resolution
graph: only one vertex with genus zero and Euler number $-d$. This graph can be compared  with
those obtained for $\partial F$ listed below.

For the second motivation (of more analytic nature), see \ref{bek:Kodaira}.

\begin{bekezdes}\labelpar{bek:classifi} Assume that $C$ is rational
and unicuspidal of degree $d\geq 3$, and the equisingularity type of its
singularity is given by the local equation $u^a+v^b=0$. The
possible triples $(d,a,b)$ are classified in \cite{BLMN}.\ix{projective curve!unicuspidal}

\ix{Fibonacci numbers}
In order to state this result, consider the Fibonacci numbers
$\{\varphi_j\}_{j\geq 0}$ defined by $\varphi_0=0$, $\varphi_1=1$,
and $\varphi_{j+2}=\varphi_{j+1}+\varphi_j$ for $j\geq 0$.

Then $(d,a,b)$ ($1<a<b$) is realized by a curve with the above properties
if and only if it appears in the following list:

\vspace{2mm}

(a) \ $(a,b)=(d-1,d)$;

(b) \ $(a,b)=(d/2,2d-1)$, where $d$ is even;

(c) \ $(a,b)=(\varphi^2_{j-2},\varphi^2_j)$ and
$d=\varphi^2_{j-1}+1=\varphi_{j-2}\varphi_j$, where $j$ is odd and
$\geq 5$;

(d) \ $(a,b)=(\varphi_{j-2},\varphi_{j+2})$ and $d=\varphi_{j}$,
where $j$ is odd and $\geq 5$;

(e) \ $(a,b)=(\varphi_4,\varphi_8+1)=(3,22)$ and $d=\varphi_6=8$;

(f) \ $(a,b)=(2\varphi_4,2\varphi_8+1)=(6,43)$ and
$d=2\varphi_6=16$.

\vspace{2mm}

(a) is realized e.g. by the curve $\{x^d=zy^{d-1}\}$, (b) by
$\{zy-x^2)^{d/2}=xy^{d-1}$. The cases (c) and (d) appear in Kashiwara's list
\cite{Kashiwara}, while (e) and (f) were found by Orevkov in \cite{Orevkov}.
\ix{Kashiwara} \ix{Orevkov}

\begin{bekezdes}\labelpar{bek:classifi2}
Now we list the graphs $\Gmod$ for $\partial F$. The computations are straightforward,
perhaps  except the cases (c) and (d); for these ones we provide more details.
\end{bekezdes}

$\bullet$ {\bf Case (a)}

\begin{picture}(150,45)(-50,0)
\put(20,25){\line(4,1){40}} \put(20,25){\line(4,-1){40}}
\put(60,35){\circle*{4}} \put(60,15){\circle*{4}}
\put(20,25){\circle*{4}} \put(20,37){\makebox(0,0){$0$}}
\put(90,35){\makebox(0,0){$-(d-1)$}}\put(60,27){\makebox(0,0){$\vdots$}}
\put(90,15){\makebox(0,0){$-(d-1)$}}\put(150,25){\makebox(0,0){($d$
legs)}}
\end{picture}

$\bullet$ {\bf Case (b)}

\begin{picture}(150,45)(-50,0)
\put(20,25){\line(4,1){80}} \put(20,25){\line(4,-1){80}}
\put(60,35){\circle*{4}}
\put(60,15){\circle*{4}}\put(100,45){\circle*{4}}\put(100,5){\circle*{4}}
\put(60,35){\circle*{4}} \put(20,25){\circle*{4}}
\put(20,37){\makebox(0,0){$0$}} \put(60,45){\makebox(0,0){$-d$}}
\put(60,27){\makebox(0,0){$\vdots$}}
\put(60,5){\makebox(0,0){$-d$}}\put(153,25){\makebox(0,0){($d/2$
legs)}}\put(90,27){\makebox(0,0){$\vdots$}}
\put(110,45){\makebox(0,0){$-2$}}\put(110,5){\makebox(0,0){$-2$}}
\end{picture}

\vspace{2mm}

In the examples (a) and (b) the above `normal forms' are
{\it not} negative definite, hence the graphs cannot be
transformed by plumbing calculus into a negative definite graph
(without changing the orientation). \ix{graph!negative definite}

\vspace{3mm}

$\bullet$ {\bf Case (c)} Let $\Gamma$ be the minimal resolution graph
of the Brieskorn isolated hypersurface singularity $\{u^a+v^b+w^d=0\}$, where
$(a,b,d)=(\varphi_{j-2}^2,\varphi_j^2,\varphi_{j-2}\varphi_j)$.
This is a star--shaped graph, it can be determined via the procedure \ref{ss:b} or \ref{8.7}.
Here we will indicate the steps of \ref{8.7}.
\ix{graph!resolution!Brieskorn}

Some arithmetical properties of the Fibonacci numbers  enter as
important ingredients in the computations. Namely, we  need the facts  that
 ${\rm gcd}(  \varphi_{j-2},\varphi_j)=1$,  and for $j$ odd one also has:
\begin{equation}\label{eq:FIBON}
\varphi_{j-2}\cdot \varphi_{j+2}=\varphi_j^2+1, \ \  \ \  \
\varphi_{j-2}+\varphi_{j+2}=3\varphi_j.
\end{equation}\ix{Fibonacci numbers}
 Using the notations from \ref{8.7},
we will write $(a_1,a_2,a_3)=(\varphi_{j-2}^2,\varphi_j^2,\varphi_{j-2}\varphi_j)$.
Then $(d_1,d_2,d_3)=(\varphi_{j-2},\varphi_j,\varphi_{j-2}\varphi_j)$,
$(\alpha_1,\alpha_2,\alpha_3)=(\varphi_{j-2},\varphi_j, 1)$, and $(\omega_1,\omega_2,\omega_3)=(1,1,0)$.
Hence, the embedded resolution of the suspension germ  is a star shaped graph, whose each leg  has only one vertex.
There are  $\varphi_j$ legs each with Euler number $-\varphi_{j-2}$,
and  $\varphi_{j-2}$ legs each with Euler number $-\varphi_{j}$. The central vertex has Euler number
$-3$ and genus decoration  $g=(\varphi_j-1)(\varphi_{j-2}-1)/2$.
The arrow of the function germ $w$ has multiplicity 1 and it  is supported by the central vertex which has
multiplicity $\varphi_{j-2}\varphi_j$.

Therefore, the graph $G^m$ is the following:  it is a star shaped graph with all the legs having only one vertex.
It has $\varphi_j$ legs each with Euler number $\varphi_{j-2}$, and
 $\varphi_{j-2}$ legs each with Euler number $\varphi_{j}$, as well  as  a leg  with Euler number 0.
 The central vertex has  genus decoration  $g=(\varphi_j-1)(\varphi_{j-2}-1)/2$ and Euler number
$3$ (although this is not relevant anymore, because of the 0--leg).
\ix{connected sum}

This 3--manifold is not irreducible.
The irreducible components can be
seen if we apply the splitting operation to the unique 0--vertex. Then we get\\
 $2g=(\varphi_j-1)(\varphi_{j-2}-1)$ copies of
\begin{picture}(16,20)(10,0) \put(20,4){\circle*{4}}
\put(20,12){\makebox(0,0){$0$}}
\end{picture}, $\varphi_{j-2}$ copies of  \begin{picture}(16,20)(10,0)
\put(20,4){\circle*{4}} \put(20,12){\makebox(0,0){$\varphi_{j}$}}
\end{picture}, and $\varphi_{j}$ copies of \begin{picture}(16,20)(10,0)
\put(20,4){\circle*{4}}
\put(20,12){\makebox(0,0){$\varphi_{j-2}$}}
\end{picture}.

\vspace{4mm}

$\bullet$ {\bf Case (d)} In this case the graph $\Gmod$ of $\partial F$ is
\begin{picture}(20,10)(10,0)
\put(20,4){\circle*{4}} \put(20,12){\makebox(0,0){$d$}}
\end{picture}. This follows again from the very special arithmetical properties (\ref{eq:FIBON}) of the
Fibonacci numbers.

Similarly as in the previous case,  we have  first to  determine
the minimal resolution graph $\Gamma$ of the Brieskorn singularity
with coefficients $(a,b,d)$. Using (\ref{eq:FIBON}) it is easy to verify that
these integers are pairwise relatively prime. Moreover, $\varphi_{j\pm 2}$ is not divisible by 3,
and  one also has   the following congruences
\begin{equation}\label{eq:FIBON2}
\varphi_{j-2} \varphi_{j+2}\equiv 1\ (mod\ \varphi_j)  \ \  \ \  \
\varphi_{j\pm 2}\varphi_{j}\equiv -3 \ (mod\ \varphi_{j\mp 2}).
\end{equation}\ix{Fibonacci numbers}

Hence, by a computation, $\Gamma$ consists of a central vertex with three legs.
The central vertex  has decoration $-2$ (and genus 0), one  leg
consists of  $\varphi_j-1$ vertices each decorated by $-2$, the second  has three vertices
with decorations $-2,\ -2,\ -\lceil\varphi_{j\pm 2}/3\rceil$ (where the first
$-2$ vertex is connected to the central vertex). The  third leg has two
vertices with decorations $-3$ and $-\lceil\varphi_{j\mp 2}/3\rceil$, where the $-3$
vertex is connected to the central vertex. (Note that among the integers
$\varphi_{j-2}$ and $\varphi_{j+2}$ exactly one has the form $3k-1$. The leg whose
$\alpha$--invariant has the form $3k-1$  will be the leg with two vertices, and the other leg
will have three vertices.)

By the procedure \ref{feature2} we have to change the orientation (hence the $(-2)$--string
transforms into a 2--string) and add
one more vertex to the end-curve of the $2$--string, which will be decorated by 1.
Hence this new extended string can be contracted completely. After this
contraction the central curve becomes a 1--curve. This generates 3 more
 blow--downs and a 0--chain absorption.

As an interesting phenomenon:
the graph of the normalization of  $V_f$ is
\begin{picture}(20,10)(10,0)
\put(20,4){\circle*{4}} \put(20,12){\makebox(0,0){$-d$}}
\end{picture}.

\noindent Hence the boundary of the Milnor fiber coincides with the link of
the normalization with opposite orientation.

\vspace{3mm}

 $\bullet$ {\bf Case (e)}

\vspace{1mm}

\begin{picture}(150,55)(-50,0)
\put(20,25){\line(2,1){40}} \put(20,25){\line(1,0){40}}
\put(20,25){\line(2,-1){40}}
\put(60,25){\circle*{4}}\put(60,45){\circle*{4}}\put(60,5){\circle*{4}}
\put(20,25){\circle*{4}} \put(20,37){\makebox(0,0){$-1$}}
\put(73,25){\makebox(0,0){$-3$}}\put(73,45){\makebox(0,0){$-11$}}
\put(73,5){\makebox(0,0){$-3$}}
\end{picture}

\vspace{2mm}

$\bullet$ {\bf Case (f)}\ix{graph!negative definite}

\vspace{2mm}

\begin{picture}(150,55)(-50,0)
\put(20,25){\line(2,1){40}} \put(20,25){\line(1,0){40}}
\put(20,25){\line(2,-1){40}}
 \put(60,25){\line(1,0){40}}
\put(60,45){\line(1,0){80}} \put(60,5){\line(1,0){80}}
\put(60,25){\circle*{4}}\put(60,45){\circle*{4}}\put(60,5){\circle*{4}}
\put(20,25){\circle*{4}}
\put(100,25){\circle*{4}}\put(100,45){\circle*{4}}\put(100,5){\circle*{4}}
\put(140,45){\circle*{4}}\put(140,5){\circle*{4}}
\put(20,37){\makebox(0,0){$-2$}}
\put(60,32){\makebox(0,0){$-2$}}\put(60,52){\makebox(0,0){$-2$}}
\put(60,12){\makebox(0,0){$-2$}}
\put(100,32){\makebox(0,0){$-2$}}\put(100,52){\makebox(0,0){$-3$}}
\put(100,12){\makebox(0,0){$-3$}}
\put(140,52){\makebox(0,0){$-9$}}
\put(140,12){\makebox(0,0){$-9$}}
\end{picture}

\vspace{4mm}

The  normal forms in (e) and (f) are {\it  negative
definite}, a property which rather rarely happens for non--isolated singularities. In particular,
in both cases,  $\partial F$ is diffeomorphic (under a
diffeomorphism which preserves orientation) with the link of a(n)
(elliptic, not complete intersection) normal surface singularity.
\end{bekezdes} \ix{graph!negative definite}

\begin{bekezdes}\labelpar{bek:Kodaira}
Looking at the shapes and decorations
 of the above graphs, one can group them in four
categories (a)--(b), (c), (d) and (e)--(f). This is compatible with  other
groupings based on certain analytic/combinatorial invariants of these curves,
see \cite{BLMN,Spany}.

For example, if $\pi:X\to \bp^2$ is the minimal embedded resolution of $C\subset \bp^2$, and
$\bar{C}$ denotes the strict transform of $C$, then $\bar{C}^2=d^2-ab=3d-a-b-1$.
In the above six cases this value is the following: it is positive for (a) and (b),
it is zero for (c), it is $-1$ for (d), and $-2$ for (e) and (f).

Perhaps,  the most striking `coincidence'
is that the cases (e)--(f) are the only cases when the logarithmic
Kodaira dimension of $\bp^2\setminus C$ is 2 (for the first four cases  it is $-\infty$),
while from the point of view of
the present discussion (e)--(f)  are the  only cases when  $\partial F$ is an irreducible
Seifert 3--manifold, \ix{Seifert manifold}
representable  by a negative definite graph (and which is not a lens space).\ix{Kodaira dimension}
\ix{graph!negative definite}

This suggests that the topology of $\partial F$  probably carries
a great  amount of analytic information about $f$.
\end{bekezdes}

\section{\ The weight filtration of the mixed Hodge structure}\labelpar{s:MHS2}
\setcounter{equation}{0} \

\begin{proposition} If $f$ is homogeneous
then the  mixed Hodge structure of  $H^*(\partial F)$ satisfies Conjecture
\ref{be:mhs2}.
\end{proposition}
\ix{mixed Hodge structure!weight filtration}

\begin{proof}
Let $d$ denote the degree of  $f$, and  let $C=\{f=0\}\subset \bp^2$ be the projective curve
as above. Recall that $H_1(\bp^2\setminus C,\Z)=\Z^{|\Lambda|}/(d_1,\ldots, d_{|\Lambda|})$. Therefore,
the representation
$$\pi_1(\bp^2\setminus C,*)\stackrel{ab}{\longrightarrow}
H_1(\bp^2\setminus C,\Z)\stackrel{\sigma}\longrightarrow \Z_d,$$
where $ab$ is the Hurewicz epimorphism and $\sigma ([a_1,\ldots,a_{|\Lambda|}])=\sum a_\lambda$,
is well--defined, it is onto, and it provides a cyclic covering $\pi:Y\to \bp^2$, branched along $C$.
Moreover, $F:=\pi^{-1}(\bp^2\setminus C)$ is a smooth affine variety which can
be identified with the open Milnor fiber of $f$, cf. \ref{feature3}.
\ix{d@$d$--covering!of spaces}\ix{Hurewich homomorphism}

The singularities of $Y$ are isolated and are
situated  above the singular points of $C$: if $\{f'_j(u,v)=0\}$
is the local equation of a singular point of $C$, then its
cyclic covering ($d$--suspension) $\{w^d=f'_j(u,v)\}$ is the local equation
of the corresponding singular point in $Y$ above it. Let
$r:\widetilde{Y}\to Y$ be the minimal good resolution of the
singularities of $Y$ and set $\widetilde{\pi}:=\pi\circ r:\widetilde{Y}\to \bp^2$.
Then $\widetilde{Y}$ is smooth and $F=\widetilde{Y}\setminus \widetilde{\pi}^{-1}(C)$
is the complement of the normal crossing curve configuration $\widetilde{\pi}^{-1}(C)$.
 Moreover, the dual graph associated with this  curve configuration is exactly $-G$,
 where $G$ is the graph provided by the discussion
\ref{feature2} (which has the same $g$, $c$ and $\corank$ as the graph
provided by the Main Algorithm). Therefore, for any sufficiently small tubular
neighbourhood $\widetilde{T}$ of
$\widetilde{\pi}^{-1}(C)$, we have $\partial F=-\partial \widetilde{T}$, cf. \ref{feature3}.

The point is that
the natural mixed Hodge structure of $H^*(\partial\widetilde{ T})$
is transported by this isomorphism into the
 mixed Hodge structure of $H^*(\partial F)$. This follows, for example, by analyzing the terms of the exact sequence
 (\ref{eq:MHSmhs}).  Namely, the MHS on the local vanishing cohomology of the
  Milnor fiber $F$ can be identified with Deligne's MHS on the affine smooth hypersurface
 $\widetilde{Y}\setminus \widetilde{\pi}^{-1}(C)$, see for example Example (3.12) of \cite{StOslo}, or \cite{Stwh}.
 Similarly, the local (Steenbrink)
 mixed Hodge structures on $H^*_c(F)$ coincides with Deligne's MHS on  $H^*(\widetilde{Y},\widetilde{\pi}^{-1}(C))$.
 Since the maps $H^2_c(F)\to H^2(F)$ and $H^2(\widetilde{Y},\widetilde{\pi}^{-1}(C))\to
 H^2(\widetilde{Y}\setminus \widetilde{\pi}^{-1}(C))$ can also be identified,
 their cones also agree. This  proves  that the MHS of $H^*(\partial F)$ is the same as the MHS of $H^*(\partial
 \widetilde{T})$. \ix{Deligne}

 This combined with \ref{re:nss}(b)
 (or with  \cite[(6.9)]{ElNe}) shows that this mixed Hodge structure satisfies Conjecture \ref{be:mhs},
 hence  its dual structure in cohomology satisfies Conjecture \ref{be:mhs2}.
\end{proof}\ix{mixed Hodge structure!weight filtration}

Note that the above proof provides an alternative way to show that the  plumbing graph
constructed in \ref{feature2} is indeed a possible graph for $\partial F$.

 \begin{remark}\labelpar{re:MHSnot}
 As we already mentioned, the oriented handle absorption modifies the integers $c(\Gmod)$, $g(\Gmod)$
 and $\corank A_{\Gmod}$, and this can happen even in the homogeneous case. E.g., if $f=z(xy+z^2)$,
 (compare also with  \ref{ss:dsmall}(3c) and  \ref{re:cgdep}):

\begin{picture}(200,50)(-20,-5)
\put(50,15){\circle*{4}} \put(100,15){\circle*{4}}
\qbezier(50,15)(75,30)(100,15) \qbezier(50,15)(75,0)(100,15)

\put(38,15){\makebox(0,0){$0$}} \put(112,15){\makebox(0,0){$3$}}
\put(75,28){\makebox(0,0){$\circleddash$}}
 \put(140,15){\makebox(0,0){$\sim$}}

\put(160,15){\circle*{4}} \put(160,25){\makebox(0,0){$3$}}
\put(160,5){\makebox(0,0){$[1]$}}
\end{picture}

\vspace{3mm}

The integers  $c(\Gmod)$, $2g(\Gmod)$
 and $\corank A_{\Gmod}$ read from the left hand side graph provide the ranks
 $Gr^W_\bullet H_1(\partial F)$, while the right hand side graph does not have this property.
 (In particular, the topological  methods from \ref{ss:XY} or \ref{ss:XAYB} are perfectly suitable
 to determine the oriented 3--manifold $\partial F$,  or even
 the characteristic polynomial of its algebraic Milnor monodromy, but they are not sufficiently fine
 to recover the weight filtration of the mixed Hodge structure of $H_1(\partial F)$.)
 \end{remark}\ix{mixed Hodge structure!weight filtration}

 \begin{example}\labelpar{ex:MHSnot} {\bf The weight filtration of the MHS  of $H_1(\partial F)$ is not a
 topological invariant.}

Consider the homogeneous function $f=xy(xy+z^2)$ of degree $d=4$. Its graph $G$ under
{\it strictly reduced calculus}, that is,  under reduced calculus excluding R5,
cf. \ref{strictly},   can be transformed into
\ix{mixed Hodge structure!weight filtration!not topological}

\begin{picture}(200,60)(-20,-10)
\put(50,15){\circle*{4}} \put(100,15){\circle*{4}}
\qbezier(50,15)(75,30)(100,15) \qbezier(50,15)(75,0)(100,15)

\put(38,15){\makebox(0,0){$0$}} \put(112,20){\makebox(0,0){$0$}}\put(112,10){\makebox(0,0){$[2]$}}
\put(75,28){\makebox(0,0){$\circleddash$}}

\end{picture}

In particular, $c(G)=1$, $2g(G)=4$ and $\corank A_G=2$, and these numbers equal the ranks of
$Gr^W_\bullet H_1(\partial F)$.

\vspace{2mm}

Next, consider the homogeneous function $f=z(x^4+y^4)$ of degree $d=5$. For its graph $G$ see
the left graph from \ref{ex:almostfan} with  $c(G)=3$, $2g(G)=0$ and $\corank A_G=4$.
In this case these are the ranks of $Gr^W_\bullet H_1(\partial F)$.

\vspace{2mm}

Note that using R3 and R5,  both graphs  can be transformed into
\begin{picture}(40,10)(0,0)
\put(20,4){\circle*{4}} \put(13,4){\makebox(0,0){$0$}}
\put(30,4){\makebox(0,0){$[3]$}}
\end{picture}. Hence, in both cases,
$\partial F$ is orientation preserving diffeomorphic
with the product of $S^1$ with a surface of genus 3.
In particular, the smooth  type of $\partial F$ does not determine the weight filtration
of $H_1(\partial F)$. (This phenomenon might happen with  links of isolated singularities
of higher dimension as well, see \cite{SS}.)

\vspace{2mm}

Additionally, the two characteristic polynomials of the algebraic monodromies acting on
$H_1(\partial F)$ associated with the above two examples are $(t^2+1)^2(t-1)^3$ and
$(t-1)^7$ respectively. Hence, from $\partial F$ one cannot, in general, read  the characteristic
polynomial. (Note that the multiplicity of $f$ cannot be read  either!)
 \end{example}

\begin{example}\label{ex:MILFIB}
In fact, the Milnor fibers of the above two functions treated in \ref{ex:MHSnot} are also
different. Indeed, let $S_g$ denote the oriented closed surface of genus $g$.
Then for  $f=xy(xy+z^2)$ the Milnor fiber is diffeomorphic to
$[0,1]\times S^1\times (S_1\setminus 2 \ \mbox{points})$
(cf. \ref{ss:XY}), while for $f=z(x^4+y^4)$, the Milnor fiber is
 $[0,1]\times S^1\times (S_0\setminus 4 \ \mbox{points})$ (cf. Chapter \ref{s:zf'}).
 Note  that, though these spaces are not homeomorphic,
 in fact, they are homotopic.

\end{example}

\bekezdes \labelpar{s:MHSEIG} The weight filtration on $H_1(\partial F)$ is compatible with the
eigenspace decomposition of the algebraic
monodromy action. Indeed, from previous computations   and from
 \ref{cor:EIG}(a) we obtain  for the generalized eigenspaces the following decomposition.
\ix{generalized eigenspace}
\begin{proposition}
\begin{equation*}\label{eq:mhs4}
\dim \, Gr^W_i(H_1(\partial F)_{\lambda=1})=\left\{
\begin{array}{ll}
\corank A_G=\corank A_{\gc}=|\Lambda|-1 \ \ & \mbox{if $i=-2$}, \\
2g(\gc) & \mbox{if $i=-1$}, \\
c(\gc) & \mbox{if $i=0$}, \end{array}\right.
\end{equation*}
and
\begin{equation*}\label{eq:mhs5}
\dim \, Gr^W_i(H_1(\partial F)_{\lambda\not =1})=\left\{
\begin{array}{ll}
0 & \mbox{if $i=-2$}, \\
2g(G)-2g(\gc)\ \  & \mbox{if $i=-1$}, \\
c(G)-c(\gc) & \mbox{if $i=0$}. \end{array}\right.
\end{equation*}
\end{proposition}

\begin{remark}\label{re:WHG} {\bf The case of weighted homogeneous germs.}

It is a natural task to generalize the above results valid for homogeneous singularities to
 weighted homogeneous non-isolated singularities. In fact, several examples of the present
 book are of this type,
nevertheless, in their study we did not exploit their weighted homogeneous action.
\ix{singularities!weighted homogeneous}\ix{Newton diagram}

The general strategy for the study of these germs,
which exploits their $\bfc^*$--action, is rather straightforward: one has to consider the toric resolution
associated with their Newton diagram. As an intermediate step, one has to consider a non-singular subdivision
of the fan determined by the Newton diagram. This, usually is not unique,
and depends on several choices. Then the identification of the curve  $\C$ and of the decorations of
$\G_\C$ can be done via computations specific to toric resolutions.

We completed this program for several examples presented in this  book,
and using the Main Algorithm and plumbing calculus we verified that
they provide the expected answer. Nevertheless, we were not satisfied with our computations:
the  general statement  formulated  {\it only} in
terms of the Newton diagram and  {\it independently of the choice of the subdivision} is still missing.
Therefore, we decided not to include these results;  they will be completed in the
near future.
\end{remark}

\section{\ Line arrangements}\labelpar{s:LINEARR}
\setcounter{equation}{0} Assume that
$C$ is a projective line arrangement. We will use the notations of
\ref{arrang}. Recall (cf. \ref{cor:cora}) that
$$\mbox{$|\cala(\gc)|=|\cala(G)|=d$, \ \  $\corank A_G=d-1$, \ and \
$g(\gc)=0$. }$$
Moreover, since
the covering data of $G$ over  $\gc$ is trivial, one has \ix{graph!covering!data}
 $$c(G)=c(\gc)=\sum_{j\in\Pi}(m_j-1) -(d-1), $$
 and by \ref{ex:gcG}
$$2g(G)=\sum_{j\in \Pi}(m_j-2)\big( \mbox{gcd}(m_j,d)-1\big).$$
\ix{singularities!line arrangements}

\begin{bekezdes}\labelpar{ss:arrchar}{\bf Characteristic polynomial.}
Since $\n_w=\n_e=1$ for $w\in \calw(\gc)$ and $e\in\cale_{\calw}(\gc)$,
Corollary  \ref{th:charpolhom} via a computation transforms into\end{bekezdes}
\begin{theorem}\labelpar{th:arrangpol}
Let $\{L_\lambda\}_{\lambda\in\Lambda}$ be an arrangement
with $d=|\Lambda|$ lines and with singular points $\Pi$. Then
the characteristic polynomial of the monodromy acting on $H_1(\partial F)$ is
$$
(t-1)^{|\Pi|}\cdot \prod_{j\in\Pi} (t^{(m_j,d)}-1)^{m_j-2}.$$
\end{theorem}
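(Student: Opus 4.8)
The plan is to specialize the general homogeneous formula (\ref{th:charpolhom}) to the line arrangement situation. The starting point is Corollary (\ref{th:charpolhom}), which gives the characteristic polynomial of the Milnor monodromy on $H_1(\partial F)$ as
$$
\frac{(t-1)^{|\Lambda|-d}\cdot\prod_{w\in\calw(\gc)}\,
(t^{(m_w,d)}-1)^{2g_w+\delta_w-2} \cdot (t^{\n_w}-1)} {\prod_{e\in
\cale_w(\gc)}\, (t^{\n_e}-1)}.
$$
All the input data for this formula is already pinned down by the explicit description of $\gc$ for arrangements in (\ref{arrang}): there are $|\Lambda|=d$ line-vertices $v_\lambda$ with weight $(1;d,1)$ and genus $0$, and for each $j\in\Pi$ one point-vertex $v_j$ with weight $(m_j;d,1)$ and genus $0$; each $v_\lambda$ carries one arrowhead supported by a $1$-edge, and $v_j$ is connected to $v_\lambda$ by a $2$-edge whenever $P_j\in L_\lambda$.

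First I would note that $|\Lambda|-d=0$, so the prefactor $(t-1)^{|\Lambda|-d}$ disappears. Next I would compute the covering integers $\n_w$ and $\n_e$ from the definitions in the Main Algorithm (\ref{algo}): since the first entries of all relevant weights are $\gcd$'ed together with the middle entry $d$, and each line-vertex $v_\lambda$ has first multiplicity $1$, one gets $\n_{v_\lambda}=1$; for the point-vertex $v_j$ the relevant $\gcd$ is $\gcd(m_j,d,\ldots)$, but because $v_j$ is adjacent only to line-vertices whose neighbouring first-multiplicities equal $1$, again $\n_{v_j}=1$ — more conceptually, the covering $G\to\gc$ has trivial covering data, which is already observed for arrangements in section \ref{s:LINEARR}. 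Likewise every edge $e$ of $\gc$ has $\n_e=1$. Therefore the fraction $\prod_w(t^{\n_w}-1)/\prod_e(t^{\n_e}-1) = (t-1)^{\#\calw(\gc)-\#\cale_\calw(\gc)} = (t-1)^{1-c(\gc)}$ by the Euler-characteristic identity (\ref{eq:c}).

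Then I would evaluate $\prod_{w\in\calw(\gc)}(t^{(m_w,d)}-1)^{2g_w+\delta_w-2}$. The non-arrowhead vertices split into the $d$ line-vertices and the $|\Pi|$ point-vertices. For a line-vertex $v_\lambda$: $m_{v_\lambda}=1$ so $(m_{v_\lambda},d)=1$ and $(t^1-1)^{\ldots}=(t-1)^{2\cdot 0+\delta_\lambda-2}$, where $\delta_\lambda$ is the number of legs of $v_\lambda$, namely one arrow plus $m(\lambda)$ two-edges with $m(\lambda)$ the number of singular points lying on $L_\lambda$. For a point-vertex $v_j$: $g_{v_j}=0$, $\delta_{v_j}=m_j$, and $(m_{v_j},d)=(m_j,d)$, contributing $(t^{(m_j,d)}-1)^{m_j-2}$. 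Collecting the $(t-1)$-powers from the line-vertices with the factor $(t-1)^{1-c(\gc)}$, and using $c(\gc)=\sum_{j\in\Pi}(m_j-1)-(d-1)$ together with the incidence count $\sum_\lambda m(\lambda)=\sum_{j\in\Pi} m_j$, the bookkeeping should collapse the total exponent of $(t-1)$ to exactly $|\Pi|$, leaving the claimed answer $(t-1)^{|\Pi|}\cdot\prod_{j\in\Pi}(t^{(m_j,d)}-1)^{m_j-2}$.

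The only real content here is the exponent arithmetic for the $(t-1)$-factor; this is a routine but slightly fiddly bookkeeping of Euler characteristics and incidence numbers, and that is where I would be most careful — in particular making sure the arrowhead legs of the $v_\lambda$ and the identification $2g(G)=\sum_{j}(m_j-2)(\gcd(m_j,d)-1)$ are consistent with (\ref{cor:cora}) and with the rank computations of section \ref{s:LINEARR}. I would also remark that the applicability of (\ref{th:charpolhom}) is legitimate because homogeneous singularities always satisfy the hypothesis (\ref{eq:ech}) by (\ref{cor:corank}), so no extra case analysis is needed. A sanity check against the small examples (the $A_3$ arrangement of (\ref{ex:a3}), and the generic arrangement of (\ref{ex:genarr})) would finish the verification.
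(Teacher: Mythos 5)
Your proposal is correct and follows exactly the paper's own route: the paper likewise obtains (\ref{th:arrangpol}) by specializing (\ref{th:charpolhom}) to the arrangement graph of (\ref{arrang}), using $|\Lambda|=d$ and $\n_w=\n_e=1$, and the same Euler-characteristic/incidence bookkeeping for the exponent of $(t-1)$. Your remark that (\ref{cor:corank}) guarantees the hypothesis of (\ref{eq:ech}) is also the justification implicit in the paper.
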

\begin{bekezdes}{\bf Jordan blocks of  $M_{\Phi,hor}$ and
  $M_{\Phi,ver}$.}\labelpar{ss:arrJordan}
Since $M'_{j,hor}$ is semisimple, from \ref{ss:applJo} we get
$$\#^2_\lambda M_{\Phi,hor}=\#^2_\lambda M_{\Phi,ver}=0 \ \ \mbox{for
  $\lambda\not=1$, and}$$
$$\#^2_1M_{\Phi,hor}=\#^2_1M_{\Phi,ver}=c(\gc).$$
\end{bekezdes}
\ix{monodromy!characteristic polynomial!of $\partial F$}
\ix{monodromy!Jordan block}

\begin{example}\labelpar{ex:genarrr}{\bf The generic arrangement.}\ix{singularities!line arrangements!generic}
Consider the generic arrangement:
all the intersection points of the $d$ lines are transversal. In this case
there are $|\Pi|=d(d-1)/2$ intersection points, all with $m_j=2$. Therefore,
 the characteristic polynomial is just $(t-1)^{|\Pi|}$.\ix{singularities!line arrangements!generic}

A possible  graph for  $\partial F$ can be constructed as follows.

Consider the complete graph $\mathfrak{G}$
 with $d$ vertices (i.e. any two different vertices
are connected by an edge). Decorate all these vertices by $-1$.
Put on each edge $e$ of    $\mathfrak{G}$ a new vertex with decoration $-d$.
In this way, the edge $e$ is `cut' into two edges;
decorate one of them by $\circleddash$ (and do this with all the edges of
$\mathfrak{G}$).

Notice that any $(-1)$--vertex is adjacent to $d-1$ vertices (all decorated
by $-d$). Hence, if $d\leq 3$, this graph is not minimal, but otherwise
in this way we get the `normal form'. The graph has $(d-1)(d-2)/2 $ cycles
and $\corank A=d-1$.
\end{example}

\begin{example}\labelpar{ex:A3arrr}{\bf The ${\mathbf A_3}$ arrangement.}
Consider the arrangement from \ref{ex:a3}. The characteristic polynomial is
$(t^3-1)^4\cdot (t-1)^7$. The graph can be deduced easily from $\gc$, which is
presented in \ref{ex:a3}. There are 4 vertices with $g=1$, 6 cycles and
$\corank A_G=d-1=5$.
 \end{example}\ix{singularities!line arrangements!$A_3$}

\begin{example}\labelpar{ex:fan}{\bf The pencil.}
Assume that all the lines contain a fixed point. (E.g., $f=x^d+y^d$.)
Then the characteristic polynomial is
$(t-1)(t^d-1)^{d-2}$, and  $\Gmod$ is

\begin{picture}(140,80)(-30,-10)
\put(30,30){\circle*{4}} \put(60,40){\circle*{4}}
\put(60,20){\circle*{4}}
 \put(30,30){\line(3,1){30}} \put(30,30){\line(3,-1){30}}
\put(30,17){\makebox(0,0){$[g]$}}\put(30,37){\makebox(0,0){$*$}}
\put(62,45){\makebox(0,0)[b]{$0$}}
\put(62,8){\makebox(0,0)[b]{$0$}}
\put(60,25){\makebox(0,0)[b]{$\vdots$}}
\put(160,30){\makebox(0,0){$d$ legs and $g=(d-1)(d-2)/2$}}
\end{picture}

In fact, $\partial F\approx \#_{(d-1)^2}S^2\times S^1$.
\end{example}

\begin{example}\labelpar{ex:almostfan}
Assume that $f=z(x^{d-1}+y^{d-1})$. Then the characteristic polynomial is
$(t-1)^{2d-3}$, and $\Gmod$ is

\begin{picture}(140,85)(-100,-10)
\put(30,30){\circle*{4}} \put(60,40){\circle*{4}}
\put(60,20){\circle*{4}}\put(60,50){\circle*{4}}
 \put(30,30){\line(3,1){30}} \put(30,30){\line(3,-1){30}}\put(30,30){\line(3,2){30}}
\put(29,37){\makebox(0,0){$-1$}}
\put(62,30){\makebox(0,0)[b]{$0$}}\put(62,55){\makebox(0,0)[b]{$0$}}
\put(62,8){\makebox(0,0)[b]{$0$}}
\put(58,25){\makebox(0,0)[b]{$\vdots$}}
\put(-40,30){\makebox(0,0){$(d-1)$ \ 0--vertices}}
\put(90,37){\makebox(0,0){$1$}}
\put(90,30){\circle*{4}}
 \put(90,30){\line(-3,1){30}} \put(90,30){\line(-3,-1){30}}\put(90,30){\line(-3,2){30}}

\put(78,45){\makebox(0,0){$\circleddash$}}
\put(78,30){\makebox(0,0){$\circleddash$}}\put(78,20){\makebox(0,0){$\circleddash$}}
\put(120,30){\makebox(0,0){$\sim$}}
\put(150,30){\circle*{4}}\put(150,40){\makebox(0,0)[b]{$0$}}
\put(150,20){\makebox(0,0)[t]{$[d-2]$}}
\end{picture}

\noi In particular, for any oriented surface $S$, the product $S\times S^1$
can be realized as $\partial F$.
\end{example}

\bekezdes\labelpar{ss:TOR} {\bf The torsion of the integral homology $H_1(\partial F,\Z)$.}
In general, even for homogeneous $f$,
$H_1(\partial F,\Z)$ might have nontrivial torsion.
It would be important to characterize completely  this torsion group
in the case of arrangements.
The interest in such a question
is motivated  by a conjecture which predicts
that for arrangements $H_1(F,\Z)$ has no torsion. Hence it is natural to
attack this conjecture via the epimorphism $H_1(\partial F,\Z)\to H_1(F,\Z)$.

\begin{example}\label{ex:TORTOR}
Assume that $f$ is the generic arrangement with $d=4$. Then the torsion part of
$H_1(\partial F,\Z)$ is $\Z_4$.\ix{singularities!line arrangements!generic}

Indeed, by \ref{re:TOR}, the torsion part of $H_1(\partial F,\Z)$ is exactly the torsion part of
$\coker A_G$. Since the intersection matrix $A$ is determined explicitly in \ref{ex:genarrr},\ix{matrix!intersection}
a computation provides the result.

In fact, for generic arrangements and larger $d$, the torsion part of $H_1(\partial F,\Z)$ is even bigger.
One can prove that  $A\otimes \Z_d$ has corank $\geq (d^2-3d+4)/2$, which is considerably larger than
$\corank A=d-1$.

\end{example}


\chapter{Cylinders of plane curve
singularities: $f=f'(x,y)$}\labelpar{s:cyl}

\section{\ Using the Main Algorithm. The graph $G$}
\label{ss:cylcyl}\setcounter{equation}{0}\ix{singularities!cylinders}
 Assume that $f(x,y,z)=f'(x,y)$,
as in \ref{cyl}. Assume that $f'$ has $\#=\#(f')$ local irreducible
components, and let $\mu=\mu(f')$ be its Milnor number.\ix{Milnor!number}

For  $g(x,y,z)=z$ a  graph $\gc$ is determined in
\ref{cyl}. In this section we determine the graph $G^m$ using
 the main steps of the collapsing algorithm.
Nevertheless, we provide certain  numerical invariants for the original $G$ as well.

Let

\vspace{3mm}

\begin{picture}(140,50)(-100,-10)
\put(20,30){\circle*{4}} \put(100,30){\circle*{4}}
\put(20,30){\line(1,0){80}} \put(20,35){\makebox(0,0)[b]{$(m_w)$}}
\put(100,35){\makebox(0,0)[b]{$(m_w')$}}
\put(20,15){\makebox(0,0)[b]{$[g]$}}
\put(100,15){\makebox(0,0)[b]{$[g']$}}
\put(20,0){\makebox(0,0)[b]{$w$}}
\put(100,0){\makebox(0,0)[b]{$w'$}}
\end{picture}

\noindent be an edge $e$ of the minimal embedded resolution graph
$\Gamma'=\G(\bfc^2,f')$ of $f'$. Then using the recipe of \ref{cyl}, the
transformation step from \ref{re:w3}, and the algorithm
\ref{algo}, this edge will be replaced in $G$ by
$\n_e=(m_{w_1},m_{w_2})$ strings  with the   decorations:

\vspace{2mm}

\begin{picture}(140,50)(-30,10)
\put(20,30){\circle*{4}} \put(80,30){\circle*{4}}
\put(140,30){\circle*{4}} \put(200,30){\circle*{4}}
\put(260,30){\circle*{4}}  \put(20,30){\line(1,0){240}}
\put(80,35){\makebox(0,0)[b]{$0$}}
\put(140,35){\makebox(0,0)[b]{$\frac{m_w+m_w'}{\n_e}$}}
\put(200,35){\makebox(0,0)[b]{$0$}}
\put(110,32){\makebox(0,0)[b]{$\circleddash$}}
\put(20,15){\makebox(0,0)[b]{$[\tilde{g}_w]$}}
\put(260,15){\makebox(0,0)[b]{$[\tilde{g}_{w'}]$}}
\put(170,32){\makebox(0,0)[b]{$\circleddash$}}
\end{picture}

\vspace{2mm}

\noindent Compare also with \ref{bad:edge}.

\vspace{2mm}

Above $w$ there are  $\n_w$ vertices in $G$, where $\n_w$ is the
greatest common divisor of $m_w$ and all the multiplicities of the
adjacent vertices of $w$ in $\Gamma'$. Their genus  decoration
$\tilde{g}_w$ is given by
$$\n_w(2-2\tilde{g}_w)=(2-\delta_w)m_w+\sum _{e \ \footnotesize{\mbox{adjacent
to } \ }w}\n_e.$$ There is a similar picture for edges supporting arrowheads.
Note that $$|\cala(G)|=\#,$$ the number of local irreducible components of $f'$.

 In general, $G$ is not  a tree, the number $c(G)$ of its cycles is given by
\begin{equation}\label{NN}
1-c(G)=\sum_{w\in\calw(\Gamma')}\n_w-\sum_{e\in
\cale_w(\Gamma')}\n_e.\end{equation}
 After executing all the
$0$--chain absorptions we get

\begin{picture}(140,90)(-100,-10)
\put(30,30){\circle*{4}} \put(60,40){\circle*{4}}
\put(60,20){\circle*{4}}
 \put(30,30){\vector(3,1){60}} \put(30,30){\vector(3,-1){60}}
\put(20,37){\makebox(0,0){$[g]$}}
\put(20,23){\makebox(0,0){$(1)$}}
\put(62,45){\makebox(0,0)[b]{$0$}}
\put(50,42){\makebox(0,0){$(1)$}}
\put(62,10){\makebox(0,0)[b]{$0$}}
\put(50,12){\makebox(0,0)[b]{$(1)$}}\put(-20,30){\makebox(0,0){$G^{mod}:$}}
\put(80,25){\makebox(0,0)[b]{$\vdots$}}
\put(100,50){\makebox(0,0){$(1)$}}
\put(100,10){\makebox(0,0){$(1)$}} \put(150,30){\makebox(0,0){($\#$
arrows)}}
\end{picture}

Here,  the Euler number of the central vertex is
missing, since it is irrelevant. The genus $g$ is determined
as a sum provided by the 0--chain absorption formula:
 $2g=2c(G)+2\sum_{w\in\calw(\Gamma')}\n_w\tilde{g}_w$,
which via the above identities is equal to
$2-\#-\sum_w(2-\delta_w)m_w$. Hence, by A'Campo's formula
(\ref{eq:ACampo})  for  $\mu$, we get \ix{A'Campo's formula}
$$g=(\mu+1-\#)/2.$$

\begin{bekezdes} Clearly, the fiber $F_\Phi$ of $\Phi=(f',z)$
is the same as the Milnor fiber $F'$ of $f'$. Moreover,
the vertical monodromy of $F_\Phi$ is isotopic to the identity
(this follows either from \ref{ex:cylver}, or by the observation
that $\Delta_\Phi=\{c=0\}$, hence $\Phi$ is a  trivial fibration over
$D_{\delta}$). Therefore, from (\ref{prop:gc1}) we
get \begin{equation}\label{EX1} \mu=2g(G)+2c(G)+\#-1.\end{equation}
From the above diagram of $G^{mod}$,  and  by  \ref{le:h-1} we get
\begin{equation*}\begin{split}
\dim\, H_1(\partial
F)&=2g(G^{mod})+c(G^{mod})+\corank A_{G^{mod}}\\
& =(\mu+1-\#)+0+(\#-1)=\mu.\end{split}\end{equation*}
On the other hand, again by \ref{le:h-1} applied for $G$,  we get
\begin{equation}\label{EX2}
\mu=2g(G)+c(G)+\corank A_G.\end{equation} Then, from (\ref{EX1}),
(\ref{EX2}) and  Corollary \ref{cor:Jordan} we obtain
\begin{equation}\label{EX3}
\left\{\begin{array}{l} \corank A_G=c(G)+\#-1,\\
\corank(A,\inc)_G=c(G)+\#.\end{array}\right.\end{equation} This shows
that the upper bounds  from \ref{re:c} can be realized.

In fact, $c(G)$ has an intrinsic meaning (hence, by (\ref{EX3}),
$\corank(A,\inc)_G$ and $\corank A_G$ too): the expression (\ref{NN})
combined with (\ref{CG})
gives that
\begin{equation}\label{UJ:cg}
c(G)=\mbox{the number of 2--Jordan blocks of the
monodromy of $f'$}.\end{equation}
\end{bekezdes}

\begin{bekezdes}\labelpar{ss:partF}
If the arrowheads and multiplicities of $G^{mod}$  above are deleted,
we get a graph which coincides with
 $\widehat{G}$ from Chapter \ref{ss:ELI}.

 It has  $c(\widehat{G})=0$ and $\corank A_{\widehat{G}}=\#-1$:

\begin{picture}(140,60)(-50,0)
\put(30,30){\circle*{4}} \put(60,40){\circle*{4}}
\put(60,20){\circle*{4}}
 \put(30,30){\line(3,1){30}} \put(30,30){\line(3,-1){30}}
\put(25,17){\makebox(0,0){$[g]$}}
\put(62,45){\makebox(0,0)[b]{$0$}}
\put(62,10){\makebox(0,0)[b]{$0$}}
\put(-20,30){\makebox(0,0){$\widehat{G}:$}}
\put(60,25){\makebox(0,0)[b]{$\vdots$}}
\put(160,30){\makebox(0,0){$\#$ legs and $g=(\mu+1-\#)/2$}}
\end{picture}

\ix{connected sum}
By the `splitting operation' R6 of the plumbing claculus, we get another possible non-connected
plumbing graph for
$\partial F$: the  disjoint union of $\mu$ vertices (without any
edges), all of them decorated with Euler number and genus zero. In
other words:
$$\partial F\approx \#_\mu S^2\times S^1.$$ Although this presentation of $\partial F$  can be
deduced by other methods too (see e.g. section
\ref{ss:comgeo}), the graph of $G$ associated with the open book
decomposition of $g=z$, or any graph $G$ computed in this way
associated with any germ $g$,   is a novelty of the present
method.
\end{bekezdes}

\begin{bekezdes}\labelpar{bek:chf}{\bf The characteristic
polynomial.} Since $M_{\Phi,ver}=id$, the characteristic
polynomial $P_{H_1(\partial F)}$ of the Milnor monodromy acting on
$H_1(\partial F)$ is the formula given in  \ref{charpols}(a). It
turns out that this expression coincides with the expression of
the characteristic polynomial of the monodromy of the isolated
plane curve singularity $f'$ provided by A'Campo's formula (\ref{eq:ACampo}).\ix{A'Campo's formula}
\end{bekezdes}

\section{\ Comparing with a different geometric
construction}\labelpar{ss:comgeo}\setcounter{equation}{0}
 Let $F'$ be the Milnor fiber
of $f'$. In the above situation it is easy to see (since
$\Delta_\Phi=\{c=0\}$) that $F\approx F'\times D$, where $D$ is a real
2--disc. In particular, we also have the following geometric
description for $\partial F$:
\begin{equation}\label{eq:pargeo}
\partial F=F'\times S^1\ \cup_{\partial F'\times S^1}
\ \partial F'\times D.
\end{equation}
Using the Mayer--Vietoris exact sequence for this decomposition,
one gets that
\begin{equation}\label{eq:F'PF}
H_1(\partial F,\Z)=H_1(F',\Z).\end{equation}
 Since the
monodromy acts on this sequence, we also obtain that the
monodromy on $H_1(\partial F,\Z)$ is the same as the monodromy
of the plane curve singularity $f'$ acting on  $H_1(F',\Z)$.
In particular, in this way we get the Jordan block structure of
the monodromy acting on $H_1(\partial F,\Z)$ as well.

This shows that,
in general, the {\em algebraic monodromy acting on $H_1(\partial F)$ is
not finite}: take e.g. for $f'$ the germ from \ref{ex:cyln}.

\section{\ The mixed Hodge structure on $H_1(\partial F)$}\labelpar{cyl:MHS}\setcounter{equation}{0}

The isomorphism (\ref{eq:F'PF}) can also be proved as follows.
Here we prefer to  discuss  the cohomological case,
which is more traditional from the point of view of MHS.

Since $H^2_c(F)=H^0_c(F')=0$,
by (\ref{eq:MHSmhs}) the inclusion induces an isomorphism $H^1(F)\to H^1(\partial F)$. Moreover, the inclusion
of $F'$ into $F$ (cut out by $z=0$)
induces also an isomorphism $H^1(F)\to H^1(F')$. Being induced by inclusions,
these isomorphisms preserve the mixed Hodge structures. Therefore, the mixed Hodge structures on
$H^1(F')$ and $ H^1(\partial F)$ coincide.
\ix{mixed Hodge structure!weight filtration}

The mixed Hodge structure of $H^1(F') $  is  Steenbrink's MHS defined  on the
vanishing cohomology of the plane curve singularity $f'$ \cite{StOslo,StHodge}.
 Its weight filtration is compatible with the generalized eigenspace decompositions. According to the general theory, $H^1(F')_{\lambda=1}$ is pure of weight 2, and it has rank $\#-1$.
On the other hand, $H^1(F')_{\lambda\not =1}$, in general, has weights 0, 1 and 2, and the
weight filtration is the monodromy weight of the algebraic monodromy. In particular,
the ranks of $Gr^W_0 H^1(F')_{\lambda\not=1}$ and $Gr^W_2 H^1(F')_{\lambda\not=1}$
are equal, and they agree with the number of
2--Jordan blocks of the monodromy (recall that the monodromy has no 2--blocks with eigenvalue one).
Hence, this  rank is exactly $c(G)$ by (\ref{UJ:cg}).
In particular, the rank of  $Gr^W_0 H^1(F')$ is $c(G)$, while the rank of
$Gr^W_2 H^1(F')$ is $c(G)+\#-1=\corank A_G$, cf. (\ref{EX3}).
Hence, by dimension computation, we get that the remaining
$Gr^W_1 H^1(F')$ has rank $2g(G)$. This supports Conjecture \ref{be:mhs2}:
\ix{generalized eigenspace}

\begin{corollary}  If $f$ is a cylinder of an isolated  plane curve singularity
then the  mixed Hodge structure of  $H^*(\partial F)$ satisfies Conjecture
\ref{be:mhs2}.
\end{corollary}

\vspace{2mm}

Note that the information about the ranks of $Gr^W_\bullet H^1(F')$ cannot be read from $\widehat{G}$:
the graph  $\widehat{G}$ contains information about $\#$ and $\mu$ only.
\ix{mixed Hodge structure}

\chapter{Germs $f$ of type $zf'(x,y)$}\labelpar{s:zf'}

\section{\ A geometric representation of $F$ and $\partial F$}\labelpar{ss:GEOM}\setcounter{equation}{0}
In this section we assume that $f(x,y,z)=zf'(x,y)$, where $f'$ is an
isolated plane curve singularity.

Similarly as in the case of cylinders (or, in the case of `composed  singularities'
\cite{NPhg}), consider the ICIS $\bar{\Phi'}:(\bfc^3,0)\to (\bfc^2,0)$ given by
$\bar{\Phi'}=(f'(x,y),z)$. By similar notations as in Chapter \ref{s:ICIS}, the Milnor fiber $F$ of $f$ is
$$F=B_\epsilon^3\cap \bar{\Phi'}^{-1} (\{cd=t\}\cap D_\eta^2),$$
where $0<t\ll \eta\ll \epsilon$. For any $\eta>0$ consider the disc $D_\eta:=\{c:\, |c|\leq \eta\}$.
Then, by isotopy, the above representation of $F$ can be transformed into
$$F=B_\epsilon^3\cap \bar{\Phi'}^{-1} (D_\eta\times \{t\}\setminus D^\circ _{\eta'}\times \{t\}),$$
for some  $0<\eta'\ll\eta$. From this  the variable $z$ can be eliminated, that is, if $B^2_\epsilon$ is
the $\epsilon$--ball in the $(x,y)$--plane, then
\begin{equation}\label{eq:MFIB}
F=B_\epsilon^2\cap (f')^{-1} (D_\eta\setminus D^\circ _{\eta'}).
\end{equation}
It can also be verified that the monodromy on $F$ is isotopic to the identity, since  it is the rotation
by $2\pi$ of the annulus  $D_\eta\setminus D_{\eta'}$.

In particular, the homotopy type of $F$ is the same as the homotopy type of the complement of the link of
$f'$ in $S^3_\epsilon$.

\vspace{2mm}

(\ref{eq:MFIB}) provides a geometric picture for $\partial F$ as well,
and proves that its monodromy action is trivial.

\bekezdes
Projecting $D_\eta\setminus D_{\eta'}$ to $S^1=\partial D_\eta$,
and composing with $f'$ we get a map $\partial F\to S^1$ which is a locally trivial fibration.
Hence, $$\mbox{$\partial F$ fibers over $S^1$.}$$

\begin{bekezdes}\labelpar{bek:plumf}
From (\ref{eq:MFIB}) one can read the following plumbing representation for $\partial F$.
In order to understand the geometry behind the statement, let us analyze different constituent
 parts of  $\partial F$. Notice that $B_\epsilon^2\cap (f')^{-1}(\partial D_\eta)$ can be identified with
 the complement of the link of $f'$ in $S^3$. Similarly, $B_\epsilon^2\cap (f')^{-1}(\partial D_{\eta'})$
 is the same, but with opposite orientation. Moreover, they are glued together along their
 boundaries in a natural way.
 Therefore, the plumbing graph can be constructed as follows.

 Take the (minimal)  embedded resolution graph of $(\bfc^2,V_{f'})$. This has $\#$ arrows, where $\#$
 is the number of irreducible components of $f'$.  Keep all the Euler numbers and
  delete all the multiplicity decorations. Let the  schematic form of the result
  be the next `box':

 \begin{picture}(100,60)(-100,-10)
\put(10,25){\vector(1,0){30}}\put(10,5){\vector(1,0){30}}
\put(-40,0){\framebox(60,30){$\Gamma$}}
\put(30,18){\makebox(0,0){$\vdots$}}
\end{picture}

\vspace{2mm}

\noindent Then a possible plumbing graph for $\partial F$ is:

\vspace{2mm}

 \begin{picture}(100,60)(-100,-10)
\put(10,25){\line(1,0){80}}\put(10,5){\line(1,0){80}}
\put(50,25){\circle*{4}}
\put(50,5){\circle*{4}}
\put(-40,0){\framebox(60,30){$\Gamma$}}\put(80,0){\framebox(60,30){$-\Gamma$}}
\put(50,12){\makebox(0,0){$0$}}
\put(50,32){\makebox(0,0){$0$}}
\put(75,18){\makebox(0,0){$\vdots$}}
\put(30,18){\makebox(0,0){$\vdots$}}
\put(65,30){\makebox(0,0){$\circleddash$}}
\put(65,10){\makebox(0,0){$\circleddash$}}
\end{picture}

\vspace{2mm}

Here, in $-\Gamma$, we change the sign of all Euler numbers and edge--decorations, that is,
 we put $\circleddash$ on all edges.

Notice that any 3--manifold $M$ obtained in this way is
orientation preserving diffeomorphic with the manifold obtained by changing its orientation:
$M\approx  -M$.

The  statement about the above shape of the graph
 can be tested using the examples listed in \ref{ss:PROD} as well.
\end{bekezdes}

\begin{example}\labelpar{ex:ARA} The first (and simplest)  example,
when $f'=x^{d-1}+y^{d-1}$, hence $f$ defines an arrangement,  was already  considered in
\ref{ex:almostfan}. Its graph produced by the Main Algorithm is
the left diagram of \ref{ex:almostfan} supporting
(together with the characteristic polynomial computation) the above statements.
\end{example}

\begin{example}\label{ex:zf'2}
Assume that $f=z(x^2+y^3)$ and take $g$ to be a generic linear form.
The graph $\gc$ is given in \ref{ex:zf'}.
Running the algorithm and reduced calculus, a possible $\Gmod$ is

\begin{picture}(100,70)(-100,-10)
\put(0,5){\circle*{4}}\put(40,5){\circle*{4}}
\put(0,45){\circle*{4}}\put(40,45){\circle*{4}}
\put(20,25){\circle*{4}}
\put(0,5){\line(1,1){40}}\put(0,45){\line(1,-1){40}}
\put(-10,5){\makebox(0,0){$-2$}}\put(-10,45){\makebox(0,0){$-3$}}
\put(50,5){\makebox(0,0){$2$}}\put(50,45){\makebox(0,0){$3$}}
\put(10,25){\makebox(0,0){$0$}}
\end{picture}

\noi which is compatible with the predicted form from \ref{bek:plumf}.\end{example}

\begin{example}\labelpar{re:SEIFERT}\ix{Seifert manifold}
More generally,  if $f'=x^a+y^b$, then in the graph of  \ref{bek:plumf},
all the 0--vertices can be eliminated by 0--chain or handle absorption, hence
we get a star--shaped graph with four legs and central vertex with genus
$\mbox{gcd}(a,b)-1$. In particular, $\partial F$ has a Seifert structure.
\end{example}

\begin{remark}\label{re:zf'2}
Let us consider again the ICIS $\Phi=(f,g)$ as in the original construction.\vspace{2mm}

(a) This family might also serve as a testing family for some of the characteristic polynomial formulae
\ref{charpols},
regarding the vertical monodromies $M'_{j,ver}$ of the transversal types
(which, in fact, are much harder to test).

Indeed, if $f=zf'$ and $g$ is a generic linear function, then $\Sigma_f$ has two components.
 $\Sigma_1=\{x=y=0\}$ with  $d_1=1$, and
 $\Sigma_2=\{z=f'=0\}$. Let us concentrate on the first component $\Sigma_1$.
   The transversal type $T\Sigma_1$ is the same as  the type of
   $f'$ (in two variables). Hence,  the Milnor fiber $F'_1$ of the transversal type can be identified
with the Milnor fiber of $f'$.

Since $\{f'e^{it}=\delta\}=\{f'=\delta e^{-it}\}$, we get that
the vertical monodromy of $F'_1$ coincides with the inverse of the
Milnor monodromy acting on the Milnor fiber of $f'$. Hence, the
characteristic polynomial of $M'_{1,ver}$,  determined by \ref{charpols}(c),  should coincide with the
characteristic polynomial of $f'$ provided by the classical A'Campo formula (\ref{eq:ACampo}).
The interested reader is invited to verify this on all the available graphs $\gc$.\vspace{2mm}
\ix{A'Campo's formula}

(b) Let us also test  the invariants of the fiber of $\Phi$. Let us write $g$ as $z+g'(x,y)$, where
$g'$ is a generic linear form with respect to $f'(x,y)$. Then, solving the system $zf'(x,y)=c$ and
$z+g'(x,y)=d$, we get the fiber $F_\Phi$ of $\Phi$. Eliminating $z$ we get $(d-g'(x,y))f'(x,y)=c$, hence
the fiber $F_\Phi$ is the same as the Milnor fiber of the plane curve singularity $g'f'$.

For example, in the case of $f'=x^2+y^3$, whose  graph $\gc$ is given in \ref{ex:zf'},
the first formula of (\ref{eq:H1Phi}) provides for $H_1(F_\Phi)$ the rank 5, which is the Milnor number of
$y(x^2+y^3)$.

It is interesting to identify and analyze the vertical monodromy of $F_\Phi$ via the local
 equation $(\eta e^{it}-g'(x,y))f'(x,y)=\delta$ in two variables with $0<\delta \ll\eta\ll\epsilon$.

Note that the present strategy provides a method for the study of the monodromy of such a deformation
for an arbitrary plane curve singularity pair $(f',g')$:  only has to be computed the graph
$\G_\C$ for $(zf',z+g')$. \ix{deformation!of curves}
\end{remark}

\begin{bekezdes}\labelpar{comments}
While  computing  the plumbing graph
of $\partial F$ for this family $f=zf'$ via the Main Algorithm,  the following amazing fact emerged:
although the graph
$\gc$ shows no symmetry, after running the algorithm and calculus the output graph has
the symmetry (up to orientation) predicted in \ref{bek:plumf}.
For example, looking at the starting graph \ref{ex:zf'},  we realize absolutely no symmetry,
nor even the hidden potential presence of  symmetry.  Indeed,  the two parts of $\G_\C$, which  provide
$\G$ and $-\G$ of \ref{bek:plumf} respectively, are rather different;
they codify two different geometric situations.
Nevertheless,  after calculus we get the two symmetric parts represented by $\Gamma$ and $-\Gamma$.

To construct a resolution $r$ which produces $\Gamma$ (the embedded resolution of a plane curve singularity)
is rather natural (see e.g. the the case
of cylinders), but to get  a resolution $r$ which
(or part of it)  produces $-\Gamma$ in a natural way,  is very
tricky. But, in fact, this is what  $\gc$ does: a part of it produces $\Gamma$, another part
of it produces $-\Gamma$. This anti--duality is still a mystery for the authors.
This shows  how hard it is to recognize and follow  global geometric
properties by   manipulating (local) equations/resolutions.
\end{bekezdes}

\chapter{The $T_{*,*,*}$--family}\labelpar{s:zgf'}

\section{\ The series $T_{a,\infty,\infty}$}\labelpar{Taii}\setcounter{equation}{0}
We start our list of  examples with the series associated with
$T_{\infty,\infty,\infty}$, where $T_{\infty,\infty,\infty}$ denotes the germ
$f=xyz$. This germ  was already clarified
either as an arrangement, see \ref{ss:dsmall}(3e),  or by the algorithm of Chapter \ref{s:zf'}, hence
$\partial F=S^1\times S^1\times S^1$.

 Next we  consider the
series $T_{a,\infty,\infty}$ with one--parameter $a\geq 2$ given by $f=z^a+xyz$.
For $g$ we take the generic linear function.

The case $a=2$ can be rewritten as $z^2=x^2y^2$ and its graph $\gc$ is given in \ref{ex:AB}, Case 1.
 The case $a=3$ is treated in \ref{ss:dsmall}(3c), the general  $a>3$ in \ref{ZFG}.
The Main Algorithm  provides for the boundary of the
Milnor fiber $\partial F$ the plumbing graph

\begin{picture}(110,60)(-50,20)
\put(50,40){\circle*{4}} \put(140,40){\circle*{4}}
\put(50,50){\makebox(0,0){$a$}}
\put(140,50){\makebox(0,0){$0$}}

\qbezier(50,40)(95,60)(140,40)
\qbezier(50,40)(95,20)(140,40)
\put(95,20){\makebox(0,0){$\circleddash$}}

 \put(190,40){\circle*{4}}
\put(190,30){\makebox(0,0){$[1]$}}\put(190,50){\makebox(0,0){$a$}}

\put(170,40){\makebox(0,0){$\sim$}}

\end{picture}

\section{\ The series $T_{a,2,\infty}$}\labelpar{Tabi}\setcounter{equation}{0}

Running the Main Algorithm and calculus for the graphs of \ref{a2infty} and \ref{a2double},
we get for $\partial F$  the plumbing graph:

\vspace{2mm}

\begin{picture}(100,70)(-50,0)
\put(50,40){\circle*{4}} \put(140,40){\circle*{4}}
\put(50,50){\makebox(0,0){$a$}}
\put(140,50){\makebox(0,0){$2$}}

\qbezier(50,40)(95,60)(140,40)
\qbezier(50,40)(95,20)(140,40)
\put(95,20){\makebox(0,0){$\circleddash$}}

\end{picture}

\noindent
In fact,  $\partial F$ for $T_{a,b,\infty}$ ($f=x^a+y^b+xyz$) is given by the plumbing graph:

\vspace{2mm}

\begin{picture}(100,70)(-50,0)
\put(50,40){\circle*{4}} \put(140,40){\circle*{4}}
\put(50,50){\makebox(0,0){$a$}}
\put(140,50){\makebox(0,0){$b$}}

\qbezier(50,40)(95,60)(140,40)
\qbezier(50,40)(95,20)(140,40)
\put(95,20){\makebox(0,0){$\circleddash$}}

\end{picture}

 By a different geometric argument this was verified
by the first author's student in \cite{Ba}.
For $a=b=3$ it follows  from \ref{ex:dminusz2} or \ref{ss:dsmall}(3b).

\chapter{Germs $f$ of type $\tilde{f}(x^ay^b,z)$. Suspensions}\label{s:fAB}
In this chapter we again provide an alternative way to identify the boundary of the Milnor fiber
for a special class of germs.\ix{singularities!suspensions}

\section{\ $f$ of type $\tilde{f}(xy,z)$}\labelpar{ss:XY}\setcounter{equation}{0}

First, we sketch  a geometric construction which provides $\partial F$ and its monodromy, provided
that $f(x,y,z)=
\tilde{f}(xy,z)$, where $\tilde{f}$ is an isolated plane curve singularity in variables $(u,z)$.

We will use the following notations: $\tilde{F}=\{\tilde{f}=\eta\}$ is the Milnor fiber
and  $\tilde{\mu}$ the Milnor number of $\tilde{f}$.  Define
$I$ as zero if $u$ is a component of $\tilde{f}$, otherwise $I$ denotes the intersection multiplicity
$(\tilde{f},u)_0$ at the origin. Set $\Delta:=\tilde{F}\cap \{u=0\}$. Then, clearly, $|\Delta|=I$.

Next,
consider the Morse singularity $h:(\bfc^2,0)\to (\bfc,0)$, $h(x,y)=xy$. Then the Milnor fiber $F$ of $f$
can be reproduced via the projection $\bar{\Phi}:(\bfc^3,0)\to (\bfc^2,0)$ given by
$(x,y,z)\mapsto (h(x,y),z)=(u,z)$. Indeed, $\bar{\Phi}$ maps  $F$  onto
$\tilde{F}$.
The fibers of $\bar{\Phi}$ are as follows:  above generic points of $\tilde{F}$ we have the Milnor fiber
$F_h=S^1\times [0,1]$ of $h$, while over the special points $P\in \Delta$ we have the contractible central
fiber $D \vee D$ of $h$ (here $D$ is the real 2--disc).

Hence, $\partial F$ decomposes into two parts, one of them is
$(S^1\sqcup S^1)\times \tilde{F}$, while the other is $\bar{\Phi}^{-1}(\partial \tilde{F})$, which is a locally trivial fiber bundle over $\partial \tilde{F}$ with fiber $F_h=S^1\times [0,1]$.
The monodromy of this bundle is given by the variation map of $\bar{\Phi}$, hence it is the composition of $I$ Dehn twists
(corresponding to the variation maps around the Morse points $P_i$). Therefore, if we define the `double of $\tilde{F}$' as $d\tilde{F}:=
\tilde{F}\sqcup_{\partial \tilde{F}} (-\tilde{F})$, then $\partial F$ is an $S^1$--bundle over
$d\tilde{F}$: two trivial copies of $S^1\times \tilde{F}$ are glued above
$\partial \tilde{F}$ so that the Euler number of the resulting $S^1$--bundle is $I$.

Notice that the genus of $d\tilde{F}$ is exactly $\tilde{\mu}$, hence:

\begin{picture}(20,55)(-130,-25) \put(100,4){\circle*{4}}
\put(100,-6){\makebox(0,0){$[\tilde{\mu}]$}}\put(100,12){\makebox(0,0){$I$}}
\put(-0,4){\makebox(0,0){$\partial F$ {\em is given by the plumbing graph }  \ }}
\end{picture}

In particular, any $S^1$--bundle with arbitrary genus and non--negative Euler number
can be realized as $\partial F$ (and any such bundle might have many realizations by rather different singularities).

It is interesting to note that even if $\tilde{f}$ has many Puiseux pairs (hence its embedded resolution graph has many rupture vertices), $\partial F$ is still Seifert ---, in fact, it is  an $S^1$--bundle.\ix{Seifert manifold}

\begin{example}\label{ex:AXYZ}  Assume that  $f=z^a+xyz$ with $a\geq 2$ as in \ref{Taii}.
 Then $\tilde{F}=\{z^a+uz=\eta\}$ ($0<\eta\ll 1$)
 and $\Delta=\{u=0,\ z^a=\eta\}$. Then   $\tilde{F}$, via the projection $(u,z)\mapsto z$,
is diffeomorphic with the annulus  $A:=\{x: \eta_1\leq |z|\leq \eta_2\}$,
where $\eta_1<\sqrt[a]{\eta}<\eta_2$, and by this identification
its special points from $\Delta$ are
$\cup_{i=1}^aP_i=\{z^a=\eta\}$.  Notice that $d\tilde{F}$ is a torus. Moreover,
the monodromy is the rotation of $A$,
hence it is isotopic to the identity.  In particular, we get (a fact compatible with \ref{Taii}):

\begin{picture}(20,30)(-130,-10) \put(20,4){\circle*{4}}
\put(20,-6){\makebox(0,0){$[1]$}}\put(20,12){\makebox(0,0){$a$}}
\put(-50,4){\makebox(0,0){$\partial F$ {\em is the torus bundle} }}
\put(90,4){\makebox(0,0){{\em with trivial monodromy.} }}
\end{picture}

\end{example}

\begin{bekezdes} More generally, in the situation of \ref{ss:XY},
the monodromy on $\partial F$ is induced by the monodromy of $\tilde{f}$. Hence,
if $P_{\partial F}$ (resp. $P_{\tilde{f}}$) denotes the characteristic polynomial of the algebraic monodromy acting on $H_1(\partial F)$ (resp. $H_1(\tilde{F})$), then
\begin{equation}\label{eq:CHPOL}
P_{\partial F}(t)=\big( P_{\tilde{f}}(t)\big)^2\cdot (t-1)^{1-sign(I)}
\end{equation}
where $sign(I)$ is 1 for $I>0$ and zero for $I=0$.

The above result can be applied for $f=xyz$, $f=z(z^2+xy)$, $f=xy(z^2+xy)$ and
$f=z^a+x^ky^k$. These cases can be compared with \ref{ss:dsmall}(3e), (3c), (4b') or (4c').
Moreover, (\ref{eq:CHPOL})
can also be compared with the formula from Theorem
\ref{th:charpolG} valid for the characteristic polynomial.

\end{bekezdes}
\begin{bekezdes} The above construction provides the structure of the Milnor fiber $F$ as well:
one may  extract from it key information, such as  the  Euler characteristic,
zeta--function of the monodromy, etc.
\end{bekezdes}

\section{\ $f$ of type $\tilde{f}(x^ay^b,z)$}\labelpar{ss:XAYB}
\setcounter{equation}{0}
Assume now that $a$ and $b$ are two positive relative prime integers.
Then, the discussion of \ref{ss:XY} can be modified as follows.

Fix any isolated plane curve singularity $\tilde{f}(u,z)$, and replace $h$ by $u=h(x,y):=x^ay^b$ in order to get $f:=\tilde{f}\circ \bar{\Phi}$. Then all the arguments of \ref{ss:XY} remain valid with the following modification.

The difference is that in the case $h=xy$, $\bar{\Phi}$ restricted on $\partial F$ is a trivial fibration
above any small  neighbourhood of any point of $\Delta$. In the new situation of $h=x^ay^b$,
above a small neighbourhood of a  point $P$ of $\Delta$, $\bar{\Phi}|_{\partial F}$ is {\it not} a
fibration; in fact,  in $\partial F$  two special $S^1$--fibers appear above $P$.
One of them has the same  local neighbourhood (Seifert structure) as $\{x=0\}$ in  $S^3=\{|x|^2+|y|^2=1\}$
with $S^1$--fibers/orbits cut out by the family $x^ay^b=$constant; the other has local behaviour as
$\{y=0\}$ in the same space. These two local Seifert neighbourhoods in $S^3$ are well--understood, they are
guided by the continued fractions of $a/b$, respectively $b/a$. \ix{Hirzebruch--Jung continued fraction}
\ix{Seifert manifold}

Therefore, we get the following result:

\begin{theorem}\labelpar{prop:XAYB}Let $\tilde{f}(u,z)$ be an isolated plane curve
singularity, and $a$ and $b$ two positive relative prime integers. Then $\partial F$ associated with
$f=\tilde{f}(x^ay^b,z)$ is a Seifert 3--manifold whose minimal star--shaped plumbing graph
can be constructed as follows:\ix{Seifert manifold}

\vspace{2mm}

(i) The central vertex has genus $\tilde{\mu}$, the Milnor number of $\tilde{f}$, while its  Euler number is $I$,
where $I$ is zero if $u|\tilde{f}$, otherwise $I=(\tilde{f},u)_0$.

\vspace{2mm}

(ii) Let $a/b=[p_0,p_1,\ldots,p_s]$, respectively $b/a=[q_0,q_1,\ldots,q_t]$ be the
Hirzebruch--Jung continued fraction expansions \ix{Hirzebruch--Jung continued fraction}
 of $a/b$ and $b/a$ with $p_0,q_0\geq 1$, $p_i\geq 2$ for $i\geq 1$ and
$p_j\geq 2$ for $j\geq 1$. Then the graph has $2I$ legs (with all vertices
having genus--decoration zero). $I$ strings have length $s$, the vertices are
decorated by Euler numbers $p_1,\ldots, p_s$
 such that the vertex decorated by $p_s$
is the one glued to the central vertex; while the other $I$ legs are strings decorated by Euler numbers
$q_1,\ldots, q_t$, and the $q_t$--vertex is the one glued to the central vertex.

If $b=1$ then the first set of $I$ legs is empty, if $a=1$ then the second set of $I$ legs is  empty.
(If $a=b=1$ then we recover \ref{ss:XY}, in which case the graph has no legs.)

\vspace{2mm}

(iii) The orbifold Euler number of the Seifert 3--manifold is $\frac{I}{ab}\geq 0$.

\vspace{2mm}

(iv) The characteristic polynomial of monodromy action on $H_1(\partial F)$ is
$$P_{\partial F}(t)=\big( P_{\tilde{f}}(t)\big)^2\cdot (t-1)^{1-sign(I)}.$$
\end{theorem}
\begin{proof}
We need to prove only {\it (iii)}.
For this use  the fact that for any $a$ and $b$ one has:
$$\frac{[p_1,\ldots,p_{s-1}]}{[p_1,\ldots,p_s]}+\frac{[q_1,\ldots,q_{t-1}]}{[q_1,\ldots, q_t]}=1-\frac{1}{ab}.$$
This and  (\ref{eq:eorb1}) show that $e^{orb}(-\partial F)=-I/ab$. Then use (\ref{eq:eorb2}).
\end{proof}
\ix{Seifert manifold}
\ix{orbifold Euler number}

\begin{example}\label{ex:MPW}
If we take $\tilde{f}=z^n+u^d$, then $f=z^n+x^{da}y^{db}$, $\tilde{\mu}=(n-1)(d-1)$ and $I=n$.
In this way we recover the main result of \cite{MPW}.

See also \ref{ex:347b} for an explicit example determined via $\gc$.

The reader can also verify that the graphs $\gc$ from \ref{ss:ds} produce compatible answers
via the Main Algorithm. The next remark emphasizes certain  advantages of the Main Algorithm.
\end{example}

\begin{remark}\label{re:AB}
Let us assume that $f=x^{2n}y^{2m}+z^2$
as in \ref{ex:AB}, Case 1. Set $d:={\rm gcd}(m,n)$.
Since $\G_\C$ is unicolored, \ix{graph!unicolored}
the boundary $\partial F$ and its characteristic polynomial can be determined in  two ways, either by
\ref{th:charpolG}, or by the above topological theorem \ref{prop:XAYB}.
Both theorems provide for the characteristic polynomial
$$P_{\partial F}(t)=(P_{\tilde{f}}(t))^2=\frac{(t^{2d}-1)^2(t-1)^2}{(t^2-1)^2}.$$
Nevertheless, the Main Algorithm  also gives that $\#^2_1M_{\Phi,ver}=1$ provided that $g$
is the generic linear form,
and also the following data about the graph $G$:
$$\corank A_G=c(G)=1, \ \ \mbox{and} \ \ 2g(G)=4(d-1).$$
This data provides not only $\rank H_1(\partial F)=2g(G)+c(G)+\corank A_G=4d-2$,  but also the ranks of its
 `weight' decomposition.

Note that in this case, the mixed Hodge structure on $H^1(\tilde{F})$ (where $\tilde{F}$ is the Milnor fiber of
$\tilde{f}$ as above) has two weights, the 1--dimensional eigenspace $H^1(\tilde{F})_1$ has weight 2, while
 $H^1(\tilde{F})_{\lambda\not=1}$ has weight 1.\ix{mixed Hodge structure!weight filtration}

 On the other hand, the Conjecture (\ref{eq:mhs2}) regarding the weight decomposition
  of  $H^1(\partial F)$  predicts  a similar decomposition compatible with the eigenspace
  decomposition of the monodromy: in case of eigenvalue 1 weights 0 and 2 both survive in rank one,
  while $H^1(\partial F)_{\lambda\not=1}$ has weight 1 and rank $4d-4=2g(G)$.

\vspace{2mm}

Similar study can be done for all the examples of \ref{ss:ds}.

\end{remark}

\begin{remark}\label{re:212v}
In Theorem \ref{prop:XAYB}, if the total number of legs is
less than two and $\tilde{\mu}=0$, then we get a lens space.
In particular the graph (a posteriori) will have no central vertex.

For example, take $z^2=xy^2$. Then $\tilde{f}= z^2-u$, hence $I=2$ and $\tilde{\mu}=0$. Thus we have to glue to a vertex (with Euler number 2) two other vertices, both decorated by 2. Hence, $\partial F$ is the lens space $L(4,1)$, which can also be represented by a unique vertex decorated by $-4$,  cf.  \ref{221b}.

\end{remark}

{\part{What next?}

\chapter{Peculiar structures on $\partial F$. Topics for future research}\labelpar{ss:PECUL}

In this chapter we list some topics that are closely related with the oriented
3--manifold $\partial  F$, and are natural extensions  of the present work.
With this we plan to generate some research in this direction.

We  omit definitions and  do not strive for a comprehensive treatment of the subjects
involved. 
We simply  wish to arouse interest and generate further research by
pointing  out some new phenomena  generated by the `new'
manifold $\partial F$ exhibited.

\section{\ Contact structures}\labelpar{CONTACT}\setcounter{equation}{0}
 Recently, there is an intense activity in the
theory of {\it contact structures of 3--manifolds}, see e.g. \cite{OS}.
\ix{contact structure}\ix{Stein fillings}\ix{contact structure!tight}
From the point of view of complex geometry,
 a central place is occupied by links of normal surface singularities.
 This targets   the classification of their (tight)  contact structures and the
classification of the corresponding {\it Stein fillings}. In the case of normal/isolated
complex surface singularities,
the analytic structure of the singularity induces a {\it canonical } contact structure on the link.
Moreover, all the {\it Milnor open book decompositions}
(that is,  open book decompositions associated with analytic map--germs)
support (in the sense of Giroux, cf. \cite{Gi})\ix{Giroux}
 exactly this canonical contact structure,  for details, see \cite{CNP}.
 In fact, \cite{CNP} also proves that
this canonical contact structure can be recovered from the topology of the link, and can be topologically
identified among all the contact structures.
Any resolution of the singularity (with perturbed  analytic structure)
 appears as a natural Stein filling of the canonical  contact structure. Furthermore,
if the singularity is smoothable, then all the Milnor fibers (smoothings) appear as natural Stein fillings
of this contact structure.

One may ask the validity of similar properties  in the present situation, that is,
 for $\partial F$, where $F$ is the Milnor fiber
  associated with a non-isolated singularity $f:(\bfc^3,0)\to (\bfc,0)$.

Although, in this case,
the link is not smooth, hence from the point of view of the theory of 3--manifolds it is not interesting,
we can concentrate on the boundary of the Milnor fiber.  As it was emphasized in several places in the body
of this book, this class of 3--manifolds grows out from the class of
 negative definite plumbed 3--manifolds \ix{graph!negative definite}
(although we do not know a precise characterization of it).

For any such $\partial F$, we can ask about the classification of its (tight)
contact structures. Moreover, the Milnor fiber, as a Stein manifold with boundary, induces a contact
structure on $\partial F$. A crucial question is to  characterize and identify
this structure  among all the contact structures. Note that all the open book
decompositions of $\partial F$ cut out by  germs $g$ considered in this article
(namely when the pair $(f,g)$ is an ICIS) support the very same contact
structure induced by $F$, cf. \cite{CC}. Hence this structure too should have  some universal property.

The point we wish to emphasize is that
the present new geometric situation (considering non--isolated $f$ instead of isolated singularities)
introduces a {\it new set of contact structures} together with a new set of {\em  Stein fillings}
 realizable by singularity theory. Their classification is of major interest.

 For example, consider
  a 3--manifold which can appear  as $\partial F$ for some non--isolated $f$ as in the present  work,
 and also as a singularity link  (that is,  it can be represented by a connected
 negative definite plumbing graph). In such a case,
  the contact structure induced by the Milnor fiber $F$ of $f$, let us call it
 {\it Milnor fiber contact structure},
 and the canonical contact structure (as singularity link) are, in general, not contactomorphic.

 The simplest proof of this statement is by comparison of  the Chern classes of the corresponding
 structures in $H_1(\partial F,\Z)=H^2(\partial F,\Z)$, a fact already noticed in \cite{CC}.
 The Chern class of the Milnor fiber contact structure is zero,
 since  $F$ is   parallelizable. On the other hand, the Chern class of the canonical contact structure is the
 class of the canonical cycle  in $H_1(\partial F,\Z)$ (that is, the restriction of
 the canonical class of a resolution to its boundary). In particular, the Chern class of the
 restriction of the canonical class  to $\partial F$  is zero
 if and only if the singularity/link is numerically Gorenstein
 (see e.g. \cite{Five} for the terminology). Therefore, if $\partial F$
 can be realized by a negative definite graph which is not numerically
  Gorenstein, then the two contact structures are different. This is
  happening in the case of all lens spaces which are not $A_n$--singularity links,
 and also in \ref{bek:classifi2}, cases (e)-(f).
\ix{Chern class} \ix{numerically Gorenstein} \ix{canonical class}\ix{contact structure}

\section{\ Triple product. Resonance varieties}\labelpar{ss:TRP}\setcounter{equation}{0}

 \bekezdes  {\bf Triple product.} \ For any oriented 3--manifold $Y$, the {\it cohomology  ring}
$H^*(Y,\Z)$ of $Y$ carries rather subtle information. This can be reformulated in the {\em triple product},
induced by the cup product $\mu_Y\in \Lambda^3H'$ given by $\mu_Y(a,b,c)=\langle a\cup b\cup c,[Y]\rangle$
for $a,b,c\in H^1(Y,\Z)$ (and $H'$ is the dual of $H^1(Y,\Z)$).
\ix{triple product}

Sullivan in \cite{SU} proved that for any pair $(H,\mu)$ (where $H$ is a finitely generated free abelian group, and $\mu\in \Lambda^3H'$), there exists a 3--manifold $Y$ with $H^1(Y)=H$ and $\mu_Y=\mu$.
Moreover, for any singularity link, $\mu$ is trivial. In fact, by the proof of Sullivan,
if $Y$ can be represented by a plumbing graph with non--degenerate intersection form, then
the cup--product $H^1(Y)\otimes H^1(Y)\to H^2(Y)$ is trivial.
\ix{Sullivan} \ix{Mark}

In our situation, however, $Y=\partial F$ might have non-trivial $\mu_Y$; see e.g.
the example of $S^1\times S^1\times S^1$ realized by $f=xyz$. It would be very interesting to determine
the triple product for all the 3--manifolds appearing as $\partial F$, and connect it with other singularity invariants.
The same project can be formulated for the related numerical invariant introduced by T. Mark in \cite{Mark}.

For partial results see \ref{cor:EIG} and \ref{cor:EIGGG}. For results regarding the  ring structure
 for Seifert and graph 3--manifolds, see  e.g.  \cite{Aa}.

\bekezdes {\bf Resonance varieties.}
 The $d$--th {\em resonance variety} of a space $X$ is the set
${\mathcal R}_d(X)$ of cohomology classes $\lambda\in H^1(X,\bfc)$ for which there is a subspace
$W\subset H^1(X,\bfc)$, of dimension $d+1$, such that $\lambda \cup W=0$. The resonance varieties of arrangements were introduced by Falk in \cite{FALK}, and since then they play a central  role in several parts of mathematics.
\ix{resonance variety}\ix{Falk}

Since the cup--product on $H^1(\partial F)$ might be non--trivial, and rather subtle,
it would be of major interest to determine the resonance varieties of $\partial F$ and connect them
with other singularity invariants.

\section{\ Relations with the homology of the Milnor fiber}\labelpar{ss:MILNOR}\setcounter{equation}{0}
\bekezdes
From the cohomology long exact sequence of the pair $(F,\partial F)$, one gets a monomorphism
$$H^1(F,\Z)\hookrightarrow H^1(\partial F,\Z),$$
which is compatible with the monodromy action. In particular, we get an upper bound
 for the first Betti number  of $F$: $\rank H_1(F)\leq \rank\,H^1(\partial F)$.
In fact, the characteristic polynomial of the monodromy acting on
$H^1(F)$ divides the characteristic polynomial of $H^1(\partial F)$, which is expressed
 combinatorially  in  \ref{ss:arrchar}.

This is important, since, in general, the behaviour of  $\rank H_1(F)$ can be rather involved, mysterious.
Even in the case of arrangements (that is, in the `simplest homogeneous case'), it is not known whether
$\rank H_1(F)$ can  be deduced from the combinatorics of the arrangement  or not;
see for example \cite{BDS,S3,LIB} and the references therein.
Note that by our algorithm, $\partial F$ is deduced from the combinatorics of
the arrangement, hence we get a combinatorial upper bound for $H_1(F)$ as well.

In fact, one can determine an even  better bound. Notice that $a\cup b\cup c=0$ for any
$a,b,c\in H^1(F)$. Therefore, $H^1(F)$ should be injected  in such a subspace of $H^1(\partial F)$
on which the restriction of the triple product vanishes.

For homogeneous singularities, \ref{s:MHS2} combined with known facts regarding the
mixed Hodge structure of $H^1(F)$ might produce even stronger restrictions.
Recall that in the homogeneous case,  $H^1(F)_{\lambda\not=1}$ is pure of weight 1, while
$H^1(F)_{\lambda=1}$ is pure of weight 2.\ix{mixed Hodge structure!weight filtration}

\begin{bekezdes}  From a different point of view, more in the spirit of \ref{CONTACT},
the Milnor fiber $F$ is a Stein filling of $\partial F$. One of the most
intriguing  questions is if  this Milnor fiber can   be characterized universally from
$\partial F$ (eventually under some restrictions on $\partial F$).



The Milnor fiber also might help in  the classification of the possible boundaries as well.
 For example, we have the following statements:
\end{bekezdes}

\begin{proposition} Let $f:(\bfc^3,0)\to (\bfc,0)$ be a hypersurface singularity with Milnor fiber $F$.

\vspace{1mm}

(a) If $F$ is a rational ball (that is, $\widetilde{H}^*(F,{\mathbb Q})=0$)
then $f$ is smooth.

(b)  If the boundary $\partial F$ of the Milnor fiber is $S^3$ then $f$ is
smooth.
\end{proposition}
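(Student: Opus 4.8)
The plan is to prove (b) as a consequence of (a), and to prove (a) by analyzing the topology of the Milnor fiber of a (possibly non-isolated) hypersurface germ. First I would observe that if $\Sigma=\{0\}$, i.e. $f$ is an isolated singularity, then $F$ has the homotopy type of a bouquet of $\mu$ two-spheres (Milnor), where $\mu$ is the Milnor number, so $H_2(F,\Q)\cong\Q^\mu$; this is a rational ball only when $\mu=0$, which forces $f$ to be smooth. Hence the whole content of (a) is to handle the case $\dim\Sigma=1$: I would show that in this case $F$ can \emph{never} be a rational homology ball, so the hypothesis in (a) already implies $\Sigma=\{0\}$, and then the isolated case finishes the argument.

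For the non-isolated case the key is to exhibit nontrivial rational homology in $F$. There are several routes, all using machinery already recalled in the excerpt. The cleanest is via the transversal singularity: choose a germ $g$ so that $(f,g)$ is an ICIS (\ref{ss:ICIS}), fix an irreducible component $\Sigma_j$ of $\Sigma$ with transversal type singularity $T\Sigma_j$ and transversal Milnor fiber $F'_j$. Since $T\Sigma_j$ is a \emph{plane curve} singularity that is genuinely singular, its Milnor number $\mu'_j=\operatorname{rank}H_1(F'_j)$ is strictly positive, so $F'_j$ is not contractible. Using the identification of $F_{\epsilon,\delta}$ with $\Phi^{-1}(D_\delta)$ (\ref{rem:phif}) and the fibre-bundle description of $\partial_{2,j}F$ over $L_j$ with fibre $F'_j$ (\ref{prop:sier}), together with the decomposition $F\cap V_g\hookrightarrow F$ and the structure of $\partial_{2,j}F$ as a mapping torus, one sees that the nonzero homology of $F'_j$ must contribute to $H_*(F,\Q)$: concretely, $\chi(F)$ can be computed additively from $\gc$ (as in the A'Campo-type computations of Chapter VII, e.g. the formula for $D(F_\Phi)$ in the proof of \ref{th:char}), and the contribution $\chi(C_w^{reg})\cdot m_w\nu_w$ of vertices of $\Gamma^2_{\C,j}$ shows that $\chi(F)$ genuinely differs from $1$ once $\Sigma\neq\{0\}$. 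Thus $b_*(F)\neq b_*(\mathrm{pt})$, contradicting the rational ball hypothesis. An alternative, perhaps more transparent argument: the monodromy on $\partial F$ restricted to $\partial_2F$ is, in general, nontrivial and, by Siersma's decomposition, $\partial_2F$ itself has nonvanishing $H_1$ (it fibers over $L_j\simeq S^1$ with fibre $F'_j$ whose $H_1$ is nonzero), which via the exact sequence of the pair $(F,\partial F)$ forces $H^1(F,\Q)$ or $H_2(F,\Q)$ to be nonzero.

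Given (a), statement (b) is immediate: if $\partial F\cong S^3$ then $F$ is a compact oriented $4$-manifold with boundary $S^3$; capping off with a $4$-ball produces a closed oriented $4$-manifold $\widehat F$ with $H_*(\widehat F,\Q)$ determined by $H_*(F,\Q)$, but more directly one notes that $H_1(S^3)=H_2(S^3)=0$ and the long exact sequence of $(F,\partial F)$ together with Lefschetz duality $H_k(F,\partial F)\cong H^{4-k}(F)$ forces $H_1(F,\Q)=H_2(F,\Q)=0$ (the boundary contributes nothing and $F$ is connected simply by the Milnor fibration set-up), i.e. $F$ is a rational homology ball; then (a) gives that $f$ is smooth. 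The main obstacle I expect is making the non-isolated case of (a) genuinely clean: one must rule out accidental cancellations in $\chi(F)$ and in the homology, so the safest implementation is to argue directly with the transversal Milnor fibre $F'_j$ and the bundle $\partial_{2,j}F\to L_j$, showing $b_1(\partial_2 F)>0$ and then feeding this into the exact sequence of $(F,\partial F)$ rather than relying on an Euler characteristic count alone. This transversal-type argument is short, self-contained, and uses only \ref{ss:2.0b}, \ref{prop:sier} and \ref{felbo}, all available above.
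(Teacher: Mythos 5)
There are two genuine gaps, one in each part, and in both cases the paper's actual proof is forced to use a theorem you try to replace by elementary homological arguments that do not suffice.

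In part (a), your treatment of the non-isolated case does not close. The statement you need is that $F$ is never a rational homology ball when $\dim\Sigma=1$, but neither of your two routes establishes it. The Euler-characteristic route you already concede may suffer cancellations. The fallback route --- show $b_1(\partial_{2,j}F)>0$ and "feed this into the exact sequence of $(F,\partial F)$" --- fails because that exact sequence sees $H_*(\partial F;\Q)$, not $H_*(\partial_2F;\Q)$, and the class of the fibre direction $L_j$ in $H_1(\partial_{2,j}F;\Q)$ can (and often does) die in $H_1(\partial F;\Q)$: the paper contains many non-isolated examples where $\partial F$ is a rational homology sphere, e.g.\ (\ref{ex:C4}) and the rational cuspidal curves of (\ref{ss:ratS}). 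Since a rational ball $F$ forces (by Lefschetz duality) only that $\partial F$ is a rational homology sphere, exhibiting homology in $\partial_2F$ proves nothing. The paper's proof of (a) is a one-line application of A'Campo's theorem \cite{AC2}: if $f\in\mathfrak{m}^2$ then the Lefschetz number of the monodromy acting on $H_*(F)$ is $0$, whereas for a rational ball it would be $1$. This handles the isolated and non-isolated cases uniformly and avoids all of the above.

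In part (b), the deduction "$\partial F\cong S^3$ $\Rightarrow$ $F$ is a rational ball" is false as a purely homological claim. The long exact sequence of $(F,\partial F)$ with Lefschetz duality gives $H_1(F;\Q)=0$ (using $H^3(F)=0$), but for $H_2$ it only gives that the natural map $H_2(F;\Q)\to H_2(F,\partial F;\Q)\cong H^2(F;\Q)$ is an isomorphism, i.e.\ that the intersection form on $H_2(F;\Q)$ is nondegenerate --- not that $H_2(F;\Q)=0$. (Compare $\overline{\bp^2}$ minus an open ball: boundary $S^3$, yet $H_2\cong\Z$.) This is exactly why the paper invokes Eliashberg's theorem \cite{El} that the only Stein filling of $S^3$ is the $4$--ball: since $F$ is a Stein filling of $\partial F$, the hypothesis $\partial F\cong S^3$ forces $F$ to be the ball, and then (a) applies. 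Without some such input beyond homology, (b) does not follow from (a).
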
 \begin{proof}
(a) follows from A'Campo's theorem \cite{AC2}, which says that for $f$ non--smooth
 the Lefschetz number of the monodromy is zero. Part (b) follows from (a) and a theorem of
Eliashberg, which says that the {\it only} Stein filling of $S^3$ is the ball \cite{El}.
\ix{A'Campo}
\end{proof}
\ix{sphere!$\partial F$} \ix{Eliashberg}
This result can be compared with the
celebrated theorem of Mumford which states that if the link of a normal surface singularity is
$S^3$ then the germ is smooth, and also with the
famous conjecture of L\^e D\~ung Tr\'ang and M. Oka, which predicts  that
if the {\it link} of the hypersurface germ
$f$ with 1--dimensional singular locus
is homeomorphic to $S^3$, then $V_f$ is an equisingular family of
irreducible plane curves.\ix{equisingularity}
\ix{L\^e} \ix{Oka} \ix{Mond}

\section{\ Open problems}\labelpar{OPEN-PROB}\setcounter{equation}{0}

Some general questions/open problems, or natural tasks for further study:
 \\

\begin{bekezdes}\labelpar{O1}
 Determine/characterize all oriented plumbed  3--manifolds which might appear as
$\partial F$ for some non--isolated hypersurface  singularity $f$.
(Recall that  the classification of  those normal surface
singularity  links which might appear as
hypersurface singularity or complete intersection links is also open.)

\end{bekezdes}

\begin{bekezdes}\labelpar{O6}
 Classify all lens spaces realized as $\partial F$. Classify all Seifert manifolds
realized as $\partial F$.
\end{bekezdes}

\begin{bekezdes}\labelpar{O9}
 Find a $\partial F$ (with $f$ a non--isolated singularity)
which is an integral homology sphere (cf. also  Question 3.21 of \cite{Si3}).

Note that although we have the criteria from \ref{re:SiRa}, the structure of monodromy operators is
so rigid, that simultaneous realizations of those unimodularity properties  of the operators is
seriously obstructed. This open problem might lead to some compatibility conditions connecting these
operators too.
\end{bekezdes}

\bekezdes Consider germs of map  $h:(\bfc^2,0)\to (\bfc^3,0)$, and
let $V_f$ be its image. Connect the invariants
of the present book with the invariants of $h$ as they are treated, for example, in the series of
articles of D. Mond, see e.g. \cite{Mond}.

\bekezdes Similarly, compare the analytic invariants and the more algebraic study of non--isolated
surface singularities, as it is presented for example in \cite{deJong,Pe,vanSt}, with the invariants of the
present book.

\begin{bekezdes}\labelpar{JJ}
Determine the Jordan block--structure of the algebraic monodromy acting on $H_1(\partial F)$.
\end{bekezdes}

\begin{bekezdes}\labelpar{O8}
  Develop the mixed Hodge structure of $H^1(\partial F)$. Prove Conjecture \ref{be:mhs} from
 Chapter  \ref{s:MHS}.  (For the case of homogeneous germs and cylinders, see \ref{s:MHS2} and \ref{cyl:MHS}.)
\end{bekezdes}\ix{mixed Hodge structure!weight filtration}

\begin{bekezdes}\labelpar{O7}
Determine the graph $G^\circ $  for which $c(G^\circ)$ is minimal among all $c(\Gmod)$, where
$\Gmod\sim G$. Has $G^\circ$ got any intrinsic significance? Has $c(G^\circ)$ got any intrinsic significance?
(For example, is  $c(G^\circ)$ independent of $g$ ? In what situations
 is this minimum realized by $\widehat{G}$ ?)
\end{bekezdes}

\begin{bekezdes}\labelpar{O3}
 Determine completely the ring structure  of $H^*(\partial F)$ (that is, the triple product on $H^1(\partial F)$),
 and the resonance varieties, cf. Chapter  \ref{ss:PECUL}.
\end{bekezdes}

\begin{bekezdes}\labelpar{O4}
 Can $H^1(F)$ be determined from $\partial F$ (at least in particular cases, say,
for arrangements)? Or, from $G$ of $\gc$ used here?
Understand the monomorphism from \ref{ss:MILNOR} better.
(Recall, that it is a famous conjecture
for arrangements,  that $\rank H^1(F)$ is determined combinatorially.)
\end{bekezdes}

\begin{bekezdes}\labelpar{O5}
 In the case of arrangements, can the combinatorics of the arrangement be recovered from
$\partial F$? (We conjecture that yes.)
\end{bekezdes}

\begin{bekezdes}\labelpar{O2} \ix{contact structure}
 Classify/characterize  all the `Milnor fiber contact structures' induced on
3--manifolds realized as $\partial F$, cf. \ref{CONTACT}. Are there natural
families for which one can classify all the
Stein fillings of the Milnor fiber (or all the)
contact structures ? Find examples when the Milnor fiber contact
structure (besides the Milnor fiber) has  other Stein fillings as well.
\end{bekezdes}

\begin{bekezdes}\labelpar{O10}
 Compute the Seiberg--Witten invariants, or more generally,
the Heegaard Floer homologies (or generalized versions of the
lattice homologies) for those 3--manifolds $\partial F$
which are rational homology spheres. How are they related to the
signature of $F$?  Is there any analogue of the Seiberg--Witten Invariant Conjecture
of the first author and Nicolaescu, cf.  \cite{NeNi,Ninv}?
\end{bekezdes}
\ix{Seiberg--Witten invariants}\ix{Heegaard Floer homology}
\ix{Nicolaescu}

\begin{bekezdes}\labelpar{O11}
 Clarify the open question  \ref{OP1} (i.e. determine the resolution of
the real analytic variety
$\{(u,v,w)\in (\bfc^3,0)\, :\, u^mv^{m'}w^n=|w|^{k}\}^+$).
\end{bekezdes}

\begin{bekezdes}\labelpar{O12}
 Establish the relationship between the present work (which determines
the boundary $\partial F$) and \cite{eredeti}, which determines the links $K(f,g)_k$ of the
Iomdin series $f+g^k$ ($k\gg 0$). In what sense is $\partial F$ the `limit'
of $\{K(f,g)_k\}_k$ ?\ix{Iomdin series}
\end{bekezdes}

\begin{bekezdes}\labelpar{O13}
 We conjecture that there exists some kind of   rigidity property
 restricting the pairs $(\partial_1F,\partial_2F)$ (cf. \ref{felo})
which form  together  a possible $\partial F$. That is, we expect that
the normalization has some effect on
the possible types of transversal singularities, and vice versa.
\end{bekezdes}

\begin{bekezdes}\labelpar{O14}
Determine $\partial F$ for any quasi--ordinary singularity $f$ in terms of the
characteristic pairs of $f$. For some related homological results see \cite{GPN,KMCE}.
\end{bekezdes}

\begin{bekezdes}\labelpar{O14Sie}
Determine the variation operator $VAR^{III}$ of Siersma \cite{Si,Si3}
in terms of $\G_\C$ in the spirit of the present work.
\end{bekezdes}\ix{Siersma}

\begin{bekezdes}\labelpar{O14tor}
Find a nice formula for the torsion of $H_1(\partial F,\Z)$,  at least for  arrangements.
\end{bekezdes}

\begin{bekezdes}\labelpar{ONewton}
Determine $\partial F$ for weighted homogeneous or Newton non-nondegenerate singularities in terms
of their Newton diagram. For the isolated singularity case, see Oka's algorithm \cite{Oka}.
See also \ref{re:WHG}.
\end{bekezdes}\ix{Oka}

\begin{bekezdes}
Develop the  analytic aspects and study the
analytic invariants related to   $\partial F$; moreover,
analyze  also  its relations with deformation theory (for this last subject
  see for example the thesis of D. van Straten or T. de Jong and their series of articles
 \cite{deJong,deJongvanSt,vanSt}).
\end{bekezdes}

\begin{bekezdes}
Are  the techniques and results of the present book
applicable in the context of  the equisingularity problems of non--isolated germs?
 (For such problems, consult  for example the article of F. de
 Bobadilla \cite{deB2} containing several key conjectures as well).
\end{bekezdes}
\ix{Straten@van Straten}\ix{Jong@de Jong} \ix{Bob@de Bobadilla} \ix{deformation!theory}\ix{equisingularity}

\vspace{2mm}

Questions related mostly to  the technicalities used in the proofs and results:

\vspace{2mm}

\begin{bekezdes}\labelpar{Oa1}
 Develop the `calculus' of decorated graphs such as  $\gc$, cf.
\ref{re:CONJ}.
\end{bekezdes}

\begin{bekezdes}\labelpar{Oa2}
  Analyze the possible relations connecting the decorations of $\gc$.
Provide an independent proof of the fact that the expressions \ref{charpols}(c) are
independent of $g$.
\end{bekezdes}

\begin{bekezdes}\labelpar{Oa3}
 Find closed formulae for $\corank A$ and  $\corank(A,\inc)$.
In what situations can we expect the validity of the relation
 $\corank(A,\inc)_{\widehat{G}}=c(\widehat{G})+|\cala(G)|$ (or similar formula for
 $G^\circ$ instead of $\widehat{G}$) ?
\end{bekezdes}

\begin{bekezdes}\labelpar{Oa4}
  Is it true that $\#^2_1 M_{\Phi,ver}=c(\widehat{G})$ ?
Or for $G^\circ$ instead of $\widehat{G}$ ?
\end{bekezdes}

\begin{bekezdes}\labelpar{Oa5}
  Is the technical lemma \ref{lem:unitw} true in general ?
\end{bekezdes}

\begin{bekezdes}\labelpar{Oa6}
  Discuss the case of {\em all} eigenvalues of the vertical monodromy
$M_{\Phi,ver}$.
\end{bekezdes}


\chapter{List of examples}\label{s:LISTEX}

\
\setlength{\extrarowheight}{1ex}

 \begin{tabular}{lll}
 plane curve singularities  & \hspace{5mm} & \ref{ex:planecurves}, \ \ref{1.6}, \ \\
 cyclic coverings   & & \ref{1.6}, \ \ref{ss:b}, \                                          \\
Hirzebruch--Jung singularities &&  \ref{1.6}, \                                  \\
Brieskorn singularities && \ref{8.7}, \                                         \\

Seifert 3--manifolds && \ref{bek:SEIFERT}, \ \ref{ex:347b}, \ \ref{ss:dsmall}, \ref{ss:unicusp} \\
 &&  \ref{re:SEIFERT}                                             \\
homogeneous singularities && \ref{homgen}, \ \ref{ex:ACirr},  \ \ref{ex:homch}, \ \ref{re:c}, \\
 && \ \ref  {bek:bounD}, \ \ref{re:REMARK}, \ Ch. \ref{s:HOMOGEN}, \
                               \\
homogeneous with small degree &&  \ref{ss:dsmall} \\
rational unicuspidal curve,  one Puiseux pair && \ref{ss:unicusp} \\
cylinders of  plane curve singularities && \ref{cyl}, \   \ref{re:c}, \  \ref{bek:bounD}, \  \ref{re:REMARK}\\
 && \ref{ex:cylver}, \  Ch. \ref{s:cyl}       \\
type $f=zf'(x,y)$ &&  \ref{ss:PROD}, \   Ch. \ref{s:zf'}                                \\
suspensions $f=f'(x,y)+z^k$  && \ref{ss:ds}, \                                     \\
Key Example && \ref{keyex} \\
$(\{x^2+y^7-z^{14}=0\},0)$, $f_1=z^2$, $f_2=z^2-y$ & & \ref{3.14}, \ \ref{5.9}, \\
$(\{x^2+y^7-z^{14}=0\},0)$, && \\ \hspace{2cm}
$f_1=z^2+y^2$, $f_2=z^2-y+y^2$ && \ref{3.17}, \  \ref{5.10}, \ \ref{rem:HOM}, \ \\
$(\{z^2+(x^2-y^3)(x^3-y^2)=0\},0)$,  && \\ \hspace{2cm} $f_1=x^2$, $f_2=x^2-y^3$ && \ref{3.16},  \   \\

\end{tabular}

 \begin{tabular}{lll}

 $(\{z^2+(x^2-y^3)(x^3-y^2)=0\},0)$,  && \\
\hspace{2cm} $f_1=x^2+y^4$, $f_2=x^2-y^3+y^4$ && \ref{3.18}, \  \ref{5.12}, \  \\

$A_3$ arrangement $f=xyz(x-y)(y-z)(z-x)$ && \ref{ex:a3}, \ \ref{ex:A3arrr} \\
generic arrangement && \ref{ex:genarr}, \  \ref{ex:genarrr}, \ \ref{ex:TORTOR} \\
$T_{a,2,\infty}$ family $f=x^a+y^2+xyz$ && \ref{a2double}, \ \ref{xyz3}, \ \ref{xyz5}, \ \ref{Tabi}, \\
&& \ref{ex:charpol3}    \\
$T_{a,\infty,\infty}$ family $f=x^a+xyz$ && \ref{bek:ZFG}, \ \ref{Taii}, \  \ref{ex:AXYZ} \\
$f=\tilde{f}(x^ay^b,z)$ && \ref{ss:XY}, \ \ref{ss:XAYB}   \\

 \end{tabular}

 \begin{tabular}{lll}

$f=x^2y^2+z^2(x+y)$ && \ref{nu2}, \  \ref{rm:dj}, \ \ref{ex:nu2b}, \
\ref{UUUU}, \\ &&   \ref{nu2b},  \ \ref{ex:charpol4}, \                              \\
$f=y^3+(x^2-z^4)^2$ && \ref{ex:ketA2}, \    \ref{ex:ketA2b}, \ \ref{ex:ketA2b'}, \ \ref{ex:charpol2}, \         \\
$f=x^3y^7-z^4$  && \ref{ex:347}, \ \ref{ex:347b'}, \  \ref{ex:347bal}, \
             \ref{ex:347bal2}, \\ &&  \ref{ex:347bal3}, \ \ref{ex:347b}, \  \ref{ex:charpol1}, \
              \ref{ex:MPW}, \                           \\

$f=z^d-xy^{d-1}$ && \ref{ex:dd-1}, \                                              \\
$f=x^2y^2+y^2z^2+z^2x^2-2xyz(x+y+z)$ && \ref{ex:3A2}, \ \ref{ss:dsmall}(4a)                    \\
$f=x^d+y^d+xyz^{d-2}$ && \ref{ex:loop}, \ \ref{ex:dminusz2}, \ \ref{ex:DDD} \\
$f(x,y,z)=f'(x,y)=(x^2-y^3)(y^2-x^3)$ && \ref{ex:cyln},                       \\
$f=z(x^2+y^3)$ && \ref{ex:zf'}, \ \ref{ex:zf'2}, \  \ref{ex:zf'2}, \ref{re:zf'2}(b) \  \\
$f=x^2y+z^2$   && \ref{221}, \ \ref{221b},  \ \ref{ex:charpol4}, \  \ref{re:212v}  \\
$f=x^ay^b+z^2$  && \ref{ex:AB}, \ \ref{re:AB}, \ \ref{prop:XAYB}, \ \ref{re:AB}                         \\
$f=x^{md}y^{nd}+z^d$  && \ref{mdndd}, \  \ref{ex:MPW}\\
$f=x^ay(x^2+y^3)+z^2$  && \ref{AnPi}, \ \ref{AnPi2} \\

$f=x^5+y^2+xyz$ && \ref{xyz5}, \ \ref{UUUU}, \    \ref{ex:charpol3}                                  \\
$f=x^3+y^2+xyz$ && \ref{xyz3}, \ \ref{ex:Uu}, \ \ref{a2double}, \ \ref{ex:d1d2}, \\ &&  \ref{UUUU}, \
          \ref{ex:charpol3}, \ \ref{re:UUU}                                     \\
$f=z(xy-z^2)$ && \ref{ss:dsmall}(3c), \ \ref{re:MHSnot},    \\
$f=xy(xy+z^2)$ && \ref{ex:MHSnot}, \ \ref{ex:MILFIB} \\
$f=z(x^4+y^4)$ && \ref{ex:MHSnot}, \ \ref{ex:MILFIB} \\
$f=x^d+y^d$   && \ref{ex:fan}   \\
$f=z(x^{d-1}+y^{d-1})$ && \ref{ex:almostfan}, \ \ref{ex:ARA} \\
$f=z(x^a+y^b)$ && \ref{re:SEIFERT}
 \end{tabular}

\chapter{List of notations}\label{s:LISTNOT}

\
\setlength{\extrarowheight}{1ex}

\noindent\begin{tabular}{p{2.5cm}>{\small}p{7cm}>{\hspace{1mm}}p{2.5cm}}
{\bf Symbol:}& {\bf Description:} & {\hspace{-1mm}\bf Appears in:}\\
 & & \\
$(a,b)$ & ${\rm gcd}(a,b)$ & \\
$\chi$ & Euler characteristic of a space & \\
$[k_1,k_2,\ldots,k_s]$ & continued fraction & \ref{re:HJCF}\\
$\circleddash$, +, - &  edge decorations of a plumbing graph & \ref{bek:pl}\\
$\#^k_\lambda$ &  number of Jordan blocks of size $k$ with eigenvalue
$\lambda$ & \ref{geneig} \\
$\#$, $\#(f)$ & number of local irreducible components of plane curve singularity &
\ref{ex:planecurves}, \ref{ss:cylcyl}\\
$\#T\Sigma_j$ &number of local irreducible components of $T\Sigma_j$ & \ref{ss:2.0b}\\
 & & \\
$A$ & intersection matrix & \ref{def:incidence}\\
$(A,\inc)$ & block matrix & \ref{def:incidence}\\
$(\alpha_\ell,\omega_\ell)_\ell$ & Seifert invariants &  \ref{bek:SEIFERT}\\
${\cala}(\G)$, ${\cala}$ &  the set of arrowhead vertices of a graph $\Gamma$ &  \ref{links}\\
$\arg$ & $\arg(g)=g/|g|$ & \ref{be:fibr}\\
$\arg_*$ & induced in homology by $\arg$  & \ref{3.9}\\
 & & \\
$B_\epsilon$ & closed ball of $\epsilon$ radius in $\bfc^n$  &  \ref{ss:2.0a}  \\
& & \\
$c(\G)$ &  the number of independent cycles of a graph $\G$ & \ref{bek:RESCAL} \\
$C_\Phi$ &  critical locus of an ICIS $\Phi$ & \ref{ex:ICIS} \\
\end{tabular}

\noindent\begin{tabular}{p{2.5cm}>{\small}p{7cm}>{\hspace{1mm}}p{2.5cm}}${\C}$  &  the `special curve arrangement' in $\de$ & \ref{gc}, \ref{zart2}\\
 ${\csj}$, ${\csi}$  &   collections of certain irreducible components of ~$\C$ & \ref{def:csj}\\
$C=\cup_{\lambda\in\Lambda}C_\lambda$ & projective curve & \ref{homgen}\\
$(C_{j,i},p_j)_{i\in I_j}$ & local analytic irreducible components of $(C,p_j)$ & \ref{homgen}\\
$c$ & identification map $c:\cup_jI_j\to \Lambda$ & \ref{homgen}\\
& & \\
$d_\lambda$ & degree of $C_\lambda$, $\lambda\in\Lambda$ & \ref{homgen}\\
$Div(H;M)$ & the divisor of a characteristic polynomial & \ref{ss:chars}\\
$D_r$  & complex disk of radius $r$  &  \ref{ss:2.0a}    \\
$D_r^2$, $D^2_\eta$  & bidisc  &  \ref{ss:2.0a}, \ref{ss:ICIS}  \\
${\de}$  & total transform of $V_{fg}$ in an embedded resolution & \ref{gc}\\
 ${\dec}$, ${\ded}$,  ${\deo}$   &  collections of certain components  of ${\de}$& \ref{gc} \\
$\delta(f)$ & Serre--invariant (or delta--invariant) of $f$& \ref{ex:planecurves}\\
$\partial $ & homological operator & \ref{COMPAR}\\
$\delta_j'$ & Serre--invariant of  $T\Sigma_j$ & \ref{ss:2.0b} \\
$d_j$ & covering degree of $g|_{\Sigma_j}$ & \ref{bek:dj}\\
$d(e)$ & invariant  of a cutting edge &  \ref{bek:ce1}\\
$\Delta_\Phi$, $\Delta_j$ & discriminant of an ICIS $\Phi$; its irreducible components & \ref{ex:ICIS} \\
   ${\bd}$  & covering data, \ {\it see } $(\bn,\bd)$ too& \ref{def:2.3.2}\\
   $\delta_w$ & the number of vertices adjacent to a vertex $w$ & \ref{rem:HOM}\\
$\partial F$, $\partial F_{\epsilon,\delta}$ & boundary of the Milnor fiber & \ref{intro}, \ref{ss:2.0a}\\
$\partial _1F$, $\partial _2F$ & decomposition  subsets of $\partial F$ & \ref{intro}, \ref{ss:2.1} \\
$\partial _{2,j}F$  & connected components of $\partial _2F$ & \ref{ss:2.1} \\
$\overline{\partial _{2,j}F}$ & canonical closure of $\partial _{2,j}F$ & \ref{rem:closure}\\
$\partial' \Phi^{-1}(D_\delta)$ & subset of $\partial F_{\epsilon,\delta}$ & \ref{rem:phif}, \ref{felbo}\\
& & \\
 $\epsilon_e$ & edge decoration & \ref{bek:MULT}\\
$e_w$ & Euler number decoration  of  $E_w$ & \ref{bek:MULT}, \ref{resgraph}\\
$e^{orb} $ & orbifold Euler number & \ref{bek:SEIFERT}\\
${\evg}$,  ${\cale}$ & the set of edges of a graph $\Gamma$ &  \ref{bek:pl}\\
$\cale_{cut}$ & the set of cutting edges of $\Gamma$ & \ref{ss:parcut}\\

$\cale_\calw(\G)$ &  the set of edges connecting non--arrowhead vertices & \ref{cale_calw}\\
\end{tabular}

\noindent\begin{tabular}{p{2.5cm}>{\small}p{7cm}>{\hspace{1mm}}p{2.5cm}}
$F$, $F_{\epsilon,\delta}$  & Milnor fiber of $f$ & \ref{ss:2.0a}\\
$F_\Phi$, ${\fcd}$  & Milnor fiber of an ICIS $\Phi$ &  \ref{milnorfibre}\\
$F_j'$ & Milnor fiber of transversal type singularity $T\Sigma_j$ & \ref{ss:2.0b} \\
${\wfp}$  & lifted Milnor fiber & Ch. \ref{ss:1.3}\\
${\wfv}$,  ${\wfw}$  & subsets of ${\wfp}$ near curves $\C_v$ or $\C_w$ & Ch. \ref{ss:1.3}\\
${\wfW}$  & collection of subsets ${\wfw}$ & \ref{diagram}\\
$\Phi$ & good representative of an ICIS & \ref{gc} \\
${\wP}$  & a lift of $\Phi$ & \ref{gc}\\
$\varphi_j$ & Fibonacci numbers & \ref{bek:classifi}\\
& & \\
$g_w$ & genus of  $E_w$ & \ref{resgraph}\\
$d_\lambda$ & degree of $C_\lambda$, $\lambda\in\Lambda$ & \ref{homgen}\\
$g(\G)$ & the sum of all genera in a graph  $\G$ & \ref{bek:RESCAL}\\
$\G_1+\G_2$ & disjoint union of graphs & \ref{bek:PlCon} \\
$\gc$ & the dual graph of $\C$ & \ref{gc}, \ref{summary}\\
${\gce}$, ${\gck}$ & `complementary' subgraphs of $\gc$ &  \ref{ss:parcut}\\
${\gj}$   & connected components of ${\gck}$ & \ref{def:csj}\\
$\gj/\sim$ & a `base graph' derived from ${\gj}$   & \ref{back}\\
${\gbk}$  & a subgraph of $\gc$ having vanishing 2-edges only & \ref{bad:graph}\\
$\G(X)$ & resolution graph of  a normal surface singularity
$X$ & \ref{1.4}, \ref{ss:a}\\
$\G(X,f)$ & embedded resolution graph of $X$ and a germ $f$ & \ref{resgraph} \ref{ss:NSS}\\
$\widehat{\gc}$& an undecorated  graph obtained from $\gc$ & \ref{algoim}\\
$\G(f',g')$ & shorthand for $\Gamma(\bfc^2,f'g')$ & \ref{ss:ds} \\
$G$, $G_1$, $G_2$ & graphs derived from $\gc$, $\gce$, $\gck$ using the  Main Algorithm & \ref{def:GGm} \\
$G_{2,j}$ & components of the graph $G_2$; related to $\G^2_{\C,j}$ & \ref{2} \\
$\overline{G_{2,j}}$ & canonical closure of $G_{2,j}$ & \ref{rem:closure} \\
$\Gmod$, $\Gemod$,  $\Gkjmod$ & graphs obtained from $G$, $G_1$, $G_2$ using plumbing calculus & \ref{def:GGm} \\
$G(K_\ell)$ & subgraphs of ${\gj}$ & \ref{bek:G(K)}\\
$\gcce$ & an embedded resolution graph derived from ${\gce}$ & \ref{bek:G1C}\\
$G(T\Sigma_j)$ & embedded resolution graph of $T\Sigma_j$ & \ref{back}\\
$G(X,f)$ & universal cyclic covering graph of $\G(X,f)$ &  \ref{ss:a}\\
$\widehat{G}$ & output graph of Collapsing Main Algorithm & \ref{algoim}\\
$\widehat{G_1}$, $\widehat{G_2}$ & graphs associated with $\partial_1 F$ and $\partial_2F$; outputs of Collapsing Main Algorithm & \ref{GHALG} \\
\end{tabular}

\noindent\begin{tabular}{p{2.5cm}>{\small}p{7cm}>{\hspace{1mm}}p{2.5cm}}
${\fedog}$  & equivalence classes of covering graphs of $\Gamma$ associated with covering data $(\bn,\bd)$ & \ref{eq:2.3.3}\\
$\Gamma_1\sim\Gamma_2$ & equivalent graphs & \ref{def:sim}\\
$-\G$ & $\G$ with reversed orientation (Euler and edge decorations) & \ref{bek:-G}\\
$|Gr|$ & topological realization of the graph $Gr$ & \ref{rem:HOM}(3) \\
$\gamma_w$, \ $\gamma_a$ & oriented meridian & \ref{Openbooks}, \ref{bek:zeroarrowhead}\\
$Gr^W_mH$ & $W_mH/W_{m-1}H$ associated with a weight filtration & \ref{ss:MHS}\\
& & \\
 $H_{M,\lambda}$ &     generalized $\lambda$--eigenspace    &  \ref{ss:chars}\\
  & & \\
   $\inc$ & incidence  matrix & \ref{def:incidence}\\
   $I$ & identity matrix & \\
    & & \\
   $K$, $K_X$ & the link of a surface singularity $X$  &  \ref{ss:2.0a}\\
$K_f\subset K_X$ &the link $V_f\cap K_X\subset  K_X$ of a germ $f$ defined on $X$ & \ref{3.6}\\
$K^{norm}$ & link of the normalization $V_f^{norm}$ & \ref{d1}   \\
& & \\
$L$, $L_j$ & singular part of the link $K$; its components & \ref{ss:2.0b}\\
 $\Lambda(m;n,\nu)$    &   a special divisor associated with $(m;n,\nu)$    &  \ref{ss:chars} \\
 $\cup_{\lambda\in\Lambda}L_\lambda$ & line arrangement  & \ref{arrang}\\
& & \\
$M(\G)$, $M$  & oriented plumbed 3-manifold associated with $\G$ & Ch. \ref{s:PLU}\\
$-M$ & the oriented manifold $M$ with reversed orientation & \ref{LE}\\
$M_1\#M_2 $ & oriented connected sum of $M_1$ and $M_2$ & \ref{bek:PlCon}\\
$m_{geom,\Phi}$ & geometric monodromy representation of an ICIS $\Phi$& \ref{milnorfibre} \\
$M_{\Phi}$  & algebraic monodromy  representation of an ICIS $\Phi$& \ref{milnorfibre} \\
$M_q$, \ $M$ & a monodromy operator & \ref{re:SiRa}, \ref{rem:HOM}, Ch. \ref{s:vh}\\
$m_{j,hor}'$, $m_{j,ver}'$  & horizontal, vertical geometric monodromies of $T\Sigma_j$& \ref{ss:2.0b}\\
$M_{j,hor}'$, $M_{j,ver}'$  & horizontal, vertical algebraic monodromies of $T\Sigma_j$& \ref{ss:MO}\\
 $m_{\Phi,hor}$, $m_{\Phi,ver}$  & horizontal, vertical geometric monodromies of $(\Phi,\Delta_1)$& \ref{hvmonodromy}\\
 $M_{\Phi, hor}$, $M_{\Phi, ver}$ &  algebraic monodromies induced by
 $m_{\Phi,hor}$ and  $m_{\Phi,ver}$ & \ref{ss:MO}\\
 $m_{j,hor}^\Phi$, $m_{j,ver}^\Phi$  & horizontal, vertical geometric monodromies of $\Phi$ near $\Sigma_j$& \ref{felo}\\
$M_{j,hor}^\Phi$, $M_{j,ver}^\Phi$  &  algebraic monodromies
induced by $M_{j,hor}^\Phi$ and  $M_{j,ver}^\Phi$  & \ref{ss:MO}\\
\end{tabular}

\noindent\begin{tabular}{p{2.5cm}>{\small}p{7cm}>{\hspace{1mm}}p{2.5cm}}
$\mu(f)$, $\mu$ & Milnor number of $f$& \ref{th:ISO}\\
$\mu'_j$ & Milnor number of $T\Sigma_j$ & \ref{ss:2.0b} \\
$(m_v)$, $(m)$  & multiplicities of a plumbing graph  & \ref{bek:MULT}\\
$\{m_\nu\}_{\nu\in{\cal V}}$ & multiplicity system & \ref{bek:MULT}, \ref{summary} \\
{\small $(m_w;n_w,\nu_w)$, $(m;n,\nu)$ }& multiplicity weights of the graph $\G_\C$ & Ch. \ref{ss:1.3}\\
$\{m_j\}_{j\in \Pi}$ & cardinality of $I_j=\{L_\lambda:L_\lambda\ni p_j\}$ for an arrangement & \ref{arrang}\\
& & \\
$n$ & normalization & \ref{d1}\\
${\bn}$, $(\bn, \bd)$ & covering data & \ref{def:2.3.2}\\
$\nu(e)$ & invariant  of a cutting edge &  \ref{bek:ce1}\\
$\bnu$ & the covering data for $\gj/\sim$ & \ref{back} \\
&&\\
$P(\G)$, $P$ & plumbed 4-manifold associated with $\G$ & Ch. \ref{s:PLU}\\
$P_M(t)$ & characteristic polynomial of  $M$ & \ref{rem:HOM}, \ref{ss:chars}\\
$P^\#$, $P^\#_j$ & certain characteristic polynomials & \ref{def:P}\\
${\calp}$, $\calp(t_{hor},t_{ver})$  & subsets of $\bfc^*\times \bfc^*$ & Ch. \ref{s:vh}\\
$\{p_j\}_{j\in \Pi}$ & singular points of $C=\cup_{\lambda\in\Lambda}C_\lambda$ & \ref{homgen}, \ref{arrang}\\
& & \\
$R0$, $R1$, \ldots, $R7$ & operations of plumbing graphs & \ref{CALC}\\
& & \\
$S_\epsilon$, $S_\epsilon^{2n-1}$ & sphere of radius $\epsilon$; $(2n-1)$--dimensional &  \ref{ss:2.0a}  \\
$S^k$  & $k$--dimensional sphere & \ref{ss:2.0a} \\
$Sl_q$ & transversal slice & \ref{ss:2.0b}, \ref{m2tetel} \\
${\cals}_k$  & a real analytic surface germs & \ref{prop:LINK}\\
  ${\wsk}$  & strict transform of ${\cals}_k$ & \ref{ss:sklifted} \\
  ${\nsk}$  & normalization of $\wsk$ & \ref{NORM1} \\
  $\overline{\cals_k}$ & `resolution' of ${\cals}_k$ & \ref{outline}, \ref{ss:skres}\\
  $St$, $St(\Sigma)$ & strict transform in a resolution & \ref{1.1}, \ref{d1} \\
  $St_e$ & strict transform associated with a cutting edge &  \ref{bek:ste}\\
 $Str$, \small{$Str(a,b;c\,|\,i,j;k)$} & a (graph) string & \ref{def:2.2.1}\\
 \small{$Str^\circleddash(a,b;c\,|\,i,j;k)$} &  the same  string with $\circleddash$ edge-decorations &  \ref{def:2.2.1}\\
\end{tabular}

\noindent\begin{tabular}{p{2.5cm}>{\small}p{7cm}>{\hspace{1mm}}p{2.5cm}}
 $\Sigma$, $\Sigma_f$; $\Sigma_j$  & singular locus; its irreducible components & \ref{ss:2.0a}, \ref{th:ISO}\\
 & & \\ ${\tv}$,  $T({\C})$   & tubular neighborhoods of certain curves & \ref{ss:a} \\
$T^\circ$ & interior of $T$ & \ref{ss:2.0a}\\
$T\Sigma_j$ & transversal type singularity & \ref{ss:2.0b} \\
${\rm twist}(h;A)$ & twist & \ref{def:twist}\\
$T_{a,*,*}$ & family of certain singularities & \ref{ZFG}\\
& & \\
${\cvg}$,  ${\calv}$ & the set of all vertices of a graph $\Gamma$ &  \ref{bek:pl}\\
$\calv_v$ & set of vertices  adjacent to the vertex $v$ & \ref{ex:LINKRHS}\\
$V_f$ & the zero locus of a germ $f$&  \ref{ss:2.0a}\\
 $V_f^{norm}$ & the normalization of $V_f$ &  \ref{d1} \\
&&\\
$\calw(\Gamma)$, $\calw$ & the set of non-arrowhead vertices of a graph $\Gamma$ &  \ref{links}\\
${\Wedge}$  & wedge neighbourhood & Ch. \ref{s:ICIS}, \ref{why} \\
$W_\bullet$ & weight filtration of a mixed Hodge structure & \ref{ss:MHS}\\
&&\\
$X_{f,N}$ & cyclic covering & \ref{1.6}
\end{tabular}







\backmatter

\printindex

\end{document}